\renewcommand{\sectionmark}[1]{\markboth{\textsf{#1}}{}}
\DeclareMathOperator{\im}{im}			
\DeclareMathOperator{\coker}{coker}		
\DeclareMathOperator{\rank}{rank}		
\DeclareMathOperator{\ab}{ab}			
\DeclareMathOperator{\I}{I}				
\DeclareMathOperator{\Log}{Log}			
\DeclareMathOperator{\Rad}{Rad}			
\DeclareMathOperator{\R}{R}				
\DeclareMathOperator{\gr}{gr}			
\DeclareMathOperator{\Z}{Z}				
\DeclareMathOperator{\GL}{GL}			
\DeclareMathOperator{\SL}{SL}			
\DeclareMathOperator{\Sp}{Sp}			
\DeclareMathOperator{\UU}{U}				
\DeclareMathOperator{\incl}{incl}		
\DeclareMathOperator{\id}{id}			
\DeclareMathOperator{\Id}{Id}			
\DeclareMathOperator{\Gal}{Gal}			
\DeclareMathOperator{\Aut}{Aut}			
\DeclareMathOperator{\Irr}{Irr}			
\DeclareMathOperator{\Rep}{Rep}			
\DeclareMathOperator{\Ind}{Ind}			
\DeclareMathOperator{\Ad}{Ad}			
\DeclareMathOperator{\ch}{char}			
\DeclareMathOperator{\End}{End}			
\DeclareMathOperator{\Hom}{Hom}			
\DeclareMathOperator{\Mor}{Mor}
\DeclareMathOperator{\A}{A}				
\DeclareMathOperator{\K}{K}				
\DeclareMathOperator{\M}{M}				
\DeclareMathOperator{\rk}{rk}			
\DeclareMathOperator{\Spec}{Spec}		
\DeclareMathOperator{\Fun}{Fun}			
\DeclareMathOperator{\rad}{Rad}			
\DeclareMathOperator{\kk}{k}				
\DeclareMathOperator{\Wh}{Wh}
\DeclareMathOperator{\HH}{H}
\DeclareMathOperator{\SK}{SK}
\DeclareMathOperator{\B}{B}			
\DeclareMathOperator{\St}{St}
\DeclareMathOperator{\trf}{trf}		
\DeclareMathOperator{\ord}{ord}		
\DeclareMathOperator{\Alt}{Alt}			
\DeclareMathOperator{\Frob}{Frob}		
\DeclareMathOperator{\Der}{Der}			
\DeclareMathOperator{\Lie}{Lie}			
\DeclareMathOperator{\Lin}{Lin}			
\DeclareMathOperator{\f}{F}			
\DeclareMathOperator{\tr}{tr}		
\DeclareMathOperator{\Real}{Re}		
\DeclareMathOperator{\PGL}{PGL}		
\newcommand{\NN}{\mathbb{N}}			
\newcommand{\QQ}{\mathbb{Q}}			
\newcommand{\ZZ}{\mathbb{Z}}			
\newcommand{\CC}{\mathbb{C}}			
\newcommand{\Fp}{\mathbb{F}_{p}}    	
\newcommand{\Fq}{\mathbb{F}_{q}}		
\newcommand{\F}{\mathbb{F}}				
\newcommand{\FF}{\mathbb{F}}			
\newcommand{\OO}{\mathcal{O}}			
\newcommand{\mm}{\mathfrak{m}}			
\newcommand{\pp}{\mathfrak{p}}			
\newcommand{\LL}{\mathcal{L}}			
\newcommand{\RR}{\mathbb{R}}			
\newcommand{\Fl}{\mathbb{F}_{l}}	
\newcommand{\note}[1]{}							
\newcommand{\marginnote}[1]{}					
\newcommand{\qnote}[1]{}						
\newcommand{\qmarginnote}[1]{}					
\newcommand{\enote}[1]{}						
\theoremstyle{plain}
	\newtheorem{theorem}{Theorem}[chapter]
	\newtheorem*{theorem*}{Theorem}
 	\newtheorem{proposition}{Proposition}[section]
	\newtheorem{lemma}{Lemma}[section]
	\newtheorem*{lemma*}{Lemma}
	\newtheorem{corollary}{Corollary}[section]
	\newtheorem{question}{{\rm Question}}
	\newtheorem{teorema}{Teorema}[chapter]
	\newtheorem*{teorema*}{Teorema}
\theoremstyle{definition}
	\newtheorem*{definition*}{Definition}
\theoremstyle{remark}
	\newtheorem*{remark}{Remark}
	\newtheorem*{example}{Example}
	\newtheorem{claim}{Claim}
	\newtheorem{conjecture}{Conjecture}
	\newtheorem*{conjecture*}{Conjecture}
\begin{document}

\pagestyle{empty} 

\pagestyle{empty}
\begin{titlepage}

\newcommand{\HRule}{\rule{\linewidth}{0.5mm}} 

\center 
 

\textsc{\LARGE Universidad Autónoma de Madrid}\\[1.5cm] 
\textsc{\Large Tesis Doctoral}\\[0.5cm] 
\textsc{\large }\\[0.5cm] 


\HRule \\[0.4cm]
{ \huge \bfseries Representation Growth}\\[0.4cm] 
\HRule \\[1.5cm]
 

\begin{minipage}{0.4\textwidth}
\begin{flushleft} \large
\emph{Autor:}\\
Javier García Rodríguez 
\end{flushleft}
\end{minipage}
~
\begin{minipage}{0.4\textwidth}
\begin{flushright} \large
\emph{Director:} \\
Andrei Jaikin Zapirain 
\end{flushright}
\end{minipage}\\[2cm]



{\large Octubre 2016}\\[2cm] 


 

\vfill 

\end{titlepage}
\clearpage{\pagestyle{empty}\cleardoublepage}


\newpage
\pagestyle{empty}
\vspace*{7cm}
\hfill\begin{minipage}[t]{0.5\textwidth}
\raggedleft
{\Large{\it a Laura.
}}
\end{minipage}

\frontmatter
\pagestyle{plain}

\chapter*{Abstract}
\addcontentsline{toc}{chapter}{Abstract}
The main results in this thesis deal with the representation theory of certain classes of groups.
More precisely, if $r_n(\Gamma)$ denotes the number of non-isomorphic $n$-dimensional complex representations of a group $\Gamma$, we study the numbers $r_n(\Gamma)$ 
and the relation  of this arithmetic information with structural properties of $\Gamma$.

In chapter $1$ we present the required preliminary theory.
In chapter $2$ we introduce the Congruence Subgroup Problem for an algebraic group $G$ defined over a global field $k$.

In chapter $3$ we consider $\Gamma=G(\OO_S)$ an arithmetic subgroup of a semisimple algebraic $k$-group for some global field $k$ with ring of $S$-integers $\OO_S$.
If the Lie algebra of $G$ is perfect, Lubotzky and Martin showed in \cite{LuMa} that if $\Gamma$ has the weak Congruence Subgroup Property then $\Gamma$ has Polynomial Representation Growth, that is, $r_n(\Gamma)\leq p(n)$ for some polynomial $p$.
By using a different approach, we show that the same holds for any semisimple algebraic group $G$ including those with a non-perfect Lie algebra.

In chapter $4$ we apply our results on representation growth of groups of the form $\Gamma=G(\OO_S)$ to show that if $\Gamma$ has the weak Congruence Subgroup Property then $s_n(\Gamma)\leq n^{D\log n}$ for some constant $D$,
where $s_n(\Gamma)$ denotes the number of subgroups of $\Gamma$ of index at most $n$.
As before, this extends similar results of Lubotzky \cite{Lu}, Nikolov, Abert, Szegedy \cite{NiAlSze} and Golsefidy \cite{Gol} for almost simple simply connected groups with perfect Lie algebra to any almost simple simply connected algebraic $k$-group $G$.

In chapter $5$ we consider $\Gamma=1+J$, where $J$ is a finite nilpotent associative algebra, this is called an algebra group.
The Fake Degree Conjecture says that for algebra groups the numbers $r_n(\Gamma)$ may be obtained by 
looking at the square root of the sizes of the orbits arising from Kirillov's Orbit Method.
In \cite{Ja1} Jaikin gave a $2$-group that served as a counterexample to this conjecture.
We provide counterexamples for any prime $p$ by looking at groups of the form $\Gamma=1+\I_{\F_q}$, where $\I_{\F_q}$
is the augmentation ideal of the group algebra $\F_q[\pi]$ for some $p$-group $\pi$.
Moreover, we show that for such groups $r_1(\Gamma)=q^{\kk(\pi)-1}|\B_0(\pi)|$, where $\B_0(\pi)$ is the Bogomolov multiplier of $\pi$.

Finally in chapter $6$, we consider $\Gamma=\prod_{i\in I} S_i$, where the $S_i$ are nonabelian finite simple group.
We give a characterization of the groups of this form having Polynomial Representation Growth.
We also show that within this class one can obtain any 
rate of representation growth, i.e., for any $\alpha>0$ there exists $\Gamma=\prod_{i\in I}S_i$ where the $S_i$ are finite simple groups of Lie type such that $r_n(\Gamma)\sim n^\alpha$.
Moreover, we may take all $S_i$ with a fixed Lie type. This complements results of Kassabov and Nikolov in \cite{KaNi}.

\clearpage{\pagestyle{empty}\cleardoublepage}

\chapter*{Agradecimientos}
\addcontentsline{toc}{chapter}{Agradecimientos}
Escribir 100 páginas en 4 años puede no parecer gran cosa. Confieso además que no lo habría podido hacer sin la ayuda de los que me han acompañado estos últimos años. Me gustaría darles las gracias por todas las líneas con las que, de una manera u otra, han contribuido.

Gracias en primer lugar a Andrei por estar siempre dispuesto y de buen humor para trabajar, por la barbaridad de cosas que he aprendido y por el placer y la motivación que da trabajar con alguien con tanto talento.
Gracias también a Joan por ejercer de hermano mayor. Por los ratos hablando de y pensando en grupos y sus cosas y por todos los buenos ratos que hemos pasado fuera de los grupos y sus cosas.
Gracias a Urban porque los meses de trabajo y estudio juntos han sido los más satisfactorios. 
Gracias a Benjamin por acogerme en Düsseldorf y a toda la gente de allí que me hizo sentir como uno más durante mi estancia.

I would like to thank A. S. Golsefidy and B. Martin whose work as reviewers significantly improved this thesis. I would also like to thank Y. Antolin, B. Klopsch and B. Martin for being part of the Ph. D. Thesis committee and their helpful comments.

La mejor parte de hacer el doctorado en la UAM ha sido sin duda el grupo de gente con el que he compartido estos años y que han conseguido que madrugara con ganas de irme a la otra punta de Madrid.

Gracias a toda mi familia por apoyarme en todas las etapas hasta llegar hasta aquí.

Y gracias a ti Laura. Por acompañarme y ayudarme en todo desde siempre.
\clearpage{\pagestyle{empty}\cleardoublepage}


\tableofcontents 

\mainmatter


\pagestyle{fancy} 

\setcounter{page}{1}
\pagenumbering{arabic}


\chapter*{Introducción}
\addcontentsline{toc}{chapter}{Introducción}

Sea $\Gamma$ un grupo.
Una representación de $\Gamma$ es un homomorfismo de grupos $
\rho:\Gamma \to \GL(V)$
para algún espacio vectorial complejo finito dimensional $V$.
La Teoría de Representaciones estudia la relación entre propiedades estructurales de $\Gamma$ y sus representaciones.
El Teorema de Burnside fue la primera gran contribución de la Teoría de Representaciones y desde entonces se ha convertido en un área de investigación en sí misma. Este trabajo se centra en la teoría de representaciones de ciertas clases de grupos.
El bloque principal está dedicado al estudio de la teoría de representaciones (más concretamente, del crecimiento de representaciones) de grupos aritméticos, aunque también abordamos cuestiones relacionadas con la teoría de representaciones de grupos asociados a álgebras y el crecimiento de representaciones de (productos directos de) grupos finitos simples.

Sea $\Gamma$ un grupo y denotemos por $r_n(\Gamma)$ el número de clases de isomorfía de representaciones irreducibles $n$-dimensionales de $\Gamma$.
Si $\Gamma$ es finito sabemos que $r_n(\Gamma)=0$ para $n\geq |\Gamma|^{1/2}$.
El estudio de grupos finitos a través de las dimensiones de sus representaciones irreducibles y clases de conjugación es una gran área de investigación, e.g., ver \cite{Isaacs} o \cite{Hu}  y referencias allí indicadas.
Por otro lado para $\Gamma$ infinito $r_n(\Gamma)$ puede ser infinito.
El comportamiento asintótico de la sucesión $r_n(\Gamma)$ es el crecimiento de representaciones del grupo $\Gamma$. 
Supongamos que $r_n(\Gamma)<\infty$ para todo $n$.
Entonces podemos asociar a la sucesión $\{r_n(\Gamma)\}$ una zeta función de Dirichlet
\[
\zeta_\Gamma(s)=\sum_{n=1}^\infty r_n(\Gamma)n^{-s} \quad (s\in\mathbb{C}).
\]
La abscisa de convergencia $\alpha(\Gamma)$ de $\zeta_\Gamma(s)$ es el ínfimo de todos los $\alpha\in\mathbb{R}_{\geq 0}$ tales que la serie $\zeta_\Gamma(s)$ converge en el semiplano derecho $\{s\in\mathbb{C}\; |\; \Real(s)>\alpha\}$.
La abscisa $\alpha(\Gamma)$ es finita si y solo si $\Gamma$ tiene Crecimiento Polinomial de Representaciones (CPR), es decir, si $r_n(\Gamma)\leq p(n)$ para algún polinomio $p$.

\subsubsection*{Crecimiento de Representaciones de Grupos Aritméticos}
Sea $k$ un cuerpo global, $S$ un conjunto finito de valuaciones de $k$ que contiene a todas las arquimedianas y $\OO_S$ el anillo de  $S$-enteros correspondiente.
Sea $G$ un $k$-grupo conexo simplemente conexo y semisimple y supongamos $G\subseteq\GL_N$.
Un grupo $\Gamma$ es un grupo aritmético si es conmensurable con 
\[
G(\OO_S):=G(k)\cap\GL_N(\OO_S).
\]
Nótese que en términos del crecimiento de representaciones de un grupo aritmético $\Gamma$ podemos asumir $\Gamma=G(\OO_S)$.
En \cite{LuMa}, Lubotzky y Martin iniciaron el estudio del crecimiento de representaciones de grupos aritméticos.
Establecieron una conexión entre tener CPR y el famoso Problema de los Subgrupos de Congruencia.
Decimos que $G$ tiene la Propiedad de los Subgrupos de Congruencia con respecto a $S$ si todo subgrupo de índice finito $H\leq G(\OO_S)$ contiene algún subgrupo de congruencia principal $G(\mathfrak{p})$ para algún ideal $\mathfrak{p}\subset\OO_S$.
En el capítulo \ref{sec:CSP} presentamos el Problema de los Subgrupos de Congruencia y explicamos la  condición (ligeramente más débil) de tener la Propiedad débil de los Subgrupos de Congruencia (PdSC).

\begin{teorema*}[Lubotzky y Martin]
Supongamos que $G$ tiene la Propiedad débil de los Subgrupos de Congruencia con respecto a $S$.
Si $\ch k=2$, asumimos también que $G$ no contiene factores de tipo $A_1$ o $C_m$ para cualquier $m$. 
Entonces $\Gamma$ tiene Crecimiento Polinomial de Representaciones.
Además, si $\ch k=0$ el recíproco también es cierto.
\end{teorema*}

Si $G$ tiene la Propiedad débil de los Subgrupos de Congruencia con respecto a $S$, la compleción profinita $\widehat{G(\OO_S)}$ puede estudiarse a través de la compleción de congruencia $\overline{G(\OO_S)}$, la cuál,  por el Teorema Fuerte de Aproximación, es isomorfa a $G(\widehat{\OO}_S)$.
Es decir, el crecimiento de representaciones de $\Gamma$ viene dado por el de 
\[
G(\widehat{\OO}_S)=\prod_{v\notin S}G(\OO_v).
\]

Cada factor local $G(\OO_v)$ es un grupo compacto $k_v$-analítico. 
Para obtener un crecimiento polinomial de representaciones la clave es encontrar una cota uniforme de tal manera que $r_n(G(\OO_v))\leq cn^c$ para todo $v\notin S$.
Lubotzky y Martin obtuvieron esta cota uniforme cuando $L_G$, el álgebra de Lie del grupo algebraico $G$, es perfecta, equivalentemente, si $\ch k=2$ el grupo $G$ no contiene factores de tipo $A_1$ o $C_m$.
Cuando el álgebra de Lie no es perfecta Jaikin obtuvo en \cite{JaiPre} crecimiento polinomial de representaciones para cada factor local, es decir, para cada $v$ existe una constante $c_v$ tal que $r_n(G(\OO_v))\leq c_vn^{c_v}$.
Siguiendo las ideas de Jaikin para el caso local abordamos el caso global al obtener una cota polinomial uniforme para todo $v$, lo que finalmente lleva al siguiente teorema (cf. Theorem \ref{th:CSP_implies_PRG}).

\begin{teorema}\label{main}
Sean $k,\,S,\,\Gamma$ y $G$ como anteriormente.
Si $G$ tiene la Propiedad débil de los Subgrupos de Congruencia con respecto a $S$, entonces $\Gamma$ tiene Crecimiento Polinomial de Representaciones.
\end{teorema}

El trabajo de Lubotzky y Martin en \cite{LuMa} inició el estudio del crecimiento de representaciones de grupos aritméticos (con la Propiedad débil de Subgrupos de Congruencia).
En \cite{LarLu} Larsen y Lubotzky estudiaron la abscisa de convergencia $\alpha(\Gamma)$ y Avni demostró en \cite{Av} que ésta es un número racional para $\ch k=0$.
Una gran parte del trabajo reciente en el área ha sido realizado por Avni, Klopsch, Onn y Voll en una seria de artículos (véase \cite{AKOV1}, \cite{AKOV2}, \cite{AKOV3} y \cite{AKOV_base_change}). Recomendamos \cite{Klopsch} para una visión general sobre el área.

\subsubsection{Crecimiento de Subgrupos de Grupos Aritméticos}

Sea $\Gamma$ un grupo finitamente generado.
Escribiremos $a_n(\Gamma)$ para denotar el número de subgrupos de $\Gamma$ de índice $n$ y definimos $s_n(\Gamma):=\sum_{i=1}^n a_n(\Gamma)$.
En analogía con el Crecimiento de Representaciones\footnote{El estudio de propiedades estructurales de un grupo $\Gamma$ a través de su crecimiento de subgrupos en realidad precede al enfoque desde el punto de vista del crecimiento de representaciones, de hecho, este último está motivado de alguna manera por la rica teoría de Crecimiento de Subgrupos.} el Crecimiento de Subgrupos de $\Gamma$ viene dado por el comportamiento asintótico de $a_n(\Gamma)$ (equivalentemente $s_n(\Gamma)$).
El estudio de propiedades estructurales de $\Gamma$ a través de su crecimiento de subgrupos ha dado pie a una teoría profunda con muchos resultados y aplicaciones, véase \cite{Subgroup_Growth} y referencias allí.

Al igual que antes, nos centraremos en el crecimiento de subgrupos de grupos aritméticos, más concretamente, nos fijaremos en el crecimiento de subgrupos de un grupo $\Gamma=G(\OO_S)$, donde $G$ es un grupo algebraico sobre $k$ que es conexo, simplemente conexo y casi simple y $\OO_S$ es el anillo de $S$-enteros del cuerpo global $k$.

Si el grupo $G$ tiene la Propiedad de los Subgrupos de Congruencia con respecto a $S$, entonces todo subgrupo de índice finito es un subgrupo de congruencia y por tanto $s_n(\Gamma)=\gamma_n(\Gamma)$, donde $\gamma_n(\Gamma)$ denota el número de subgrupos de congruencia de índice como mucho $n$.
El estudio del crecimiento de subgrupos de congruencia de grupos aritméticos fue iniciado por Lubotzky en \cite{Lu}.
Para $\ch k=0$ Lubotzky encuentra cotas inferiores y superiores para $\gamma_n(\Gamma)$.
\begin{teorema*}[{\cite[Teorema A y F]{Lu}}]
Sea $\Gamma=G(\OO_S)$ como anteriormente.
Si $\ch k=0$, entonces existen constantes $c_1,c_2$ tales que
\[
n^{\frac{c_1\log^2 n}{\log\log n}}\leq\gamma_n(\Gamma)\leq n^{\frac{c_2\log^2 n}{\log\log n}}.
\]
Si $\ch k>0$ y si $\ch k=2$ asumimos que $G$ no es de tipo $A_1$ o $C_m$, entonces existen constantes $c_3,c_4$ tales que
\[
n^{c_3 \log n}\leq \gamma_n(\Gamma)\leq n^{c_4\log^2n}.
\]
\end{teorema*}
Abért, Nikolov y Szegedy mejoraron la cota superior en \cite{NiAlSze}.
\begin{teorema*}[{\cite[Teorema 1]{NiAlSze}}]
Sea $\Gamma=G(\OO_S)$ como anteriormente y supongamos que $\ch k>0$.
Supongamos que $G$ es split sobre $k$ y que si $\ch k=2$ entonces  $G$ no es de tipo $A_1$ o $C_m$.
Entonces existe una constante $D$ que solo depende del rango de Lie de $G$ tal que
\[
\gamma_n(\Gamma)\leq n^{D\log n}.
\] 
\end{teorema*} 
En \cite{Gol}, entre otros resultados, Golsefidy obtiene una cota similar sin la necesidad de que $G$ sea split sobre $k$.
\begin{teorema*}
Sea $\Gamma=G(\OO_S)$ como anteriormente y supongamos que $\ch k\geq 5$.
Entonces existen constantes $C$ y $D$ tales que
\[
n^{C\log n}\leq \gamma_n(\Gamma)\leq n^{D\log n}.
\] 
\end{teorema*}

En los resultados previos es necesario que el álgebra de Lie (graduada) asociada a cada factor local $G(\OO_v)$ sea perfecta.
De ahí la necesidad de excluir los casos en los que $G$ es de tipo $A_1$ o $C_m$.
Abordamos esta dificultad aplicando el Teorema \ref{main} y cierta relación entre el crecimiento de subgrupos y el crecimiento de representaciones.
\begin{teorema}[cf. Theorem \ref{th:log_subgroup_growth} y \ref{th:PSG}]
Sea $\Gamma=G(\OO_S)$ como anteriormente.
Entonces existe una constante $D>0$ tal que
\[
\gamma_n(\Gamma)\leq n^{D\log n}.
\]
Supongamos que $G$ tiene la Propiedad débil de los Subgrupos de Congruencia con respecto a $S$.
Entonces existe $D'>0$ tal que
\[
s_n(\Gamma)\leq n^{D'\log n}.
\]
\end{teorema}

\subsubsection{La Conjetura de los Pseudogrados}

Sea $J$ un álgebra associativa finito dimensional sobre un cuerpo finito $\F_q$.
Asumamos además que $J$ es nilpotente, es decir, $J^n=0$ para algún $n$.
Podemos entonces dar al conjunto formal $1+J=\{1+j\;:\; j\in J\}$ una estructura de grupo definiendo $(1+j)(1+h):=1+j+h+jh$ y $(1+j)^{-1}:=\sum_{k=0}^{n-1}(-1)^kj^k$.
Llamamos a los grupos de la forma $1+J$ grupos asociados a álgebras; éstos fueron introducidos por Isaacs en \cite{Isaacs} como una generalización de los grupos unitriangulares $U_n(\Fq)=1+\mathfrak{u}_n(\F_q)$, donde $\mathfrak{u}_n(\F_q)$ es el álgebra de matrices cuadradas de tamañano $n$ estrictamente triangulares superiores.

El grupo $1+J$ actúa de manera natural en el álgebra $J$ y esto induce una acción de $1+J$ en $\Irr(J)$, los caracteres irreducibles del grupo aditivo de $J$.
En este contexto es posible adaptar el método de las órbitas de Kirillov y tratar de construir una biyección
\[
\begin{array}{ccc}
(1+J)\textrm{-órbitas en }\Irr(J)&\to&\Irr(1+J).
\end{array}
\]
Existen serias dificultades para llevar a cabo esta construcción y una tal biyección solo se obtiene cuando $J^p=0$, donde $p=\ch \F_q$.
Sin embargo la adaptación del método de las órbitas de Kirillov sugiere una función
\[
\begin{array}{ccc}
(1+J)\textrm{-órbitas en }\Irr(J)&\to&\{\chi(1)\;:\;\chi\in\Irr(1+J)\}\\
\Omega&\mapsto&|\Omega|^{1/2}.\\
\end{array}
\]
Isaacs conjeturó que esta función está bien definida y que los $|\Omega|^{1/2}$ (los pseudogrados), contando con multiplicidad, son los grados de los caracteres irreducibles de $\Irr(1+J)$ para cualquier grupo asociado a un álgebra $1+J$.
Esta conjetura es conocida como la Conjetura de los Pseudogrados.
En \cite{Ja1} Jaikin refutó la Conjetura de los Pseudogrados construyendo un grupo asociado a una $\F_2$-álgebra que servía de contraejemplo.
No obstante, el comportamiento especial del primo $p=2$ en correspondencias de caracteres y varios cálculos experimentales sugerían que la conjetura podría ser cierta para grupos asociados a $\F_q$-álgebras donde $\ch \F_q>2$.
Consideremos la siguiente familia de grupos asociados a $\F_q$-álgebras.
Si $\ch \F_q=p$, consideremos un $p$-grupo $\pi$ y sea $\I_{\F_q}$ el ideal de augmentación del álgebra de grupo $\F_q[\pi]$.
Entonces $1+\I_{\F_q}$ es un grupo asociado a una $\F_q$-algebra.
Si la Conjetura de los Pseudogrados fuera cierta es fácil ver (Lemma \ref{lm:FD_ab_lie_conjugacy_classes}) que se debe tener
\[
|(1+\I_{\F_q})_{\ab}|=q^{\kk(\pi)-1},
\]
donde $(1+\I_{\F_q})_{\ab}$ es la abelianización del grupo $1+\I_{\F_q}$ y $\kk(\pi)$ denota el número de clases de conjugación del grupo $\pi$.
El siguiente teorema da una descripción del tamaño de la abelianización de los grupos de la forma $1+\I_{\F_q}$. 
\begin{teorema}[cf. Theorem \ref{th:sizeequality}]
Sea $\pi$ un $p$-grupo y $\I_{\F_q}$ el ideal de augmentación de $\F_q[\pi]$. Entonces 
\[
|(1+\I_{\F_q})_{\ab}|=q^{\kk(\pi)-1}|B_0(\pi)|.
\]
\end{teorema}
El grupo $B_0(\pi)$ que aparece en el teorema es el multiplicador de Bogomolov del grupo $\pi$.
Como para cada primo $p$ existen $p$-grupos con multiplicador de Bogomolov no trivial el teorema anterior ofrece contrajemplos a la Conjetura de los Pseudogrados para todo primo $p$.

La inesperada aparición del multiplicador de Bogomolov en el teorema anterior nos llevó a seguir investigando los grupos $(1+\I_{\F_q})_{\ab}$.
Nótese que para toda $\F_q$-álgebra $R$, $1+\I_{\F_q}\otimes R$ da un grupo asociado a una $\F_q$-álgebra.
Esto define un $\F_q$-grupo algebraico afín $G$ tal que $1+\I_{\F_q}=G(\F_q)$.
\begin{teorema}[cf. Theorem \ref{t:algebraicgroup}]
Sea $\pi$ un $p$-grupo y $1+\I_{\F_q}$ el grupo asociado a la $\F_q$-álgebra correspondiente.
Sea $G$ el $\F_q$-grupo algebraico asociado.
Entonces el grupo derivado $[G,G]$ es un grupo algebraico afín definido sobre $\F_q$ y se tiene
\[
[G,G](\F_q)/[G(\F_q),G(\F_q)]\cong B_0(\pi).
\]
\end{teorema}

Los resultados que aparecen en este capítulo fueron obtenidos en colaboración con Andrei Jaikin y Urban Jezernik y pueden encontrarse en \cite{GRJaiJe}.
\subsubsection{Productos Directos de Grupos Finitos Simples} 

Consideremos la siguiente clase de grupos 
\[
\mathcal{C}:=\{\mathbf{H}=\prod_{i\in I}S_i\ :\ S_i \ \textrm{es un grupo finito simple no abeliano}\}.
\]
Dago un grupo $\Gamma$ definimos $R_n(\Gamma):=\sum_{i=1}^n r_i(\Gamma)$. Hemos obtenido una caracterización de los grupos en $\mathcal{C}$ que tienen Crecimiento Polinomial de Representaciones.
\begin{teorema}[c.f Theorem \ref{th:CharPRG}]
Sea $\mathbf{H}=\prod_{i\in I} S_i$ un producto cartesiano de grupos finitos simples no abelianos y sea $l_{\mathbf{H}}(n):=|\{i\in I : R_n(S_i)>1\}|$.
Entonces $\mathbf{H}$ tiene Crecimiento Polinomial de Representaciones si y solo si $l_{\mathbf{H}}(n)$ esta acotado polinomialmente.
\end{teorema}
Es natural preguntarse qué tipos de crecimiento de representaciones pueden ocurrir dentro de los grupos de $\mathcal{C}$ que tienen CPR. 
En \cite{KaNi}, Kassabov y Nikolov estudiaron cuestiones relacionadas con la clase de grupos $\mathcal{C}$, y más concretamente con la subclase $\mathcal{A}\subset\mathcal{C}$ donde
\[
\mathcal{A}:=\{\mathbf{H}=\prod_{i\in I}S_i\ :\ S_i=\Alt(m) \ \mbox{para algún} \  m\geq 5\}
\]
y $\Alt(m)$ denota el grupo alternado de $m$ letras.
En particular mostraron que cualquier tipo de crecimiento de representaciones es posible.

\begin{teorema*}[{\cite[Theorem 1.8]{KaNi}}]
Para cualquier $b>0$, existe un grupo $G$ tal que $\alpha(G)=b$.
\end{teorema*}
Los grupos que aparecen en la prueba del teorema anterior son frames de grupos de la forma
\[
\mathbf{H}=\prod_{i\geq 5}\Alt(i)^{f(i)},
\]
para alguna $f:\NN\to \NN$.
En particular los factores simples de $\mathbf{H}$ tienen rango no acotado \footnote{Dado $S$ un grupo finito simple no abeliano definimos $\rk S=m$ si $S=\Alt(m)$ y $\rk S=\rk L$ si $S$ es un grupo simple de tipo Lie.} como grupos alternados.
Queremos obtener un resultado similar para la subclase $\mathcal{L}\subset\mathcal{C}$, donde
\[
\mathcal{L}:=\{\mathbf{H}=\prod_{i\in I}S_i\ :\ S_i \ \mbox{es un grupo finito simple de tipo Lie}\}
.\]
Obtenemos este resultado en el siguiente teorema, donde probamos que un resultado análogo es cierto para esta subclase.
En nuestro caso, todos los factores simples pueden tener el mismo rango de Lie.
\begin{teorema}[cf. Theorem \ref{th:arb_abscissa_quasisimple}]
Para cualquier $c>0$, existe un grupo $\mathbf{H}=\prod_{i\in\NN}S_i\in\mathcal{L}$ tal que $\alpha(\mathbf{H})=c$.
Además $\mathbf{H}$ puede ser elegido de tal manera que solo ocurra un tipo de Lie entre sus factores.
\end{teorema}

Los resultados presentados en este capítulo fueron obtenidos en colaboración con Benjamin Klopsch durante mi estancia en la Universidad de Düsseldorf.
\clearpage{\pagestyle{empty}\cleardoublepage}

\chapter*{Introduction}
\addcontentsline{toc}{chapter}{Introduction}

Let $\Gamma$ be a group.
A representation of $\Gamma$ is a homomorphism of groups $
\rho:\Gamma \to \GL(V)$
for some finite dimensional complex vector space $V$.
Representation Theory studies the interplay between the structural properties of $\Gamma$ and its representations.
Burnside's Theorem was the first important application of Representation Theory and since then it has become a research area on its own. This work focuses on the representation theory of certain classes of groups.
The main body is dedicated to the study of the representation theory (more precisely, the representation growth) of arithmetic groups although we also address some questions related to the representation theory of algebra groups and  the representation growth of (direct products of) finite simple groups.

Let $\Gamma$ be a group and write $r_n(\Gamma)$ for the number of isomorphism classes of $n$-dimensional irreducible complex representations of $\Gamma$.
If $\Gamma$ is finite then $r_n(\Gamma)=0$ for $n\geq |\Gamma|^{1/2}$.
The study of finite groups by means of their
irreducible character degrees and conjugacy classes is a well established research
area, e.g., see \cite{Isaacs} or \cite{Hu}  and references therein. On the other hand for $\Gamma$ infinite $r_n(\Gamma)$ might well be infinite.
The asymptotic behaviour of the sequence $r_n(\Gamma)$ gives the Representation Growth of the group $\Gamma$. 
Let us assume that $r_n(\Gamma)<\infty$ for every $n$.
Then we can associate to the sequence $\{r_n(\Gamma)\}$ a Dirichlet zeta function
\[
\zeta_\Gamma(s)=\sum_{n=1}^\infty r_n(\Gamma)n^{-s} \quad (s\in\mathbb{C}).
\]
The abscissa of convergence $\alpha(\Gamma)$ of $\zeta_\Gamma(s)$ is the infimum of all $\alpha\in\mathbb{R}_{\geq 0}$ such that the series $\zeta_\Gamma(s)$ converges on the right half plane $\{s\in\mathbb{C}\; |\; \Real(s)>\alpha\}$.
The abscissa $\alpha(\Gamma)$ is finite if and only if $\Gamma$ has Polynomial
Representation Growth (PRG), that is, if $r_n(\Gamma)\leq p(n)$ for some polynomial $p$.

\subsubsection*{Representation Growth of Arithmetic Groups}
Let $k$ be a global field, $S$ a finite set of valuations of $k$ containing all archimedean ones and $\OO_S$ the corresponding ring of $S$-integers.
Consider $G$ a connected simply connected semisimple algebraic $k$-group and suppose $G\subseteq\GL_N$.
A group $\Gamma$ is an arithmetic group if it is commensurable with
\[
G(\OO_S):=G(k)\cap\GL_N(\OO_S).
\]
Note that in terms of the representation growth of an arithmetic group $\Gamma$ we may assume that $\Gamma=G(\OO_S)$.
In \cite{LuMa}, Lubotzky and Martin initiated the study of the representation growth of arithmetic groups.
They established a strong connection between having PRG and the famous Congruence Subgroup Problem.
Recall that $G$ is said to have the Congruence Subgroup Property with respect to $S$ if every finite index subgroup $H\leq G(\OO_S)$ contains some principal congruence subgroup $G(\mathfrak{p})$ for some ideal $\mathfrak{p}\subset\OO_S$.
In Chapter \ref{sec:CSP} we present the corresponding Congruence Subgroup Problem and explain the (slightly weaker) condition of having the weak Congruence Subgroup Property (wCSP).

\begin{theorem*}[Lubotzky and Martin]
Suppose $G$ has the weak Congruence Subgroup Property with respect to $S$. If $\ch k=2$, assume that $G$ does not contain factors of type $A_1$ or $C_m$ for any $m$. 
Then $\Gamma$ has Polynomial Representation Growth.
Moreover, if $\ch k=0$ the converse holds.
\end{theorem*}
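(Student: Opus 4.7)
The plan is to use wCSP to replace the study of complex representations of $\Gamma$ by those of its profinite completion, and then, via strong approximation, by a uniform local analysis of the factors $G(\OO_v)$. First, one shows that every finite-dimensional complex representation of $\Gamma$ is, on a finite-index subgroup, continuous for the profinite topology; in higher $S$-rank this follows from Margulis super-rigidity together with the rigidity of semisimple representations, so that $r_n(\Gamma)$ and $r_n(\widehat{\Gamma})$ differ by at most a polynomial factor. Under wCSP the congruence kernel $\ker(\widehat{\Gamma}\to\overline{\Gamma})$ is finite, and by strong approximation $\overline{\Gamma}\cong\prod_{v\notin S} G(\OO_v)$, so it suffices to bound $r_n\bigl(\prod_{v\notin S}G(\OO_v)\bigr)$ polynomially in $n$.

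Any continuous irreducible representation of such a profinite product decomposes as a tensor product $\bigotimes_v \rho_v$ with all but finitely many $\rho_v$ trivial on an open subgroup; thus a $d$-dimensional irreducible corresponds to a factorisation $d=\prod_v d_v$ with almost all $d_v=1$, weighted by one-dimensional characters of the abelianisations $G(\OO_v)^{\ab}$. A divisor-type summation reduces the global statement to a \emph{uniform} polynomial bound $r_n(G(\OO_v))\leq C\,n^{C}$ with $C$ independent of $v$, together with uniform polynomial growth of the abelianisations (automatic in the FAb setting).

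The decisive technical step is this uniform local bound. For $v$ outside a finite bad set, $G(\OO_v)$ is a compact $k_v$-analytic FAb group and one applies Howe's $p$-adic Kirillov correspondence to a suitable open uniform pro-$p$ subgroup: irreducible characters of dimension $d$ are in bijection with coadjoint orbits of size $d^2$ on the dual of the associated $\OO_v$-Lie lattice. Counting orbits by size then reduces to linear algebra on the commutator pairing of the graded Lie algebra $\gr G(\OO_v)$; when $L_G$ is perfect, this pairing is uniformly non-degenerate, yielding polynomial orbit counts with constants depending only on the root datum and dimension of $G$, hence uniform in $v$. The exclusion of types $A_1$ and $C_m$ in characteristic $2$ is forced by the failure of $L_G$ to be perfect in those cases; the main obstacle of the whole argument is precisely this uniformity of constants across all residue characteristics, which breaks down in the exceptional types and is what the author's Theorem \ref{main} later circumvents by a different route.

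For the converse in characteristic zero I would argue contrapositively: if wCSP fails, the congruence kernel $C(G,S)$ is an infinite profinite group, and results of Serre and Prasad--Rapinchuk on its structure furnish a sequence of non-congruence finite quotients whose irreducible characters can be pulled back to $\widehat{\Gamma}$; their dimensions grow slowly enough that, counted together with the congruence part, they force super-polynomial growth of $r_n(\Gamma)$, contradicting PRG.
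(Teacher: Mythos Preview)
Your global reduction matches the paper's: pass to the profinite completion, use wCSP to replace $\widehat{G(\OO_S)}$ by the congruence completion, identify the latter with $\prod_{v\notin S}G(\OO_v)$ via strong approximation, and then seek a \emph{uniform} polynomial bound on $r_n(G(\OO_v))$. Two remarks on the reduction itself. First, in this thesis $r_n(\Gamma)$ counts only representations with finite image (see the conventions at the start of Section~\ref{sec:RG}), so $r_n(\Gamma)=r_n(\widehat{\Gamma})$ is immediate from Lemma~\ref{lm:RG_profinite_finite_image}; the super-rigidity argument you invoke is what Lubotzky--Martin need in their setting where arbitrary complex representations are counted, but it is not part of the paper's argument. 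Second, your phrase ``weighted by one-dimensional characters of the abelianisations $G(\OO_v)^{\ab}$'' is spurious: for almost all $v$ the groups $G(\OO_v)$ are perfect (Lemma~\ref{lm:goodvaluations}\ref{lm:good_valuations_G(O_v)_perfect}), and the decomposition is simply $\rho\cong\bigotimes_v\rho_v$ with no extra twist.

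The substantive divergence is in the local step. You propose Howe's Kirillov orbit method on an open uniform pro-$p$ subgroup and an orbit count governed by non-degeneracy of the commutator pairing. The paper does \emph{not} use Kirillov theory here. Its argument (Theorem~\ref{th:RG_p-adic_perfect_type_c}, reproducing the Lubotzky--Martin local input) is that each compact $p$-adic analytic group with perfect Lie algebra is of \emph{representation type $c$}: using the standard filtration $G_i=(p^i\ZZ_p)^d$ on an open standard subgroup, perfectness gives $G_{3(n+1)}\subseteq[G_{n+1},G_{n+1}]$, and since every irreducible of a pro-nilpotent group is monomial (Lemma~\ref{lm:RG_pronilpotent_monomial}), any $\rho\in\Irr_{p^n}$ is induced from a subgroup containing $G_{n+1}$, forcing $G_{3(n+1)}\subseteq\ker\rho$ and hence $|G/K_n(G)|\le n^c$. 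The uniform global bound then follows as in the proof of Theorem~\ref{th:PRG}. Your Kirillov route can be made to work in the $p$-adic case, but note that Howe's correspondence is a characteristic-zero tool; for local fields of positive characteristic it does not apply as stated, and the paper handles that case by an entirely different argument (Theorem~\ref{th:RG_Fp[[t]]-standard_type_c}, due to Jaikin) based on finite generation of $\mathcal{L}_G$ as a $G$-module rather than on perfectness or orbit counting. Since the theorem under discussion covers global fields of arbitrary characteristic, this is a genuine gap in your sketch.

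Finally, the paper does not prove the converse in characteristic zero; it is only quoted from \cite{LuMa}. Your contrapositive outline points in the right direction, but the actual Lubotzky--Martin argument proceeds via the structure of $C(G,S)$ (in particular Lemma~\ref{lm:CSP_implies_simplyconnected} and Lemma~\ref{lm:kv_anisotropic} in this thesis isolate the two obstructions) together with a bounded-generation/FAb analysis, rather than the Serre/Prasad--Rapinchuk structural results you name.
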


If $G$ has the weak Congruence Subgroup Property with respect to $S$, the profinite completion $\widehat{G(\OO_S)}$ can be studied through the congruence completion $\overline{G(\OO_S)}$, which,  by the Strong Approximation Theorem, is isomorphic to $G(\widehat{\OO}_S)$. that is, the representation growth of $\Gamma$ is given by that of
\[
G(\widehat{\OO}_S)=\prod_{v\notin S}G(\OO_v).
\]

Each local factor $G(\OO_v)$ is a compact $k_v$-analytic group. 
To obtain polynomial representation growth the key is to find a uniform bound such that $r_n(G(\OO_v))\leq cn^c$ for every $v\notin S$.
Lubotzky and Martin obtained this uniform bound when $L_G$, the Lie algebra of the algebraic group $G$, is perfect, i.e., if $\ch k=2$ the group $G$ does not contain factors of type $A_1$ or $C_m$.
When the Lie algebra is not perfect Jaikin obtained in \cite{JaiPre} polynomial representation growth for each local factor, that is, for every $v$ there exists a constant $c_v$ such that $r_n(G(\OO_v))\leq c_vn^{c_v}$.
Following Jaikin's ideas for the local case we handle the global case by obtaining a uniform polynomial bound for every $v$, which ultimately gives the following theorem (cf. Theorem \ref{th:CSP_implies_PRG}).

\begin{theorem}\label{main}
Let $k,\,S,\,\Gamma$ and $G$ be as above.
If $G$ has the weak Congruence Subgroup Property with respect to $S$, then $\Gamma$ has Polynomial Representation Growth.
\end{theorem}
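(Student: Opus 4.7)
The plan is to reduce the global statement to an Euler product of local representation zeta functions and then to prove a uniform polynomial bound on each local factor. Under the weak Congruence Subgroup Property the congruence kernel of $G(\OO_S)$ is finite and central, so every irreducible representation of $\Gamma$ factors (up to a bounded twist) through the congruence completion; by the Strong Approximation Theorem this completion is
$$
\overline{\Gamma}\;\cong\;G(\widehat{\OO}_S)\;=\;\prod_{v\notin S}G(\OO_v).
$$
Consequently $r_n(\Gamma)$ and $r_n(G(\widehat{\OO}_S))$ differ by a bounded multiplicative factor, and it suffices to bound the latter.

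Any continuous irreducible representation of the profinite product $\prod_{v\notin S}G(\OO_v)$ decomposes as a tensor product of local irreducibles, with only finitely many non-trivial factors, giving the Euler product $\zeta_{G(\widehat{\OO}_S)}(s)=\prod_{v\notin S}\zeta_{G(\OO_v)}(s)$. Hence $\Gamma$ has Polynomial Representation Growth provided one can exhibit absolute constants $c$ and $c_0$, depending only on $G$ and $k$, such that
$$
r_n\bigl(G(\OO_v)\bigr)\leq c\,n^{c_0}\qquad\text{for every }v\notin S,
$$
together with enough control of $r_1(G(\OO_v))$ for the Euler product to converge on some right half-plane. The abelianisations $G(\OO_v)_{\ab}$ are controlled by the simple connectedness of $G$ and by the structure of $G(\OO_v)$ as a hyperspecial parahoric at almost all $v$, so the $r_1$-contributions pose no essential difficulty.

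The core of the proof is therefore the uniform local bound. For those $v$ at which the Lie algebra $L_G$ is perfect the bound follows from the Lubotzky--Martin machinery of \cite{LuMa}, which combines the Howe--Kirillov correspondence on the pro-$p$ part of $G(\OO_v)$ with Deligne--Lusztig type estimates on the finite reductive quotient. The genuinely difficult case is $\ch k=2$ with $G$ of type $A_1$ or $C_m$: Jaikin's approach in \cite{JaiPre} yields a polynomial bound for each fixed $v$, but with constants that \emph{a priori} depend on $v$. I would carry out Jaikin's analysis while tracking each numerical invariant (the width of the commutator filtration, the dimensions of coadjoint orbits, the Kirillov obstruction, and the centraliser dimensions entering the orbit method) and show that all of them are controlled by integral invariants of $G$ viewed as an $\OO_S$-group scheme, together with the residue characteristic, which is bounded.

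The main obstacle, and the essential novelty over the local result, is precisely this uniform control. The Kirillov orbit method in the non-perfect setting is sensitive to the residual ramification, and the straightforward abelian deformation arguments that produce uniformity in the perfect case do not apply verbatim; one has to replace them by a finer analysis of how the failure of perfection of $L_G$ interacts with the congruence filtration on $G(\OO_v)$, and exploit that only finitely many residue characteristics are involved in the problematic factors in order to absorb the remaining dependence on $v$ into global constants.
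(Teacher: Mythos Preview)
Your overall architecture matches the paper's: reduce via commensurability and Strong Approximation to bounding $r_n(G(\widehat{\OO}_S))$, decompose into local factors, and seek a \emph{uniform} bound $r_n(G(\OO_v))\le c n^c$ valid for all $v\notin S$. The paper carries this out exactly (see the reductions at the start of \S\ref{sec:CSP_vs_RG} and Theorem~\ref{th:PRG}); your side remarks about $r_1(G(\OO_v))$ and the perfectness of the local quotients are handled there by Lemma~\ref{lm:goodvaluations}.

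The gap in your proposal is the mechanism for uniformity. In $\ch k=2$ with $G$ of type $A_1$ or $C_m$ the Lie algebra is non-perfect at \emph{every} place, so there are infinitely many problematic $v$ with residue fields $\F_{2^n}$ of unbounded size; the observation that the residue \emph{characteristic} is bounded gives you nothing. Saying you will ``track each numerical invariant'' in Jaikin's local argument and show they depend only on $\OO_S$-scheme data is a description of the goal, not a method: the invariants in question (e.g.\ the smallest $s$ with $\mm_v^s L(\OO_v)\subseteq [L(\OO_v),G(\OO_v)]$, and the constants $c_1,c_2$ such that $\pi_v^{c_1+kc_2}\I_{v,1}\subseteq\I_{v,k}$) are defined place by place, and nothing in Jaikin's analysis tells you \emph{a priori} that they stabilise.

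The paper's key new idea is to make these quantities \emph{algebro-geometric}. In Claim~2 of Theorem~\ref{th:allvaluations} the index of $[L(\OO_v),N_v^k]$ in $L(\OO_v)$ is governed by the $v$-adic valuation of a single determinant $\det\M({\bf x},{\bf y})$, which is a regular function on the $\OO_S$-variety $L^d\times G^d$. The uniform bound then follows from Lemma~\ref{lm:polval}: for a nonzero $f\in\OO_S[X]$ there is an $s$ (depending only on $f$) such that for almost every $v$ and every $k$ one can find an $\mm_v^k$-point where $\ord_v(f)=ks$. This is the step your proposal is missing --- without it, the ``tracking constants'' programme does not get off the ground.
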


The work of Lubotzky and Martin in \cite{LuMa} initiated the study of representation growth of arithmetic groups (with weak CSP).
In \cite{LarLu} Larsen and Lubotzky studied the abscisa of convergence $\alpha(\Gamma)$ and Avni showed in \cite{Av} that this is a rational number for $\ch k=0$. Most of the recent work  has been done by Avni, Klopsch, Onn and Voll in a series of papers  (see \cite{AKOV1}, \cite{AKOV2}, \cite{AKOV3} and \cite{AKOV_base_change}). We recommend \cite{Klopsch} for a survey on representation growth.
\subsubsection{Subgroup Growth of Arithmetic Groups}

Let $\Gamma$ be a finitely generated group.
Let us write $a_n(\Gamma)$ for the number of subgroups of $\Gamma$ of index $n$ and put $s_n(\Gamma):=\sum_{i=1}^n a_n(\Gamma)$.
Analogously as for the Representation Growth\footnote{The study of structural properties of a group $\Gamma$ my means of its subgroup growth actually precedes the representation growth approach and the latter is somehow motivated by the fruitful theory of Subgroup Growth.} the Subgroup Growth of $\Gamma$ is given by the asymptotic behaviour of $a_n(\Gamma)$ (equivalently $s_n(\Gamma)$).
The study of structural properties of $\Gamma$ via its subgroup growth has evolved into a deep theory with many important results and applications, see \cite{Subgroup_Growth} and references therein.

As above, we focus on the subgroup growth of arithmetic groups, more precisely, we look at the subgroup growth of a group $\Gamma=G(\OO_S)$, where $G$ is a connected simply connected simple $k$-group and $\OO_S$ is the ring of $S$-integers of the global field $k$.

If the group $G$ has the Congruence Subgroup Property with respect to $S$, then every finite index subgroup is a congruence subgroup and so $s_n(\Gamma)=\gamma_n(\Gamma)$, where $\gamma_n(\Gamma)$ stands for the number of congruence subgroups of index at most $n$.
The study of the subgroup growth of congruence subgroups of arithmetic groups was initiated by Lubotzky in \cite{Lu}.
For $\ch k=0$ Lubotzky finds lower and upper bounds for $\gamma_n(\Gamma)$.
\begin{theorem*}[{\cite[Theorem A and F]{Lu}}]
Let $\Gamma=G(\OO_S)$ be as above.
If $\ch k=0$, then there exist constants $c_1,c_2$ such that
\[
n^{\frac{c_1\log^2 n}{\log\log n}}\leq\gamma_n(\Gamma)\leq n^{\frac{c_2\log^2 n}{\log\log n}}.
\]
If $\ch k>0$ and in case $\ch k=2$ $G$ is not of type $A_1$ or $C_m$, then there exists constants $c_3,c_4$ such that
\[
n^{c_3 \log n}\leq \gamma_n(\Gamma)\leq n^{c_4\log^2n}.
\]
\end{theorem*}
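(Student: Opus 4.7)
The plan is to exploit Strong Approximation: since $G$ is connected, simply connected and almost simple, the congruence completion of $\Gamma = G(\OO_S)$ is isomorphic to $G(\widehat{\OO}_S) = \prod_{v \notin S} G(\OO_v)$, so counting $\gamma_n(\Gamma)$ reduces to counting open subgroups of this profinite group of index at most $n$. The problem then splits into a local part (counting open subgroups of each $G(\OO_v)$) and a combinatorial part (deciding which places can contribute non-trivially to a given global subgroup of bounded index).

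For the upper bound in characteristic zero, an open subgroup $H$ of index at most $n$ projects onto all but finitely many factors, and the set of exceptional places $T$ satisfies $\prod_{v \in T} q_v \le n$. By the Prime Number Theorem this forces $|T| = O(\log n / \log \log n)$. At each such place Lazard's theory identifies an open uniform pro-$p$ subgroup of $G(\OO_v)$ of dimension $d = \dim G$, and standard subgroup counts for uniform pro-$p$ groups give at most $m^{c\, d\, \log m}$ open subgroups of index at most $m$. Distributing an index budget $n$ among at most $O(\log n / \log \log n)$ places and optimising the local indices produces the upper bound $n^{c_2 \log^2 n / \log \log n}$. The matching lower bound comes from an explicit construction: for each of the first $O(\log n / \log \log n)$ rational primes $p$ pick a place $v$ of residue characteristic $p$, write down the Frattini quotient of $G(\OO_v)$ as an $\F_{q_v}$-vector space of dimension $d$, and count combinations of subspaces whose combined index stays below $n$.

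In positive characteristic the lack of variation of the residue characteristic kills the $\log\log n$ gain from the Prime Number Theorem, so a single local factor already suffices for the lower bound $n^{c_3 \log n}$, constructed via subspaces of the successive Frattini quotients of the principal congruence filtration of some $G(\OO_v)$. For the upper bound one needs a uniform local estimate of the shape $s_m(G(\OO_v)) \le m^{c \log m}$ across all $v \notin S$, and this is where the hypothesis that $L_G$ is perfect (equivalently, that in $\operatorname{char} k = 2$ the group $G$ contains no factors of type $A_1$ or $C_m$) becomes crucial: perfectness of the graded Lie algebra of the congruence filtration allows the cross-commutator terms to be controlled prime by prime, and summing over places gives $n^{c_4 \log^2 n}$. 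The main obstacle will be precisely this uniform local upper bound in positive characteristic; removing the perfectness hypothesis would require the sharper representation-growth input provided by Theorem \ref{main} of the thesis, which in turn is what allows the improved subgroup-growth bound $n^{D \log n}$ obtained later in the chapter.
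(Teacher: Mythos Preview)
The paper does not contain a proof of this statement. It is quoted verbatim from Lubotzky's paper \cite{Lu} (Theorems A and F there) as a background result in the introduction, and serves only as motivation for the thesis's own contribution in Chapter~4, namely the removal of the restriction on types $A_1$ and $C_m$ in characteristic~$2$ and the sharpening of the positive-characteristic upper bound from $n^{c_4\log^2 n}$ to $n^{D\log n}$. So there is no ``paper's own proof'' to compare your proposal against.

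That said, your sketch is a fair outline of how Lubotzky's original argument proceeds: reduction to $G(\widehat{\OO}_S)$ via Strong Approximation, control of the set of contributing places by the Prime Number Theorem in characteristic~$0$ (which is exactly where the $\log\log n$ arises), local subgroup counts via the Lazard/uniform pro-$p$ structure, and explicit Frattini-quotient constructions for the lower bounds. Your diagnosis of why the $\log\log n$ disappears in positive characteristic and why perfectness of the graded Lie algebra is needed for the uniform local upper bound is also correct, and your closing remark about Theorem~\ref{main} being the key new input is precisely the point of the thesis chapter. Just be aware that you are reconstructing the argument of \cite{Lu}, not anything proved in the thesis itself.
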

Abért, Nikolov and Szegedy improved the upper bound in \cite{NiAlSze}.
\begin{theorem*}[{\cite[Theorem 1]{NiAlSze}}]
Let $\Gamma=G(\OO_S)$ be  as above and suppose $\ch k>0$.
Suppose that $G$ splits over $k$ and if $\ch k=2$ suppose $G$ is not of type $A_1$ or $C_m$.
Then there exists a constant $D$ which depends only on the Lie rank of $G$ such that
\[
\gamma_n(\Gamma)\leq n^{D\log n}.
\] 
\end{theorem*}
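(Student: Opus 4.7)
The plan is to reduce the global count $\gamma_n(\Gamma)$ to uniform local subgroup counts at each place $v\notin S$ and then assemble them. Since a congruence subgroup is, by definition, the preimage of an open subgroup of the congruence completion, and strong approximation identifies the latter with $L:=G(\widehat{\OO}_S)=\prod_{v\notin S}G(\OO_v)$, it suffices to bound the number of open subgroups of index at most $n$ in $L$. Write $L_v:=G(\OO_v)$ and let $U_v$ be its first principal congruence subgroup, a pro-$p_v$ group of dimension $d=\dim G$ with finite quotient $L_v/U_v\cong G(\kk_v)$ of Lie type.

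The first main step is to establish a uniform local bound of the form $s_{q_v^k}(L_v)\leq q_v^{D_0 k^2}$, with $D_0$ depending only on the Lie rank of $G$. For almost all $v$ the pro-$p_v$ group $U_v$ is uniform, and its associated graded Lie algebra, with respect to the $\mathfrak p_v$-adic filtration, is $L_G\otimes\gr\OO_v$. The hypothesis that $G$ is split and is not of type $A_1$ or $C_m$ when $\ch k=2$ ensures that $L_G$, and hence this graded algebra, is perfect. Via the Lie correspondence this translates counting open subgroups of $U_v$ into counting graded subalgebras, and the classical estimate $s_{p^k}(P)\leq p^{ck^2}$ for uniform pro-$p$ groups of dimension $d$ applies with $c$ depending only on $d$. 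The polynomial subgroup growth of the finite reductive quotients $G(\kk_v)$ (Liebeck–Pyber) is absorbed into the constant, and the finitely many bad places where $U_v$ is not yet uniform are handled by passing to a deeper principal congruence subgroup $G(\mathfrak p_v^k)$, changing $D_0$ only by a bounded factor.

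The second step is to assemble these local bounds. Any open subgroup $H\leq L$ of index $n$ is determined by its projections $H_v\leq L_v$ with $\prod_v[L_v:H_v]\leq n$, and only places with $|G(\kk_v)/Z(G(\kk_v))|\leq n$, hence $q_v\leq n^{1/d}$, can contribute nontrivially. The number of such places in a global function field is controlled by the function-field prime-number theorem. Thus
\[
\gamma_n(\Gamma)\leq\sum_{\substack{(n_v)\\ \prod_v n_v\leq n}}\prod_v s_{n_v}(L_v),
\]
where only finitely many $n_v$ exceed $1$. Taking logarithms, the uniform local bound gives $\log s_{n_v}(L_v)\leq D_0(\log n_v)^2/\log q_v$, and the resulting constrained optimization — distributing the budget $\log n=\sum_v\log n_v$ across the available places — is maximized when the contributions are balanced, yielding $\log\gamma_n(\Gamma)\leq D(\log n)^2$.

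The main obstacle will be making the local estimate genuinely uniform across all $v$: the constant $D_0$ must be independent of the residue characteristic $p_v$, of the degree of $\kk_v$ over the prime field, and of the level at which the Lie correspondence first becomes available. The clean Chevalley structure of the split group $G$ over each $\OO_v$ is what enables this uniformity, and the exclusion of $A_1$ and $C_m$ in characteristic $2$ is precisely the place where perfectness of $L_G$ is invoked; without it, the Lie-algebraic count degenerates and one loses control of the constant.
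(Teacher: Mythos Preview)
This theorem is cited from \cite{NiAlSze} rather than proved in the paper; the relevant comparison is with the paper's proof of the stronger Theorem~\ref{th:PSG}, which it describes as following \cite{NiAlSze} with one modification at the end.

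Your assembly step has a genuine gap. The claim that an open subgroup $H\leq L=\prod_v L_v$ of index $n$ is ``determined by its projections $H_v\leq L_v$ with $\prod_v[L_v:H_v]\leq n$'' is false: in $S\times S$ with $S$ nonabelian simple, the diagonal is a proper subgroup whose projections are both surjective, so the product of projection indices is $1$ while the global index is $|S|$. Subgroups of direct products carry Goursat-type gluing data that the projections alone do not see, and your displayed bound $\gamma_n(\Gamma)\leq\sum_{(n_v)}\prod_v s_{n_v}(L_v)$ is therefore unjustified. What the paper (following \cite{NiAlSze}) does to make the product structure tractable is first pass to a \emph{subnormal} subgroup via Lubotzky's reduction (Theorem~\ref{th:Lu_subnormal}): every open $H$ of index $n$ contains a subnormal $H'$ of index at most $n^{d_1}$. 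Because each $L_v/N_v^1$ is, for $v\in\mathcal P$, a perfect central extension of a finite simple group, Propositions~\ref{prop:subnormal_subgr_local} and~\ref{prop:subnormal_subgr_global} force such an $H'$ to be sandwiched between $\prod_{v\notin V(H')}L_v$ and $\prod_{v\notin V(H')}L_v\times\prod_{v\in V(H')}Z_v$ for a finite set $V(H')$ with $\sum_{v\in V(H')}\log q_v\ll\log n$; only then does the count localize to a finite product $\widetilde G$, where the extension inequality \eqref{eq:subgr_extension} and a uniform pro-$p$ Frattini estimate (this is where your local input --- perfectness of the graded Lie algebra under the split and type hypotheses --- actually enters) finish the argument. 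Your plan is missing exactly this subnormal-reduction layer.
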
 
In \cite{Gol}, among other things, Golsefidy obtained a similar bound without the split assumption.
\begin{theorem*}
Let $\Gamma=G(\OO_S)$ be as above and suppose $\ch k\geq 5$.
Then there exist constants $C$ and $D$ such that
\[
n^{C\log n}\leq \gamma_n(\Gamma)\leq n^{D\log n}.
\] 
\end{theorem*}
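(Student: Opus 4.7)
The plan is to reduce the counting problem to the congruence completion $\overline{\Gamma}$: since $\gamma_n(\Gamma)=s_n(\overline{\Gamma})$ and Strong Approximation (available because $G$ is simply connected and almost simple) gives $\overline{\Gamma}\cong\prod_{v\notin S}G(\OO_v)$, an open subgroup $H\leq\overline{\Gamma}$ of finite index is determined by a tuple of local open subgroups with indices $n_v:=[G(\OO_v):\mathrm{pr}_v(H)]$ satisfying $\prod_v n_v\leq n$. This yields the key convolution-type bound
\[
\gamma_n(\Gamma)\leq\sum_{\prod_v n_v\leq n}\prod_{v\notin S}a_{n_v}(G(\OO_v)),
\]
so everything reduces to bounding the local subgroup-counting function $a_m(G(\OO_v))$ uniformly in $v$.

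For the upper bound I would establish $s_m(G(\OO_v))\leq m^{c\log m}$ with $c$ independent of $v$. The first congruence subgroup $G^1(\OO_v)$ is a compact $k_v$-analytic pro-$p$ group of dimension $d=\dim G$; under the assumption $\ch k\geq 5$ the associated graded Lie algebra $\gr G^1(\OO_v)$ is perfect and a Lie-theoretic count in its Lazard-type filtration gives $s_m(G^1(\OO_v))\leq m^{c(d)\log_p m}$. Combining this with the bounded quotient $G(\OO_v)/G^1(\OO_v)\hookrightarrow G(\mathbb{F}_v)$, whose subgroup count is polynomial in $|\mathbb{F}_v|$, produces a uniform local estimate. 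A careful summation—separating the at most $O(n^{1/d})$ places $v$ for which $|G(\mathbb{F}_v)|\leq n$ from the remaining places that contribute a trivial local factor—closes the convolution above and yields $\gamma_n(\Gamma)\leq n^{D\log n}$.

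For the lower bound it suffices to exhibit many open subgroups in a single local factor. Fix any $v\notin S$ and consider $U=G^1(\OO_v)$; its congruence filtration $U\supset U_2\supset U_3\supset\cdots$ has successive quotients that are elementary abelian $p$-groups of $\mathbb{F}_p$-rank $d\cdot[\mathbb{F}_v:\mathbb{F}_p]$, so $U/U_{k+1}$ is a finite $p$-group of order $q_v^{dk}$ whose Frattini quotient has $\mathbb{F}_p$-rank at least of order $k$. Counting codimension-$j$ subspaces inside that Frattini quotient alone produces at least $p^{ck^2}$ open subgroups of index $p^{c'k}$ in $U$, and setting $n=p^{c'k}$ this translates into $\gamma_n(\Gamma)\geq n^{C\log n}$.

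The principal obstacle is securing the uniform local bound $s_m(G(\OO_v))\leq m^{c\log m}$ with $c$ genuinely independent of $v$, and it is here that the hypothesis $\ch k\geq 5$ plays a decisive role. At places whose residue characteristic divides bad primes attached to the root datum of $G$, or at ramified places where $G$ degenerates, the naive Lie correspondence and the perfectness of $\gr G^1(\OO_v)$ can fail; the global assumption $\ch k\geq 5$ rules such pathologies out at every $v$ simultaneously, and the remaining split-versus-nonsplit issues are handled by passing to a finite unramified base change and noting that finitely many exceptional places contribute only a bounded multiplicative error to the convolution. Once the uniform local estimate is available, matching the lower and upper bounds of the same shape $n^{\Theta(\log n)}$ is a routine but delicate calculation with the Dirichlet-type sum above.
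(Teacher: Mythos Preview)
This theorem is quoted in the paper as a result of Golsefidy and is not proved there; the paper's own contribution is the related upper bound in Theorem~\ref{th:PSG}, valid in all characteristics. So let me compare your approach to the paper's proof of that upper bound, and flag a genuine gap in your argument.

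Your central reduction step is wrong. You assert that an open subgroup $H\leq\prod_v G(\OO_v)$ is ``determined by a tuple of local open subgroups'' $\mathrm{pr}_v(H)$, and from this derive the convolution bound
\[
\gamma_n(\Gamma)\leq\sum_{\prod_v n_v\leq n}\prod_{v}a_{n_v}(G(\OO_v)).
\]
But a subgroup of a direct product is \emph{not} determined by its projections: already in $C_p\times C_p$ the diagonal subgroup has full projection onto each factor yet is proper. In the present setting there are vastly more open subgroups than product subgroups, and the map $H\mapsto(\mathrm{pr}_v(H))_v$ is far from injective. The inequality $\prod_v n_v\leq n$ you state is correct, but it gives no control on the number of $H$ sitting above a fixed tuple of projections. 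So the convolution bound simply does not hold, and with it your whole route to the upper bound collapses. The uniform local estimate $s_m(G(\OO_v))\leq m^{c\log m}$ that you aim for is true (the paper records it as Corollary~\ref{cor:all_valuations_subgroup_type_dlog}), but by itself it does not bound subgroups of the full product.

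The paper's argument for the upper bound proceeds quite differently and avoids this trap. Given $H\leq G(\widehat{\OO}_S)$ of index $n$, one first invokes Lubotzky's result (Theorem~\ref{th:Lu_subnormal}) that $H$ contains a \emph{subnormal} subgroup of index $\leq n^{d_1}$. Subnormal subgroups of $G(\widehat{\OO}_S)$ have a rigid structure (Proposition~\ref{prop:subnormal_subgr_global}): outside a controlled finite set $V(H')$ of places they contain the full local factor, and at places in $V(H')$ they sit inside the centre. This confines the counting to a finite product $\widetilde{G}$, and the passage from $\widetilde{G}/\widetilde{G}(1)$ to subgroups of the pro-$p$ kernel $\widetilde{G}(1)$ is handled not by a local convolution but by the Frattini-quotient bound of Lemma~\ref{lm:Fratt_p_subgroup_of_PRG_group}, which in turn rests on the polynomial representation growth of $G(\widehat{\OO}_S)$ established in Theorem~\ref{th:PRG}. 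It is precisely this detour through representation growth that lets the paper dispense with the perfect-graded-Lie-algebra hypothesis (and hence with the restriction $\ch k\geq 5$).

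Your lower-bound sketch, counting subspaces in a single congruence tower, is essentially the standard argument and is fine.
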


In the results above it is necessary that the (graded) Lie algebra associated to each local factor $G(\OO_v)$ is perfect.
This excludes the cases where $G$ is of type $A_1$ or $C_m$.
We overcome this difficulty by applying Theorem \ref{main} and a certain relation between subgroup and representation growth.
\begin{theorem}[cf. Theorem \ref{th:log_subgroup_growth} and \ref{th:PSG}]
Let $\Gamma=G(\OO_S)$ be as above.
Then there exists a constant $D>0$ such that
\[
\gamma_n(\Gamma)\leq n^{D\log n}.
\]
Suppose that $G$ has the weak Congruence Subgroup Property with respect to $S$. Then there exists $D'>0$ such that
\[
s_n(\Gamma)\leq n^{D\log n}.
\]
\end{theorem}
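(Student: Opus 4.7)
My plan is to deduce both bounds from Theorem~\ref{main} together with the standard dictionary between polynomial representation growth and subgroup growth of type $n^{D\log n}$ for profinite groups.

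For the first inequality I would identify $\gamma_n(\Gamma)$ with an open-subgroup count in the congruence completion. By Strong Approximation,
\[
\gamma_n(\Gamma)\;=\;s_n\!\left(\overline{G(\OO_S)}\right)\;=\;s_n\!\left(G(\widehat{\OO}_S)\right)\;=\;s_n\!\left(\prod_{v\notin S}G(\OO_v)\right).
\]
The proof of Theorem~\ref{main} actually produces a uniform polynomial bound $r_m(G(\OO_v))\le c\,m^{c}$ valid for every $v\notin S$, which aggregates into a polynomial estimate $r_m(G(\widehat{\OO}_S))\le A\,m^{c}$. Importantly, this information is available even without the weak Congruence Subgroup Property, since it is a purely local-to-global statement about the congruence completion.

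Next I would invoke the general principle for profinite groups: whenever $H$ satisfies $r_m(H)\le A\,m^{c}$ for every $m$, one has $s_n(H)\le n^{D\log n}$ for some $D=D(A,c)$. The argument goes via permutation representations. An open subgroup of index $m\le n$ determines a transitive $H$-action on its cosets, whose complexification decomposes as $\bigoplus_\chi m_\chi V_\chi$ with $m_\chi\le \dim V_\chi$ and $\sum_\chi m_\chi\dim V_\chi=m$; counting such tuples under the constraint $r_d(H)\le A\,d^{c}$ yields the required estimate. This combinatorial bookkeeping is the main technical obstacle, and it is precisely what forces the exponent $\log n$ rather than a plain polynomial bound. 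Applied to $H=G(\widehat{\OO}_S)$ it gives the first inequality.

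Finally, to pass from $\gamma_n(\Gamma)$ to $s_n(\Gamma)$ under the weak Congruence Subgroup Property, I would use that the congruence kernel $C=\ker(\widehat{\Gamma}\twoheadrightarrow\overline{\Gamma})$ is finite, so $\widehat{\Gamma}$ is a finite central extension of $\overline{\Gamma}$. A standard lifting argument bounds the number of open subgroups of $\widehat{\Gamma}$ of index at most $n$ by a constant $\kappa(|C|)$ times $s_{|C|n}(\overline{\Gamma})$; combining this with the identification $s_n(\Gamma)=s_n(\widehat{\Gamma})$ and the bound from the first part yields
\[
s_n(\Gamma)\;\le\;\kappa(|C|)\cdot (|C|n)^{D\log(|C|n)}\;\le\;n^{D'\log n}
\]
for a suitable $D'$, completing the plan.
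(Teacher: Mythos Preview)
Your framing reductions are correct: $\gamma_n(\Gamma)=s_n(G(\widehat{\OO}_S))$ by Strong Approximation, and under wCSP the passage to $s_n(\Gamma)$ via the finite kernel $C(G,S)$ is routine (a minor slip: $C(G,S)$ is only known to be finite, not necessarily central, but you only use finiteness). The gap is your ``general principle'' that $r_m(H)\le Am^{c}$ forces $s_n(H)\le n^{D\log n}$. This is not a theorem, and counting permutation characters does not prove it: many non-conjugate subgroups share the same permutation character, so bounding the number of multiplicity tuples $(m_\chi)$ does not bound $s_n$. The implication that \emph{is} available is Lemma~\ref{lm:type_c_log_subg_growth}: if $|H:K_n(H)|\le n^{c}$ then $s_n(H)\le n^{c^{2}\log n}$, since every index-$\le n$ subgroup contains $K_n(H)$. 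But $G(\widehat{\OO}_S)$ is \emph{not} of representation type $c$: one checks $K_n\bigl(\prod_v G(\OO_v)\bigr)=\prod_v K_n(G(\OO_v))$, and although each local index is $\le cn^{c}$ by Theorem~\ref{th:allvaluations}, there are roughly $n^{b/\delta}$ places $v$ with $q_v\le n^{1/\delta}$ contributing nontrivially (Proposition~\ref{th:deltaP1} and Lemma~\ref{pre:lm:numberofval}), so $|G(\widehat{\OO}_S):K_n|$ grows like $n^{c\,n^{b/\delta}}$, not polynomially.

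The paper therefore argues structurally rather than through a single representation-theoretic bound. Lubotzky's Theorem~\ref{th:Lu_subnormal} shows every index-$n$ subgroup of $G(\widehat{\OO}_S)$ contains a subnormal one of index $\le n^{d_1}$; Propositions~\ref{prop:subnormal_subgr_local} and~\ref{prop:subnormal_subgr_global} then pin such a subnormal subgroup inside a finite sub-product $\widetilde G=\prod_{v\in V(H')}G(\OO_v)\times E$ determined by a set $V(H')$ of $O(\log n)$ places. One bounds $s_n(\widetilde G)$ via the extension $1\to\widetilde G(1)\to\widetilde G\to\widetilde G/\widetilde G(1)\to 1$: the quotient is a finite group of order $\le n^{d_1}$, handled by Lemma~\ref{lm:SG_finite_group}, while the kernel $\widetilde G(1)=\prod G(\mm_v)$ is pro-$p$. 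Only on this pro-$p$ piece does PRG enter, and not through permutation characters but through Lemma~\ref{lm:Fratt_p_subgroup_of_PRG_group}: PRG of $G(\widehat{\OO}_S)$ gives $|P:\Phi(P)|\le n^{E}$ for every open $P$ of bounded index, and one counts maximal chains in $\widetilde G(1)$ step by step. This detour through subnormal structure and the pro-$p$ filtration is precisely what compensates for the failure of a global ``type $c$'' bound.
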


\subsubsection{The Fake Degree Conjecture}

Let $J$ be a finite dimensional associative algebra over a finite field $\F_q$.
Assume further that $J$ is nilpotent, that is, $J^n=0$ for some $n$.
Then one can give to the formal set $1+J=\{1+j\;:\; j\in J\}$ a group structure by defining $(1+j)(1+h):=1+j+h+jh$ and $(1+j)^{-1}:=\sum_{k=0}^{n-1}(-1)^kj^k$.
Groups of the form $1+J$ are called Algebra Groups and were introduced by Isaacs in \cite{Isaacs} as a generalization of the unitriangular groups $U_n(\Fq)=1+\mathfrak{u}(\F_q)$, where $\mathfrak{u}_n(\F_q)$ is the algebra of strictly upper triangular square matrices of size $n$.

The group $1+J$ acts naturally on the algebra $J$ and this induces an action of $1+J$ on $\Irr(J)$, the irreducible characters of the additive group of $J$.
In this setting, it is possible to adapt Kirillov's Orbit Method and try to build a bijection
\[
\begin{array}{ccc}
(1+J)\textrm{-orbits in }\Irr(J)&\to&\Irr(1+J).
\end{array}
\]
This approach has serious difficulties and such a bijection is only obtained when $J^p=0$, where $p=\ch \F_q$.
Nevertheless, the adaptation of Kirillov's Orbit Method suggests a map
\[
\begin{array}{ccc}
(1+J)\textrm{-orbits in }\Irr(J)&\to&\{\chi(1)\;:\;\chi\in\Irr(1+J)\}\\
\Omega&\mapsto&|\Omega|^{1/2}.\\
\end{array}
\]
Isaacs conjectured that this map is well defined and that the $|\Omega|^{1/2}$'s (the fake degrees), counting multiplicities, give the irreducible character degrees of $\Irr(1+J)$ for every algebra group $1+J$.
This is known as the Fake Degree Conjecture.
In \cite{Ja1} Jaikin disproved the Fake Degree Conjecture by providing an $\F_2$-algebra group which served as a counterexample.
Nevertheless, the particular behaviour of the prime $p=2$ in character correspondences and experimental computations suggested that the conjecture might hold for $\F_q$-algebra groups with $\ch \F_q>2$.
Let us consider the following family of $\F_q$-algebra groups. If $\ch \F_q=p$, take a $p$-group $\pi$ and let $\I_{\F_q}$ the augmentation ideal of the group algebra $\F_q[\pi]$.
Then $1+\I_{\F_q}$ is an $\F_q$-algebra group.
If the Fake Degree Conjecture holds it is easy to see (Lemma \ref{lm:FD_ab_lie_conjugacy_classes}) that we must have
\[
|(1+\I_{\F_q})_{\ab}|=q^{\kk(\pi)-1},
\]
where $(1+\I_{\F_q})_{\ab}$ is the abelianization of the group $1+\I_{\F_q}$ and $\kk(\pi)$ stands for the number of conjugacy classes of the group $\pi$.
The following theorem gives a description of the size of the abelianization of groups of the form $1+\I_{\F_q}$. 
\begin{theorem}[cf. Theorem \ref{th:sizeequality}]
Let $\pi$ be a $p$-group and $\I_{\F_q}$ the augmentation ideal of $\F_q[\pi]$. Then 
\[
|(1+\I_{\F_q})_{\ab}|=q^{\kk(\pi)-1}|B_0(\pi)|.
\]
\end{theorem}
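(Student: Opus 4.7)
My strategy is to realise $G = 1+\I_{\F_q}$ as the group of $\F_q$-rational points of a connected smooth unipotent algebraic group $\mathcal{G}$ over $\F_q$, defined functorially by $\mathcal{G}(R) = 1+(\I_{\F_q}\otimes_{\F_q}R)$ for every $\F_q$-algebra $R$. The abelianisation of $\mathcal{G}$ as an algebraic group turns out to be much more accessible than $G^{\ab}$ itself, and the discrepancy between the two will produce the factor $|B_0(\pi)|$.

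First I would identify $\Lie\mathcal{G}$ with $\I_{\F_q}$ equipped with the associative commutator $[a,b]=ab-ba$; since $[1,x]=0$, one has $[\Lie\mathcal{G},\Lie\mathcal{G}]=[\F_q[\pi],\F_q[\pi]]$. Because the commutator quotient $\F_q[\pi]/[\F_q[\pi],\F_q[\pi]]$ has $\F_q$-dimension $\kk(\pi)$ (with basis indexed by conjugacy classes of $\pi$), this yields $\dim_{\F_q}(\I_{\F_q}/[\I_{\F_q},\I_{\F_q}])=\kk(\pi)-1$. Exploiting the $\I_{\F_q}$-adic filtration on $\mathcal{G}$ and the identity $[1+a,1+b]\equiv 1+(ab-ba)\pmod{\I_{\F_q}^{3}}$, I would then show that $[\mathcal{G},\mathcal{G}]$ is a connected closed subgroup whose Lie algebra coincides with $[\I_{\F_q},\I_{\F_q}]$, so that $\mathcal{G}^{\ab}$ is a connected commutative unipotent $\F_q$-group of dimension $\kk(\pi)-1$.

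Lang's theorem then supplies two ingredients. Applied to the connected group $[\mathcal{G},\mathcal{G}]$ it gives the vanishing of $H^{1}(\F_q,[\mathcal{G},\mathcal{G}])$ and hence exactness of
\[
1\longrightarrow [\mathcal{G},\mathcal{G}](\F_q)\longrightarrow \mathcal{G}(\F_q)\longrightarrow \mathcal{G}^{\ab}(\F_q)\longrightarrow 1;
\]
moreover, for any connected commutative unipotent group $U$ over $\F_q$ one has $|U(\F_q)|=q^{\dim U}$, so $|\mathcal{G}^{\ab}(\F_q)|=q^{\kk(\pi)-1}$.

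Finally, combining $G^{\ab} = \mathcal{G}(\F_q)/[G,G]$ with the inclusion $[G,G]\subseteq[\mathcal{G},\mathcal{G}](\F_q)$ produces the short exact sequence
\[
1\longrightarrow [\mathcal{G},\mathcal{G}](\F_q)/[G,G]\longrightarrow G^{\ab}\longrightarrow \mathcal{G}^{\ab}(\F_q)\longrightarrow 1.
\]
The hard part is to identify the leftmost term with $B_0(\pi)$; this is the content of the companion Theorem~\ref{t:algebraicgroup}, which I expect to prove in parallel by recognising $[\mathcal{G},\mathcal{G}](\F_q)/[\mathcal{G}(\F_q),\mathcal{G}(\F_q)]$ as the Galois-cohomological obstruction to realising algebraic commutators as products of $\F_q$-rational commutators, and then matching this obstruction with the Hopf-type description $B_0(\pi) \cong (R\cap K(F))/[F,R]$ arising from a presentation $\pi=F/R$ with $K(F)$ the subgroup generated by commutators of commuting pairs. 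Combining both factors then yields $|G^{\ab}| = q^{\kk(\pi)-1}|B_0(\pi)|$, as desired.
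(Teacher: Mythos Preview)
Your architecture is sound and indeed matches the ``conceptual explanation'' the paper advertises in its introduction: pass to the unipotent algebraic group $\mathcal{G}$, use Lang's theorem to get $|\mathcal{G}^{\ab}(\F_q)|=q^{\dim\mathcal{G}^{\ab}}$, and then read off $|G^{\ab}|$ from the extension $1\to\mathcal{G}'(\F_q)/[G,G]\to G^{\ab}\to\mathcal{G}^{\ab}(\F_q)\to 1$. But in the paper the logical flow runs the \emph{opposite} way: Theorem~\ref{th:sizeequality} is obtained first, via $K$-theory, and only then is Theorem~\ref{t:algebraicgroup} deduced from it. Your proposal removes precisely the engine that drives both results.

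Concretely, two points fail. First, the filtration identity $[1+a,1+b]\equiv 1+(ab-ba)\pmod{\I^3}$ only yields $[\Lie\mathcal{G},\Lie\mathcal{G}]\subseteq\Lie\mathcal{G}'$; in characteristic~$p$ the reverse inclusion can fail for algebraic groups, and the paper obtains $\dim\mathcal{G}'\le|\pi|-\kk(\pi)$ only \emph{after} establishing (via the $K$-theoretic exact sequence) the inequality $|G(\F_q):\mathcal{G}'(\F_q)|\ge|M_q|=q^{\kk(\pi)-1}$. Second, and more seriously, your plan for $\mathcal{G}'(\F_q)/[G,G]\cong B_0(\pi)$ --- ``recognise it as a Galois-cohomological obstruction and match with the Hopf description'' --- is not a proof. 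In the paper this identification goes through Oliver's computation $\SK_1(\R_q[\pi])\cong B_0(\pi)$ for the unramified $p$-adic lift $\R_q=\Z_p[\zeta_{q-1}]$, the $p$-adic logarithm on $\Wh'(\R_q[\pi])$, and the reduction sequence $\K_1(\R_q[\pi])\to\K_1(\F_q[\pi])$; the transfer isomorphism from \cite{Oli80} is what finally pins down the quotient. None of this is visible from Lang's theorem or from the Hopf formula for $B_0$, and the paper itself records surprise that the Bogomolov multiplier enters here at all. Without an independent route to these two facts your argument is circular: you are invoking Theorem~\ref{t:algebraicgroup} to prove Theorem~\ref{th:sizeequality}, while in the text Theorem~\ref{t:algebraicgroup} is established \emph{using} the $K$-theoretic exact sequence behind Theorem~\ref{th:sizeequality}.
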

The group $B_0(\pi)$ appearing in the theorem is the Bogomolov multiplier of the group $\pi$.
Since for every prime $p$ there exist $p$-groups with nontrivial Bogomolov multiplier, the previous theorem provides counterexamples for the Fake Degree Conjecture for every prime $p$.

The somehow unexpected appearance of the Bogomolov multiplier in the previous theorem led us to further investigation of the groups $(1+\I_{\F_q})_{\ab}$.
Note that for every $\F_q$-algebra $R$, $1+\I_{\F_q}\otimes R$ gives an $\F_q$-algebra group. This defines an affine algebraic $\F_q$-group $G$ such that $1+\I_{\F_q}=G(\F_q)$.
\begin{theorem}[cf. Theorem \ref{t:algebraicgroup}]
Let $\pi$ be a $p$-group and $1+\I_{\F_q}$ the associated $\F_q$-algebra group. Let us consider the associated algebraic $\F_q$-group $G$ as above.
Then the derived group $[G,G]$ is an affine algebraic group defined over $\F_q$ and we have
\[
[G,G](\F_q)/[G(\F_q),G(\F_q)]\cong B_0(\pi).
\]
\end{theorem}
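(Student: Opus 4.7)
I would carry out the proof in three parts. \textbf{Step 1} (Algebraic-group structure of $[G,G]$.) Since $\I_{\F_q}$ is a finite-dimensional nilpotent associative $\F_q$-algebra, the functor $R\mapsto 1+\I_{\F_q}\otimes_{\F_q}R$ is represented by a smooth, connected, unipotent affine $\F_q$-group $G$ whose underlying variety is $\mathbb{A}^{|\pi|-1}$. Standard algebraic-group theory (e.g.\ Humphreys, \emph{Linear Algebraic Groups}, \S17) then guarantees that the image of the commutator morphism $G\times G\to G$ generates a closed, connected $\F_q$-subgroup $[G,G]$, so that $G^{\ab}=G/[G,G]$ is again a smooth connected unipotent $\F_q$-group.

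\textbf{Step 2} (Lang's theorem.) Applied to the connected $\F_q$-group $[G,G]$, Lang yields $H^1(\F_q,[G,G])=1$; hence the short exact sequence of algebraic groups $1\to [G,G]\to G\to G^{\ab}\to 1$ descends to
\[
1\to [G,G](\F_q)\to G(\F_q)\to G^{\ab}(\F_q)\to 1.
\]
Factoring out $[G(\F_q),G(\F_q)]\subseteq [G,G](\F_q)$ gives
\[
1\to [G,G](\F_q)/[G(\F_q),G(\F_q)]\to G(\F_q)^{\ab}\to G^{\ab}(\F_q)\to 1.
\]

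\textbf{Step 3} (Identify the kernel.) Since $G^{\ab}$ is smooth connected unipotent, $|G^{\ab}(\F_q)|=q^{\dim G^{\ab}}$ by Lang--Steinberg. The upper bound $\dim G^{\ab}\leq \dim_{\F_q}\I_{\F_q}/[\I_{\F_q},\I_{\F_q}]=\kk(\pi)-1$ (the last equality coming from the class-sum basis of $\F_q[\pi]/[\F_q[\pi],\F_q[\pi]]$), combined with the formula $|G(\F_q)^{\ab}|=q^{\kk(\pi)-1}|B_0(\pi)|$ of Theorem~\ref{th:sizeequality} and the exact sequence of Step~2, forces $\dim G^{\ab}=\kk(\pi)-1$ and therefore $|[G,G](\F_q)/[G(\F_q),G(\F_q)]|=|B_0(\pi)|$. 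To upgrade this equality of orders to a canonical isomorphism, I would take a free presentation $\pi=F/R$ and use the Hopf-type description
\[
B_0(\pi)=\bigl(R\cap[F,F]\bigr)/\bigl([F,R]\cdot\langle[x,y]:x,y\in F,\,[x,y]\in R\rangle\bigr);
\]
the embedding $\pi\hookrightarrow G(\F_q)$, $g\mapsto 1+(g-1)$, induces an evaluation map from this Schur-type quotient into $[G,G](\F_q)/[G(\F_q),G(\F_q)]$, which must be an isomorphism by the cardinality count.

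The \textbf{main obstacle} lies in Step~3: producing a canonical isomorphism rather than a mere equality of orders. The crucial subtlety is verifying that commuting commutators of elements of $\pi$ (viewed inside $G(\F_q)$ via $g\mapsto 1+(g-1)$) lie in $[G(\F_q),G(\F_q)]$, i.e.\ become trivial in $[G,G](\F_q)/[G(\F_q),G(\F_q)]$. This rigidity phenomenon should follow from an explicit computation of commutators $[1+a,1+b]$ in the algebra group together with the $\F_q$-linear structure of $\I_{\F_q}$, using that $[1+a,1+b]\equiv 1+(ab-ba)$ modulo higher-order terms.
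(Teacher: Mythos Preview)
Your Steps 1--3 are correct and in fact give a tidier proof of the numerical equality $|G'(\F_q)/G(\F_q)'|=|\B_0(\pi)|$ than the paper's. The paper reaches the same equality by passing to a large extension $\F_l/\F_q$ and identifying $G'(\F_q)$ with $G(\F_q)\cap G(\F_l)'$, then comparing via the inclusion map $G(\F_q)_{\ab}\to G(\F_l)_{\ab}$; your use of Lang's theorem on the connected unipotent group $[G,G]$ is more direct and yields the exact sequence $1\to G'(\F_q)/G(\F_q)'\to G(\F_q)_{\ab}\to G^{\ab}(\F_q)\to 1$ immediately, after which the count via Theorem~\ref{th:sizeequality} and $\dim G'\geq\dim[L_G,L_G]_L$ goes through exactly as you say.

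The genuine gap is in your upgrade to a canonical isomorphism. Your proposed ``evaluation map'' does not exist: in the Hopf description $\B_0(\pi)=(R\cap[F,F])/\bigl([F,R]\cdot\langle[x,y]:[x,y]\in R\rangle\bigr)$ the group $\B_0(\pi)$ is a subquotient of $R$, and under the composite $F\to\pi\hookrightarrow G(\F_q)$ every element of $R$ maps to $1$. So the induced map $R\cap[F,F]\to G'(\F_q)/G(\F_q)'$ is identically zero, not an isomorphism. (Your ``crucial subtlety'' about commuting commutators is a symptom of this: if $x,y\in\pi$ commute then $[x,y]=1$ already, so the condition you want to verify is vacuous.) There is no obvious repair using only the embedding $\pi\hookrightarrow G(\F_q)$; one needs an external source for the canonical copy of $\B_0(\pi)$ inside $G(\F_q)_{\ab}$.

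The paper supplies that source via $K$-theory. It uses Oliver's theorem $\SK_1(\R_q[\pi])\cong\B_0(\pi)$ (Theorem~\ref{th:oliver}) and the reduction map $\mu\colon\K_1(\R_q[\pi])\to\K_1(\F_q[\pi])$ to embed $\B_0(\pi)$ into $G(\F_q)_{\ab}$ as $\SK_1(\F_q[\pi])$; this is exactly the embedding already appearing in Theorem~\ref{th:exactseq}. The paper then shows, by analysing the induced map $\M_q\to\M_l$ and a transfer argument, that this copy of $\B_0(\pi)$ coincides with the kernel of $G(\F_q)_{\ab}\to G(\F_l)_{\ab}$ for suitable $l$, which is $G'(\F_q)/G(\F_q)'$. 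If you wish to keep your Lang-theorem framework, the cleanest fix is to show that the image of $\SK_1(\F_q[\pi])\hookrightarrow G(\F_q)_{\ab}$ from Theorem~\ref{th:exactseq} dies under $G(\F_q)_{\ab}\to G^{\ab}(\F_q)$; by your exact sequence and the cardinality count, this forces the isomorphism. But verifying that vanishing again requires Oliver's description of $\SK_1$ and is essentially the paper's argument.
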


The results appearing in this chapter were obtained in joint work with Andrei Jaikin and Urban Jezernik and can be found in \cite{GRJaiJe}.

\subsubsection{Direct Products of Finite Simple Groups} 

Let us consider the class of groups 
\[
\mathcal{C}:=\{\mathbf{H}=\prod_{i\in I}S_i\ :\ S_i \ \textrm{is a nonabelian finite simple group}\}.
\]
Given a group $\Gamma$ let us put $R_n(\Gamma):=\sum_{i=1}^n r_i(\Gamma)$. We obtain a characterization of groups in $\mathcal{C}$ having Polynomial Representation Growth.
\begin{theorem}[c.f Theorem \ref{th:CharPRG}]
Let $\mathbf{H}=\prod_{i\in I} S_i$ be a cartesian product of nonabelian finite simple groups and let $l_{\mathbf{H}}(n):=|\{i\in I : R_n(S_i)>1\}|$.
Then $\mathbf{H}$ has Polynomial Representation Growth if and only if $l_{\mathbf{H}}(n)$ is polynomially bounded.
\end{theorem}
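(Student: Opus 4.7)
The plan is to pass to the Dirichlet zeta functions and exploit their multiplicativity across the direct factors. The key preliminary observation is that every finite-dimensional complex irreducible representation of $\mathbf{H}$ factors through a finite subproduct of the $S_i$: if $\rho$ is such a representation and $F=\{i\in I:\rho|_{S_i}\neq 1\}$, then by simplicity each $\rho|_{S_i}$, $i\in F$, is faithful, the images commute pairwise in $\GL(V)$, and so $V$ decomposes as an external tensor product of irreducible representations of the $S_i$ with $i\in F$, each of dimension at least $2$ (since $S_i$ is nonabelian). Hence $|F|\leq \log_2\dim V$ is finite, and the irreducible representations of $\mathbf{H}$ are parametrised by tuples $(\rho_i)_{i\in I}$ with $\rho_i\in\Irr(S_i)$ almost always trivial, of dimension $\prod_i\dim\rho_i$. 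This yields the Euler product
\[
\zeta_{\mathbf{H}}(s)\;=\;\prod_{i\in I}\zeta_{S_i}(s).
\]
Writing $d(S):=\min\{\chi(1):\chi\in\Irr(S),\,\chi\neq 1\}$ and noting that $R_n(S_i)>1$ precisely when $d(S_i)\leq n$, we have $l_{\mathbf{H}}(n)=|\{i:d(S_i)\leq n\}|$.

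For the forward implication, suppose $R_n(\mathbf{H})\leq p(n)$ for a polynomial $p$. For each $i$ with $R_n(S_i)>1$, any nontrivial $\chi\in\Irr(S_i)$ of degree $\leq n$, extended trivially on the remaining factors, gives a pairwise distinct irreducible representation of $\mathbf{H}$ of degree $\leq n$, so $l_{\mathbf{H}}(n)\leq R_n(\mathbf{H})\leq p(n)$.

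For the backward implication, assume $l_{\mathbf{H}}(n)\leq n^c$. I would invoke the uniform Liebeck--Shalev estimate on zeta functions of nonabelian finite simple groups (recalled in the preliminary chapter): for every $\eta>0$ there exists $s_0(\eta)$ such that $\zeta_S(s)-1\leq d(S)^{-\eta}$ for every nonabelian finite simple $S$ and every real $s\geq s_0(\eta)$. Since the Euler product converges iff $\sum_i(\zeta_{S_i}(s)-1)<\infty$, Abel summation gives
\[
\sum_{i\in I}\bigl(\zeta_{S_i}(s)-1\bigr)\;\leq\;\sum_{i\in I}d(S_i)^{-\eta}\;=\;\sum_{n\geq 1}l_{\mathbf{H}}(n)\,\bigl(n^{-\eta}-(n+1)^{-\eta}\bigr),
\]
and the right-hand side is finite as soon as $\eta>c$, which may be arranged by taking $s$ sufficiently large. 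Therefore $\zeta_{\mathbf{H}}(s)$ converges at some real $s$ and $\mathbf{H}$ has polynomial representation growth.

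The principal difficulty is the uniform Liebeck--Shalev bound. The trivial estimate $R_n(S_i)\leq k(S_i)$ is useless, and even an individual polynomial bound $R_n(S)\leq Cn^{C}$ would combine across the factors into only a quasi-polynomial $n^{C\log_2 n}$ bound; it is essential that $\zeta_{S_i}(s)-1$ decays like a negative power of $d(S_i)$ uniformly in $S_i$, so that a polynomial sparsity of the indices $\{i:d(S_i)\leq n\}$ genuinely translates into polynomial representation growth of the whole product.
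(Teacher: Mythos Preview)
Your argument is correct, but the route for the converse differs from the paper's. The paper counts $r_n(\mathbf{H})$ directly: an irreducible of dimension $n$ determines an ordered factorisation $n=n_{i_1}\cdots n_{i_t}$ with all $n_{i_j}>1$; the number of such factorisations is $\leq n^a$ by Lemma~\ref{lm:ndecom}, for a fixed factorisation the number of index tuples is $\leq\prod_j l_{\mathbf H}(n_{i_j})\leq\prod_j n_{i_j}^b=n^b$, and the number of representations on each factor is $\leq\prod_j n_{i_j}^c=n^c$ by the uniform bound $r_n(S)\leq cn$ (Corollary~\ref{cor:growthsimples}). This gives $r_n(\mathbf H)\leq n^{a+b+c}$ with no passage through zeta functions. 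Your Euler-product/Abel-summation argument reaches the same conclusion analytically; both rely on the same Liebeck--Shalev input, since your uniform estimate $\zeta_S(s)-1\leq d(S)^{-\eta}$ is an immediate consequence of $r_n(S)\leq cn$ together with $r_n(S)=0$ for $n<d(S)$. (Your forward implication, incidentally, is cleaner than the paper's contrapositive via pairs.)

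One remark on your closing paragraph: the assertion that a uniform polynomial bound $r_n(S)\leq Cn^C$ would only yield quasi-polynomial growth is not right. Multiplicativity saves the day in the paper's counting argument, since $\prod_j n_{i_j}^C=n^C$; the bound on the number of ordered factorisations absorbs the combinatorics of $t$. So the ``decay in $d(S)$'' you highlight as essential is already implicit in any uniform polynomial bound on $r_n(S)$, and the paper's elementary counting exploits this without isolating it.
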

It is natural to ask what types of representation growth can occur among groups in $\mathcal{C}$ having PRG. 
In \cite{KaNi}, Kassabov and Nikolov studied questions related to the class $\mathcal{C}$, and more in particular about the subclass $\mathcal{A}\subset\mathcal{C}$ where
\[
\mathcal{A}:=\{\mathbf{H}=\prod_{i\in I}S_i\ :\ S_i=\Alt(m) \ \mbox{for some} \  m\geq 5\}
\]
and $\Alt(m)$ denotes the alternating group on $m$ letters.
In particular they showed that any type of representation growth is possible.

\begin{theorem*}[{\cite[Theorem 1.8]{KaNi}}]
For any $b>0$, there exists a group $G$ such that $\alpha(G)=b$.
\end{theorem*}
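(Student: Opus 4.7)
The plan is to realise $G$ as a cartesian product $\mathbf{H} = \prod_{m\geq 5} \Alt(m)^{f(m)}$ for a suitable multiplicity function $f\colon \NN\to\NN_{\geq 0}$, and to tune $f$ so that $\alpha(\mathbf{H}) = b$. The basic observation is that for a cartesian product of non-abelian finite simple groups the zeta function factorises, $\zeta_{\mathbf{H}}(s) = \prod_m \zeta_{\Alt(m)}(s)^{f(m)}$, so $\alpha(\mathbf{H})$ coincides with the abscissa of convergence of
$$\log \zeta_{\mathbf{H}}(s) \;=\; \sum_{m\geq 5} f(m)\,\log\zeta_{\Alt(m)}(s).$$

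Next I would analyse $\log\zeta_{\Alt(m)}(s)$ asymptotically in $m$. For $m\geq 5$ the smallest non-trivial irreducible degree of $\Alt(m)$ is $m-1$, attained by a unique character (the restriction of the standard representation of $S_m$), and all further irreducible degrees are at least $(m-1)(m-2)/2$. Combining this with the Liebeck--Shalev zeta-function bounds for alternating groups, which in particular give $\alpha(\Alt(m))\to 0$ as $m\to\infty$, for each fixed $s>0$ one obtains
$$\log\zeta_{\Alt(m)}(s) \;=\; (m-1)^{-s}\bigl(1+o(1)\bigr) \qquad (m\to\infty),$$
with the error controlled uniformly on compact subsets of $\{s>0\}$.

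Given $b>0$, I would then set $f(m) := \lfloor m^{b}\rfloor - \lfloor(m-1)^{b}\rfloor$, so that $\sum_{k\leq m} f(k) = \lfloor m^{b}\rfloor$. By the standard formula $\alpha = \limsup_{m\to\infty} \tfrac{\log\sum_{k\leq m}a_k}{\log m}$ for the abscissa of convergence of a Dirichlet series $\sum_m a_m m^{-s}$ with non-negative coefficients, the series $\sum_m f(m)(m-1)^{-s}$ has abscissa exactly $b$. Combining this with the asymptotic from the previous step, $\sum_m f(m)\log\zeta_{\Alt(m)}(s)$ has the same abscissa $b$, and hence $\alpha(\mathbf{H}) = b$. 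Since each factor $\Alt(m)$ with $m\geq 5$ is non-abelian simple, $\mathbf{H}$ lies in the class $\mathcal{C}$.

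The main obstacle is the asymptotic control in the second step: one needs uniform-in-$m$ bounds on $\zeta_{\Alt(m)}(s)$ to ensure that the contribution of the high-dimensional characters of $\Alt(m)$ (whose total number grows faster than any polynomial in $m$) does not push the overall abscissa above $b$. This is precisely the r\^ole of the deep character-degree estimates of Liebeck--Shalev (or the related work of Larsen--Lubotzky on representation zeta functions of semisimple groups); once these are in place, the remainder of the argument reduces to elementary bookkeeping with Dirichlet series.
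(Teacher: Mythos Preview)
The paper does not prove this statement itself; it is quoted from \cite{KaNi}. The paper only records that Kassabov--Nikolov's groups are \emph{frames} of cartesian products $\prod_{i\ge 5}\Alt(i)^{f(i)}$, and then proves its own analogue for Lie-type factors (Theorem~\ref{th:arb_abscissa_quasisimple}). Your proposal is the natural transplant of the paper's Lie-type argument back to alternating groups: factor $\zeta_{\mathbf H}$ over the simple factors, approximate each local zeta function by its leading non-trivial term, and choose multiplicities $f(m)$ so that the resulting Dirichlet-type series has abscissa exactly~$b$. This is the same scheme the paper uses, with $\zeta_{L(p^i)}\sim_C 1+p^{ik(1-hs/2)}$ (Theorem~\ref{th:AKOV_finite_Lie}) playing the role of your $\zeta_{\Alt(m)}(s)\approx 1+(m-1)^{-s}$.

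The one genuine point to flag is the strength of the alternating-group input. In the Lie-type case the paper can lean on the AKOV equivalence $\zeta_{G(\Fq)}\sim_C 1+q^{k(1-hs/2)}$, which is a two-sided bound up to a constant depending only on the root datum; this is what makes Lemma~\ref{lm:prod_zeta_fun} fire cleanly. For alternating groups the Liebeck--Shalev statement quoted in the paper (Theorem~\ref{th:repgrwoAn}) is only the pointwise convergence $\zeta_{\Alt(m)}(t)\to 1$, and the number of irreducibles of $\Alt(m)$ grows like $e^{c\sqrt m}$, so the naive bound $\zeta_{\Alt(m)}(s)-1\le p(m)(m-1)^{-s}$ is useless. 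What you actually need (and correctly isolate as the obstacle) is a uniform estimate of the shape $\zeta_{\Alt(m)}(s)-1\le C_s\,(m-1)^{-s}$ for each fixed $s>0$; this does follow from the hook-length/partition analysis underlying \cite{LieSha2}, but it is a stronger statement than Theorem~\ref{th:repgrwoAn} as recorded here, so you should cite or prove that refinement explicitly rather than invoke the theorem as stated. Once that is in hand, your choice $f(m)=\lfloor m^b\rfloor-\lfloor (m-1)^b\rfloor$ and the Abel-summation bookkeeping are fine, and you land on a profinite $\mathbf H\in\mathcal C$ with $\alpha(\mathbf H)=b$. (Kassabov--Nikolov pass to a finitely generated dense ``frame'' of such an $\mathbf H$, but the statement as quoted does not demand finite generation, so your profinite group already suffices.)
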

The groups appearing in the proof of the previous theorem are frames of groups of the form
\[
\mathbf{H}=\prod_{i\geq 5}\Alt(i+1)^{f(i)},
\]
for some $f:\NN\to \NN$.
In particular the simple factors of $\mathbf{H}$ have unbounded rank\footnote{For $S$ a nonabelian finite simple group we define $\rk S=m$ if $S=\Alt(m)$ and $\rk S=\rk L$ if $S$ is a simple group of Lie type $L$.} as alternating groups.
We want to obtain a similar result for the subclass $\mathcal{L}\subset\mathcal{C}$, where
\[
\mathcal{L}:=\{\mathbf{H}=\prod_{i\in I}S_i\ :\ S_i \ \mbox{is a finite simple group of Lie type}\}
.\]
We do this in the following theorem, where it is shown that an analogous result holds for this class.
In our case, all of the simple factors may have the same Lie rank.
\begin{theorem}[cf. Theorem \ref{th:arb_abscissa_quasisimple}]
For any $c>0$, there exists a group $\mathbf{H}=\prod_{i\in\NN}S_i\in\mathcal{L}$ such that $\alpha(\mathbf{H})=c$. Moreover $\mathbf{H}$ can be chosen so that only one Lie type occurs among its factors.
\end{theorem}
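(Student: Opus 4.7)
The plan is to construct $\mathbf{H} = \prod_{i \in \NN} L(\FF_{q_i}) \in \mathcal{L}$ as a cartesian product of finite simple groups all of a single Lie type $L$ (to be chosen depending on $c$), with the field sizes $q_i$ growing at a rate tuned to the target abscissa. This mirrors Kassabov and Nikolov's construction for alternating groups, with representation-theoretic asymptotics for groups of Lie type playing the role of their combinatorial estimates for $\Alt(m)$.

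\textbf{Step 1: a single factor.} Fix a Lie type $L$ with Coxeter number $h = h(L)$. The Deligne--Lusztig/Green parametrization partitions $\Irr(L(\FF_q))$ into finitely many families, say indexed by $j$, where family $j$ contains $\Theta(q^{e_j})$ characters of degree $\Theta(q^{d_j})$ for certain rationals $d_j > 0$. Summing,
\[
\zeta_{L(\FF_q)}(s) - 1 \;\asymp\; \sum_j q^{e_j - s d_j} \;\asymp\; q^{-a_L(s)}, \qquad a_L(s) := \min_j(s d_j - e_j).
\]
Here $a_L$ is a continuous, concave, piecewise linear function, strictly increasing because every $d_j > 0$, with $a_L(s_0) = 0$ at the critical value $s_0 = 2/h$ singled out by Liebeck--Shalev, and $a_L(s) \to \infty$ as $s \to \infty$.

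\textbf{Step 2: the global construction.} Given $c > 0$, I would pick $L$ (say of type $A_{n-1}$ with $n > 2/c$) so that $s_0 < c$, set $\beta := a_L(c) > 0$, and take a sequence of prime powers $q_1 < q_2 < \cdots$ with $q_i \asymp i^{1/\beta}$ (possible by density of prime powers). Put $\mathbf{H} := \prod_i L(\FF_{q_i}) \in \mathcal{L}$. Since the factors are nonabelian simple with minimum nontrivial degree tending to infinity, every finite-dimensional irreducible of $\mathbf{H}$ is a tensor product of factor irreducibles with cofinitely many trivial tensorands, so $\zeta_{\mathbf{H}}(s) = \prod_i \zeta_{L(\FF_{q_i})}(s)$ and
\[
\log \zeta_{\mathbf{H}}(s) \;\asymp\; \sum_i \bigl(\zeta_{L(\FF_{q_i})}(s) - 1\bigr) \;\asymp\; \sum_i q_i^{-a_L(s)} \;\asymp\; \sum_i i^{-a_L(s)/\beta}.
\]
By strict monotonicity of $a_L$, this last series converges iff $a_L(s) > a_L(c)$ iff $s > c$. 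Combined with the trivial divergence for $s \leq s_0$ (where the factors $\zeta_{L(\FF_{q_i})}(s) - 1$ do not tend to $0$), this gives $\alpha(\mathbf{H}) = c$, with only the single Lie type $L$ appearing among the simple factors.

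\textbf{Main obstacle.} The hard part will be establishing the two-sided asymptotic in Step~1. The upper bound $\zeta_{L(\FF_q)}(s) - 1 \lesssim q^{-a_L(s)}$ essentially follows from Liebeck--Shalev's character-degree estimates, but the matching lower bound requires exhibiting, for every relevant $s$, an explicit family of irreducibles of $L(\FF_q)$ that realizes the exponent $a_L(s)$; this is where one must dig into the Deligne--Lusztig parametrization, or, in low rank, classical Green--Steinberg formulas. A secondary subtlety is that $a_L$ is only piecewise linear, so if $c$ lands at a breakpoint one has to perturb the growth of $q_i$ by a polylogarithmic factor (e.g.\ $q_i \asymp i^{1/\beta}(\log i)^{-\gamma}$) to absorb borderline logarithmic divergences and pin the abscissa exactly at $c$.
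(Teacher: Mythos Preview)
Your Step~1 worry is misplaced: the two-sided asymptotic you need is exactly the content of the Avni--Klopsch--Onn--Voll estimate quoted in the paper (Theorem~\ref{th:AKOV_finite_Lie}), which gives
\[
\zeta_{L(q)}(s)\;\sim_C\;1+q^{k(1-\tfrac{h}{2}s)}\qquad(k=\rk\Phi),
\]
so $a_L(s)=k(\tfrac{h}{2}s-1)$ is genuinely linear, not merely piecewise linear, and there are no breakpoints to worry about. You can take this as a black box; no further Deligne--Lusztig work is required.

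The actual gap is in Step~2. With $a_L$ linear one has $\beta=a_L(c)=k(\tfrac{hc}{2}-1)$, and your construction asks for \emph{strictly increasing} prime powers $q_1<q_2<\cdots$ with $q_i\asymp i^{1/\beta}$. But the $i$-th prime power is of size $\sim i\log i$, so such a sequence exists only when $1/\beta>1$, i.e.\ $\beta<1$. A short check over the types $A_{n-1}$ shows the admissible window $0<\beta<1$ translates into $c\in(2/n,\,2/(n-1))$; these intervals cover $(0,2)$ but miss every $c\geq 2$ (and the endpoints). For $c\geq 2$ one always has $\beta\geq c-1\geq 1$ regardless of the type chosen, and your construction collapses: there simply are not enough distinct prime powers that small.

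The paper's construction sidesteps this by allowing repetitions, i.e.\ by using multiplicities. It fixes a prime $p$, runs over fields $\FF_{p^i}$, and takes
\[
\mathbf{H}=\prod_{i}L(p^i)^{f(i)},\qquad f(i)\approx p^{\,ik(\tfrac{hc}{2}-1)}=p^{\,i\beta},
\]
so that $\sum_i f(i)\,(p^i)^{k(1-\tfrac{h}{2}s)}=\sum_i p^{\frac{ikh}{2}(c-s)}$ is geometric and converges iff $s>c$. This is really the same mechanism as yours (re-indexing your sequence by the \emph{value} of $q$ rather than by $i$ and recording how often each value occurs), but it works for all $c>0$. If you simply drop the requirement $q_1<q_2<\cdots$ and let $q_i$ be the prime power nearest $i^{1/\beta}$ (so that each $q$ occurs roughly $q^{\beta}$ times), your argument becomes correct and essentially coincides with the paper's. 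One further cosmetic point: to land in $\mathcal{L}$ (simple, not just quasisimple, factors) the paper chooses the type and characteristic so that the centre is trivial, e.g.\ $\SL_p$ over $\FF_{p^e}$; you should do the same.
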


The results presented in this chapter where obtained in collaboration with B. Klopsch during my research stay at Heinrich Heine Universtät Düsseldorf.

\clearpage{\pagestyle{empty}\cleardoublepage}
\pagestyle{fancy}

\chapter{Preliminaries}
\section{Global and local fields}\label{sec:global_and_local}

By a \textbf{global field} we mean a finite field extension  of $\mathbb{Q}$ or of $\F_q(t)$ for some prime power $q$.
Global fields have interesting arithmetic properties, resembling those of $\mathbb{Q}$.

Let $k$ be a field.
A multiplicative \textbf{valuation} or an absolute value on $k$ is a function $v: k \to \mathbb{R}_{\geq 0}$ which satisfies:
\begin{enumerate}[label=\roman*)]
\item $v(x)=0$ if and only if $x=0$.
\item $v(xy)=v(x)v(y)$ for every $x,y\in k$.
\item $v(x+y)\leq v(x)+v(y)$ for every $x,y\in k$.
\end{enumerate}
A valuation $v$ is said to be non-archimedean if it satisfies the ultrametric inequality
\[
v(x+y)\leq\max\{v(x),v(y)\}\, \mbox{for every}\,x,y\in k,
\]
and is said to be archimedean otherwise.

We will always assume that a valuation is nontrivial, i.e., $v(x)\neq 1$ for some $x\in k\setminus\{0\}$.
Any valuation $v$ defines a metric $d_v$ on $k$, given by $d_v(x,y)=v(x-y)$ for $x,y\in k$. Two valuations are said to be equivalent if the metrics they induce define the same topology on $k$.
This gives an equivalence relation among valuations of a field $k$.
From now on a valuation will be identified with its equivalence class.

The set of (equivalence classes of) valuations of $k$ will be denoted by $V_k$.
Let us write $V_\infty$ for the set of archimedean valuations and $V_f$ for the non-archimedean ones.
The set of archimedean valuations  $V_\infty$ is non-empty if and only if $\ch k=0$.
Note that given a valuation $v$, addition in $k$ is continuous with respect to the topology induced by $v$.
We can consider its completion $k_v$ which is again a valued field where multiplication, addition and the valuation naturally extend those of $k$.

Given $S\subseteq V_k$ we define the ring of $S$-integers to be
\[
\OO_S:=\{x\in k\,:\, v(x)\leq 1\;\textrm{for every}\; v\in V_f\setminus{S}\}.
\]
The ultrametric inequality for non-archimedean valuations guarantees that $\OO_S$ is a subring of the field $k$.

A \textbf{local field}  is a field isomorphic either to $\mathbb{C}$ or $\mathbb{R}$ or a finite field extension of the $p$-adic numbers $\mathbb{Q}_p$ or of the field of formal Laurent series over a finite field $\F_q((t))$.

Suppose now that $k$ is a global field.
For every $v\in V_k$ the completion $k_v$ is a local field and $v$ extends to a valuation on $k_v$ which we will continue to denote by $v$. The following cases may occur:

If $v\in V_\infty$ then $k_v$ is isomorphic to $\mathbb{C}$ or $\mathbb{R}$.
If $v\in V_f$ and $\ch k=0$ then $k_v$ is isomorphic to a finite extension of $\mathbb{Q}_p$ for some prime $p$.
If $\ch k=p>0$ then $k_v$ is isomorphic to a finite extension of $\F_q((t))$ for some $p$-power $q$.

Suppose now $v$ is non-archimedean. Then 
\[
\OO_v:=\{x\in k_v\,:\, v(x)\leq 1\}
\]
is a ring. It is the unique maximal compact subring of $k_v$ and is called the \textbf{ring of integers} of $k_v$.
The ring $\OO_v$ is a discrete valuation ring with maximal ideal
\[
\mm_v:=\{x\in \OO_v\; :\; v(x)<1\}.
\]
It follows that $\F_v:=\OO_v/\mm_v$ is a field, called the residue field of $v$ (or of $k_v$).
For $x\in\mm_v$ we will write
\[
\ord_v(x):=\max\{k\;:\; x\in\mm_v^k\}.
\]

We define the \textbf{adèle ring or ring of adèles} of the global field $k$ as 
\[
\mathbb{A}_k:=\{(x_v)_{v\in V_k}\in\prod_{v\in V_k}k_v\; :\; x_v\in\OO_v\;\textrm{for almost every}\; v\in V_f\}.
\]
Note that $\mathbb{A}_k$ is a subring of the ring $\prod_{v\in V_k}k_v$, where ring operations are made component-wise.
The ring $\mathbb{A}_k$ is endowed with the restricted direct product topology with respect to the $\OO_v$ ($v\in V_f$), that is, the open sets are of the form $\prod_{v\in V_k} X_v$, where $X_v$ is open in $k_v$ for every $v\in V_k$ and $X_v=\OO_v$ for almost every $v\in V_f$.
With this topology $\mathbb{A}_k$ is a locally compact topological group.
Note that if $x\in k$, then $x\in\OO_v$ for almost every $v\in V_f$.
It follows that we can embed $k$ diagonally into $\mathbb{A}_k$ as the set of \textbf{principal adèles}, $x\mapsto (x)_{v\in V_k}$.
Moreover $k$ is discrete in $\mathbb{A}_k$.
Given a finite subset $S\subset V_f$ we will write $\mathbb{A}_S$ for the image of $\mathbb{A}_k$ under the natural projection onto the direct product $\prod_{v\notin S}k_v$.

The following lemma gives a bound for the number of distinct valuations in terms of the size of their residue field, see \cite[Lemma 4.7.(a)]{LuMa} for a proof.
\begin{lemma} \label{pre:lm:numberofval}
Let $k$ be a global field. There exists $a\in\NN$, depending only on the field $k$, such that the number of $v\in V_f$ such that $|\F_v|\leq n$ is bounded by $an$, and hence also by $n^b$ for some $b\in\NN$.
\end{lemma}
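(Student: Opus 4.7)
The plan is to reduce the count to the base field ($\mathbb{Q}$ or $\F_q(t)$) by restricting valuations, then bound the number of small-residue-field places of the base field directly, and finally use the fundamental identity to pull back to $k$.

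More precisely, since $k$ is a global field, it is a finite separable extension of a base field $k_0$, where $k_0=\QQ$ if $\ch k=0$ and $k_0=\F_q(t)$ (for some prime power $q$) if $\ch k>0$. Any place $v\in V_f$ restricts to a place $v_0$ of $k_0$, and the residue field inclusion $\F_{v_0}\hookrightarrow\F_v$ gives $|\F_{v_0}|\leq|\F_v|$. Moreover, by the fundamental identity $\sum_{v\mid v_0}e(v\mid v_0)f(v\mid v_0)=[k:k_0]$, the number of places of $k$ above a single place $v_0$ of $k_0$ is at most $d:=[k:k_0]$. It therefore suffices to show that the number of $v_0\in V_f(k_0)$ with $|\F_{v_0}|\leq n$ is bounded linearly in $n$.

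For $k_0=\QQ$, the finite places are the rational primes, each with residue field of size $p$, so the count is $\pi(n)\leq n$. For $k_0=\F_q(t)$, the finite places correspond to monic irreducible polynomials of $\F_q[t]$ together with the place at infinity; the residue field at a polynomial of degree $m$ has size $q^m$. Thus $|\F_{v_0}|\leq n$ forces $m\leq\log_q n$, and since the number of monic polynomials of degree $m$ is $q^m$, the total count is at most
\[
1+\sum_{m=1}^{\lfloor\log_q n\rfloor}q^m\;\leq\;\tfrac{q}{q-1}\,n+1\;\leq\;3n.
\]
Combining with the previous step, the number of $v\in V_f$ with $|\F_v|\leq n$ is at most $3dn$, so taking $a=3d$ (or $a=d$ in characteristic $0$) works.

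For the polynomial bound, once we have $an$, simply choose $b$ with $n^b\geq an$ for all $n\geq 1$; e.g.\ $b=\lceil\log_2 a\rceil+1$ suffices since then $n^b\geq an$ for $n\geq 2$, and one adjusts $b$ if necessary to cover $n=1$. No step is really an obstacle here; the only subtlety is to handle the two characteristics uniformly, which the reduction to the base field accomplishes cleanly.
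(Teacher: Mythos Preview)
Your proof is correct. The paper itself does not give a proof of this lemma at all; it simply refers the reader to \cite[Lemma 4.7(a)]{LuMa}. Your argument---reducing to the base field $\QQ$ or $\F_q(t)$ via restriction of places, bounding the count there by a direct enumeration, and multiplying by the degree $[k:k_0]$---is a clean, self-contained proof of exactly the kind one would expect to find in the cited reference. One small remark: in positive characteristic you implicitly use that $k$ can be written as a finite \emph{separable} extension of some $\F_q(t)$; this is standard (any function field in one variable over a perfect field admits a separating transcendence element), but it is worth saying explicitly since separability is what guarantees the fundamental identity $\sum e_if_i=[k:k_0]$ holds without a defect term. The handling of $n=1$ is a non-issue since no residue field has size $\leq 1$, so the count there is zero.
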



We shall also need an application of Chebotarev Density Theorem, see \cite{Mil} for more details.

\begin{theorem}[Chebotarev Density Theorem]\label{pre:th:Chebotarev}
Let $k$ be a global field and $K$ a finite Galois extension of $k$.
Let $C$ be a conjugacy class in $\Gal(K/k)$.
Let $\Sigma_C$ be the set of non-archimedean places $v\in V_f$ that are unramified in $k$ and have Frobenius conjugacy class $C$.
The set $\Sigma_C$ has a positive Dirichlet density and it is equal to $|C|/|\Gal(K/k)|.$
In particular, $\Sigma_C$ is infinite.
\end{theorem}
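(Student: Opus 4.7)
The plan is to deduce the density statement from the analytic behaviour of Artin $L$-functions. To each irreducible character $\chi$ of $G=\Gal(K/k)$ one attaches an Artin $L$-function $L(s,\chi)$, and the Dedekind zeta function factorises as $\zeta_K(s)=\prod_\chi L(s,\chi)^{\chi(1)}$. The inputs required are that $\zeta_K(s)$ has a simple pole at $s=1$, while for every nontrivial irreducible $\chi$ the function $L(s,\chi)$ extends holomorphically and non-vanishingly across the line $\Real(s)=1$. Granting these, taking logarithms gives
\[
\log L(s,\chi)=\sum_{v\in V_f \text{ unramified in }K}\frac{\chi(\Frob_v)}{|\F_v|^{s}}+O(1)\qquad\text{as }s\to 1^+,
\]
where higher powers of Frobenius contribute a bounded error via the convergence of $\sum 1/|\F_v|^{2s}$ near $s=1$.

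The second ingredient is a purely character-theoretic identity: the indicator function of the class $C$ decomposes as
\[
\mathbf{1}_C=\frac{|C|}{|G|}\sum_\chi \overline{\chi(C)}\,\chi,
\]
the sum running over irreducible characters of $G$. Substituting into $\sum_{v\in\Sigma_C}|\F_v|^{-s}$ and combining with the asymptotics above, the trivial character produces $\log\zeta_k(s)\sim\log\tfrac{1}{s-1}$, while every nontrivial $\chi$ contributes a bounded term. Dividing by $\log\tfrac{1}{s-1}$ and letting $s\to 1^+$ recovers the Dirichlet density $|C|/|G|$, and the infinitude of $\Sigma_C$ is immediate.

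The hard part, of course, is the holomorphic continuation and non-vanishing at $s=1$ of $L(s,\chi)$ for nontrivial $\chi$. In the number field case I would follow Artin's classical route: Brauer's induction theorem expresses $\chi$ as an integer combination of characters induced from one-dimensional characters of subgroups, and by class field theory these induced $L$-functions coincide with Hecke $L$-functions, whose non-vanishing at $s=1$ is a classical consequence of the analytic class number formula; non-vanishing then propagates to $L(s,\chi)$ by the multiplicativity of $L$-functions under induction, provided one handles the zeros and poles introduced by the integer combination (Artin's lemma). In the function field case one can instead appeal to Weil's proof of the Riemann hypothesis for curves over finite fields, which forces each $L(s,\chi)$ to be a polynomial in $|\F_q|^{-s}$ with zeros on $\Real(s)=1/2$, so that the density calculation becomes even more transparent.
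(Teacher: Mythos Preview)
The paper does not actually prove this theorem: it is stated in the preliminaries and attributed to the literature (the sentence preceding it reads ``We shall also need an application of Chebotarev Density Theorem, see \cite{Mil} for more details''), and the text then moves directly to Lemma~\ref{pre:lm:totally_split_v} without any argument. So there is no proof in the paper for your proposal to be compared against.

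That said, your sketch is the standard analytic route and is essentially correct in outline. A couple of points worth tightening if you ever write it out in full: the Brauer-induction argument shows each $L(s,\chi)$ is a \emph{ratio} of Hecke $L$-functions, so the holomorphy at $s=1$ for nontrivial $\chi$ is not automatic from Brauer alone---one typically invokes the Aramata--Brauer theorem (that $\zeta_K/\zeta_k$ is entire) or, more simply for the density statement, observes that the factorisation $\zeta_K(s)=\prod_\chi L(s,\chi)^{\chi(1)}$ combined with the simple pole of $\zeta_K$ and $\zeta_k$ at $s=1$ already forces $\sum_{\chi\neq 1}\chi(1)\,\mathrm{ord}_{s=1}L(s,\chi)=0$, and since each factor is known to be holomorphic with non-negative order (via Hecke), all orders vanish. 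Your function-field remark is fine, though for the density statement alone one does not need the full Riemann hypothesis for curves---the rationality of the zeta function and the location of its pole suffice.
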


\begin{lemma}\label{pre:lm:totally_split_v}
Let $k$ be a global field and $K$ a finite Galois field extension of $k$.
Then for infinitely many $v\in V_f$ we have $K\subset k_v$.
\end{lemma}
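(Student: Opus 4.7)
The plan is to apply the Chebotarev Density Theorem (Theorem \ref{pre:th:Chebotarev}) to the trivial conjugacy class in $\Gal(K/k)$, after translating the condition ``$K \subset k_v$'' into the splitting behaviour of $v$ in the extension $K/k$.

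First, I would observe the following standard dictionary. Let $v \in V_f$ be unramified in $K/k$, and let $w$ be a place of $K$ lying above $v$. The local Galois group $\Gal(K_w/k_v)$ is the decomposition group of $w$ over $v$ inside $\Gal(K/k)$, and since $v$ is unramified it is cyclic, generated by the Frobenius element $\Frob_w$. Consequently $\Frob_w = 1$ in $\Gal(K/k)$ if and only if $\Gal(K_w/k_v)$ is trivial, if and only if $K_w = k_v$. When this happens, composing the inclusion $K \hookrightarrow K_w$ (determined by $w$) with $K_w = k_v$ yields a $k$-algebra embedding $K \hookrightarrow k_v$, which is precisely the meaning of $K \subset k_v$. (In the opposite direction, if some $k$-embedding $K \hookrightarrow k_v$ exists, one recovers a place $w$ of $K$ above $v$ with $K_w = k_v$, so $\Frob_w = 1$.)

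Next I would apply Theorem \ref{pre:th:Chebotarev} with $C = \{1\} \subset \Gal(K/k)$. The theorem asserts that the set $\Sigma_{\{1\}}$ of unramified non-archimedean places of $k$ whose Frobenius class is trivial has Dirichlet density $1/|\Gal(K/k)| > 0$; in particular it is infinite. By the paragraph above, every $v \in \Sigma_{\{1\}}$ satisfies $K \subset k_v$, which gives infinitely many places with the desired property.

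The only mild obstacle is the dictionary step between ``Frobenius trivial at $v$'' and ``$K \subset k_v$'': one must be careful that $\Frob_w$ is well-defined up to conjugation (so the \emph{class} $\{1\}$ in $\Gal(K/k)$ is the correct invariant of $v$), and that the field-theoretic statement $K \subset k_v$ is interpreted as the existence of a $k$-embedding (equivalent to complete splitting of $v$ in $K$). Once this is cleanly stated, the result is an immediate application of Chebotarev, and no further computation is required.
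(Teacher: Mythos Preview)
Your proof is correct and follows exactly the same approach as the paper: apply Chebotarev to the trivial class $C=\{1\}$ to obtain infinitely many completely split places $v$, and then note that $K_w=k_v$ gives the desired embedding $K\subset k_v$. Your write-up simply spells out the dictionary between trivial Frobenius and complete splitting in more detail than the paper does.
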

\begin{proof}
By Chebotarev Density Theorem applied to the trivial conjugacy class $C=\{1\}$ we obtain that there are infinitely many $v\in V_f$ such that $v$ completely splits in $K$, i.e., $K_w=k_v$, where $w$ is the extension of $v$ to $K$.
But this in particular implies that $K\subseteq k_v$. 
\end{proof}

\section{Topological groups}

We begin by recalling some definitions and concepts generally related to topological spaces.
Let us recall that a \textbf{filter} $\mathfrak{F}$ in a set $X$ is a collection of non-empty subsets of $X$ such that:
\begin{enumerate}[label=\roman*)]
\item If $F\in\mathfrak{F}$ and $F\subseteq E$, then $E\in\mathfrak{F}$.
\item If $E,F\in\mathfrak{F}$ then $E\cap F\in\mathfrak{F}$.
\end{enumerate}
The main example for us is the set of neighbourhoods $\mathfrak{N}(x)$ of an element $x$ in a topological space $X$, which we called the \textbf{neighbourhood filter} of $x$.
In this setting we say that a filter $\mathfrak{F}$ on $X$ \textbf{converges} to $x\in X$ if $\mathfrak{N}(x)\subseteq\mathfrak{F}$.
Recall that a \textbf{filter basis} $\mathfrak{B}$ on a set $X$ is a collection of subsets of $X$ such that:
\begin{enumerate}[label=\roman*)]
\item If $F_1,\, F_2\in\mathfrak{B}$, then there exists $F\in\mathfrak{B}$ such that $F\subseteq F_1\cap F_2$.
\item $\emptyset\not\in\mathfrak{B}$.
\end{enumerate}
The collection of supersets of a filter basis on $X$ gives a filter on $X$.

To define the completion of a topological group we first need to establish a notion of nearness of a pair of points in a topological space $X$. This is done via uniform structures.
A uniform structure on a set $X$ consists of a family $E$ of subsets of $X\times X$ called entourages satisfying:
\begin{enumerate}[label=E\arabic*)]
\item If $V\in E$ and $V\subseteq U$, then $U\in E$.
\item If $U,V\in E$, then $U\cap V\in E$.
\item If $V\in E$, then $\Delta=\{(x,x)\, :\, x\in X\}\subseteq V$.
\item If $V\in E$, then $V^{-1}:=\{(x,y)\, :\, (y,x)\in V\}\in E$.
\item If $V\in E$, then there exists $W\in E$ such that $W\circ W\subset V$, where $W\circ W:=\{(x,z)\, :\,(x,y),(y,z)\in W\,\mbox{for some}\, y\in W\}$.
\end{enumerate}

If  $(x,y)\in V$ for some $V\in E$  we say that $x$ is {\bf $\mathbf{V}$-close} to $y$.
We say that $A\subseteq X$ is $\mathbf{V}${\bf-small} if $A\times A\subseteq V$, equivalently, if every pair of points in $A$ are $V$-close.
Hence, entourages provide a way to measure the nearness of a pair of points in $X$.
Given a uniform structure on $X$ one can endow $X$ with a topology as follows.
For every $V\in E$ and $x\in X$ put $V(x):=\{y\in X\,:\, (x,y)\in V\}$.
Then, for every $x\in X$ we define a system of neighbourhoods of $x$ to be
\[
\mathcal{N}(x):=\{V(x)\,:\, V\in E\}.
\]
It follows that $X$ is Hausdorff if and only if $\bigcap_{V\in E}V=\Delta$.
The uniform structure allows us to define a notion of uniform continuity.
Namely, a function $f:X\to Y$ between uniform spaces $(X,E)$ and $(Y,E')$ is said to be \textbf{uniformly continuous} if for every $W\in E'$ there exists $V\in E$ such that $(f(x),f(x'))\in W$ for every $(x,x')\in V$.

In a similar fashion as a fundamental system of neighbourhoods of a given topology we can define the notion of a fundamental system of entourages of a uniform structure.
A family $\mathcal{F}\subseteq E$ is called a \textbf{fundamental system of entourages} for $E$ if for every $V\in E$ there exists some $W\in\mathcal{F}$ such that $W\subseteq V$.
Equivalently given a fundamental system of entourages $\mathcal{F}$ of a uniform structure $E$, we can recover $E$ as
\[
E=\{V\subseteq X\times X\, :\, W\subseteq V\,\textrm{for some}\, W\in\mathcal{F}\}.
\]
A family $\mathcal{F}$ of subsets of $X\times X$ serves as a fundamental system of entourages for a uniform structure on $X$ if and only if $\mathcal{F}$ satisfies the following properties:
\begin{enumerate}[label=F\arabic*)]
\item For every $U,V\in\mathcal{F}$ there exists $W\in\mathcal{F}$ such that $W\subseteq U\cap V$.
\item If $V\in \mathcal{F}$, then $\Delta\subseteq V$.
\item If $V\in \mathcal{F}$, then there exists $W\in \mathcal{F}$ such that $W\subseteq V^{-1}$.
\item If $V\in\mathcal{F}$, then there exists $W\in\mathcal{F}$ such that $W\circ W\subseteq V$.
\end{enumerate}

An entourage $V$ such that $V=V^{-1}$ is called \textbf{symmetric}.
If $V$ is symmetric, then $V\cap V^{-1}$ and $V\cup V^{-1}$ are entourages and from this one can check that the set of symmetric entourages forms a fundamental system of entourages.

Given a uniform space $X$ with uniform structure $E$ we want to construct the completion $\widehat{X}$ of $X$.
To this end we need to introduce the notion of a \textbf{Cauchy filter}.
A filter $\mathfrak{F}$ of $X$ is Cauchy if for every entourage $V\in E$ there exists $F\in\mathfrak{F}$ such that $F$ is $V$-small, i.e., $F\times F\subseteq V$.
\begin{proposition}[{\cite[II.3.2.5]{Bour}}] \label{pre:prop:minimal_Cauchy_filters}
Let $X$ be a uniform space.
For each Cauchy filter $\mathfrak{F}$ on $X$ there is a unique minimal Cauchy filter $\mathfrak{F}_0$ coarser than $\mathfrak{F}$.
If $\mathfrak{B}$ is a base of $\mathfrak{F}$ and $\mathfrak{S}$ is a fundamental system of symmetric entourages of $X$, then the sets $V(M):=\{\bigcup_{x\in M} V(x)\}$ for $M\in\mathfrak{B}$ and  $V\in\mathfrak{S}$ form a base of $\mathfrak{F}_0$.
\end{proposition}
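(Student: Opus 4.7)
The plan is to construct $\mathfrak{F}_0$ explicitly from the prescribed family, verify it is a Cauchy filter coarser than $\mathfrak{F}$, and then show that any Cauchy filter coarser than $\mathfrak{F}$ must contain it. Minimality and uniqueness then follow simultaneously.

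First I would verify that the family $\mathfrak{B}_0 := \{V(M) : M \in \mathfrak{B},\; V \in \mathfrak{S}\}$ is a filter basis. Each $V(M) \supseteq M$ is non-empty (since $\Delta \subseteq V$). Given $V(M)$ and $W(N)$, pick $U \in \mathfrak{S}$ with $U \subseteq V \cap W$ (intersecting two symmetric entourages in $\mathfrak{S}$ by axioms F1 and F3), and pick $L \in \mathfrak{B}$ with $L \subseteq M \cap N$ (using that $\mathfrak{B}$ is a base). Then $U(L) \subseteq V(M) \cap W(N)$. Let $\mathfrak{F}_0$ be the filter generated by $\mathfrak{B}_0$. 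Since $M \subseteq V(M)$ and $M \in \mathfrak{F}$, each $V(M) \in \mathfrak{F}$, hence $\mathfrak{F}_0 \subseteq \mathfrak{F}$, i.e.\ $\mathfrak{F}_0$ is coarser than $\mathfrak{F}$.

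Next, I would show $\mathfrak{F}_0$ is Cauchy. Given any entourage $V$, iterating the composition axiom (and restricting to symmetric entourages) produces $W \in \mathfrak{S}$ with $W \circ W \circ W \subseteq V$. Since $\mathfrak{F}$ is Cauchy, there exists $M \in \mathfrak{B}$ that is $W$-small. For any $y_1, y_2 \in W(M)$, there exist $x_1, x_2 \in M$ with $(x_i, y_i) \in W$; since $W$ is symmetric and $(x_1, x_2) \in W$, we get $(y_1, y_2) \in W \circ W \circ W \subseteq V$. Thus $W(M) \in \mathfrak{B}_0$ is $V$-small, establishing the Cauchy property.

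Finally, the crucial step — minimality. Suppose $\mathfrak{G}$ is any Cauchy filter with $\mathfrak{G} \subseteq \mathfrak{F}$; I will show each basic set $V(M) \in \mathfrak{B}_0$ lies in $\mathfrak{G}$. Since $\mathfrak{G}$ is Cauchy, pick $N \in \mathfrak{G}$ that is $V$-small. Because $\mathfrak{G} \subseteq \mathfrak{F}$ and $M \in \mathfrak{F}$, the sets $M$ and $N$ both belong to the filter $\mathfrak{F}$ and therefore $M \cap N \neq \emptyset$; choose $x \in M \cap N$. Then for every $y \in N$ we have $(x, y) \in V$, so $N \subseteq V(x) \subseteq V(M)$, giving $V(M) \in \mathfrak{G}$. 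Consequently $\mathfrak{F}_0 \subseteq \mathfrak{G}$, which proves that $\mathfrak{F}_0$ is the unique minimal Cauchy filter coarser than $\mathfrak{F}$. I expect the main subtlety to be this last step: one must carefully exploit that $\mathfrak{G}$ and $\mathfrak{F}$ sit in the same filter, so that $V$-small sets coming from $\mathfrak{G}$ necessarily meet the base elements of $\mathfrak{F}$, which is what forces $V(M)$ into $\mathfrak{G}$.
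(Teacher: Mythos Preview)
Your proof is correct and is essentially the standard argument from Bourbaki \cite[II.3.2.5]{Bour}. Note that the paper does not supply its own proof of this proposition; it is stated as a preliminary result with a citation, so there is nothing further to compare.
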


A uniform space $X$ is called \textbf{complete} if every Cauchy filter on $X$ converges to some point.
\begin{theorem}[{\cite[Theorem II.3.7.3]{Bour}}]\label{th:completion}
Let $X$ be a uniform space.
Then there exists a complete Hausdorff
uniform space $\widehat{X}$ and a uniformly continuous mapping $i:X\to\widehat{X}$ having the following property:
Given any uniformly continuous mapping $f:X\to Y$ for some complete Hausdorff uniform space $Y$, there is a unique uniformly continuous mapping $\hat{f}:\widehat{X}\to Y$ such that $f=\hat{f}\circ i$.

Moreover if $(i',X')$ is another pair consisting of a complete Hausdorff uniform space $X'$ and a uniformly continuous mapping $i':X\to X'$ having the previous property, then there is a unique isomorphism $\phi:\widehat{X}\to X'$ such that $i'=\phi\circ i$.
\end{theorem}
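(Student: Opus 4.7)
The plan is to construct $\widehat{X}$ concretely as the set of minimal Cauchy filters on $X$, exploiting Proposition \ref{pre:prop:minimal_Cauchy_filters}, and then derive the universal property and the uniqueness essentially formally. Concretely, let $\widehat{X}$ be the set of all minimal Cauchy filters on $X$. For each symmetric entourage $V$ in the uniform structure $E$ of $X$, set
\[
\tilde{V} := \{(\mathfrak{F}, \mathfrak{G})\in \widehat{X}\times\widehat{X}\, :\, \text{there exists } M\subseteq X \text{ that is } V\text{-small and } M\in \mathfrak{F}\cap \mathfrak{G}\}.
\]
The first step is to check that $\{\tilde{V}\}_V$ is a fundamental system of entourages on $\widehat{X}$, i.e., it satisfies F1--F4. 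The key input is that if $W\circ W\circ W\subseteq V$, then $\widetilde{W}\circ \widetilde{W}\subseteq \tilde{V}$; this uses Proposition \ref{pre:prop:minimal_Cauchy_filters}, which tells us that sets of the form $W(M)$ with $M\in \mathfrak{F}$ still belong to the minimal Cauchy filter $\mathfrak{F}$.

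Next, define $i:X\to \widehat{X}$ by sending $x$ to its neighbourhood filter $\mathfrak{N}(x)$ (which is the unique minimal Cauchy filter converging to $x$). Uniform continuity of $i$ is immediate from the construction, and Hausdorffness of $\widehat{X}$ follows from the fact that two minimal Cauchy filters sharing a common $V$-small set for every $V$ must coincide (again by the minimality characterization of Proposition \ref{pre:prop:minimal_Cauchy_filters}). The step I expect to be the main obstacle is verifying that $\widehat{X}$ is complete: given a Cauchy filter $\mathfrak{H}$ on $\widehat{X}$, one first shows that $i(X)$ is dense in $\widehat{X}$ (each $\mathfrak{F}\in \widehat{X}$ is a limit of the filter $i(\mathfrak{F})$), then uses this density to pull $\mathfrak{H}$ back to a Cauchy filter $\mathfrak{H}_0$ on $X$, takes its minimal refinement $\mathfrak{F}_0$, and verifies that $\mathfrak{F}_0$, viewed as a point of $\widehat{X}$, is a limit of $\mathfrak{H}$. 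Keeping track of entourages carefully through this pullback is the delicate part.

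For the universal property, given any uniformly continuous $f:X\to Y$ with $Y$ complete Hausdorff, for each $\mathfrak{F}\in \widehat{X}$ the image filter base $\{f(F)\,:\, F\in \mathfrak{F}\}$ generates a Cauchy filter on $Y$ by uniform continuity of $f$; since $Y$ is complete and Hausdorff this filter has a unique limit, and we set $\hat{f}(\mathfrak{F})$ to be this limit. The relation $\hat{f}\circ i=f$ is automatic since $f(\mathfrak{N}(x))$ converges to $f(x)$ by continuity of $f$, while uniform continuity of $\hat{f}$ reduces to chasing the definition of $\tilde{V}$ through a modulus of uniform continuity for $f$. Uniqueness of $\hat{f}$ follows from Hausdorffness of $Y$ together with the density of $i(X)$ in $\widehat{X}$.

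Finally, the uniqueness of the pair $(\widehat{X}, i)$ up to isomorphism is purely formal from the universal property: applying the universal property of $\widehat{X}$ to $i':X\to X'$ yields a unique uniformly continuous $\phi:\widehat{X}\to X'$ with $i'=\phi\circ i$, and symmetrically the universal property of $(X',i')$ yields $\psi:X'\to\widehat{X}$ with $i=\psi\circ i'$. Both composites $\psi\circ\phi$ and $\phi\circ\psi$ agree with the identity on the dense subsets $i(X)$ and $i'(X)$ respectively, and hence coincide with the identity everywhere by the uniqueness clause of the universal property applied to $i$ and $i'$.
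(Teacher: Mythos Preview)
Your proposal is correct and follows essentially the same approach as the paper's sketch: construct $\widehat{X}$ as the set of minimal Cauchy filters, define the entourages $\tilde V$ via common $V$-small sets, and take $i(x)=\mathfrak{N}(x)$. The paper only sketches the construction and omits the verifications you outline (F1--F4, Hausdorffness, completeness, the universal property, and uniqueness), but your plan agrees with it in every essential respect.
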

\begin{proof}[Sketch of proof]
The space $\widehat{X}$ is defined as the set of minimal Cauchy filters on $X$ (these are the minimal elements with respect to inclusion of the set of Cauchy filters on $X$).
We shall define a uniform structure on $\widehat{X}$.
For this purpose, if $V$ is any symmetric entourage of $X$, let $\widehat{V}$ denote the set of all pairs $(\mathfrak{F},\mathfrak{G})
$ of minimal Cauchy filters which have in common a $V$-small set.
One can check that the sets $\widehat{V}$ form a fundamental system of entourages of a uniform structure on $\widehat{X}$ and that $\widehat{X}$ with this uniform structure is a complete space. 
The map $i:X\to\widehat{X}$ sends every element $x\in X$ to $N(x)$, which is a minimal Cauchy filter. One can check that the pair $(i,\widehat{X})$ satifies all the required properties.
\end{proof}
Before moving to topological group completions we give the following theorem about extensions of uniformly continuous functions.

\begin{theorem}[{\cite[Theorem II.3.6.2]{Bour}}]
Let $f:A\to X'$ be a function from a dense subset $A$ of a uniform
space $X$ to a complete Hausdorff uniform space $X'$ and suppose
that $f$ is uniformly continuous on $A$.
Then f can be extended to the whole of $X$ by continuity, and the extended function $\widehat{f}$ is uniformly continuous.
\end{theorem}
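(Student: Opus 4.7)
The plan is to define $\widehat{f}$ pointwise by a limiting procedure along Cauchy filters, and then to verify both that it extends $f$ and that it is uniformly continuous. For each $x\in X$, let $\mathfrak{N}(x)$ be the neighbourhood filter of $x$; since $A$ is dense in $X$, the trace $\{N\cap A : N\in\mathfrak{N}(x)\}$ is a filter basis on $A$, and I denote by $\mathfrak{F}_x$ the filter it generates. First I would check that $\mathfrak{F}_x$ is a Cauchy filter on $A$: given an entourage $V$ of $X$, pick a symmetric $W$ with $W\circ W\subseteq V$; then $W(x)\cap A\in\mathfrak{F}_x$ is $V$-small, because any two points of $W(x)$ are joined through $x$ in $W\circ W\subseteq V$. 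By uniform continuity of $f$ on $A$, the image filter basis $f(\mathfrak{F}_x)$ is a Cauchy filter basis on $X'$. Since $X'$ is complete and Hausdorff, $f(\mathfrak{F}_x)$ converges to a unique point of $X'$, which I define to be $\widehat{f}(x)$.

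For $x\in A$ one sees that $\widehat{f}(x)=f(x)$: the continuity of $f$ at $x$ (which follows from uniform continuity on $A$) forces $f(\mathfrak{F}_x)$ to converge to $f(x)$, and by the Hausdorff property of $X'$ the limit is unique. Thus $\widehat{f}$ does extend $f$.

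The main obstacle is to upgrade pointwise convergence to global uniform continuity of $\widehat{f}$. The plan is the standard closure trick: given an entourage $W'$ of $X'$, choose a \emph{closed} symmetric entourage $V'$ of $X'$ with $V'\circ V'\circ V'\subseteq W'$ (this exists because for any entourage the closures of entourages form a fundamental system; closedness is what will let me pass to limits). By uniform continuity of $f$ on $A$, take a symmetric entourage $U$ of $X$ such that $(f(a),f(b))\in V'$ whenever $(a,b)\in U\cap(A\times A)$. Now suppose $(x,y)\in U$. Using the density of $A$ and the description of $\widehat{f}$ as the limit along $\mathfrak{F}_x$ and $\mathfrak{F}_y$, one can find $a\in A$ near $x$ and $b\in A$ near $y$, with $(a,b)\in U$, such that $f(a)$ is $V'$-close to $\widehat{f}(x)$ and $f(b)$ is $V'$-close to $\widehat{f}(y)$. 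Since $(f(a),f(b))\in V'$ by construction, composing gives $(\widehat{f}(x),\widehat{f}(y))\in V'\circ V'\circ V'\subseteq W'$. Hence $\widehat{f}$ is uniformly continuous.

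Uniqueness of such a continuous extension then follows automatically from the Hausdorff property of $X'$, density of $A$, and the fact that two continuous maps to a Hausdorff space that agree on a dense set agree everywhere. The delicate point throughout is really the third paragraph: one must organise the three ``near'' relations (from $\widehat{f}(x)$ to $f(a)$, from $f(a)$ to $f(b)$, from $f(b)$ to $\widehat{f}(y)$) so that they can be composed into a single entourage $W'$, and arranging this cleanly is why it is convenient to start from $V'\circ V'\circ V'\subseteq W'$ with $V'$ symmetric.
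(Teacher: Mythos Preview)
The paper does not actually prove this theorem; it merely quotes it from Bourbaki and immediately moves on to its corollary. So there is no ``paper's own proof'' to compare against, and your argument is essentially the standard one found in Bourbaki: trace the neighbourhood filter of $x$ on the dense set $A$, push it forward by $f$ to a Cauchy filter in $X'$, and use completeness plus Hausdorff to define $\widehat f(x)$ as the unique limit.

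Your argument is correct in outline, but the third paragraph has a small loose end you should tighten. You assert that for $(x,y)\in U$ one can find $a\in A$ near $x$ and $b\in A$ near $y$ with $(a,b)\in U$; this does not follow automatically from density alone unless you build in some slack. The clean fix is to pick a further symmetric entourage $U_0$ with $U_0\circ U_0\circ U_0\subseteq U$ and prove uniform continuity for $U_0$: given $(x,y)\in U_0$, choose $a\in U_0(x)\cap A$ and $b\in U_0(y)\cap A$ inside neighbourhoods small enough that $f(a)\in V'(\widehat f(x))$ and $f(b)\in V'(\widehat f(y))$; then $(a,b)\in U_0^3\subseteq U$ gives $(f(a),f(b))\in V'$, and the three $V'$-steps compose into $W'$. (Alternatively, take $U$ to be an \emph{open} entourage and use density of $A\times A$ in $X\times X$.) Also note that you introduce a \emph{closed} $V'$ but never actually use closedness; either drop the adjective or, if you prefer the closure-based variant, replace the approximation argument by the observation that $\{(x,y):(\,\widehat f(x),\widehat f(y)\,)\in V'\}$ is closed and contains $U\cap(A\times A)$, hence contains its closure.
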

This has as an immediate consequence:
\begin{corollary}\label{pre:cor:extension_to_completion}
Let $X$ and $X'$ be two complete Hausdorff uniform spaces, and
let $A$, $A'$ be dense subsets of $X$ and $X'$ respectively. Then every isomorphism $f:A\to A'$ extends to an isomorphism $\widehat{f}:X\to X'$.
\end{corollary}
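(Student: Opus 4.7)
The plan is to reduce this to the preceding extension theorem (Theorem II.3.6.2) applied in both directions, and then to verify via uniqueness of uniformly continuous extensions that the two extensions are mutual inverses.

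Since an isomorphism of uniform spaces means a bijection which is uniformly continuous in both directions, the map $f\colon A\to A'$ is uniformly continuous as a map $A\to X'$, and its inverse $f^{-1}\colon A'\to A$ is uniformly continuous as a map $A'\to X$. First I would regard $f$ as a uniformly continuous map from the dense subset $A$ of $X$ into the complete Hausdorff uniform space $X'$. By the extension theorem cited just above the corollary, there is a unique uniformly continuous extension $\widehat{f}\colon X\to X'$. Symmetrically, applying the same theorem to $f^{-1}\colon A'\to X$, one obtains a unique uniformly continuous extension $\widehat{f^{-1}}\colon X'\to X$.

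It remains to show that $\widehat{f^{-1}}\circ\widehat{f}=\id_X$ and $\widehat{f}\circ\widehat{f^{-1}}=\id_{X'}$. Here I would use the uniqueness clause of the extension theorem. The composition $\widehat{f^{-1}}\circ\widehat{f}\colon X\to X$ is uniformly continuous and, on the dense subset $A$, coincides with $f^{-1}\circ f=\id_A$. The identity map $\id_X\colon X\to X$ is another uniformly continuous extension of $\id_A$, and since $X$ is Hausdorff the extension is unique, so $\widehat{f^{-1}}\circ\widehat{f}=\id_X$. The symmetric argument, carried out on $X'$ using density of $A'$ and uniqueness of the extension of $\id_{A'}$, yields $\widehat{f}\circ\widehat{f^{-1}}=\id_{X'}$. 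Consequently $\widehat{f}$ is a bijection whose inverse $\widehat{f^{-1}}$ is also uniformly continuous, i.e.\ $\widehat{f}$ is an isomorphism of uniform spaces extending $f$.

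The only subtle point is the verification that the compositions agree with the identity, and the cleanest way to handle it is precisely the uniqueness-of-extension argument above rather than a direct computation on filters or entourages; this avoids any need to chase Cauchy filters through the construction of the completion. The rest is a straightforward application of the previous theorem.
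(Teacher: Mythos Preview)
Your proof is correct and is exactly the standard argument one expects here: the paper does not spell out a proof at all, presenting the corollary as an ``immediate consequence'' of the extension theorem, and your two-sided application of that theorem together with the uniqueness clause is precisely how that consequence is obtained.
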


\subsection{Topological groups and group completions}\label{sec:completions_and_group_completions}

A \textbf{topological group} is a topological space $G$ together with a group structure such that multiplication $m:G\times G\to G$ and inversion $i:G\to G$ are continuous.
We will write $H\leq_o G$, respectively $H\leq_c G$, whenever $H$ is an open subgroup, respectively closed subgroup, of $G$.
We similarly write $H\trianglelefteq_o G$ in case $H$ is a normal subgroup.
Let us denote by $\mathfrak{N}(e)=\mathfrak{N}$ the neighbourhood filter of the identity element $e$.
Since multiplication by an element $g\in G$ is a homeomorphism, $g\cdot\mathfrak{N}$ or $\mathfrak{N}\cdot g$ are the neighbourhood filter of the element $g$.
Hence the topology on a topological group $G$ is determined by $\mathfrak{N}$.
Since in a topological group multiplication and inversion (and hence also conjugation) are continuous it follows that $\mathfrak{N}$ satisfies:
\begin{enumerate}[label=(GB\arabic*)]
\item For every $U\in\mathfrak{N}$ there exists $V\in\mathfrak{N}$ such that $V\cdot V\subseteq U$.\label{GB1}
\item For every $U\in\mathfrak{N}$ we have $U^{-1}\in\mathfrak{N}$.\label{GB2}
\item For every $g\in G$ and every $U\in\mathfrak{N}$ we have $g U g^{-1}\in\mathfrak{N}$.\label{GB3}
\end{enumerate}
Conversely, we have:
\begin{proposition}[{\cite[III.1.2.1]{Bour}}]\label{pre:base_group_topology_char}
Let $\mathfrak{N}$ be a filter on a group $G$ such that $\mathfrak{N}$ satisfies \ref{GB1}, \ref{GB2} and \ref{GB3}.
Then there is a unique topology on $G$, compatible with the group structure of $G$, for which $\mathfrak{N}$ is the neighbourhood filter of the identity element.
\end{proposition}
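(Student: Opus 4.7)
The plan is to define the candidate topology explicitly from the translates of $\mathfrak{N}$, verify it is a topology, show that $\mathfrak{N}$ really is the neighbourhood filter of the identity in it, and then check that multiplication and inversion are continuous; uniqueness will be immediate from the observation made in the paragraph preceding the statement.

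First I would declare $V \subseteq G$ to be \emph{open} if and only if for every $g \in V$ we have $g^{-1}V \in \mathfrak{N}$ (equivalently, $V \in g\mathfrak{N}$). Stability under arbitrary unions is obvious, and stability under finite intersections follows from the filter axiom that $\mathfrak{N}$ is closed under finite intersections. One then needs to check that for each $g \in G$ the collection $\{gU : U \in \mathfrak{N}\}$ coincides with the neighbourhood filter of $g$ in this topology. The nontrivial inclusion is that each $gU$ contains an open set containing $g$: given $U \in \mathfrak{N}$, use \ref{GB1} to pick $V \in \mathfrak{N}$ with $VV \subseteq U$, and verify that $W = \{h \in gV : h^{-1}gV \in \mathfrak{N}\}$ is open, contains $g$, and is contained in $gU$. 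The inclusion $h \in W \Rightarrow hV \subseteq gVV \subseteq gU$ follows from $h \in gV$ and $VV \subseteq U$, while $W$ being open uses that $V \in h^{-1}gV \cdot \mathfrak{N}$ in a straightforward way. This is the step I expect to be the main obstacle: sorting out carefully that the translates really form a genuine neighbourhood base at each point, rather than just a filter satisfying abstract axioms.

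Next I would verify continuity of the group operations. For multiplication at $(g,h)$, given a basic neighbourhood $ghU$ with $U \in \mathfrak{N}$, I want neighbourhoods $gV_1$ of $g$ and $hV_2$ of $h$ with $(gV_1)(hV_2) \subseteq ghU$. Rewriting, this says $(h^{-1}V_1h)V_2 \subseteq U$. By \ref{GB1} choose $W \in \mathfrak{N}$ with $WW \subseteq U$; by \ref{GB3}, $h^{-1}Wh \in \mathfrak{N}$, so take $V_1$ inside $hWh^{-1}$ (which lies in $\mathfrak{N}$ by \ref{GB3}) and $V_2 = W$. For continuity of inversion at $g$, a basic neighbourhood of $g^{-1}$ is $g^{-1}U$ with $U \in \mathfrak{N}$, and I need $V \in \mathfrak{N}$ with $(gV)^{-1} \subseteq g^{-1}U$, i.e., $gV^{-1}g^{-1} \subseteq U$. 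By \ref{GB2}, $U^{-1} \in \mathfrak{N}$, and by \ref{GB3}, $g^{-1}U^{-1}g \in \mathfrak{N}$; choose $V = (g^{-1}U^{-1}g) \cap \mathfrak{N}$-element to get what is needed.

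Finally, for uniqueness, any topology on $G$ compatible with the group structure has its topology determined by the neighbourhood filter of $e$, since left translation $L_g : G \to G$ is a homeomorphism and hence carries the neighbourhood filter at $e$ onto that at $g$. Thus two topologies sharing $\mathfrak{N}$ as neighbourhood filter at $e$ must coincide. This gives uniqueness and completes the plan.
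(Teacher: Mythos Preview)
The paper does not give a proof of this proposition; it simply records the statement with a citation to Bourbaki and moves on. Your proposal is therefore not being compared against anything in the paper, but it is essentially the standard argument one finds in Bourbaki: declare a set open when each of its left translates back to $e$ lies in $\mathfrak{N}$, use \ref{GB1} to show the translates $g\mathfrak{N}$ really are the neighbourhood filters, and use \ref{GB1}--\ref{GB3} for continuity of multiplication and inversion, with uniqueness coming from the homogeneity observation already recorded before the proposition.

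One small point worth tightening: your auxiliary set $W=\{h\in gV: h^{-1}gV\in\mathfrak{N}\}$ works, but the cleaner and more transparent choice is simply the ``interior'' $\{h\in gU: h^{-1}gU\in\mathfrak{N}\}$; showing this is open uses \ref{GB1} exactly once and avoids the slightly tangled chain of inclusions you sketch. Also, in the inversion step your phrase ``choose $V=(g^{-1}U^{-1}g)\cap\mathfrak{N}$-element'' is garbled; you mean $V=g^{-1}U^{-1}g$, which lies in $\mathfrak{N}$ by \ref{GB2} followed by \ref{GB3}, and then $gV^{-1}g^{-1}=U$ on the nose. With those cosmetic fixes the argument is complete.
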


Let $G$ be an abstract group and $\mathfrak{F}$ a family of subgroups of $G$. Consider $\mathfrak{B}$ the set of finite intersections of groups of the form $g^{-1}Fg$ for $g\in G$, $F\in\mathfrak{F}$. Then $\mathfrak{B}$ is a filter base and by construction its associated filter satisfies \ref{GB1}, \ref{GB2} and \ref{GB3}. From any family of subgroups $\mathfrak{F}$ of a given group $G$, one can construct a topology on $G$ such that $F$ is open for every $F\in\mathfrak{F}$ and with this topology $G$ becomes a topological group.

Let us now focus on Hausdorff topological groups. Recall that we denote by $\mathfrak{N}(e)$ the neighbourhood filter of the identity element in $G$.
It is easy to check that the families
\begin{align*}
& \mathcal{F}_r:=\{V_r=\{(x,y)\,:\, yx^{-1}\in V \}\quad :\, V\in \mathfrak{N}(e)\}\\
& \mathcal{F}_l:=\{V_l=\{(x,y)\,:\, x^{-1}y\in V\}\quad :\, V\in\mathfrak{N}(e)\}
\end{align*}
satisfy the required properties to be a fundamental system of entourages, and hence define a uniform structure $E_r$ (repectively $E_l$) on $G$ called the right (respectively left) uniform structure of $G$ which is compatible with the original topology of $G$, i.e., this uniformity induces the original topology on $G$.
If we now consider $G_l=(G,E_l)$, we want to give to $\widehat{G}_l$ (the completion of $G_l$ as a uniform space, see Theorem \ref{th:completion}) a topological group structure such that $G$ is a dense subgroup of $\widehat{G}_l$.
For this we need to define a continuous multiplication and inversion on $\widehat{G}_l$ extending those of $G$.

\begin{proposition}[\cite{Bour}, Proposition III.3.4.6]
Let $\mathfrak{F}$ and $\mathfrak{G}$ be two Cauchy filters on $G
_l$.
Then the image of the filter $\mathfrak{F}\times\mathfrak{G}$ under the map $(x,y)\mapsto (xy)$ is a Cauchy filter base on $G_l$.

\end{proposition}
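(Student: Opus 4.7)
The plan is to verify that the collection $\mathfrak{B}:=\{FH : F\in\mathfrak{F},\ H\in\mathfrak{G}\}$ is a Cauchy filter base on $G_l$. That $\mathfrak{B}$ is a filter base follows from the filter axioms and the inclusion $(F_1\cap F_2)(H_1\cap H_2)\subseteq F_1H_1\cap F_2H_2$, so finite intersections of members of $\mathfrak{B}$ always contain another member of $\mathfrak{B}$.

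The real content is the Cauchy property. Given an entourage $V_l$ of the left uniformity, determined by some $V\in\mathfrak{N}(e)$, I need to produce $F\in\mathfrak{F}$ and $H\in\mathfrak{G}$ for which
\[
(f_1h_1)^{-1}(f_2h_2)=h_1^{-1}f_1^{-1}f_2h_2\in V\qquad\text{for all }f_1,f_2\in F,\ h_1,h_2\in H.
\]
The algebraic trick I plan to use is the decomposition through a fixed reference element $h_0$,
\[
h_1^{-1}f_1^{-1}f_2h_2=(h_1^{-1}h_0)\cdot(h_0^{-1}f_1^{-1}f_2h_0)\cdot(h_0^{-1}h_2),
\]
which splits the element into a conjugate of $f_1^{-1}f_2$ sandwiched between two left differences of elements of $H$. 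Cauchy smallness of $\mathfrak{F}$ and $\mathfrak{G}$ controls $f_1^{-1}f_2$ and the $h_0^{-1}h_i$, while the conjugation axiom \ref{GB3} is what lets us force the sandwiched term to lie in a prescribed neighbourhood.

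I would then fix the neighbourhoods in the following order. Iterating \ref{GB1} and using \ref{GB2}, pick a symmetric $W\in\mathfrak{N}(e)$ with $WWW\subseteq V$. Since $\mathfrak{G}$ is Cauchy for the left uniformity, there is $H\in\mathfrak{G}$ that is $W_l$-small; choose any $h_0\in H$. By \ref{GB3} the set $U:=h_0Wh_0^{-1}$ is a neighbourhood of the identity, so using that $\mathfrak{F}$ is Cauchy we can pick $F\in\mathfrak{F}$ that is $U_l$-small. With these choices, for $h_1,h_2\in H$ symmetry of $W$ gives $h_1^{-1}h_0=(h_0^{-1}h_1)^{-1}\in W$ and $h_0^{-1}h_2\in W$, while for $f_1,f_2\in F$ we have $f_1^{-1}f_2\in U=h_0Wh_0^{-1}$ and hence $h_0^{-1}f_1^{-1}f_2h_0\in W$. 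Multiplying the three factors yields $(f_1h_1)^{-1}(f_2h_2)\in WWW\subseteq V$, so $FH$ is $V_l$-small.

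The main obstacle is the order in which the neighbourhoods must be chosen. Since conjugation in a general topological group is not uniformly continuous, there is no way to make $h^{-1}Wh$ small uniformly in $h$; the neighbourhood $U$ controlling $F$ must therefore be selected only after a specific $h_0$ has been fixed. Anchoring the decomposition at a single reference element $h_0\in H$ is precisely what circumvents this lack of uniformity and also explains why the conclusion is stated for the left (rather than the two-sided) uniformity.
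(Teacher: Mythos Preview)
Your argument is correct and is essentially the standard Bourbaki proof: the paper itself does not supply a proof here but simply cites \cite[Proposition III.3.4.6]{Bour}, and your decomposition $h_1^{-1}f_1^{-1}f_2h_2=(h_1^{-1}h_0)(h_0^{-1}f_1^{-1}f_2h_0)(h_0^{-1}h_2)$ together with the careful order of choices (first $W$, then $H$ and $h_0$, then $U=h_0Wh_0^{-1}$, then $F$) is exactly the argument Bourbaki gives. Your closing remark about why the reference point $h_0$ must be fixed before choosing $U$ is also the right explanation for why the proof is organised this way.
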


It follows that we can define a multiplication on $\widehat{G}_l$ that extends the original one in $G$.
To show that we can also extend inversion to $\widehat{G}_l$ we must assume that the function $x\mapsto x^{-1}$ maps Cauchy filters to Cauchy filters\footnote{There are examples of groups where this assumption is not satisfied, see \cite[Exercise 16 X.3]{Bour}.}.
However let us note that if $G$ has a neighbourhood base of the identity consisting of subgroups then this condition is automatically satisfied. Moreover one can show that the group $\widehat{G}_l$ obtained in this way is unique, in particular, $\widehat{G}_l\cong\widehat{G}_r$.
In the end we obtain the following theorem.

\begin{theorem}[\cite{Bour}, Theorem III.3.4.1]\label{th:group_completion}
A Hausdorff topological group $G$ is isomorphic to a dense subgroup
of a complete group $\widehat{G}$ if and only if the image under the symmetry $x\mapsto x^{-l}$ of a Cauchy filter with respect to the left uniformity of $G$ is a Cauchy filter with
respect to this uniformity.
The complete group $\widehat{G}$, which is called the completion
of G, is then unique up to isomorphism.
\end{theorem}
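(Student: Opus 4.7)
The plan is to handle the two implications separately and then uniqueness. For necessity, assume $G$ embeds as a dense subgroup of a complete Hausdorff topological group $\widehat{G}$; the induced uniformity on $G$ coming from the left uniformity of $\widehat{G}$ agrees with the original left uniformity of $G$. Given a left Cauchy filter $\mathfrak{F}$ on $G$, its pushforward to $\widehat{G}$ is left Cauchy there, hence converges to some $a\in\widehat{G}$. Since inversion on $\widehat{G}$ is continuous, the image filter $\mathfrak{F}^{-1}$ converges to $a^{-1}$, and any convergent filter is Cauchy for both uniformities; restricting to $G$, $\mathfrak{F}^{-1}$ is a left Cauchy filter on $G$.

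For sufficiency, take $\widehat{G}_l$ to be the uniform-space completion of $(G,E_l)$ furnished by Theorem \ref{th:completion}, with canonical embedding $i:G\hookrightarrow\widehat{G}_l$. The cited proposition (\cite[III.3.4.6]{Bour}) says that multiplication on $G$ sends pairs of Cauchy filters to Cauchy filter bases, which amounts to uniform continuity of $m:G\times G\to G$ with respect to the product uniformity on the left. Thus $m$ extends by Theorem \ref{th:completion} to a continuous $\widehat{m}:\widehat{G}_l\times\widehat{G}_l\to\widehat{G}_l$. The standing hypothesis that inversion maps left Cauchy filters on $G$ to left Cauchy filters is precisely uniform continuity of the map $j:G_l\to G_l$, $x\mapsto x^{-1}$, and so $j$ extends to a continuous involution $\widehat{j}:\widehat{G}_l\to\widehat{G}_l$.

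It remains to check that $(\widehat{G}_l,\widehat{m},\widehat{j})$ is a topological group containing $i(G)$ as a dense subgroup. The group axioms (associativity, identity, inverse) are equalities of continuous maps $\widehat{G}_l^{\,n}\to\widehat{G}_l$ which already hold on the dense subset $i(G)^n$, hence on all of $\widehat{G}_l^{\,n}$. Completeness of $\widehat{G}_l$ as a uniform space is built into the construction. The main technical point, and the place where the hypothesis is essential, is the translation between Cauchy-filter preservation and uniform continuity of inversion: one verifies that if every left Cauchy filter $\mathfrak{F}$ has $\mathfrak{F}^{-1}$ left Cauchy, then applying this to the neighbourhood filter of each point (together with the group translation invariance in the form \ref{GB3}) yields that for every $V\in\mathfrak{N}(e)$ there is $W\in\mathfrak{N}(e)$ with $W\subseteq V$ and $W^{-1}\subseteq V$, i.e.\ symmetric neighbourhoods form a base and inversion is uniformly continuous in $E_l$. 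Without this, $\widehat{m}$ exists but $\widehat{j}$ need not, and the completion fails to be a group (this is exactly the exceptional situation alluded to in \cite{Bour}).

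Uniqueness follows from Corollary \ref{pre:cor:extension_to_completion}: if $(i',\widehat{G}')$ is another completion in the group sense, then $i'\circ i^{-1}:i(G)\to i'(G)$ is an isomorphism of dense uniform subspaces of complete Hausdorff uniform spaces, hence extends uniquely to a uniform isomorphism $\phi:\widehat{G}_l\to\widehat{G}'$. Because $\phi$ is continuous and agrees with a group homomorphism on the dense subgroup $i(G)$, the relations $\phi\circ\widehat{m}=\widehat{m}'\circ(\phi\times\phi)$ and $\phi\circ\widehat{j}=\widehat{j}'\circ\phi$ hold on a dense set of inputs and thus everywhere, so $\phi$ is an isomorphism of topological groups.
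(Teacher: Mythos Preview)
Your overall strategy matches the paper's brief discussion preceding the theorem (the paper itself does not give a proof, only a sketch and a citation to Bourbaki): construct $\widehat{G}_l$ as the uniform completion, extend multiplication using Proposition~III.3.4.6, extend inversion using the hypothesis, and deduce uniqueness from Corollary~\ref{pre:cor:extension_to_completion}. So the architecture is fine.

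There is, however, a genuine technical slip in how you justify the extensions. You write that Proposition~III.3.4.6 ``amounts to uniform continuity of $m:G\times G\to G$'' and that the inversion hypothesis ``is precisely uniform continuity of $j:G_l\to G_l$'', and you then invoke Theorem~\ref{th:completion}, whose universal property requires uniform continuity. But multiplication $G_l\times G_l\to G_l$ is \emph{not} uniformly continuous in general, and the inversion hypothesis (left Cauchy $\Rightarrow$ left Cauchy) is strictly weaker than uniform continuity of $j$ in $E_l$. What Proposition~III.3.4.6 and the inversion hypothesis actually give is that these maps send Cauchy filters to Cauchy filter bases; this is enough to extend by continuity to the completion (define the extension at a point of $\widehat{G}_l$ as the limit of the image filter base, which converges by completeness), but it is not the universal property of Theorem~\ref{th:completion}. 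Your ``main technical point'' paragraph compounds this: the conclusion that symmetric neighbourhoods of $e$ form a base is just axiom~\ref{GB2}, always true in a topological group, and does not yield uniform continuity of inversion for $E_l$. Replace the appeal to Theorem~\ref{th:completion} by the more elementary extension-by-continuity argument (Cauchy-preserving maps into a complete Hausdorff target extend), and the proof goes through.
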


\subsection{Profinite groups}\label{pre:profinite_groups}

A \textbf{directed system} $(\Lambda,\leq)$ is a partially ordered set such that for every pair of elements $\lambda_1,\lambda_2\in\Lambda$ there exists an element $\lambda_0$ such that $\lambda_0\geq\lambda_1,\lambda_2$.
The collection of (normal) subgroups of a group is a directed system ordered by reverse inclusion.

A \textbf{projective system} is a collection of sets $(X_\lambda)_{\lambda\in\Lambda}$ for some directed system $\Lambda$ together with maps $f_{\lambda\mu}:X_\lambda\to X_\mu$ for every $\lambda\geq\mu$. Given such a directed system we define its \textbf{projective limit} as

\[
\varprojlim_{\lambda\in\Lambda}:=\{(x_\lambda)_{\lambda\in\Lambda}\in\prod_{\lambda\in\Lambda}X_{\lambda}\;:\; f_{\lambda\mu}(x_{\lambda})=x_\mu\;\textrm{for every}\; \lambda\geq\mu\}
\]
If we have a family of normal subgroups $\mathfrak{N}$ of a given group $G$ closed under intersection, then we have group homomorphisms $f_{NM}:G/N\to G/M$ whenever $N\leq M$.
This way $\varprojlim_{N\in\mathfrak{N}} G/N$ becomes a subgroup of the direct product $\prod_{N\in\mathfrak{N}}G/N$.
If all quotients are finite and we give them the discrete topology, then the inverse limit becomes a closed subgroup of the compact group $\prod_{N\in\mathfrak{N}}G/N$ called the pro-$\mathfrak{N}$ completion of $G$.
If we take $\mathfrak{N}$ to be the family of all finite index normal subgroups of $G$, then
\[
\widehat{G}:=\varprojlim_{N\in\mathfrak{N}} G/N
\]
is called the profinite completion of $G$\footnote{Proposition \ref{pre:pro:profinite_uniform_completion_coincide} justifies the overlaping of notation.}.

A \textbf{profinite group} $G$ is a compact Hausdorff topological group whose open subgroups form a base for the neighbourhoods of the identity.
The following theorem collects equivalent characterizations of profinite groups:

\begin{theorem}[\cite{AnalyticProP}, Chapter 1]\label{pre:th:profinite_char}
Let $G$ be a topological group. The following are equivalent:
\begin{enumerate}[label=\roman*)]
\item $G$ is profinite.
\item $G$ is totally disconnected.
\item $G\cong\varprojlim_{N\trianglelefteq_oG}G/N$
\item $G$ has a neighbourhood base of the identity consisting of finite index open subgroups and $G$ is complete.
\end{enumerate}
\end{theorem}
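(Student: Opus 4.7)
The plan is to establish the implications $\mathrm{(i)}\Rightarrow\mathrm{(iii)}\Rightarrow\mathrm{(iv)}\Rightarrow\mathrm{(i)}$ and then separately $\mathrm{(i)}\Leftrightarrow\mathrm{(ii)}$, using the group completion machinery from Theorem \ref{th:group_completion} together with basic compactness arguments.

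For $\mathrm{(i)}\Rightarrow\mathrm{(iii)}$, suppose $G$ is profinite, so $G$ is compact Hausdorff with a neighbourhood base $\mathcal{U}$ of the identity consisting of open subgroups. I would first check that each $U\in\mathcal{U}$ is in fact of finite index and contains an open normal subgroup: openness of $U$ makes its cosets open, so by compactness there are only finitely many of them, and the normal core $\bigcap_{g\in G}gUg^{-1}$ is a finite intersection of open subgroups. Hence the open normal subgroups of $G$ form a base $\mathfrak{N}$ at the identity. Next I would define the map $\varphi:G\to\varprojlim_{N\in\mathfrak{N}}G/N$ by $g\mapsto (gN)_{N}$; this is a continuous homomorphism, injective because $\bigcap_{N\in\mathfrak{N}}N=\{e\}$ by Hausdorffness, and its image is dense. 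Since $G$ is compact and the inverse limit is Hausdorff, $\varphi(G)$ is closed and equal to the whole inverse limit, giving a topological isomorphism.

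For $\mathrm{(iii)}\Rightarrow\mathrm{(iv)}$, an inverse limit of finite discrete groups sits as a closed subgroup of the compact product $\prod_{N}G/N$ (Tychonoff), and the sets $\ker(\mathrm{pr}_N)$ form a base of open normal finite index subgroups at the identity; compactness together with the Hausdorff property gives completeness with respect to the uniform structure described in Section \ref{sec:completions_and_group_completions}. For $\mathrm{(iv)}\Rightarrow\mathrm{(i)}$, the nontrivial point is compactness: given a complete Hausdorff group $G$ with a base $\mathfrak{N}$ of finite index open subgroups, the families $\{N_r\}$ and $\{N_l\}$ form a fundamental system of entourages for the right and left uniformities, and on intersecting with normal members of $\mathfrak{N}$ we obtain a diagonal embedding $G\hookrightarrow\prod_{N\trianglelefteq_oG}G/N$. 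Using Theorem \ref{th:group_completion} and the uniqueness of the completion up to isomorphism (Corollary \ref{pre:cor:extension_to_completion}), the image is closed in the compact Hausdorff product, so $G$ itself is compact; this gives $\mathrm{(i)}$.

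Finally for $\mathrm{(i)}\Leftrightarrow\mathrm{(ii)}$, note first that in any topological group an open subgroup is automatically closed, since its complement is a union of cosets. Hence if $G$ satisfies $\mathrm{(i)}$ the connected component of $e$ lies in every member of the open subgroup base, hence is trivial, and a standard translation argument forces $G$ to be totally disconnected. For the converse, assuming $G$ is a compact Hausdorff totally disconnected group, I would invoke the fact that in such a space the clopen sets form a base for the topology, then upgrade a clopen neighbourhood $C$ of $e$ to an open subgroup: by compactness of $C$ and continuity of multiplication and inversion, one can find a symmetric clopen neighbourhood $V$ with $V\cdot V\subseteq C$ and iterate to produce $H=\bigcup_{n\geq 1}V^n$, a clopen subgroup contained in $C$. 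The main obstacle is precisely this last step, constructing an open subgroup inside a prescribed clopen neighbourhood of the identity; once it is in place, the rest of the argument is formal juggling of uniformities, inverse limits, and the completeness/compactness dictionary.
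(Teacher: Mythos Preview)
The paper does not supply its own proof of this theorem; it is stated with a reference to \cite{AnalyticProP}, Chapter 1, and used as a black box. So there is nothing to compare your argument against inside the paper. Your outline follows the standard route found in that reference (compactness forces open subgroups to have finite index and to contain open normal cores; the canonical map to the inverse limit is a dense embedding with compact image; completeness plus the uniqueness of completions recovers compactness from (iv); and the clopen-to-open-subgroup upgrade handles the totally disconnected direction), and each step is sound.

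One point worth flagging: condition (ii) as printed in the paper is too weak to be equivalent to the others, since for instance the discrete group $\mathbb{Z}$ is totally disconnected but not profinite. You quietly repair this in your $\mathrm{(ii)}\Rightarrow\mathrm{(i)}$ argument by assuming $G$ is compact Hausdorff and totally disconnected, which is the statement actually proved in \cite{AnalyticProP}. That is the correct fix, but you should make the added hypothesis explicit rather than sliding it in, and perhaps remark that the paper's transcription of (ii) appears to omit compactness. Similarly, in $\mathrm{(iv)}\Rightarrow\mathrm{(i)}$ you need Hausdorffness (so that $\bigcap_N N=\{e\}$ and the diagonal map is injective); this is implicit in the paper's use of ``complete'' via Theorem~\ref{th:group_completion}, but again worth saying.
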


Consider now an abstract group $G$  and $\mathfrak{N}$ a family of finite index normal subgroups closed under intersection and such that $\bigcap_{N\in\mathfrak{N}}N=e$.
On the one hand $\mathfrak{N}$ satisfies \ref{GB1}, \ref{GB2} and \ref{GB3} so it induces a topology on $G$, which  makes $G$ into a Hausdorff topological group, and this gives $G$ a uniform structure as well.
On the other hand $\varprojlim_{N\in\mathfrak{N}}G/N$ is a profinite group.
\begin{proposition} \label{pre:pro:profinite_uniform_completion_coincide}
Let $G$ and $\mathfrak{N}$ be as above. Then $\widehat{G}_l\cong \varprojlim_{N\in\mathfrak{N}}G/N$.
\end{proposition}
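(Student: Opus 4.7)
The plan is to identify $\varprojlim_{N \in \mathfrak{N}} G/N$ as a completion of $G$ (for the uniform structure coming from $\mathfrak{N}$) and then invoke the uniqueness statement in Theorem \ref{th:group_completion}. First I would set up the natural diagonal map
\[
 i \colon G \longrightarrow \varprojlim_{N \in \mathfrak{N}} G/N, \qquad g \longmapsto (gN)_{N \in \mathfrak{N}}.
\]
This is a well-defined group homomorphism because $\mathfrak{N}$ is closed under intersection, and injective because $\bigcap_{N \in \mathfrak{N}} N = e$. So $G$ is embedded as a subgroup of $\varprojlim G/N$.

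Next I would check that $i$ is a topological (in fact uniform) embedding. A fundamental system of neighbourhoods of $e$ in $G$ for the $\mathfrak{N}$-topology is $\mathfrak{N}$ itself (each $N \in \mathfrak{N}$ is open, and finite intersections stay in $\mathfrak{N}$), while a fundamental system of neighbourhoods of $e$ in $\varprojlim G/N$ is given by the kernels $K_N$ of the projections $\varprojlim G/M \to G/N$. Since $i^{-1}(K_N) = N$ and $i(N) = i(G) \cap K_N$, the map $i$ is continuous and open onto its image; in particular it matches the left uniform structures on both sides.

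Then I would verify density of $i(G)$: a basic open set in $\varprojlim G/N$ is of the form $(x_N K_N) \cap \varprojlim G/N$ for some $N$ and some coset $x_N N \in G/N$, and by definition of the inverse limit any element whose $N$-coordinate is $x_N N$ comes (in that coordinate) from a representative in $G$, so $i(G)$ meets every nonempty basic open set. Moreover, $\varprojlim G/N$ is a closed subgroup of the compact Hausdorff product $\prod_{N \in \mathfrak{N}} G/N$, hence itself compact Hausdorff, and in particular complete in its canonical uniform structure (this is part of Theorem \ref{pre:th:profinite_char}). The Cauchy filter condition in Theorem \ref{th:group_completion} is automatic for $G$ since its identity has a neighbourhood base consisting of (normal) subgroups, which are invariant under $x \mapsto x^{-1}$.

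Thus $(i, \varprojlim G/N)$ is a pair consisting of a complete Hausdorff group and a dense uniform embedding of $G$, exactly the data characterizing $\widehat{G}_l$ in Theorem \ref{th:group_completion} (equivalently Corollary \ref{pre:cor:extension_to_completion}). The uniqueness clause then yields a unique isomorphism $\widehat{G}_l \cong \varprojlim_{N \in \mathfrak{N}} G/N$ extending $i$. I do not foresee a main obstacle: the only point requiring a little care is the matching of the two uniform structures, i.e.\ that the basic entourages on $G$ coming from $\mathfrak{N}$ coincide with the pullback along $i$ of the basic entourages on $\varprojlim G/N$, but this is a direct unravelling of the definitions once one writes $V_l = \{(x,y) : x^{-1}y \in N\}$ for $N \in \mathfrak{N}$.
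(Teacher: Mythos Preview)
Your proof is correct and follows essentially the same approach as the paper: both identify $\varprojlim_{N\in\mathfrak{N}} G/N$ as a complete Hausdorff group containing $G$ densely with the matching uniform structure, and then invoke the uniqueness of the completion via Corollary~\ref{pre:cor:extension_to_completion}. The paper's proof is simply terser, stating only that $G$ is dense in both spaces and that the identity $G\to G$ extends by that corollary, whereas you spell out the embedding, the matching of the basic entourages, and the density explicitly.
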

\begin{proof}
Note that $G$ is dense both in $\varprojlim_{N\in\mathfrak{N}}G/N$ and $\widehat{G}_l$. Since the identity map $\id: G\to G$ is continuous with respect to these structures, Corollary \ref{pre:cor:extension_to_completion} gives the result.
\end{proof}

A profinite group is called \textbf{pro-nilpotent} if it can be realized as an inverse limit of nilpotent groups. Similarly, a profinite group is called a \textbf{pro-$\mathbf{p}$ group} if it can be realized as an inverse limit of $p$-groups.
\subsection{Analytic groups}

Throughout this section $k$ is a local field complete with respect to a non-archimedean valuation, $\OO$ denotes its valuation ring and $\mm$ its maximal ideal. We refer to Part II of \cite{Ser} for a more detailed exposition.

Let $G$ be a topological group and suppose further that $G$ has the structure of an analytic manifold over $k$.
We say that $G$ is an \textbf{analytic group} over $k$ or a $k$-analytic group if multiplication $m:G\times G\to G$ and inversion $i:G\to G$ are analytic maps.
If $G$ is a $\mathbb{Q}_p$-analytic group we say that $G$ is a $\mathbf{p}${\bf-adic analytic group}.
The following lemma shows that every $k_v$-analytic group can be seen either as a $p$-adic analytic group or an $F_p((t))$-analytic group, see \cite[Exercise 13.4]{AnalyticProP}.

\begin{lemma}\label{pre:lm:Zp_and_Fpt_analytic} Suppose $k'$ is a finite separable extension of $k$ with ring of integers $\OO'$. Let $G$ be a $k'$-analytic group and $U\leq_o G$ an $\OO'$-standard subgroup (see definition below).
Then $G$ has the structure of a $k$-analytic group as well and $U$ becomes an $\OO$-standard subgroup with respect to this structure.
\end{lemma}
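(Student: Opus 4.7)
The plan is to establish the lemma via restriction of scalars along the finite separable extension $k'/k$, both at the level of analytic manifolds and at the level of the formal data defining a standard group.

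First I would use the classical fact that, because $k'/k$ is a finite separable extension of complete discretely valued fields, the ring $\OO'$ is a free $\OO$-module of rank $d=[k':k]$. Fixing an $\OO$-basis $e_1,\dots,e_d$ of $\OO'$ gives a $k$-linear isomorphism $k'\cong k^d$ that restricts to $\OO'\cong \OO^d$. The structure constants $c_{ij}^l\in\OO$ defined by $e_ie_j=\sum_l c_{ij}^l e_l$ will turn any convergent $k'$-power series into a convergent $k$-power series: a monomial $\alpha X^{\mathbf{n}}$ with $\alpha\in k'$ and $X=(X_1,\dots,X_m)$, once we write each $X_i=\sum_j X_{i,j}e_j$ and expand products using the $c_{ij}^l$, becomes a $d$-tuple of $k$-polynomials in the new variables $X_{i,j}$, with the $l$-th component living in $k$. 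Taking limits of such expansions shows that every $k'$-analytic map between open sets of $(k')^m$ and $(k')^n$ is $k$-analytic as a map between open sets of $k^{md}$ and $k^{nd}$.

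Next, I would apply this pointwise to the $k'$-analytic manifold structure of $G$: pulling back each chart via the basis identification $(k')^{\dim_{k'}G}\cong k^{d\cdot\dim_{k'}G}$ and checking that transition maps remain analytic after restriction (which is exactly the previous paragraph) yields a $k$-analytic manifold structure on $G$ of dimension $d\cdot\dim_{k'}G$. Since multiplication and inversion are $k'$-analytic and hence $k$-analytic by the same argument, $G$ becomes a $k$-analytic group.

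For the second half, I would unwind what it means for $U\leq_oG$ to be $\OO'$-standard: there is an analytic chart $\varphi:U\to (\mm')^n$ (where $\mm'$ is the maximal ideal of $\OO'$, or some power, depending on the precise convention for "standard") such that the induced multiplication on $(\mm')^n$ is given by an $n$-tuple of formal power series in $\OO'[[X_1,\dots,X_n,Y_1,\dots,Y_n]]$ converging on $(\mm')^n\times(\mm')^n$ and with no constant term. Under the identification $(\OO')^n\cong \OO^{nd}$ via the basis $e_1,\dots,e_d$, one has $(\mm')^n\cong (\mm^e\OO)^{nd}$ for a suitable ramification index $e$, so $\varphi$ becomes an $\OO$-analytic chart onto a product of standard polydiscs over $\OO$ (after, if necessary, replacing $U$ by a smaller open subgroup, which is harmless for the standard-group property since standardness descends to the analogous open subgroups). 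Expanding the multiplication power series using the structure constants $c_{ij}^l\in\OO$ turns them into power series in $\OO[[X_{i,j},Y_{i,j}]]$ without constant term, converging on the corresponding $\OO$-polydisc, which is exactly the defining condition for $U$ to be $\OO$-standard in the $k$-analytic structure.

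The main obstacle is purely bookkeeping: matching the precise normalisation of "standard" used in the paper (valuation of the radius, whether $\varphi$ lands in $(\OO')^n$ or $(\mm')^n$, and whether $e_1=1$ is required so that $\OO\subset\OO'$ corresponds to the first coordinate), and possibly shrinking $U$ so that after restricting scalars the radii of convergence still fit inside the required $\OO$-polydisc. Once the basis is chosen with $e_1=1$ and the radii are adjusted by the ramification index, the conversion of power-series coefficients via the $c_{ij}^l$ is mechanical and yields the claim.
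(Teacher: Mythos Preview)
The paper does not supply its own proof of this lemma: it simply refers the reader to \cite[Exercise 13.4]{AnalyticProP}. Your restriction-of-scalars argument is exactly the intended approach for that exercise, and the outline you give---choosing an $\OO$-basis of $\OO'$, rewriting $k'$-analytic maps as $k$-analytic maps via the structure constants, and then doing the same to the formal group law defining $U$---is correct.

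One small point worth tightening: your identification ``$(\mm')^n\cong(\mm^e\OO)^{nd}$'' is not quite right. Since $\mm'$ is a free $\OO$-module of rank $d$ (multiplication by a uniformiser $\pi'$ gives an $\OO$-isomorphism $\OO'\to\mm'$), a basis of $\mm'$ over $\OO$ identifies $(\mm')^n$ with $\OO^{nd}$ rather than with $\mm^{nd}$. To land in $\mm^{nd}$ as required by the paper's definition of ``standard'', it is cleanest to pass to the open subgroup corresponding to $(\mm\OO')^n=((\mm')^e)^n$, which does identify with $\mm^{nd}$ under an $\OO$-basis of $\OO'$; the expanded formal group law then visibly has coefficients in $\OO$. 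You already anticipate this shrinking step, and for the applications in the paper (Theorems \ref{th:RG_p-adic_perfect_type_c} and \ref{th:RG_Fp[[t]]-standard_type_c}) only the existence of \emph{some} open $\OO$-standard subgroup matters, so this is harmless.
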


\subsubsection{ Formal group laws}
Let us write ${\bf x}=(x_1,\ldots,x_n)$ to denote a tuple of $n$ variables.
For $\alpha=(\alpha_1,\ldots,\alpha_n)\in\mathbb{N}^n$ let us write ${\bf x}^\alpha=x_1^{\alpha_1}\ldots,x_n^{\alpha_n}$ and $|\alpha|=\sum_{i=1}^n\alpha_i$.
Given a commutative ring $R$ consider the formal power series ring $R[[x_1,\ldots,x_n]]=R[[{\bf x}]]$.
A \textbf{formal group law} over $R$ in $n$ variables is an $n$-tuple $ F=(F_1,\ldots,F_n)$ of formal power series in $2n$ variables, $F_i\in R[[{\bf x},{\bf y}]]$ such that:
\begin{enumerate}
\item $F({\bf x},0)={\bf x}$ and $F(0,{\bf y})={\bf y}$.
\item $F(F({\bf x},{\bf y}),{\bf z})=F({\bf x},F({\bf y},{\bf z}))$.
\end{enumerate}
If there is no danger of confusion we will write $\mathbf{x}\mathbf{y}=F(\mathbf{x},\mathbf{y})$.
\begin{proposition}\label{pre:formal_group_laws}
Let $F=(F_1,\ldots,F_n)$ be a formal group law over $R$ and let $O(3)$ denote some formal power series whose homogeneous components of degree strictly less than $3$ are zero. Then:
\begin{enumerate}[label=\roman*)]

\item For every $1\leq i\leq n$ we have
\[
F_i(\mathbf{x},\mathbf{y})=x_i+y_i+\sum_{|\alpha|,|\beta|\geq 1} c_{\alpha\beta}\mathbf{x}^\alpha\mathbf{y}^\beta.
\]
\item There exists a unique $\phi=(\phi_1,\ldots,\phi_n)$ where $\phi_i\in R[[\mathbf{x}]]$ called the formal inverse such that $\phi(0)=0$ and
\[
F(\mathbf{x},\phi(\mathbf{x}))=F(\phi(\mathbf{x}),\mathbf{x})=0.
\]
We will write $\mathbf{x}^{-1}:=\phi(\mathbf{x})$. 
\item $F(\mathbf{x},\mathbf{y})=\mathbf{x}+\mathbf{y}+ B(\mathbf{x},\mathbf{y})+O(3)$, where $B$ is an $R$-bilinear form. We define
\[
[\mathbf{x},\mathbf{y}]:=B(\mathbf{x},\mathbf{y})-B(\mathbf{y},\mathbf{x}).
\].\label{e-bracket-formal-law}
\item $\mathbf{x}^{-1}=-\mathbf{x}+ B(\mathbf{x},\mathbf{x})+O(3)$.
\item \label{e_formal_adjoint}$F(F(\mathbf{x},\mathbf{y}),\mathbf{x}^{-1})=\mathbf{y}+[\mathbf{x},\mathbf{y}]+O(3)$ and more precisely
\[
\mathbf{x}\mathbf{y}\mathbf{x}^{-1}=\mathbf{y}+[\mathbf{x},\mathbf{y}]+\sum_{|\alpha|,|\beta|\geq 1, |\alpha|+|\beta|\geq 3} d_{\alpha,\beta}\mathbf{x}^\alpha\mathbf{y}^\beta,
\]
where $d_{\alpha,\beta}\in R$.
\item $\mathbf{x}^{-1}\mathbf{y}^{-1}\mathbf{x}\mathbf{y}=[\mathbf{x},\mathbf{y}]+O(3)$. \label{e_formal_commutator}

\item $[\mathbf{x},[\mathbf{y},\mathbf{z}]+[\mathbf{y},[\mathbf{z},\mathbf{x}]]+[\mathbf{z},[\mathbf{x},\mathbf{y}]]=0$. \label{e_fomal_jacobi}
\item ${\bf x}^m=\sum_{i=1}^\infty {m\choose i}\psi_i({\bf x})$, where $\psi_i({\bf x})$ has order at least $i$.\label{e_power_map}
\end{enumerate}
\end{proposition}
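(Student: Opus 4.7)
The plan is to work through the eight assertions in order, each reducing to an explicit manipulation of formal power series in the ring $R[[\mathbf{x}, \mathbf{y}]]$ or $R[[\mathbf{x}, \mathbf{y}, \mathbf{z}]]$, exploiting the two axioms $F(\mathbf{x},0) = \mathbf{x}$, $F(0,\mathbf{y}) = \mathbf{y}$ and associativity $F(F(\mathbf{x},\mathbf{y}),\mathbf{z}) = F(\mathbf{x}, F(\mathbf{y},\mathbf{z}))$. For (i), I would write $F_i = \sum c_{\alpha\beta}^{(i)} \mathbf{x}^\alpha \mathbf{y}^\beta$ and specialize $\mathbf{y} = 0$; this forces $c_{\alpha, 0}^{(i)} = \delta_{\alpha, e_i}$, and symmetrically for $\mathbf{x} = 0$, leaving only the terms indicated. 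Part (iii) is then just a definition, since (i) guarantees that the homogeneous component of $F$ of degree $2$ splits into a piece bilinear in $(\mathbf{x}, \mathbf{y})$ (call it $B(\mathbf{x}, \mathbf{y})$) together with purely quadratic contributions in $\mathbf{x}$ alone or $\mathbf{y}$ alone, but the latter vanish by the identity axioms, leaving exactly the bilinear $B$.

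For (ii) I would construct $\phi$ inductively: by (i), $F(\mathbf{x}, \mathbf{y}) = \mathbf{x} + \mathbf{y} + \text{(higher)}$, so writing $\phi = \sum_{n \geq 1} \phi^{(n)}$ with $\phi^{(n)}$ homogeneous of degree $n$, the equation $F(\mathbf{x}, \phi(\mathbf{x})) = 0$ determines $\phi^{(n)}$ uniquely in terms of $\phi^{(1)}, \ldots, \phi^{(n-1)}$ and the coefficients of $F$. Uniqueness is automatic from this recursion, and the resulting $\phi$ is simultaneously a left and right inverse by the usual trick: $\phi'$ a right inverse and $\phi$ a left inverse forces $\phi = \phi \cdot e = \phi \cdot (\mathbf{x} \cdot \phi') = (\phi \cdot \mathbf{x}) \cdot \phi' = e \cdot \phi' = \phi'$ using associativity. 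Part (iv) then follows by substituting $\phi(\mathbf{x}) = -\mathbf{x} + \phi^{(2)}(\mathbf{x}) + O(3)$ into $F(\mathbf{x}, \phi(\mathbf{x})) = 0$ and reading off $\phi^{(2)} = B(\mathbf{x}, \mathbf{x})$.

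Parts (v) and (vi) are direct low-degree expansions. For (v), expand $F(\mathbf{x}, \mathbf{y})$ modulo $O(3)$ using (iii), then compose with $\mathbf{x}^{-1}$ using (iv): the degree-one terms produce $\mathbf{y}$, and the degree-two terms produce $B(\mathbf{x}, \mathbf{y}) - B(\mathbf{y}, \mathbf{x})$ after the $B(\mathbf{x},\mathbf{x})$ contributions cancel; the remaining higher-order coefficients are packaged as $d_{\alpha, \beta}$. Part (vi) follows the same way, multiplying the expansion of (v) by $\mathbf{y}^{-1}$ on the left and using (iv) again. For (viii), I would induct on $m$: the base case $m = 1$ is trivial, and from $\mathbf{x}^{m+1} = F(\mathbf{x}, \mathbf{x}^m)$ plus the expansion in (i) (with both arguments specialized to $\mathbf{x}$), one checks that the resulting series has coefficients matching the Pascal relation ${m \choose i-1} + {m \choose i} = {m+1 \choose i}$, the $\psi_i$ being determined by the $c_{\alpha\beta}$'s.

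The main obstacle is (vii), the Jacobi identity. The strategy is to expand the associativity axiom $F(F(\mathbf{x}, \mathbf{y}), \mathbf{z}) = F(\mathbf{x}, F(\mathbf{y}, \mathbf{z}))$ modulo $O(4)$ in three variables. Both sides agree through degree $2$ trivially; the degree-$3$ part of each side can be expressed in terms of $B$ alone, and subtracting the two expansions yields a polynomial identity relating $B(B(\mathbf{x},\mathbf{y}), \mathbf{z})$, $B(\mathbf{x}, B(\mathbf{y}, \mathbf{z}))$ and their cyclic permutations. Antisymmetrizing this identity in the three variables (equivalently, summing its image under the cyclic permutation of $(\mathbf{x}, \mathbf{y}, \mathbf{z})$ with appropriate sign pattern) collapses everything to $[\mathbf{x}, [\mathbf{y}, \mathbf{z}]] + [\mathbf{y}, [\mathbf{z}, \mathbf{x}]] + [\mathbf{z}, [\mathbf{x}, \mathbf{y}]] = 0$, exactly as required. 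The care needed in carrying out this degree-three bookkeeping — and in particular in using (iii) to rewrite everything in terms of $[\cdot, \cdot]$ — is where the calculation is most delicate.
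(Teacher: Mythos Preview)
The paper states this proposition without proof; it is a collection of standard identities for formal group laws drawn from the references (notably Serre's \emph{Lie Algebras and Lie Groups}), so there is no ``paper's own proof'' to compare against. Your outline is the standard one and is essentially correct for parts (i)--(vii): the identity axioms pin down the shape in (i) and (iii), the inverse is built degree by degree in (ii) and (iv), the conjugation and commutator formulae (v)--(vi) are direct degree-two expansions, and the Jacobi identity (vii) is exactly the degree-three content of associativity after antisymmetrization.

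One point to tighten is (viii). Your inductive sketch invokes ``Pascal's relation'' as though $F(\mathbf{x}, \mathbf{x}^m)$ were linear in the second argument, which it is not. What actually makes the argument work is that for each fixed multi-index $\alpha$, the coefficient of $\mathbf{x}^\alpha$ in $\mathbf{x}^m$ is, as a function of $m$, an integer-valued polynomial of degree at most $|\alpha|$ (proved by induction on $m$, using that $F$ has integer---or here $R$-valued---coefficients and that composing with $F(\mathbf{x},\cdot)$ raises polynomial degree in $m$ by at most one in each step). Such functions are $\mathbb{Z}$-linear combinations of the $\binom{m}{i}$ with $i \le |\alpha|$, which is precisely the assertion that $\mathbf{x}^m = \sum_i \binom{m}{i}\psi_i(\mathbf{x})$ with $\psi_i$ of order at least $i$ and independent of $m$. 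Phrasing it this way closes the gap.
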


\subsubsection{Standard groups}\label{sec:standard_groups}

Let $F=(F_1,\ldots,F_n)$ a group law over $\OO$ and put
\[
G=\{(g_1,\ldots,g_n)\;:\; x_i\in\mm\}=(\mm)^n.
\]
Given $g=(g_1,\ldots,g_n)$, $h=(h_1,\ldots,h_n)\in G$, define  multiplication on $G$ by $gh:=F(g,h)$ and inversion by $g^{-1}:=\phi(g)$ where $\phi$ is the inverse associated to $F$.
Then $G$ is an analytic $k$-group.
Any group isomorphic to one of the form of $G$ is called an $\mathbf{\OO}$\textbf{-standard group}.
To every $\OO$-standard group $G$ one can associate a corresponding $\OO$-Lie algebra, namely we take $\mathcal{L}_{G}:=(\OO^n,[\quad ])$ and define for ${\bf x},{\bf y}\in\OO^n$ 
\[
[{\bf x},{\bf y}]=B(x,y)-B(y,x),
\]
where $B$ is the $\OO$-bilinear form associated to the formal group law $F$ as in \ref{pre:formal_group_laws}.\ref{e-bracket-formal-law}, which by \ref{pre:formal_group_laws}.\ref{e_fomal_jacobi} defines a Lie bracket.
Note that by \ref{pre:formal_group_laws}.\ref{e_formal_adjoint} we have
\[
\mathbf{x}\mathbf{y}\mathbf{x}^{-1}=\mathbf{y}+[\mathbf{x},\mathbf{y}]+\sum_{|\alpha|,|\beta|\geq 1, |\alpha|+|\beta|\geq 3} d_{\alpha\beta}\mathbf{x}^\alpha\mathbf{y}^\beta={\bf y}\Ad_{\OO}(\bf{x})+O_y(2),
\]
where $O_y(2)$ denotes some power series whose terms have degree at least $2$ in $y_i$.
Hence $\Ad_{\OO}:G\to \Aut_{\OO}(\mathcal{L}_{G})$ gives an $\OO$-linear action of $G$ on $\mathcal{L}_{G}$ called the \textbf{adjoint action}. For $g\in G$, $\Ad_{\OO}(g)$ is given by
\begin{equation}\label{eq:adjoint_locally}
\mathbf{x}\mapsto\mathbf{x}+[\mathbf{g},\mathbf{x}]+\sum_{|\beta|=1} d_{\alpha,\beta}{\bf g}^\alpha{\bf x}^\beta.
\end{equation}

\begin{theorem}\label{pre:th:analytic_open_standard}

Every $k$-analytic group $G$ has an open $\OO$-standard subgroup.

\end{theorem}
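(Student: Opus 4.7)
The plan is to transport the analytic manifold structure of $G$ near the identity into a formal group law over $k$, and then rescale the local coordinates so that the coefficients become integral and the domain of convergence contains $(\mm)^n$. First, since $G$ is a $k$-analytic manifold, I would pick an analytic chart $\varphi: U \to V \subseteq k^n$ around the identity with $\varphi(e) = 0$. After shrinking $U$ so that multiplication sends $U \times U$ into a fixed coordinate patch, the group law and inversion are expressed in the chart as $k$-analytic maps $F(\mathbf{x}, \mathbf{y})$ and $\phi(\mathbf{x})$, given by power series with coefficients in $k$ that converge on some polydisc $\{\mathbf{x} : v(x_i) \geq -s \text{ for all } i\}$. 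The identities $e \cdot x = x \cdot e = x$, $F(\mathbf{x}, \phi(\mathbf{x})) = \mathbf{0}$, and associativity translate directly, as equalities of analytic maps on an open set and hence as identities of formal power series, into the axioms of a formal group law over $k$.

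Next, fix a uniformizer $\pi$ of $\OO$ and, for a positive integer $N$ to be chosen, perform the rescaling $\mathbf{x} = \pi^N \mathbf{x}'$. The transformed multiplication
\[
\tilde F(\mathbf{x}', \mathbf{y}') := \pi^{-N} F(\pi^N \mathbf{x}', \pi^N \mathbf{y}')
\]
is again a formal group law, and a monomial $c_{\alpha\beta}\, \mathbf{x}^\alpha \mathbf{y}^\beta$ of $F_i$ contributes a monomial with coefficient $c_{\alpha\beta}\, \pi^{N(|\alpha|+|\beta|-1)}$ to $\tilde F_i$. The linear part is preserved as $x_i' + y_i'$, and convergence of $F$ on its polydisc forces a bound of the form $v(c_{\alpha\beta}) \geq -C(|\alpha|+|\beta|)$ for some constant $C$. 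Choosing $N$ larger than $C$ (with a small adjustment for the $-1$ in the exponent) makes every nonlinear coefficient of $\tilde F$ lie in $\OO$; the same rescaling applied to $\phi$ yields a formal inverse in $\OO[[\mathbf{x}']]$. After possibly increasing $N$ further, the domain of convergence in the new coordinates contains $(\mm)^n$.

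Finally, the set $(\mm)^n$ equipped with $\tilde F$ is, by definition, an $\OO$-standard group $H$, and the map $\mathbf{x}' \mapsto \varphi^{-1}(\pi^N \mathbf{x}')$ is a bijection of $H$ onto the open neighborhood $\varphi^{-1}((\mm^{N+1})^n)$ of $e$ in $G$ which intertwines the group operations by construction of $\tilde F$. This exhibits an open subgroup of $G$ isomorphic to $H$, producing the desired open $\OO$-standard subgroup.

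The hard part is the second step: producing a single integer $N$ that integralizes all of the infinitely many coefficients of $F$ simultaneously. The key fact is that convergence on a polydisc of positive radius forces $-v(c_{\alpha\beta})$ to grow at most linearly in $|\alpha|+|\beta|$, and this linear growth is eventually dominated by the rescaling factor $N(|\alpha|+|\beta|-1)$, which grows strictly faster once $N$ is chosen large enough.
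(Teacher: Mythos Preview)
The paper states this theorem without proof, as a preliminary result from the literature (it refers to Serre's \emph{Lie Algebras and Lie Groups}, Part II, for the theory of analytic groups over local fields). So there is no ``paper's own proof'' to compare against; I evaluate your argument on its own merits.

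Your approach is correct and is essentially the standard one. A few minor remarks:
\begin{itemize}
\item Your notation $v(x_i)\geq -s$ uses $v$ as an additive valuation, whereas the paper's $v$ is multiplicative; replacing $v$ by $\ord_v$ would avoid confusion.
\item The sentence ``After possibly increasing $N$ further, the domain of convergence in the new coordinates contains $(\mm)^n$'' is unnecessary: once all coefficients of $\tilde F$ lie in $\OO$, convergence on $(\mm)^n\times(\mm)^n$ is automatic in the non-archimedean setting, since the general term has $\ord\geq |\alpha|+|\beta|\to\infty$.
\item You do not need to rescale $\phi$ separately: once $\tilde F$ has coefficients in $\OO$, Proposition~\ref{pre:formal_group_laws}(ii) guarantees that the formal inverse is unique and the recursive construction shows its coefficients lie in $\OO$ as well.
\item In the final step you should also note that $(\mm^{N+1})^n$ lies inside the image of the original chart $\varphi$ for $N$ large enough, so that $\varphi^{-1}$ is defined there; this is implicit in your ``after shrinking $U$'' but worth making explicit.
\end{itemize}
The key estimate---that convergence on a polydisc of positive radius forces $\ord(c_{\alpha\beta})\geq -C(|\alpha|+|\beta|)-D$ for some constants $C,D$, and that the rescaling factor $N(|\alpha|+|\beta|-1)$ dominates this for $|\alpha|+|\beta|\geq 2$ once $N$ is large---is handled correctly.
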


\subsubsection{The Lie algebra of an analytic group and the adjoint action}\label{sec:Lie_algebra_analytic}

Given a $k$-analytic group $G$, let $U\leq_o G$ be an open standard subgroup with a homeomorphism: $f: U\to k^n$ such that $f(1)=0$.
Then the group law in $U$ gives a formal group law $F$ on $k^n$.
Now let us denote by $\mathcal{L}(G)$ the tangent space of $G$ at $1$.
Then the differential of $f$ at $1$ gives an isomorphism $df:\mathcal{L}(G)\to k^n$.
For every $x,y\in\mathcal{L}(G)$ we define
\[
df([x,y])=[df(x),df(y)]_{F},
\]
where the Lie bracket on the right is the one associated to the formal group law $F$. Note that since $U$ is standard, the formal group $F$ can actually be interpreted as a formal group law over $\OO$. The associated Lie bracket is thus $\OO$-linear and extends to a $k$-linear bracket. 

If $U'$ is another standard subgroup with chart $f':U'\to k^n$ and associated formal group law $F'$, then $f^{-1}\circ f'_{|f(U\cap U')}\to f'(U\cap U')$ gives a local analytic isomorphism at $0$ and induces a formal homomorphism from $F$ to $F'$. The following lemma shows that $\mathcal{L}(G)$ is well defined.

\begin{lemma}\label{pre:lm:lie_analytic_well_defined}
Let $F$ and $F'$ be two formal group laws and let $f$ be a formal homomorphism from $F$ to $F'$ (that is, $f(F(X,Y))=F'(f(X),f(Y))$). Let $f_1$ be the linear part of $f$. Then:
\[
[f_1(X),f_1(Y)]_F'=f_1([X,Y]_F).
\]
\end{lemma}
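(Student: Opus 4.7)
The plan is to expand both sides of the homomorphism identity $f(F(X,Y)) = F'(f(X),f(Y))$ up to total degree $2$, then isolate the bilinear (mixed in $X$ and $Y$) contribution, and finally antisymmetrize in $X \leftrightarrow Y$ so that the symmetric garbage cancels and only the brackets remain.

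First I would set up notation. Write $f = f_1 + f_2 + O(3)$ where $f_i$ is the degree-$i$ homogeneous component and use Proposition \ref{pre:formal_group_laws}.\ref{e-bracket-formal-law} to write
\[
F(X,Y) = X + Y + B(X,Y) + O(3), \qquad F'(X,Y) = X + Y + B'(X,Y) + O(3),
\]
with $[X,Y]_F = B(X,Y) - B(Y,X)$ and similarly for $F'$. Substituting into $f(F(X,Y)) = F'(f(X),f(Y))$ and using that $f(0)=0$, the degree $0$ and $1$ parts match trivially (they just give $f_1(X+Y) = f_1(X)+f_1(Y)$). The real information lives in degree $2$.

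Next I would compute the degree-$2$ component on each side. On the left,
\[
f(F(X,Y)) = f_1(X+Y) + f_1(B(X,Y)) + f_2(X+Y) + O(3),
\]
while on the right,
\[
F'(f(X),f(Y)) = f_1(X)+f_1(Y) + f_2(X)+f_2(Y) + B'(f_1(X),f_1(Y)) + O(3).
\]
Extracting only the bilinear-in-$(X,Y)$ terms (those of degree exactly one in $X$ and exactly one in $Y$) and writing $T(X,Y)$ for the symmetric bilinear form defined by $T(X,Y) := f_2(X+Y) - f_2(X) - f_2(Y)$, I get the identity
\[
f_1(B(X,Y)) + T(X,Y) = B'(f_1(X),f_1(Y)).
\]

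Finally I would antisymmetrize. Swapping $X$ and $Y$ in the identity above and using $T(X,Y) = T(Y,X)$, subtraction kills the $T$ terms and yields
\[
f_1\bigl(B(X,Y) - B(Y,X)\bigr) = B'(f_1(X),f_1(Y)) - B'(f_1(Y),f_1(X)),
\]
which is precisely $f_1([X,Y]_F) = [f_1(X),f_1(Y)]_{F'}$. There is no real obstacle here beyond careful bookkeeping; the whole content of the lemma is the observation that the higher-order correction $f_2$ contributes only symmetric bilinear terms and therefore washes out when one passes from the bilinear form $B$ to its antisymmetrization, the Lie bracket.
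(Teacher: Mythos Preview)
Your proof is correct. The paper does not actually supply a proof of this lemma; it is stated without proof as a standard fact used to show that the Lie algebra of an analytic group is well defined. Your argument---expanding both sides of the homomorphism identity to total degree $2$, extracting the bilinear part, and antisymmetrizing to eliminate the symmetric contribution $T(X,Y)=f_2(X+Y)-f_2(X)-f_2(Y)$---is exactly the standard way one proves this, and all the bookkeeping is done correctly.
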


Equivalently, if we denote by $k[G]$ the $k$-algebra of analytic functions on $G$, there is a natural isomorphism from $\LL(G)$ to the space of left invariant derivations of $k[G]$, which is a Lie algebra, see \ref{sec:Lie_algebra_algebraic_group} below.
One can endow $\LL(G)$ with a Lie algebra structure via this isomorphism and this gives the same Lie algebra structure on $\LL(G)$  as before.

For $g\in G$, let $\varphi_g$ denote conjugation by $g$. Then $\varphi_g$ defines an automorphism of $G$ and by the previous lemma its differential $d\varphi_g$ gives an automorphism of $\mathcal{L}(G)$.
The adjoint action of $G$ is $\Ad_{k_v}: G\to\Aut_{k_v}(\mathcal{L}(G))$, given by $g\mapsto d\varphi_g$.
Moreover, $\Ad_{k_v}$ is an analytic map.
Indeed, since $\Ad_{k_v}$ is clearly a group homomorphism, it suffices to check that $\Ad_{k_v}$ is analytic at $e$.
Now in local coordinates, it follows from \eqref{eq:adjoint_locally} that $\Ad_{k_v}$ is analytic at $e$.

\section{Affine algebraic varieties}
We refer to \cite{Bor} for a more detailed presentation.
Let $K$ be an algebraically closed field. A $K$-space $(X,\mathcal{O}_X)$ is a topological space $X$ together with a sheaf $\mathcal{O}_X$ of $K$-algebras on $X$ whose stalks are local rings. We will often write $X$ for $(X,\mathcal{O}_X)$ and $K[X]=\OO_X(X)$.
For $x\in X$ we write $\mathcal{O}_{x}$ for the stalk over $x$. Its maximal ideal is denoted  $\mm_x$ and its residue class field by $K(x)$.
A morphism $(Y,\mathcal{O}_Y)\to (X,\mathcal{O}_X)$ is a continuous function $f:Y\to X$ together with $K$-algebra homomorphisms $f_U^V: \mathcal{O}_X(U)\to\mathcal{O}_Y(V)$ whenever $U\subset X$, $V\subset Y$ are open subsets such that $f(V)\subset U$.
These maps 
are required to be compatible with the respective restriction homomorphisms 
in $\mathcal{O}_X$ and $\mathcal{O}_Y$.
For $y\in Y$ we can pass to the limit over neighborhoods $V$ of $y$ and $U$ of $x=f(y)$ to obtain a homomorphism $f_y:\mathcal{O}_x\to\mathcal{O}_y.$
It is further 
required of a morphism that this always be a local homomorphism, that is, that $f_y(\mm_x)\subseteq\mm_y$. 

An \textbf{affine $\mathbf{K}$-algebra} $A$ is a finitely generated $K$-algebra.
We write
\[
\Spec_m(A):=\{\mm\subset A\,:\,\mm\,\mbox{is a maximal ideal}\}.
\] 
Recall that the Nullstellensatz gives a canonical bijection
\[
\begin{array}{rccc}
e:	&\Hom_K(A,K)&	\to 		&X:=\Spec(A)\\
	&g			&	\mapsto		&\ker g.
\end{array}
\]
Given $x\in X$ and $f\in A$ we will write $f(x)=e^{-1}(x)(f)$. 
The resulting function $f:X\to K$ determines $f$ modulo the nilradical of $A$.
Hence if $A$ is reduced (i.e. the nilradical of $A$ is trival), we can identify $A$ with a ring of 
$X$-valued functions on $X$.
We give $X$ the structure of a $K$-space as follows.
We endow $X$ with the Zariski topology, that is, the closed sets are of the form 
\[
V(J)=\{\mm\in X\,:\,J\subseteq\mm\}
\]
 for some $J\subseteq A$.
Moreover, to every open subset $U\subseteq X$ we associate the $K$-algebra $A_{S(U)}$, the localization of $A$ at $A\setminus{S(U)}$, where $S(U)$ is the set of $f\in A$ such that $f(x)\neq 0$ for every $x\in U$.
This gives a presheaf on the topological space $X$, which induces a sheaf $\tilde{A}$ on $X$.
It is easy to see that for every $x\in X$ the stalk at $x$ is isomorphic to the local ring $A_x$.

Any $K$-space isomorphic to one of the form $(\Spec_m(A),\tilde{A})$ will be called an \textbf{affine $\mathbf{K}$-space}.
A homomorphism $\alpha^*:A\to B$ of affine $K$-algebras induces a continuous 
function $\alpha:Y\to X$, where $Y=\Spec_m(B).$ If $U\subset X$ and $V\subset Y$ are open and 
$\alpha(V)\subseteq U$ then $\alpha(S(U))\subseteq S(V)$ so there is a natural homomorphism $A_{S(U)}\to B_{S(V)}$. These induce a morphism on the associated 
$K$-spaces $(Y,\tilde{B})\to(X,\tilde{A})$.
Hence the assignment $A\mapsto \Spec_m (A)$ gives a contravariant functor from the category of affine $K$-algebras to the category of affine $K$-spaces.
\begin{theorem}\label{th:equivalence_categories_K_schemes}
Let $X=\Spec_m(A)$, $Y=\Spec_m(B)$ be affine $K$-spaces.
The natural map $A\mapsto\tilde{A}(X)$ is an isomorphism, and the map
\[
\begin{array}{ccc}
\Mor_{K}(Y,X)	&\to		&\Hom_{K}(A,B)\\
		\alpha	&\mapsto 	&\alpha^*
\end{array}
\]
is bijective. Thus the assignment $A\to\Spec_m(A)$ is a contravariant equivalence from the 
category of affine $K$-algebras to the category of affine $K$-spaces.
\end{theorem}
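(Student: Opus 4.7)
The plan is to establish the theorem in two stages matching the two statements: (1) the natural map $A \to \tilde{A}(X)$ is a $K$-algebra isomorphism, and (2) every morphism $(Y, \tilde{B}) \to (X, \tilde{A})$ is induced by a unique $K$-algebra homomorphism $A \to B$. The contravariant equivalence of categories then follows formally, essential surjectivity being automatic since every affine $K$-space is by definition of the form $\Spec_m(A)$ for some affine $K$-algebra.

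For (1), I would work on the principal open subsets $D(f) = \{x \in X : f(x) \neq 0\}$, which form a basis for the Zariski topology since $V(J)^c = \bigcup_{f \in J} D(f)$. The key computation is that $\tilde{A}(D(f)) = A_f$, the localization at the multiplicative set $\{f^n : n \geq 0\}$. The global identification $\tilde{A}(X) = A$ then follows using that $S(X) = \{f \in A : f(x) \neq 0 \text{ for every } x \in X\}$ consists of elements lying in no maximal ideal, which by the Nullstellensatz are precisely the units of $A$; hence $A_{S(X)} = A$.

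For (2), injectivity of $\alpha \mapsto \alpha^*$ rests on the locality condition built into the definition of a morphism of $K$-spaces: if $(\alpha, \alpha^*) : (Y, \tilde{B}) \to (X, \tilde{A})$ is such a morphism and $y \in Y$ with $x = \alpha(y)$, then $\alpha_y : \mathcal{O}_x \to \mathcal{O}_y$ is local, so passing to global sections forces $x = (\alpha^*)^{-1}(\mm_y)$. Thus $\alpha$ is recovered from $\alpha^* : A \to B$. For surjectivity, given any $K$-algebra homomorphism $\phi : A \to B$, define $\alpha(\mm) := \phi^{-1}(\mm)$ for $\mm \in Y = \Spec_m(B)$; the Nullstellensatz guarantees $B/\mm \cong K$, so the composition $A \to B \to B/\mm$ has kernel a maximal ideal, placing $\alpha$ into $\Spec_m(A)$. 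Continuity follows from $\alpha^{-1}(V(J)) = V(\phi(J) B)$, and the required sheaf maps are constructed on the basis $\{D(f)\}$ from the localizations $\phi_f : A_f \to B_{\phi(f)}$ and extended by sheafification; locality at each stalk is immediate because these are induced by a $K$-algebra map and $B_\mm / \mm B_\mm = K$.

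The main technical obstacle lies in part (1), namely verifying $\tilde{A}(D(f)) = A_f$ and hence $\tilde{A}(X) = A$. This reduces to the algebraic assertion that for any cover $X = \bigcup_{i} D(f_i)$, equivalently (by the Nullstellensatz) for any $f_1, \ldots, f_r$ with $(f_1, \ldots, f_r) = A$, the sequence
\[
A \to \prod_i A_{f_i} \rightrightarrows \prod_{i,j} A_{f_i f_j}
\]
is an equalizer of $A$-modules. Once this sheaf-theoretic bookkeeping is in hand, the remainder of the argument is formal, with the Nullstellensatz providing the essential bridge between the algebraic side (maximal ideals, $K$-algebra homomorphisms) and the geometric side ($K$-points, morphisms of $K$-spaces).
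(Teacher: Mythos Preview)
The paper does not supply a proof of this theorem: it appears in the Preliminaries chapter as background material, stated without argument and with a blanket reference to Borel's \emph{Linear Algebraic Groups} for details. Your proposal is the standard proof one finds in such references, and it is correct; in particular, your reduction of part~(1) to the equalizer sequence for a cover by principal opens, and your use of the Nullstellensatz in part~(2) to ensure that $\phi^{-1}(\mm)$ is maximal, are exactly the expected steps.
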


Let $X$ and $Y$ be affine $K$-spaces.
The product $X\times Y$ is characterized by the property that morphisms from an affine $K$-space $Z$ 
to $X\times Y$ are pairs of morphisms to the two factors.
Applying this to $Z=\Spec_m(K$) we find that the underlying set of $X\times Y$ is the usual cartesian 
product.
From \ref{th:equivalence_categories_K_schemes} it follows immediately that the product of affine 
$K$-spaces $\Spec_m(A)$ and $\Spec_m(B)$ exists and equals 
$\Spec_m(A\otimes_K B)$.

An affine $K$-space $X=\Spec_m(A)$ is said to be an \textbf{affine $\mathbf{K}$-variety} if $A$ is reduced.
By gluing affine $K$-spaces one can construct the more general concepts of $K$-schemes and $K$-varieties. However, throughout this work we will only encounter affine $K$-spaces and affine varieties, so whenever we use the terms $K$-space and $K$-variety we mean affine $K$-space and affine $K$-variety. 

From now on let $X=\Spec_m(K[X])$ be a $K$-variety.
A subvariety $Y$ of $X$ is a closed subset $Y\subseteq X$ where $\mathcal{O}_Y=\mathcal{O}_{X|Y}$. Every such $Y$ is given by an ideal $I\subseteq K[X]$ and $Y=\Spec_m(K[X]/I)\hookrightarrow \Spec_m(K[X])$. 

We recall that a topological space $X$ is said to be irreducible if it is not empty and is not the union of two proper closed subsets.
An algebraic variety is said to be \textbf{irreducible}, if it is not empty and can not be represented as a union of two proper algebraic subvarieties.
Every algebraic variety $X$ is a union of finitely many maximal algebraic irreducible subvarieties which are called the  \textbf{irreducible components} of $X$.
We define the \textbf{dimension} of $X$, $\dim X$, as the supremum of the lengths, $n$, of chains $F_0\subset F_1\subset\ldots\subset F_n$ of distinct irreducible closed sets in $X$.

\subsubsection{$k$-structures, $k$-varieties and $k$-morphisms}\label{pre:k_structures}

Let $k$ be a subfield of $K$. Given a $K$-algebra $A$ a $\mathbf{k}$\textbf{-structure} on $A$ is a $k$-subalgebra $A_k$ such that we have a natural isomorphism $K\otimes_k A_k\cong A$.

Let $Y$ be an algebraic subvariety of the affine space $K^n=\Spec_m(K[x_1,\ldots,x_n])$.
The subvariety $Y$ is called $\mathbf{k}$\textbf{-closed} or closed over $k$ if $X$ is the set of common zeros of a finite system of polynomials with coefficients in $k$.
A subvariety $Y$ is $k$-closed if and only if it is invariant under the natural action on $K^n$ of the Galois group of the field $K$ over $k$.
Let us denote by $I(Y)\subseteq K[x_1,\ldots,x_n]$ the ideal of polynomials vanishing on $Y$, and put $I_k(Y)=I(Y)\cap k[x_1,\ldots,x_n]$.
A subvariety $Y$ is said to be $\mathbf{k}$\textbf{-defined}, or defined over $k$, if $I_k(Y)$ is a $k$-structure on $I(Y)$, i.e., if $I(Y)=K\otimes_k I_k(Y)$.
A subvariety $Y$ is closed over $k$ if
and only if it is defined over some purely inseparable field extension of $k$.
If $Y$ is defined over $k$, then $K[Y]=K\otimes_k k[Y],$ where $k[Y]=k[x_1,\ldots,x_n]/I_k(Y)$.
In other words, if $Y$ is defined over $k$, then the $k$-algebra $k[Y]$ is a $k$-structure on the $K$-algebra $K[Y]$.
A function $f\in K[Y]$ is said to be defined over $k$ if $f\in k[Y]$.

To define a $k$-structure on an affine algebraic variety $X$ is the same thing as to define an isomorphism $\alpha: X\twoheadrightarrow Y$ of the variety $X$ onto an algebraic subvariety $Y$ of an affine space which is defined over $k$.
An affine variety equipped with a $k$-structure is called an \textbf{affine $\mathbf{k}$-variety}.
The image of the $k$-algebra $k[Y]$ under
the comorphism $\alpha^*$ is called the algebra of $k$-regular functions on the $k$-variety $X$ and is denoted by $k[X]$.
Note that $k[X]$ is a $k$-structure of the $k$-algebra $K[X]$ of regular functions on $X$.
Two $k$-structures on an affine variety are called equivalent if the corresponding algebras of $k$-regular functions coincide.
We say that a morphism $\alpha: X\to X'$ between two affine $k$-varieties is defined over $k$ or that $\alpha$ is a $k$-morphism if the image of $k[X']$ under the comorphism $\alpha^*$ is contained in $k[X]$.
If $Y$ and $Y'$ are subvarieties of affine spaces defined over
$k$, then a morphism $\alpha: Y\twoheadrightarrow Y'$ is defined over $k$ if and only if $\alpha$ is defined by
polynomials with coefficients in $k$.

If $X$ and $Y$ are affine $k$-varieties then $X\times Y$ becomes a $k$-variety with $k$-structure given by $k[X]\otimes_k k[Y]$.

If $X$ is an affine $k$-variety , we denote by $X(k)=\Hom_k(k[X],k)$ the set of its $\mathbf{k}$\textbf{-rational points}.

We shall identify the $k$-variety $X$ with the set $X(K)$. In particular, a susbset $A\subseteq X$ is said to be \textbf{Zariski-dense} if it is dense in $X(K)$.

\begin{theorem}[{\cite[AG.14.4]{Bor}}]\label{lm:sub_G(k)_has_closure_defined_k}
Let $V$ be a $k$-variety and $X\subseteq V(k)$. Then the Zariski closure of $X$ is defined over $k$.
\end{theorem}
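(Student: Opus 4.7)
The plan is to realize $V$ as a closed subvariety of some affine $n$-space, so that $k[V]$ is a $k$-structure on $K[V]=K\otimes_k k[V]$, and then to argue that the defining ideal of the closure $\overline{X}$ is cut out inside $K[V]$ by linear conditions whose coefficients already lie in $k$. Concretely, let $W=\overline{X}$ be the Zariski closure of $X$ in $V$, and let $I:=I(W)\subseteq K[V]$ be its vanishing ideal. Since the Zariski closure of $X$ is by definition $V(I(X))$, where $I(X)$ is the ideal of $f\in K[V]$ vanishing pointwise on $X$, we have the crucial description
\[
I=\{\,f\in K[V]\ :\ f(x)=0\text{ for every }x\in X\,\}.
\]
The goal is then to show that $I$ is $k$-defined, i.e., $I=K\otimes_k (I\cap k[V])$.

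First I would fix a $k$-basis $\{e_\alpha\}_{\alpha\in A}$ of $k[V]$; by the $k$-structure hypothesis $K\otimes_k k[V]\cong K[V]$, this is simultaneously a $K$-basis of $K[V]$, so every $f\in K[V]$ has a unique expansion $f=\sum_\alpha c_\alpha e_\alpha$ with all but finitely many $c_\alpha\in K$ zero. Second, I would use the assumption $X\subseteq V(k)$: any $x\in X$ corresponds to a $k$-algebra homomorphism $k[V]\to k$, so $e_\alpha(x)\in k$ for every $\alpha\in A$ and every $x\in X$. Third, the condition $f\in I$ becomes the linear system
\[
\sum_{\alpha} c_\alpha\, e_\alpha(x)=0\quad\text{for every }x\in X,
\]
whose coefficients $e_\alpha(x)$ all lie in $k$. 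Fourth, since the solution space of a homogeneous linear system over $k$ extends scalars faithfully from $k$ to $K$, the set of coefficient tuples $(c_\alpha)\in K^{(A)}$ satisfying this system equals $K\otimes_k$ (the set of $k$-valued tuples satisfying it). Translating back via the basis $\{e_\alpha\}$ gives $I=K\otimes_k (I\cap k[V])=K\cdot I_k(W)$, which is precisely the statement that $W$ is defined over $k$.

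The subtlety I would be most careful about is the distinction emphasised in Section~\ref{pre:k_structures} between being \emph{$k$-closed} (Galois-stable, which here is automatic because $X\subseteq V(k)$ is fixed pointwise by $\Gal(K/k)$) and being \emph{$k$-defined} (the stronger condition, which can fail in inseparable situations). The argument above bypasses Galois descent entirely and instead exploits the linear-algebra fact that a $K$-subspace of $K[V]$ cut out by $k$-linear functionals is itself of the form $K\otimes_k(\,\cdot\,)$; this is what makes the conclusion robust in all characteristics. A minor bookkeeping point is that the $k$-basis $\{e_\alpha\}$ may be infinite, but each individual equation involves only finitely many $c_\alpha$, so the scalar-extension step applies coordinate-wise and presents no real difficulty.
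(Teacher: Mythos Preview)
The paper does not supply its own proof of this statement; it is quoted verbatim from Borel \cite[AG.14.4]{Bor} and used as a black box. Your argument is correct and is in fact the standard one given in Borel: the vanishing ideal of $X$ is the common kernel of the evaluation functionals at points of $X$, these functionals are $k$-linear because $X\subseteq V(k)$, and the kernel of a family of $k$-linear functionals on $K\otimes_k k[V]$ is automatically of the form $K\otimes_k(\text{kernel over }k)$. The cleanest way to execute the scalar-extension step is the one you hint at but do not quite write out: pick a $k$-basis $\{\lambda_\beta\}$ of $K$, expand $f=\sum_\beta \lambda_\beta f_\beta$ with $f_\beta\in k[V]$, and use the $k$-linear independence of the $\lambda_\beta$ together with $f_\beta(x)\in k$ to conclude that each $f_\beta$ already vanishes on $X$. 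This avoids any fuss about infinite systems.
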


\subsubsection{Tangent spaces and differentials}
Let $X$ be an affine $K$-variety.
Recall that $\OO_x$ denotes the local ring at $x$ and $\mm_x$ is the unique maximal ideal in $\OO_x$.
Note that $K$ is naturally isomorphic to the quotient ring $K(x)=\OO_x/\mm_x$.
We denote the image of the element $f\in\OO_x$ under the projection $\OO_x\to\OO_x/\mm_x\cong K$ by $f(x)$.
A \textbf{derivation} at the point $x$ is a $K$-linear map $\delta:\OO_x\to K$ such that $\delta(fg)=f(x)\delta(g)+\delta(f)g(x)$ for every $f,g\in\OO_x$.
We define $\mathcal{T}_x(X)$ to be the set of derivations at $x$, which has a natural structure of $K$-vector space.
The space $\mathcal{T}_x(X)$ is called the \textbf{tangent space of $\mathbf{X}$ at} $\mathbf{x}$.
Note that there is a natural isomorphism between $\mathcal{T}_x(X)$ and $\Hom_K(\mm_x/\mm_x^2)$.
Equivalently, one can interpret $\mathcal{T}_x(X)$ via dual numbers as follows. Recall that given a ring $R$, the ring of dual numbers of $R$ is $R[\epsilon]\cong R[T|T^2=0]$ and there is a natural projection $R[\epsilon]\twoheadrightarrow R$. This induces a morphism $\Hom_K(K[X],K[\epsilon])\to\Hom_K(K[X],K)$ whose fibre above $x$ is $K$-isomorphic to $\mathcal{T}_x(X)$.

If $\alpha:X\to Y$ is a morphism of algebraic varieties, we can define a map $d\alpha_x: \mathcal{T}_x(X)\to\mathcal{T}_{\alpha(x)}(Y)$, called the \textbf{differential of $\mathbf{\alpha}$ at $\mathbf{x}$}, as follows: if $v\in\mathcal{T}_x(X),\, f\in\OO_{f(x)}$ and $\alpha^*$ is the comorphism of $\alpha$, we set $d\alpha_{x}(v)(f)=v(\alpha^*(f))$.
If $X$ is a $k$-variety for some subfield $k$ one can give a $k$-structure $\OO_{x,k}$ to the local ring $\OO_x$ and and this induces a $k$-structure $\mathcal{T}_{x,k}(X)$ on $\mathcal{T}_x(X)$ given by the $k$-linear derivations from $\OO_{x,k}$ to $k$ and we have $\mathcal{T}_{x,k}\cong\ker\left(\Hom_k(k[X],k[\epsilon])\to\Hom_k(k[X],k)\right)$.
Moreover, if $\alpha$ is a $k$-morphism then $d\alpha_x(\mathcal{T}_{x,k}(X))\subseteq\mathcal{T}_{f(x),k}(Y)$.
We call a point $x\in X$ a \textbf{simple point} if $\OO_x$ is a regular local ring.
If $X$ is an irreducible variety this means that $\dim X =\dim\mathcal{T}_x(X)$ (if $X$ is irreducible we always have $\dim \mathcal{T}_x(X)\geq\dim X$).
The set of simple points of $X$ is Zariski dense and open in $X$.
If all points are simple, then we say that $X$ is \textbf{smooth}.

\subsubsection{Local fields and the space of adèles}\label{sec:analytic_varieties_adeles_varieties}

Let $X$ be an affine $k$-variety and let $\iota: X\to\mathbb{A}^n$ be an embedding onto a closed subvariety of an affine space. 
We make an abuse of notation an identify $X=\iota(X)$.
For every $v\in V_k$ the subspace $X(k_v)=\Hom(k[X],k_v)$ can be given a topology induced by that of $k_v$. Indeed, since $X\subseteq\mathbb{A}^n$ we can see $X(k_v)\subseteq k_v^n$ and give $X(k_v)$ the topology as a subspace of the topological space $k_v^n$. Hence $X(k_v)$ is a closed subspace of a locally compact topological space, which is itself locally compact.
It is easily checked that this topology does not depend on the embedding $\iota$ of $X$ into $\mathbb{A}^n$.

For $v\in V_f$ let us write $X(\OO_v)$ for the points in $X$ all of whose coordinates lie in $\OO_v$, so we have $X(\OO_v)\subseteq X(k_v)$ is a compact subspace.
We define the \textbf{adèle space of} $\mathbf{X}$ as follows:
\[
X(\mathbb{A})=\{(x_v)_{v\in V_k}\in\prod_{v\in V_k}X(k_v)\;:\; x_v\in X(\OO_v)\;\textrm{for almost every}\;v\in V_f\}.
\]
We endow $X(\mathbb{A}_k)$ with a topology by considering it as a subspace of the restricted topological direct product of the $X(k_v)$ with respect to $X(\OO_v)$, that is, open sets are of the form $\prod_{v\in v_k}\OO_v$ where $\OO_v$ is open in $X(k_v)$ and $\OO_v=X(\OO_v)$ for almost every $v\in V_k$.
One can check that the space $X(\mathbb{A}_k)$ does not depend on the initial embedding $\iota$ of $X$ into an affine space.
Since the $X(\OO_v)$ are compact and $X(k_v)$ is locally compact, $X(\mathbb{A}_k)$ is a locally compact space.
In the same way that $k$ embeds diagonally  as a discrete subspace in $\mathbb{A}_k$, $X(k)$ embeds diagonally as a discrete subspace in $X(\mathbb{A}_k)$, that is, $X(k)$ is naturally identified with the set of principal adèles of $X(\mathbb{A}_k)$ via the diagonal embedding $x\mapsto (x)_{v\in V_f}$.

\section{Algebraic groups}
Let $G$ be an affine algebraic variety over an algebraically closed field $K$.
Suppose that we have morphisms of algebraic varieties $m:G\times G\to G$, $i:G\to G$ and $e: G\to K$ (i.e. $e\in G(K)$)  such that $G$ is a group with multiplication given by $m$, inversion given by $i$ and $e$ as an identity element.
Then we call $G$ an \textbf{algebraic group}.

If $G$ is a $k$-variety and the morphisms $m,i$ and $e$ are defined over $k$, then $G$ is said to be defined over $k$ or an \textbf{algebraic $\mathbf{k}$-group} or simply a $k$-group.

A subgroup $H$ of an algebraic group $G$ is called algebraic if $H$ is an algebraic subvariety of $G$.
Algebraic subgroups defined over $k$ (as algebraic subvarieties) are called $k$-subgroups.
An algebraic subgroup of an algebraic group is called
$k$-closed or closed over $k$ (respectively $k$-defined or defined over $k$) if it is $k$-closed (respectively $k$-defined) as an algebraic subvariety.

By the center of a $k$-group $G$ we mean the center of the group $G(K)$, which is an algebraic subgroup of $G$ that is $k$-closed and will be denoted $Z(G)$.

An algebraic group morphism $\alpha:G\to G'$ is defined as a group homomorphism which is
an algebraic variety morphism. 
The morphism $\alpha$ is said to be $k$-defined if it is $k$-defined as morphism of algebraic varieties.
If $G$ and $G'$ are $k$-groups we say that they are isomorphic over $k$ or $k$-isomorphic if there exists a $k$-isomorphism $\alpha: G\to G'$.

The following proposition shows that every affine algebraic $k$-group is linear.

\begin{proposition}[{\cite[Proposition I.1.10]{Bor}}]\label{pre:prop:G(k)_linear}
Let $G$ be an affine $k$-group, then $G$ is $k$-isomorphic to a $k$-subgroup of the group $\GL_n$ for some $n$. 
\end{proposition}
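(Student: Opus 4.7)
The plan is to embed $G$ into $\GL(V)$ for a suitable finite-dimensional $k$-vector space $V$ carved out of $k[G]$ itself, using the right regular representation of $G$ on its coordinate ring.

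First I would set up the right translation action. For each $g\in G(K)$ and $f\in K[G]$ define $(\rho_g f)(x)=f(xg)$. Using the comultiplication $\Delta:K[G]\to K[G]\otimes_K K[G]$ induced by $m$, for any $f\in K[G]$ one can write $\Delta(f)=\sum_{i=1}^r f_i\otimes h_i$ with $f_i,h_i\in K[G]$, and then $\rho_g f=\sum_i h_i(g)f_i$. Consequently the $K$-linear span $W_f$ of $\{\rho_g f:g\in G(K)\}$ is contained in the finite-dimensional space $\sum_i Kf_i$, hence finite-dimensional; moreover $W_f$ is $\rho$-stable. Taking sums, any finite subset of $K[G]$ lies in a finite-dimensional $\rho$-stable subspace. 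A $k$-rational refinement is essential: since $G$ is a $k$-group, $\Delta$ is defined over $k$, so if $f\in k[G]$ we may take $f_i,h_i\in k[G]$, and hence the smallest $\rho$-stable subspace $V\subseteq K[G]$ containing a given finite set $f_1,\dots,f_n\in k[G]$ admits a $k$-structure $V_k:=V\cap k[G]$ with $V=K\otimes_k V_k$.

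Next I would pick generators. Since $G$ is affine, $k[G]$ is a finitely generated $k$-algebra; fix generators $f_1,\dots,f_n\in k[G]$ and let $V$ be the finite-dimensional $\rho$-stable $K$-subspace constructed above with its $k$-structure $V_k$. The action $\rho:G\to\GL(V)$ is a morphism of algebraic groups defined over $k$: in coordinates with respect to a $k$-basis $e_1,\dots,e_N$ of $V_k$, the matrix entries $a_{ij}(g)$ of $\rho_g$ are determined by $\rho_g e_j=\sum_i a_{ij}(g)e_i$, and the formula $\rho_g f=\sum h_i(g)f_i$ shows each $a_{ij}$ is a $k$-regular function on $G$. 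So $\rho$ is a $k$-morphism $G\to\GL_N$.

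Finally I would show $\rho$ is a closed immersion defined over $k$. For injectivity on $K$-points and for surjectivity of the comorphism, the key observation is that the generators $f_j$ of $k[G]$ lie in the image of $\rho^*$: picking a basis of $V_k$ that includes the $f_j$'s, and writing $f_j(g)=(\rho_g f_j)(e)=\sum_i a_{ij}(g)f_i(e)$, one sees $f_j=\sum_i f_i(e)\,\rho^*(a_{ij})\in\rho^*(k[\GL_N])$. Hence $\rho^*:k[\GL_N]\to k[G]$ is surjective, so $\rho$ identifies $G$ with the closed $k$-subvariety $V(\ker\rho^*)\subseteq\GL_N$, and being a group homomorphism this closed subvariety is a $k$-subgroup.

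The main technical obstacle is arranging everything $k$-rationally at once: the generic construction of $V$ is over $K$, and I must verify that the $k$-structure on $k[G]$ descends to a $k$-structure on $V$ making the resulting representation $\rho:G\to\GL_N$ and its comorphism $k$-defined, which is where the fact that $\Delta$ itself is defined over $k$ (since $G$ is a $k$-group) does the real work.
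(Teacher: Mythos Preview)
The paper does not supply its own proof of this proposition; it simply records the statement with a citation to \cite[Proposition I.1.10]{Bor}. Your argument is the standard one found there: act by right translation on $K[G]$, use the comultiplication to show any finite set of regular functions lies in a finite-dimensional $\rho$-stable subspace with a $k$-structure, and then choose $k$-algebra generators of $k[G]$ to obtain a $k$-defined closed immersion $G\hookrightarrow\GL_N$. So there is nothing to compare beyond noting that your proposal matches the cited reference.
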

Algebraic subgroups of $\GL_n$ are called \textbf{linear algebraic groups}.
The algebra $k[\GL_n]$ has the form $k[GL_n]=
k[x_{11},x_{12},\ldots,x_{nn},\det^{-1}]$, where we have chosen coordinates such that for a matrix $g=(g_{ij})\in\GL_n$ we have $x_{ij}(g)=g_{ij}-\delta_{ij}$ and $\det(g)=\det((g_{ij}))$. 
Note that with respect to this coordinates, the identity matrix has zero coordinates except for $\det \Id=1$.

An algebraic group is called \textbf{connected} if it is connected in the Zariski topology.
We will denote by $G^0$ the connected component of the identity in the group $G$, that is the maximal connected algebraic subgroup of $G$.

\begin{proposition}[{\cite[I.1.2]{Bor}}]\label{pre:connected_components}
Let $G$ be an affine algebraic $k$-group.
The factor group $G/G^0$ is finite, $G^0$ is defined over $k$ and
every algebraic subgroup of finite index in $G$ contains $G^0$.
\end{proposition}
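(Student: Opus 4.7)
The proposition contains three claims to establish: finiteness of $G/G^0$, the $k$-rationality of $G^0$, and the fact that every finite-index algebraic subgroup contains $G^0$. The plan is to handle them in this order.

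For finiteness of $G/G^0$, I would first use that an affine variety is Noetherian, hence $G$ has only finitely many irreducible components $X_1,\ldots,X_r$. The key step is to show that exactly one of them contains $e$. If $e\in X_i\cap X_j$, consider multiplication $\mu:X_i\times X_j\to G$; the image is irreducible and therefore contained in some component $X_\ell$, and its closure $\overline{X_i\cdot X_j}$ is an irreducible closed subvariety containing both $X_i=X_i\cdot e$ and $X_j=e\cdot X_j$. Maximality of irreducible components then forces $X_i=X_\ell=X_j$. Denote this unique component by $G^0$. For any $g\in G$, left translation $L_g$ is a homeomorphism of the variety, so $L_g$ permutes the components and maps $G^0$ onto a component containing $g$. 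It follows that every component has the form $gG^0$, and since there are only finitely many, $[G:G^0]$ is finite. The same translation argument shows $G^0$ is closed under products (if $g\in G^0$ then $gG^0$ is a component through $g$, which must be $G^0$) and under inversion (since $i$ is a homeomorphism fixing $e$). Normality follows because $xG^0x^{-1}$ is a connected component through $e$, hence equals $G^0$.

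For the $k$-structure on $G^0$, I would invoke Galois descent. Since $G$ is defined over $k$, the Galois group $\Gal(k_s/k)$ (acting on the set of $k_s$-points and on closed subvarieties defined over $k_s$) permutes the irreducible components of $G$. The identity $e$ is a $k$-rational point, so the Galois action fixes the unique component containing $e$, namely $G^0$. Hence $G^0$ is $k_s$-closed and Galois-stable, which by standard descent yields that $G^0$ is defined over $k$. Alternatively, I could appeal to Theorem \ref{lm:sub_G(k)_has_closure_defined_k}: since translates by $k$-rational points permute the components and $G^0$ is characterized as the one through $e$, one can identify $G^0$ with the Zariski closure of a suitable $k$-rational set, giving $k$-definability directly. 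I expect the Galois-descent step to be the main technical obstacle, particularly in positive characteristic where one must distinguish between $k$-closed and $k$-defined; the cleanest route is to use that $G^0$ is smooth (being a translate of any smooth point) and invoke descent for smooth closed subschemes stable under $\Gal(k_s/k)$.

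For the last claim, let $H\leq G$ be an algebraic subgroup of finite index. Then $H$ is closed in $G$, and its finitely many cosets partition $G$ into closed sets. Since a finite union of closed cosets covers $G$ and the complement of $H$ in $G$ is a finite union of closed cosets, $H$ is also open. As $G^0$ is connected and contains $e\in H$, the open-closed subgroup $H$ must contain the whole connected component $G^0$. This completes the proposal.
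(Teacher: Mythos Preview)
The paper does not supply its own proof of this proposition; it simply cites \cite[I.1.2]{Bor} and uses the result as a black box. So there is no in-paper argument to compare against. That said, your proposal is correct and is essentially the standard proof one finds in Borel: Noetherianity gives finitely many irreducible components, the multiplication trick shows only one contains $e$, translations identify components with cosets of $G^0$, and the open--closed argument handles the finite-index claim.

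One small comment on the $k$-rationality step: your instinct that the subtlety lies in distinguishing $k$-closed from $k$-defined in positive characteristic is exactly right. The cleanest way to close this, in the spirit of the paper's own footnote in the proof of Lemma~\ref{lm:CSP_connected}, is to note that in an algebraic group the irreducible components are pairwise disjoint (since each is a coset of $G^0$), hence each is already defined over $k_s$; Galois-stability of the component through the $k$-rational point $e$ then gives that $G^0$ is defined over $k$, not merely $k$-closed. Your smoothness-based descent also works, but the disjointness observation is what makes the argument immediate without invoking scheme-theoretic machinery.
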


\subsubsection{Reductive, semisimple and (almost) simple groups}
Let $K$ be an algebraically closed field and $G$ a linear algebraic group. Fix an embedding $\iota: G\to\GL_n(K)$ for some $n$ and assume from now on that $G\subseteq\GL_n(K)$.
An element $g\in G$ is said to be \textbf{semisimple} if it is diagonalizable as an element of $\End_n(K)$.
An element $g\in G$ is said to be \textbf{unipotent} if $g-\Id$ is a nilpotent element in $\End_n(K)$.
These concepts are independent of the chosen embedding $\iota$.
We say that $G$ is unipotent if all of its elements are unipotent.

The \textbf{unipotent radical} $\R_u(G)$ of an algebraic group $G$ is the maximal connected algebraic unipotent normal subgroup in $G$.
If $\R_u(G)$ is trivial then $G$ is said to be a \textbf{reductive} group.
We present some results concerning reductive groups.

\begin{theorem}[{\cite[2.14 and 2.15]{BorTits1}}]\label{th:G(k)_reductive}
Let $G$ be a connected reductive $k$-group. Then
\begin{enumerate}[label=\roman*)]
\item If $k$ is infinite then $G(k)$ is Zariski dense in $G$. \label{e_G(k)_Zariski_dense}
\item $Z(G)$ is defined over $k$.\label{e_Z_k_defined}
\end{enumerate} 

\end{theorem}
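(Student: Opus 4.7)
The plan is to reduce both parts to the fundamental structural result that a connected reductive $k$-group $G$ is $k$-unirational, that is, there exists a dominant $k$-morphism from some affine space $\mathbb{A}^n$ onto a Zariski dense open subset of $G$. Granting this, (i) is immediate: when $k$ is infinite, the $k$-points of $\mathbb{A}^n$ are Zariski dense, and a dominant $k$-morphism sends them to a Zariski dense subset of $G$. For (ii), one then exhibits $Z(G)$ as a $k$-defined intersection of centralizers of $k$-rational elements.

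To establish $k$-unirationality I would proceed via the Borel--Tits structure theory. The first ingredient is that $G$ contains a maximal torus $T$ defined over $k$. Any $k$-torus is $k$-unirational: it becomes $(\mathbb{G}_m)^n$ after a finite separable extension $k'/k$, so there is a surjective $k$-morphism from the restriction of scalars $\Res_{k'/k}((\mathbb{G}_m)^n)$, which is an open subset of affine space and hence $k$-rational, onto $T$. The second ingredient is the big cell: choose a minimal parabolic $k$-subgroup $P$ with Levi decomposition $P = L \cdot \R_u(P)$ and an opposite parabolic $P^-$; the multiplication map $\R_u(P^-) \times L \times \R_u(P) \to G$ is an open $k$-immersion onto a dense open subset of $G$. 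The unipotent radicals $\R_u(P^{\pm})$ are $k$-split, hence $k$-isomorphic to affine space, while $L$ is $k$-unirational by an induction on dimension (its derived group is semisimple of smaller rank and its center is a $k$-torus). Combining these, $G$ is $k$-unirational, proving (i).

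For (ii), observe that for every $h \in G(k)$ the centralizer $C_G(h) = \{g \in G : gh = hg\}$ is cut out by polynomial conditions with coefficients in $k$, hence is a $k$-defined subvariety of $G$. When $k$ is infinite, part (i) yields Zariski density of $G(k)$ in $G$; thus any element commuting with every $h \in G(k)$ commutes with every $h \in G$, and we obtain
\[
Z(G) = \bigcap_{h \in G(k)} C_G(h).
\]
By Noetherianity this stabilizes to a finite intersection of $k$-defined subvarieties, so $Z(G)$ is itself $k$-defined. For $k$ finite one argues uniformly by identifying $Z(G)$ with the kernel of the conjugation action $G \to \Aut_k(G)$, a $k$-morphism whose kernel is automatically $k$-defined; alternatively, $Z(G)$ can be cut out inside a $k$-defined maximal torus $T$ by the Galois-stable conditions $\alpha(t) = 1$ for all roots $\alpha \in \Phi(G, T)$.

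The main obstacle is $k$-unirationality when $k$ is imperfect. Over imperfect fields, Borel subgroups need not be $k$-defined, forcing one to work with minimal parabolic $k$-subgroups; and guaranteeing both the existence of a maximal $k$-torus in the Levi factor and the $k$-splitness of the unipotent radicals of these parabolics requires the full strength of the Borel--Tits theory. Once this structural input is in place, the remaining steps unfold by formal algebraic-geometric considerations.
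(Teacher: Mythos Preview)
The paper does not prove this theorem; it is quoted from \cite[2.14, 2.15]{BorTits1} without argument, so there is no in-paper proof to compare against. Your overall strategy---deduce (i) from $k$-unirationality of $G$, then use density of $G(k)$ for (ii)---is the standard one, and your treatment of tori via restriction of scalars is correct.

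There is, however, a genuine gap in your inductive proof of unirationality. You reduce to the Levi factor $L$ of a minimal parabolic $k$-subgroup and assert that $L$ is $k$-unirational by induction because its derived group has smaller rank. But the derived group of $L$ is \emph{$k$-anisotropic} (this is exactly what minimality of $P$ forces), and for an anisotropic semisimple $k$-group the only parabolic $k$-subgroup is the group itself: your big-cell step becomes vacuous there and the induction stalls. The anisotropic case is precisely the hard one. The standard remedy (see \cite[18.2]{Bor}) avoids parabolics entirely: one uses that $G$ contains a maximal torus $T$ defined over $k$, that $T$ is $k$-unirational, and that for suitable $g_1,\dots,g_n$ the product-of-conjugates map $T^n\to G$, $(t_1,\dots,t_n)\mapsto g_1t_1g_1^{-1}\cdots g_nt_ng_n^{-1}$, is dominant and $k$-defined.

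For (ii) your main argument is also loose over imperfect $k$: being cut out by polynomials with coefficients in $k$ yields only $k$-closed, not $k$-defined in the sense used here, and a finite intersection of $k$-defined subvarieties need not be $k$-defined (the radical of a sum of $k$-generated ideals need not be $k$-generated). Your final alternative---realizing $Z(G)$ inside a maximal $k$-torus $T$ as the common kernel of the roots, a Galois-stable set of characters defined over $k_s$---is the correct route.
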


The following theorem by Mostow (see \cite[7.1]{Mos} or \cite[Proposition 5.1]{BorSer}) shows that in charactersitic $0$ any $k$-group $G$ is a semidirect product of a reductive $k$-subgroup $H$ and its unipotent radical $\R_u(G)$ ($\R_u(G)$ is a $k$-defined subgroup of $G)$.
\begin{theorem}\label{th:mostow}
Suppose $\ch k=0$. Let $G$ be a $k$-group and $\R_u(G)$ its unipotent radical. Then there exists a reductive $k$-subgroup $H$ such that $G=H\ltimes R_u(G)$ (as $k$-groups).
\end{theorem}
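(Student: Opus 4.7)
The plan is to obtain the decomposition first at the Lie algebra level via Levi's theorem, then integrate it to an algebraic subgroup over the algebraic closure $\bar k$, and finally use Galois cohomology to descend to $k$.

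First, I would pass to Lie algebras. Let $\mathfrak{g} = \mathcal{L}(G)$ and $\mathfrak{n} = \mathcal{L}(\R_u(G))$, a nilpotent ideal of $\mathfrak{g}$. Since $\ch k = 0$, Levi's theorem for finite-dimensional Lie algebras produces a subalgebra $\mathfrak{h} \subseteq \mathfrak{g}$ with $\mathfrak{g} = \mathfrak{h} \oplus \mathfrak{n}$, where $\mathfrak{h}$ is reductive (it is isomorphic to $\mathcal{L}(G/\R_u(G))$, which is reductive by Theorem \ref{th:G(k)_reductive}). Malcev's uniqueness theorem further guarantees that any two such Levi complements are conjugate by $\exp(\mathrm{ad}\, x)$ for some $x \in \mathfrak{n}$.

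Next, I would integrate $\mathfrak{h}$ to an algebraic subgroup over $\bar k$. The cleanest route is by induction on $\dim \R_u(G)$: choose a nontrivial central $\bar k$-subgroup $A \subseteq \R_u(G)_{\bar k}$ isomorphic to $\mathbb{G}_a^r$, apply the induction hypothesis to $G_{\bar k}/A$ to obtain a Levi complement $H' \subseteq G_{\bar k}/A$, and split the resulting central extension $1 \to A \to \tilde H \to H' \to 1$. The obstruction to splitting lies in the rational Hochschild cohomology $H^2_{\mathrm{rat}}(H', A)$, which vanishes in characteristic zero for $H'$ reductive (the algebraic-group analogue of Whitehead's second lemma). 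This yields a connected reductive algebraic subgroup $H \subseteq G_{\bar k}$ with $\mathcal{L}(H) = \mathfrak{h}$; since $H \cap \R_u(G)_{\bar k}$ is simultaneously reductive and unipotent, it must be trivial, so multiplication gives an isomorphism $H \times \R_u(G)_{\bar k} \to G_{\bar k}$ of varieties, i.e.\ $G_{\bar k} = H \ltimes \R_u(G)_{\bar k}$.

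Finally, the descent to $k$. By Malcev's uniqueness, the set $\mathcal{X}$ of Levi complements of $\R_u(G)_{\bar k}$ in $G_{\bar k}$ is a torsor under conjugation by $\R_u(G)(\bar k)$, equipped with a natural $\Gal(\bar k/k)$-action. Its class lives in $H^1(k, \R_u(G))$, and this cohomology vanishes for any unipotent $k$-group in characteristic zero: filtering $\R_u(G)$ by $k$-subgroups with successive quotients isomorphic to $\mathbb{G}_a$ reduces the claim to $H^1(k, \mathbb{G}_a) = 0$, the additive form of Hilbert's Theorem 90. Consequently $\mathcal{X}$ has a $k$-rational point, that is, a Levi subgroup $H$ defined over $k$, and we get $G = H \ltimes \R_u(G)$ as $k$-groups.

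The main obstacle is the descent step. Producing the Lie-algebraic decomposition and integrating it over $\bar k$ are classical, but guaranteeing that $H$ is defined over $k$ requires the vanishing of Galois cohomology for unipotent groups, which is precisely where the hypothesis $\ch k = 0$ is indispensable: in positive characteristic both Levi's theorem for Lie algebras and the vanishing $H^1(k, \R_u(G)) = 0$ may fail, and the theorem itself need not hold.
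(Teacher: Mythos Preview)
The paper does not supply its own proof of this theorem; it is stated in the preliminaries with a citation to Mostow \cite[7.1]{Mos} and Borel--Serre \cite[Proposition 5.1]{BorSer}. So there is nothing to compare against directly.

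Your outline follows essentially the Borel--Serre route: produce a Levi factor over $\bar k$ and descend via the vanishing of $H^1(k,\R_u(G))$. The descent step is fine. Two points deserve tightening. First, the appeal to Levi's theorem in step~1 is not quite on target: Levi--Malcev splits off the \emph{solvable} radical by a semisimple complement, whereas here $\mathfrak{n}=\mathcal{L}(\R_u(G))$ is only the nilpotent radical and the desired complement is reductive, not semisimple. Since your step~2 is an independent inductive construction at the group level, this is more a cosmetic issue than a gap; you could simply drop step~1. Second, in step~3 you invoke ``Malcev's uniqueness'' to see that Levi complements form an $\R_u(G)(\bar k)$-torsor, but Malcev's statement is about Lie algebras. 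The group-level conjugacy of Levi subgroups under $\R_u(G)$ is itself part of Mostow's theorem; it can be extracted from your inductive setup in step~2 (the vanishing of $H^1_{\mathrm{rat}}(H',A)$ gives uniqueness of the splitting at each stage), but you should say so explicitly rather than cite the Lie-algebra result.
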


\begin{lemma}\label{pre:lm:reductive_center}
Let $G$ a connected reductive $k$-group.
Then
$G=[G,G]\cdot Z(G)^0$.\label{e_derived_plus_Z(G)}
\end{lemma}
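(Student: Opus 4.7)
The plan is to prove the lemma via the standard structure theory of connected reductive groups, showing that $H := [G,G]\cdot Z(G)^0$ is a closed connected subgroup of $G$ whose dimension forces $H = G$.

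First, I would establish the two component subgroups. Since $G$ is reductive, its radical $R(G)$ is a connected solvable normal subgroup with trivial unipotent radical, hence a torus. A normal connected torus in a connected group is always central (the adjoint action on characters factors through a connected group mapping into the discrete group $\Aut(R(G))$), so $R(G) = Z(G)^0$. Thus $Z(G)^0$ is a central torus. On the other hand, $[G,G]$ is a closed connected normal subgroup (commutator of connected groups), and the quotient $G/[G,G]$ is a connected commutative reductive group, hence a torus; equivalently, $[G,G]$ is semisimple.

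Second, I would bound the intersection. Because $Z(G)^0 \cap [G,G]$ lies in $Z([G,G])$ and the center of a semisimple group is finite, this intersection is finite. Consequently, the product $H = [G,G]\cdot Z(G)^0$ is closed and connected with
\[
\dim H \;=\; \dim [G,G] + \dim Z(G)^0.
\]

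Third --- and this is the step requiring the most input --- I would verify $\dim H = \dim G$ via a maximal torus. Choose a maximal torus $T$ of $G$ containing $Z(G)^0$; then $T\cap [G,G]$ is a maximal torus of $[G,G]$. Using the root datum of $(G,T)$, the cocharacter lattice $X_*(T)\otimes\QQ$ splits as the sum of the subspace spanned by coroots (corresponding up to isogeny to $T\cap[G,G]$) and its orthogonal complement with respect to the root pairing (corresponding to $Z(G)^0$, since $Z(G)^0$ is the identity component of the intersection of the kernels of all roots). This gives $\dim T = \dim(T\cap[G,G]) + \dim Z(G)^0$. Combined with the equalities $\dim G - \dim T = |\Phi| = \dim [G,G] - \dim(T\cap [G,G])$ coming from the root space decomposition, we obtain $\dim G = \dim [G,G] + \dim Z(G)^0 = \dim H$. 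Since $H$ is closed connected of full dimension in the connected group $G$, we conclude $G = H = [G,G]\cdot Z(G)^0$.

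The main obstacle is the third step: the dimension equality fundamentally relies on the root space decomposition of a reductive group, so in a preliminaries chapter the cleanest presentation is to invoke this structural input rather than re-derive it. Everything else (torus/semisimple structure of the two factors and finiteness of their intersection) is essentially formal once reductivity is unpacked.
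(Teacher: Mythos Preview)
Your argument is correct and is the standard proof of this well-known structural fact for reductive groups. However, the paper does not actually supply a proof of this lemma: it is stated without proof in the preliminaries chapter as a background result (in the spirit of the surrounding citations to \cite{Bor} and \cite{BorTits1}). So there is no ``paper's own proof'' to compare against; your write-up simply fills in what the paper takes for granted, and the route you take (identifying $R(G)=Z(G)^0$, showing $[G,G]$ is semisimple with finite intersection with $Z(G)^0$, and matching dimensions via the root datum of a maximal torus) is exactly the textbook argument one finds, e.g., in \cite[IV.14.2]{Bor}.
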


The \textbf{solvable radical} $\R_s(G)$ of an algebraic group $G$ is the maximal connected algebraic solvable normal subgroup in $G$.
If $\R_s(G)$ is trivial then $G$ is said to be \textbf{semisimple}.

An algebraic group $G$ is called (absolutely) \textbf{simple} (respectively (absolutely) \textbf{almost simple}) if the trivial subgroup is the only proper algebraic normal subgroup of $G$ (respectively all such subgroups are finite).
If $G$ is an algebraic $k$-group, then $G$ is called $k$-simple (respectively almost $k$-simple) if this condition holds for $k$-closed normal subgroups.

\subsubsection{Tori, rank and root systems}

A commutative algebraic group $T$ is said to be a \textbf{torus} if it is connected and $T$ is isomorphic to the product of $\dim T$ many copies of the group $\mathbb{G}_m$.
A torus $T$ is called \textbf{$\mathbf{k}$-split} or split over $k$ if it is defined over $k$ and is $k$-isomorphic to the direct product of $\dim T$ many copies of $\mathbb{G}_m$. Every torus defined over $k$ is split over some finite separable field extension of $k$.

Let $G$ be an algebraic $k$-group.
If $G$ is reductive,
then (see \cite[4.21 and 8.2]{BorTits1}) the maximal $k$-split tori of $G$ are conjugate by elements of $G(k)$ and, hence, all have the same dimension.
If $G$ is reductive we denote by $\rank_k G$ the $k$-rank of the group $G$, i.e., the common dimension of maximal $k$-split tori in $G$.
If $\rank_k G> 0$, then $G$ is said to be \textbf{$\mathbf{k}$-isotropic} or isotropic over $k$.
Otherwise $G$ is said to be \textbf{$\mathbf{k}$-anisotropic} or anisotropic over $k$.
An almost $k$-simple factor of $G$ which is $k$-isotropic (resp.
$k$-anisotropic) will be called a $k$-isotropic (resp. $k$-anisotropic) factor.
A reductive $k$-group $G$ is called $k$-split or split over $k$, if $G$ contains a $k$-split torus which is a maximal torus of the group $G$, in other words, if $\rank_k G=\rank_K G$ where $K$ is the algebraic closure of $k$.

Let $G$ be a connected semisimple algebraic $k$-group and let $T$ be a maximal $k$-split torus in $G$. Then one can associate to the pair $(T,G)$ a root system $\Phi(T,G)$. We refer to \cite[Chapters 4 and 5]{Bor} for a detailed exposition on this subject.

\subsubsection{Isogenies and simply connected groups}
A surjective group homomorphism with finite kernel is called an \textbf{isogeny}.

A morphism $f:G\to G'$ of algebraic groups is called \textbf{quasi-central} if $\ker f\subset Z(G)$. In other words, $f$ is quasi-central if there exists a mapping $\chi: f(G)\times f(G)\to G$ such that $\chi(f(x),f(y))=xyx^{-1}y^{-1}$ for all $x,y\in G$. We say that $f$ is \textbf{central} if it is quasi-central and the mapping $\chi$ is a morphism of algebraic varieties.
We say that a connected  semisimple group $G$ is \textbf{simply connected} if every central isogeny 
$f:G'\to G$ between connected algebraic groups is an algebraic group isomorphism.

\begin{theorem}\label{pre:th:simply_covering}
If $G$ is a semisimple $k$-group there exists a connected simply connected semisimple $k$-group $\widetilde{G}$ and a central $k$-isogeny $p:\widetilde{G}\to G$.
\end{theorem}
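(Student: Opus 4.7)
The plan is to build $(\widetilde{G},p)$ first over an algebraic closure $K\supseteq k$ and then descend the construction to $k$ by Galois descent. Over $K$, the classical structure theory of connected semisimple algebraic groups (the classification by root data) furnishes, for any connected semisimple $K$-group $G_K$, a connected simply connected semisimple $K$-group $\widetilde{G}_K$ together with a central isogeny $\pi_K\colon\widetilde{G}_K\to G_K$, and the pair $(\widetilde{G}_K,\pi_K)$ is characterised (up to a unique isomorphism over $G_K$) by the universal property: any other central isogeny $H\to G_K$ from a connected algebraic $K$-group factors uniquely through $\pi_K$. Concretely, if $T\subset G_K$ is a maximal torus with root datum $(X^\ast(T),\Phi,X_\ast(T),\Phi^\vee)$, one constructs $\widetilde{G}_K$ as the semisimple group with root datum obtained by replacing $X^\ast(T)$ with the full weight lattice $P(\Phi)$; the kernel $\ker\pi_K$ is then a finite central $K$-subgroup.

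Next, I would promote $(\widetilde{G}_K,\pi_K)$ to a $k$-structure by Galois descent. Let $\Gamma=\mathrm{Gal}(k_s/k)$ act on $G_K$ through the given $k$-structure. For every $\sigma\in\Gamma$ the $\sigma$-twist of $\widetilde{G}_K$, together with the $\sigma$-twist of $\pi_K$, is again a central isogeny from a connected simply connected semisimple $K$-group onto $G_K$. By the universal property of $\pi_K$ there is a unique $K$-isomorphism $c_\sigma\colon\widetilde{G}_K\to{}^\sigma\widetilde{G}_K$ over $G_K$, and uniqueness forces the cocycle identity $c_{\sigma\tau}={}^\sigma c_\tau\circ c_\sigma$. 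This yields a continuous semilinear $\Gamma$-action on $\widetilde{G}_K$, and since a continuous Galois cocycle on an affine algebraic variety is effective, it descends $\widetilde{G}_K$ to a connected simply connected semisimple $k$-group $\widetilde{G}$ with $\widetilde{G}\times_k K\cong\widetilde{G}_K$. The map $\pi_K$ is $\Gamma$-equivariant by the very construction of the $c_\sigma$, so it descends to a central $k$-isogeny $p\colon\widetilde{G}\to G$.

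The main obstacle is the descent step: one has to know that the uniquely determined lifts $c_\sigma$ are algebraic morphisms (so that the cocycle lives in the algebraic-group automorphism functor and descent actually applies) and that they depend continuously on $\sigma$ in the profinite topology on $\Gamma$. Both points follow from the fact that the $c_\sigma$ are the unique lifts of the identity on $G_K$ across the central covering $\pi_K$ and so coincide with the algebraic lifts produced by the root-datum classification itself; continuity is then automatic since the cocycle is determined on $\widetilde{G}_K(k_s)$ by a finite amount of data. A subsidiary subtlety is the inseparability that may occur in positive characteristic when $\mathrm{char}\,k$ divides $|\ker\pi_K|$, but this causes no difficulty for the descent because one descends affine group schemes rather than their $k$-points, and centrality of $\pi_K$ is preserved under base change.
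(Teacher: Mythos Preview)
The paper does not actually prove this theorem: it is stated in the preliminaries chapter as a standard structural fact about semisimple algebraic groups, with no proof given and no explicit reference attached (the surrounding material cites \cite{Bor} and \cite{BorTits1} for the general theory). So there is nothing to compare your argument against.

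That said, your outline is the correct one and is essentially the standard proof one finds in the literature (e.g.\ Conrad's notes on reductive group schemes, or the treatment via root data in Springer or Borel--Tits). Constructing the simply connected cover over an algebraic closure from the root datum and then descending via the uniqueness of the universal central isogeny is exactly how this is done. Your remarks about the descent step are on point: the key input is that $(\widetilde{G}_K,\pi_K)$ is unique up to \emph{unique} isomorphism over $G_K$, which is what forces the $c_\sigma$ to satisfy the cocycle condition automatically. One small addition you might make explicit is that the descent works over the separable closure $k_s$ rather than the full algebraic closure $K$; the simply connected cover and the classification by root data are already available over $k_s$ since maximal tori split over separable extensions, and Galois descent is naturally formulated for $k_s/k$.
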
 

\begin{theorem}[{\cite[2.15]{BorTits1}}]\label{pre:th:ss_decomposes_simple_factors}
Let $G$ be an algebraic $k$-group. Suppose that $G$ is connected and semisimple. Then:
\begin{enumerate}[label=\roman*)]
\item $G$ decomposes uniquely (up to permutation of the factors) into an almost direct product of connected non-commutative almost simple algebraic subgroups $G_1,\ldots,G_m$, i.e., there exists a central isogeny
\[
p:G_1\times\ldots\times G_m\to G.
\]
\label{e-semisimple-in_simples}
\item $G$ decomposes uniquely (up to permutation of the factors) into an almost direct product of connected non-commutative almost $k$-simple $k$-subgroups $G_1',\ldots,G_r'$, i.e., there exists a central $k$-isogeny
\[
p':G_1'\times\ldots\times G_r'\to G.
\]
\label{e-k-semisimple-in-k-simples}
\end{enumerate}
Moreover, the almost simple factors (almost $k$-simple factors) are invariant under isogenies ($k$-isogenies), that is, if $f:G\to H$ is an isogeny ($k$-isogeny), the image of an almost simple factor (almost $k$-simple factor) is an almost simple factor (almost $k$-simple factor).

\end{theorem}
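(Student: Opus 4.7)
The plan is to argue by first isolating the almost simple constituents over the algebraic closure and then to descend to $k$ via a Galois action.

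I would first consider the collection $\mathcal{M}$ of minimal nontrivial connected closed normal subgroups of $G$. Each $N\in\mathcal{M}$ is semisimple (since $\R_s(G)$ being trivial forces $\R_s(N)$ trivial as it is characteristic in $N$ and normal in $G$), and by minimality every proper connected closed normal subgroup of $N$ is trivial; hence any proper normal closed subgroup of $N$ has trivial identity component, i.e.\ is finite, so $N$ is almost simple and non-commutative (a commutative connected semisimple group is trivial). The first key step is to show that distinct elements $N_1\neq N_2$ of $\mathcal{M}$ commute: $N_1\cap N_2$ is closed and normal in $G$, and by minimality its identity component is trivial, hence $N_1\cap N_2$ is finite; since $[N_1,N_2]$ is connected, contained in $N_1\cap N_2$, and commutators of connected groups are connected, we get $[N_1,N_2]=1$.

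Next I would show $\mathcal{M}=\{G_1,\dots,G_m\}$ is finite and that the multiplication map $\mu\colon G_1\times\cdots\times G_m\to G$ is a central isogeny. Using the commuting established above, $\mu$ is a morphism of algebraic groups; its image $H$ is connected, closed, normal. If $H\neq G$, then $G/H$ is semisimple of positive dimension and contains a minimal connected closed normal subgroup, whose preimage contributes a new element of $\mathcal{M}$ not contained in $H$, a contradiction (one can equivalently argue with the semisimple Lie algebra $\Lie(G)$, which is the direct sum of the ideals $\Lie(G_i)$). Hence $\mu$ is surjective. The kernel of $\mu$ is contained in $\prod_i(G_i\cap\prod_{j\neq i}G_j)$, and each $G_i\cap\prod_{j\neq i}G_j$ centralizes $G_i$ (by the commuting) while being contained in $G_i$, hence lies in $Z(G_i)^0\cap G_i$; combined with finiteness of $Z(G_i)$ modulo what is forced by almost simplicity, one concludes $\ker\mu$ is finite and central, proving \ref{e-semisimple-in_simples}. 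Uniqueness is immediate since any decomposition as an almost direct product of non-commutative almost simple factors must have its factors be minimal connected closed normal subgroups of $G$, hence equal to $\mathcal{M}$ up to order.

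For \ref{e-k-semisimple-in-k-simples} I would use Galois descent: the absolute Galois group $\Gamma=\Gal(\bar{k}/k)$ acts on $\mathcal{M}$ by permutation (it preserves normality, connectedness, and minimality), so $\mathcal{M}$ breaks into $\Gamma$-orbits $\mathcal{O}_1,\dots,\mathcal{O}_r$. For each orbit, the subgroup $G_i':=\prod_{N\in\mathcal{O}_i}N$ is $\Gamma$-stable, hence descends to a closed $k$-subgroup by \ref{lm:sub_G(k)_has_closure_defined_k}; by construction $G_i'$ is connected, normal in $G$, and any $k$-defined closed normal subgroup of $G_i'$ must be $\Gamma$-stable, so corresponds to a union of $\Gamma$-orbits of members of $\mathcal{M}$ contained in $\mathcal{O}_i$, forcing it to be finite or all of $G_i'$; thus $G_i'$ is almost $k$-simple. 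The product map $G_1'\times\cdots\times G_r'\to G$ factors the map $\mu$ grouped by orbits, hence is a central $k$-isogeny; uniqueness up to $k$-isomorphism follows from uniqueness in \ref{e-semisimple-in_simples} applied to each orbit.

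Finally, for invariance under isogenies: if $f\colon G\to H$ is an isogeny and $G_i$ is an almost simple factor, then $f(G_i)$ is a connected closed normal subgroup of $H$ of the same dimension as $G_i$; any proper connected closed normal subgroup of $f(G_i)$ would have connected preimage intersecting $G_i$ in a proper normal subgroup of positive dimension, contradicting the almost simplicity of $G_i$. So $f(G_i)$ is almost simple, and together the $f(G_i)$ give the decomposition of $H$; the same argument with $f$ a $k$-isogeny handles the $k$-rational version. The main technical obstacle is the surjectivity of $\mu$ and the careful handling of $\ker\mu$ as a finite central subgroup; here the cleanest route is via the semisimplicity of $\Lie(G)$ and its decomposition into simple ideals, which bypasses the subtleties of working directly with the quotient $G/H$ in positive characteristic.
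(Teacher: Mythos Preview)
The paper does not supply a proof of this theorem: it is quoted as a preliminary result with the citation \cite[2.15]{BorTits1} and no proof environment follows. So there is nothing in the paper to compare your argument against; your sketch is essentially the standard argument one finds in Borel--Tits or in Borel's \emph{Linear Algebraic Groups}.

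That said, one step of your sketch deserves a warning. For part \ref{e-k-semisimple-in-k-simples} you invoke Lemma~\ref{lm:sub_G(k)_has_closure_defined_k} to conclude that a $\Gal(\bar k/k)$-stable closed subgroup is defined over $k$. That lemma, as stated in the paper, concerns the Zariski closure of a subset of $V(k)$, which is not the situation here. In general, Galois-stability only gives that the subgroup is $k$-closed, i.e.\ defined over a purely inseparable extension of $k$ (cf.\ the remarks in \S\ref{pre:k_structures}). To get genuine $k$-definition in arbitrary characteristic you need the extra input that the almost simple factors of a semisimple group are already defined over the separable closure $k_s$ (this comes from the structure theory via root data, and is part of what Borel--Tits prove). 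Once that is known, stability under $\Gal(k_s/k)$ does give $k$-definition. Your outline is correct in spirit, but the justification you cite does not carry the weight you put on it.
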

\subsubsection{The Lie algebra of an algebraic group}\label{sec:Lie_algebra_algebraic_group}

Recall that given an $R$-algebra $A$ for some commutative ring $R$, an $R$-derivation of $A$ is an $R$-linear map $D:A\to A$ such that $D(ab)=D(a)b+aD(b)$.
We denote by $\Der_R(A)$ the set of $R$-derivations of $A$.
It is easy to check that for $D,E\in\Der_R(A)$ $[D,E]:=DE-ED$ belongs to $\Der_R(A)$ as well.
Hence $\Der_R(A)$ has a Lie algebra structure.

Let $G$ be an algebraic $k$-group.
We will write $L_G=\mathcal{T}_e(G)$ and give it the structure of a Lie algebra as follows.
For every $x\in G$, left multiplication by $g$ gives an automorphism $\lambda_g:K[G]\to K[G]$. 
We define
\[
\Lie(G):=\{D\in\Der_K(A)\;|\; \lambda_gD=D\lambda_g,\,\textrm{for every}\ g\in G\}.
\]
One checks that $\Lie(G)$ is a Lie subalgebra of $\Der_K(A)$.
Moreover, one can give $\Lie(G)$ a $k$-structure via
\[
\Lie(G)_k:=\{D\in\Lie(G)\;|\; D(k[G])\subseteq k[G]\}.
\]
Now note that evaluation at $e$ gives a map $D\mapsto D_e$ from $\Lie(G)\to\mathcal{T}_e(G)$ such that $D_e(f)=(D(f))(e)$.
Moreover, this is a linear isomorphism, so $L_G$ becomes a Lie algebra via this isomorphism.

\subsubsection{Restriction of scalars}\label{sec:restriction_scalars}

We refer to \cite[6.17-2.21]{BorTits1}, \cite[1.7]{Mar} and \cite[1.3]{Weil} for a detailed presentation. Let $l$ be a finite separable extension of $k$.
If $V$ is an $l$-variety we want to find a $k$-variety $W$ such that in some sense $W(k)$ corresponds to $V(l)$.
Suppose $V$ is an affine $l$-variety given by some ideal $I\leq l[x_1,\ldots,x_n]$, i.e., $l[V]=l[x_1,\ldots,x_n]/I$.
Let $\sigma\in\Gal(l/k)$ and define $V^{\sigma}$ to be the $l$-variety defined by $I^{\sigma}:=\sigma(I)$, i.e., $l[V^\sigma]=l[x_1,\ldots,x_n]/I^\sigma$.
Analogously if $p: V\to V'$ is an $l$-morphism of $l$-varieties we obtain from $\sigma$ a morphism $p^{\sigma}:V^\sigma\to V'^\sigma$.
Let $\{\sigma_1=\Id,\sigma_2,\ldots,\sigma_d\}$ be the set of all distinct $k$-embeddings of the field $l$ into the algebraic closure of $k$.
Then for each $l$-variety $V$ there exists a $k$-variety $W$ and an $l$-morphism $p:W\to V$ such that the map $(p^{\sigma_1},\ldots,p^{\sigma_d}):W\to V^{\sigma_1}\times\ldots V^{\sigma_d}$ is an isomorphism of algebraic varieties.
Moreover, the pair $(W,p)$ is defined uniquely up to $k$-isomorphism.

The variety $W$ is denoted by $R_{l/k}(V)$ and is called the \textbf{restriction of scalars} from $l$ to $k$ of $V$.
The restriction of scalars from $l$ to $k$, $R_{l/k}$, gives a functor from the category of $l$-varieties into the category of $k$-varieties.
The map $p:R_{l/k}(V)\to V$ has the following universal property: for a $k$-variety $X$ and an $l$-morphism $f:X\to V$ there exists a unique $k$-morphism $\phi:X\to R_{l/k}(V)$ such that $f=p\circ\phi$. Notice that if $f$ is an $l$-group morphism, then $\phi$ is a $k$-group morphism.
The restriction of the projection projection $p:W\to V$ to $W(k)$ induces a bijection between $W(k)$ and $V(l)$.
We denote by $R_{l/k}^0$ the inverse map to $p_{|V(k)}$.
If $V$ is an $l$-group, then $R_{l/k}(V)$ is a $k$-group, $p$ is an $l$-group morphism and $R_{l/k}^0:V(l)\to W(k)$ is a group isomorphism.

\begin{theorem}[{\cite[3.1.2]{Ti2}]}]\label{pre:th:k_simple_is_restriction_abs_simple}

Let $G$ be a simply connected almost $k$-simple $k$-group. Then there exist a finite separable extension $k'$ of $k$ and a connected simply connected absolutely almost simple $k'$-group $H$ such that $G=R_{k'/k}(H)$.
\end{theorem}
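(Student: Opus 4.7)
The plan is to decompose $G$ over the algebraic closure into its absolutely almost simple factors and exhibit it as the restriction of scalars of one of them, where the extension $k'/k$ is the fixed field associated to the stabilizer of a chosen factor under the natural Galois action.

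First I would apply Theorem \ref{pre:th:ss_decomposes_simple_factors}.\ref{e-semisimple-in_simples} to the semisimple group $G$ viewed over the algebraic closure $\bar{k}$: this yields a central isogeny $G_1 \times \cdots \times G_m \to G$ from a product of connected almost simple groups. Because $G$ is simply connected, this central isogeny must be an isomorphism (otherwise the kernel would furnish a nontrivial central cover of $G$), so $G_{\bar{k}} \cong G_1 \times \cdots \times G_m$ as a direct product, each $G_i$ being connected, simply connected, and absolutely almost simple.

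Next I would consider the natural action of $\Gamma = \Gal(\bar{k}/k)$ on the set $\{G_1,\ldots,G_m\}$ of simple factors; this action is well defined because $G$ itself is $k$-defined and the $G_i$ are exactly the minimal positive-dimensional normal algebraic subgroups of $G_{\bar{k}}$, a $\Gamma$-invariant collection. I would then argue that this action is transitive: if $\mathcal{O} \subsetneq \{G_1,\ldots,G_m\}$ were a proper orbit, then by Theorem \ref{lm:sub_G(k)_has_closure_defined_k} (or directly by Galois descent) the product $\prod_{G_i \in \mathcal{O}} G_i$ would be a proper $k$-defined closed normal subgroup of positive dimension, contradicting that $G$ is almost $k$-simple. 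Set $\Gamma' = \stab_\Gamma(G_1)$ and let $k' = \bar{k}^{\Gamma'}$; then $k'/k$ is a finite separable extension with $[k':k] = m$, and transitivity identifies the factors with the $\Gamma$-conjugates $G_1^{\sigma_i}$ for coset representatives $\sigma_1 = \Id,\sigma_2,\ldots,\sigma_m$ of $\Gamma/\Gamma'$.

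The heart of the argument — and the step I expect to be the main obstacle — is the Galois descent showing that $G_1$, viewed as a $\bar{k}$-subgroup of $G_{\bar{k}}$, is actually defined over $k'$. For this I would observe that $\Gamma' = \Gal(\bar{k}/k')$ fixes $G_1$ setwise, hence fixes its defining ideal inside $\bar{k}[G]$; by standard Galois descent for closed subvarieties (invariance under $\Gal(\bar{k}/k')$ implies definability over $k'$ in characteristic zero, and in general because $k'/k$ is separable so that separable descent applies to the reduced subgroup scheme), the subgroup $G_1$ descends to a connected simply connected absolutely almost simple $k'$-group $H \subseteq G_{k'}$.

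Finally, I would produce the identification $G \cong R_{k'/k}(H)$ by exhibiting a $k$-morphism $p: G \to H$ (obtained from the projection $G_{k'} \to H$ onto the distinguished factor, composed with the natural map and using the universal property in Section \ref{sec:restriction_scalars}) and checking that $(p^{\sigma_1},\ldots,p^{\sigma_m}): G_{\bar{k}} \to H^{\sigma_1}\times\cdots\times H^{\sigma_m}$ is exactly the decomposition isomorphism constructed in the first step. Uniqueness of the restriction of scalars then gives the required $k$-isomorphism.
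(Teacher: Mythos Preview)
The paper does not give its own proof of this theorem: it is quoted verbatim from Tits \cite[3.1.2]{Ti2} and used as a black box (for instance in Lemma~\ref{lm:CSP_absolutely_simple}). So there is nothing in the paper to compare your argument against.

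Your sketch is the standard proof and is essentially correct. One refinement: throughout you should work over the separable closure $k_s$ rather than the full algebraic closure $\bar{k}$. The decomposition of Theorem~\ref{pre:th:ss_decomposes_simple_factors}.\ref{e-semisimple-in_simples} already takes place over $k_s$ (the simple factors, being connected normal subgroups, are defined over $k_s$), the Galois group you want is $\Gal(k_s/k)$, and then $k' = k_s^{\Gamma'}$ is automatically a finite \emph{separable} extension of $k$ with no further justification needed. This also cleans up your descent step: invariance of $G_1$ under $\Gamma' = \Gal(k_s/k')$ gives that $G_1$ is $k'$-closed, and since $G_1$ is a connected smooth subgroup it is in fact $k'$-defined, avoiding any issue with inseparability in positive characteristic.
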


\section{Affine group schemes}\label{sec:affine_group_schemes}
We refer to \cite[Chapter 1 and 2]{Wat} for a more detailed presentation.
Throughout this section let $k$ be a commutative ring with $1$.
A functor $F: k$-algebras$\to \textrm{Sets}$ is said to be \textbf{representable} if there exists a $k$-algebra $A$ and a natural correspondence between $F(R)$ and $\Hom_k(A,R)$ for every $k$-algebra $R$, in this case we will say that $A$ represents $F$.
If $F$ is representable, $k[F]$ will denote a $k$-algebra which represents $F$.

An \textbf{affine group scheme over $\mathbf{k}$} or an affine group $k$-scheme is a representable functor from the category of $k$-algebras to groups.
Suppose $G$ is an affine group scheme over $k$ represented by a $k$-algebra $k[G]$, then multiplication and inversion induce $k$-algebra maps $\Delta:k[G]\to k[G]\otimes_{k}k[G]$ (comultiplication), $\epsilon:k[G]\to k$ (counit) and $S:k[G]\to k[G]$ (antipode) such that $(k[G],\Delta,\epsilon,S)$ becomes a Hopf Algebra.
Equivalently, every Hopf $k$-algebra $A$, gives an affine group $k$-scheme $G$ given by $G(R)=\Hom_k(A,R)$ for every $k$-algebra $R$, where multiplication, unit and inverse are induced from $\Delta$, $\epsilon$ and $S$.

\begin{theorem}
Affine group schemes over $k$ correspond to Hopf algebras over $k$.
\end{theorem}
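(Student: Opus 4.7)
The plan is to build the correspondence via Yoneda's lemma, using the fact that an affine group scheme is by definition a representable functor to groups. First I would observe that if $G$ is an affine group $k$-scheme represented by the $k$-algebra $A = k[G]$, then the group operations furnish natural transformations
\[
m: G \times G \to G, \qquad e: \{*\} \to G, \qquad \iota: G \to G,
\]
where $G \times G$ is the functor $R \mapsto G(R) \times G(R)$, which is represented by $A \otimes_k A$, and $\{*\}$ is the constant functor represented by $k$. By the Yoneda lemma these natural transformations correspond bijectively to $k$-algebra homomorphisms
\[
\Delta: A \to A \otimes_k A, \qquad \epsilon: A \to k, \qquad S: A \to A,
\]
which are the comultiplication, counit, and antipode.

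Next I would translate the group axioms. Associativity of $m$ as a natural transformation $G \times G \times G \to G$ in two ways gives the same map, and Yoneda transports this identity into coassociativity $(\Delta \otimes \id)\Delta = (\id \otimes \Delta)\Delta$. Likewise the identity axiom $m(e,x) = x = m(x,e)$ becomes $(\epsilon \otimes \id)\Delta = \id = (\id \otimes \epsilon)\Delta$, and the inverse axiom $m(\iota(x), x) = e = m(x, \iota(x))$ becomes the antipode axiom $\mu \circ (S \otimes \id) \circ \Delta = \eta \circ \epsilon = \mu \circ (\id \otimes S) \circ \Delta$, where $\mu$ and $\eta$ are the multiplication and unit of $A$. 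Thus $(A, \Delta, \epsilon, S)$ is a Hopf algebra.

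Conversely, given a Hopf $k$-algebra $(A, \Delta, \epsilon, S)$, I would define $G(R) := \Hom_{k\text{-alg}}(A, R)$ and endow $G(R)$ with a binary operation by convolution: for $f, g \in G(R)$ set $f \cdot g := \mu_R \circ (f \otimes g) \circ \Delta$, with identity $\eta_R \circ \epsilon$ and inverse $f \circ S$. The Hopf algebra axioms guarantee that $(G(R), \cdot)$ is a group, and naturality in $R$ is automatic since all constructions are functorial. Hence $G$ is an affine group $k$-scheme.

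Finally I would note that the two constructions are mutually inverse: starting from $G$, recovering $A = k[G]$ and then forming $\Hom_{k\text{-alg}}(A, -)$ recovers $G$ because $G$ was assumed representable by $A$; starting from a Hopf algebra $A$, the universal element $\id_A \in \Hom_{k\text{-alg}}(A, A)$ and Yoneda ensure that the co-operations read off from the group structure of $G(R)$ coincide with the original $\Delta, \epsilon, S$. The main technical step is the careful verification that the group axioms on the functor side correspond term-by-term to the Hopf algebra axioms on the algebra side; this is essentially bookkeeping with Yoneda, but one must be systematic to check all three axioms (associativity, identity, inverse) and confirm that the maps $\Delta, \epsilon, S$ are indeed $k$-algebra homomorphisms, not merely $k$-linear, which follows because the operations on $G(R) \times G(R) = (G \times G)(R)$ are defined componentwise.
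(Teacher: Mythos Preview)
Your proposal is correct and follows exactly the standard Yoneda-based approach that the paper itself sketches in the paragraph preceding the theorem (comultiplication, counit, and antipode induced from multiplication, identity, and inversion, and conversely). The paper does not actually supply a proof beyond that sketch; it refers to \cite[Chapter 1 and 2]{Wat} for the details, and what you have written is precisely the argument one finds there.
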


\subsubsection{Dual numbers and the functor Lie}
Given a $k$-algebra $R$, the algebra of dual numbers of $R$ is the $k$-algebra $R[\epsilon]:=R[x]/\langle x^2\rangle$.
More precisely, $R[\epsilon]=R+R\epsilon$ is a free $R$-module of rank $2$ with multiplication given by $(a+b\epsilon)(c+d\epsilon)=ac+(ad+bc)\epsilon$ for $a,b,c,d\in R$.
Note that we have two natural $k$-algebra morphism $\iota_R:R\to R[\epsilon]$ and $p_R:R[\epsilon]\to R$ such that $p_r\circ\iota_R=\id_R$.

Consider now an affine group $k$-scheme $G$.
Then we have a map $G(p_R): G(R[\epsilon])\to G(R)$.
We define $\Lie(G)(R):=\ker G(p_R)$.
Let us write $I_G:=\ker \left(\epsilon: k[G]\to k\right)$.

\begin{theorem}
For every $k$-algebra $R$ there are isomorphisms of abelian groups between the following:
\begin{enumerate}[label=\roman*)]
\item $\Der_k(A, R)$.
\item $\Hom_k(I_G/I_G^2,R)$.
\item $\Lie(G)(R)$.
\end{enumerate}
These isomorphisms are natural in $R$ and compatible with the action of $R$.
\end{theorem}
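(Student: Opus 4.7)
The plan is to establish (iii) $\cong$ (i) and (i) $\cong$ (ii) separately, so that $\Der_k(k[G],R)$ serves as the middle object, where $R$ is made into a $k[G]$-module through the composition $k[G]\xrightarrow{\epsilon}k\to R$.

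For (iii) $\cong$ (i), any $\phi\in G(R[\epsilon])=\Hom_k(k[G],R[\epsilon])$ decomposes uniquely as $\phi(a)=\phi_0(a)+\phi_1(a)\epsilon$ with $\phi_0,\phi_1\colon k[G]\to R$ being $k$-linear maps. Using $\epsilon^2=0$, the multiplicativity of $\phi$ is equivalent to the system
\begin{align*}
\phi_0(ab)&=\phi_0(a)\phi_0(b),\\
\phi_1(ab)&=\phi_0(a)\phi_1(b)+\phi_1(a)\phi_0(b).
\end{align*}
Hence $\phi$ lies in $\ker G(p_R)$ precisely when $\phi_0$ coincides with the identity of $G(R)$, namely the composition $k[G]\xrightarrow{\epsilon}k\to R$, and in that case $\phi_1$ is exactly a $k$-derivation from $k[G]$ into $R$. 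The inverse assignment sends $D\in\Der_k(k[G],R)$ to the morphism $a\mapsto\epsilon(a)+D(a)\epsilon$. A short computation using the Hopf algebra comultiplication then shows that the group law on $G(R[\epsilon])$ restricted to $\Lie(G)(R)$ coincides with the addition of derivations, giving an abelian group isomorphism.

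For (i) $\cong$ (ii), I would use the splitting $k[G]=k\oplus I_G$ furnished by the counit. Any $k$-derivation $D\colon k[G]\to R$ must vanish on $k$, since $D(1)=D(1\cdot 1)=2D(1)$. For $a,b\in I_G$ the Leibniz rule yields $D(ab)=\epsilon(a)D(b)+\epsilon(b)D(a)=0$, so $D$ factors through $I_G/I_G^2$ and restriction to $I_G$ defines the map to $\Hom_k(I_G/I_G^2,R)$. Conversely, a $k$-linear map $\ell\colon I_G/I_G^2\to R$ extends uniquely to a derivation on $k[G]$ by declaring it zero on $k$ and lifting through the quotient on $I_G$; the only nontrivial Leibniz identity to verify is on $I_G\times I_G$, where both sides vanish by construction.

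Naturality in $R$ and compatibility with the $R$-action should follow directly: every step above is either post-composition with a $k$-algebra map in the target, or restriction of a $k$-linear map along a fixed inclusion, and all three objects inherit their $R$-module structure from scalar multiplication on their common target. I do not anticipate any serious obstacle; the conceptual core of the argument is the observation that multiplicativity of a map into $R[\epsilon]$ decouples into a ring homomorphism plus a derivation, which is the standard mechanism producing tangent spaces.
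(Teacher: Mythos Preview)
Your argument is correct and is the standard one. Note, however, that the paper does not actually supply a proof of this statement: it appears in the preliminary section on affine group schemes, where the author refers the reader to \cite{Wat} (Waterhouse, Chapters~1--2) for details, and simply records the result for later use. Your proof is precisely the argument one finds there, so there is nothing to compare.
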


\subsubsection{Affine algebraic groups and affine group schemes}
Let now $k$ be a field. Given an affine algebraic $k$-group $G$ the algebra of regular functions $k[G]$ defines a representable functor, which by abuse of notation we will continue to denote by $G$,
\[
\begin{array}{rccc}
G: &k\textrm{-algebras}&\to &\textrm{Sets}\\
&R&\mapsto &\Hom_k(k[G],R).
\end{array}
\]
Note that the multiplication, inversion and unit maps in $G$ gives comaps $m^*: k[G]\to k[G]\otimes_k k[G]$, $i^*:k[G]\to k[G]$ and $\epsilon^*:k\to k[G]$.
Since $G$ is an algebraic group these comaps make $k[G]$ a Hopf algebra over $k$.
It follows that we can see an affine algebraic $k$-group as an affine group $k$-scheme which is represented by $k[G]$.

\section{Representation Theory}\label{sec:representation_theory}
\begin{definition*}
Let $\Gamma$ be a group.
A \textbf{complex representation} of $\Gamma$ is a group homomorphism
\[
\rho: \Gamma\to \GL(V),
\]
where $V$ is a finite dimensional $\mathbb{C}$-vector space. Equivalently, a representation of $\Gamma$ is a linear action of $\Gamma$  on a finite dimensional complex vector space $V$.
We say write $\dim\rho=\dim V$ for the dimension of $\rho$.

\item If the group $\Gamma$ has a topology, we require $\rho$ to be \textbf{continuous}, where $\GL(V)$ is given the usual topology\footnote{Often in the literature $\GL(V)$ is given the discrete topology. This ensures that a continuous homomorphism from a profinite group $\Gamma$ to $\GL(V)$ has finite image. Nevertheless, one gets the same conclusion if $\GL(V)$ is given the usual topology, see Lemma \ref{lm:RG_profinite_finite_image}.} as a subset of the $\mathbb{C}$-vector space $\End_{\mathbb{C}}(V)$.

\item We say that $\rho$ is \textbf{finite} if it has finite image, otherwise we say that $\rho$ is infinite.

\item A \textbf{subrepresentation} of $\rho$ is given by a subspace $W\subseteq V$ such that $\rho(\Gamma)(W)\subseteq W$.
Note that if $W$ is such a subspace, $\rho$ gives a linear action of $\Gamma$ on $W$ and hence a representation $\rho_{|W}:\Gamma\to\GL(W)$.

\item A representation  $\rho$ is \textbf{irreducible} if it has no nontrivial subrepresentation $0\subset W\subset V$. 

\item Two representations $\rho:\Gamma\to\GL(V)$ and $\rho':\Gamma\to\GL(W)$ are \textbf{isomorphic}, $\rho\cong\rho'$, if there exists an isomorphism of vector spaces $f:V\to W$  such that $f(\rho(g)(v))=\rho'(g)(f(v))$ for every $v\in V$ and $g\in\Gamma$.
The relation $\cong$ gives an equivalence relation.
We write $\Rep_n(\Gamma)$ (respectively $\Irr_n(\Gamma)$) for the set of isomorphism classes of $n$-dimensional complex representations (respectively irreducible representations) and put $\Rep(\Gamma):=\bigcup_{n}\Rep_n(\Gamma)$ (repectively $\Irr(\Gamma)=\bigcup_n\Irr_n(\Gamma)$). We will also write $\Irr_{\leq n}(\Gamma)=\bigcup_{i\leq n}\Irr_i(\Gamma)$.
We will usually make an abuse of notation by identifying a class $\overline{\rho}\in\Irr(\Gamma)$ with one of its representatives $\rho$.

\item We say that $\rho\in\Irr(\Gamma)$ is an \textbf{irreducible constituent} of a representation $\theta:\Gamma\to\GL_n(V)$ if there exists a subrepresentation $W\subseteq V$ such that $\rho\cong\theta_{|W}$.

\end{definition*}

Given two representation $\rho_1:\Gamma\to\GL(V)$ and $\rho_2:\Gamma\to\GL(W)$ we can construct a new representation $\rho_1\oplus\rho_2:\Gamma\to\GL(V\oplus W)$, called the direct sum of $\rho_1$ and $\rho_2$, by diagonally embedding $\GL(V)$ and $\GL(W)$ into $\GL(V\oplus W)$.

A representation $\rho\in\Rep(\Gamma)$ is called \textbf{completely reducible} if it is isomorphic to a direct sum of irreducible subrepresentations.

\begin{theorem}[Maschke's Theorem]
If $\Gamma$ is a finite group, then every representation of $\Gamma$ is completely reducible.
\end{theorem}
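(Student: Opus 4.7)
The plan is to prove Maschke's Theorem by an averaging argument, producing for every subrepresentation a complementary subrepresentation, and then concluding by induction on the dimension.

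First I would set up the key reduction: it suffices to show that whenever $\rho:\Gamma\to\GL(V)$ is a representation and $W\subseteq V$ is a $\Gamma$-invariant subspace, there exists a $\Gamma$-invariant complement $W'\subseteq V$ with $V=W\oplus W'$. Indeed, once this is established, one can iterate on $V$, picking an irreducible subrepresentation (which exists by finite-dimensionality and the descending chain of invariant subspaces) and splitting off a complement, to decompose $V$ as a direct sum of irreducible subrepresentations. This reduction is routine and short.

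For the splitting lemma itself, the approach is averaging over the group. Fix any vector space complement $U$ of $W$ in $V$ (which exists by linear algebra, forgetting the $\Gamma$-action), and let $\pi:V\to V$ denote the projection onto $W$ along $U$, so $\pi^2=\pi$ and $\im\pi=W$. Define
\[
\tilde{\pi}=\frac{1}{|\Gamma|}\sum_{g\in\Gamma}\rho(g)\,\pi\,\rho(g)^{-1}.
\]
Here the factor $1/|\Gamma|$ makes sense because $\ch\mathbb{C}=0$ and $\Gamma$ is finite. The verification proceeds in three short steps: (i) $\tilde{\pi}$ maps $V$ into $W$, since $W$ is $\Gamma$-invariant and $\pi$ takes values in $W$; (ii) $\tilde{\pi}$ restricts to the identity on $W$, since for $w\in W$ each $\rho(g)^{-1}w\in W$, hence $\pi\rho(g)^{-1}w=\rho(g)^{-1}w$, and averaging gives $w$; (iii) $\tilde{\pi}$ commutes with the $\Gamma$-action, by the standard change-of-variable $h\mapsto g^{-1}h$ in the averaging sum. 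From (i) and (ii) one gets $\tilde{\pi}^2=\tilde{\pi}$ and $\im\tilde{\pi}=W$, so $W':=\ker\tilde{\pi}$ is a vector space complement of $W$; from (iii), $W'$ is $\Gamma$-invariant.

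The main (and essentially only) obstacle is the choice of averaging operator and verifying the three properties above, which is straightforward once the formula for $\tilde{\pi}$ is written down. There is no deeper difficulty; the whole argument relies on finiteness of $\Gamma$ (so the sum is finite) and on $|\Gamma|$ being invertible in the ground field (automatic over $\mathbb{C}$). An equivalent route, which I might mention as an alternative, is to fix any Hermitian inner product on $V$, average it over $\Gamma$ to obtain a $\Gamma$-invariant Hermitian form, and take $W'$ to be the orthogonal complement of $W$ with respect to this form; invariance of the form forces $W'$ to be $\Gamma$-stable. Either route yields the splitting, and the induction on $\dim V$ then produces the required decomposition into irreducible subrepresentations.
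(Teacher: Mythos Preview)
Your proof is correct and is the standard averaging argument for Maschke's Theorem. However, the paper does not actually prove this statement: it is stated without proof as a classical result in the preliminaries section on representation theory. There is thus nothing to compare against; your write-up supplies a valid proof where the paper simply invokes the theorem.
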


Given $\rho\in\Rep(\Gamma)$ by Maschke's Theorem we have $\rho=m_1\lambda_1\oplus\ldots\oplus m_r\lambda_r$ for some $\lambda_i\in\Irr(\Gamma)$. The $\lambda_i$ are called the irreducible constituents of $\rho$ and $m_i\in\NN$ is the multiplicity of $\lambda_i$ in $\rho$, where $m \lambda:=\oplus_{i=1}^n\lambda$. If $\lambda\in\Irr(\Gamma)$, we write $\langle\rho,\lambda\rangle$ for the multiplicity of $\lambda$ in $\rho$ (which might be equal to zero). Both the irreducible constituents of a representation and their multiplicities are uniquely determined.

Suppose now $\Gamma'\leq\Gamma$ is a finite index subgroup such that $|\Gamma:\Gamma'|=m$.
If $\rho:\Gamma\to \GL(V)$ is a representation of $\Gamma$, we can consider the restriction of $\rho$ to $\Gamma'$, $\rho_{|\Gamma'}:\Gamma'\to\GL(V)$, which gives a representation of $\Gamma'$.  

Given $\rho:\Gamma\to\GL(V)$  we say that $\rho$ is \textbf{imprimitive} if there exist $W_1,\ldots,W_n$ subspaces of $V$ such that $V=\bigoplus W_i$ and the $W_i$ are transitively permuted by the action of $\Gamma$, otherwise we say it is \textbf{primitive}.
Given a representation $\rho\in \Rep_n(\Gamma')$ there exists a representation  $\Ind_{\Gamma'}^\Gamma(\rho):\Gamma\to \GL(V)$ having an imprimitive decomposition $V=\bigoplus_{i=1}^n W_i$, where $\Gamma'$ is the stabilizer of $W_1$ and $\rho\cong \left(\Ind_{\Gamma'}^\Gamma(\rho)_{|\Gamma'}\right)_{|W_1}$.

\begin{theorem}[Frobenius reciprocity]\label{pre:Frobenius}
Let $\Gamma$ be a group and $\Gamma'\leq\Gamma$ a finite index subgroup.
If $\rho\in\Irr(\Gamma)$ and $\rho'\in\Irr(\Gamma')$ then
\[
\langle\rho,\Ind_{\Gamma'}^\Gamma \rho'\rangle=\langle\rho_{|\Gamma'},\rho'\rangle.
\]
\end{theorem}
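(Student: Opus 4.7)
The plan is to derive Frobenius reciprocity from the tensor--Hom adjunction together with Schur's lemma. First I would realize the induced representation concretely: writing $V'$ for the underlying space of $\rho'$, set $\Ind_{\Gamma'}^\Gamma V' := \mathbb{C}[\Gamma] \otimes_{\mathbb{C}[\Gamma']} V'$. Fixing coset representatives $g_1,\ldots,g_m$ of $\Gamma/\Gamma'$ with $m=[\Gamma:\Gamma']$, the right $\mathbb{C}[\Gamma']$-module decomposition $\mathbb{C}[\Gamma]=\bigoplus_{i=1}^m g_i\mathbb{C}[\Gamma']$ yields $\Ind_{\Gamma'}^\Gamma V' = \bigoplus_{i=1}^m g_i\otimes V'$, a finite-dimensional $\mathbb{C}[\Gamma]$-module of dimension $m\dim V'$ coinciding with the imprimitive construction described before the statement.

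Next I would establish the tensor--Hom adjunction at the level of complex vector spaces:
$$\Hom_{\mathbb{C}[\Gamma]}\bigl(\mathbb{C}[\Gamma]\otimes_{\mathbb{C}[\Gamma']} V',\, V\bigr) \;\cong\; \Hom_{\mathbb{C}[\Gamma']}\bigl(V',\, V_{|\Gamma'}\bigr),$$
given by the mutually inverse maps $\Phi\mapsto (v'\mapsto \Phi(1\otimes v'))$ and $\varphi\mapsto (g\otimes v'\mapsto g\cdot\varphi(v'))$; well-definedness of the second assignment uses the $\Gamma'$-equivariance of $\varphi$, and the inverse check is immediate.

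Finally I would convert both Hom-spaces into multiplicities via Schur's lemma after ensuring complete reducibility on each side. Because $V$ is finite-dimensional and irreducible over $\Gamma$, a Clifford-type argument works: any nonzero $\mathbb{C}[\Gamma']$-submodule $W\subseteq V$ satisfies $\sum_i g_iW = V$, from which one extracts a direct-sum decomposition of $V_{|\Gamma'}$ into $\Gamma$-conjugate irreducible summands. Hence $V_{|\Gamma'}$ is semisimple and $\dim\Hom_{\mathbb{C}[\Gamma']}(V',V_{|\Gamma'}) = \langle V_{|\Gamma'},\rho'\rangle$. A dual argument, applied to the socle of the finite-dimensional module $\Ind_{\Gamma'}^\Gamma V'$ and iterated on quotients, delivers complete reducibility on the other side and so $\dim\Hom_{\mathbb{C}[\Gamma]}(V,\Ind_{\Gamma'}^\Gamma V') = \langle \rho,\Ind_{\Gamma'}^\Gamma\rho'\rangle$. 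Combining these identifications with the adjunction isomorphism yields the claim.

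The main obstacle is obtaining complete reducibility on each side without Maschke's theorem, since $\Gamma$ is allowed to be infinite. The saving feature is exactly that $\Gamma'$ has finite index: averaging over the finitely many cosets $g_1\Gamma',\ldots,g_m\Gamma'$ replaces the finite-group averaging in Maschke and suffices to split off invariant subspaces in the finite-dimensional modules at hand. Once semisimplicity of $V_{|\Gamma'}$ and of $\Ind_{\Gamma'}^\Gamma V'$ is granted, the remainder of the proof is formal.
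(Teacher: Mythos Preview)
The paper does not give a proof of this statement: Frobenius reciprocity is recorded in the preliminaries (Section~1.6) as a standard background result, with no argument supplied. So there is no ``paper's own proof'' to compare against.

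Your approach via the tensor--Hom adjunction is sound and is the standard module-theoretic route. The adjunction
\[
\Hom_{\mathbb{C}[\Gamma]}\bigl(\Ind_{\Gamma'}^\Gamma V',\,V\bigr)\;\cong\;\Hom_{\mathbb{C}[\Gamma']}\bigl(V',\,V_{|\Gamma'}\bigr)
\]
is correct as written, and once both sides are semisimple the $\Hom$-dimensions become the multiplicities in the statement. Your Clifford-type argument for the semisimplicity of $V_{|\Gamma'}$ is fine. One small point: the phrase ``a dual argument, applied to the socle \ldots\ and iterated on quotients'' is vague and does not by itself yield semisimplicity of $\Ind_{\Gamma'}^\Gamma V'$. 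The mechanism you actually need is the one you name in your final paragraph: given any $\Gamma$-submodule $A\subseteq \Ind_{\Gamma'}^\Gamma V'$, restrict to $\Gamma'$, use that $(\Ind_{\Gamma'}^\Gamma V')_{|\Gamma'}$ is semisimple (Mackey decomposes it into pieces built from conjugates of the irreducible $V'$) to get a $\Gamma'$-equivariant projection onto $A$, and then average that projection over the $m$ cosets to obtain a $\Gamma$-equivariant projection. That coset-averaging step is exactly the finite-index substitute for Maschke, and it is what makes the argument go through for infinite $\Gamma$. If you tighten the exposition to make that step explicit rather than alluding to socles, the proof is complete.
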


Consider now $N\trianglelefteq\Gamma$. If $\lambda\in\Rep(N)$, then for $g\in\Gamma$ we can define $\lambda^g$ given by $\lambda^g(n)=(gng^{-1})$.
If $\lambda$ is irreducible then so is $\lambda^g$.
This gives an action of $\Gamma$ on  $\Irr(N)$.
We have the following theorem.

\begin{theorem}[Clifford Theory]\label{pre:th:Clifford}
Let $N\trianglelefteq\Gamma$ and consider $\lambda\in\Irr(N)$.
Let $H=\St_\Gamma(\lambda)=\{g\in \Gamma\; :\; \lambda^g\cong\lambda\}$.
Then the map
\[
\begin{array}{rccc}
\Ind: &\{\psi\in\Irr(H)\; :\; <\psi_{|N},\lambda>\neq 0\} &\to &\{\rho\in\Irr(\Gamma)\; :\; <\rho_{|N},\lambda>\neq 0\}\\
&\psi &\mapsto &\Ind_H^\Gamma\psi\\
\end{array}
\]
is a bijection.
In other words, every irreducible representation $\rho\in\Irr(\Gamma)$ such that $<\rho_{N},\lambda>\neq 0$ is induced from an irreducible representation of the stabilizer of $\lambda$.
\end{theorem}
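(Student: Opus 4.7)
The plan is to argue via the classical Clifford decomposition combined with Frobenius reciprocity (Theorem \ref{pre:Frobenius}) and a Mackey-style analysis. First I would establish the preliminary fact: for any $\rho\in\Irr(\Gamma)$ acting on $V$, the restriction $\rho_{|N}$ is a direct sum of $\Gamma$-conjugates of a single $\mu\in\Irr(N)$, each with the same multiplicity. The key observation is that for $\mu\in\Irr(N)$ the $\mu$-isotypic component $V_\mu\subseteq V$ (the sum of all $N$-subrepresentations isomorphic to $\mu$) is carried by any $g\in\Gamma$ to $V_{\mu^g}$, so $\Gamma$ permutes isotypic components compatibly with its action on $\Irr(N)$; irreducibility of $\rho$ forces this permutation to be transitive on the nonzero ones.

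Second, I would check that the map is well-defined. Given $\psi\in\Irr(H)$ with $\langle\psi_{|N},\lambda\rangle\neq 0$, apply the preliminary step to $\psi$ viewed as an irreducible representation of $H$ (note $N\trianglelefteq H$): the $H$-orbit of any constituent of $\psi_{|N}$ consists only of $\lambda$ itself, since $H=\St_\Gamma(\lambda)$. Hence $\psi_{|N}=e\lambda$ for some $e\geq 1$. A direct computation then gives $(\Ind_H^\Gamma\psi)_{|N}\cong e\bigoplus_{g\in\Gamma/H}\lambda^g$, whose constituents are pairwise non-isomorphic (because $g\notin H$ means $\lambda^g\not\cong\lambda$), and in particular $\lambda$ appears.

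The main obstacle, which I expect to be the technical heart of the argument, is showing that $\Ind_H^\Gamma\psi$ is irreducible. For this I would compute
\[
\langle\Ind_H^\Gamma\psi,\Ind_H^\Gamma\psi\rangle_\Gamma=\langle\psi,(\Ind_H^\Gamma\psi)_{|H}\rangle_H
\]
using Frobenius reciprocity, and then apply Mackey's decomposition of $(\Ind_H^\Gamma\psi)_{|H}$ as a sum indexed by double cosets $H\backslash\Gamma/H$. For each $g\notin H$ the corresponding summand restricts on $N$ to a sum of $\lambda^g\not\cong\lambda$, while $\psi_{|N}=e\lambda$; thus no $H$-intertwiner can contribute from that summand (the restrictions to $N$ are already orthogonal). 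Only the $g=e$ term survives, yielding $\langle\psi,\psi\rangle_H=1$.

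Finally, for bijectivity I would construct the inverse explicitly. Given $\rho\in\Irr(\Gamma)$ on $V$ with $\langle\rho_{|N},\lambda\rangle\neq 0$, the $\lambda$-isotypic component $V_\lambda\subseteq V$ is $H$-stable (since $H$ fixes $\lambda$), giving a representation $\psi_\rho$ of $H$ on $V_\lambda$. The Clifford decomposition above translates into $V=\bigoplus_{gH\in\Gamma/H}gV_\lambda$, which is precisely the model of $\Ind_H^\Gamma\psi_\rho$, so $\rho\cong\Ind_H^\Gamma\psi_\rho$. Irreducibility of $\psi_\rho$ follows because any proper $H$-invariant subspace of $V_\lambda$ would generate a proper $\Gamma$-invariant subspace of $V$. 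Since the $\lambda$-isotypic component of $(\Ind_H^\Gamma\psi)_{|N}$ recovers $\psi$, the assignments $\psi\mapsto\Ind_H^\Gamma\psi$ and $\rho\mapsto\psi_\rho$ are mutually inverse, completing the proof.
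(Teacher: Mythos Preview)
The paper states this theorem as a standard preliminary result without proof (it sits in the Representation Theory section of Chapter~1 alongside Maschke's Theorem and Frobenius reciprocity, all unproved). Your argument is the standard proof of Clifford theory via isotypic components and Mackey decomposition, and it is correct; there is nothing in the paper to compare it against.
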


Given a representation $\rho:\Gamma\to\GL(V)$ we define its \textbf{character}
\[
\begin{array}{rccc}
\chi_\rho:&\Gamma &\to &\mathbb{C}\\
& g &\mapsto &\tr\rho(g),
\end{array}
\]
where $\tr\rho(g)$ stands for the trace of the matrix $\rho(g)$.
Note that $\dim\rho=\chi_{\rho}(1)$, so given a character $\chi$ its dimension or degree is given by $\chi(1)$.
A character of an irreducible representation is said to be an irreducible character.
It is easy to check that character are constant on conjugacy classes and that two isomorphic representations have the same character. Conversely we have the following theorem.
\begin{theorem}
Let $\Gamma$ be a finite group. Then two representations $\rho_1,\,\rho_2\in\Rep_n(\Gamma)$ are isomorphic if and only if $\chi_{\rho_{1}}=\chi_{\rho_2}$.
Moreover, $\rho\cong m_1\lambda_1\oplus\ldots m_k\lambda_k$ if and only if $\chi_{\rho}=m_1\chi_{\lambda_1}+\ldots +\chi_{\lambda_k}$.
\end{theorem}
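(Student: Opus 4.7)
The plan is to reduce the first equivalence to the second and then prove the second via the orthogonality relations for irreducible characters. Since $\rho_1 \cong \rho_2$ trivially implies $\chi_{\rho_1} = \chi_{\rho_2}$ (conjugate matrices have the same trace), and since any decomposition $\rho \cong m_1 \lambda_1 \oplus \cdots \oplus m_k \lambda_k$ yields $\chi_\rho = m_1 \chi_{\lambda_1} + \cdots + m_k \chi_{\lambda_k}$ by the additivity of trace on block-diagonal matrices, it suffices to prove that the multiplicities $m_i$ appearing in a decomposition into irreducibles are recoverable from $\chi_\rho$.

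The first step is to invoke Maschke's Theorem (stated just above in the excerpt) to write any $\rho \in \Rep_n(\Gamma)$ as a direct sum of irreducibles $\rho \cong \bigoplus_j m_j \lambda_j$ with $\lambda_j \in \Irr(\Gamma)$ pairwise non-isomorphic. The characters then satisfy $\chi_\rho = \sum_j m_j \chi_{\lambda_j}$. To extract the $m_j$ canonically from $\chi_\rho$, I would introduce the Hermitian inner product on class functions
\[
\langle f, g \rangle := \frac{1}{|\Gamma|} \sum_{g \in \Gamma} f(g) \overline{g(g)}
\]
and prove the first orthogonality relation: for $\lambda, \mu \in \Irr(\Gamma)$,
\[
\langle \chi_\lambda, \chi_\mu \rangle = \delta_{\lambda\mu}.
\]

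To establish the orthogonality relation, I would argue via Schur's Lemma. Given representations $V_\lambda$, $V_\mu$ affording $\lambda, \mu$, and any linear map $T : V_\mu \to V_\lambda$, the averaged map $\widetilde{T} := \frac{1}{|\Gamma|} \sum_{g \in \Gamma} \lambda(g) T \mu(g)^{-1}$ is $\Gamma$-equivariant, hence zero if $\lambda \not\cong \mu$ and scalar if $\lambda = \mu$ (with scalar $\tr(T)/\dim\lambda$). Taking $T$ to run through matrix units and computing traces of the resulting identities yields the orthogonality relation above. Once this is in hand, the bilinearity of $\langle\cdot,\cdot\rangle$ and the decomposition give
\[
\langle \chi_\rho, \chi_{\lambda_i} \rangle = \sum_j m_j \langle \chi_{\lambda_j}, \chi_{\lambda_i} \rangle = m_i,
\]
so each $m_i$ is determined by $\chi_\rho$. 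This proves the moreover clause, and in particular two representations with equal characters have identical multiplicities of every irreducible, hence are isomorphic; conversely the forward implication of the moreover clause was already noted.

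The main technical obstacle is the proof of the orthogonality relations, which rests on Schur's Lemma; the bulk of the work (and the only place where finiteness of $\Gamma$ genuinely enters, through the averaging operator $\frac{1}{|\Gamma|} \sum_{g \in \Gamma}$) lies in justifying that averaging produces $\Gamma$-equivariant maps and then carefully extracting scalar traces. Once orthogonality is established the rest of the argument is formal linear algebra on class functions, and the uniqueness statement about multiplicities in a direct sum decomposition into irreducibles follows immediately.
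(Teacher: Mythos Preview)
Your argument is correct and is the standard textbook proof via Maschke's Theorem, Schur's Lemma, and the first orthogonality relations. The paper itself does not supply a proof of this statement: it appears in the preliminaries chapter as a classical fact quoted without argument, so there is nothing to compare against beyond noting that your approach is the expected one.

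One small slip worth fixing: in your definition of the inner product you have used $g$ both as the second class function and as the summation variable, writing $\overline{g(g)}$. You presumably mean
\[
\langle f, h \rangle := \frac{1}{|\Gamma|} \sum_{g \in \Gamma} f(g)\,\overline{h(g)}.
\]
Otherwise the proposal is sound.
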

\noindent It follows that a representation of a finite group is completely determined by its character.

A \textbf{projective representation} of $\Gamma$ is a group homomorphism
\[
\rho:\Gamma\to \PGL(V)
\]
where $V$ is a finite dimensional complex vector space and $\PGL(V)$ stands for the projective linear group of $V$. We write $\dim\rho=\dim V$.

\section{Finite groups of Lie type}\label{sec:FGLT}
We refer to \cite[Part III]{MaTes} and references therein for a more detailed presentation.
Let $p$ be a prime, let $q$ denote some $p$-power and let us write $\F$ for an algebraic closure of $\F_p$.
Suppose $G$ is a reductive algebraic group defined over $\F_q$.
The map $F^0:\Fq[G]\to\Fq[G]$ given by $x\mapsto x^q$ is an $\Fq$-endomorphism of the algebra $\Fq[G]$.
Hence, the map $\Frob_q:=F^0\otimes\Id$ gives a map $\F[G]\to\F[G]$ called the \textbf{Frobenius endomorphism} of $G$. 
A \textbf{Steinberg endomorphism} of a linear algebraic group $G$ is an endomorphism $F:G\to G$ such that $F^m=\Frob_q$ for some $\Fq$-structure of $G$ and some $m\geq 1$.
A \textbf{finite group of Lie type} is a group of the form $G^F$ where $G$ is a reductive algebraic group and $F$ is a Steinberg endomorphism of $G$.
Finite groups of Lie type give rise to an important part of the finite simple groups.  
\begin{theorem}[{Tits, see \cite[24.17]{MaTes}}]\label{pre:th:FGLT_simple}
Let $G$ be a simply connected absolutely simple linear algebraic $\F_q$-group with Steinberg endomorphism $F$.
Then, except for a finite number of exceptions (in particular whenever $q\neq 2,3$) $G^F$ is perfect and $G^F/Z(G^F)$ is a finite simple group.
\end{theorem}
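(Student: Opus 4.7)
The plan is to reduce the statement to the classical Tits simplicity criterion for groups admitting a split $(B,N)$-pair, and then verify its hypotheses for $G^F$ using the Chevalley/Steinberg commutator relations. This is a well-trodden path in the theory of finite groups of Lie type, so the proof is essentially bookkeeping plus one genuinely hard input (the commutator relations on root subgroups).

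First I would equip $G^F$ with its standard Tits system. Let $T \leq B$ be an $F$-stable maximal torus inside an $F$-stable Borel subgroup of $G$, let $B = T U$ be the corresponding Levi decomposition with $U$ the unipotent radical, and let $N = N_G(T)$. By the Borel--Tits theorem (or the explicit construction in \cite[Chapter 24]{MaTes}), the pair $(B^F, N^F)$ is a saturated BN-pair in $G^F$ whose Weyl group $W_F = N^F/T^F$ is obtained from the absolute Weyl group by folding under the action of $F$; the assumption that $G$ is absolutely simple translates into irreducibility of the root system, hence of $W_F$, after the folding.

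Next I would verify the hypotheses of Tits' simplicity theorem (see for instance \cite[Theorem~24.16]{MaTes}): that $G^F$ is generated by its conjugates of $U^F$, that $U^F$ is solvable (in fact nilpotent, being a finite $p$-group), that $W_F$ is irreducible, and crucially that $G^F$ is perfect with trivial ``big'' center outside $Z(G^F)$. Granting perfectness, the simplicity criterion then gives that $G^F/Z(G^F)$ is simple, so the whole theorem reduces to proving perfectness.

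The main obstacle is thus perfectness, and it is here that the exceptional small cases intrude. The strategy is the standard one: $G^F$ is generated by the $F$-stable root subgroups (or, in the twisted case, by their $F$-fixed amalgams $U_\alpha^F$); for each such root subgroup one shows $U_\alpha^F \subseteq [G^F,G^F]$ by exhibiting it as a set of commutators $[t, u]$ with $t \in T^F$ and $u \in U_\alpha^F$, using the fact that the character $t \mapsto \alpha(t)$ on $T^F$ is nontrivial once $q$ is large enough to give a nontrivial element of $\F_q^\times$ of the relevant order. This argument goes through whenever $q \geq 4$, and in rank~$1$ one needs the slightly stronger $q \geq 5$; the remaining finitely many small $(G,q)$ combinations (such as the solvable groups $\SL_2(2)$, $\SL_2(3)$, the Suzuki group $^2B_2(2)$, $^2G_2(3)$, $^2F_4(2)$, $G_2(2)$, $^2A_2(2)$, etc.) are then the stated finite list of exceptions, handled case by case. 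This gives perfectness in all but the listed exceptions, and combined with Tits' criterion completes the proof.
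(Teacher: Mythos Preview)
The paper does not prove this theorem: it is stated in the preliminaries chapter with a citation to Malle--Testerman \cite[24.17]{MaTes} and no proof is given. Your sketch is the standard argument (and indeed the one carried out in the cited reference): establish the split $(B,N)$-pair on $G^F$, reduce simplicity of $G^F/Z(G^F)$ to Tits' criterion, and isolate perfectness as the one nontrivial hypothesis, which is then checked via the Chevalley commutator relations on root subgroups together with a nontriviality argument on $T^F$, leaving a finite list of small-$q$ exceptions. That is exactly the right approach, and there is nothing to compare against in the paper itself.
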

\noindent A finite simple group of the form $G^F/Z(G^F)$ is called a \textbf{finite simple group of Lie type}.

Let $G$ be a connected reductive algebraic group with root system $\Phi$ and let $F$ be a Frobenius endomorphism of $G$ with respect to some $\Fq$-structure of $G$.
Then $F$ induces an automorphism $\tau$ of the Coxeter diagram associated to $\Phi$.
Moreover if $G$ is an absolutely almost simple simply connected algebraic group, $F$ is uniquely determined (up to inner automorphism) by the pair $(\tau,q)$, see \cite[Theorem 22.5]{MaTes}.
Given a reductive algebraic group $G$ and a Frobenius automorphism $F$, we will say that the finite group $G^F$ is of type $L=(\Phi,\tau)$, where $\Phi$ is the root system of $G$ and $\tau$ is the automorphism induced by $F$ on the corresponding Coxeter diagram.
By a {\bf Lie type} $L$ we mean a pair $(\Phi,\tau)$ obtained as the type of some finite group of Lie type $G^F$.
For a root system $\Phi$, we denote by $\rk\Phi$ its rank
and by $\Phi^{+}$ a choice of positive roots.
For a Lie type $L=(\Phi,\tau)$ we write $\rk L=\rk\Phi$.
Finally, given an irreducible root system $\Phi$ its Coxeter number $h$ is defined by $h=|\Phi|/\rk \Phi$.

The following is Lemma 4.11 in \cite{LuMa}.
\begin{lemma}\label{lm:projrep}
Let $\Phi$ be a perfect finite central extension of a finite simple group of Lie type $H(\Fq)/Z(H(\Fq))$. Then there exists $\delta>0$, depending neither on $q$ nor on the Lie type, such that every nontrivial complex projective representation of $\Phi$ has dimension  at least $q^{\delta}$. In particular, any proper subgroup $\Phi '\leq\Phi$ has index at least $q^{\delta}$.
\end{lemma}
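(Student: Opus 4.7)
The plan is to first reduce the problem about projective representations to one about linear representations of a closely related group, then appeal to the Landazuri--Seitz--Zalesski lower bounds on the minimal dimension of a nontrivial irreducible complex representation of a finite quasisimple group of Lie type.

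First I would show that a nontrivial projective representation $\rho:\Phi\to\PGL_n(\mathbb{C})$ lifts to a linear representation $\tilde{\rho}:\widetilde{\Phi}\to\GL_n(\mathbb{C})$ of some finite central extension $\widetilde{\Phi}$ of $\Phi$. Concretely, pull back the natural projection $\GL_n(\mathbb{C})\to\PGL_n(\mathbb{C})$ along $\rho$ to obtain a central extension of $\Phi$ by $\mathbb{C}^{\times}$; since $\Phi$ is finite the corresponding $2$-cocycle takes only finitely many values, so one can cut down to a finite central extension. Because $\Phi$ is perfect, replacing $\widetilde{\Phi}$ by $[\widetilde{\Phi},\widetilde{\Phi}]$ if necessary, I may assume $\widetilde{\Phi}$ is perfect as well, and the map $\widetilde{\Phi}\to\Phi$ is still surjective with central kernel. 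Thus $\widetilde{\Phi}$ is a perfect finite central extension of $S=H(\mathbb{F}_q)/Z(H(\mathbb{F}_q))$.

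Next I would invoke the general fact that every perfect central extension of a finite group is a quotient of the universal (Schur) cover, so $\widetilde{\Phi}$ is a central quotient of the Schur cover $\widetilde{S}$ of $S$. For $S$ a finite simple group of Lie type, $\widetilde{S}$ is (with only finitely many exceptions) the quasisimple group $H_{\mathrm{sc}}(\mathbb{F}_q)$, and the order of the Schur multiplier $M(S)$ is bounded in terms of the Lie type alone. Therefore $\tilde{\rho}$ factors through a nontrivial linear representation of $\widetilde{S}$ of the same dimension $n$, and its restriction to the quasisimple part has a nontrivial irreducible constituent.

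Now I would apply the Landazuri--Seitz--Zalesski theorem: for each Lie type $L$ there is a constant $c(L)>0$ such that every nontrivial irreducible complex representation of the corresponding quasisimple group has dimension at least $c(L)\cdot q^{a(L)}$ for some $a(L)>0$. Examining the well-known table (the worst case is type $A_1$, where the minimal dimension is $(q-1)/\gcd(2,q-1)$), one sees that a single $\delta>0$ can be chosen so that $n\geq q^{\delta}$ holds uniformly in $q$ and in the Lie type, adjusting $\delta$ for the finitely many small exceptional pairs $(L,q)$ where the Schur multiplier is enlarged or the LSZ bound is weaker.

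For the "in particular" statement, given a proper subgroup $\Phi'\leq\Phi$ of index $n$, consider the permutation representation $\pi:\Phi\to S_n\subset\GL_n(\mathbb{C})$ on the cosets and then compose with the projection to $\PGL_n(\mathbb{C})$ to obtain a projective representation $\bar{\pi}$. If $\bar{\pi}$ were trivial then $\pi(\Phi)$ would consist of scalar permutation matrices, forcing $\pi$ itself to be trivial and hence $\Phi'=\Phi$, a contradiction. So $\bar{\pi}$ is nontrivial of dimension $n$, and the first part gives $n\geq q^{\delta}$. The main obstacle I anticipate is controlling the finitely many exceptional small cases (arising both from large Schur multipliers and from weaker LSZ bounds for tiny $q$) so that a single absolute constant $\delta$ suffices simultaneously for all Lie types and all $q$.
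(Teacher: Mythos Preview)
Your approach is correct and is essentially the standard one; the paper, however, does not give its own proof of this lemma but simply cites it as \cite[Lemma~4.11]{LuMa}. The Landazuri--Seitz bounds you invoke are exactly the engine behind that result, and indeed the paper later states an almost identical bound (Theorem~\ref{th:min_deg_proj_rep}) citing Landazuri directly.

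One small remark on your ``in particular'' argument: rather than passing through $\PGL_n$, it is slightly cleaner to note that the permutation representation on cosets decomposes as the trivial representation plus a representation of dimension $n-1$, and since $\Phi$ is perfect any nontrivial linear constituent already yields a nontrivial projective representation of dimension at most $n-1$. Your version is fine too, since a permutation matrix that is scalar must be the identity.
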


\section{Commutative algebra}

Let $R$ be a ring and $M$ an $R$-module (we will consider all modules to be right modules).
If $R$ and $M$ carry topological structures, the action of $R$ on $M$ is assumed to be continous and all submodules are considered to be closed.
Given $r\in R$ and $m\in M$, we will write $[m,r]=mr-m$.
If $M'\subseteq M$ and $R'\subseteq R$, denote by $[M',R']$ the smallest $R$-module containing all elements of the form $[m,r]$, where $m\in M',\ r\in R'$.

If a group $H$ acts by automorphisms on an abelian group $M$, we call $M$ an $H$-module.
We adopt similar conventions and notation for $H$-modules as for $R$-modules. 

\begin{lemma}\label{lm:finite_generation_pro_p_modules}

Let $H$ be a pro-$p$ group, $M$ an abelian pro-$p$ group and
suppose that $M$ is an $H$-module.
If $M/[M,H]$ is finite, then $M$ is a finitely generated $H$-module.
Conversely, if $M$ has finite exponent and $M$ is a finitely generated $H$-module then $M/[M,H]$ is finite.
\end{lemma}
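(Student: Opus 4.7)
The plan is to prove the two implications separately; throughout, ``finitely generated'' is understood topologically, so that the closed $H$-submodule generated by the generators equals $M$. I would dispose of the converse first since it is essentially formal.

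For the converse, suppose $m_1,\dots,m_n$ topologically generate $M$ as an $H$-module. Since $H$ acts trivially on $M/[M,H]$ by definition of the bracket, the classes $\bar m_1,\dots,\bar m_n$ topologically generate $M/[M,H]$ as an abelian pro-$p$ group. Under the hypothesis that $M$ has exponent $p^k$, the quotient $M/[M,H]$ has exponent dividing $p^k$, and a topologically finitely generated abelian pro-$p$ group of bounded exponent is a quotient of $(\mathbb{Z}/p^k\mathbb{Z})^n$ and therefore finite.

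For the forward direction I would apply a topological Nakayama argument. Lift generators of the finite group $M/[M,H]$ to elements $m_1,\dots,m_n\in M$ and let $N$ be the closed $H$-submodule they generate; by construction $M = N+[M,H]$, so $K := M/N$ is an abelian pro-$p$ $H$-module satisfying $[K,H] = K$, and it suffices to prove $K = 0$. Suppose for contradiction that $K\neq 0$ and pick any proper open subgroup $V\subsetneq K$. Continuity of $H\times K\to K$ at every point $(h,0)$, combined with compactness of $H$, yields by a standard finite-subcover argument an open subgroup $V_0\leq_o K$ with $H\cdot V_0\subseteq V$. The subgroup $V'$ of $K$ generated by $\{hV_0:h\in H\}$ is then $H$-invariant, open (it contains $V_0$), and still contained in $V$, so $K/V'$ is a nontrivial finite abelian $p$-group on which $H$ acts continuously through a finite $p$-group $\bar H$. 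Applying classical Nakayama over the local ring $\mathbb{F}_p[\bar H]$ to the nonzero $\mathbb{F}_p$-vector space $(K/V')/p(K/V')$ shows that the coinvariants are nonzero, i.e., $[K,H]+V'+pK\subsetneq K$. Since $[K,H] = K$, this is the desired contradiction, so $K = 0$ and $M = N$.

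The main obstacle is the topological Nakayama step, and in particular producing a proper open $H$-invariant subgroup of $K$ without assuming any finite generation of $K$; the compactness-plus-continuity construction above resolves this, after which the representation theory of finite $p$-groups over $\mathbb{F}_p$ (the local ring $\mathbb{F}_p[\bar H]$ having nilpotent augmentation ideal) closes the proof.
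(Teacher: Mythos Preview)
Your proof is correct and follows essentially the same topological Nakayama argument as the paper: both reduce to showing that a nonzero pro-$p$ $H$-module $K$ with $[K,H]=K$ admits a finite nontrivial quotient on which the (finite $p$-group image of) $H$ must have nonzero coinvariants, yielding a contradiction. The only notable difference is that where the paper cites \cite[Lemma 5.3.3(c)]{RiZa} to produce a proper open $H$-submodule, you construct one directly via the compactness-and-continuity argument, making your proof self-contained at that step.
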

\begin{proof}
Suppose that $M/[M,H]$ is finite.
Let $m_1,\ldots,m_r\in M$ be representatives of $M/[M,H]$ and write $N$ for the $H$-submodule they generate.
Suppose $N\lneq M$, then there exists an $H$-module $L\leq_o M$ such that $N+L\lneq M$ (\cite[Lemma 5.3.3(c)]{RiZa}).
Pick $M'$ maximal $H$-module such that $N+L\leq M'$, then
$M/M'$ is a finite simple $H$-module (i.e., it has no nontrivial proper $H$-submodules).
But $H$ being a pro-$p$ group, this implies that $H$ acts trivally on $M/M'$ (otherwise the set of fixed points would be a nontrivial proper $H$-submodule as we have a $p$-group acting on a set of $p$-power order), that is, $[M,H]\subseteq M'$.
It follows that $N+[M,H]\leq M'$, which contradicts the construction of $N$.
Therefore, $N=M$ and $m_1,\ldots,m_r$ generate $M$.

Conversely, suppose that $M$ is finitely generated and $M$ has finite exponent. Then the action of $H$ on $M/[M,H]$ is trivial, hence it is just a finitely generated abelian group with finite exponent, hence finite. 

\end{proof}

\clearpage{\pagestyle{empty}\cleardoublepage}

\chapter{The Congruence Subgroup Problem}\label{sec:CSP}

\section{Introduction}
Let $k$ be a global field, $S$ a finite set of valuations  and $\OO_S$ the ring of $S$-integers.
Prototypical examples of global fields are $\mathbb{Q}$ and $\Fp(t)$ with rings of $S$-integers $\mathbb{Z}$ ($S=\emptyset$) and $\Fp[t]$ ($S=\{v_{\infty}\}$\footnote{$v_{\infty}$ stands for the degree valuation on $\F_p(t)$: $v_\infty(0)=0$ and $v_\infty(\frac{f}{g})=e^{-(\deg g - deg f)}$.}).
Let us consider an algebraic group $G$ defined over $k$.
The relation between the group $G(k)$ of $k$-rational points of $G$ and the arithmetic of $k$ represents a source of interesting questions.
The arithmetic of $k$ is best reflected on its rings of $S$-integers $\OO_S$.
Analogously, if we fix a $k$-embedding  $\iota: G\to \GL_n$, we can see $G(k)$ as a subgroup of $\GL_n(k)$ and set \[G(\OO_S):=G(k)\cap \GL_n(\OO_S).\]
Note that, in general, the group $G(\OO_S)$ depends on the choice of the $k$-embedding.
For instance, if we take $G=(k,+)$, the additive group of $k$, and two different embeddings $\iota_j:G\to \GL_2$, given by
\[
\iota_1:\ x\mapsto \begin{pmatrix}
						1&\frac{x}{2}\\
						0&1\\
					\end{pmatrix}
\quad \iota_2:\ x\mapsto \begin{pmatrix}
							1&\frac{x}{3}\\
							0&1\\
						\end{pmatrix},
\]
we obtain different groups $G(\OO_S)$ for each embedding. Nevertheless certain properties have an intrinsic nature that are independent of the chosen embedding. The Congruence Subgroup Property is one of these. Let us start by giving an example that lies in the origin of the subject.

Take $k=\mathbb{Q}$, $\OO_S=\mathbb{Z}$ and $G=\SL_n$ ($n\geq 2$).
For every integer  $m\in\mathbb{Z}$ we have a ring homomorphism $\mathbb{Z}\to\mathbb{Z}/m\mathbb{Z}$ and this gives a group homomorphism $\SL_n(\mathbb{Z})\to\SL_n(\mathbb{Z}/m\mathbb{Z})$.
The kernel of this map is called the principal congruence subgroup of level $m$.
Any subgroup of $\SL_n(\mathbb{Z})$ which contains a principal congruence subgroup of some level is called a congruence subgroup.
Note that $\SL_n(\mathbb{Z}/m\mathbb{Z})$ is a finite subgroup, hence every congruence subgroup has finite index in $\SL_n(\mathbb{Z})$.
The Congruence Subgroup Problem asks whether every finite index subgroup is a congruence subgroup, or more generally, how far this is from being true.

By the end of the 19 century Fricke and Klein showed that the answer is negative for $\SL_2(\mathbb{Q})$.
In the 1960s Bass, Lazard and Serre \cite{Bass_Laz_Ser} and independently Mennicke \cite{Me} solved the problem in the affirmative for $\SL_n$, $n\geq 3$.
The formulation of the problem for $G$ any linear algebraic group and $k$ a global field was introduced in \cite{Bass_Mil_Ser}.
We refer to the surveys \cite{Rag2}, \cite{Pra_Rap_developments} or the book \cite{Sury} for more references and results on the Congruence Subgroup Problem.

\section{Arithmetic groups}\label{sec:arithmetic_groups}
Throughout this section we follow \cite[1.3]{Mar}. Let $k$ be a global field. We refer to section \ref{sec:global_and_local} for notation and definitions concerning global and local fields. 
Fix a finite subset $S$ of $V_k$ that is supposed to be non-empty\footnote{Note that otherwise the ring of $S$-integers is empty.} in case $\ch k>0$.
Recall that 
\[\OO_S=\{x\in k \: |\: x\in\OO_v\:\: \forall \, v\in V_f\setminus{S}\}\] is the ring of $S$-integers of the field $k$.

Let $G$ be an algebraic $k$-group.
We fix a $k$-embedding\footnote{By Proposition \ref{pre:prop:G(k)_linear} there exists a $k$-isomorphism onto a $k$-subgroup of some $\GL_n$ but it is not unique.} $\iota: G\hookrightarrow\GL_N$ and consider from now on $G\subseteq \GL_N$.
We define the group of $S$-integral points of $G$ as
\[
G(\OO_S):=G(k)\cap\GL_N(\OO_S).
\]
Let us note that $G$ is an affine $k$-variety, that is, we can see $G$ as the vanishing set of a radical ideal $J\leq K[x_{11},\ldots,x_{NN}]$, where $K$ is an algebraic closure of $k$. Since $G$ is a $k$-group, we have $J=J_k\otimes K$ where $J_k:=J\cap k[x_{11},\ldots,x_{NN}]$.
Now since $\GL_N$ is an affine group $\mathbb{Z}$-scheme it can be seen in particular as an affine group $\OO_S$-scheme represented by $\OO_S[\GL_N]=\mathbb{Z}[\GL_N]\otimes\OO_S$, see section \ref{sec:affine_group_schemes}.
If we put $J_{\OO_S}:=J\cap\OO_S[x_{11},\ldots,x_{NN}]$, we can consider the affine group $\OO_S$-subscheme $G_{\OO_S}$ defined by the ideal $J_{\OO_S}$, i.e., represented by $\OO_S[G_{\OO_S}]:=\OO_S[x_{11},\ldots,x_{NN}]/J_{\OO_S}$.
Note that by construction we have $k[G]=\OO_S[G_{\OO_S}]\otimes_{\OO_S}k$.
We will make an abuse of notation and write $G$ for $G_{\OO_S}$ if there is no danger of confusion.
Hence, from now on we will assume that $G$ has the structure of an affine group $\OO_S$-scheme. In this sense we have
\[
G(\OO_S)=\Hom_{\OO_s}(\OO_S[G],\OO_S)
\]
and the notation is consistent with both descriptions of $G(\OO_S)$.

For every ideal $\mathfrak{q}$ of $\OO_S$ we define the \textbf{principal $\mathbf{S}$-congruence subgroup of level $\mathbf{\mathfrak{q}}$} to be\[
G(\mathfrak{q}):=G(\OO_S)\cap\GL_N(\mathfrak{q}),
\]
where $\GL_N(\mathfrak{q})$ is the subgroup of $\GL_N(\OO_S)$ consisting of matrices congruent to the identity matrix modulo $\mathfrak{q}$.
Equivalently, we have
\[
G(\mathfrak{q})=G(\OO_S)\cap\ker\left(\GL_N(\OO_S)\to\GL_N(\OO_S/\mathfrak{q})\right).
\]
If we regard $G$ as an affine group $\OO_S$-scheme then $\OO_S/\mathfrak{q}$ is an $\OO_S$-algebra and we have a map $G(\OO_S)\to G(\OO_S/\mathfrak{q})$, so equivalently we have
\[
G(\mathfrak{q})=\ker\left(G(\OO_S)\to G(\OO_S/\mathfrak{q})\right).
\]
Subgroups of $G(\OO_S)$ containing some $G(\mathfrak{q})$ ($\mathfrak{q}\neq \mathfrak{0})$ are called $\mathbf{S}$\textbf{-congruence subgroups}.
The following lemma collects some facts that will be used in the sequel.
\begin{lemma}[{\cite[1.3.1.1]{Mar}}]\label{lm:embeddings_and_arithmetic}
Let $f: G\to G'$ be a $k$-morphism (as $k$-varieties) between linear algebraic $k$-groups, $G\subseteq GL_n$ and $G'\subseteq\GL_{m}$ 
. Then
\begin{enumerate}[label=\roman*)]
\item for every non-zero ideal $\mathfrak{p}\subset\OO_S$, there exists a non-zero ideal
$\mathfrak{q}\subset\OO_S$ such that $f(G(\mathfrak{q}))\subseteq G'(\mathfrak{p})$.\label{e_congruence_well_defined}
\item Suppose that $f$ is a homomorphism of $k$-groups.
Then for every $S$-congruence subgroup $H\leq G'(\OO_S)$, $f^{-1}(H)$ contains an $S$-congruence subgroup of the group $G(\OO_S)$.
In particular, the notion of $S$-congruence subgroup is independent of the chosen embedding.\label{e_congruence_continuous}
\item Every $S$-congruence subgroup of the group $G(\OO_S)$ is of finite index in $G(\OO_S)$.\label{e_finite_index}
\item Suppose that $f$ is an isomorphism of $k$-groups.
Then the subgroups $f(G(\OO_S))$ and $G'(\OO_S)$
are commensurable. \label{e_arithmetic_well_defined}
\item For each $g\in G(k)$ the subgroups $gG(\OO_S)g^{-1}$ and $G(\OO_S)$ are commensurable.\label{e_G(k)_commensurates}
\end{enumerate}
\end{lemma}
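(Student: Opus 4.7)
The plan is to build the entire lemma on part (i), which is the technical heart: it says that $f$ is \emph{uniformly continuous} at the identity with respect to the congruence topologies defined by the ideals of $\OO_S$. For (i), assuming $f(I_n)=I_m$ (the case actually needed in (ii)--(v)), I would write $f$ in coordinates. Since $G\subseteq\GL_n$ and $G'\subseteq\GL_m$ are closed $k$-subvarieties and $f$ is a $k$-morphism, each coordinate function $y_{ij}\circ f$ of $f$ lies in $k[G]$, hence is the restriction of a rational function in $k[\GL_n]=k[x_{rs},\det^{-1}]$. Clearing denominators, there is a non-zero $d\in\OO_S$ such that every $d\,(y_{ij}\circ f - \delta_{ij})$ is represented by $P_{ij}(x)/\det(x)^{N}$ with $P_{ij}\in\OO_S[x_{rs}]$ satisfying $P_{ij}(I_n)=0$. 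For $g=I_n+A\in G(\OO_S)$ with entries of $A$ in an ideal $\mathfrak{q}$, a Taylor-style expansion of $P_{ij}$ around $I_n$ shows $P_{ij}(g)\in\mathfrak{q}$, while $\det(g)\in\OO_S^\times$. Choosing $\mathfrak{q}\subseteq d\,\mathfrak{p}$ (a non-zero ideal because $\OO_S$ is an integral domain) then yields $d\,((y_{ij}\circ f)(g)-\delta_{ij})\in d\,\mathfrak{p}$, and cancelling the non-zero $d$ gives $(y_{ij}\circ f)(g)\equiv\delta_{ij}\pmod{\mathfrak{p}}$, that is, $f(g)\in G'(\mathfrak{p})$.

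Part (ii) is a direct consequence: if $H\supseteq G'(\mathfrak{p})$, then by (i) there is $\mathfrak{q}\neq 0$ with $f(G(\mathfrak{q}))\subseteq G'(\mathfrak{p})\subseteq H$, so $G(\mathfrak{q})\subseteq f^{-1}(H)$. Independence of the notion of $S$-congruence subgroup from the chosen embedding then follows by applying (ii) to both directions of the $k$-isomorphism comparing two admissible embeddings of $G$ into some $\GL_N$. For (iii), observe that $\OO_S/\mathfrak{q}$ is finite for every non-zero ideal $\mathfrak{q}$ (since $\OO_S$ is a Dedekind domain with finite residue fields at all non-zero primes), so $G(\OO_S/\mathfrak{q})\subseteq\GL_N(\OO_S/\mathfrak{q})$ is finite and the principal $S$-congruence subgroup $G(\mathfrak{q})=\ker(G(\OO_S)\to G(\OO_S/\mathfrak{q}))$ has finite index; any $S$-congruence subgroup contains some $G(\mathfrak{q})$ and inherits this property.

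For (iv), since $f$ is a $k$-isomorphism of $k$-groups both $f$ and $f^{-1}$ are $k$-morphisms preserving the identity and inducing mutually inverse group isomorphisms on $k$-rational points. Applying (i) to $f$ with $\mathfrak{p}=\OO_S$, I obtain a non-zero $\mathfrak{q}$ with $f(G(\mathfrak{q}))\subseteq G'(\OO_S)\cap f(G(\OO_S))$; since $f|_{G(k)}$ is a bijection, $[f(G(\OO_S)):f(G(\mathfrak{q}))]=[G(\OO_S):G(\mathfrak{q})]<\infty$ by (iii). The symmetric argument applied to $f^{-1}$ yields the stated commensurability. Part (v) is then (iv) applied to the $k$-automorphism $c_g:x\mapsto gxg^{-1}$ of $G$, which is a $k$-isomorphism of $k$-groups for each $g\in G(k)$. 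The only real obstacle is part (i), whose subtlety lies in the careful bookkeeping of the denominators introduced by inverting $\det$ and in ensuring that the chosen $\mathfrak{q}$ absorbs the common denominator $d$ without becoming zero; once (i) is established, (ii)--(v) follow mechanically.
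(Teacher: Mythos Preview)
The paper does not supply its own proof of this lemma; it is stated with a citation to Margulis \cite[1.3.1.1]{Mar} and used as a black box. Your argument is the standard one and is essentially correct: part (i) is proved by writing the coordinate functions of $f$ as polynomials over $\OO_S$ divided by a power of $\det$ and a fixed scalar denominator, and parts (ii)--(v) follow formally from (i) together with the finiteness of $\OO_S/\mathfrak{q}$.

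One small point worth tightening in (i): after showing that all entries of $f(g)$ lie in $\OO_S$ and are congruent to $\delta_{ij}$ modulo $\mathfrak{p}$, you still need $f(g)\in\GL_m(\OO_S)$, i.e.\ $\det f(g)\in\OO_S^\times$, to conclude $f(g)\in G'(\mathfrak{p})$. This is automatic once $f$ is a $k$-group homomorphism (since then $f(g)^{-1}=f(g^{-1})$ has $\OO_S$-entries by the same computation), which covers (ii)--(v); for the bare variety-morphism statement of (i) you can instead apply your argument simultaneously to $f$ and to $\iota_{G'}\circ f$, where $\iota_{G'}$ is inversion on $G'$, to force both $f(g)$ and $f(g)^{-1}$ into $M_m(\OO_S)$.
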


A subgroup of $G$ is called an $\mathbf{S}$\textbf{-arithmetic subgroup} if it is commensurable with $G(\OO_S)$.
From Lemma \ref{lm:embeddings_and_arithmetic}.\ref{e_arithmetic_well_defined} it follows that the notion of $S$-arithmetic subgroup is intrinsic with respect to the $k$-structure on $G$, that is, if $f:G\to G'$ is a $k$-isomorphism of $k$-groups, then a subgroup $\Gamma$ of $G$ is $S$-arithmetic if and only if $f(\Gamma)$ is an $S$-arithmetic subgroup of $G'$.

\begin{lemma}[{\cite[1.3.1.3]{Mar}}]\label{lm:arithmetic_subgroups}
Let $f:G\to G'$ be a $k$-morphism of $k$-groups and let $\Gamma\subset G$, $\Gamma'\subset G'$ be $S$-arithmetic subgroups.
Then:
\begin{enumerate}[label=\roman*)]
\item The subgroup $\Gamma\cap f^{-1}(\Gamma')$ is of finite index in $\Gamma$, in particular, it is an $S$-arithmetic subgroup.\label{e_arithmetic_continuous}
\item For every $g\in G(k)$ the subgroups $g\Gamma g^{-1}$ and $\Gamma$ are commensurable.\label{e_G(k)_commensurates_arithmetic}
\end{enumerate}
\end{lemma}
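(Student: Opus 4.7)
The plan is to reduce both parts to properties already established in Lemma~\ref{lm:embeddings_and_arithmetic}, exploiting the fact that commensurability is an equivalence relation on the set of subgroups of a common ambient group (here $G(k)$ or $G'(k)$).

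For part (i), I would first pass to $S$-integral intersections. Set $\Gamma_0 := \Gamma \cap G(\OO_S)$ and $\Gamma'_0 := \Gamma' \cap G'(\OO_S)$. Since $\Gamma$ and $G(\OO_S)$ are commensurable, $\Gamma_0$ has finite index in $\Gamma$; analogously $\Gamma'_0$ has finite index in $G'(\OO_S)$. Thus it is enough to prove that $\Gamma_0 \cap f^{-1}(\Gamma'_0)$ has finite index in $\Gamma_0$, which will in turn exhibit $\Gamma \cap f^{-1}(\Gamma')$ as containing an $S$-arithmetic subgroup and therefore as being itself $S$-arithmetic (commensurable with $G(\OO_S)$).

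The key step is to show that $H := G(\OO_S) \cap f^{-1}(G'(\OO_S))$ has finite index in $G(\OO_S)$. Here I would fix any non-zero ideal $\mathfrak{p} \subset \OO_S$ and apply Lemma~\ref{lm:embeddings_and_arithmetic}\ref{e_congruence_continuous} to the $S$-congruence subgroup $G'(\mathfrak{p}) \leq G'(\OO_S)$: this produces an $S$-congruence subgroup of $G(\OO_S)$ contained in $f^{-1}(G'(\mathfrak{p})) \subseteq f^{-1}(G'(\OO_S))$, which has finite index in $G(\OO_S)$ by \ref{e_finite_index}. Once $H$ is known to have finite index, $f$ restricts to a group homomorphism $f|_H : H \to G'(\OO_S)$; the preimage $(f|_H)^{-1}(\Gamma'_0)$ has finite index in $H$ because $\Gamma'_0$ has finite index in $G'(\OO_S)$, and hence has finite index in $G(\OO_S)$. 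Intersecting with $\Gamma_0$ gives the required finite-index subgroup of $\Gamma$.

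For part (ii), I would simply chain three commensurabilities: $\Gamma$ is commensurable with $G(\OO_S)$ by hypothesis; $G(\OO_S)$ is commensurable with $gG(\OO_S)g^{-1}$ by Lemma~\ref{lm:embeddings_and_arithmetic}\ref{e_G(k)_commensurates}; and since conjugation by $g \in G(k)$ is a group automorphism of $G(k)$, it transports the commensurable pair $(G(\OO_S),\Gamma)$ to the commensurable pair $(gG(\OO_S)g^{-1},\, g\Gamma g^{-1})$. Transitivity of commensurability then yields that $\Gamma$ and $g\Gamma g^{-1}$ are commensurable. I do not foresee any real obstacle: both statements are formal consequences of the previous lemma, with the only mildly subtle point being the reduction in part (i) to the map $f|_H : H \to G'(\OO_S)$, which forces us to first verify that $f$ sends a finite-index subgroup of $G(\OO_S)$ into $G'(\OO_S)$.
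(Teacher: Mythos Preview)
Your argument is correct. The paper does not supply its own proof of this lemma; it simply records the statement with a citation to Margulis \cite[1.3.1.3]{Mar}. Your reduction of part~(i) to Lemma~\ref{lm:embeddings_and_arithmetic}\ref{e_congruence_continuous} via the auxiliary subgroup $H=G(\OO_S)\cap f^{-1}(G'(\OO_S))$ is clean, and the chain of commensurabilities in part~(ii) is exactly the intended use of Lemma~\ref{lm:embeddings_and_arithmetic}\ref{e_G(k)_commensurates}.
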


\begin{theorem}[{\cite[Theorem 4.1]{PlaRa}}]\label{th:CSP:arithmetic_open_map}
Suppose $\ch k=0 $ and let $f:G\to H$ be a surjective morphism of $k$-groups. Then for every $S$-arithmetic subgroup $\Gamma$ of $G$, $f(\Gamma)$ is an $S$-arithmetic subgroup.
\end{theorem}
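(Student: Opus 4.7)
The plan is to reduce the statement to showing that $f(G(\OO_S))$ and $H(\OO_S)$ are commensurable, and then to split the problem along a Levi decomposition in order to treat the reductive and unipotent factors separately.

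First I would reduce to the case $\Gamma = G(\OO_S)$: the subgroup $\Gamma \cap G(\OO_S)$ has finite index in both $\Gamma$ and $G(\OO_S)$, so its image has finite index in both $f(\Gamma)$ and $f(G(\OO_S))$, forcing $f(\Gamma)$ and $f(G(\OO_S))$ to be commensurable. Next, one inclusion of the desired commensurability is easy: applying Lemma \ref{lm:embeddings_and_arithmetic}.\ref{e_congruence_well_defined} to $f\colon G\to H$ (viewed as $k$-morphism of $k$-varieties) with any fixed non-zero ideal $\mathfrak{p}\subset\OO_S$ produces a non-zero ideal $\mathfrak{q}\subset\OO_S$ with $f(G(\mathfrak{q}))\subseteq H(\OO_S)$, while Lemma \ref{lm:embeddings_and_arithmetic}.\ref{e_finite_index} says $G(\mathfrak{q})$ has finite index in $G(\OO_S)$. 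Consequently $f(G(\OO_S))\cap H(\OO_S)$ is of finite index in $f(G(\OO_S))$, and the crux of the theorem becomes the reverse bound
$[H(\OO_S):f(G(\OO_S))\cap H(\OO_S)]<\infty$.

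To attack this I would invoke Mostow's Theorem (Theorem \ref{th:mostow}), available since $\ch k=0$, to fix Levi decompositions $G = L\ltimes R_u(G)$ and $H = L'\ltimes R_u(H)$ with $L,L'$ reductive $k$-subgroups. Since $f(R_u(G))\subseteq R_u(H)$ (a normal unipotent subgroup maps to one), and since $f(L)$ is a connected reductive $k$-subgroup of $H$ (being a quotient of the reductive $L$ by a normal subgroup), and $f(L)\cap R_u(H)$ is a normal unipotent subgroup of the reductive $f(L)$ hence trivial, we can replace $L'$ by $f(L)$ and assume $f(L)=L'$. Then $f$ restricts to surjective $k$-morphisms $f_L\colon L\twoheadrightarrow L'$ and $f_u\colon R_u(G)\twoheadrightarrow R_u(H)$; moreover, up to finite-index corrections coming from the semidirect product structure on $S$-integer points, it suffices to treat $f_L$ and $f_u$ separately. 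The unipotent case $f_u$ is essentially linear: in characteristic zero the Baker--Campbell--Hausdorff exponential identifies $R_u(G)$ with $\Lie R_u(G)$ as $k$-varieties, converting $f_u$ into a surjective $k$-linear map of $k$-vector spaces under which the image of a full-rank $\OO_S$-lattice (commensurable with the $S$-integer points) is a full-rank $\OO_S$-sublattice of the target, hence of finite index there.

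The main obstacle is the reductive case, namely showing that for the surjection $f_L\colon L\twoheadrightarrow L'$ between connected reductive $k$-groups, $f_L(L(\OO_S))$ has finite index in $L'(\OO_S)$. Setting $M:=\ker f_L$, a normal $k$-subgroup of $L$, the short exact sequence $1\to M\to L\to L'\to 1$ yields a long exact sequence in non-abelian Galois cohomology in which $L'(\OO_S)/f_L(L(\OO_S))$ injects into a cohomology set built from $M$. The essential input is that this cohomology set is \emph{finite} for reductive groups over global fields of characteristic zero; this is the classical Borel--Serre finiteness theorem, itself a consequence of the Borel--Harish-Chandra finiteness of class numbers for arithmetic groups. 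Once that finiteness is available, the index $[L'(\OO_S):f_L(L(\OO_S))]$ is finite, completing the reductive case and hence the theorem.
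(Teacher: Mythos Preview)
The paper does not supply its own proof of this theorem; it is quoted verbatim as \cite[Theorem 4.1]{PlaRa} and used as a black box (for instance in the proof of Lemma~\ref{lm:CSP_implies_simplyconnected}). So there is no in-paper argument to compare against. That said, your outline is broadly the standard one and the reductions are sound: the passage to $\Gamma=G(\OO_S)$, the easy inclusion via Lemma~\ref{lm:embeddings_and_arithmetic}, the Levi splitting via Theorem~\ref{th:mostow}, and the unipotent case via the exponential are all correct and in the spirit of the Platonov--Rapinchuk proof.

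The one genuine soft spot is your treatment of the reductive step. You write that $L'(\OO_S)/f_L(L(\OO_S))$ injects into ``a cohomology set built from $M$'' and then invoke Borel--Serre finiteness. But Borel--Serre (the reference \cite{BorSer} in this paper) is finiteness of $H^1(k,M)$ over the \emph{field}, and that set is not finite in general when $M$ has nontrivial component group---e.g.\ for $M=\mu_2$ one gets $k^\times/(k^\times)^2$. What actually controls the quotient $L'(\OO_S)/f_L(L(\OO_S))$ is the \emph{integral} cohomology set $H^1_{\mathrm{\acute et}}(\Spec\OO_S,\mathcal{M})$ for a suitable $\OO_S$-model $\mathcal{M}$ of $M$, and it is the finiteness of \emph{that} set (equivalently, finiteness of the $S$-class number of $M$) which is the Borel--Harish-Chandra input. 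You gesture at this in your last sentence, so you have the right ingredient in mind, but the attribution to Borel--Serre and the phrase ``over global fields'' is misleading; make clear that the cohomology is over $\Spec\OO_S$, not over $k$, and that the finiteness is class-number finiteness for $M$ (finite, toric, and semisimple pieces handled separately), not $H^1(k,M)<\infty$.
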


A discrete subgroup $\Gamma$ in a locally compact group $H$ is a \textbf{lattice} if $\mu_H(\Gamma\setminus{H})<\infty$, where $\mu_H$ stands for the Haar measure of $H$.
We present now some results about lattices in semisimple groups and see that $S$-arithmetic groups are a main source to find them.
Recall that $k$ is discrete in $\mathbb{A}_k$, the ring of adèles of $k$, and so is $G(k)$ in $G(\mathbb{A}_k)$, see sections \ref{sec:global_and_local} and \ref{sec:analytic_varieties_adeles_varieties}.
Moreover, if $G$ is connected and reductive we have a characterisation of the groups $G$ for which $G(k)$ is a lattice in $G(\mathbb{A}_k)$. Let $X_k(G)$ be the group of $k$-rational characters of $G$, i.e., the group of $k$-homomorphisms from $G$ to $\mathbb{G}_m$. We have the following theorem. 

\begin{theorem} [\cite{Bor3}  for $\ch k=0$ and \cite{Be2} and \cite{Har2} for $\ch k\neq 0$]\label{th:G(k)lattice_char}
Let $G$ be a connected reductive $k$-group.
Then $G(k)$ is a lattice in $G(\mathbb{A}_k)$ if and only if $X_k(G)=1$.
\end{theorem}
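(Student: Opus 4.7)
The plan is to measure the obstruction to finite covolume via the $k$-rational character group of $G$, and then to invoke adelic reduction theory to show that this is the only obstruction. Let $|\cdot|:\mathbb{A}_k^\ast\to\mathbb{R}_{>0}$ denote the idelic absolute value. Since $G$ is connected reductive, $X_k(G)$ is a finitely generated torsion-free abelian group; let $r$ be its rank. Each $\chi\in X_k(G)$ yields a continuous homomorphism $|\chi|:G(\mathbb{A}_k)\to\mathbb{R}_{>0}$, and bundling a $\mathbb{Z}$-basis of $X_k(G)$ gives a continuous homomorphism $\Phi:G(\mathbb{A}_k)\to\mathbb{R}_{>0}^r$. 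The product formula $\prod_v|x|_v=1$ for $x\in k^\ast$ forces $G(k)\subseteq G(\mathbb{A}_k)^1:=\ker\Phi$, a closed normal subgroup of $G(\mathbb{A}_k)$.

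For the forward direction I would assume $X_k(G)\neq 1$, so $r\geq 1$. A non-trivial $k$-character $\chi:G\to\mathbb{G}_m$ has connected image in $\mathbb{G}_m$ and is therefore $k$-surjective; by the local implicit function theorem $\chi(G(k_v))$ is open in $k_v^\ast$ for each place $v$, whence $\Phi(G(\mathbb{A}_k))$ is a non-compact subgroup of $\mathbb{R}_{>0}^r$. The quotient group $G(\mathbb{A}_k)^1\backslash G(\mathbb{A}_k)$ is then a non-compact locally compact abelian group, hence of infinite Haar measure. Since $G(k)\subseteq G(\mathbb{A}_k)^1$, the space $G(k)\backslash G(\mathbb{A}_k)$ continuously surjects onto $G(\mathbb{A}_k)^1\backslash G(\mathbb{A}_k)$ and the invariant measure pushes forward to a Haar measure there up to a constant; consequently $\mu(G(k)\backslash G(\mathbb{A}_k))=\infty$ and $G(k)$ fails to be a lattice.

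For the converse I would assume $X_k(G)=1$, so $G(\mathbb{A}_k)=G(\mathbb{A}_k)^1$, and exhibit a fundamental set of finite Haar measure. Fix a minimal $k$-parabolic $P=MU$ of $G$, a maximal $k$-split torus $T\subseteq M$, and a maximal compact subgroup $K\subseteq G(\mathbb{A}_k)$. Adelic reduction theory produces a Siegel set $\Omega=\omega\cdot A_t\cdot K$ with $\omega\subseteq P(\mathbb{A}_k)$ relatively compact and $A_t\subseteq T(\mathbb{A}_k)$ a suitably truncated positive Weyl chamber, satisfying (i) $G(k)\cdot\Omega=G(\mathbb{A}_k)$ and (ii) $\mu(\Omega)<\infty$. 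The assumption $X_k(G)=1$ is precisely what forces the truncated chamber $A_t\cap G(\mathbb{A}_k)^1$ to be relatively compact, which in turn underlies (ii); combined with (i) this yields $\mu(G(k)\backslash G(\mathbb{A}_k))<\infty$, i.e., $G(k)$ is a lattice.

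The principal technical obstacle is the Siegel-set construction: one must build $\Omega$, control each local factor simultaneously, use the root system $\Phi(T,G)$ to describe how $T(\mathbb{A}_k)$ acts on $U(\mathbb{A}_k)$, and verify that vanishing of $X_k(G)$ is exactly what prevents $A_t$ from spreading out. This is the content of the Borel--Harish-Chandra reduction theorem over number fields (\cite{Bor3}) and of the analogous theorems of Behr and Harder over function fields (\cite{Be2},\cite{Har2}), where in positive characteristic archimedean Mahler-compactness arguments are replaced by analogues at the places in $S$. Granting those, the equivalence reduces to the measure-theoretic bookkeeping above.
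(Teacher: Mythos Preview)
The paper does not prove this theorem; it is stated as a black-box result with citations to \cite{Bor3}, \cite{Be2}, and \cite{Har2}, and then used as input to derive Theorem~\ref{th:G(OS)lattice_char}. Your outline is the standard argument underlying those references: the obstruction direction via the product formula and the idelic norm map is exactly right, and for the converse you correctly identify that the substance lies entirely in adelic reduction theory (Siegel sets) and defer to the same sources the paper cites. So there is nothing to compare against, and your sketch is sound as a summary of what those references contain.
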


It is natural to ask whether $G(\OO_S)$ gives a lattice in an appropriate subgroup of $G(\mathbb{A}_k)$.
Before proceeding we need to understand when the local factor $G(k_v)$ gives a compact group.
Recall that for $v\in V_k$ we say that $G$ is $k_v$-isotropic if $G(k_v)$ contains a nontrivial $k_v$-split subtorus.
Otherwise we say that $G$ is $k_v$-anisotropic.
We write $\rank_{k_v}G$ for the dimension of a maximal $k_v$-split subtorus of $G$, hence $G$ is isotropic (anisotropic) if and only if $\rank_{k_v}G\geq 1\ (=0)$.
Put $\mathcal{A}=\mathcal{A}(G):=\{v\in V_k\ :\ G\ \mbox{is}\ k_v\mbox{-anisotropic}\}$.
The following theorem, originally showed by Bruhat and Tits, can be found in \cite{Pra_isotropic_char}.

\begin{theorem}\label{th:kv_anisotropic_compact}
Let $G$ be a reductive $k$-group.
Then $G(k_v)$ is compact if and only if $G$ is $k_v$-anisotropic.
\end{theorem}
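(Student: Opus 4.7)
\medskip

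The plan is to prove the two directions of the equivalence separately, handling the easy implication first and then indicating how the much deeper converse is established.

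For the implication \emph{isotropic $\Rightarrow$ non-compact}, I would argue as follows. Suppose $G$ contains a nontrivial $k_v$-split torus $T\subseteq G$ defined over $k_v$. Since $T$ is $k_v$-split of some dimension $r\geq 1$, there is a $k_v$-isomorphism $T\cong \mathbb{G}_m^{\,r}$, and hence a topological group isomorphism $T(k_v)\cong (k_v^\times)^r$. The valuation $v:k_v^\times\to\mathbb{Z}$ (or $\mathbb{R}$, in the archimedean case) is a continuous surjective homomorphism onto a non-compact group, so $k_v^\times$ is not compact; a fortiori neither is $T(k_v)$. On the other hand $T(k_v)$ sits inside $G(k_v)$ as a closed subgroup (it is the vanishing locus in $G(k_v)$ of the defining polynomials of $T$). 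A closed subgroup of a compact Hausdorff group would itself be compact, so $G(k_v)$ cannot be compact.

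For the converse \emph{anisotropic $\Rightarrow$ compact}, I would proceed in three stages. First, reduce to the semisimple case: by Lemma~\ref{pre:lm:reductive_center} we have $G=[G,G]\cdot Z(G)^0$ with a central isogeny from $[G,G]\times Z(G)^0$ to $G$, and the hypothesis of $k_v$-anisotropy passes to both factors (the central torus $Z(G)^0$ being anisotropic means $Z(G)^0(k_v)$ is compact, since any anisotropic $k_v$-torus is a product of norm-one tori $R^{(1)}_{L/k_v}(\mathbb{G}_m)$ whose $k_v$-points coincide with the compact subgroup $\{x\in L^\times:N_{L/k_v}(x)=1\}$). The image under the isogeny is compact iff the source is, so we reduce to the semisimple anisotropic case. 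Second, for $G$ semisimple and $k_v$-anisotropic one shows that every maximal $k_v$-torus $S\subseteq G$ is itself $k_v$-anisotropic, hence $S(k_v)$ is compact by the norm-torus argument just described. Third, and this is the crucial step, one invokes Bruhat--Tits theory: the group $G(k_v)$ acts properly on its affine building $\mathcal{B}(G,k_v)$, and $k_v$-anisotropy forces this building to reduce to a single point (the apartment attached to any maximal $k_v$-split torus is $0$-dimensional). A proper action on a point means compact stabilizer, which is the whole of $G(k_v)$, so $G(k_v)$ is compact.

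The main obstacle is clearly this last step: the Bruhat--Tits machinery is technically heavy, and avoiding it requires substantial substitutes (for instance, in characteristic zero one can use Jacobson--Morozov to embed any non-trivial unipotent element into an $\SL_2$-subgroup and thereby produce a $k_v$-split torus, thus contrapositively deriving compactness from anisotropy once one knows via Godement-type criteria that a non-compact $G(k_v)$ must contain non-trivial unipotents). Since the paper only cites \cite{Pra_isotropic_char} for the result and uses it as a black box, I would present the argument at the level of the outline above, leaning on Prasad's elementary treatment for the detailed verification in the semisimple anisotropic case.
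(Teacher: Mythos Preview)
The paper does not prove this statement: it simply records the theorem with the attribution ``originally shown by Bruhat and Tits, can be found in \cite{Pra_isotropic_char}'' and uses it as a black box thereafter. Your proposal therefore supplies strictly more than the paper does, and you correctly anticipate this in your final paragraph.

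Your outline is the standard one and is essentially sound in the non-archimedean case. Two small points deserve mention. First, the claim that an anisotropic $k_v$-torus is literally ``a product of norm-one tori $R^{(1)}_{L/k_v}(\mathbb{G}_m)$'' is not accurate in general; what is true is that $X^*(T)$ has no nonzero Galois invariants, whence the valuation map $T(k_v)\to X_*(T)\otimes\mathbb{R}$ is trivial and $T(k_v)$ equals its maximal bounded subgroup, which is compact. Second, the Bruhat--Tits building argument applies only to non-archimedean $k_v$; the archimedean case is classical Lie theory (Cartan's characterisation of compact real forms) and requires a separate treatment you do not supply. This matters in principle, since the paper does invoke the theorem at archimedean places in the paragraph immediately following it (the factor $\prod_{v\in (S\cup V_\infty)\cap\mathcal{A}} G(k_v)$ may contain archimedean $v$). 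But as the paper itself simply cites the result, none of this is a discrepancy with what the paper actually does.
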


For every $S\subseteq V_k$ let us denote by $G_S$ the subgroup in $G(\mathbb{A}_k)$ consisting of the adèles whose $v$-component is equal to the identity for every $v\in V_k\setminus{S}$.
Note that if $S$ is finite then $G_S=\prod_{v\in S} G(k_v)$.
Let us note that the $S$-integral points $G(\OO_S)$ can be written as

\begin{equation}\label{eq:OS_description}
G(\OO_S)=G(k)\cap (G_{S\cup V_\infty}\cdot\prod_{v\in V_f\setminus{S}}G(\OO_v)).
\end{equation}

Let $G$ be a reductive $k$-group.
Note that for $v\in\mathcal{A}\cap V_f$, $G(k_v)$ is a compact group and hence $G(\OO_v)$, being an open subgroup, is of finite index.
Since $\mathcal{A}$ is finite (\cite[Lemma 4.9]{Spr}), it follows that $G(\OO_S)$ is of finite index in $G(\OO_{S\cap\mathcal{A}})$.

On the other hand, since $G(k)$ is discrete in $G(\mathbb{A}_k)$ it follows from \eqref{eq:OS_description} that $G(\OO_S)$ is discrete in 
\[
\prod_{v\in S\cup V_\infty}G(k_v)=\prod_{v\in (S\cup V_\infty)\setminus{\mathcal{A}}}G(k_v)\times \prod_{v\in (S\cup V_\infty)\cap\mathcal{A}}G(k_v).
\]
The second factor is a compact group, hence $G(\OO_S)$ (identified with its image under the diagonal embedding into $G_S$) is discrete in $G_S$ whenever $V_\infty\setminus{\mathcal{A}}\subseteq S$.

The above remarks together with Theorem \ref{th:G(k)lattice_char} give

\begin{theorem}\label{th:G(OS)lattice_char}
Let $G$ be a connected reductive $k$-group and let $V_\infty\setminus{\mathcal{A}(G)}\subseteq S$.
Then $G(\OO_S)$ is a lattice in $G_S$ if and only if $X_k(G)=1$.
\end{theorem}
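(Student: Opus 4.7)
The plan is to bridge $G(\OO_S) \subset G_S$ to $G(k) \subset G(\mathbb{A}_k)$ via an auxiliary open subgroup, and then conclude by applying Theorem \ref{th:G(k)lattice_char}. Set
\[
U := \prod_{v \in V_f \setminus S} G(\OO_v) \times \prod_{v \in V_\infty \setminus S} G(k_v), \qquad V := G_S \cdot U,
\]
so that $V$ is an open subgroup of $G(\mathbb{A}_k) = G_S \times G_S^c$. The hypothesis $V_\infty \setminus \mathcal{A}(G) \subseteq S$ ensures, via Theorem \ref{th:kv_anisotropic_compact}, that every archimedean factor outside $S$ is compact; together with the compactness of each $G(\OO_v)$, this gives that $U$ is compact. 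By the description \eqref{eq:OS_description} we have $G(k) \cap V = G(\OO_S)$ under the diagonal embedding.

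The first reduction is from $G_S$ to $V$. Observe that $G(\OO_S)$ acts diagonally on $V = G_S \times U$, and its image in the $G_S^c$-component lies inside $U$. If $F \subseteq G_S$ is a fundamental domain for $G(\OO_S) \backslash G_S$, then $F \times U \subseteq V$ is a fundamental domain for $G(\OO_S) \backslash V$, yielding the Haar-measure identity
\[
\mu_V(G(\OO_S) \backslash V) = \mu_U(U) \cdot \mu_{G_S}(G(\OO_S) \backslash G_S).
\]
Since $U$ is compact, $G(\OO_S)$ is a lattice in $G_S$ if and only if it is a lattice in $V$.

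The second reduction relates $V$ to $G(\mathbb{A}_k)$ via the Borel--Harish-Chandra finiteness of class numbers for reductive groups, which applies to our setting: the double coset space $G(k) \backslash G(\mathbb{A}_k) / V$ is finite, so we may write $G(\mathbb{A}_k) = \bigsqcup_{i=1}^n G(k) g_i V$ with $g_1 = 1$. Integrating,
\[
\mu\bigl(G(k) \backslash G(\mathbb{A}_k)\bigr) = \sum_{i=1}^n \mu\bigl((g_i^{-1} G(k) g_i \cap V) \backslash V\bigr),
\]
and each summand is comparable to $\mu_V(G(\OO_S) \backslash V)$, since the subgroups $g_i^{-1} G(k) g_i \cap V$ are all commensurable with $G(\OO_S) = G(k) \cap V$ (their components at each $v$ differ only by a bounded perturbation coming from the compact parts of $g_i$). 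Hence $G(k)$ is a lattice in $G(\mathbb{A}_k)$ iff $G(\OO_S)$ is a lattice in $V$. Combining with the first reduction and Theorem \ref{th:G(k)lattice_char} gives the chain of equivalences
\[
G(\OO_S) \text{ is a lattice in } G_S \iff G(\OO_S) \text{ is a lattice in } V \iff G(k) \text{ is a lattice in } G(\mathbb{A}_k) \iff X_k(G) = 1.
\]

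The main obstacle is the commensurability of each $g_i^{-1} G(k) g_i \cap V$ with $G(\OO_S)$: the $g_i$ lie in $G(\mathbb{A}_k)$ rather than in $G(k)$, so Lemma \ref{lm:embeddings_and_arithmetic}.\ref{e_G(k)_commensurates} does not apply directly. If this commensurability is delicate to establish cleanly, one can treat the two directions separately: the implication $X_k(G) = 1 \Rightarrow G(\OO_S)$ lattice needs only the $i = 1$ summand of the identity above (finiteness of the total bounds the first term), while the converse $X_k(G) \neq 1 \Rightarrow G(\OO_S)$ not a lattice can be shown directly by picking a nontrivial $\chi \in X_k(G)$ and considering the continuous homomorphism $\|\cdot\|_S : G_S \to \mathbb{R}_{>0}$, $x \mapsto \prod_{v \in S} |\chi(x_v)|_v$. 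The product formula combined with $|\chi(x)|_v = 1$ for $v \in V_f \setminus S$ (since $\chi(x) \in \OO_v^*$) and for $v \in V_\infty \setminus S$ (since $G(k_v)$ is compact by hypothesis) shows that $\|\cdot\|_S$ is trivial on $G(\OO_S)$ but has unbounded image on $G_S$, forcing infinite covolume.
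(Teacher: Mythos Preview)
Your approach is essentially the same as the paper's: deduce the statement from Theorem \ref{th:G(k)lattice_char} by exploiting the compactness of the factors $G(k_v)$ for $v\in V_\infty\setminus S\subseteq\mathcal{A}(G)$ and of the $G(\OO_v)$ for $v\in V_f\setminus S$. The paper simply writes ``the above remarks together with Theorem \ref{th:G(k)lattice_char} give'' the result, whereas you spell out the two reductions explicitly (including the finiteness of class numbers and the commensurability of the $g_i^{-1}G(k)g_i\cap V$ with $G(\OO_S)$, which does hold since $g_iVg_i^{-1}$ and $V$ are commensurable open subgroups of $G(\mathbb{A}_k)$); your alternative product-formula argument for the direction $X_k(G)\neq 1\Rightarrow$ infinite covolume is a nice addition not present in the paper.
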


Let us note that for $G$ semisimple, the condition $X_k(G)=1$ is automatically satisfied.
We begin now to investigate the relation between $G(\OO_S)$ being infinite, being Zariski-dense in $G$ and having Strong Approximation.

\begin{proposition}\label{prop:G(OS)_Zariski_dense}
Let $G$ be a connected semisimple $k$-group and suppose $V_\infty\setminus\mathcal{A}(G)\subseteq S\subseteq V_k$. Let $\Gamma$ be an $S$-arithmetic subgroup of the group $G(k)$. Suppose $\sum_{v\in S}\rank_{k_v}G'>0$ for every $k$-simple factor $G'$ of $G$. Then $\Gamma$ is infinite and  Zariski dense in $G$.
\end{proposition}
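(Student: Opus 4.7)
The plan is to reduce to the case $\Gamma = G(\OO_S)$ by commensurability, and then to invoke Theorem \ref{th:G(OS)lattice_char} together with the $S$-arithmetic version of Borel's density theorem.

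For the reduction, Lemma \ref{lm:arithmetic_subgroups} gives that $\Gamma$ and $G(\OO_S)$ are commensurable, so they share a common subgroup $\Gamma_0$ of finite index in each. Since $G$ is connected, it is irreducible as a $K$-variety, so writing $\Gamma = \bigsqcup_{i=1}^n g_i \Gamma_0$ we have $\overline{\Gamma}^{\mathrm{Zar}} = \bigcup_i g_i \overline{\Gamma_0}^{\mathrm{Zar}}$, and this equals $G$ if and only if $\overline{\Gamma_0}^{\mathrm{Zar}} = G$. Likewise $\Gamma$ is infinite iff $\Gamma_0$ is, iff $G(\OO_S)$ is. So it suffices to treat $\Gamma = G(\OO_S)$.

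For infiniteness, since $G$ is semisimple we have $X_k(G)=1$, and the hypothesis $V_\infty \setminus \mathcal{A}(G) \subseteq S$ places us in the setting of Theorem \ref{th:G(OS)lattice_char}, which identifies $G(\OO_S)$ as a lattice in $G_S = \prod_{v \in S} G(k_v)$. Pick any $k$-simple factor $G'$ of $G$; by the rank hypothesis, some $v_0 \in S$ satisfies $\rank_{k_{v_0}} G' > 0$, so by Theorem \ref{th:kv_anisotropic_compact} the local group $G'(k_{v_0})$, and hence $G_S$, is non-compact. A lattice in a non-compact locally compact (unimodular) group is necessarily infinite, since otherwise the Haar measure of a fundamental domain would coincide with that of $G_S$ itself, which is infinite.

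For Zariski density, I invoke the $S$-arithmetic Borel density theorem (see, e.g., \cite[Chapter II]{Mar}): if $G(\OO_S)$ is a lattice in $G_S$ and, for every $k$-simple factor $G'$ of $G$, the product $G'_S$ has at least one non-compact local component, then $G(\OO_S)$ is Zariski dense in $G$. Both hypotheses are already in place: the lattice property from Theorem \ref{th:G(OS)lattice_char}, and the non-compactness, for each $k$-simple factor separately, from the rank condition together with Theorem \ref{th:kv_anisotropic_compact}. By Lemma \ref{lm:sub_G(k)_has_closure_defined_k} the Zariski closure of $\Gamma$ is automatically defined over $k$, so Borel density is the right tool to force this closure to be all of $G$. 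The main obstacle is that a self-contained proof of Borel density requires auxiliary input (a Furstenberg-type lemma on quasi-invariant probability measures on projective varieties, used to rule out that $\Gamma$ stabilises a proper $k$-subvariety); the cleanest route is simply to invoke the result as a black box.
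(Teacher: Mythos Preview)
Your argument is correct, but the paper takes a more elementary and self-contained route for Zariski density, avoiding Borel density entirely. After reducing to the case where $G$ is $k$-simple, the paper lets $H$ be the Zariski closure of $\Gamma$ and observes (via Lemma~\ref{lm:embeddings_and_arithmetic}.\ref{e_G(k)_commensurates}) that every $g\in G(k)$ commensurates $\Gamma$, hence conjugation by $g$ preserves the identity component $H^0$; since $G(k)$ is Zariski dense in $G$ (Theorem~\ref{th:G(k)_reductive}), $H^0$ is a normal $k$-subgroup of $G$. The infiniteness argument (which you reproduce) shows $H$ and therefore $H^0$ is infinite, so $k$-simplicity forces $H^0=G$. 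What this buys is a proof using only the algebraic tools already assembled in the paper, with no recourse to invariant measures or the Furstenberg lemma you flag as the hidden cost of your approach. Your route, on the other hand, handles all $k$-simple factors simultaneously without an explicit reduction step and makes clear that the result is an instance of a general principle.
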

\begin{proof}
Note that it suffices to consider the case where $G$ is $k$-simple.
Let $H$ be the Zariski closure of $\Gamma$ and $H^0$ be the connected component of the identity in $H$.
By Theorem \ref{lm:sub_G(k)_has_closure_defined_k}, $H$ is defined over $k$ and so is $H^0$ (\ref{pre:connected_components}).
Note that $g\Gamma g^{-1}$ is commensurable with $\Gamma$ for every $g\in G(k)$ (\ref{lm:embeddings_and_arithmetic}).\ref{e_G(k)_commensurates}) and  since $H^0$ is the intersection of all algebraic subgroups of finite index in $H$, it follows that $G(k)$ normalizes $H^0$.
Moreover, since $G(k)$ is Zariski dense in $G$ (\ref{th:G(k)_reductive}.\ref{e_G(k)_Zariski_dense}), $H^0$ must be a normal subgroup of $G$.

Now by assumption $\sum_{v\in S}\rank_{k_v}G>0$ and this implies (recall Theorem \ref{th:kv_anisotropic_compact}) that $\prod_{v\in S}G(k_v)$ is not compact.
Since $G(\OO_S)$ is a lattice in the locally compact group $G_S$ (Theorem \ref{th:G(OS)lattice_char}), it follows that $G(\OO_S)$, $\Gamma$  and $H$ are all infinite.
But then $H^0$ is an infinite normal $k$-subgroup of $G$, which is a $k$-simple group. Thus, we must have $H^0=G$, as desired.
\end{proof}

An algebraic $k$-group $G$ is said to have \textbf{Strong Approximation} with respect to a finite set of valuations $S\subseteq V_f$ if the group $G(k)G_S$ is dense in $G(\mathbb{A}_k)$. Note that this in particular implies that $G(\OO_S)$ is dense in $\prod_{v\in V_f\setminus{S}}G(\OO_v)=G(\widehat{\OO}_S)$.
We will make use of the Strong Approximation Theorem.
This was shown by Kneser (\cite{Kne}) and Platonov (\cite{Pla}) for $\ch k=0$ and by Margulis (\cite{Mar1} and \cite{Mar}) and Prasad (\cite{Pra}) for $\ch k>0$.

\begin{theorem}[Strong Approximation]\label{th:Strong_Approximation}
Let $G$ be a connected simply connected semisimple $k$-group.
Let $S\subset V$ be a finite set of valuations of $k$.
Suppose that $G^i_S=\prod_{v\in S}G^i(k_v)$ is non-compact for every almost $k$-simple factor $G^i$. Then $G(k)\cdot G_S$ is dense in $G(\mathbb{A}_k)$.
\end{theorem}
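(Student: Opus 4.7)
The plan is to approach this Strong Approximation Theorem via a series of reductions to the absolutely almost simple isotropic case, where one can work explicitly with root subgroups. First, using Theorem \ref{pre:th:ss_decomposes_simple_factors}.\ref{e-k-semisimple-in-k-simples} together with the fact that a central $k$-isogeny of simply connected semisimple groups is an isomorphism, I would decompose $G$ as a direct product of almost $k$-simple simply connected factors and reduce the statement to the case where $G$ itself is almost $k$-simple. Next, by Theorem \ref{pre:th:k_simple_is_restriction_abs_simple}, I can write $G = R_{k'/k}(H)$ for a finite separable extension $k'/k$ and an absolutely almost simple simply connected $k'$-group $H$. Since the restriction of scalars functor is compatible in a natural way with the adelic formation (identifying $G(\mathbb{A}_k)$ with $H(\mathbb{A}_{k'})$ and $G_S$ with $H_{S'}$ for the appropriate lift $S'$ of $S$ to valuations of $k'$), this reduces me to the case where $G$ is absolutely almost simple and simply connected.

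In the reduced setting, the noncompactness hypothesis on $G_S$ combined with Theorem \ref{th:kv_anisotropic_compact} forces $G$ to be $k_v$-isotropic for at least one $v_0 \in S$. The structural input I would then invoke is the Bruhat--Tits theory for isotropic groups: over $k_{v_0}$, the group $G$ admits a pair of opposite parabolic $k_{v_0}$-subgroups $P^{\pm}$ with unipotent radicals $U^{\pm}$, and $G(k_{v_0})$ is generated (as an abstract group) by $U^{+}(k_{v_0}) \cup U^{-}(k_{v_0})$; this is essentially the Kneser--Tits property, which is known for simply connected isotropic groups over local fields. The key observation is that each $U^{\pm}$ is $k$-isomorphic to an affine space as a variety, so strong approximation for the additive group of $k$ (a consequence of the discreteness of $k$ in $\mathbb{A}_k$ together with the fact that $k + (k_\infty \times \prod_{v \in V_f \setminus S} \OO_v) = \mathbb{A}_k$, i.e., Chinese Remainder) gives density of $U^\pm(k)$ inside the restricted product $\prod'_{v \notin S} U^\pm(k_v)$.

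From here my plan is to show that the closure $\overline{G(k) \cdot G_S}$ is all of $G(\mathbb{A}_k)$. By the previous paragraph, this closure contains $U^\pm(\mathbb{A}_k) \cdot G_S$, and hence contains the closure of the subgroup generated by these elements. A standard open-mapping/commutator argument shows that the subgroup of $G(\mathbb{A}_k)$ generated by $U^+(\mathbb{A}_k)$ and $U^-(\mathbb{A}_k)$ is open, because one can already see this locally: $U^+(k_v) \cdot U^-(k_v)$ contains an open neighborhood of the identity in $G(k_v)$ at every $v$, and almost everywhere $G(\OO_v)$ is generated by the $U^\pm(\OO_v)$ by smooth reduction arguments (Hensel's lemma plus the fact that $\GL_N$-reductions of $G$ are good at almost all places). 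Thus $\overline{G(k)\cdot G_S}$ is an open, hence closed, subgroup of $G(\mathbb{A}_k)$. The final step is to show it is all of $G(\mathbb{A}_k)$: the quotient is a compact totally disconnected group on which $G(k)$ projects with dense image, and one rules out nontrivial finite quotients by invoking simple connectedness (which forces the absence of nontrivial central isogeny covers and so eliminates the only obstruction).

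The main obstacle, and the point where the published proofs of Kneser, Platonov, Margulis and Prasad diverge most from one another, is showing that $\overline{G(k)\cdot G_S}$ has no nontrivial finite-index closed supergroup inside $G(\mathbb{A}_k)$; this requires genuinely global input, typically a class field theoretic argument tying simple connectedness to the triviality of certain cohomological obstructions (e.g., $H^1(k,G) \to \prod_v H^1(k_v,G)$ controlled by the Hasse principle for simply connected groups). In positive characteristic, additional care is needed because of inseparability phenomena and because the Kneser--Tits property is more delicate; this is the content of Prasad's contribution. I would not attempt to reproduce this last step in detail; the strategy is rather to cite the Hasse principle for simply connected semisimple groups and feed it into the density argument sketched above.
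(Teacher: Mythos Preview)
The paper does not prove this theorem; it is stated with attribution to Kneser, Platonov, Margulis and Prasad, and used as a black box. So there is no ``paper's own proof'' to compare against.

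That said, your sketch contains a genuine gap at the step where you pass from local to global unipotent subgroups. You introduce the parabolics $P^{\pm}$ and their unipotent radicals $U^{\pm}$ as $k_{v_0}$-subgroups, arising from the fact that $G$ is $k_{v_0}$-isotropic. But then you write that ``each $U^{\pm}$ is $k$-isomorphic to an affine space'' and apply strong approximation for the additive group of $k$ to conclude density of $U^{\pm}(k)$ in the restricted product. This does not make sense: $U^{\pm}$ need not be defined over $k$ at all, so $U^{\pm}(k)$ has no meaning. The hypothesis of the theorem only gives $k_{v_0}$-isotropy for some $v_0 \in S$; the group $G$ may well be $k$-anisotropic (e.g., $G = \mathrm{SL}_1(D)$ for a quaternion division algebra $D$ over a number field ramified at all infinite places), in which case there are no proper $k$-parabolics and no nontrivial $k$-unipotent subgroups to work with.

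This is precisely the difficulty that makes Strong Approximation a deep theorem. The published proofs do not proceed by directly producing dense $k$-rational unipotents. Platonov's argument (char $0$) shows that the closure of $G(k)\cdot G_S$ is normal in $G(\mathbb{A}_k)$ and then exploits the Kneser--Tits theorem over each local field $k_v$ to force the closure to surject onto every $G(k_v)$; the remaining compact discrepancy is killed using class number finiteness and simple connectedness. Margulis's and Prasad's proofs in positive characteristic are of a different nature again, using superrigidity/ergodic-theoretic input. Your reductions to the almost $k$-simple and absolutely almost simple cases are correct and standard, and your final paragraph correctly identifies that the hard global input lives at the end; but the middle of your argument, as written, does not go through.
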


As a direct consequence of the previous results we obtain:

\begin{theorem}\label{th:G(OS)_infinite_iff_G_S_noncompact}
Let $G$ be a connected semisimple $k$-group and suppose that $V_\infty\setminus\mathcal{A}\subseteq S\subseteq V$. Then $G(\OO_S)$ is infinite if and only if $G_S$ is noncompact. Moreover, if $G$ is simply connected and $k$-simple, then $G(\OO_S)$ is infinite if and only if $G$ has Strong Approximation.
\end{theorem}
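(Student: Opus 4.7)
The plan is to leverage the lattice property from Theorem \ref{th:G(OS)lattice_char} for the first equivalence, and to combine this with the Strong Approximation Theorem (Theorem \ref{th:Strong_Approximation}) for the moreover assertion. Since $G$ is semisimple one has $X_k(G)=1$, and the hypothesis $V_\infty\setminus\mathcal{A}\subseteq S$ is assumed; hence Theorem \ref{th:G(OS)lattice_char} guarantees that $G(\OO_S)$ is a lattice in $G_S$, and this single fact will drive the first equivalence.

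For the first equivalence I would argue both directions from this lattice structure. If $G_S$ is compact, then $G(\OO_S)$ is a discrete subgroup of a compact Hausdorff group and hence finite, giving the contrapositive of the $\Rightarrow$ direction. Conversely, if $G(\OO_S)$ were finite, the equality $\mu_{G_S}(G_S) = |G(\OO_S)|\cdot\mu_{G_S}(G(\OO_S)\backslash G_S)$, together with the finite covolume of a lattice, would force $G_S$ to have finite total Haar measure; but a locally compact Hausdorff group of finite Haar measure is necessarily compact, contradicting the assumption that $G_S$ is noncompact.

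For the moreover part, assume $G$ is simply connected and $k$-simple, so $G$ itself is its unique almost $k$-simple factor. If $G(\OO_S)$ is infinite, the first part yields $G_S$ noncompact, and Theorem \ref{th:Strong_Approximation} applies directly to conclude that $G$ has Strong Approximation. For the converse, suppose $G(k)G_S$ is dense in $G(\mathbb{A}_k)$. Projecting onto the restricted direct product of the $G(k_v)$ for $v\notin S$ collapses the fibre $G_S$, so the diagonal image of $G(k)$ is dense in that restricted product; intersecting with the open subgroup $\prod_{v\notin S}G(\OO_v)$ shows that $G(\OO_S)$ embeds densely into $\prod_{v\notin S}G(\OO_v)$. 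The latter is an infinite Hausdorff group, because $G$ is a nontrivial $k$-simple $k$-group of positive dimension, whence each $G(\OO_v)$ is a positive-dimensional compact $k_v$-analytic group and hence uncountable. Since a finite subset of a Hausdorff space is closed, a finite dense subset can live only in a finite space; this forces $G(\OO_S)$ to be infinite.

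I expect the main subtlety to lie in the converse direction of the moreover statement, where one must carefully pass from density of $G(k)G_S$ in the ad\`ele group (with its restricted-product topology) to density of $G(\OO_S)$ in the ordinary direct product $\prod_{v\notin S}G(\OO_v)$. The rest of the bookkeeping, namely verifying positive-dimensionality of $G$ so that each $G(\OO_v)$ is infinite and reconfirming the discreteness of $G(\OO_S)$ in $G_S$ under $V_\infty\setminus\mathcal{A}\subseteq S$, is routine.
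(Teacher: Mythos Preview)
Your argument is correct, and it differs from the paper's in one substantive place. For the implication ``$G_S$ noncompact $\Rightarrow$ $G(\OO_S)$ infinite'' the paper does not use the Haar-measure observation; instead it invokes Theorem~\ref{th:kv_anisotropic_compact} to deduce that some almost $k$-simple factor $G^i$ has $G^i_S$ noncompact, and then applies Proposition~\ref{prop:G(OS)_Zariski_dense} to conclude that $G^i(\OO_S)$ (hence $G(\OO_S)$) is infinite and Zariski dense. Your route via ``finite lattice $\Rightarrow$ finite total Haar measure $\Rightarrow$ compact'' is more self-contained and avoids the algebraic-group machinery behind Proposition~\ref{prop:G(OS)_Zariski_dense}; the paper's route, on the other hand, simultaneously delivers Zariski density, which is what is actually needed later (e.g.\ in Lemma~\ref{lm:L(Ov)_fg_module}). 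For the ``moreover'' clause the paper simply asserts that Theorem~\ref{th:Strong_Approximation} gives the equivalence with noncompactness of $G_S$; your explicit argument that Strong Approximation forces $G(\OO_S)$ to be dense in the infinite group $\prod_{v\in V_f\setminus S}G(\OO_v)$ (this consequence is recorded in the paper right after Theorem~\ref{th:Strong_Approximation}) supplies the direction not literally stated in that theorem, so your treatment is in fact a bit more complete here. One small cosmetic point: when you project, restrict to $v\in V_f\setminus S$ so that $G(\OO_v)$ is defined at every place in the product.
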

\begin{proof}
By Theorem \ref{th:G(OS)lattice_char}, $G(\OO_S)$ is a lattice in $G_S$. Since a compact group does not admit an infinite lattice, the direct implication follows. For the converse note that Theorem \ref{th:kv_anisotropic_compact} forces one of the almost $k$-simple factors of $G$ to be noncompact.
But then by Proposition \ref{prop:G(OS)_Zariski_dense} $G(\OO_S)$ must be infinite.
For $G$ simply connected and $k$-simple Theorem \ref{th:Strong_Approximation} gives that $G$ has Strong Approximation if and only if $G_S$ is noncompact.
\end{proof}

Finally the following lemma describes the relations between arithmetic groups under restriction of scalars, see section \ref{sec:restriction_scalars}.

\begin{lemma}[{\cite[3.1.4]{Mar}}]\label{lm:restriction_scalars_commensurable}
Let $k'$ be a finite separable field extension of $k$, let $H$ be a $k'$ group and let $S\subset V_k$.
For each $v\in V_k$, we denote by $v'$ the set of valuations of the field $k'$ extending the valuation $v$.
Put $S'=\bigcup_{v\in S}v'$.
\begin{enumerate}[label=\roman*)]
\item \label{e-kv-iso-restriction}For each $v\in V_k$ there exists a natural $k_v$-isomorphism 
\[
f_v:\ R_{k'/k}H\to\prod_{w\in v'}R_{k'_w/k_v}H.
\]
The isomorphisms $f_v^{-1}\circ R^0_{k'_w/k_v}$, for $v\in V_k$, $w\in v'$ induce a topological group isomorphism of the adele groups associated with $H$ and $R_{k'/k}H$, whose restriction to $H(k')$ agrees with $R^0_{k'/k}$.
\item \label{e-commensurable-restriction}
The subgroups $R^0_{k'/k}(H(\OO_{S'}))$ and $(R_{k'/k}(H))(\OO_S)$ are commensurable.
\end{enumerate} 
\end{lemma}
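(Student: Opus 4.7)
The plan is to exploit, at each place $v \in V_k$, the canonical decomposition $k_v \otimes_k k' \cong \prod_{w \in v'} k'_w$ of $k_v$-algebras, which holds because $k'/k$ is finite and separable. Combined with the universal property characterising $R_{k'/k}$, this will force both the local isomorphisms of (i) and the integral comparison needed for (ii).

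For (i), the universal property gives, functorially in any $k$-algebra $A$, a natural bijection $(R_{k'/k}H)(A) \cong H(A \otimes_k k')$. Applied to an arbitrary $k_v$-algebra $B$ one obtains
\[
(R_{k'/k}H)(B) \cong H(B \otimes_k k') \cong \prod_{w \in v'} H(B \otimes_{k_v} k'_w) \cong \prod_{w \in v'} (R_{k'_w/k_v}H)(B),
\]
which by Yoneda defines the required $k_v$-isomorphism $f_v$. Compatibility on $k'$-points with $R^0_{k'_w/k_v}$ and $R^0_{k'/k}$ is tautological from the construction. To glue the $f_v$ into an isomorphism of topological adele groups, one checks that for almost every non-archimedean $v$, $f_v$ restricts to an isomorphism $(R_{k'/k}H)(\OO_v) \cong \prod_{w \mid v} H(\OO_w)$; this in turn reduces to the generic equality $\OO_v \otimes_{\OO_S} \OO_{S'} \cong \prod_{w \mid v} \OO_w$, valid outside the finitely many places ramifying in $k'/k$ or dividing the discriminant.

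For (ii), transport both subgroups into the common ambient adele group $(R_{k'/k}H)(\mathbb{A}_k) \cong H(\mathbb{A}_{k'})$ via the identification of (i). Using the adelic description of $S$-integral points from section \ref{sec:arithmetic_groups},
\[
R^0_{k'/k}(H(\OO_{S'})) = H(k') \cap \Bigl(H_{S'} \times \prod_{w \notin S'} H(\OO_w)\Bigr),
\]
and analogously for $(R_{k'/k}H)(\OO_S)$ inside $(R_{k'/k}H)(k)$. By the previous paragraph, the local factors agree at every $v$ outside a finite set $T$, so the two global intersections differ only at finitely many places; at each $v \in T$, both $(R_{k'/k}H)(\OO_v)$ and $\prod_{w \mid v} H(\OO_w)$ are compact open subgroups of the same locally compact group, hence commensurable. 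Commensurability of the two discrete intersections with the diagonal then follows by a standard adelic argument.

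The main technical obstacle is precisely the bookkeeping at the exceptional finite set $T$. One must check that the two a priori different $\OO_v$-integral structures produce commensurable compact open subgroups of $(R_{k'/k}H)(k_v) \cong \prod_{w \mid v} H(k'_w)$, and that this local commensurability survives intersection with the discrete image of $H(k')$. The essential input is that any two $\OO_v$-lattices in a $k_v$-vector space are commensurable, applied to an embedding of $R_{k'/k}H$ into some $\GL_N$ and to the embedding of $H$ into $\GL_{N'}$ over $k'$.
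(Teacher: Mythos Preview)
The paper does not supply a proof of this lemma; it is stated with the citation \cite[3.1.4]{Mar} and used as a black box, so there is no in-paper argument to compare against. Your sketch is the standard route (functor-of-points description of Weil restriction combined with $k_v\otimes_k k'\cong\prod_{w\mid v}k'_w$, then an adelic comparison at almost all places) and is essentially how Margulis proceeds; the outline for (i) is correct, and for (ii) the reduction to commensurability of two compact open subgroups at finitely many bad places is the right idea, though the final ``standard adelic argument'' deserves one more sentence: since both groups sit between $H(k')$ and a product that agrees outside a finite $T$, their intersection has finite index in each, which is exactly Lemma~\ref{lm:embeddings_and_arithmetic}\ref{e_arithmetic_well_defined} applied across the isomorphism.
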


\section{The general formulation}\label{sec:CSP_formulation}
Let $k$ be a global field, fix $S$ a non-empty set of valuations containing $V_\infty$ and let $\OO_S$ be the ring of $S$-integers.
Let $G$ be an algebraic $k$-group which is assumed to be embedded in $\GL_N$ or to have an affine group $\OO_S$-scheme structure as in section \ref{sec:arithmetic_groups}.
Let us consider the arithmetic group $G(\OO_S)$. 

Let $\mathfrak{N}_a$ be the family of all normal subgroups of finite index of $G(\OO_S)$.
One can consider $\mathfrak{N}_a$ as a fundamental system of neighbourhoods of the identity (see Proposition \ref{pre:base_group_topology_char}), this induces a topology $\tau_a$ on $G(\OO_S)$, which is compatible with the group structure, known as the profinite topology.

A rich source of normal subgroups of finite index of $G(\OO_S)$ is provided by the arithmetic of the ring $\OO_S$.
Namely, for every ideal $\mathfrak{q}$ of $\OO_S$ we can consider the principal $S$-congruence subgroup $G(\mathfrak{q})$.
If we take the family $\mathfrak{N}_c$ of all principal $S$-congruence subgroups of $\Gamma$ as a fundamental system of neighbourhoods for the identity, this defines a topology $\tau_c$ on $\Gamma$, which is compatible with the group structure, called the $S$-congruence topology.
Note that every $S$-congruence subgroup has finite index in $G(\OO_S)$ (\ref{lm:embeddings_and_arithmetic}.\ref{e_finite_index}) , so we have $\mathfrak{N}_c\subseteq\mathfrak{N}_a$.

The {\bf Congruence Subgroup Problem} arises from the following  question:
\begin{center}
Is every finite index subgroup in $\Gamma$ an $S$-congruence subgroup?
\end{center}
Clearly, an affirmative answer is equivalent to $\tau_c=\tau_a$.
This equality can be reinterpreted as follows.
Observe that $G(\OO_S)$ admits completions $\widehat{G(\OO_S)}$ and $\overline{G(\OO_S)}$ with respect to the  (left) uniform structures induced by $\tau_a$ and $\tau_c$, see section \ref{sec:completions_and_group_completions}.
By Proposition \ref{pre:pro:profinite_uniform_completion_coincide} we can describe these completions via projective limits as follows:
\[
\widehat{G(\OO_S)}=\varprojlim_{N\in\mathfrak{N}_a}G(\OO_S)/N\quad \mbox{and}\quad \overline{G(\OO_S)}=\varprojlim_{\mathfrak{0}\neq\mathfrak{q}} G(\OO_S)/G(\mathfrak{q}).
\]
Since $\tau_a$ is finer than $\tau_c$ we have a surjective (uniformly) continuous homomorphism  $\pi: \widehat{G(\OO_S)}\twoheadrightarrow{\overline{G(\OO_S)}}$ (Corollary \ref{pre:cor:extension_to_completion}).
It follows that $\tau_a=\tau_c$ if and only if the $S$-congruence kernel
\[C(G,S):=\ker\pi\]
is the trivial group.

Note that the definitions of $G(\OO_S)$ and $G(\mathfrak{q})$ depend on the embedding $\iota$.
Nevertheless $C(G,S)$ is independent of the chosen embedding.
To see this let us reconstruct $C(G,S)$ in terms of the group $G(k)$.
We will analogously use two families of subgroups of $G(k)$.

We say that a subgroup $\Gamma\subset G(k)$ is an $S$-congruence subgroup if $G(\mathfrak{q})\subseteq\Gamma$ for some $\mathfrak{q}\neq 0$.
The family $\widetilde{\mathfrak{N}}_c$ of all $S$-congruence subgroups of $G(k)$ serves as a fundamental system of neighbourhoods of the identity and defines a topology $\widetilde{\tau}_c$ on $G(k)$.
Indeed, $\widetilde{\mathfrak{N}}_c$ clearly satisfies \ref{GB1} and \ref{GB2} in \ref{pre:base_group_topology_char} and \ref{GB3} follows from Lemma \ref{lm:embeddings_and_arithmetic}.\ref{e_congruence_well_defined} and the fact that conjugation by an element $g\in G(k)$ is a $k$-automorphism of $G$.
Clearly $\mathfrak{N}_c\subset\widetilde{\mathfrak{N}}_c$, hence $G(\OO_S)$ with the topology $\widetilde{\tau}_c$ is a on open subgroup of $G(k)$.

A subgroup $\Gamma\subset G(k)$ is an $S$-arithmetic\footnote{In general a subgroup $H\leq G(\overline{k})$ is called $S$-arithmetic if $H$ and $G(\OO_S)$ are commensurable.} subgroup if $\Gamma$ and $G(\OO_S)$ are commensurable.
The family $\widetilde{\mathfrak{N}}_a$ of all $S$-arithmetic subgroups of $G(k)$ serves as a fundamental system of neighbourhoods of the identity and defines a topology $\widetilde{\tau}_a$ on $G(k)$. Indeed, $\widetilde{\mathfrak{N}}_c$ clearly satisfies \ref{GB1} and \ref{GB2}, while \ref{GB3} follows from Lemma \ref{lm:embeddings_and_arithmetic}.\ref{e_G(k)_commensurates}.
Clearly $\mathfrak{N}_a\subset\widetilde{\mathfrak{N}}_a$, hence $G(\OO_S)$ with the topology $\widetilde{\tau}_a$ is an open subgroup of $G(k)$.

It follows from \ref{e_congruence_well_defined} and \ref{e_arithmetic_well_defined} in Lemma \ref{lm:embeddings_and_arithmetic} that the definition of the above topologies on $G(k)$ does not depend on the embedding $\iota$.
As above, $\widetilde{\tau}_a$ and $\widetilde{\tau}_c$ induce two uniform structures in $G(k)$ and these give  respective completions which will be denoted respectively by $\widehat{G(k)}$ and $\overline{G(k)}$. Since $\widetilde{\tau}_a$ is finer than $\widetilde{\tau}_c$ the identity map in $G(k)$ induces a continuous surjective homomorphism 
\[\widetilde{\pi}: \widehat{G(k)}\to\overline{G(k)}.\]
Moreover, note that $\widehat{G(\OO_S)}$ ,  respectively $\overline{G(\OO_S)}$, is naturally isomorphic to the closure of $G(\OO_S)$ with respect to $\widetilde{\tau}_a$, respectively $\widetilde{\tau}_c$, as a subgroup of $\widehat{G(k)}$ , respectively $\overline{G(k)}$--  since $\bigcap_{N\in \mathfrak{N}_c}=1$ the group $G(k)$, and in particular $G(\OO_S)$, embedds both in $\widehat{G(k)}$ and $\overline{G(k)}$.

\begin{remark}
Note that by construction $\overline {G(k)}$ is nothing but the closure of $G(k)$ in the topological group $G(\mathbb{A}_S)$, see section \ref{sec:analytic_varieties_adeles_varieties}. 
\end{remark}

\begin{lemma} \label{lm:CSP_reduces_to_G(Os)}
Let $\widetilde{\pi}$ and $\pi$ be as above. Then $\ker\widetilde{\pi}\cong\ker{\pi}$. In particular, $C(G,S)$ is independent of the embedding $\iota: G\hookrightarrow \GL_N$.
\end{lemma}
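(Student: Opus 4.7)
The plan is to show that $\ker\widetilde{\pi}$ is already contained in $\widehat{G(\OO_S)}$ and that $\widetilde{\pi}$ restricts there to the map $\pi$. The whole argument rests on the principle that taking completions commutes with passage to open subgroups. I would proceed in the following steps.

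First, I would observe that $G(\OO_S)\in\widetilde{\mathfrak{N}}_a$ (it is commensurable with itself), and $G(\OO_S)\in\widetilde{\mathfrak{N}}_c$ (it is its own $S$-congruence subgroup, as $G(\OO_S)\supseteq G(\mathfrak{q})$ for any $\mathfrak{q}$). Hence $G(\OO_S)$ is open in $G(k)$ with respect to both $\widetilde{\tau}_a$ and $\widetilde{\tau}_c$, and the subspace uniformities it inherits are exactly $\tau_a$ and $\tau_c$. A standard fact about completions then gives that the closure of $G(\OO_S)$ inside $\widehat{G(k)}$ (respectively $\overline{G(k)}$) is naturally identified with $\widehat{G(\OO_S)}$ (respectively $\overline{G(\OO_S)}$), and is open in the ambient completion.

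Second, using that open subgroups are closed, I would prove that the inclusions $G(k)\hookrightarrow\widehat{G(k)}$ and $G(k)\hookrightarrow\overline{G(k)}$ induce bijections of coset spaces
\[
G(k)/G(\OO_S)\longleftrightarrow\widehat{G(k)}/\widehat{G(\OO_S)}\longleftrightarrow\overline{G(k)}/\overline{G(\OO_S)}.
\]
Surjectivity follows because $G(k)$ is dense in each completion while $\widehat{G(\OO_S)}$ and $\overline{G(\OO_S)}$ are open, so every coset meets $G(k)$. Injectivity reduces to the identity $G(k)\cap\widehat{G(\OO_S)}=G(\OO_S)$, which follows by taking the open neighbourhood $g\cdot G(\OO_S)$ of any $g$ in the intersection and finding a point of $G(\OO_S)$ inside.

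Third, since $\widetilde{\pi}$ is the identity on $G(k)$, the induced map on coset spaces is the identity on $G(k)/G(\OO_S)$. In particular $\widetilde{\pi}^{-1}\bigl(\overline{G(\OO_S)}\bigr)=\widehat{G(\OO_S)}$, whence $\ker\widetilde{\pi}\subseteq\widehat{G(\OO_S)}$. On the other hand, $\widetilde{\pi}\!\restriction_{\widehat{G(\OO_S)}}$ and $\pi$ are both continuous extensions to $\widehat{G(\OO_S)}$ of the identity on the dense subgroup $G(\OO_S)$, valued in the Hausdorff completion $\overline{G(\OO_S)}\subseteq\overline{G(k)}$; by Corollary~\ref{pre:cor:extension_to_completion} they coincide. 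This gives $\ker\widetilde{\pi}=\ker\pi$, and the invariance of $C(G,S)$ under the choice of embedding follows because the definition of $\ker\widetilde{\pi}$ only uses the topologies $\widetilde{\tau}_a,\widetilde{\tau}_c$ on $G(k)$, which were already shown to be intrinsic via Lemma~\ref{lm:embeddings_and_arithmetic}.

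The only mildly delicate point is verifying $G(k)\cap\widehat{G(\OO_S)}=G(\OO_S)$ and the analogous equality in $\overline{G(k)}$; everything else is formal manipulation with completions of topological groups having an open subgroup.
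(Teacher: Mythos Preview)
Your proof is correct and follows essentially the same strategy as the paper: both arguments establish $\ker\widetilde{\pi}\subseteq\widehat{G(\OO_S)}$ by exploiting that $G(\OO_S)$ is open in $G(k)$ for both topologies, and then identify $\widetilde{\pi}\!\restriction_{\widehat{G(\OO_S)}}$ with $\pi$. The paper carries this out more directly in the language of minimal Cauchy filters (an element $x\in\ker\widetilde{\pi}$, viewed as a filter, must contain $G(\OO_S)$ since it converges to $1$ in $\widetilde{\tau}_c$, so its trace on $G(\OO_S)$ already defines a point of $\widehat{G(\OO_S)}$), whereas you reach the same conclusion through the coset-space bijections; your route is slightly longer but perhaps more transparent if one prefers to treat completions as black boxes rather than via their explicit filter description.
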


\begin{proof}
Suppose $x\in \widehat{G(k)}$ is a minimal Cauchy filter on $G(k)$ converging to $1$ with respect to $\widetilde{\tau}_c$.
This implies that $\widetilde{\mathfrak{N}}_c$ and, in particular, all principal congruence subgroups and $G(\OO_S)$ belong to $x$.
Hence, if we  look at $x\cap G(\OO_S):=\{F\cap G(\OO_S)\;:\; F\in x\}$ this gives a filter on $G(\OO_S)$ which is Cauchy with respect to (the uniform structure given by) $\tau_c$ and converges to $1$ in $\tau_c$.
It follows that $x\cap G(\OO_S)$ gives rise to a point in $\widehat{G(\OO_S)}$, which is isomorphic to the closure of $G(\OO_S)$ in $\widehat{G(k)}$.
This implies that $x\in\widehat{G(\OO_S)}$ and so $x\in C(G,S)$.

\end{proof}

Let us note that the proof of the previous lemma actually shows the following.

\begin{lemma}\label{lm:CSP_reduces_to_open_subgroup}
Let $\Gamma\leq G(k)$ be a subgroup, and denote by $\widehat{\Gamma}$ and $\overline{\Gamma}$ the closure of $\Gamma$ in $\widehat{G(k)}$ and $\overline{G(k)}$. We have a natural map $\widehat{\Gamma}\to\overline{\Gamma}$.
Suppose that $\Gamma$ is open both in $\widetilde{\tau}_a$ and $\widetilde{\tau}_c$.
Then $\ker\widetilde{\pi}\cong C(G,S)\cong \ker (\widehat{\Gamma}\to\overline{\Gamma})$.
\end{lemma}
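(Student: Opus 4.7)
The plan is to imitate the proof of Lemma \ref{lm:CSP_reduces_to_G(Os)}, exploiting the openness of $\Gamma$ in both topologies to relate minimal Cauchy filters on $G(k)$ to minimal Cauchy filters on $\Gamma$. Recall that points of $\widehat{G(k)}$ are minimal Cauchy filters on $G(k)$ with respect to the left uniform structure associated to $\widetilde{\tau}_a$ (Theorem \ref{th:completion} and its proof), and a point lies in $\ker\widetilde{\pi}$ precisely when the filter also converges to $1$ in $\widetilde{\tau}_c$. By Lemma \ref{lm:CSP_reduces_to_G(Os)} we already have $\ker\widetilde{\pi}\cong C(G,S)$, so only the second isomorphism needs to be established.

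First I would observe that, because $\Gamma$ is open in both $\widetilde{\tau}_a$ and $\widetilde{\tau}_c$, it is also closed in both, and the subspace left-uniform structures inherited from $G(k)$ coincide with the intrinsic left-uniform structures coming from the restrictions of $\widetilde{\tau}_a$ and $\widetilde{\tau}_c$ to $\Gamma$. In particular, the closure $\widehat{\Gamma}$ of $\Gamma$ in $\widehat{G(k)}$ is canonically the completion of $(\Gamma,\widetilde{\tau}_a|_\Gamma)$, and similarly for $\overline{\Gamma}\subseteq\overline{G(k)}$, so the map $\widehat{\Gamma}\to\overline{\Gamma}$ is the restriction of $\widetilde{\pi}$.

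Next, I would construct the isomorphism by restriction and extension of filters. Given $x\in\ker\widetilde{\pi}$, the filter $x$ converges to $1$ in $\widetilde{\tau}_c$; since $\Gamma$ is a $\widetilde{\tau}_c$-neighbourhood of $1$, we have $\Gamma\in x$. The trace $x|_\Gamma:=\{F\cap\Gamma:F\in x\}$ is then a filter on $\Gamma$, Cauchy for $\widetilde{\tau}_a|_\Gamma$ (entourages of $\Gamma$ are just restrictions of entourages of $G(k)$), and converging to $1$ in $\widetilde{\tau}_c|_\Gamma$; minimality follows from Proposition \ref{pre:prop:minimal_Cauchy_filters}, since the distinguished base $\{V(M)\}$ with $M\in x|_\Gamma$ and $V$ symmetric entourage of $\Gamma$ is exactly the restriction of the corresponding base of $x$. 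Conversely, given $y\in\ker(\widehat{\Gamma}\to\overline{\Gamma})$, the filter $\{F\subseteq G(k):F\cap\Gamma\in y\}$ is a filter on $G(k)$; since $\Gamma$ is open, every entourage of $G(k)$ contains a symmetric one of the form $U\times U$ with $U$ an open subgroup of $\Gamma$, so this extended filter is Cauchy in $\widetilde{\tau}_a$ and converges to $1$ in $\widetilde{\tau}_c$, and passing to its minimal refinement yields a point of $\ker\widetilde{\pi}$.

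Finally, I would verify that these two assignments are mutually inverse and are group homomorphisms. Both facts are essentially bookkeeping: the minimal refinement of the restriction of a minimal Cauchy filter containing $\Gamma$ is itself, and the group operation on completions is defined filter-wise, so it commutes with restriction and extension. The only real technical point, which I would expect to be the main obstacle, is keeping track of minimality when passing between the two settings; this is handled uniformly by using the explicit description of minimal Cauchy filters supplied by Proposition \ref{pre:prop:minimal_Cauchy_filters} and the fact, guaranteed by the openness of $\Gamma$ in both topologies, that a fundamental system of symmetric entourages of $G(k)$ can be chosen with all members contained in $\Gamma\times\Gamma$ (namely, those of the form $\{(x,y):x^{-1}y\in U\}$ for $U$ an open subgroup contained in $\Gamma$).
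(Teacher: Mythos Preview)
Your proposal is correct and follows exactly the route the paper intends: the paper does not give a separate proof of this lemma but simply remarks that ``the proof of the previous lemma actually shows the following,'' meaning the filter-restriction argument of Lemma~\ref{lm:CSP_reduces_to_G(Os)} goes through verbatim once $G(\OO_S)$ is replaced by any $\Gamma$ open in both topologies. Your write-up is in fact more careful than the paper's own argument for Lemma~\ref{lm:CSP_reduces_to_G(Os)}, which only exhibits the restriction map and leaves the inverse and the group-homomorphism check implicit; your explicit treatment of minimality via Proposition~\ref{pre:prop:minimal_Cauchy_filters} and of the extension map fills in exactly what the paper glosses over.
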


The Congruence Subgroup Problem therefore reduces to the study of the group
\[
C(G,S)=\ker\left( \widehat{G(\OO_S)}\to\overline{G(\OO_S)}\right).
\]

Whenever $C(G,S)$ is trivial we say that $G(\OO_S)$ has the \textbf{Congruence Subgroup Property (CSP)}.
If $C(G,S)$ is finite, we say that $G(\OO_S)$ has the \textbf{weak Congruence Subgroup Property (wCSP)}.
Even the finiteness of $C(G,S)$ has some interesting consequences, see Theorem \ref{th:CSP_implies_PRG} and  \ref{th:log_subgroup_growth}.

\section{Reductions and necessary conditions}

We begin by describing the behaviour of $C(G,S)$ for different $G$.
\begin{proposition} \label{lm:congruence_kernel_functor}
The assignment $G\mapsto C(G,S)$ gives a  functor $C(\_,S)$ from the category of affine $k$-groups to the category of profinite groups. 
\end{proposition}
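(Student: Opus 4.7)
The plan is to verify separately that $C(G,S)$ is profinite and that any $k$-morphism $f\colon G\to G'$ of $k$-groups induces a continuous homomorphism $C(f,S)\colon C(G,S)\to C(G',S)$, compatibly with identities and composition.

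First, I would observe that $C(G,S)$ is profinite: by construction, $\widehat{G(\OO_S)}=\varprojlim_{N\in\mathfrak{N}_a}G(\OO_S)/N$ is a profinite group, and $C(G,S)=\ker\pi$ is a closed subgroup of it, hence itself profinite. The same applies to $\overline{G(\OO_S)}$, and $\pi$ is continuous by construction.

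For functoriality on morphisms, I would work at the level of $G(k)$ and invoke Lemma \ref{lm:CSP_reduces_to_G(Os)} to transfer the construction to $C(G,S)$. Given a $k$-morphism $f\colon G\to G'$, the induced map $f\colon G(k)\to G'(k)$ is continuous for the topology $\widetilde{\tau}_c$: indeed, for any $S$-congruence subgroup $H\supseteq G'(\mathfrak{p})$ of $G'(k)$, Lemma \ref{lm:embeddings_and_arithmetic}.\ref{e_congruence_well_defined} yields a nonzero $\mathfrak{q}$ with $f(G(\mathfrak{q}))\subseteq G'(\mathfrak{p})\subseteq H$, so $f^{-1}(H)\supseteq G(\mathfrak{q})$ is an $S$-congruence subgroup. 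Likewise $f$ is continuous for $\widetilde{\tau}_a$: for any $S$-arithmetic $\Gamma'\subset G'(k)$, Lemma \ref{lm:arithmetic_subgroups}.\ref{e_arithmetic_continuous} shows that $G(\OO_S)\cap f^{-1}(\Gamma')$ is $S$-arithmetic and is contained in $f^{-1}(\Gamma')$, so the latter is a neighbourhood of the identity in $\widetilde{\tau}_a$. Since both topologies admit a neighbourhood base of the identity consisting of subgroups, the associated left uniform structures satisfy the hypothesis of Theorem \ref{th:group_completion}, so $f$ is uniformly continuous and extends uniquely to continuous group homomorphisms $\widehat{f}\colon\widehat{G(k)}\to\widehat{G'(k)}$ and $\overline{f}\colon\overline{G(k)}\to\overline{G'(k)}$. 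By the uniqueness clause these are compatible with the projections $\widetilde{\pi}$ and $\widetilde{\pi}'$, so $\widehat{f}$ carries $\ker\widetilde{\pi}$ into $\ker\widetilde{\pi}'$; Lemma \ref{lm:CSP_reduces_to_G(Os)} then identifies these kernels canonically with $C(G,S)$ and $C(G',S)$, and the resulting restriction defines $C(f,S)$.

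Functoriality follows again from the uniqueness of continuous extensions: the identity on $G(k)$ extends to the identity, so $C(\id_G,S)=\id_{C(G,S)}$; and for a composition $G\xrightarrow{f}G'\xrightarrow{g}G''$, uniqueness forces $\widehat{g\circ f}=\widehat{g}\circ\widehat{f}$, whose restriction to the copies of the $C(-,S)$ gives $C(g\circ f,S)=C(g,S)\circ C(f,S)$. The principal obstacle is not a substantial difficulty but rather a careful bookkeeping one: checking that the continuity of $f$ in each of the two group topologies on $G(k)$ yields uniform continuity for the respective left uniformities in a way compatible with Lemma \ref{lm:CSP_reduces_to_G(Os)}, so that restricting $\widehat{f}$ to the canonically identified copy of $C(G,S)$ inside $\widehat{G(k)}$ genuinely lands in $C(G',S)$.
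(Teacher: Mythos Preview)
Your proposal is correct and follows essentially the same approach as the paper: both show $C(G,S)$ is profinite as a closed subgroup of $\widehat{G(\OO_S)}$, establish continuity of $f$ on $G(k)$ in the two topologies via Lemmata \ref{lm:embeddings_and_arithmetic} and \ref{lm:arithmetic_subgroups}, extend to the completions, and then check that $\ker\widetilde{\pi}$ is carried into $\ker\widetilde{\pi}'$. The only cosmetic difference is that the paper traces the image of a minimal Cauchy filter explicitly to verify the last step, whereas you invoke the commuting square and uniqueness of continuous extensions; your version also spells out the compatibility with identities and composition, which the paper leaves implicit.
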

\begin{proof}
It follows from Lemma \ref{lm:CSP_reduces_to_G(Os)} that $C(G,S)=\ker\pi$ is a closed subgroup of $\widehat{G(\OO_S)}$, hence a profinite group.
Let $f:G\to H$ be a $k$-morphism of $k$-groups. 
Note that $f_{|G(k)}:G(k)\to H(k)$ is continuous with respect to both the congruence and arithmetic topologies ( \ref{lm:embeddings_and_arithmetic}.\ref{e_congruence_continuous} and \ref{lm:arithmetic_subgroups}.\ref{e_arithmetic_continuous}) and so it extends to a continuous function between the respective completions.
Given $x\in \widehat{G(k)}$ a minimal Cauchy filter, since $f_{|G(k)}$ is continuous, $f(x):=\{f(F)\;:\;F\in x\}$ gives a Cauchy filter basis on $H(k)$.
Thus we define $C(\_,S):\widehat{G(k)}\to \widehat{H(k)}$ by  assigning to every element $x\in \widehat{G(k)}$ the minimal Cauchy filter containing $f(x)$. 
Let us check that under this map $C(G,S)$ maps to $C(H,S)$.
If $H(\mathfrak{p})$ is a principal $S$-congruence subgroup in $H(k)$. Then by Lemma \ref{lm:embeddings_and_arithmetic}.\ref{e_arithmetic_well_defined} there exists an ideal $\mathfrak{q}\subset\OO_S$ such that $f(G(\mathfrak{q}))\subseteq H(\OO_S)$. 
Now if $x\in C(G,S)$ then $x$ converges to $1$ in $\widetilde{\tau}_c$ and this implies $G(\mathfrak{q})\in x$.
Hence $f(G(\mathfrak{q}))\in f(x)$ and thus any filter contaning $f(x)$ contains $H(\mathfrak{p})$. It follows that $C(\_,S)(f)(C(G,S))\subseteq C(H,S)$ as desired.

\end{proof}

\begin{lemma}\label{lm:CSP_connected}
If $G^0$ is the connected component of the identity in $G$, the inclusion map $i:G^0\to G$ induces an isomorphism $C(G^0,S)\to C(G,S)$.
\end{lemma}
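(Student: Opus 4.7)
The plan is to reduce the comparison of $C(G^0, S)$ and $C(G, S)$ to a statement about the same subgroup $G^0(\OO_S)$ viewed inside both $G^0(k)$ and $G(k)$, by invoking Lemma \ref{lm:CSP_reduces_to_open_subgroup}. Since the inclusion $i:G^0\to G$ is a $k$-morphism, Proposition \ref{lm:congruence_kernel_functor} already produces the continuous homomorphism $C(G^0,S)\to C(G,S)$, so we only need to show it is an isomorphism.

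First I would verify that $G^0(\OO_S)$ is open in $G(k)$ with respect to both $\widetilde{\tau}_a$ and $\widetilde{\tau}_c$. By Proposition \ref{pre:connected_components}, $G/G^0$ is a finite algebraic $k$-group, so $G^0(\OO_S)$ has finite index in $G(\OO_S)$; hence it is commensurable with $G(\OO_S)$ and is open in the arithmetic topology. For the congruence topology, I would apply Lemma \ref{lm:embeddings_and_arithmetic}.\ref{e_congruence_well_defined} to the quotient $k$-morphism $\rho:G\to H:=G/G^0$: for every nonzero ideal $\mathfrak{p}\subset\OO_S$ there exists a nonzero $\mathfrak{q}$ with $\rho(G(\mathfrak{q}))\subseteq H(\mathfrak{p})$. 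Because $H$ is a finite $k$-group, $H(\OO_S)$ is finite and one can choose $\mathfrak{p}$ avoiding the finitely many primes dividing a nonzero entry of $h-1$ for $h\in H(\OO_S)\setminus\{1\}$, making $H(\mathfrak{p})=1$. Hence $G(\mathfrak{q})\subseteq\ker\rho\cap G(\OO_S)=G^0(\OO_S)$, so $G^0(\OO_S)$ is open in $\widetilde{\tau}_c$.

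With $\Gamma:=G^0(\OO_S)$, Lemma \ref{lm:CSP_reduces_to_open_subgroup} yields $C(G,S)\cong\ker(\widehat{\Gamma}\to\overline{\Gamma})$, where the completions are formed inside $\widehat{G(k)}$ and $\overline{G(k)}$. The final task is to identify these completions with those computing $C(G^0,S)$, i.e., to show that the subspace topologies that $\Gamma$ inherits from the arithmetic and congruence topologies on $G(k)$ coincide with the corresponding topologies it inherits from $G^0(k)$. For the arithmetic topology this is straightforward: $S$-arithmetic subgroups of $G(k)$ intersect $G^0(k)$ in $S$-arithmetic subgroups of $G^0(k)$ by Lemma \ref{lm:arithmetic_subgroups}.\ref{e_arithmetic_continuous}, while conversely any $S$-arithmetic subgroup of $G^0(k)$, being commensurable with $G^0(\OO_S)$ and hence with $G(\OO_S)$, is $S$-arithmetic in $G(k)$. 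For the congruence topology one uses that with the induced embedding $G^0\subseteq G\subseteq\GL_N$ we literally have $G^0(\mathfrak{q})=G(\mathfrak{q})\cap G^0(k)$, so principal congruence subgroups of $G$ restrict to principal congruence subgroups of $G^0$, and Lemma \ref{lm:embeddings_and_arithmetic}.\ref{e_congruence_continuous} gives the reverse inclusion of filters up to finer refinement.

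I expect the main obstacle to be the clean verification that these two induced topologies on $G^0(\OO_S)$ genuinely agree with the intrinsic ones; in particular, Step producing $\mathfrak{q}$ with $G(\mathfrak{q})\subseteq G^0(\OO_S)$ is the crux and relies essentially on the finiteness of $G/G^0$ and the fact that $\OO_S$ is a Dedekind domain (so that each nonzero element lies in only finitely many maximal ideals). Once these compatibilities are established, the two descriptions of the kernel coincide and the functorial map $C(G^0,S)\to C(G,S)$ becomes the identification of isomorphic profinite groups.
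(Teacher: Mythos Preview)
Your proposal is correct and follows the same overall architecture as the paper: reduce via Lemma~\ref{lm:CSP_reduces_to_open_subgroup} to showing that $G^0(\OO_S)$ (equivalently $G^0(k)$) is open in both the arithmetic and congruence topologies on $G(k)$, and then identify the resulting kernel with $C(G^0,S)$ using $G^0(\mathfrak{q})=G(\mathfrak{q})\cap G^0(k)$.

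The one genuine tactical difference is in how congruence openness is established. The paper argues directly with coordinate rings: it uses the decomposition $k[G]=k[G^0]\oplus k[G']$ (where $G'$ is the union of the nonidentity components), extracts the idempotent $e\in k[G']$, writes it as a polynomial in the matrix coordinates, and chooses $\mathfrak{q}$ avoiding the primes in its coefficients so that $e$ cannot reduce to $1$ modulo $\mathfrak{q}$. You instead pass to the finite quotient $\rho:G\to H=G/G^0$, invoke Lemma~\ref{lm:embeddings_and_arithmetic}\ref{e_congruence_well_defined} to get $\rho(G(\mathfrak{q}))\subseteq H(\mathfrak{p})$, and then kill $H(\mathfrak{p})$ by choosing $\mathfrak{p}$ avoiding the finitely many primes witnessed by the finitely many nonidentity elements of $H(\OO_S)$. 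Both arguments ultimately exploit the same finiteness, but yours packages it through an existing lemma and the quotient morphism, which is a bit more conceptual; the paper's idempotent computation is more self-contained and avoids invoking the existence of the affine quotient $G/G^0$ as a $k$-group. You are also more explicit than the paper about checking that the induced topologies on $G^0(\OO_S)$ from $G(k)$ and from $G^0(k)$ agree, which the paper leaves implicit.
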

\begin{proof}
First let us note that $G^0$ is a normal $k$-subgroup of $G$ of finite index (Proposition \ref{pre:connected_components}).
Hence we may keep the chosen embedding for $G^0$ as well and we have $G^0(\OO_S)=G(\OO_S)\cap G(k)$ and $G^0(\mathfrak{q})= G(\mathfrak{q})\cap G^0(k)$ for every ideal $\mathfrak{q}\subseteq\OO_S$.
Moreover the quotient $G(k)/G^0(k)$ is a finite group, hence $G^0(\OO_S)$ has finite index in $G(\OO_S)$.
This implies that $G^0(\OO_S)$ and $G^0(k)$ are open subgroups of $G(k)$ with respect to the arithmetic topology of $G(k)$.
By Lemma \ref{lm:CSP_reduces_to_open_subgroup} it suffices to show that $G^0(k)$ is also open with respect to the congruence topology in $G(k)$, i.e., there exists an ideal $\mathfrak{q}\subseteq\OO_S$ such that $G(\mathfrak{q})\subseteq G^0(k)$.

To prove the claim let us denote by $G^0$, $G^1$, $\ldots$, $G^n$ the connected components of the algebraic $k$-group $G$.
The identity component $G^0$ is defined over $k$ (Proposition \ref{pre:connected_components}) and this implies that $G':=\cup_{i=1}^{n}G^i$ is defined over $k$ as well\footnote{In an algebraic group the connected and irreducible components coincide and are disjoint. The irreducible components are defined over the separable closure $k_s$ and a component is defined over $k$ if and only it is stable under the action of $\Gal(k_s/k)$, which permutes the irreducible components. Now since $G^0$ is defined over $k$ this implies that $G\setminus{G^0}$ is stable under $\Gal(k_s/k)$ and hence defined over $k$, see \cite[Proposition I.1.2]{Bor} and references therein for details.}.
It follows that we have a decomposition $k[G]=k[G^0]\oplus k[G']$ as a direct sum of algebras.
Now this implies that $1=e_0+e$ for some $e_0\in k[G^0]$ and $e\in k[G']$.

It follows that $e(g)=1$, for $g\in G'(k)$, and $e(g)=0$ for $g\in G^0(k)$.
Now we know that $k[G]=k[x_{ij},\,\det(x_{ij})^{-1}\,|\,1\leq i,j\leq n]$, where $x_i(1)=0$ for every $i\in I$. Then $e=f(x_{i})$ for some polynomial $f\in k[X_i]$.
Pick an ideal $\mathfrak{q}\subseteq\OO_S$ such that $c\notin\mathfrak{q}$ for every coefficient $c\in k$ of $f$.
We claim that $G(\mathfrak{q})\cap G'(k)=\emptyset$ and hence $G(\mathfrak{q})\subseteq G^0(k)$.
Indeed, since $e(1)=0$, we must have $e(g)=0$ $\mod\mathfrak{q}$ for $g\in G(\mathfrak{q})$, but $e(g)=1$ for every $g\in G'(k)$.
Hence we must have $G(\mathfrak{q})\cap G'(k)=\emptyset$.
\end{proof}

The following lemma describes the behaviour of $C(G,S)$ when $G$ is a semidirect product of $k$-groups.

\begin{lemma}[{\cite{Rag}}] \label{lm:CSP_semidirect}
Let $G$ be an algebraic $k$-group, such that $G$ is the semidirect product of a normal $k$-subgroup $N$ and a $k$-subgroup $H$.
Then $C(G,S)$ is a semidirect product of $C(H,S)$ and a quotient of $C(N,S)$. In particular, if $C(N,S)$ is trivial then $C(G,S)\cong C(H,S)$.
\end{lemma}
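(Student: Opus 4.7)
The plan is to combine the functoriality of $C(\_,S)$ from Proposition \ref{lm:congruence_kernel_functor} with the fact that the semidirect product decomposition $G = N \rtimes H$ of $k$-groups descends to decompositions at every level of interest. Let $i: N \hookrightarrow G$ and $j: H \hookrightarrow G$ denote the inclusions, and $p: G \twoheadrightarrow H$ the canonical projection with $p \circ j = \id_H$; all are $k$-morphisms. By functoriality they induce continuous maps $i_*, j_*, p_*$ between the arithmetic completions, and analogous maps $\bar{\imath}_*, \bar{\jmath}_*, \bar{p}_*$ between the congruence completions, with $p_* \circ j_* = \id$ and $\bar{p}_* \circ \bar{\jmath}_* = \id$. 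Restricting to congruence kernels immediately gives a splitting
\[
C(G,S) \;=\; K \rtimes j_*(C(H,S)), \qquad K \;:=\; \ker\bigl(p_*|_{C(G,S)}\colon C(G,S) \to C(H,S)\bigr),
\]
so it suffices to identify $K$ as a quotient of $C(N,S)$.

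For any $\OO_S$-algebra $R$ we have $G(R) = N(R) \rtimes H(R)$. Applied with $R = \OO_S/\mathfrak{q}$ for nonzero ideals $\mathfrak{q} \subset \OO_S$, this yields split short exact sequences at each finite level, and passing to the inverse limit produces a split short exact sequence
\[
1 \to \overline{N(\OO_S)} \xrightarrow{\bar{\imath}_*} \overline{G(\OO_S)} \xrightarrow{\bar{p}_*} \overline{H(\OO_S)} \to 1,
\]
so in particular $\bar{\imath}_*$ is injective. On the arithmetic side, $p_*$ is surjective (admitting the section $j_*$), and a short compactness argument using $G(\OO_S) = N(\OO_S) \cdot j(H(\OO_S))$ (approximating any element of $\widehat{G(\OO_S)}$ by a net of such products and extracting convergent subnets in the compact group $\widehat{H(\OO_S)}$) shows that $\ker p_* = i_*\bigl(\widehat{N(\OO_S)}\bigr)$. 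Consequently $\widehat{G(\OO_S)}$ itself is a semidirect product $i_*(\widehat{N(\OO_S)}) \rtimes j_*(\widehat{H(\OO_S)})$.

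To finish, take any $x \in K$. By the previous paragraph $x = i_*(y)$ for some $y \in \widehat{N(\OO_S)}$; writing $\alpha: \widehat{N(\OO_S)} \to \overline{N(\OO_S)}$ and $\beta: \widehat{G(\OO_S)} \to \overline{G(\OO_S)}$ for the natural maps, functoriality gives $\beta(x) = \bar{\imath}_*(\alpha(y))$. Since $\beta(x) = 1$ and $\bar{\imath}_*$ is injective, one obtains $\alpha(y) = 1$, i.e. $y \in C(N,S)$. Thus $K = i_*(C(N,S))$, which is a quotient of $C(N,S)$, establishing the first assertion; the second then follows, for if $C(N,S) = 1$ then $K = 1$ and the splitting collapses to $C(G,S) \cong C(H,S)$.

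The main obstacle in this plan is the identification $\ker p_* = i_*(\widehat{N(\OO_S)})$ at the level of arithmetic completions: profinite completion is not left exact in general, so this must be extracted from the existence of the algebraic section $j$ via the compactness argument above, rather than taken for granted; once this step is in place, the remainder of the argument is a formal diagram chase using the injectivity of $\bar{\imath}_*$.
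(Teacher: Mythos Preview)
The paper does not supply its own proof of this lemma; it is simply attributed to Raghunathan \cite{Rag}. Your argument is a correct reconstruction of the standard proof, and the key diagram chase using injectivity of $\bar{\imath}_*$ on the congruence side together with the identification $\ker p_* = i_*(\widehat{N(\OO_S)})$ on the arithmetic side is exactly the right mechanism.

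Two minor remarks. First, the equality $G(\OO_S) = N(\OO_S)\cdot j(H(\OO_S))$ is not automatic for an arbitrary embedding $G\hookrightarrow\GL_N$; it holds once you choose the embedding induced by the $k$-isomorphism of varieties $N\times H\to G$ (and you may do so, since Lemma~\ref{lm:CSP_reduces_to_G(Os)} makes $C(G,S)$ embedding-independent). Second, your ``extract convergent subnets'' phrasing for $\ker p_*=i_*(\widehat{N(\OO_S)})$ is slightly roundabout: once you know $i_*(\widehat{N(\OO_S)})$ is a closed normal subgroup and $i_*(\widehat{N(\OO_S)})\cdot j_*(\widehat{H(\OO_S)})$ is closed and contains the dense subgroup $G(\OO_S)$, the product equals $\widehat{G(\OO_S)}$; then for $x\in\ker p_*$ write $x=n\cdot j_*(h)$ and apply $p_*$ to force $h=1$. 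This avoids nets altogether.
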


\section{The characteristic 0 case}

Throughout this section we assume that $\ch k=0$, that is, $k$ is a number field.
The aim of this section is to reduce the Congruence Subgroup Problem to the case of semisimple algebraic groups.
By Lemma \ref{lm:CSP_connected} we may restrict to connected groups. We begin by investigating the additive and multiplicative  groups $\mathbb{G}_a$ and $\mathbb{G}_m$.

\begin{lemma}\label{lm:CSP_additive_trivial}

Let $G=\mathbb{G}_a$, then for any $S\subseteq V_f$ we have  $C(G,S)=1$. 
\end{lemma}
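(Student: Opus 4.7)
The plan is to show directly that on $\mathbb{G}_a(\mathcal{O}_S)$ the profinite and congruence topologies coincide, which by the reformulation in Section \ref{sec:CSP_formulation} immediately gives $C(\mathbb{G}_a, S) = 1$.

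First, I would unpack the objects. Fixing the standard embedding $\mathbb{G}_a \hookrightarrow \GL_2$, we have $\mathbb{G}_a(\OO_S) = (\OO_S, +)$. For a non-zero ideal $\mathfrak{q} \subseteq \OO_S$, the principal $S$-congruence subgroup is
\[
\mathbb{G}_a(\mathfrak{q}) = \ker\bigl((\OO_S, +) \to (\OO_S/\mathfrak{q}, +)\bigr) = \mathfrak{q},
\]
regarded as an additive subgroup of $\OO_S$. Thus the family $\mathfrak{N}_c$ consists precisely of the non-zero ideals of $\OO_S$, while $\mathfrak{N}_a$ consists of all finite-index additive subgroups.

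The key step is to verify that every element of $\mathfrak{N}_a$ lies in $\mathfrak{N}_c$. Let $H \leq (\OO_S, +)$ be of finite index $n$. Since $\ch k = 0$, the integer $n$ is a non-zero element of $\OO_S$, so $n\OO_S$ is a non-zero ideal. By Lagrange's theorem applied to the finite abelian group $\OO_S/H$, we have $n x \in H$ for every $x \in \OO_S$, so $n\OO_S \subseteq H$. Hence $H$ contains the principal congruence subgroup $\mathbb{G}_a(n\OO_S)$ and is therefore an $S$-congruence subgroup.

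From $\mathfrak{N}_a \subseteq \mathfrak{N}_c$ (together with the reverse containment noted in Section \ref{sec:CSP_formulation}), the arithmetic and congruence topologies on $\mathbb{G}_a(\OO_S)$ agree, so their completions $\widehat{\mathbb{G}_a(\OO_S)}$ and $\overline{\mathbb{G}_a(\OO_S)}$ are canonically isomorphic via $\pi$, giving $C(\mathbb{G}_a, S) = \ker \pi = 1$. No step here seems delicate; the only point to be careful about is that the argument genuinely requires $\ch k = 0$, since in positive characteristic $n = \ch k$ gives $n\OO_S = 0$ and the multiplication-by-$n$ trick fails.
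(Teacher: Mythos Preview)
Your proof is correct and is exactly the paper's argument, just spelled out in full: the paper's proof is the single line ``Suppose $H\leq\OO_S$ has index $m$. Then $m\OO_S\leq H$.'' One small notational slip: you write $\mathfrak{N}_a\subseteq\mathfrak{N}_c$, but what you actually prove (and what is needed) is that every $H\in\mathfrak{N}_a$ \emph{contains} some element of $\mathfrak{N}_c$, not that $H$ is itself a principal congruence subgroup; this does not affect the argument.
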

\begin{proof}
Suppose $H\leq \OO_S$ has index $m$. Then $m\OO_S\leq H$.
\end{proof}

To show that the congruence kernel for the multiplicative group is also trivial we will apply the following theorem by Chevalley (\cite[Theorem 1]{Che}).

\begin{theorem} \label{th:Chevalley}
Suppose $H\leq k^*$ is a finitely generated subgroup. Then, given $m\in\NN$ and any non-zero ideal $\mathfrak{p}\subset\OO$, there exists a non-zero ideal $\mathfrak{q}\subset\OO$ coprime to $\mathfrak{p}$ such that for every $x\in H$ such that $x\equiv 1\mod\mathfrak{q}$, we have $x\in H^m$.
The statement $x\equiv 1\mod \mathfrak{q}$ is to be interpreted in $k$, that is, $x-1\in\frac{a}{b}\mathfrak{q}$ where $a,b\in\OO$ and $(b)$ is coprime to $\mathfrak{q}$.
\end{theorem}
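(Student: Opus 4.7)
My plan combines the finiteness of $H/H^m$ with Kummer theory and Chebotarev's density theorem to detect each non-trivial coset of $H^m$ in $H$ by a single $v$-adic congruence condition. Since $H$ is a finitely generated abelian group, the quotient $H/H^m$ is finite; I would fix representatives $h_1,\ldots,h_s\in H$ for its non-trivial cosets. Writing $U_{\mathfrak{q}} := \{x\in k^*\ :\ x\equiv 1 \bmod \mathfrak{q}\}$ for the set appearing in the statement, the desired inclusion $H\cap U_{\mathfrak{q}}\subseteq H^m$ is equivalent to the disjointness $h_i H^m\cap U_{\mathfrak{q}} = \emptyset$ for every $i$. Hence it suffices to produce, for each $i$, a non-zero ideal $\mathfrak{q}_i\subseteq\OO$ coprime to $\mathfrak{p}$ with $h_i H^m\cap U_{\mathfrak{q}_i} = \emptyset$, and then take $\mathfrak{q} = \prod_i \mathfrak{q}_i$.

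For a fixed $h = h_i\in H\setminus H^m$ I would look for $\mathfrak{q}_i = \mathfrak{p}_v^n$ attached to a single non-archimedean place $v$. Excluding the finite set $T$ of places dividing $\mathfrak{p}$ or at which some generator of $H$ fails to be a unit, for $v\notin T$ we have $H\subseteq\OO_v^*$ together with reduction maps $\psi_v^{(n)}\colon H\to\OO_v^*/U_v^{(n)}$; the disjointness condition then becomes $\psi_v^{(n)}(h)\notin\psi_v^{(n)}(H^m)$. To exhibit such a pair $(v,n)$ I would pass to $k' = k(\mu_m)$ and form the finite abelian Kummer extension $L = k'\bigl(\sqrt[m]{h_1^*},\ldots,\sqrt[m]{h_r^*}\bigr)$, where $h_1^*,\ldots,h_r^*$ generate $H$. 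Kummer theory identifies $\Gal(L/k')$ with $\Hom(\overline H,\mu_m)$, where $\overline H$ is the image of $H$ in $k'^{*}/k'^{*m}$. A character $\chi\in\Hom(H/H^m,\mu_m)$ with $\chi(h)\neq 1$ then lifts to a character of $\Gal(L/k')$, and Chebotarev density (Theorem \ref{pre:th:Chebotarev}) supplies infinitely many places $v$ of $k$, coprime to $\mathfrak{p}$ and unramified in $L$, whose Frobenius realises $\chi$. At such a place $h$ is not an $m$-th power modulo $\mathfrak{p}_v$, so $\psi_v(h)\notin\psi_v(H)^m = \psi_v(H^m)$, and $n=1$ works.

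The main obstacle is the degenerate case $h\in k^{*m}\setminus H^m$, in which $\sqrt[m]{h}$ already lies in $k$ and the Kummer argument above collapses. To handle it I would write $h = g^m$ with $g\in k^*$ and observe that $h\notin H^m$ forces $g\notin H\mu_m$ (otherwise $g = h_0\zeta$ with $h_0\in H$, $\zeta\in\mu_m$ would give $h = g^m = h_0^m\in H^m$). One then reruns the Kummer/Chebotarev machine on the enlarged finitely generated subgroup $H\langle g\rangle$ to produce a place $v$, of residue characteristic coprime to $m$, separating $g$ from $H\mu_m$ in $\OO_v^*/U_v^{(n)}$ for some $n\geq 1$; choosing $n$ large enough that the $m$-th power map on $U_v^{(n)}$ is bijective onto its image, this separation descends to a separation of $h = g^m$ from $H^m$ modulo a suitable power $\mathfrak{p}_v^{n'}$. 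Carrying out the construction coset by coset and multiplying the resulting ideals then yields the required $\mathfrak{q}$, completing the proof.
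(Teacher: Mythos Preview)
The paper does not prove this theorem at all: it is quoted as a black box from Chevalley's original paper (\cite[Theorem 1]{Che}) and used immediately afterwards to show $C(\mathbb{G}_m,S)=1$. There is therefore no in-paper proof to compare your proposal against.

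That said, your Kummer--Chebotarev strategy is a standard and viable route to this result, different in flavour from Chevalley's original argument. One point deserves more care: the ``degenerate case'' should be $h\in k'^{*m}\setminus H^m$ rather than $h\in k^{*m}\setminus H^m$, since the Kummer pairing is set up over $k'=k(\mu_m)$ and collapses precisely when $h$ becomes an $m$-th power in $k'$, which can happen without $h$ lying in $k^{*m}$ (e.g.\ $h=-1$, $m=2$, $k=\mathbb{Q}$). In that situation you cannot write $h=g^m$ with $g\in k^*$ as you do, so the enlarged-group argument must be run over $k'$ (or handled by a separate elementary observation), and the resulting place of $k'$ restricted down to $k$. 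This is fixable, but as written the case split does not cover all possibilities.
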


\begin{lemma}\label{lm:CSP_multiplicative_trivial}
Let $G=\mathbb{G}_m$. Then for any finite $S\subseteq V_f$ we have $C(G,S)=1$.
\end{lemma}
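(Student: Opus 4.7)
The plan is to exploit the finite generation of $\OO_S^*$ coming from the Dirichlet $S$-unit theorem (valid since $\ch k=0$) and then apply Chevalley's Theorem \ref{th:Chevalley} to push every finite-index subgroup down to a congruence subgroup.

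First I would unwind the definitions. With the standard embedding $\mathbb{G}_m\hookrightarrow\GL_1$ we have $\mathbb{G}_m(\OO_S)=\OO_S^{*}$, and for an ideal $\mathfrak{q}\subset\OO_S$ the principal congruence subgroup is
\[
\mathbb{G}_m(\mathfrak{q})=\{x\in\OO_S^{*}\ :\ x-1\in\mathfrak{q}\}.
\]
By Dirichlet's $S$-unit theorem $\OO_S^{*}$ is a finitely generated abelian group, hence the family $\{(\OO_S^{*})^{m}\}_{m\in\NN}$ is cofinal among its finite-index subgroups. Therefore it suffices to show that for every $m\in\NN$ there is a non-zero ideal $\mathfrak{q}\subset\OO_S$ with $\mathbb{G}_m(\mathfrak{q})\subseteq(\OO_S^{*})^{m}$; granting this, every finite-index subgroup of $\OO_S^{*}$ will contain some $\mathbb{G}_m(\mathfrak{q})$, i.e. $\widetilde{\tau}_a=\widetilde{\tau}_c$ on $\mathbb{G}_m(k)=k^{*}$, and Lemma \ref{lm:CSP_reduces_to_G(Os)} gives $C(\mathbb{G}_m,S)=1$.

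Next I would produce $\mathfrak{q}$ via Chevalley. Let $\OO$ be the ring of integers of $k$ and let $\mathfrak{p}_S\subset\OO$ be the product of the (finitely many) prime ideals of $\OO$ corresponding to the places in $S\cap V_f$. Apply Theorem \ref{th:Chevalley} to the finitely generated subgroup $H=\OO_S^{*}\leq k^{*}$, to the given exponent $m$, and to the ideal $\mathfrak{p}_S$: this yields a non-zero ideal $\mathfrak{q}'\subset\OO$ coprime to $\mathfrak{p}_S$ such that every $x\in\OO_S^{*}$ with $x\equiv 1\pmod{\mathfrak{q}'}$ (in the sense of the theorem) lies in $(\OO_S^{*})^{m}$. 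Set $\mathfrak{q}=\mathfrak{q}'\OO_S$, which is a non-zero ideal of $\OO_S$.

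Finally I would verify the inclusion $\mathbb{G}_m(\mathfrak{q})\subseteq(\OO_S^{*})^{m}$. Take $x\in\OO_S^{*}$ with $x-1\in\mathfrak{q}'\OO_S$. Since $\OO_S$ is the localization of $\OO$ at the multiplicative set of elements whose prime divisors all lie in $S$, we can write $x-1=\tfrac{a}{b}$ with $a\in\mathfrak{q}'$, $b\in\OO$, and every prime divisor of $(b)$ lying in $S$; by our coprimality choice $(b)$ is coprime to $\mathfrak{q}'$. This is precisely the congruence condition of Theorem \ref{th:Chevalley}, so $x\in(\OO_S^{*})^{m}$, as required. The one mild subtlety —the main thing to get right— is this compatibility between the ``$\mod\mathfrak{q}$'' in $\OO_S$ used to define $\mathbb{G}_m(\mathfrak{q})$ and the ``$\mod\mathfrak{q}'$ in $k$'' appearing in Chevalley's statement; the coprimality of $\mathfrak{q}'$ with $S$ is exactly what makes the two compatible.
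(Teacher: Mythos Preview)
Your proof is correct and follows essentially the same route as the paper: reduce to showing that each $(\OO_S^{*})^{m}$ contains a principal congruence subgroup, invoke finite generation of $\OO_S^{*}$, and apply Chevalley's Theorem with the auxiliary ideal chosen coprime to the primes in $S$ so that the extended ideal $\mathfrak{q}'\OO_S$ is proper and the two congruence conditions match. Your write-up is in fact slightly more explicit than the paper's on the compatibility between ``$x\equiv 1\pmod{\mathfrak{q}'}$ in $k$'' and ``$x-1\in\mathfrak{q}'\OO_S$'', which is the only point requiring care.
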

\begin{proof}
Let $N\leq \OO_S^*$ be a subgroup of index $m$. Then we have $(\OO_S^*)^m\leq N$. It is well known that $\OO_S^*$ is a finitely generated group.
Hence we can apply Theorem \ref{th:Chevalley} to $H=\OO_S^*$ and $\mathfrak{p}=\prod_{v\in S}\mathfrak{p}_v$ to obtain an ideal $\mathfrak{q}$ coprime to $\mathfrak{p}$ such that for every $x\in \OO_S^*$ such that $x\equiv 1\mod\mathfrak{q}$ we have $x\in(\OO_S^*)^m$. Consider $\mathfrak{q}^S:=\mathfrak{q}\OO_S$. Now observe that $\mathfrak{q}^S$ is an ideal in $\OO_S$ ($\mathfrak{q}$ is coprime to $\mathfrak{p})$ and that $G(\mathfrak{q}^S)\subseteq\{x\in\OO^*\ :\ x\equiv 1\mod \mathfrak{q}\}\leq (\OO_S^*)^m\leq N$, i.e., $N$ is an $S$-congruence subgroup.

\end{proof}

\begin{lemma}\label{lm:CSP_unipotent_trivial}
Let $G$ be a connected unipotent $k$-group. Then for any $S\subseteq V_f$ we have $C(S,G)=1$.
\end{lemma}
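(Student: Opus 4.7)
My plan is to argue by induction on $n = \dim G$. The case $n = 0$ is trivial, and for $n = 1$ any connected one-dimensional unipotent $k$-group in characteristic zero is $k$-isomorphic to $\mathbb{G}_a$, so Lemma \ref{lm:CSP_additive_trivial} (combined with the functoriality of $C(\_,S)$ from Lemma \ref{lm:congruence_kernel_functor}) would give $C(G,S)=1$.

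For the inductive step ($n \geq 2$), the strategy would be to exhibit $G$ as an internal semidirect product $N \rtimes H$ of $k$-subgroups with $\dim N = n-1$ and $H \cong \mathbb{G}_a$, so that Lemma \ref{lm:CSP_semidirect} applies directly. To produce such a decomposition I would exploit the fact that, in characteristic zero, the exponential map $\exp\colon \mathfrak{g} \to G$ is a $k$-isomorphism of $k$-varieties which identifies $k$-Lie subalgebras of $\mathfrak{g}$ with closed $k$-subgroups of $G$ and $k$-ideals with normal $k$-subgroups (via the BCH formula, whose rational coefficients are harmless in characteristic $0$, together with nilpotency of $\mathfrak{g}$, which makes the series terminate). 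The Lie algebra $\mathfrak{g}$ is a nonzero finite-dimensional nilpotent $k$-Lie algebra, so the derived ideal $[\mathfrak{g},\mathfrak{g}]$ is a proper $k$-subspace. I would choose a $k$-hyperplane $\mathfrak{n}\subseteq\mathfrak{g}$ with $[\mathfrak{g},\mathfrak{g}] \subseteq \mathfrak{n}$, which is then automatically a $k$-ideal, and pick any $x\in\mathfrak{g}\setminus\mathfrak{n}$, so that $\mathfrak{g} = \mathfrak{n} \oplus kx$ as $k$-vector spaces. Setting $N = \exp(\mathfrak{n})$ and $H = \exp(kx)$ one obtains a connected normal unipotent $k$-subgroup $N$ of dimension $n-1$ and a one-dimensional connected $k$-subgroup $H$, necessarily $k$-isomorphic to $\mathbb{G}_a$. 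Injectivity of $\exp$ gives $N \cap H = \{1\}$, while the closed subgroup $NH$ has Lie algebra $\mathfrak{n}+kx = \mathfrak{g}$, forcing $NH = G$ by connectedness; hence $G = N \rtimes H$ as $k$-groups.

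The conclusion then follows easily: by the inductive hypothesis $C(N,S) = 1$, and by the base case $C(H,S) = C(\mathbb{G}_a,S) = 1$, so Lemma \ref{lm:CSP_semidirect} yields $C(G,S) \cong C(H,S) = 1$, completing the induction. The main obstacle is producing the $k$-semidirect decomposition $G = N \rtimes H$; everything else (the dimension counts, the verification that $N \cap H$ is trivial, and the application of Lemmas \ref{lm:CSP_additive_trivial} and \ref{lm:CSP_semidirect}) is essentially automatic once this is in hand. The characteristic-zero hypothesis enters decisively precisely at this step, through the Lie correspondence: in positive characteristic $\exp$ need not give a $k$-isomorphism of varieties and Lie subalgebras need not correspond to closed subgroups, so the argument breaks down and the unipotent case must be treated by other means.
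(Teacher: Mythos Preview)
Your proof is correct and follows essentially the same approach as the paper: both argue by induction on $\dim G$, both produce a semidirect decomposition $G = N \rtimes H$ with $N$ a normal connected unipotent $k$-subgroup of codimension one and $H \cong_k \mathbb{G}_a$, and both conclude by combining the inductive hypothesis, Lemma~\ref{lm:CSP_additive_trivial}, and Lemma~\ref{lm:CSP_semidirect}. The only difference is how the splitting is manufactured: the paper embeds $G$ into $\UU_n$ and intersects with the standard central filtration of $\UU_n$ to obtain a central series with $\mathbb{G}_a$-quotients (tacitly using that the top extension splits in characteristic~$0$), whereas you work directly with the Lie correspondence, choosing a $k$-hyperplane $\mathfrak{n}\supseteq[\mathfrak{g},\mathfrak{g}]$ and a complementary line $kx$, then exponentiating. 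Your route is arguably cleaner in that it makes the splitting explicit rather than implicit, but the two arguments are structurally identical.
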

\begin{proof}

We may suppose that $G$ is an algebraic $k$-subgroup of $\UU_n$ \cite[Theorem 8.3]{Wat}.
One can construct a central normal series of algebraic $k$-groups
\[
\UU_n=U_0\supset U_1\supset\ldots \supset U_{\frac{n(n-1)}{2}}=1
\]
such that $U_i/U_{i+1}\cong \mathbb{G}_a$ for every $i<\frac{n(n-1)}{2}$.
This gives a central normal series
\[
G=G_0\supset G_1\supset \ldots\supset G_d=1
\]
such that each quotient $G_i/G_{i+1}$ is a isomorphic to $\mathbb{G}_a$ (note that $\mathbb{G}_a$ has no algebraic subgroups) and $d=\dim G$.
We apply induction on $d$. For $d=1$ this is Lemma \ref{lm:CSP_additive_trivial}.
Suppose the result holds for $d=r-1$, in particular $C(G_1,S)=1$. Then $G$ is of the form $\mathbb{G}_a\ltimes G'$ where $G'$ is a unipotent $k$-group of dimension $d-1$. 
By induction and Lemma \ref{lm:CSP_additive_trivial} we can apply Lemma \ref{lm:CSP_semidirect} to obtain $C(G,S)=0$.
\end{proof}

Let us now consider the case of a connected $k$-group $G$.
By Theorem \ref{th:mostow}, $G$ is the semidirect product of a semisimple $k$-subgroup and its unipotent radical.
Hence, the previous lemma together with Lemma \ref{lm:CSP_semidirect} shows that for number fields the study of $C(G,S)$ reduces to the case $G$ reductive.
The following lemma shows that we can futher reduce it to the case of a semisimple group.

\begin{lemma}[{\cite[2.4]{Pra_Rap_developments}}]
Let $G$ be a connected reductive $k$-group.
Then
\[
C(G,S)\cong C([G,G],S).
\]
\end{lemma}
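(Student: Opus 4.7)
The plan is to reduce the computation of $C(G,S)$ to that of $C([G,G],S)$ by exploiting the almost direct product decomposition $G=[G,G]\cdot Z(G)^0$ from Lemma \ref{pre:lm:reductive_center} and showing that the central torus factor contributes trivially. Throughout, I will use that $Z(G)^0$ is a $k$-defined central torus (Theorem \ref{th:G(k)_reductive}.\ref{e_Z_k_defined}) and that $[G,G]$ is a connected semisimple $k$-subgroup, together with the fact that the multiplication map
\[
\phi:\; [G,G]\times Z(G)^0\longrightarrow G,\qquad (h,z)\mapsto hz,
\]
is a central $k$-isogeny whose kernel is the finite central subgroup $\{(x,x^{-1}):x\in [G,G]\cap Z(G)^0\}$.

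The first step is to show that $C(T,S)=1$ for any $k$-torus $T$. Since $T$ splits over a finite Galois extension $k'/k$, one can find a $k$-isogeny from a quasi-trivial torus $T_0=\prod_i R_{k_i/k}(\mathbb{G}_m)$ onto $T$. By Lemma \ref{lm:restriction_scalars_commensurable}.\ref{e-commensurable-restriction}, the $S$-arithmetic subgroups of $R_{k_i/k}(\mathbb{G}_m)$ are commensurable with $\mathbb{G}_m(\OO_{S_i})$, where $S_i$ is the set of extensions to $k_i$ of valuations in $S$; moreover \ref{lm:restriction_scalars_commensurable}.\ref{e-kv-iso-restriction} identifies the congruence structures. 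Hence Lemma \ref{lm:CSP_multiplicative_trivial} yields $C(R_{k_i/k}(\mathbb{G}_m),S)=1$, and since arithmetic and congruence subgroups of a direct product split as products, $C(T_0,S)=1$. The functoriality of $C(\_,S)$ (Proposition \ref{lm:congruence_kernel_functor}) applied to the finite central isogeny $T_0\twoheadrightarrow T$, together with Theorem \ref{th:CSP:arithmetic_open_map} to control images of arithmetic subgroups, then propagates the vanishing to $C(T,S)=1$.

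The second step is to pass from the product $[G,G]\times Z(G)^0$ to $G$ via $\phi$. The functor $C(\_,S)$ applied to $\phi$ yields a continuous homomorphism $C([G,G]\times Z(G)^0,S)\to C(G,S)$. Since both arithmetic and congruence subgroups of a direct product decompose componentwise, one has $C([G,G]\times Z(G)^0,S)\cong C([G,G],S)\times C(Z(G)^0,S)$, which equals $C([G,G],S)$ by the previous step. It therefore remains to verify that $\phi$ induces an isomorphism $C([G,G]\times Z(G)^0,S)\cong C(G,S)$. This is the crux: using Theorem \ref{th:CSP:arithmetic_open_map} and the fact that the kernel of $\phi$ is finite, the induced maps on both the profinite and congruence completions of the $S$-arithmetic groups fit into a commutative ladder whose horizontal arrows have finite kernel and finite cokernel — the same finite group $\ker\phi$ — on each row. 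A diagram chase (essentially the snake lemma modulo finite groups) shows that the induced map on vertical kernels, i.e.\ on the congruence kernels, is an isomorphism.

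The main obstacle is the last point: controlling the finite discrepancy introduced by the central isogeny $\phi$ at the level of completions. In characteristic $0$ this is manageable because Theorem \ref{th:CSP:arithmetic_open_map} ensures that $\phi$ maps $S$-arithmetic subgroups to $S$-arithmetic subgroups with finite cokernel, and the analogue at the congruence level follows from Lemma \ref{lm:embeddings_and_arithmetic}.\ref{e_congruence_well_defined}. The delicate bookkeeping, tracking how the finite group $\ker\phi$ sits inside both $\widehat{[G,G]\times Z(G)^0(\OO_S)}$ and $\overline{[G,G]\times Z(G)^0(\OO_S)}$ and verifying that it cancels in the kernel of the natural map to $\widehat{G(\OO_S)}\to\overline{G(\OO_S)}$, is the technical heart of the argument.
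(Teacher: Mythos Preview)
The paper does not prove this lemma; it simply cites \cite[2.4]{Pra_Rap_developments}. So there is no ``paper's proof'' to compare against, and one must judge your argument on its own merits.

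Your Step~3 contains a genuine gap. You assert that in the ladder
\[
\xymatrix{
\widehat{(H\times Z)(\OO_S)} \ar[r] \ar[d] & \widehat{G(\OO_S)} \ar[d]\\
\overline{(H\times Z)(\OO_S)} \ar[r] & \overline{G(\OO_S)}
}
\]
(with $H=[G,G]$, $Z=Z(G)^0$) both horizontal maps have ``the same finite group $\ker\phi$'' as kernel. This is false for the bottom row. The congruence completion is the closure inside the ad\`ele group, so the kernel of the bottom map is $\mu(\mathbb{A}_S)\cap\bigl(\overline{H(k)}\times\overline{Z(k)}\bigr)$, not $\mu(k)$. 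When $\mu=H\cap Z$ is nontrivial this intersection is typically infinite: exactly this discrepancy is what drives the proof of Lemma~\ref{lm:CSP_implies_simplyconnected}, where a central isogeny produces an \emph{infinite} contribution to the congruence kernel downstairs. Your ``snake lemma modulo finite groups'' therefore does not go through as stated, and the cancellation you invoke does not occur automatically. Whether it can be salvaged depends on the delicate question of which ad\`elic elements of the finite group $H\cap Z\subset Z$ lie in $\overline{Z(k)}$; this is not addressed.

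There is also a smaller inaccuracy in Step~2: an arbitrary $k$-torus need not receive an \emph{isogeny} from a quasi-trivial torus (for instance a norm-one torus does not). One always has a surjection from an induced torus, but its kernel is a torus of positive dimension, so your reduction is circular. The clean argument for $C(T,S)=1$ is the direct one: $T(\OO_S)$ is a finitely generated abelian group, embed $T\hookrightarrow\GL_n$, and apply Chevalley's Theorem~\ref{th:Chevalley} exactly as in Lemma~\ref{lm:CSP_multiplicative_trivial}.

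The standard route, and the one that avoids the isogeny trap entirely, is to work with the exact sequence $1\to[G,G]\to G\to T\to 1$ where $T=G/[G,G]$ is a $k$-torus. Here $[G,G]$ is a genuine normal $k$-subgroup of $G$ (not merely an isogenous factor), so one can compare the arithmetic and congruence topologies on $[G,G](k)$ with those induced from $G(k)$ directly, and then use $C(T,S)=1$ together with Theorem~\ref{th:CSP:arithmetic_open_map} to control the quotient.
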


\section{The semisimple case}

Throughout this section we assume that $G$ is a connected semisimple $k$-group.
Note that by Lemma \ref{lm:CSP_connected}, the assumption $G$ connected is harmless regarding CSP. 
By Theorem \ref{pre:th:ss_decomposes_simple_factors}, $G$ is the almost direct product of its almost $k$-simple factors $G_1,\ldots,G_m$. We will further assume that $G_i(\OO_S)$ is infinite, equivalently $(G_i)_S$ is non-compact (Theorem \ref{th:G(OS)_infinite_iff_G_S_noncompact}), for every $1\leq i\leq m$.
Let us note that otherwise $C(G_i,S)=1$. 

The aim of this section is to reduce, when $\ch k=0$, the Congruence Subgroup Problem for semisimple $k$-groups to the following setting:
\begin{enumerate}[label=(A\arabic*)]
\item $G$ is simply connected.\label{A_simply}
\item $G$ is almost $k$-simple. \label{A_k-simple}
\item $G$ has the Strong Approximation Property.\label{A_strong}
\item $G$ is absolutely almost simple.\label{A_abs_simp}
\item For every $v\in S\cap V_f$ the group $G(k_v)$ is noncompact, equivalently (Theorem \ref{th:kv_anisotropic_compact}), for every $v\in S\cap V_f$ the group $G$ is $k_v$-isotropic.\label{A_anisotropics}
\end{enumerate}

\begin{lemma}\label{lm:CSP_implies_simplyconnected}
Let $G$ be as above and suppose $\ch k=0$.
Suppose further that $C(S,G)$ is finite.
Then $G$ is simply connected.
\end{lemma}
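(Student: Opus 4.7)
The natural approach is to argue by contrapositive: assume $G$ is not simply connected and show that $C(G,S)$ is infinite. By Theorem \ref{pre:th:simply_covering}, let $p:\widetilde{G}\to G$ be the universal $k$-cover of $G$, a central $k$-isogeny with $\widetilde{G}$ connected and simply connected semisimple, and kernel $F=\ker p$ a nontrivial finite central $k$-subgroup. Note that each almost $k$-simple factor of $\widetilde{G}$ is the universal cover of the corresponding factor of $G$, so $\widetilde{G}(\OO_S)$ is still infinite and satisfies the Strong Approximation hypotheses.

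By the functoriality of Proposition \ref{lm:congruence_kernel_functor}, $p$ induces a continuous homomorphism $C(\widetilde{G},S)\to C(G,S)$ sitting in a natural comparison between the arithmetic and congruence completions of $\widetilde{G}(\OO_S)$ and $G(\OO_S)$. The arithmetic side is tame in characteristic zero: by Theorem \ref{th:CSP:arithmetic_open_map} the image $p(\widetilde{G}(\OO_S))$ is an $S$-arithmetic subgroup of $G$, hence of finite index in $G(\OO_S)$, and $\ker p\cap\widetilde{G}(\OO_S)\subseteq F(k)$ is finite. Thus the induced map $\widehat{\widetilde{G}(\OO_S)}\to\widehat{G(\OO_S)}$ has both finite kernel and finite cokernel.

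On the congruence side the situation is the opposite: since $\widetilde{G}$ satisfies Strong Approximation (Theorem \ref{th:Strong_Approximation}), one has $\overline{\widetilde{G}(\OO_S)}\cong\prod_{v\notin S}\widetilde{G}(\OO_v)$, whereas the image of $\widetilde{G}(\OO_S)$ inside $\overline{G(\OO_S)}\subseteq\prod_{v\notin S}G(\OO_v)$ lies in $\prod_{v\notin S}p(\widetilde{G}(\OO_v))$. The Galois cohomology sequence attached to $1\to F\to\widetilde{G}\to G\to 1$ identifies each local quotient $G(k_v)/p(\widetilde{G}(k_v))$ with a subgroup of $H^1(k_v,F)$. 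Choosing a finite Galois extension $k'/k$ splitting $F$, Chebotarev's theorem (Theorem \ref{pre:th:Chebotarev}, via Lemma \ref{pre:lm:totally_split_v}) provides infinitely many places $v\notin S$ that split completely in $k'$; for such $v$, $F$ becomes a nontrivial constant Galois module over $k_v$, so $H^1(k_v,F)$ is nontrivial and contributes a nontrivial quotient $G(\OO_v)/p(\widetilde{G}(\OO_v))$ for all but finitely many such $v$ (using that $|F|$ is prime to the residue characteristic for almost all $v$, so $F$ is smooth étale over $\OO_v$ and local cohomology classes survive the passage to $\OO_v$).

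Putting these together, the kernel of $\widehat{G(\OO_S)}\to\overline{G(\OO_S)}$, namely $C(G,S)$, must be infinite: the discrepancy between the two completions induced by $p$ is finite on the arithmetic side but infinite on the congruence side, and this infinite discrepancy is forced into $C(G,S)$, contradicting the assumed finiteness. The principal obstacle lies in the congruence step: one must verify that the infinitely many local cohomology classes indeed lift to classes in $G(\OO_v)\setminus p(\widetilde{G}(\OO_v))$ (not merely $G(k_v)$) and that their images survive as independent elements in the quotient $\widehat{G(\OO_S)}/\overline{G(\OO_S)}$ rather than being absorbed by the closure of $G(\OO_S)$. Once this persistence is established, the contrapositive closes and $G$ must be simply connected.
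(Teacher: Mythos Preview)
Your overall strategy matches the paper's: argue by contrapositive, pass to the simply connected cover $p\colon\widetilde{G}\to G$ with nontrivial finite central kernel $F=\mu$, use Strong Approximation for $\widetilde{G}$, and invoke Chebotarev to get infinitely many ``good'' places. The divergence is in which side of the isogeny you exploit on the congruence completion, and this is where a genuine gap appears.

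You argue via the \emph{cokernel}: the local quotients $G(\OO_v)/p(\widetilde{G}(\OO_v))\hookrightarrow H^1(k_v,F)$ are nontrivial for infinitely many $v$, so you claim an ``infinite discrepancy on the congruence side''. But these cokernel classes live in $\prod_{v\notin S}G(\OO_v)$, whereas $\overline{G(\OO_S)}$ is only the closure of $G(\OO_S)$ inside that product. Since $G$ is \emph{not} simply connected, it typically fails Strong Approximation, and $\overline{G(\OO_S)}$ is a proper subgroup of $\prod_v G(\OO_v)$; there is no reason your local $H^1$ classes lie in $\overline{G(\OO_S)}$ at all. In fact the cokernel of $\overline{p}\colon\overline{\widetilde{G}(\OO_S)}\to\overline{G(\OO_S)}$ is forced to be \emph{finite}: since $\widehat p$ has open image (Theorem~\ref{th:CSP:arithmetic_open_map}) and $\pi_G\colon\widehat{G(\OO_S)}\to\overline{G(\OO_S)}$ is a surjective open map, $\operatorname{im}\overline{p}\supseteq\pi_G(\operatorname{im}\widehat p)$ is open in the profinite group $\overline{G(\OO_S)}$, hence of finite index. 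So the ``infinite congruence-side discrepancy'' you need simply is not there via cokernels, and the obstacle you flag at the end cannot be overcome.

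The paper instead exploits the \emph{kernel}. One shows directly that $\mu(\mathbb{A}_S)=\prod_{v\notin S}\mu(k_v)$ is infinite: by Lemma~\ref{pre:lm:totally_split_v} any nontrivial element of $\mu$ becomes $k_v$-rational for infinitely many $v$. Since $\pi_{\widetilde G}\colon\widehat{\widetilde{G}(k)}\to\widetilde{G}(\mathbb{A}_S)$ is surjective (Strong Approximation) and every element of $\mu(\mathbb{A}_S)$ dies under $p$, the subgroup $\pi_{\widetilde G}^{-1}(\mu(\mathbb{A}_S))$ maps under $\widehat p$ into $C(G,S)$ with kernel the finite group $\mu(k)$. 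Thus $C(G,S)$ contains a copy of $\pi_{\widetilde G}^{-1}(\mu(\mathbb{A}_S))/\mu(k)$, which is infinite. Switching from $H^1(k_v,F)$ to $F(k_v)=\mu(k_v)$ --- from cokernel to kernel --- is exactly what closes the argument.
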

\begin{proof}
By Theorem \ref{pre:th:ss_decomposes_simple_factors}.\ref{e-k-semisimple-in-k-simples}, $G$ is the almost direct product of its almost $k$-simple factors $G_1,\ldots,G_r$.
By Theorem \ref{pre:th:simply_covering}, there exists a connected simply connected semisimple $k$-group $\widetilde{G}$ which decomposes as a direct product of its almost $k$-simple factors $\widetilde{G}_1,\ldots,\tilde{G}_k$ and a central $k$-isogeny $p:\widetilde{G}\to G$ with finite kernel $\mu$ such that $p(\widetilde{G}_i)=G_i$.
Note that $\rank_{k_v} \widetilde{G}_i=\rank_{k_v} G_i$ and so $(\widetilde{G}_i)_S$ is not compact.
Hence, by Theorem \ref{th:Strong_Approximation} $\widetilde{G}$ has Strong Approximation, that is, $\overline{\widetilde{G}(k)}=\widetilde{G}(\mathbb{A}_S)$. 

Applying the functor $C(\_,S)$ we obtain the following commutative diagram.

\qnote{
\[
\begin{tikzcd}
		&	& 1\ar[d] & 1\ar[d]\\
1\ar[r]	&	C(\mu,S)\ar[r]\ar[d
]				&	\widehat{\mu(k)}\ar[r
]\ar[d
]	&	\mu(\mathbb{A}_S)\ar[d
]\\
1 \ar[r]	&	C(\widetilde{G},S)\ar[r]\ar[d
]	&	\widehat{\widetilde{G}(k)}\ar[r
] \ar[d
]	&	\widetilde{G}(\mathbb{A}_S)\ar[d
]\\
1\ar[r] &	C(G,S)\ar[r]						&	\widehat{G(k)}\ar[r
]				&	G(\mathbb{A}_S)\\			
\end{tikzcd}
\]
}
Note that by Theorem \ref{th:CSP:arithmetic_open_map} the map $\widehat{p(k)}$ is open and has finite kernel $\widehat{\mu(k)}=\mu(k)$.
Hence the horizontal and vertical sequences starting at $1's$ are exact and since $\widetilde{G}$ has Strong Approximation the map $\pi_{\widetilde{G}}$ is surjective.
It follows that $C(G,S)$ contains a subgroup isomorphic to $\pi_{\widetilde{G}}^{-1}(\mu(\mathbb{A}_S))/\widehat{\mu(k)}$.
Now observe that $\mu(\mathbb{A}_S)\subset \widetilde{G}(\mathbb{A}_S)$ is an infinite group.
Indeed by Lemma \ref{pre:lm:totally_split_v} if $x\in\mu$ then $x\in\mu(k_v)$ for infinitely many $v$'s.
Since $\mu$ is nontrivial, this implies that $\mu(\mathbb{A}_S)$ is infinite.
Hence $C(G,S)$ contains an infinite subgroup, a contradiction.
Therefore $G$ must be simply connected.
\end{proof}

We can therefore assume from now on that $G$ is a simply connected semisimple group. But then $G$ is the direct product of its almost $k$-simple factors (see Theorem \ref{pre:th:ss_decomposes_simple_factors}).
It is then clear that $C(G,S)$ is finite if and only if $C(G_i,S)$ is finite for every $k$-simple factor $G_i$.
Hence we may assume \ref{A_k-simple}.

Now $G$ is a connected simply connected almost $k$-simple group and since we are assuming that $G(\OO_S)$ is infinite, Theorem \ref{th:G(OS)_infinite_iff_G_S_noncompact} gives $G_S$ is non compact, so by the Strong Approximation Theorem $G$ has Strong Approximation, i.e., we may assume \ref{A_strong}.
The following lemma shows that we may assume \ref{A_abs_simp}.

\begin{lemma}\label{lm:CSP_absolutely_simple}
Let $G$ be a connected simply connected almost $k$-simple $k$-group.
Then there exists a finite separable extension $k'$ of $k$ and a connected simply connected absolutely almost simple $k'$-group $H$ such that $G=R_{k'/k}(H)$ and $C(G,S)\cong C(H,S')$ where $S'=\cup_{v\in S}\{v'\in V_{k'}\; :\; v'_{|k}=v\}$.
\end{lemma}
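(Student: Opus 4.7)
The plan is to invoke Theorem \ref{pre:th:k_simple_is_restriction_abs_simple} to obtain the data $(k',H)$ with $G \cong R_{k'/k}(H)$, and then use Lemma \ref{lm:restriction_scalars_commensurable} to transport both the arithmetic and congruence topologies from $H(k')$ to $G(k)$ along the group isomorphism $R^0_{k'/k}: H(k') \to G(k)$. Once both topologies match, the arithmetic and congruence completions will match as well, and the naturality of the canonical surjection $\widehat{G(k)} \to \overline{G(k)}$ will yield $C(G,S) \cong C(H,S')$.

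First I would invoke Theorem \ref{pre:th:k_simple_is_restriction_abs_simple} to produce the finite separable extension $k'/k$ and the connected simply connected absolutely almost simple $k'$-group $H$ with $G = R_{k'/k}(H)$. Then by Lemma \ref{lm:restriction_scalars_commensurable}\ref{e-kv-iso-restriction} there is a topological group isomorphism between the adèle groups of $H$ (with respect to $V_{k'}$) and of $R_{k'/k}(H) = G$ (with respect to $V_k$), whose restriction to $H(k')$ coincides with $R^0_{k'/k}: H(k') \xrightarrow{\sim} G(k)$. Moreover, by Lemma \ref{lm:restriction_scalars_commensurable}\ref{e-commensurable-restriction}, the image $R^0_{k'/k}(H(\OO_{S'}))$ is commensurable with $G(\OO_S)$.

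From the commensurability, any $S'$-arithmetic subgroup of $H(k')$ (i.e.\ commensurable with $H(\OO_{S'})$) is mapped by $R^0_{k'/k}$ to a subgroup commensurable with $G(\OO_S)$, i.e.\ to an $S$-arithmetic subgroup of $G(k)$; and conversely. Hence the families $\widetilde{\mathfrak{N}}_a$ on $H(k')$ and on $G(k)$ correspond under $R^0_{k'/k}$, so this is a topological isomorphism for the arithmetic topologies. For the congruence topologies, I would use the Remark following Lemma \ref{lm:CSP_reduces_to_G(Os)}: the $S$-congruence completion $\overline{G(k)}$ is the closure of $G(k)$ inside $G(\mathbb{A}_S)$, and similarly for $H$. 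Since the adèle isomorphism in Lemma \ref{lm:restriction_scalars_commensurable}\ref{e-kv-iso-restriction} identifies $\prod_{v \notin S} G(k_v)$ with $\prod_{w \notin S'} H(k'_w)$ (because, by the very definition of $S'$, a place of $k'$ lies above $S$ iff it lies in $S'$), and since this identification restricts to $R^0_{k'/k}$ on the field-rational points, it transports the $S$-congruence topology on $G(k)$ to the $S'$-congruence topology on $H(k')$.

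With both topologies matched, the uniform structures they induce are matched, and passing to completions we obtain compatible isomorphisms $\widehat{G(k)} \cong \widehat{H(k')}$ and $\overline{G(k)} \cong \overline{H(k')}$ fitting into a commutative square with the two canonical projections $\widetilde{\pi}$. Taking kernels and applying Lemma \ref{lm:CSP_reduces_to_G(Os)} yields $C(G,S) \cong C(H,S')$, as required. The main technical point — and the only place where genuine care is needed — is the verification that the adèle isomorphism of Lemma \ref{lm:restriction_scalars_commensurable}\ref{e-kv-iso-restriction} sends the restricted product defining the congruence completion of $G$ (restricted away from $S$) precisely onto that defining the congruence completion of $H$ (restricted away from $S'$); this, however, is built into the construction of $S'$ from $S$ via the valuations lying above $S$.
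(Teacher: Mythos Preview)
Your proposal is correct and follows essentially the same route as the paper: invoke Theorem~\ref{pre:th:k_simple_is_restriction_abs_simple} for the pair $(k',H)$, then use the two parts of Lemma~\ref{lm:restriction_scalars_commensurable} to match, respectively, the arithmetic topologies (via commensurability of $G(\OO_S)$ and $R^0_{k'/k}(H(\OO_{S'}))$) and the congruence topologies (via the ad\`ele isomorphism and the definition of $S'$). The only cosmetic difference is that the paper restricts to the open subgroup $G(\OO_S)\cap R^0_{k'/k}(H(\OO_{S'}))$ and concludes by Lemma~\ref{lm:CSP_reduces_to_open_subgroup}, whereas you argue directly at the level of $G(k)$ and $H(k')$ and conclude via Lemma~\ref{lm:CSP_reduces_to_G(Os)}; both packagings are equivalent here.
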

\begin{proof}
By Theorem \ref{pre:th:k_simple_is_restriction_abs_simple} there exist a finite separable extension $k'$ and a connected simply connected absolutely almost simple $k'$-group $H$ such that $G=R_{k'/k}H$.
Now Lemma \ref{lm:restriction_scalars_commensurable}.\ref{e-commensurable-restriction} shows that $G(\OO_S)$ and $R_{k'/k}^0(H(\OO_{S'}))$ are commensurable and this implies that the restriction of the respective $S$-arithemtic topologies to $G(\OO_S)\cap R_{k'/k}^0(H(\OO_{S'}))$ induce the same topology.
Moreover by \ref{lm:restriction_scalars_commensurable}.\ref{e-kv-iso-restriction} the respective congruence topologies define the same topology when restricted to the intersection as well.
Hence by Lemma \ref{lm:CSP_reduces_to_open_subgroup} we have $C(G,S)\cong C(H,S')$.
\end{proof} 

Finally we show that \ref{A_anisotropics} is necessary for $G$ to have CSP.
\begin{lemma}\label{lm:kv_anisotropic}
Let $G$ be a semisimple $k$-group and assume that $G(\OO_S)$ is infinite. Put $\mathcal{A}:=\{v\in V_f\ :\ G\ \mbox{is}\ k_v\mbox{-anisotropic}\}$.
If $\mathcal{A}\cap S$ is non-empty, then $C(G,S)$ is infinite.
\end{lemma}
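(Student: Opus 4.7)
By the reductions already established we may assume $G$ is connected, simply connected, and almost $k$-simple, and therefore has Strong Approximation with respect to $S$ by Theorem \ref{th:G(OS)_infinite_iff_G_S_noncompact}. Pick $v_0\in\mathcal{A}\cap S$; since $\mathcal{A}\subseteq V_f$, $v_0$ is non-archimedean, so by Theorem \ref{th:kv_anisotropic_compact} the group $G(k_{v_0})$ is compact, and being totally disconnected it is a profinite group. Let $\phi\colon G(\OO_S)\hookrightarrow G(k_{v_0})$ be the (injective) diagonal embedding and set $K_{v_0}:=\overline{\phi(G(\OO_S))}$. Since $\phi$ is injective and $G(\OO_S)$ is infinite, $K_{v_0}$ is itself an infinite profinite group.

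The preimage of any open subgroup of $K_{v_0}$ is a finite-index subgroup of $G(\OO_S)$, so $\phi$ is continuous for the profinite topology on $G(\OO_S)$ and extends, by density of its image in the compact target, to a continuous surjection $\widehat{\phi}\colon \widehat{G(\OO_S)}\twoheadrightarrow K_{v_0}$. Combining with the congruence projection $\pi$ yields a continuous homomorphism
\[
\Psi:=(\pi,\widehat{\phi})\colon \widehat{G(\OO_S)}\to \overline{G(\OO_S)}\times K_{v_0}.
\]
My plan is to prove that $\Psi$ is surjective. Granted this, $\widehat{\phi}$ restricts to a surjection $C(G,S)=\ker\pi\twoheadrightarrow K_{v_0}$; since $K_{v_0}$ is infinite, so is $C(G,S)$.

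Because $\Psi(\widehat{G(\OO_S)})$ is a closed subgroup of $\overline{G(\OO_S)}\times K_{v_0}$ containing the diagonal image of $G(\OO_S)$ and surjecting onto each factor, a standard argument reduces surjectivity of $\Psi$ to showing that $\phi(G(\mathfrak{q}))$ is dense in $K_{v_0}$ for every non-zero ideal $\mathfrak{q}\subseteq\OO_S$. This is the main obstacle, and it is exactly where the hypothesis $v_0\in S$ matters: the defining condition $g\equiv 1\pmod{\mathfrak{q}}$ of $G(\mathfrak{q})$ only constrains the components at the places $v\notin S$ and leaves the $v_0$-component free. I will deduce it from Strong Approximation applied to the smaller set $\widetilde S:=S\setminus\{v_0\}$: since $G(k_{v_0})$ is compact, non-compactness of $G_S$ implies that of $G_{\widetilde S}$, so Theorem \ref{th:Strong_Approximation} gives density of the diagonal image of $G(k)$ in $G(k_{v_0})\times G(\mathbb{A}_S)$. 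For any open $U\subseteq G(k_{v_0})$ one may therefore find $g\in G(k)$ with $g_{v_0}\in U$ and $g_v\in G(\mathfrak{q}\OO_v)$ for every $v\notin S$; such a $g$ belongs to $G(\mathfrak{q})$, whence $\phi(G(\mathfrak{q}))\cap U\neq\emptyset$, establishing the desired density.
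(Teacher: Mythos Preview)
Your argument is correct and, in fact, makes explicit a step that the paper's proof leaves rather opaque. The paper argues directly, without any reductions: for $v\in\mathcal{A}\cap S$ the groups $G(\OO_S)\cap G(\mm_v^i)$ form a descending chain of finite-index subgroups with trivial intersection, and then, citing only that ``since $v\in S$, none of them are $S$-congruence subgroups'', it asserts a surjection $C(G,S)\twoheadrightarrow\varprojlim_i G(\OO_S)/(G(\OO_S)\cap G(\mm_v^i))$. Taken at face value that justification only gives nontriviality of the map out of $C(G,S)$; what is really needed for surjectivity is precisely your observation that every $G(\mathfrak{q})$ has dense image in $K_{v_0}$, and your use of Strong Approximation for $\widetilde S=S\setminus\{v_0\}$ is the standard way to obtain it (you even get the stronger conclusion $K_{v_0}=G(k_{v_0})$). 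The cost of your route is the preliminary reduction to the simply connected almost $k$-simple case, which the paper's direct argument does not invoke: that reduction is supplied by Lemma~\ref{lm:CSP_implies_simplyconnected} and the product decomposition in characteristic zero, but if the lemma is read in arbitrary characteristic (as its statement allows), the passage to the simply connected cover is not established in the text and would need separate justification.
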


\begin{proof}
By Theorem \ref{th:kv_anisotropic_compact} $G(k_v)$ is compact for every $v\in\mathcal{A}$.
In particular $G(\mm_v^i)\subseteq G(k_v)$ gives a descending sequence of finite index open subgroups with trivial intersection.
Note that for $v\in S$ we have $G(\OO_S)\subseteq G(k_v)$. Hence, for $v\in\mathcal{A}\cap S$, $G(\OO_S)\cap G(\mm_v^i)$ gives a descending sequence of finite index subgroups of $G(\OO_S)$ with trivial intersection. 
Since $v\in S$, none of them are $S$-congruence subgroups so we can build a surjection
\[
C(G,S)\twoheadrightarrow \varprojlim_{i}G(\OO_S)/(G(\OO_S)\cap G(\mm_v^i)).
\]
Since the latter inverse limit contains $G(\OO_S)$ as a subgroup it follows that $C(G,S)$ is infinite.
\end{proof}

Let us finish this chapter by stating famous Serre's Conjecture.
\begin{conjecture*}[Serre]\label{conj:Serre}
Let $G$ be a simply connnected absolutely almost simple $k$-group. Then $C(G,S)$ is finite if and only if $\rank_S G=\sum_{v\in S}\rank_{k_v}G\geq 2$ and $G(k_v)$ is noncompact for every $v\in S\cap V_f$.
\end{conjecture*}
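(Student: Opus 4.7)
The statement in question is Serre's Conjecture in its full generality, which (as of the time of writing of the thesis) remains open. Nevertheless, let me sketch the strategy that underlies the known partial results.

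The necessity direction is largely settled by results already available in the excerpt and the literature. If $G(k_v)$ is compact for some $v\in S\cap V_f$, then Lemma \ref{lm:kv_anisotropic} immediately gives $|C(G,S)|=\infty$. The remaining obstruction on the necessary side is the rank condition $\rank_S G\geq 2$. For $\rank_S G \leq 1$ one argues, following Serre, by producing an explicit infinite family of non-congruence finite-index subgroups: when $\rank_S G=1$ the group $\Gamma=G(\OO_S)$ acts properly on a Bruhat--Tits tree (or, in the archimedean case, on the hyperbolic plane/space), admits an amalgamated/HNN decomposition, and therefore has a large abelianization producing many finite quotients that cannot be captured by $\overline{G(\OO_S)}=\prod_{v\notin S}G(\OO_v)$. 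One must also treat the $\rank_S G=0$ case, where by Theorem \ref{th:G(OS)_infinite_iff_G_S_noncompact} the group $G(\OO_S)$ would be finite, which is excluded by our standing assumption.

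The sufficiency direction $\rank_S G\geq 2$ together with isotropy at every $v\in S\cap V_f$ implies $|C(G,S)|<\infty$ is the heart of the conjecture. I would reduce, using Lemmas \ref{lm:CSP_connected}, \ref{lm:CSP_implies_simplyconnected}, \ref{lm:CSP_absolutely_simple}, to the case where $G$ is a connected, simply connected, absolutely almost simple $k$-group with the Strong Approximation property. The main technical tool I would invoke is \emph{bounded generation} of $G(\OO_S)$ by a finite number of root subgroups (or more generally by unipotents / elementary matrices). Concretely, if $G$ is $k$-isotropic one uses the $S$-integral points of opposite root subgroups $U_\alpha(\OO_S), U_{-\alpha}(\OO_S)$ and shows, via explicit Mennicke-type symbols and Bass-type $K_1$-calculations, that every finite-index normal subgroup $N\trianglelefteq G(\OO_S)$ contains a principal congruence subgroup $G(\mathfrak{q})$. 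This program is carried out in Bass--Lazard--Serre and Mennicke for $\SL_n$, $n\geq 3$, and extended (Matsumoto, Bak, Raghunathan, Rapinchuk) to Chevalley and quasi-split groups; for non-split isotropic forms one would further appeal to Raghunathan's theorem on the centrality of the congruence kernel.

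The main obstacle is the $k$-\emph{anisotropic} case with $\rank_S G\geq 2$: here one has no root subgroups of $G$ defined over $k$, bounded generation becomes much harder (often requiring deep results on arithmetic of division algebras or on unipotent generators at non-archimedean places), and consequently Serre's conjecture is only known for particular classes (e.g., inner forms of type $A_n$ via the theorem of Raghunathan--Rapinchuk--Lifschitz on $\SL_1$ of division algebras under suitable hypotheses). I therefore expect that any complete resolution would proceed case-by-case along the Killing--Cartan classification, combining (i) bounded generation results for the isotropic parts, (ii) reciprocity and metaplectic kernel computations to identify $C(G,S)$ with a subgroup of roots of unity of $k$, and (iii) new arithmetic input in the anisotropic cases, which is exactly where the conjecture remains open.
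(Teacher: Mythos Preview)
Your assessment is correct: the paper does not prove this statement at all. It is presented explicitly as ``famous Serre's Conjecture'' at the very end of Chapter~\ref{sec:CSP}, immediately after Lemma~\ref{lm:kv_anisotropic}, with no attempt at a proof; the chapter simply closes with the statement of the conjecture. Your discussion in fact goes well beyond what the paper offers --- the thesis only establishes parts of the \emph{necessity} direction (Lemma~\ref{lm:kv_anisotropic} for the noncompactness condition, and the reduction lemmas you cite), while you additionally survey the bounded-generation and metaplectic-kernel programme for sufficiency and correctly flag the anisotropic case as the principal open obstruction.
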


\clearpage{\pagestyle{empty}\cleardoublepage}

\chapter{Representation Growth of Arithmetic Groups}


\section{Representation Growth}\label{sec:RG}

In this section we introduce the necessary notation for the study of representation growth and present some well known results on representation theory.
We then focus on the representation growth of $p$-adic analytic and $\F_p[[t]]$-analytic groups.
Theorem \ref{th:RG_p-adic_perfect_type_c} shows that every compact $p$-adic analytic group with perfect Lie algebra is of representation type $c$ for some $c$, this was shown by Lubotzky and Martin in \cite{LuMa}.
Jaikin showed in \cite{JaiPre} that the same result holds for $\F_p[[t]]$-standard groups under the weaker condition (in chase $\ch k>0$) of having a finitely generated Lie algebra (see Theorem \ref{th:RG_Fp[[t]]-standard_type_c}).
We present both results for completion.
Jaikin's avoidance of a perfectness hypothesis in Theorem \ref{th:RG_Fp[[t]]-standard_type_c} paves the way to extend Theorem 1.2 in \cite{LuMa} to any semisimple group over a global field.     
We refer to \ref{sec:representation_theory} for notations and definitions concerning representation theory.

Let $\Gamma$ be a group. We want to study the representations of the group $\Gamma$, in particular, in this work we are interested in representations with finite image. From now on $\Rep(\Gamma)$ (respectively $\Irr(\Gamma)$) denotes the set of isomorphism classes of (irreducible) representations of $\Gamma$ with finite image. Evidently this is no restriction if $\Gamma$ is a finite group. The following lemma shows that for profinite groups this is no restriction either.
\begin{lemma}\label{lm:RG_profinite_finite_image}
Let $\Gamma$ be a profinite group. Then every representation of $\Gamma$ factors through a finite index open subgroup. Hence, every representation of $\Gamma$ has finite image. In particular, if $\widehat{\Gamma}$ is the profinite completion of $\Gamma$ we have $r_n(\Gamma)=r_n(\widehat{\Gamma})$.
\end{lemma}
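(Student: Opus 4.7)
The plan is to reduce the statement to the classical no-small-subgroups property of the Lie group $\GL(V)\cong\GL_n(\CC)$, which says that there exists an open neighbourhood $U$ of the identity in $\GL(V)$ containing no nontrivial subgroup. I would briefly recall why this holds: choose an open ball $W\subset\mathfrak{gl}_n(\CC)$ around $0$ so small that the exponential map is a diffeomorphism on $2W$, and put $U:=\exp(W)$. If $H\leq \GL(V)$ is contained in $U$ and $I\neq g=\exp(X)\in H$ with $X\in W$, then $g^2=\exp(2X)\in H\subseteq U=\exp(W)$, and injectivity of $\exp$ on $2W$ forces $X\in\tfrac{1}{2}W$. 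Iterating gives $X\in 2^{-k}W$ for every $k$, hence $X=0$, a contradiction.

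Now let $\rho:\Gamma\to\GL(V)$ be a continuous representation of the profinite group $\Gamma$. The preimage $\rho^{-1}(U)$ is an open neighbourhood of $e\in\Gamma$, and by Theorem \ref{pre:th:profinite_char} (item iv) the open finite-index normal subgroups form a base of neighbourhoods of $e$. Hence there exists $N\trianglelefteq_o\Gamma$ with $|\Gamma:N|<\infty$ and $N\subseteq\rho^{-1}(U)$. Then $\rho(N)$ is a subgroup of $\GL(V)$ sitting inside $U$, and the no-small-subgroups property forces $\rho(N)=\{I\}$. Therefore $\rho$ factors as
\[
\Gamma\twoheadrightarrow\Gamma/N\xrightarrow{\overline{\rho}}\GL(V),
\]
so its image is contained in $\overline{\rho}(\Gamma/N)$, which is finite. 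This proves the first two assertions.

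For the equality $r_n(\Gamma)=r_n(\widehat{\Gamma})$, by the convention made before the statement, $\Rep(\Gamma)$ denotes isomorphism classes of representations with finite image, so every $\rho\in\Rep_n(\Gamma)$ factors through a finite quotient $\Gamma/N$ with $N$ of finite index. By the universal property of profinite completion, every such quotient extends uniquely to a continuous quotient of $\widehat{\Gamma}$, yielding a continuous representation of $\widehat{\Gamma}$ of the same dimension. Conversely, a continuous $n$-dimensional representation of $\widehat{\Gamma}$ has finite image by the first part, and pulls back along the canonical map $\Gamma\to\widehat{\Gamma}$ to an element of $\Rep_n(\Gamma)$. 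These constructions are mutually inverse and respect isomorphism and irreducibility, giving bijections $\Irr_n(\Gamma)\leftrightarrow\Irr_n(\widehat{\Gamma})$ and hence $r_n(\Gamma)=r_n(\widehat{\Gamma})$.

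The only non-routine ingredient is the no-small-subgroups property of $\GL_n(\CC)$; once that is invoked, the rest is an immediate application of the definition of a profinite group together with the universal property of the profinite completion, so I do not anticipate any significant obstacle.
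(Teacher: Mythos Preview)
Your argument is correct and complete. The route differs from the paper's, though both are standard. The paper argues structurally: $\rho(\Gamma)$ is compact (image of a compact group), and being a continuous image of a profinite group it is totally disconnected; as a compact (hence closed) subgroup of $\GL_n(\CC)$ it is a Lie group by Cartan's theorem, so its identity component is open, and total disconnectedness forces this component to be $\{1\}$; thus $\rho(\Gamma)$ is discrete and compact, hence finite. You instead invoke the no-small-subgroups property of $\GL_n(\CC)$ directly and combine it with the neighbourhood base of open normal subgroups in $\Gamma$. Your approach is slightly more self-contained (you prove NSS from scratch) and avoids citing Cartan's closed-subgroup theorem; the paper's approach is shorter once one is willing to quote that a compact totally disconnected Lie group is finite. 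One minor quibble: item~(iv) of Theorem~\ref{pre:th:profinite_char} speaks of open subgroups, not open \emph{normal} subgroups; you get normality either from item~(iii) or by passing to the normal core, which is still open of finite index. Your treatment of the ``in particular'' clause, reading it as a statement about an abstract group $\Gamma$ and its profinite completion via the universal property, is more explicit than the paper's, which leaves that part to the reader.
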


\begin{proof}
Let $\rho\in \Rep_n(\Gamma)$.
Recall that $\rho$ is a continuous homomorphism $\rho: \Gamma\to \GL_n(\mathbb{C})$, in particular, $\ker\rho$ is a closed subgroup of $\Gamma$ and since $\Gamma$ is compact, so is $\rho(\Gamma)$.
This implies that $\rho(\Gamma)$ is a compact Lie group isomorphic to the profinite group $\Gamma/\ker\rho$, which is a totally disconnected group (Theorem \ref{pre:th:profinite_char}).
It follows that the identity component of $\rho(\Gamma)$ must be trivial. Hence $\rho(\Gamma)$ must be finite, since it is compact. This implies that $\{1\}$ is open in $\rho(\Gamma)$ and hence $\ker\rho\leq\Gamma$ is a finite index open subgroup.
\end{proof}

The previous lemma shows that most results of the representation theory of finite groups apply to the representation theory of profinite groups as well.
If $\Gamma$ is finite, then $\Irr(\Gamma)$ is finite as well, however for infinite $\Gamma$ we may have $\Irr_n(\Gamma)\neq\emptyset$ for infinitely many $n$. Let us define

\begin{align*}
&r_n(\Gamma)=|\Irr_n(\Gamma)|,\\
&R_n(\Gamma)=\sum_{i=1}^{n}r_n(\Gamma)=|\Irr_{\leq n}(\Gamma)|.
\end{align*}
We call $r_n(\Gamma)$ the \textbf{representation growth function} of $\Gamma$.
Note that, in general, $r_n(\Gamma)$ may be infinite, e.g., $\Gamma=\mathbb{Z}$.

We say that $\Gamma$ has \textbf{polynomial representation growth} (PRG for short) if the sequence $r_n(\Gamma)$ is polynomially bounded, that is, if there exist constants $c_1,\,c_2\in\mathbb{R}$ such that $r_n(\Gamma)\leq c_1n^{c_2}$ for every $n$. In particular, this requires $r_n(\Gamma)<\infty$ for every $n$.
Note that $r_n(\Gamma)$ is polynomially bounded if and only if $R_n(\Gamma)$ is.

We say that $\Gamma$ is of {\bf representation type $\mathbf{c}$} if there exist  $c\in\mathbb{R}$ such that $|\Gamma/K_n(\Gamma)|\leq cn^c$ for every $n$, where
\[
K_n(\Gamma):=\bigcap_{\rho\in\Rep_{\leq n}(\Gamma)}\ker\rho=\bigcap_{\rho\in\Irr_{\leq n}(\Gamma)}\ker\rho.
\]

\begin{lemma}\label{RG:type_c_implies_PRG}
Suppose $\Gamma$ is of representation type $c$, then $\Gamma$ has PRG.
\end{lemma}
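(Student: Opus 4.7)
The plan is straightforward and exploits the key fact that every irreducible representation of $\Gamma$ of dimension at most $n$ factors through the finite quotient $\Gamma/K_n(\Gamma)$.

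First, I would unpack the definition: by construction $K_n(\Gamma) = \bigcap_{\rho \in \Irr_{\leq n}(\Gamma)} \ker \rho$, so for every $\rho \in \Irr_{\leq n}(\Gamma)$ we have $K_n(\Gamma) \subseteq \ker \rho$. Hence $\rho$ descends to an irreducible representation $\bar{\rho}$ of the finite group $G_n := \Gamma/K_n(\Gamma)$, and this assignment $\rho \mapsto \bar{\rho}$ is injective on isomorphism classes. Consequently
\[
r_n(\Gamma) \leq |\Irr_{\leq n}(G_n)| \leq |\Irr(G_n)|.
\]

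Next I would apply the standard fact from finite group representation theory that $\sum_{\chi \in \Irr(G_n)} \chi(1)^2 = |G_n|$, which in particular implies $|\Irr(G_n)| \leq |G_n|$. Combining this with the hypothesis that $\Gamma$ is of representation type $c$, i.e.\ $|G_n| = |\Gamma/K_n(\Gamma)| \leq c n^c$, we obtain
\[
r_n(\Gamma) \leq |G_n| \leq c n^c
\]
for every $n$, which is exactly the definition of polynomial representation growth.

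There is essentially no obstacle here: the statement is almost immediate once one observes that $K_n(\Gamma)$ is built precisely so that every representation of dimension at most $n$ factors through $\Gamma/K_n(\Gamma)$. The only (trivial) ingredient beyond the definitions is the bound $|\Irr(G)| \leq |G|$ for finite $G$, which follows from Maschke's theorem together with the sum-of-squares formula stated earlier in the preliminaries.
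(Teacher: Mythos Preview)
Your proof is correct and follows essentially the same route as the paper's: both observe that every $\rho\in\Irr_{\leq n}(\Gamma)$ factors through the finite group $\Gamma/K_n(\Gamma)$ of order at most $cn^c$, and then bound the number of such representations by the order of that quotient. The only cosmetic difference is that you explicitly invoke the sum-of-squares formula to justify $|\Irr(G)|\leq |G|$, whereas the paper leaves this implicit.
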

\begin{proof}
Note that every $\rho\in\Irr_{\leq n}(\Gamma)$ factors through $K_n(\Gamma)$, hence it can be seen as a representation of $\Gamma/K_n(\Gamma)$.
If $\Gamma$ is of representation type c, we can see $\rho\in\Irr_{\leq n}(\Gamma)$ as a representation of the finite group $\Gamma/K_n(\Gamma)$ which is of order at most $cn^c$. It follows that $|\Irr_{\leq n}(\Gamma)|\leq cn^c$, hence $\Gamma$ has PRG.
\end{proof}

\begin{lemma}\label{lm:RG_finite_index_subgroup}
Let $\Gamma$ be a profinite group and $\Gamma'\leq\Gamma$ a finite index subgroup.
Put $m=|\Gamma:\Gamma'|$.
Then for every $n\in\mathbb{N}$ we have
\begin{enumerate}[label=\roman*)]
\item $R_n(\Gamma)\leq mR_n(\Gamma')$.\label{1}
\item $R_n(\Gamma')\leq mR_{nm}(\Gamma)$.\label{2}

\end{enumerate}
As a consequence, if $\Gamma$ and $\Gamma$' are commensurable then $\Gamma$ has PRG if and only if $\Gamma'$ has PRG.
\end{lemma}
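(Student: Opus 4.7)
The plan is to deduce both inequalities from Frobenius reciprocity (Theorem~\ref{pre:Frobenius}), via the elementary observation that in any representation $\theta$ an irreducible $\pi$ occurs with multiplicity at most $\dim\theta/\dim\pi$. Since every representation of a profinite group has finite image (Lemma~\ref{lm:RG_profinite_finite_image}), we may work inside a common finite quotient so that character orthogonality and Frobenius reciprocity apply in their standard form.

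The key combinatorial inputs are two ``at most $m$'' bounds. First, for each $\rho\in\Irr(\Gamma)$ the restriction $\rho|_{\Gamma'}$ has at most $m$ distinct irreducible constituents: writing $\rho|_{\Gamma'}=\bigoplus_i e_i\sigma_i$ with pairwise non-isomorphic $\sigma_i$, one computes
\[
\sum_i e_i^{\,2}=\langle\rho|_{\Gamma'},\rho|_{\Gamma'}\rangle_{\Gamma'}=\langle\rho,\Ind_{\Gamma'}^{\Gamma}(\rho|_{\Gamma'})\rangle_{\Gamma}\leq\frac{\dim\Ind_{\Gamma'}^{\Gamma}(\rho|_{\Gamma'})}{\dim\rho}=m.
\]
Second, for each $\sigma\in\Irr(\Gamma')$ the induced representation $\Ind_{\Gamma'}^{\Gamma}\sigma$ has at most $m$ distinct irreducible constituents: if $\Ind_{\Gamma'}^{\Gamma}\sigma=\bigoplus_j n_j\rho_j$ with the $\rho_j$ pairwise non-isomorphic, Frobenius reciprocity forces $\sigma$ to appear in each $\rho_j|_{\Gamma'}$, so $\dim\rho_j\geq\dim\sigma$ and hence
\[
(\#\{j\})\cdot\dim\sigma\leq\sum_j n_j\dim\rho_j=\dim\Ind_{\Gamma'}^{\Gamma}\sigma=m\dim\sigma.
\]

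To prove \ref{1}, I would associate to each $\rho\in\Irr_{\leq n}(\Gamma)$ a chosen irreducible constituent $\sigma(\rho)$ of $\rho|_{\Gamma'}$; since $\dim\sigma(\rho)\leq\dim\rho\leq n$, the image lies in $\Irr_{\leq n}(\Gamma')$, and by the second bound the fibre over any $\sigma$ has size at most $m$, so $R_n(\Gamma)\leq mR_n(\Gamma')$. To prove \ref{2}, I would associate to each $\sigma\in\Irr_{\leq n}(\Gamma')$ a chosen irreducible constituent $\rho(\sigma)$ of $\Ind_{\Gamma'}^{\Gamma}\sigma$; then $\dim\rho(\sigma)\leq m\dim\sigma\leq mn$, and by the first bound the fibre over any $\rho$ has size at most $m$, yielding $R_n(\Gamma')\leq mR_{mn}(\Gamma)$. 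The commensurability consequence follows at once by applying \ref{1} and \ref{2} to a common finite-index subgroup of $\Gamma$ and $\Gamma'$. No real obstacle is anticipated: the only mild care required is in setting up Frobenius reciprocity in the profinite setting, which is handled by passing to a suitable finite quotient.
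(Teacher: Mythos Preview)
Your proposal is correct and follows essentially the same approach as the paper: construct maps $\Irr_{\leq n}(\Gamma)\to\Irr_{\leq n}(\Gamma')$ and $\Irr_{\leq n}(\Gamma')\to\Irr_{\leq nm}(\Gamma)$ by choosing a constituent of the restriction (resp.\ induction), and bound the fibres by~$m$ via Frobenius reciprocity. The only cosmetic difference is in how the bound ``$\rho|_{\Gamma'}$ has at most $m$ constituents'' is obtained: you use the inner-product identity $\sum_i e_i^{2}=\langle\rho|_{\Gamma'},\rho|_{\Gamma'}\rangle\le m$, while the paper picks a constituent $\lambda$ of minimal dimension $\delta$ and argues $k\delta\le\dim\rho\le m\delta$.
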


\begin{proof} 
Let $\rho\in\Irr_{\leq n}(\Gamma)$ and consider $\rho'\in\Irr_{\leq n}(\Gamma')$ an irreducible constituent of $\rho_{|\Gamma'}$.
Note that $N:=\ker\rho\cap\Gamma'$ has finite index in $\Gamma$, so we may assume that we are working with representations of the finite groups $\Gamma/N$ and $\Gamma'/N$.
Hence by Frobenius reciprocity we know that $\rho$ is an irreducible constituent of $\Ind_{\Gamma'}^\Gamma\rho'$. Moreover note that the number of irreducible constituents of $\Ind_{\Gamma'}^\Gamma\rho'$ is at most $m$ ( each of them has $\rho'$ as a constituent when restricted to $\Gamma'$ by Frobenius reciprocity and $\dim\Ind_{\Gamma'}^\Gamma\rho'=m\dim\rho'$). Hence we have a map $f:\Irr_{\leq n}(\Gamma)\to \Irr_{\leq n}(\Gamma')$ whose fibers have size at most $m$. This proves \ref{1}.

Now consider $\rho'\in\Irr_{\leq n}(\Gamma')$ and let $\rho\in\Irr(\Gamma)$ be an irreducible constituent of $\Ind_{\Gamma'}^\Gamma\rho'$.
Clearly $\rho$ has dimension at most $nm$.
Moreover for every such $\rho$, $\rho_{|\Gamma'}$ can have at most $m$ irreducible constituents (if $k$ is the number of irreducible constituents of $\rho_{|\Gamma'}$ and $\lambda$ is one of minimal dimension, say $\delta$, then $k\delta\leq\dim\rho\leq m\dim\lambda$, where the last inequality follows from the fact that $\rho$ is a constituent of $\Ind_{\Gamma'}^\Gamma\lambda$ by Frobenius reciprocity).
Hence we have a map $f:\Irr_{\leq n}(\Gamma')\to\Irr_{\leq nm}(\Gamma)$ whose fibers have size at most $m$, which proves \ref{2}.  
\end{proof}

\begin{lemma}\label{lm:RG_type_c_finite_index}
Let $\Gamma$ be a profinite group and $\Gamma'\leq\Gamma$ a finite index open subgroup such that $|\Gamma:\Gamma'|= m$. Then $\Gamma$ is of representation type $c$ for some $c\in\mathbb{R}_{\geq 0}$ if and only if $\Gamma'$ is of representation type $c'$ for some $c'\in\mathbb{R}_{\geq 0}$.
As a consequence, if $\Gamma$ and $\Gamma'$ are commensurable and any of them is of representation type $c$, there exists $d$ such that $\Gamma$, $\Gamma'$ and $\Gamma\cap\Gamma'$ are of representation type $d$.
\end{lemma}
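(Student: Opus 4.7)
The plan is to prove the two implications by showing corresponding inclusions between $K_n(\Gamma)$ and $K_n(\Gamma')$, then deduce the commensurability statement by passing through $\Gamma\cap\Gamma'$.

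For the direction $\Gamma'$ of type $c'$ implies $\Gamma$ of type $c$, I would show $K_n(\Gamma')\subseteq K_n(\Gamma)$. Given $\rho\in\Irr_{\leq n}(\Gamma)$, the restriction $\rho_{|\Gamma'}$ decomposes (by Maschke's theorem, applied after replacing $\Gamma$ by the finite quotient $\Gamma/\ker\rho$) as a direct sum of irreducible constituents $\rho'_i\in\Irr(\Gamma')$, each of dimension at most $\dim\rho\leq n$. Therefore each $\rho'_i\in\Irr_{\leq n}(\Gamma')$, so $\ker\rho\cap\Gamma'=\ker(\rho_{|\Gamma'})=\bigcap_i\ker\rho'_i\supseteq K_n(\Gamma')$, and in particular $K_n(\Gamma')\subseteq\ker\rho$. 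Intersecting over all such $\rho$ yields $K_n(\Gamma')\subseteq K_n(\Gamma)$, so
\[
|\Gamma/K_n(\Gamma)|\leq |\Gamma/K_n(\Gamma')|=m\cdot|\Gamma'/K_n(\Gamma')|\leq mc'n^{c'},
\]
and $\Gamma$ is of representation type $\max(c',mc')$.

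For the converse, assume $\Gamma$ is of representation type $c$; I would show $K_{nm}(\Gamma)\cap\Gamma'\subseteq K_n(\Gamma')$. Given $\rho'\in\Irr_{\leq n}(\Gamma')$, consider $\Ind_{\Gamma'}^\Gamma\rho'$, whose dimension is at most $nm$. By Frobenius reciprocity (Theorem \ref{pre:Frobenius}, applied again in the finite quotient setting), $\rho'$ is an irreducible constituent of $\rho_{|\Gamma'}$ for some irreducible constituent $\rho$ of $\Ind_{\Gamma'}^\Gamma\rho'$; such $\rho$ lies in $\Irr_{\leq nm}(\Gamma)$, so $K_{nm}(\Gamma)\subseteq\ker\rho$ and hence $K_{nm}(\Gamma)\cap\Gamma'\subseteq\ker\rho\cap\Gamma'\subseteq\ker\rho'$. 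Intersecting over $\rho'$ gives the claimed inclusion, and
\[
|\Gamma'/K_n(\Gamma')|\leq |\Gamma'/(K_{nm}(\Gamma)\cap\Gamma')|\leq |\Gamma/K_{nm}(\Gamma)|\leq c(nm)^c=cm^c\,n^c,
\]
so $\Gamma'$ is of representation type $\max(c,cm^c)$.

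For the final assertion, suppose $\Gamma$ and $\Gamma'$ are commensurable and, without loss of generality, that $\Gamma$ is of representation type $c$. Then $\Gamma\cap\Gamma'$ has finite index in $\Gamma$, so applying the first part gives $\Gamma\cap\Gamma'$ of some representation type $d_1$; applying the second part to the pair $\Gamma\cap\Gamma'\leq\Gamma'$ gives $\Gamma'$ of some representation type $d_2$. Taking $d=\max(c,d_1,d_2)$ (and increasing it further using the explicit formulae above with the relevant indices) yields a common constant $d$ for all three groups. No step is a serious obstacle; the computation amounts to repeatedly tracking how $K_n(\,\cdot\,)$ behaves under restriction and induction, with Frobenius reciprocity doing the essential work.
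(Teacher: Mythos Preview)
Your proof is correct and follows essentially the same approach as the paper: restriction of irreducibles gives $K_n(\Gamma')\subseteq K_n(\Gamma)$ for one direction, and induction together with Frobenius reciprocity gives $K_{nm}(\Gamma)\cap\Gamma'\subseteq K_n(\Gamma')$ for the other. Your argument for the first direction is in fact slightly more direct than the paper's, which passes through $\ker\Ind_{\Gamma'}^\Gamma(\rho_{|\Gamma'})$ and invokes that $K_n(\Gamma')$ is characteristic in $\Gamma'$; your observation that $K_n(\Gamma')\subseteq\ker(\rho_{|\Gamma'})=\ker\rho\cap\Gamma'\subseteq\ker\rho$ suffices immediately.
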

\begin{proof}
First let us suppose that $\Gamma'$ is representation type $c'$.
Given $\rho\in\Irr_{\leq n}(\Gamma)$, we have $\rho_{|\Gamma'}=\lambda_1\oplus\ldots\oplus\lambda_r$ for some $\lambda_i\in\Irr_{\leq n}(\Gamma')$. By definition, $K_n(\Gamma')\leq\ker\rho_{|\Gamma'}$ and since 
$K_n(\Gamma')$ is a characteristic subgroup of $\Gamma'$ we obtain
\[
K_n(\Gamma')\subseteq \bigcap_{g\in\Gamma}(\ker\rho_{|\Gamma'})^g=\ker\Ind_{\Gamma'}^\Gamma\left(\rho_{|\Gamma'}\right)\subseteq\ker\rho.
\]
It follows that $K_n(\Gamma')\subseteq K_n(\Gamma)$ and so 
\[
|\Gamma/K_n(\Gamma)|\leq |\Gamma:K_n(\Gamma')|=|\Gamma:\Gamma'||\Gamma':K_n(\Gamma')|\leq m c'n^{c'}\leq cn^c
\]
 for some $c\in\mathbb{R}_{\geq 0}$ as required.

Suppose now that $\Gamma$ is of representation type $c$. 
If $\rho'\in\Irr_{\leq n}(\Gamma')$ then $\Ind_{\Gamma'}^\Gamma\rho'\in\Rep_{nm}(\Gamma)$. Hence $K_{nm}(\Gamma)\subseteq\ker\Ind_{\Gamma'}^\Gamma\rho'$.
By Frobenius reciprocity $\rho'$ is a constituent of $\left(\Ind_{\Gamma'}^\Gamma\rho'\right)_{|\Gamma'}$, hence $K_{nm}(\Gamma)\cap \Gamma'\subseteq\ker\rho'$.
We thus have $K_{nm}(G)\cap\Gamma'\subseteq K_n(\Gamma')$. This gives
\[
|\Gamma'/K_n(\Gamma')|\leq |\Gamma'/(K_{nm}(\Gamma)\cap\Gamma')|\leq |\Gamma/K_{nm}(\Gamma)|\leq c(nm)^c\leq c'n^{c'}
\]
for some $c'\in\mathbb{R}_{\geq 0}$ as desired.

\end{proof}
\begin{lemma}\label{lm:RG_pronilpotent_monomial}

Let $\Gamma$ be a pro-nilpotent group. Then for every $\rho\in\Irr(\Gamma)$ there exists $\Gamma'\leq\Gamma$ with $|\Gamma:\Gamma'|=\dim\rho$ and $\rho'\in\Irr(\Gamma')$ such that $\rho=\Ind_{\Gamma'}^\Gamma\rho'$.
\end{lemma}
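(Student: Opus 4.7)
The plan is first to reduce to the case of a finite nilpotent group, and then to prove by induction on the order that every irreducible representation is monomial; a dimension count will then force $\rho'$ to be one-dimensional and $|\Gamma:\Gamma'|$ to equal $\dim\rho$.

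By Lemma~\ref{lm:RG_profinite_finite_image}, any $\rho\in\Irr(\Gamma)$ factors through a finite quotient $\Gamma/N$, and since $\Gamma$ is pro-nilpotent this quotient is a finite nilpotent group. It therefore suffices to prove the statement when $\Gamma$ is finite and nilpotent: once we produce $\overline{\Gamma'}\leq\Gamma/N$ of index $\dim\rho$ and a one-dimensional character $\rho'$ of $\overline{\Gamma'}$ with $\rho=\Ind_{\overline{\Gamma'}}^{\Gamma/N}\rho'$, pulling back to $\Gamma$ yields the required subgroup. Note that from the identity $\dim\rho=|\Gamma:\Gamma'|\cdot\dim\rho'$, the condition $|\Gamma:\Gamma'|=\dim\rho$ is equivalent to $\dim\rho'=1$.

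I then proceed by induction on $|\Gamma|$. If $\dim\rho=1$, take $\Gamma'=\Gamma$ and $\rho'=\rho$. Suppose $\dim\rho>1$; the aim is to exhibit a proper subgroup $H<\Gamma$ and some $\psi\in\Irr(H)$ with $\rho=\Ind_H^\Gamma\psi$. The inductive hypothesis applied to $(H,\psi)$ then furnishes $H'\leq H$ of index $\dim\psi$ and a one-dimensional $\psi'$ of $H'$ with $\psi=\Ind_{H'}^H\psi'$. Setting $\Gamma'=H'$ and $\rho'=\psi'$, induction in stages yields $\rho=\Ind_{\Gamma'}^\Gamma\rho'$, and the index computes as
\[
|\Gamma:\Gamma'|=|\Gamma:H|\cdot|H:H'|=\bigl(\dim\rho/\dim\psi\bigr)\cdot\dim\psi=\dim\rho.
\]

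To produce such $H$, I would first replace $\Gamma$ by $\Gamma/\ker\rho$ to assume $\rho$ is faithful; an induction decomposition obtained for the quotient lifts directly to $\Gamma$. Faithfulness together with Schur's lemma forces $Z(\Gamma)$ to be cyclic and, more importantly, characterises $Z(\Gamma)$ as exactly those $g\in\Gamma$ for which $\rho(g)$ is a scalar matrix. Now I would choose a maximal normal abelian subgroup $A\trianglelefteq\Gamma$ and invoke the classical fact that in a nilpotent group such $A$ is self-centralizing, i.e.\ $C_\Gamma(A)=A$. Since $\Gamma$ is nonabelian (as $\dim\rho>1$ and $\rho$ is faithful) we must have $A\supsetneq Z(\Gamma)$; consequently $\rho|_A$ does not act entirely by scalars, and being abelian, $A$ splits $\rho|_A$ into at least two distinct linear characters. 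Picking any such character $\lambda$, its stabilizer $H:=\St_\Gamma(\lambda)$ is a proper subgroup of $\Gamma$, and Clifford theory (Theorem~\ref{pre:th:Clifford}) provides the desired $\psi\in\Irr(H)$ with $\rho=\Ind_H^\Gamma\psi$.

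The main obstacle, conceptually, is the existence of a normal abelian subgroup $A$ on which $\rho$ acts non-scalarly: this is the step where nilpotency is indispensable, through the self-centralizing property of maximal normal abelian subgroups. Once that fact is in place, Clifford theory supplies the monomial decomposition and the inductive bookkeeping on dimensions is routine.
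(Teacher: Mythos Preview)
Your proof is correct and follows essentially the same approach as the paper: both reduce to the finite nilpotent quotient via Lemma~\ref{lm:RG_profinite_finite_image} and then invoke the monomiality of finite nilpotent groups. The only difference is that the paper simply cites \cite[Corollary 6.14]{Isa} for the latter, whereas you unpack a proof of it (maximal normal abelian subgroup, self-centralizing property, Clifford theory); this is the standard argument behind the cited result, so the two proofs are really the same.
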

\begin{proof}
If $\rho\in\Irr(\Gamma)$ then by Lemma \ref{lm:RG_profinite_finite_image}, $\rho$ factors through a finite index open subgroup $N$. Then we can regard $\rho$ as a representation of the finite nilpotent group $\Gamma/N$.
For a finite nilpotent group every irreducible representation is induced from a linear representation of a subgroup (see \cite[Corollary 6.14]{Isa}). Suppose $\rho\cong\Ind_{\Gamma'N/N}^{\Gamma/N}{\lambda}$ for some subgroup $\Gamma'N/N$ and representation $\lambda\in\Irr_1(\Gamma'N/N)$. Then the result follows if we regard $\lambda$ as a representation of $\Gamma'N$.
\end{proof}

\begin{lemma}\label{lm:RG_induction_stabilizer}

Let $\Gamma$ be a profinite group and $N\trianglelefteq_o\Gamma$ an open normal subgroup.
Then $\Gamma$ acts on $\Irr(N)$ by conjugation and if $H\leq \Gamma$ is the stabilizer of $\lambda\in\Irr(N)$ the map
\[
\begin{array}{cccc}
\Ind: & \{\psi\in\Irr(H)\; :\; \langle\psi_{|N},\lambda\rangle\neq 0\}&\to& \{\rho\in\Irr(\Gamma)\; :\; \langle\rho_{|N},\lambda\rangle\neq 0\}\\
  & \psi & \mapsto & \Ind_N^\Gamma\psi\\
\end{array}
\]
is a bijection.
\end{lemma}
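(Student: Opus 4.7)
The plan is to reduce the statement to the finite version of Clifford theory (Theorem \ref{pre:th:Clifford}) already recorded in the preliminaries, by passing to a suitable finite quotient of $\Gamma$ through which everything in sight factors.

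First I would use Lemma \ref{lm:RG_profinite_finite_image} to find an open normal subgroup $M\trianglelefteq_oN$ such that $\lambda$ factors through $N/M$. Since $N$ has finite index in $\Gamma$, it has only finitely many conjugates under $\Gamma$, and the same holds for $M$; hence
\[
M_{0}:=\bigcap_{g\in \Gamma}gMg^{-1}
\]
is a finite intersection of open subgroups, so $M_{0}\trianglelefteq_o\Gamma$ and $M_{0}\subseteq M\cap N$. Thus $\lambda$ is inflated from an irreducible representation $\bar\lambda$ of the finite group $N/M_{0}$, and the $\Gamma$-action on $\Irr(N)$ restricts to the $\Gamma/M_0$-action on $\Irr(N/M_{0})$ with the same stabilizer $H/M_{0}$.

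Next I would verify that both sides of the alleged bijection consist of representations trivial on $M_{0}$, so they can be identified with the corresponding sets for the finite groups $\Gamma/M_0$, $H/M_0$, $N/M_0$. If $\rho\in\Irr(\Gamma)$ satisfies $\langle\rho_{|N},\lambda\rangle\neq 0$, then by Clifford's theorem applied inside a common finite quotient through which $\rho$ factors, $\rho_{|N}$ decomposes into $\Gamma$-conjugates of $\lambda$; each conjugate $\lambda^g$ is trivial on $M_{0}$ because $M_{0}$ is $\Gamma$-normal and contained in $\ker\lambda$. Hence $\rho_{|N}$, and therefore $\rho$, is trivial on $M_{0}$. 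The same argument (applied in $H$) shows that any $\psi\in\Irr(H)$ with $\langle\psi_{|N},\lambda\rangle\neq 0$ is trivial on $M_{0}$.

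Finally, induction commutes with inflation in the obvious way: $\Ind_{H}^{\Gamma}\psi$ is the inflation to $\Gamma$ of $\Ind_{H/M_{0}}^{\Gamma/M_{0}}\bar\psi$, where $\bar\psi$ is the representation of $H/M_{0}$ corresponding to $\psi$. Applying Theorem \ref{pre:th:Clifford} to the finite group $\Gamma/M_{0}$, the normal subgroup $N/M_{0}$, and $\bar\lambda\in\Irr(N/M_{0})$ yields the bijection for the finite quotient, and inflating back to $\Gamma$ gives the desired bijection. There is no real obstacle; the only step requiring care is the construction of the $\Gamma$-normal subgroup $M_{0}$ and the verification that it lies in the kernel of every representation in either of the two sets, for which the key input is that $\Gamma$ normalizes $N$ and permutes $\Gamma$-conjugates of $\lambda$, all of which vanish on $M_{0}$.
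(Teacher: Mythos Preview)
Your proposal is correct and follows essentially the same route as the paper: reduce to the finite Clifford correspondence (Theorem~\ref{pre:th:Clifford}) by passing to a finite quotient $\Gamma/M_{0}$ through which $\lambda$ and all the relevant $\psi$ and $\rho$ factor. The paper's argument is terser---it picks a single open normal $M\trianglelefteq\Gamma$ with $M\leq N$ through which $\lambda$ factors and then invokes Theorem~\ref{pre:th:Clifford} directly---but your explicit construction of $M_{0}$ as the $\Gamma$-core of $\ker\lambda$ and the verification that every $\rho$ and $\psi$ lying over $\lambda$ is trivial on $M_{0}$ amount to the same thing, just spelled out more carefully.
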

\begin{proof}
This is nothing but to extend Theorem \ref{pre:th:Clifford} to profinite groups.
If $\rho\in\Irr(\Gamma)$, by Lemma \ref{lm:RG_profinite_finite_image} $\rho$ factors through some finite index open subgroup $M$.
Moreover we may assume that $M\leq N$, otherwise take $M\cap N$.
Suppose $\lambda$ is an irreducible constituent of $\rho_{|N}$, i.e., $\langle\lambda,\rho_{|N}\rangle\neq 0$.
If we regard $\rho$ as a representation of $\Gamma/M$ and $\lambda\in\Irr(N)$ as a representation of $N/M$ we can apply Theorem \ref{pre:th:Clifford} to obtain a bijection
\[
\begin{array}{ccc}
\{\psi\in\Irr(H/M)\;:\;\langle\psi_{|N/M},\lambda\rangle\neq 0\}&\to&\{\rho\in\Irr(\Gamma/M)\; :\; \langle\rho_{|N/M},\lambda\rangle\neq 0\}\\
\psi&\mapsto &\Ind_{H/M}^{\Gamma/M}\psi\\
\end{array}
\]
Hence, regarding $\psi\in\Irr(H/M)$ as a representation of $H$, we obtain the desired bijection if $\lambda\in\Irr(N)$ factors through $M$.
Since every $\lambda\in\Irr(N)$ factors through some such $M$ the result follows.
\end{proof}

\begin{lemma}\label{lm:RP_direct_prod}
Suppose $\Gamma_1$ and $\Gamma_2$ are profinite groups. Then every irreducible representation of $\Gamma_1\times\Gamma_2$ is of the form $\rho_1\otimes\rho_2$, where $\rho_i$ is an irreducible representation of $\Gamma_i$. In particular, if $\Gamma_1$ and $\Gamma_2$ have PRG then $\Gamma_1\times\Gamma_2$ has PRG.
\end{lemma}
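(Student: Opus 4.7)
The plan is to reduce everything to the case of finite groups, where the tensor-product decomposition of irreducible representations of a direct product is classical, and then to extract the bound on $r_n$ by a simple combinatorial count.

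First I would handle the reduction to finite quotients. Fix $\rho \in \Irr(\Gamma_1\times\Gamma_2)$. By Lemma \ref{lm:RG_profinite_finite_image}, $\rho$ has finite image and its kernel $M$ is an open normal subgroup of $\Gamma_1\times\Gamma_2$. Define
\[
N_1 := \{g_1 \in \Gamma_1 : (g_1,e) \in M\}, \qquad N_2 := \{g_2 \in \Gamma_2 : (e,g_2) \in M\}.
\]
Each $N_i$ is a closed subgroup of $\Gamma_i$ (as the preimage of $M$ under the continuous map $g \mapsto (g,e)$, resp.\ $g \mapsto (e,g)$). Since $M$ has finite index in $\Gamma_1\times\Gamma_2$, each $N_i$ has finite index in $\Gamma_i$, hence is open; normality is inherited from $M$. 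Now for any $g_1\in N_1$ and $g_2\in N_2$ we have $(g_1,g_2)=(g_1,e)(e,g_2)\in M$, so $N_1\times N_2 \subseteq M$. Consequently $\rho$ factors through the finite group $(\Gamma_1\times\Gamma_2)/(N_1\times N_2)\cong(\Gamma_1/N_1)\times(\Gamma_2/N_2)$.

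Next I would invoke the classical fact for finite groups: every irreducible complex representation of a direct product of two finite groups is isomorphic to the (external) tensor product $\bar\rho_1\otimes\bar\rho_2$ of irreducible representations of the two factors (a standard consequence of Maschke's theorem and the fact that the sum of squares of dimensions equals the group order, see for instance \cite[Theorem 4.21]{Isaacs}). Applying this to $(\Gamma_1/N_1)\times(\Gamma_2/N_2)$ and pulling back along the projections $\Gamma_i\twoheadrightarrow\Gamma_i/N_i$ yields $\rho_i\in\Irr(\Gamma_i)$ with $\rho\cong\rho_1\otimes\rho_2$. Uniqueness of the pair $(\rho_1,\rho_2)$ up to isomorphism follows from the corresponding uniqueness in the finite case (e.g.\ by taking characters and evaluating on elements of the form $(g,e)$ and $(e,g)$).

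For the second assertion, the bijection established above yields
\[
r_n(\Gamma_1\times\Gamma_2)=\sum_{ab=n}r_a(\Gamma_1)\,r_b(\Gamma_2),
\]
whence $R_n(\Gamma_1\times\Gamma_2)\le R_n(\Gamma_1)\,R_n(\Gamma_2)$. If both factors have PRG then both $R_n(\Gamma_i)$ are polynomially bounded, so their product is, and hence $\Gamma_1\times\Gamma_2$ has PRG. There is no substantial obstacle here; the only point that requires care is the reduction step, namely verifying that the kernel of a continuous finite-image representation of the product contains a product of open normal subgroups of the factors.
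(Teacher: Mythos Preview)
Your proof is correct and follows essentially the same approach as the paper's: reduce to finite quotients via Lemma \ref{lm:RG_profinite_finite_image}, apply the classical tensor-product decomposition for finite direct products, and then count. Your reduction step is in fact stated more carefully than the paper's (which asserts that $(\Gamma_1\times\Gamma_2)/N \cong \Gamma_1/(N\cap\Gamma_1)\times\Gamma_2/(N\cap\Gamma_2)$, which need not hold literally; you correctly observe only that $N_1\times N_2\subseteq M$ so $\rho$ factors through the product of finite quotients), and your bound $R_n(\Gamma_1\times\Gamma_2)\le R_n(\Gamma_1)R_n(\Gamma_2)$ is a cleaner version of the paper's estimate.
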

\begin{proof}
 The result is true if we assume the groups $\Gamma_1$ and $\Gamma_2$ are finite. 
Using again Lemma \ref{lm:RG_profinite_finite_image} we see that every $\rho\in\Irr(\Gamma_1\times\Gamma_2)$ factors through a finite index open subgroup $N$.
It follows that $(\Gamma_1\times\Gamma_2)/N$ is isomorphic to $\Gamma_1/(N\cap\Gamma_1)\times\Gamma_2/(N\cap\Gamma_2)$, which is a finite group.
If we regard $\rho$ as a representation of this finite group, we have $\rho\cong\lambda_1\otimes\lambda_2$ for some $\lambda_i\in\Irr(\Gamma_i/N\cap\Gamma_i)$. If we regard $\lambda_1$ and $\lambda_2$ as representations of $\Gamma_1$ and $\Gamma_2$ we obtain the desired decomposition.

Now if $\rho_1\in\Irr_m(\Gamma_1)$ and $\rho_2\in\Irr_k(\Gamma_2)$ we have by construction $\rho_1\otimes\rho_2\in\Irr_{mk}(\Gamma_1\times\Gamma_2)$.
Hence, if $R_n(\Gamma_1)\leq c_1n^{c_1}$ and $R_n(\Gamma_2)\leq c_2n^{c_2}$, we have 
\[
R_n(\Gamma_1\times\Gamma_2)\leq \sum_{mk=n}R_m(\Gamma_1)R_k(\Gamma_2)\leq c_1c_2n^{c_1+c_2+1}
\]
and $\Gamma_1\times\Gamma_2$ has PRG.
\end{proof}
\begin{theorem}[{\cite[2.7]{LuMa}}] \label{th:RG_p-adic_perfect_type_c}
Let $G$ be a compact $p$-adic analytic group and suppose that the Lie algebra of $G$ is perfect. Then $G$ is of representation type $c$ for some $c\in\mathbb{R}_{\geq 0}$.
\end{theorem}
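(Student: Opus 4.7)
The plan is to reduce to the case where $G$ itself is $\mathbb{Z}_p$-standard and then exploit perfectness via the congruence filtration. First I would apply Theorem \ref{pre:th:analytic_open_standard} to obtain an open $\mathbb{Z}_p$-standard subgroup $U \leq_o G$. Since representation type $c$ is preserved under commensurability by Lemma \ref{lm:RG_type_c_finite_index}, it suffices to treat $U$, so I assume $G = \mathbb{Z}_p^d$ (with $d = \dim G$) is $\mathbb{Z}_p$-standard, with multiplication given by a formal group law over $\mathbb{Z}_p$ and associated Lie algebra $L_G = \mathbb{Z}_p^d$ as in Section \ref{sec:standard_groups}.

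Introduce the congruence filtration $G_i := (p^i\mathbb{Z}_p)^d$ under this identification. By Proposition \ref{pre:formal_group_laws}.\ref{e_formal_commutator} we have $[G_i,G_j] \subseteq G_{i+j}$, so the associated graded $\gr(G) = \bigoplus_{i\geq 1} G_i/G_{i+1}$ is an $\mathbb{F}_p$-Lie algebra with each graded piece of order $p^d$. The hypothesis that $L_G$ is perfect translates, through the bracket formula in Proposition \ref{pre:formal_group_laws}.\ref{e-bracket-formal-law}, into perfectness of $\gr(G)$ in all sufficiently large degrees, i.e., a relation of the form $[G_1,G_i]\cdot G_{i+2} = G_{i+1}$ holds for every $i\geq 1$.

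The main step is a dimension estimate: there exist constants $c_0,c_1 > 0$, depending only on $G$, such that every $\rho \in \Irr(G)$ nontrivial on $G_m$ satisfies $\dim\rho \geq p^{c_1 m - c_0}$. To establish this I would choose $m$ so that $G_m$ is abelian modulo a bounded window, restrict $\rho|_{G_m}$, and decompose it into characters. By Lemma \ref{lm:RG_induction_stabilizer}, $\rho$ is induced from the stabilizer $H = \stab_G(\lambda)$ of one such character $\lambda$, so $\dim \rho \geq [G:H]$. The key point is that perfectness forces $H$ to have large index: nontriviality of $\lambda$ on some piece $G_j/G_{j+1}$, together with $[G_1,G_j]\cdot G_{j+1}=G_j$, produces an abundance of commutators on which $\lambda$ takes nontrivial values, so no large subgroup of $G$ can stabilize $\lambda$, yielding $[G:H] \geq p^{c_1 m - c_0}$.

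Granting this estimate, any $\rho\in\Irr_{\leq n}(G)$ is trivial on $G_m$ as soon as $p^{c_1 m - c_0} > n$, that is, for $m > (c_0 + \log_p n)/c_1$. Hence $K_n(G) \supseteq G_m$ for $m := \lceil (c_0+\log_p n)/c_1\rceil + 1$, and therefore
\[
|G/K_n(G)| \;\leq\; |G/G_m| \;=\; p^{dm} \;\leq\; A\cdot n^{d/c_1}
\]
for some constant $A>0$, which is polynomial in $n$; this shows that $G$ has representation type $c$. The principal obstacle I anticipate is the dimension estimate itself: tracking how perfectness of $\gr(G)$ propagates from the infinitesimal bracket identity up to lower bounds for indices of stabilizers of characters of $G_m$ is essentially an orbit-method computation, and the bookkeeping requires care at the top of the filtration where the formal group law contributes higher-order error terms beyond the Lie bracket.
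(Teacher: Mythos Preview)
Your reduction to an open $\mathbb{Z}_p$-standard subgroup and your use of the congruence filtration $G_i=(p^i\mathbb{Z}_p)^d$ match the paper, but from that point on the paper takes a different and much shorter route than the Clifford/stabilizer argument you outline.

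The paper does not attempt to bound the index of a stabilizer of a character of $G_m$. Instead it invokes Lemma~\ref{lm:RG_pronilpotent_monomial}: since $G$ is pro-$p$ (hence pro-nilpotent), every $\rho\in\Irr_{p^n}(G)$ is induced from a \emph{linear} character $\rho'$ of a subgroup $H$ with $|G:H|=p^n$. The decisive $p$-adic fact, absent from your sketch, is the identity $G_{n+1}=G^{p^n}$ for $\mathbb{Z}_p$-standard groups (\cite[Theorem~8.31]{AnalyticProP}), which forces $G_{n+1}\leq H$. Perfectness of $\mathcal{L}_G\otimes\mathbb{Q}_p$ gives $p^r\mathcal{L}\subseteq[\mathcal{L},\mathcal{L}]$ for some fixed $r$, and via the formal group law this yields $G_{3(n+1)}\leq [G_{n+1},G_{n+1}]\leq [H,H]\leq\ker\rho'$. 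Normality of $G_{3(n+1)}$ then gives $G_{3(n+1)}\leq\ker\rho$, so $|G/K_n(G)|\leq p^{3(n+1)d}$ and one is done.

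Your proposed route, by contrast, is essentially the strategy of Theorem~\ref{th:RG_Fp[[t]]-standard_type_c} transplanted to $\mathbb{Z}_p$. The gap is precisely where you flag it: from $[G_1,G_j]\cdot G_{j+2}=G_{j+1}$ one only concludes that the stabilizer of a nontrivial $\lambda$ is \emph{proper}, not that its index is at least $p^{c_1 m}$. In the $\mathbb{F}_p[[t]]$ proof that linear lower bound on the stabilizer index requires the full sequence of eight claims (control of the ideals $\I_{v,k}$, Nakayama arguments, semisimplicity of adjoint elements, etc.), none of which you have supplied, and your one-sentence heuristic about ``an abundance of commutators'' does not substitute for it. The paper's monomiality-plus-$G^{p^n}=G_{n+1}$ argument sidesteps this difficulty entirely; it is specific to the $p$-adic setting and is what makes the proof here dramatically shorter than its positive-characteristic counterpart.
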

\begin{proof}

Every $p$-adic analytic group has an open standard subgroup $U$ (Theorem \ref{pre:th:analytic_open_standard}), and since $G$ is compact,  every open subgroup has finite index. Thus, by Lemma \ref{lm:RG_type_c_finite_index} it suffices to show that $U$ is of representation type $c$ for some $c\in\mathbb{R}_{\geq 0}$.
Moreover, note that, since $U$ is an open subgroup of $G$, both $G$ and $U$ have the same associated Lie $\mathbb{Q}_p$-algebra.
Therefore we may assume that $G$ is a standard group.

This means we have a global chart for $G$, i.e., a homeomorphism
\[
\begin{array}{cccc}
f:& G& \to &(p\mathbb{Z}_p)^d\\
 & g&\mapsto& {\bf g},\\
\end{array}
\]
where $d=\dim G$ (as a $\mathbb{Q}_p$-analytic group).
Moreover, multiplication in $G$ is given by an analytic function $m:\mathbb{Z}_p^d\to\mathbb{Z}_p^d$  with coefficients in $\mathbb{Z}_p$ that can be written as
\[
m({\bf x},{\bf y})={\bf x}+{\bf y} + B({\bf x},{\bf y})+ O(3),
\]
where $B$ is a $\mathbb{Z}_p$-bilinear form and $O(3)$ is a power series with monomials of total order at least $3$, see section \ref{sec:standard_groups}.
We will make an abuse of notation and identify an element $g\in G$ with its coordinates ${\bf g}\in (p\mathbb{Z}_p)^d$.
The Lie algebra $\LL=\mathcal{L}_G$ associated to the standard group $G$  is $(\mathbb{Z}_p)^d$ with Lie bracket given by $[{\bf x},{\bf y}]_L=B({\bf x},{\bf y})- B({\bf x},{\bf y})$ for $\mathbf{x},\mathbf{y}\in(\mathbb{Q}_p)^d$.
By \ref{pre:formal_group_laws}.\ref{e_formal_commutator} the coordinates of the commutator of two elements $x,y\in G$ are given by
\[
[{\bf x},{\bf y}]:=f([x,y])= [{\bf x},{\bf y}]_L+ O(3).
\]

Let us write $G_i=f^{-1}((p^i\mathbb{Z}_p)^d)$, which gives a natural filtration of $G$ by finite index open normal 
subgroups. Moreover, by the proof of Theorem 8.31 in \cite{AnalyticProP} we have $G_{i+1}=G^{p^i}$, where the latter subgroup denotes the closed subgroup generated by $p^i$-powers of elements of $G$.

Now since $\mathcal{L}(\mathbb{Q}_p)=\LL\otimes_{\ZZ_p}\QQ_p$ is perfect, this implies that $p^r\mathcal{L}\subseteq [\mathcal{L},\mathcal{L}]$ for some $r\in\NN$ and so $p^{2n+r}\mathcal{L}\subseteq [p^n\mathcal{L},p^n\mathcal{L}]$ for every $n\in\NN$.
This implies that $G_{2n+r}\subseteq [G_n,G_n]G_{3n}$.
Indeed, any element ${\bf g}\in p^{2n+r}\mathcal{L}$ can be written as a sum of $d$ elements of the form $[{\bf x_1},{\bf y_1}]_{L}+\ldots [{\bf x_d},{\bf y_d}]_{L}$ for some ${\bf x_i},\,{\bf y_i}\in p^n\mathcal{L}$.
Thus, applying the above formula for the coordinates of commutators and multiplication, we have that
\[
[{\bf x_1},{\bf y_1}]\cdots[{\bf x_d},{\bf y_d}]={\bf g}+ O(3).
\]
Therefore $g\in[G_n,G_n]G_{3n}$ and $G_{2n+r}\subseteq [G_n,G_n]
G_{3n}$.

If $n\geq r+2$ then $2(n+1)+r\leq 3n$ and we can apply the above inclusion to obtain $G_{3n}\leq G_{2(n+1)+r}\leq [G_n,G_n]G_{3(n+1)}$.
Since this holds for every $n\geq r+2$, applying induction we obtain that $G_{3n}\subseteq[G_n,G_n]G_{3m}$ for every $m\geq n$, which implies $G_{3n}\leq [G_n,G_n]$.

Consider now $\rho\in\Irr_{p^n}(G)$ where $n\geq r+2$.
Since $G$ is a pro-$p$ group and hence pro-nilpotent, $\rho$ is induced from $\rho'\in\Irr_1(H)$ for some subgroup $H\leq U$ of index $p^n$ (Lemma \ref{lm:RG_pronilpotent_monomial}).
Recall that $G_{n+1}=G^{p^n}\leq H$, so we have $G_{3(n+1)}\leq[G_{n+1},G_{n+1}]\leq [H,H]$.
Hence, $G_{3(n+1)}\leq\ker\rho'$ and since $G_{3(n+1)}$ is normal this gives $G_{3(n+1)}\leq\ker\rho$.

It follows that $|G/K_n(G)|\leq G/G_{3(n+1)}=p^{3(n+1)d}$ for $n\geq r+2$. Therefore, we can find $c\in\mathbb{R}_{\geq 0}$ such that $|G/K_n(G)|\leq cn^c$ for every $n\in\NN$.
\end{proof}

The following theorem is an unpublished result of A. Jaikin. We present it here for completion.
\begin{theorem}[{\cite[Theorem 3.1]{JaiPre}}]\label{th:RG_Fp[[t]]-standard_type_c}
Let $G$ be an $\F_p[[t]]$-standard group and suppose that $\mathcal{L}_G$ is finitely generated as a $G$-module.
Then $G$ is of representation type $c$ for some $c\in\mathbb{R}_{\geq 0}$.
\end{theorem}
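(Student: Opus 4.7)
The proof adapts the argument of Theorem \ref{th:RG_p-adic_perfect_type_c}, replacing the perfectness of the Lie algebra by the $G$-module finite-generation hypothesis. As a first step I would fix a global chart identifying $G$ with $(t\F_p[[t]])^d$ and introduce the filtration $G_i = f^{-1}((t^i\F_p[[t]])^d)$ by open normal subgroups of finite index, so that each quotient $G/G_i$ is a finite $p$-group of order $p^{di}$ and $G$ is pro-$p$. The associated $\F_p[[t]]$-Lie algebra $\mathcal{L} = \F_p[[t]]^d$ carries the adjoint $G$-action, which is $\F_p[[t]]$-linear; the identities $[g,y] = [\mathbf{g},\mathbf{y}]_L + O(3)$ and $\Ad(g)(\mathbf{x}) - \mathbf{x} = [\mathbf{g},\mathbf{x}]_L + O(3)$ from Section \ref{sec:standard_groups} will serve as the bridge between Lie-algebraic and group-theoretic computations.

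The hypothesis enters through the following observation: since $p\cdot\mathcal{L} = 0$, the Lie algebra is an abelian pro-$p$ group of exponent $p$, so by the converse direction of Lemma \ref{lm:finite_generation_pro_p_modules} the finite generation of $\mathcal{L}$ as a $G$-module forces $\mathcal{L}/[\mathcal{L},G]$ to be finite. Because $\Ad(g)$ is $\F_p[[t]]$-linear, $[\mathcal{L},G]$ is actually an $\F_p[[t]]$-submodule of $\mathcal{L}$, so this finite quotient is annihilated by some $t^r$, yielding the key inclusion $t^r\mathcal{L} \subseteq [\mathcal{L},G]$.

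Next I would translate this inclusion into group-filtration language using the formal group law: every element of $t^{m+r}\mathcal{L}$ can be written as a product of group commutators $[g_i,y_i]$ with $g_i\in G$ and $y_i\in G_m$, modulo an error term of strictly higher $t$-adic order. Iterating this approximation through successive filtration levels and passing to the closure of $[G,G_m]$ yields the inclusion $[G,G_m]\supseteq G_{m+r}$ for every $m$. An immediate induction then gives $\gamma_c(G)\supseteq G_{1+(c-1)r}$, so the lower central series of $G$ descends through the filtration at a linear rate.

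To conclude, take $\rho\in\Irr_{p^k}(G)$; since $G$ is pro-$p$, Lemma \ref{lm:RG_pronilpotent_monomial} gives $\rho = \Ind_H^G\lambda$ with $[G:H]=p^k$ and $\lambda\in\Irr_1(H)$, so $\ker\rho$ is the normal core of $\ker\lambda\supseteq[H,H]$. Combining (i) the fact that $H$ necessarily contains $G^{p^k}$, (ii) the commutator bound $[G,G_m]\supseteq G_{m+r}$, and (iii) Clifford-type reasoning applied to the $G$-conjugates of $\ker\lambda$, one deduces that $\ker\rho\supseteq G_{f(k)}$ for some polynomial $f$. Intersecting over all $\rho$ of dimension at most $n$ yields $K_n(G)\supseteq G_{f(\log_p n)}$, whence $|G/K_n(G)|\leq p^{df(\log_p n)}$, polynomial in $n$, so $G$ is of representation type $c$ for a suitable $c$.

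The main technical obstacle lies in this last step. In the $p$-adic case of Theorem \ref{th:RG_p-adic_perfect_type_c} the equality $G^{p^n}=G_{n+1}$ and the \emph{inner} commutator bound $[G_m,G_m]\supseteq G_{3m}$, which comes directly from perfectness, combine transparently to give $G_{3(n+1)}\subseteq\ker\rho$. Neither tool is available here: the characteristic-$p$ power map satisfies only $G^p\subseteq G_p$, far weaker than $G^p=G_2$, and without perfectness no bound of the form $[G_m,G_m]\supseteq G_{m'}$ can be expected. Consequently the only commutator information one has is the \emph{outer} inclusion $[G,G_m]\supseteq G_{m+r}$, and the challenge is to exploit it together with the normal-core structure of $\ker\lambda\leq H\leq G$ by means of a more delicate filtration-tracking argument.
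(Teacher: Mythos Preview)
Your setup through $G_{m+r}\subseteq[G,G_m]$ matches the paper's Claim~1, and you correctly diagnose the obstacle: in characteristic $p$ one has only $G^{p^k}\subseteq G_{p^k}$, which is exponentially too deep, and there is no inner bound $[G_m,G_m]\supseteq G_{m'}$. But the three ingredients you list cannot close the gap, and no purely group-theoretic filtration-tracking will: the subgroup $H$ coming from monomiality is only known to contain $G_{p^k}$, so $[H,H]$ cannot be located anywhere useful in the filtration, and the outer bound $[G,G_m]\supseteq G_{m+r}$ gives no handle on $[H,H]$.

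The paper's decisive move is to linearise. Let $A\subseteq\End_{\F_p[[t]]}(\mathcal L)$ be the $\F_p[[t]]$-subalgebra generated by $\Ad(G)$, and for $H\subseteq G$ set $\I(H)$ to be the left ideal generated by $\{\Ad(h)-\Id:h\in H\}$; write $\I_k=\I(G_k)$. A forward inclusion $\I_{u+(2l-1)s}\subseteq t\,\I_u$ follows formally from Claim~1 together with a nilpotence bound $\I_1^l\subseteq tA$. The crucial \emph{reverse} inclusion $t^{c_1+kc_2}\I_1\subseteq\I_k$ is what replaces the $p$-adic identity $G^{p^n}=G_{n+1}$; its proof requires showing that for $\Ad(h)$ semisimple the ideal $A(\Ad(h)-\Id)^2$ has finite index in $A(\Ad(h)-\Id)$, and invoking Zelmanov's solution of the Restricted Burnside Problem to guarantee that the elements with semisimple adjoint generate an open subgroup. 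The Clifford step is also different from your sketch: one does \emph{not} use monomiality. Instead, for $\rho$ with $G_{2(k-1)}\not\subseteq\ker\rho$ but $G_{2k}\subseteq\ker\rho$, take $\lambda\in\Irr(G_k)$ a constituent of $\rho_{|G_k}$ and let $H=\St_G(\lambda)$, $|G:H|=p^m$. A chain argument combining the forward ideal inclusion with Nakayama's lemma forces $G_{s(2l-1)(m+1)}\leq H$; feeding the reverse inclusion back through the $G$-module isomorphism $G_k/G_{2k}\cong\mathcal L/t^k\mathcal L$ then bounds $k$ linearly in $m\leq\log_p\dim\rho$. The whole argument lives in the ideal structure of $A$; your commutator bound is only the entry point.
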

\begin{proof}

We will use the notation of Theorem \ref{th:RG_p-adic_perfect_type_c}. If $\LL=\LL_G$ is the $\Fp[[t]]$-Lie algebra of the standard group $G$, we have now a homeomorphism from $G$ to $t\mathcal{L}=(tF_p[[t]])^{d}$ given by
\[
\begin{array}{cccc}
f:& G &\to &(t\F_p[[t]])^d\\
 & g&\mapsto& {\bf g},
\end{array}
\]
where $d=\dim G$.
Let us denote the adjoint action of $G$ by $\Ad=\Ad_{\F_p[[t]]}:G\to \Aut_{\Fp[[t]]}(\mathcal{L})$, see section \ref{sec:standard_groups}.

If we let $G$ act by conjugation on the abelian group $G_k/G_{2k}$, from \ref{pre:formal_group_laws}.\ref{e_formal_adjoint} and the description of the adjoint action (see \eqref{eq:adjoint_locally} in section \ref{sec:standard_groups}) we obtain an isomorphism of $G$-modules
\begin{equation}\label{iso:standard}
\begin{array}{ccc}
G_k/G_{2k}&\to& \mathcal{L}/t^k\mathcal{L}\\
 g&\mapsto& t^{-k}{\bf g}
\end{array}
\end{equation}
Let us write $A$ for the $\F_p[[t]]$-subalgebra of $\End_{\F_p[[t]]}(\mathcal{L})$ generated by $\{\Ad(g)\, :\, g\in G\}$  and for every subset $H\subseteq G$ let $\I(H)$ denote the left ideal of $A$ generated by $\{\Ad(h)-\Id\;:\; h\in H\}$. We will write $\I_k=\I(G_k)$. Note that since $G_k$ is a normal subgroup, $\I_k$ is an ideal.
Under this notation $[\mathcal{L},G_k]=\mathcal{L}I_k$ if we see $\mathcal{L}$ as an $\End_{\F_p[[t]]}(\mathcal{L})$ module. 
Let us state and prove a series of claims that are required for the proof.
\begin{claim}
There exists $s$ such that for every $k\geq s$ we have
\[G_{k+s}\leq[G_k,G].
\]
\end{claim}
By assumption $\mathcal{L}$ is finitely generated as a $G$-module, equivalently (see \ref{lm:finite_generation_pro_p_modules}) $|\mathcal{L}/[\mathcal{L},G]|$ is finite.
Hence there exists $s\in\NN$ such that $t^s\mathcal{L}\subseteq[\mathcal{L},G]$.
From this and isomorphism \eqref{iso:standard} it follows that for every $k\geq s$ we have
\[
G_{k+s}\leq[G_k,G]G_{2k}.
\]
If we take $n>k$ this gives
\[
G_{2n}\leq G_{(n+1)+s}\leq [G_{n+1},G]G_{2(n+1)}\leq[G_k,G]G_{2(n+1).}
\]
Hence, whenever $k\geq s$ we can apply an inductive argument to obtain
\[
G_{k+s}\leq [G_k,G]G_{2n}
\]
for every $n>k$ and since $G_{k+s}$ is closed this gives $G_{k+s}\leq[G_k,G]$.

\begin{claim}
There exists $l\in\NN$ such that $\I_1^l\leq tA$.
\end{claim}
Note that $\F_q[[t]]$ is a Noetherian local ring and $A$ is an $\F_q[[t]]$-submodule of the finitely generated $\F_q[[t]]$-module $\End_{\F_q[[t]]}(\mathcal{L})$, hence $A$ is finitely generated as well.
We can apply Nakayama Lemma to get that $A/tA$ is finite.
Now note that from the description of the adjoint action (see \eqref{eq:adjoint_locally} in section \ref{sec:standard_groups}) it follows that $\Ad(G)-\Id\subseteq t\End_{\F_p[[t]]}(\mathcal{L})$, which implies $\I_1\subseteq t\End_{\F_p[[t]]}(\mathcal{L})$, i.e., $\I_1^i\subseteq t^i\End_{\F_p[[t]]}(\mathcal{L})$.
Thus $\I_1^i$ gives a strictly decreasing sequence of submodules, so there exists $l\in\NN$ such that $\I_1^l\leq tA$.
\begin{claim}
For every $u>s$ we have $\I_{u+s}\leq\I_1\I_u+\I_u\I_1$.
\end{claim}
By Claim 1 we know that $G_{u+s}\leq [G_u,G]$.
Suppose $g=[g_u,g_1]$, where $g_u\in G_u,\ g_1\in G_1$.
Then
\begin{align*}
\Ad(g)-\Id&=\Ad([g_u,g_1])-\Id\\
&= \Ad(g_u^{-1}g_1^{-1})[\Ad(g_u)-\Id,\Ad(g_1)-\Id]_L\in \I_{u}\I_{1}+\I_{1}\I_{u}.
\end{align*}
If $g=fh$ is a product of commutators in $[G_u,G]$, then
\begin{align}\label{eq:RG_prod_of_ads}
\Ad(fh)-\Id=(\Ad(f)-\Id)(\Ad(h)-\Id)+\Ad(f)-\Id-(\Ad(h)-\Id)
\end{align}
and by the previous case $g\in \I_1\I_u+\I_u\I_1$.
\begin{claim}
For every $u>s$ we have $\I_{u+(2l-1)s}\leq t\I_u$.
\end{claim}
Applying the previous claim $(2l-1)$ times we obtain
\[
\I_{u+s(2l-1)}\subseteq \sum_{i+j=2l-1}\I_{1}^i\I_{u}\I_{1}^j.
\]
It follows from Claim 2 that $\I_{u+s(2l-1)}\leq t\I_{u}$.

\begin{claim}
Let $H$ be an open subgroup of $G$, then $\I_1/\I(H)$ is finite.
\end{claim}
Let $f_1,\ldots,f_{d^2}$ be a set of free generators of $\End_{\Fq [[t]]}(\mathcal{L})$ as an $\Fq [[t]]$-module.
Then every element $E\in\End_{\Fq [[t]]}(\LL)$ can be expressed as
linear combination of the $f_i$ with coefficients in $\Fp [[t]]$.
If we assign to every such $E$ its coefficients with respect to the $f_i$, this gives $\End_{\Fq [[t]]}(\mathcal{L})$ the structure of an $\Fq[[t]]$-analytic variety.
Note that the map $\Ad-\Id:G\to\End_{\Fp[[t]]}(\LL)$ is then an analytic map.
We can consider any power series in the $f_i$ with coefficients in $\Fp[[t]]$ as an analytic function on $\End_{\Fq[[t]]}(\LL)$.
Since $\I_1$ is finitely generated, to prove the claim it suffices to show that $t^j\I_1\subseteq\I(H)$ for some $j\in\NN$.
Suppose that there exists no such $j$, this means that $\I(H)$ is a free $\F_p[[t]]$-module of rank strictly less than that of $\I_1$.
So there exist $\alpha_1,\ldots,\alpha_{d^2}$ such that $\psi({\bf f})=\sum \alpha_i f_i$ satisfies $\psi({\Ad(\bf h)-\Id})=0$ for every $h\in H$ but there exists $g\in G$ such that $\psi(\Ad(g)-\Id)\neq 0$. 
that is, $H$ is contained in the zeroes of some analytic equation but $G$ is not. But this is impossible by \cite[Theorem 3.3]{JaiKlopsch}

\begin{claim}
Let $s(G):=\{g\in G\;:\;\Ad(g)\;\textrm{is semisimple}\}$. Then the closed subgroup generated by $s(G)$ is open in $G$.
\end{claim}
Note that $g^{p^d}\in s(G)$ for every $g\in G$.
We can apply Zelmanov's positive solution to the Restricted Burnside Problem to obtain that $G^{p^d}$ is a finite index subgroup of $G$ and hence open.
It follows that the closed subgroup generated by $s(G)$ is open as well.
\begin{claim}
Suppose $h\in s(G)$. Then $A(\Ad(h)-\Id)^2$, the left ideal of $A$ generated by $(\Ad(h)-\Id)^2$, has finite index in $\I(h)=A(\Ad(h)- \Id)$.
\end{claim}
From the point of view of $\F_p[[t]]$-modules the claim holds if and only if $t^kA(\Ad(h)-\Id)\subseteq A(\Ad(h)-\Id)^2$ for some $k$ and this happens if and only if $\Fp((t))\otimes A(\Ad(h)-\Id)=\Fp((t))\otimes A(\Ad(h)-\Id)^2$.
This is an equality of vector spaces and hence reduces to an equality of their respective dimensions, which are stable under extension of scalars, so this holds if and only if $\overline{\Fp((t))}\otimes A(\Ad(h)-\Id)=\overline{\Fp((t))}\otimes A(\Ad(h)-\Id)^2$.
Now an appropriate change of coordinates allows us to assume that $\Ad(h)-\Id$ is a diagonal matrix, and so for every element $M\in\End_{\overline{\Fp((t))}}(\overline{\Fp((t))}^d)$ we have $M(\Ad(h)-\Id)=0$ if and only if $M(\Ad(h)-\Id)^2=0$.
Hence $\overline{\Fp((t))}\otimes A(\Ad(h)-\Id)=\overline{\Fp((t))}\otimes A(\Ad(h)-\Id)^2$ and we are done.
\begin{claim}
There exist $c_1$ and $c_2$ such that $t^{c_1+kc_2}\I_1\subseteq\I_k$ for every $k$.
\end{claim}

By Claim $5$ and $6$ we know  that $\I(s(G))$ is of finite index in $\I_1$.
Pick $g_1,\ldots,g_t\in s(G)$ such that  $\Ad(g_i)-\Id$ generate $\I(s(G))$.
By Claim $7$ we know that $A(\Ad(g_i)-\Id)/A(\Ad(g_i)-\Id)^2$ is finite.
Thus there exists $c$ such that for every $i$ we have $t^{c}A(\Ad(g_i)-\Id)\subseteq A(\Ad(g_i)-\Id)^2$.
Hence for every $j\geq 0$ we have
\[
t^{c(p^j-1)}\I(s(G))\leq\sum_i A(\Ad(g_i)-\Id)^{p^j}=\sum_i A(\Ad(g_i)^{p^j}-\Id)\leq\I_{p^j}.
\]
The last inequality follows from the fact that $G^{p^j}\leq G_{p^j}$, see \ref{pre:formal_group_laws}.\ref{e_power_map}.
Now it is clear that there exist constants $c_1$ and $c_2$ satisfying the claim.

Consider now $\rho\in\Irr(G)$ and suppose $G_{2(k-1)}\nsubseteq \ker\rho $ but $G_{2k}\subseteq\ker\rho$.

Let $\lambda\in\Irr(G_k)$ be an irreducible constituent of $\rho_{|G_k}$ and let $H\leq G$ be the stabilizer of $\lambda$ with index $|G/H|=p^m$.

\begin{claim}
$G_{s(2l-1)(m+1)}\leq H$.
\end{claim}
Consider the chain of subgroups
\[ H\leq HG_{s(2l-1)(m+1)}\leq HG_{s(2l-1)m}\leq\ldots\leq HG_{s(2l-1)}.\]
Note that $|G:H|=p^m$ implies that there must be an equality in the above chain.
If $H=HG_{s(2l-1)(m+1)}$ we are done.
If not, suppose $HG_{s(2l-1)i}=HG_{s(2l-1)(i+1)}$ for some $1\leq i\leq m$.
From \eqref{eq:RG_prod_of_ads} it follows that $\I(JN)=\I(J)+I(N)$ whenever $N\trianglelefteq G$ and $J\leq G$.
We can now apply the previous equality of subgroups and Claim 4 to obtain
\[\I(H)+\I_{s(2l-1)i}=\I(H)+\I_{s(2l-1)(i+1)}=\I(H)+t\I_{s(2l-1)i}.\]
If we apply the Nakayama Lemma to the above equality of $\F_q[[t]]$-modules we get $\I_{s(2l-1)i}\leq \I(H)$.
In particular, $\LL\I_{s(2l-1)i}\leq \LL\I(H)$.
Therefore, isomorphism \eqref{iso:standard} gives
\[ [G_k,G_{s(2l-1)i}]G_{2k}\leq [G_,H]G_{2k}.\]
In particular, $G_{s(2l-1)i}$ stabilizes $\lambda$ and $G_{s(2l-1)(m+1)}\leq G_{s(2l-1)i}\leq H$.

We are now ready to finish the proof.
Suppose that $k\geq c_1+c_2s(2l-1)(m+1)+2$. 
It follows from Claim 8 that
\[
 t^{c_1+c_2s(2l-1)(m+1)}\LL\I_1\leq \LL\I_{s(2l-1)(m+1)}.
 \]
Now first applying Claim 1 and then the previous inequality together with isomorphism \eqref{iso:standard} we obtain
\begin{align*}
G_{c_1+c_2s(2l-1)(m+1)+k+s}&\leq[G_{c_1+c_2s(2l-1)(m+1)+k},G_1]G_{2k}\\
&\leq[G_k,G_{s(2l-1)(m+1)}]G_{2k}.
\end{align*}
However, since by Claim 9 we have $G_{s(2l-1)(m+1)}\leq H$, we must have
\[
G_{k+c_1+c_2s(2l-1)(m+1)}\leq\ker\lambda.
\]
But recall that $G_{2(k-1)}\nleq\ker\lambda$ and hence
\[
2k-2\leq k+c_1+c_2s(2l-1)(m+1),
\]
equivalently, $k\leq c_1+c_2s(2l-1)(m+1)+2$, which contradicts our assumption. 
Therefore, we must have
\[k\leq c_1+c_2s(2l-1)(m+1)+2.
\]
It follows that $G_{2(c_1+c_2s(2l-1)(m+1)+2)}\leq\ker\rho$.
By Lemma \ref{lm:RG_induction_stabilizer} $\rho$ is induced from an irreducible representation of $H$ and so $\dim\rho\geq p^m$ so we have $G_{2(c_1+c_2s(2l-1)(\log_p n+1)+2)}\leq\ker\rho$ 
for some constants $d_1$, $d_2$ which only depend on $c_1$ and $c_2$.
This implies
\[
G_{d_1+d_2\log_p n}\leq K_n(G).
\]
Thus, $|G/K_n(G)|\leq|G/G_{d_1+d_2\log_p n}|=p^{\dim G(d_1+d_2\log_p n)}\leq cn^{c}$ for some $c\in\mathbb{R}_{\geq 0}$ as desired.
\end{proof}
\begin{remark}
Note that the above proof works equally well for $\mathbb{Z}_p$-standard groups if we assume that $p^k\LL(G)\leq[\LL,G]$ for some $k\in\mathbb{N}$, nevertheless for $p$-adic analytic groups this condition is equivalent to $\LL(G)\otimes\mathbb{Q}_p$ being perfect.
\end{remark}

\section{CSP and Representation Growth of Arithmetic Groups}\label{sec:CSP_vs_RG}

Let $k$ be a global field and $G$ a simply connected semisimple algebraic $k$-group.
Let $S\subset V_k$ be a non-empty finite set of valuations such that $V_\infty\subseteq S$ and $\Gamma$ and $S$-arithmetic subgroup of $G$.
In this section we investigate the relation between the weak Congruence Subgroup Property (namely the finiteness of $C(G,S)$) and the representation growth of $\Gamma$.
The main theorem is the following.
\begin{theorem}\label{th:CSP_implies_PRG}
Let $k,\,S,\,\Gamma$ and $G$ be as above.
If $G$ has the weak Congruence Subgroup Property with respect to $S$, then $\Gamma$ has Polynomial Representation Growth.
\end{theorem}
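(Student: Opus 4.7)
The plan is to reduce the problem to the analysis of the congruence completion and then to establish a \emph{uniform} bound on the representation growth of the local factors $G(\OO_v)$. First, by Lemma \ref{lm:RG_finite_index_subgroup}, PRG is invariant under commensurability, so one may assume $\Gamma=G(\OO_S)$. By Lemma \ref{lm:RG_profinite_finite_image}, $r_n(\Gamma)=r_n(\widehat{\Gamma})$. The weak CSP hypothesis says that $C(G,S)=\ker(\widehat{\Gamma}\twoheadrightarrow\overline{\Gamma})$ is finite, so $C(G,S)$ is a finite normal subgroup of $\widehat\Gamma$. A Clifford-theoretic argument (Lemma \ref{lm:RG_induction_stabilizer}) applied to this finite normal subgroup shows that PRG for $\widehat{\Gamma}$ is equivalent to PRG for $\overline{\Gamma}$. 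Since $G$ is simply connected and semisimple, Strong Approximation (Theorem \ref{th:Strong_Approximation}, applicable as each almost $k$-simple factor has $G_S$ noncompact; otherwise the corresponding factor of $G(\OO_S)$ is finite by Theorem \ref{th:G(OS)_infinite_iff_G_S_noncompact} and contributes trivially) identifies $\overline{\Gamma}$ with $\prod_{v\notin S}G(\OO_v)$.

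Next, I reduce counting irreducible representations of the profinite product $\prod_{v\notin S}G(\OO_v)$ to counting them locally. Every irreducible continuous representation of this product factors through a finite quotient and hence decomposes as a tensor product $\bigotimes_v \rho_v$ where $\rho_v\in\Irr(G(\OO_v))$ is trivial for almost every $v$ (using Lemma \ref{lm:RP_direct_prod} inductively). If $\dim\bigotimes_v\rho_v\leq n$, the set $T=\{v:\rho_v\text{ nontrivial}\}$ has size at most $\log_2 n$, and for every $v\in T$ one has $\dim\rho_v\geq 2$.

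The technical heart of the argument is to exhibit a constant $c$, depending only on $G$ and $S$, such that every local factor $G(\OO_v)$ has representation type $c$ \emph{uniformly} in $v\notin S$. Each $G(\OO_v)$ is a compact $k_v$-analytic group (section \ref{sec:Lie_algebra_analytic}), and by Lemma \ref{pre:lm:Zp_and_Fpt_analytic} it can be regarded as $\mathbb{Q}_p$-analytic or $\F_p((t))$-analytic; pick an open standard subgroup $U_v\leq G(\OO_v)$ by Theorem \ref{pre:th:analytic_open_standard}. In characteristic zero, since $G$ is semisimple, the Lie algebra of $U_v$ is perfect after tensoring with $\mathbb{Q}_p$, so Theorem \ref{th:RG_p-adic_perfect_type_c} applies. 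In positive characteristic, the relevant input is that $\mathcal{L}_{U_v}$ is finitely generated as a $U_v$-module, which follows from the semisimplicity of $G$ together with Lemma \ref{lm:finite_generation_pro_p_modules}, and then Theorem \ref{th:RG_Fp[[t]]-standard_type_c} applies. The main obstacle, and where the globalization is delicate, is to track all constants $s,l,c_1,c_2$ appearing in the proofs of those two theorems and show they can be chosen independently of $v$, using only invariants of $G$ (such as $\dim G$, its root system, and an ideal $\mathfrak{q}_0\subset \OO_S$ governing the $U_v$'s). This uniformity is what distinguishes the argument from the local one of \cite{JaiPre} and requires a global construction of the standard subgroups $U_v$ via the affine group scheme structure on $G$.

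Finally, granted the uniform bound $r_n(G(\OO_v))\leq cn^c$ and $|G(\OO_v):U_v|$ bounded by a polynomial in $n$ (combined with Lemma \ref{lm:RG_finite_index_subgroup}), I combine this with Lemma \ref{pre:lm:numberofval} and Lemma \ref{lm:projrep}: the latter ensures that if $\rho_v$ is nontrivial and faithful on the $G(\F_v)$-quotient, then $\dim\rho_v\geq |\F_v|^\delta$, so the valuations $v$ that can contribute a nontrivial factor of dimension $\leq n$ satisfy $|\F_v|\leq n^{1/\delta}$; by Lemma \ref{pre:lm:numberofval} there are at most $n^{b/\delta}$ such $v$. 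Summing over the choice of $T\subseteq\{v:|\F_v|\leq n^{1/\delta}\}$ with $|T|\leq\log_2 n$ and over tensor products $\bigotimes_{v\in T}\rho_v$ of combined dimension $\leq n$, the total count is bounded by a polynomial in $n$, completing the proof.
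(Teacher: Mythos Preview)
Your proposal is correct and follows essentially the same route as the paper: reduce via wCSP and Strong Approximation to $\prod_{v\notin S} G(\OO_v)$, establish a \emph{uniform} representation-type bound for the local factors (this is the paper's Theorem~\ref{th:allvaluations}, where the constants $s,l,c_1,c_2$ you name are controlled globally via the $\OO_S$-scheme structure, with the key auxiliary input being Lemma~\ref{lm:polval}), then assemble using the lower bound $\dim\rho_v\geq q_v^\delta$ (Proposition~\ref{th:deltaP1}) together with Lemma~\ref{pre:lm:numberofval}. Two small points where the paper is more careful: it passes from $\widehat{\Gamma}$ to $\overline{\Gamma}$ by choosing an open subgroup disjoint from the finite kernel $C(G,S)$ rather than by Clifford theory, and Proposition~\ref{th:deltaP1} must treat both the case where $\rho_v$ factors through $G(\F_v)$ \emph{and} the case where $\rho_{v|N_v^1}$ is nontrivial (the latter via Clifford theory plus the perfectness-type relation $[N_v^k,G(\OO_v)]=N_v^k$), whereas your phrase ``faithful on the $G(\F_v)$-quotient'' addresses only one of these.
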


As in the previous sections we fix a $k$-embedding $\iota:G\to\GL_n$. We also give $G$ the structure of an affine group $\OO_S$-scheme through this embedding, see section \ref{sec:arithmetic_groups}.

We will make use of the previous sections to reduce Theorem $\ref{th:CSP_implies_PRG}$ to a particular setting.
First let us note that, by definition, $\Gamma$ is commensurable with $G(\OO_S)$, so applying Lemma \ref{lm:RG_finite_index_subgroup} it follows that $\Gamma$ has PRG if and only if $G(\OO_S)$ does. Hence we may assume that $\Gamma=G(\OO_S)$.
If $G(\OO_S)$ is finite, then so is $\Gamma$ and Theorem \ref{th:CSP_implies_PRG} becomes trivial, so from now on we will assume that $G(\OO_S)$ is infinite.

Let $G^0$ be the connected component of $G$. Since $G^0$ is of finite index in $G$ (Theorem \ref{pre:connected_components}), it follows that $G^0(\OO_S)$ has finite index in $G(\OO_S)$. Hence applying Lemma \ref{lm:RG_finite_index_subgroup} again it suffices to study the representation growth of $G^0(\OO_S)$. Moreover by Lemma \ref{lm:CSP_connected} $G$ has wCSP if and only if $G^0$ does, so we may assume that $G$ is connected.

Since $G$ is simply connected, Theorem \ref{pre:th:ss_decomposes_simple_factors} gives that $G$ is a direct product of its almost $k$-simple factors, say  $G_1,\ldots, G_r$.
Now Lemma \ref{lm:RP_direct_prod} shows that $G(\OO_S)$ has PRG if and only if each $G_i(\OO_S)$ has.
Hence, we may assume that $G$ is almost $k$-simple.
Moreover Theorem \ref{th:G(OS)_infinite_iff_G_S_noncompact} now implies that $G$ has Strong Approximation.
Recall that this means that $G(k)G_S$ is dense in $G(\mathbb{A}_k)$, which in particular implies that $G(\OO_S)$ is dense in $G(\widehat{\OO}_S):=\prod_{v\in V_f\setminus{S}}G(\OO_v)$. Hence we have $\overline{G(\OO_S)}\cong G(\widehat{\OO}_S)$.

Moreover, Lemmata \ref{lm:CSP_absolutely_simple} and \ref{lm:restriction_scalars_commensurable} (together with one furhter application of Lemma \ref{lm:RG_finite_index_subgroup}) allow us to further assume that $G$ is absolutely almost simple.

Now if $G(\OO_S)$ has wCSP, then $C(G,S)=\ker\left(\widehat{G(\OO_S)}\to G(\widehat{\OO}_S)\right)$ is finite. 
Hence we can find $\Gamma_{0}$ of finite index in $G(\OO_S)$ such that its profinite closure $\widehat{\Gamma}_0$ has trivial intersection with $C(G,S)$.
Thus, we can see $\widehat{\Gamma}_0$ as a finite index subgroup in $G(\widehat{\OO}_S)$.
It follows from Lemma \ref{lm:RG_finite_index_subgroup} and Lemma \ref{lm:RG_profinite_finite_image}  that $G(\OO_S)$ has PRG if and only if $\Gamma_0$ does, if and only if $\widehat{\Gamma}_0$ does , if and only if $G(\widehat{\OO}_S)$ does.
Therefore to prove Theorem \ref{th:CSP_implies_PRG} it suffices to show the following theorem.

\begin{theorem}\label{th:PRG}
Let $k$ be a global field and $S\subset V_k$ a finite set of valuations.
Let $G$ be a connected simply connected absolutely almost simple $k$-group.
Then $G(\widehat{\OO}_S)$ has Polynomial Representation Growth.
\end{theorem}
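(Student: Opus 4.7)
The plan is to analyze each local factor $G(\OO_v)$ separately, establishing a polynomial representation growth bound that is uniform in $v\notin S$, and then to assemble these into a PRG bound for the product $G(\widehat{\OO}_S)=\prod_{v\notin S}G(\OO_v)$.

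For the local step, each $G(\OO_v)$ is a compact $k_v$-analytic group of dimension $d=\dim G$. By Theorem \ref{pre:th:analytic_open_standard} it contains an open $\OO_v$-standard subgroup $U_v$, and by Lemma \ref{pre:lm:Zp_and_Fpt_analytic} this subgroup is $\ZZ_p$-standard (if $\ch k=0$) or $\F_p[[t]]$-standard (if $\ch k>0$). I would establish a single constant $c>0$, independent of $v$, such that $U_v$ has representation type $c$ for every $v\notin S$. In characteristic $0$ the Lie algebra of $U_v$ is $\LL_G\otimes_k k_v$, which is perfect because $G$ is semisimple, so one follows the proof of Theorem \ref{th:RG_p-adic_perfect_type_c} while tracking the integer $r$ satisfying $p^r\LL\subseteq[\LL,\LL]$: for almost all $v$ this integer is bounded in terms of the generic Lie algebra alone, and the finitely many remaining places contribute only bounded extra constants. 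In positive characteristic the Lie algebra may fail to be perfect (e.g.\ $A_1$ or $C_m$ in characteristic $2$), so Theorem \ref{th:RG_Fp[[t]]-standard_type_c} is used instead, and one verifies that $\LL_{U_v}$ is finitely generated as a $U_v$-module uniformly in $v$ and that the auxiliary constants $s,l,c_1,c_2$ appearing in Claims 1--8 of that proof can be chosen independently of $v$. A final finite-index step from $U_v$ to $G(\OO_v)$ via Lemma \ref{lm:RG_type_c_finite_index}, with index bounded by a polynomial in $|\F_v|$ of fixed degree, preserves the uniform PRG after adjusting $c$.

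For the global step, by Lemma \ref{lm:RG_profinite_finite_image} every $\rho\in\Irr(G(\widehat{\OO}_S))$ factors through a finite quotient, and by Lemma \ref{lm:RP_direct_prod} applied inductively it decomposes as $\rho\cong\bigotimes_{v}\rho_v$ with $\rho_v\in\Irr(G(\OO_v))$ and almost all $\rho_v$ trivial. If $\dim\rho\leq n$, then each nontrivial $\rho_v$ has dimension at least $2$, so at most $\log_2 n$ of them are nontrivial. By Lemma \ref{lm:projrep}, every nontrivial irreducible representation of $G(\OO_v)$ has dimension at least $|\F_v|^{\delta}$ for a fixed $\delta>0$; combined with Lemma \ref{pre:lm:numberofval}, this restricts the places carrying a nontrivial factor to a polynomially bounded set in $n$. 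A combinatorial estimate then bounds $R_n(G(\widehat{\OO}_S))$ by a sum over polynomially many subsets $T$ of places and factorizations $n=\prod_{v\in T}n_v$ of products $\prod_{v\in T}r_{n_v}(G(\OO_v))$; inserting the uniform local bound $r_{n_v}(G(\OO_v))\leq c\,n_v^{c}$ yields $R_n(G(\widehat{\OO}_S))\leq p(n)$ for some polynomial $p$.

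The main obstacle is the uniformity of the local constants in $v$, particularly in positive characteristic, where Jaikin's argument a priori produces a constant $c_v$ depending on the specific $\OO_v$-module structure of $\LL_G\otimes_k k_v$. Extracting $v$-independent versions of the inclusion $G_{k+s}\leq[G_k,G]$, the nilpotency of $\I_1$ modulo $t$, and the estimate $t^{c_1+kc_2}\I_1\subseteq\I_k$ requires working with $G$ globally as an $\OO_S$-group scheme and exploiting that its generic fiber controls the nilpotency and generation estimates at almost all $v$; the finitely many places of bad reduction are absorbed by enlarging $S$, which is harmless since the congruence completion only changes by a factor of finite index.
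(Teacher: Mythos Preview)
Your overall architecture---uniform local bound on $G(\OO_v)$ plus a combinatorial assembly over $v$---is exactly the paper's strategy, and your identification of the uniformity of the local constants as the heart of the matter is correct. But there are two genuine gaps in how you propose to obtain that uniformity.

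First, the sentence ``By Lemma \ref{lm:projrep}, every nontrivial irreducible representation of $G(\OO_v)$ has dimension at least $|\F_v|^{\delta}$'' is not a valid citation: Lemma \ref{lm:projrep} is about perfect central extensions of finite simple groups of Lie type, whereas $G(\OO_v)$ is an infinite profinite group. The paper proves this bound separately (Proposition \ref{th:deltaP1}) via a Clifford-theoretic argument: if $\rho_{|N_v^1}$ is trivial, apply Lemma \ref{lm:projrep} to the simple quotient; otherwise pick a nontrivial constituent $\mu$ of $\rho_{|N_v^1}$, show its stabilizer is proper using the crucial identity $[N_v^k,G(\OO_v)]=N_v^k$ (Lemma \ref{lm:goodvaluations}\ref{lm:good_valuations_[G(O_v),Nv1]=Nv1}), and then Lemma \ref{lm:projrep} bounds the index of the stabilizer from below. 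You are missing both the Clifford step and the identity $[N_v^k,G(\OO_v)]=N_v^k$, which itself requires work (Lemma \ref{lm:fgLieJaiPre} and \ref{lm:L(Ov)_fg_module}).

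Second, and more seriously, your plan to ``verify that the auxiliary constants $s,l,c_1,c_2$ appearing in Claims 1--8 of that proof can be chosen independently of $v$'' does not work as stated. The constants $s$ and $l$ do uniformize (the paper handles $s$ via the algebro-geometric Lemma \ref{lm:polval}, and $l=d^2$ suffices), but Claims 5--8 of Theorem \ref{th:RG_Fp[[t]]-standard_type_c} rely on Zelmanov's theorem and on choosing specific semisimple elements $g_i$ with $A(\Ad(g_i)-\Id)/A(\Ad(g_i)-\Id)^2$ finite; neither the Zelmanov bound nor the index of this quotient is controlled uniformly in $v$. The paper does \emph{not} track these constants: in Theorem \ref{th:allvaluations} it \emph{replaces} the semisimple-element argument entirely. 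The substitute (Claims 7--8 there) takes a subgroup $M\trianglelefteq G(\OO_v)$ contained in the stabilizer $H$, refines the chain $MN_v^{ci}$, and uses that each nontrivial section $MN_v^j/MN_v^{j+1}$ is a nontrivial $G(\OO_v)/N_v^1$-module, whence Lemma \ref{lm:projrep} gives $|MN_v^j/MN_v^{j+1}|\ge q_v^\delta$. This converts the index $|N_v^1:H|$ directly into a bound on the depth at which $N_v^{\bullet}$ enters $H$, with constants depending only on $\dim G$ and $\delta$. Without this replacement, your sketch does not close.
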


\begin{remark}
Note that in Theorem \ref{th:PRG} we do not need to assume that $G$ has wCSP with respect to $S$.
\end{remark}

Let us write $V_f^S:=\{v\in V_f\ :\ v\notin S\}$.
Recall that
\[
G(\widehat{\OO}_S)\cong\prod_{v\in V_f^S}G(\OO_v),
\]
so we want to better understand the groups $G(\OO_v)$ for $v\in V_f^S$.
The idea is to find a uniform bound for the representation growth of the groups $G(\OO_v)$, that is, to find $c\in\mathbb{R}_{\geq 0}$ such that for every "good valuation" $v$ the group $G(\OO_v)$ has representation type $c$, and then apply Lemma \ref{lm:RP_direct_prod} to obtain a polynomial bound for the representation growth of $G(\widehat{\OO}_S)$.
By Proposition \ref{th:deltaP1} we can control the effect of $G(\OO_v)/G(\mm_v)$.
Then we show that for almost every $v$ the group $G(\mm_v)$ is $\OO_v$-standard (Lemma \ref{lm:Nv1_standar}) and we can "uniformly" apply  Theorem \ref{th:RG_Fp[[t]]-standard_type_c} to obtain Theorem \ref{th:allvaluations}.

First, let us briefly recall how $G(k_v)$ has the structure of a $k_v$-analytic group for every $v\in V_f^S$.
If we see $G\subseteq\GL_N\subseteq\mathbb{A}_{N^2+1}$, then $G$ is determined by an ideal given by some polynomials $f_1,\ldots,f_m\in k[x_1,\ldots,x_{N^2+1}]$.
Since $G$ is an algebraic variety the set of regular points is Zariski dense in $G$, moreover since $G$ acts transitively on itself by automorphism, it follows that all points in $G$ are regular.
Hence, by the Jacobian Criterion (see for instance \cite[Theorem 2.19]{Liu}, the Jacobian of $f_1,\ldots,f_m$ has rank $N^2+1-\dim G$.
Thus, applying the Inverse Function Theorem for local fields (\cite[Part II, Chapter III]{Ser}) it follows that at every point $x\in G(k_v)$ the projection onto the, say first, $\dim G$ coordinates defines a chart on an open neighbourhood of $x$. This gives the structure of a $k_v$-analytic variety to $G(k_v)$ and multiplication and inversion, which are given by maps of algebraic $k$-groups, are analytic with respect to this structure.

Now on the one hand we have $\mathcal{L}(G(k_v))$, the $k_v$-Lie algebra associated to the analytic group $G(k_v)$ and on the other hand we have $L_G(k_v)=k_v\otimes L(k)$ the $k_v$-Lie algebra associated to the algebraic $k$-group $G$.
Since $\Der_{k_v}(k_v[G(k_v)])=k_v\otimes \Der_k(k[G])$ we have $L(k_v)\cong\LL(G(k_v))$.

Equivalently, if we regard $G$ as an affine group $\OO_S$-scheme then
\[
L_G(R)=\Der_{\OO_S}(\OO_S[G],R)\cong \Hom_{\OO_S}(I_{\OO_S[G]}/I_{\OO_S[G]}^2),R)
\]
for every $\OO_S$-algebra $R$, see section \ref{sec:affine_group_schemes}.
In particular,
\[
L_G(k_v)=L_G(k)\otimes k_v=L_G(\OO_S)\otimes k_v\cong\LL(G(k_v)).
\]

Let us introduce some useful notation and state a series of results which will allow us to prove Theorem \ref{th:PRG}.
Given $v\in V_f^S$, recall that $\mm_v$ stands for the maximal ideal of $\OO_v$ and let us write $\mathfrak{p}_v:=\{x\in\OO_S\ :\ x\in\OO_v\} $ for the corresponding maximal ideal of $\OO_S$.
We will fix a uniformizer $\pi_v\in\OO_v$ such that $\mm_v=(\pi_v)$.
Recall that $\F_v=\OO_v/\mm_v$ stands for the corresponding residue field of size $q_v:=|\F_v|$.
For every $k\in\NN$ let us write
\[
N_v^k=G(\mm_v^k):=G(\OO_v)\cap \ker\left(\GL_N(\OO_v)\to \GL_N(\OO_v/\mm_v^k)\right),
\]
for the principal congruence subgroup of level $k$ of $G(\OO_v)$.
If we see $G$ embedded in $\GL_N$ this is nothing but the points of $G$ whose coordinates lie in $\mm_v^k$.
If one thinks of $G$ as an affine group $\OO_S$-scheme then
\[
G(\mm_v^k)=\{\varphi\in\Hom_{\OO_S}(\OO_S[G],\OO_v)\,:\, \varphi(I_{\OO_S[G]})\subseteq\mm_v^k\},
\] where $I_{\OO_S[G]}=\ker\epsilon$ and $\epsilon:\OO_S[G]\to \OO_S$ represents the identity of $G_{\OO_S}$, see section \ref{sec:affine_group_schemes}.

We want to apply Theorem \ref{th:RG_Fp[[t]]-standard_type_c} to the groups $G(\OO_v)$, so we begin by investigating the adjoint action in semisimple algebraic $k$-groups. 
\begin{lemma}\label{lm:fgLieJaiPre}
Let $Q$ be a connected simply connected semisimple algebraic group defined over a field $F$ and $H\leq Q(F)$ a Zariski dense subgroup.
If $L$ is the Lie algebra of $Q$, then 
\[L(F)=[L(F),H].\]
\end{lemma}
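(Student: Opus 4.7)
The natural strategy is to first upgrade the hypothesis from ``$H$ Zariski dense'' to ``$H = Q(F)$'', and then prove the resulting algebraic statement $[L(F),Q(F)] = L(F)$ using the root structure of $Q$.

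For the reduction, let $W = [L(F),H]$, an $F$-linear subspace of the finite-dimensional space $L(F) = L_Q(F)$, and let $\overline{W}$ denote its Zariski closure in $L$. Since $W \subseteq L(F)$ is an $F$-subspace, $\overline{W}$ is a linear subvariety of $L$ defined over $F$ (Theorem~\ref{lm:sub_G(k)_has_closure_defined_k}), and $\overline{W}(F) = W$. For each $X \in L(F)$ the orbit-difference morphism
\[
\phi_X \colon Q \to L, \qquad g \mapsto \Ad(g)X - X,
\]
is an $F$-morphism of varieties with $\phi_X(H) \subseteq W \subseteq \overline{W}$. Thus $\phi_X^{-1}(\overline{W})$ is a Zariski-closed subvariety of $Q$ containing the Zariski-dense set $H$, hence equals $Q$. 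In particular $\phi_X(Q(F)) \subseteq \overline{W}(F) = W$, which shows $[L(F),Q(F)] \subseteq [L(F),H]$; the reverse inclusion is trivial. Note also that the density hypothesis forces $F$ to be infinite whenever $Q$ is non-trivial, since $Q(\FF_q)$ is finite and so cannot be Zariski dense in a positive-dimensional variety.

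It remains to prove $[L(F),Q(F)] = L(F)$. I would argue this by passing to $\overline F$ and using the root space decomposition $L_{\overline F} = L_0 \oplus \bigoplus_{\alpha \in \Phi} L_\alpha$ with respect to a maximal torus of $Q_{\overline F}$. For a root $\alpha$, $s \in F$, and a root vector $X_{-\alpha} \in L_{-\alpha}$ one computes
\[
\Ad(u_\alpha(s))X_{-\alpha} - X_{-\alpha} = s\, h_\alpha + s^2\, X_\alpha,
\]
with no higher-order terms (the series terminates since $\alpha - \alpha = 0$ and $\alpha + \alpha \notin \Phi$ for the relevant root systems). Because $F$ is infinite, two distinct values of $s$ give two linearly independent vectors of the form $s h_\alpha + s^2 X_\alpha$, whose $F$-span contains both $h_\alpha$ and $X_\alpha$. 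A symmetric argument with $u_{-\alpha}(s)$ acting on $X_\alpha$ produces $X_{-\alpha}$, while acting with $u_\alpha(s)$ on an arbitrary weight vector $X_\beta$ fills in the remaining root spaces $L_{\beta + n\alpha}$. Since $Q$ is simply connected and semisimple, the coroot vectors $\{h_\alpha : \alpha \in \Phi\}$ span the Cartan subalgebra $L_0$ (this is where simple connectedness enters, via the coroot lattice equaling the coweight lattice). Altogether $[L_{\overline F}, Q(F)] = L_{\overline F}$, and intersecting with $L(F)$ gives the claim.

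\textbf{Main obstacle.} The subtle point is the behaviour in bad characteristic, where $L$ can possess a non-zero centre fixed by the entire adjoint action (for instance, $\mathfrak{sl}_2$ in characteristic $2$ contains the identity matrix $h$, which is $\Ad$-invariant). Invariance does not preclude $h$ from lying in $[L(F),Q(F)]$: the polynomial dependence $s \mapsto s h_\alpha + s^2 X_\alpha$ already illustrates how an invariant element is recovered as an $F$-linear combination of non-invariant differences, provided $F$ is infinite. Making this separation work uniformly for every simply connected semisimple $Q$ in every characteristic, and verifying in particular that the coroots still span $L_0$ in small characteristics, is the delicate part of the argument.
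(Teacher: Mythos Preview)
Your approach is correct and essentially the same as the paper's: reduce from $H$ to the full group via Zariski density and continuity of the adjoint action, then use the root structure to reduce to a rank-one computation. The paper carries out the reduction over $\bar F$ first (via a dimension argument) and then invokes Steinberg explicitly to identify each $\langle X_\alpha,X_{-\alpha}\rangle$ with $\SL_2$ acting on $\mathfrak{sl}_2$, finishing with a short matrix computation that uses one torus element and one unipotent; your version works directly with root vectors and uses two unipotent values plus a Vandermonde separation, which is equivalent. Your worry about bad characteristic is well-founded but already resolved by your own argument: the polynomial $s\mapsto s\,h_\alpha - s^2 X_\alpha$ over an infinite field recovers $h_\alpha$ regardless of whether it is central, and for simply connected $Q$ the coroots do span $L_0$ (this is exactly the paper's claim $L(F)=\langle\sum_\alpha \mathfrak{x}_{-\alpha}+\mathfrak{h}_\alpha+\mathfrak{x}_\alpha\rangle$, which follows from the Chevalley basis).
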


\begin{proof}
Since we are dealing  with vector spaces, the equality amounts to equality of dimensions.
Extension of scalars does not affect dimension and hence if $\bar{F}$ is an algebraic closure of $F$ it suffices to show $\bar{F}\otimes_{F}L(F)=\bar{F}\otimes_F[L(F),H]$, that is, $L(\bar{F})=[L(\bar{F}),H]$.
Moreover, since the adjoint action is Zariski continuous and $[L(\bar{F}),H]$, being a vector subspace, is Zariski closed we have $[L(\bar{F}),H]=[L(\bar{F}),Q(\bar{F})]$.
We may therefore assume without loss of generality that $F$ is algebraically closed and that $H=Q(F)$.

In this setting $Q$ can be obtained as a Chevalley group scheme over $F$ (see the comments before Corollary 5.1 in \cite{Steinberg}).
Let $\alpha$ be a root of an associated  (absolute )root system $\Phi$ of $Q$ and let $X_\alpha$ denote the corresponding unipotent subgroup, then $G$ is generated by $\{X_{\alpha}\;:\;\alpha\in\Phi\}$, see \cite[Chapter 3]{Steinberg}.
Then, since $Q$ is simply connected, the subgroup $\langle X_\alpha,X_{-\alpha}\rangle$ acts on its Lie algebra $\mathfrak{x}_{-\alpha}\oplus \mathfrak{h}_\alpha\oplus\mathfrak{x}_\alpha$ as $\SL_2(F)$ acts on its Lie algebra $\mathfrak{sl}_2$ (\cite[Corollary 3.6]{Steinberg}).
Moreover $L(F)=\langle \sum_{\alpha\in\Phi}\mathfrak{x}_{-\alpha}+\mathfrak{h}_\alpha+\mathfrak{x}_\alpha\rangle$, so it suffices to check the equality in the case $Q=\SL_2$ and this reduces to the following quick calculation.
\begin{align*}
&\begin{pmatrix} k^{-1} & 0\\ 0 & k\end{pmatrix}
\begin{pmatrix} 0 & 1\\ 0 & 0\end{pmatrix}
\begin{pmatrix} k & 0\\ 0 & k^{-1} \end{pmatrix}
-
\begin{pmatrix}  0 & 1\\ 0 &0\end{pmatrix}
&=&
\begin{pmatrix} 0 & k^{-2}-1\\ 0 & 0\end{pmatrix}
\\
&\begin{pmatrix} 1 & 0\\ -k & 1\end{pmatrix}
\begin{pmatrix} 0 & 1\\ 0 & 0\end{pmatrix}
\begin{pmatrix} 1 & 0\\ k & 1 \end{pmatrix}
-
\begin{pmatrix}  0 & 1\\ 0 &0\end{pmatrix}
&=&k\begin{pmatrix} 1 & 0\\ 0 & -1\end{pmatrix}
+
\begin{pmatrix} 0 & 0\\ -k^2& 0 \end{pmatrix}
\end{align*}
From the first calculation we obtain that all elements of the form $\begin{pmatrix}
0& x\\ 0 &0
\end{pmatrix}$
belong to $[\mathfrak{sl}_2,\SL_2]$ and by a symmetric operation the same holds for elements of the form
$\begin{pmatrix}
0 & 0\\ x& 0
\end{pmatrix}$.
Now from the second calculation we obtain that all elements of the form
$\begin{pmatrix}
x & 0\\
0 &-x
\end{pmatrix}$
belong to $[\mathfrak{sl}_2,\SL_2]$ and this finishes the proof.
\end{proof}

As a direct application, we obtain the following lemma.

\begin{lemma}\label{lm:L(Ov)_fg_module}
Let $k$, $S$ and $G$ be as in Theorem \ref{th:PRG}. Then for every $v\in V_f^S$, $L(\OO_v)$ is finitely generated as an $N_v^1$-module. Moreover, for almost every $v\in V_f^S$ we have
\[L(\OO_v)=[L(\OO_v),G(\OO_v)].\]
\end{lemma}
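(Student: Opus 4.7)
The plan is to attack the two statements separately, since the first holds for every $v\in V_f^S$ while the second only for almost every $v$. The common engine is Lemma \ref{lm:fgLieJaiPre}, together with the fact (recorded in Theorem \ref{th:G(k)_reductive}.\ref{e_G(k)_Zariski_dense}) that many naturally occurring subgroups of $G$ are Zariski dense.

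\textbf{Finite generation as an $N_v^1$-module.} First I would observe that $N_v^1=G(\mm_v)$ is open in $G(k_v)$ in the $k_v$-analytic topology and is therefore Zariski dense in $G$: a proper Zariski-closed subvariety $V\subsetneq G$ has strictly smaller dimension as a $k_v$-analytic submanifold than $G(k_v)$, hence $V(k_v)$ has empty interior, so no analytically open subset of $G(k_v)$ can be contained in $V(k_v)$. Applying Lemma \ref{lm:fgLieJaiPre} with $F=k_v$, $Q=G$ and $H=N_v^1$ yields $L_G(k_v)=[L_G(k_v),N_v^1]$. The $\OO_v$-submodule $M:=[L_G(\OO_v),N_v^1]\subseteq L_G(\OO_v)$ is stable under the $\OO_v$-linear adjoint action, and its $k_v$-span equals $[L_G(k_v),N_v^1]=L_G(k_v)$. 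Hence the finitely generated $\OO_v$-module $L_G(\OO_v)/M$ is torsion, so finite (as $\OO_v$ is a DVR with finite residue field). Since $L_G(\OO_v)\cong\OO_v^{\dim G}$ is an abelian pro-$p$ group (with $p$ the residue characteristic at $v$) and $N_v^1$ is a pro-$p$ group, Lemma \ref{lm:finite_generation_pro_p_modules} gives the finite generation.

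\textbf{The equality for almost every $v$.} Now $G(k)$ is Zariski dense in $G$ by Theorem \ref{th:G(k)_reductive}.\ref{e_G(k)_Zariski_dense}, so Lemma \ref{lm:fgLieJaiPre} again provides $L_G(k)=[L_G(k),G(k)]$. I would fix a $k$-basis $e_1,\dots,e_d$ of $L_G(k)$ and, for each $j$, expand
\[
e_j=\sum_{i=1}^{N_j}[x_{ij},g_{ij}],\qquad x_{ij}\in L_G(k),\ g_{ij}\in G(k),
\]
involving only finitely many $x_{ij}$ and $g_{ij}$. Each such element is integral at almost all valuations, so away from a finite set $T\subset V_f^S$ we have $x_{ij}\in L_G(\OO_v)$, $g_{ij}\in G(\OO_v)$, and (since $L_G$ corresponds over the Dedekind domain $\OO_S$ to a finitely generated projective module of rank $d$) we may also arrange that $e_1,\dots,e_d$ is an $\OO_v$-basis of $L_G(\OO_v)$. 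For every $v\in V_f^S\setminus T$ each $e_j$ lies in the $\OO_v$-submodule $[L_G(\OO_v),G(\OO_v)]$ of $L_G(\OO_v)$, and taking $\OO_v$-linear combinations yields $L_G(\OO_v)=[L_G(\OO_v),G(\OO_v)]$.

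\textbf{Main obstacle.} There is no genuinely hard step; the only point that requires care is verifying that $[L_G(\OO_v),N_v^1]$ has full $\OO_v$-rank inside $L_G(\OO_v)$, which needs the Zariski density of $N_v^1$ plus the $\OO_v$-linearity of the adjoint action. The basis choice in Part 2 is routine provided one remembers that $L_G$ is locally free over $\OO_S$, so a single $k$-basis does the job for cofinitely many $v$.
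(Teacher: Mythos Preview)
Your proposal is correct and follows essentially the same route as the paper: both parts amount to applying Lemma \ref{lm:fgLieJaiPre} to a suitable Zariski-dense subgroup and then clearing finitely many denominators to pass from the field to $\OO_v$. The only cosmetic differences are that the paper uses $G(\OO_S)$ (Zariski dense by Proposition \ref{prop:G(OS)_Zariski_dense}) rather than $G(k)$ for the second part, and obtains the Zariski density of $N_v^1$ from the inclusion $G(\OO_S)\subseteq G(\OO_v)$ together with $|G(\OO_v):N_v^1|<\infty$, rather than via your analytic-topology argument.
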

\begin{proof}
$G(\OO_S)$ is Zariski-dense by Proposition \ref{prop:G(OS)_Zariski_dense}. 
Thus, we can apply Lemma \ref{lm:fgLieJaiPre} to obtain $L(k)=[L(k),G(\OO_S)]$.
Since $L(k)=k\otimes_{\OO_S}L(\OO_S)$, this implies  
\[\pp_{v_1}^{i_1}\cdots\pp_{v_{t}}^{i_t}L(\OO_S)\subseteq[L(\OO_S),G(\OO_S)]\]
for certain prime ideals $\pp_{v_i}\subseteq\OO$ corresponding to distinct valuations $v_1,\,\ldots,\,v_t\in V_f^S$.
It follows that $L(\OO_v)=[L(\OO_v),G(\OO_v)]$ for every $v\in V_f^S\setminus\{v_1,\,\ldots,\,v_t\}$.

In general, note that, for $v\in V_f^S$, $G(\OO_v)$ is Zariski-dense since $G(\OO_S)$ is.
It follows that $N_v^1=G(\mm_v)$, being a finite index subgroup in $G(\OO_v)$, is Zariski-dense.
Applying Lemma \ref{lm:fgLieJaiPre} we obtain $L(k_v)=[L(k_v),N_v^1]$.
Hence, $[L(\OO_v),N_v^1]$ has finite index in $L(\OO_v)$, which implies that $L(\OO_v)$ is a finitely generated $N_v^1$-module (note that $N_v^1$ is a pro-$p$ group for $p=\ch \mathbb{F}_v$ and apply Lemma \ref{lm:finite_generation_pro_p_modules} to $L(\OO_v)$ as an $N_v^1$-module).
\end{proof}

\begin{lemma} \label{lm:G(Fv)_simple}
Let $G$ be a connected simply connected absolutely almost simple algebraic $k$-group.
Then for almost every $v\in V_f^S$ the group $
G(\OO_v)/N_v^1$ is perfect and is a central extension of a finite simple group of Lie type $H(\F_{v})/Z(H(\F_v))$, where $H$ is of the Lie type of $G$.

\end{lemma}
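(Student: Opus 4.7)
The plan is to exploit the $\OO_S$-scheme structure of $G$ (see section \ref{sec:arithmetic_groups}) to spread out the data of $G$ over almost every valuation $v \in V_f^S$, reduce modulo $\mm_v$ using Hensel's lemma, and then invoke Tits' theorem \ref{pre:th:FGLT_simple} on finite groups of Lie type.

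First, I would argue that for almost every $v \in V_f^S$ the affine group $\OO_v$-scheme $G_{\OO_v} := G \otimes_{\OO_S} \OO_v$ is smooth and its special fibre $G_{\F_v} := G \otimes_{\OO_S} \F_v$ is a connected simply connected absolutely almost simple algebraic $\F_v$-group of the same Lie type as $G$. This is a standard spreading-out argument: $G$ is cut out of $\GL_N$ over $\OO_S$ by finitely many polynomials, and the properties of smoothness, connectedness, simply connectedness, semisimplicity, being absolutely almost simple, and having a given root datum are all open conditions on $\Spec \OO_S$ that hold at the generic point $\Spec k$, hence away from finitely many closed points. One only has to exclude the finitely many $v$ at which any of these properties degenerates.

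Second, for such good $v$, smoothness of $G_{\OO_v}$ combined with Hensel's lemma gives that the reduction map $G(\OO_v) \to G_{\F_v}(\F_v)$ is surjective. Its kernel is, by the scheme-theoretic definition recalled in section \ref{sec:arithmetic_groups}, precisely $N_v^1 = G(\mm_v)$. This yields the canonical isomorphism $G(\OO_v)/N_v^1 \cong G_{\F_v}(\F_v)$. Identifying $G_{\F_v}(\F_v)$ with the fixed points $(G_{\overline{\F}_v})^F$ of the Frobenius endomorphism $F$ associated to the $\F_v$-structure, we land exactly in the setting of a finite group of Lie type as in section \ref{sec:FGLT}, and the Lie type of $(G_{\overline{\F}_v}, F)$ coincides with that of $G$ by construction.

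Third, I would apply Theorem \ref{pre:th:FGLT_simple}: since $|\F_v| = q_v \to \infty$ as $v$ ranges over $V_f^S$, we have $q_v \notin \{2,3\}$ for almost every $v$, so after excluding the remaining finitely many exceptions of Tits' theorem we obtain that $G_{\F_v}(\F_v)$ is perfect and that $G_{\F_v}(\F_v)/Z(G_{\F_v}(\F_v))$ is a finite simple group of Lie type, with Lie type matching that of $G$. Transporting this back through the isomorphism $G(\OO_v)/N_v^1 \cong G_{\F_v}(\F_v)$ gives the conclusion.

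The main potential obstacle is the spreading-out step, specifically verifying that the Lie type (i.e.\ the root datum together with the Galois twist giving rise to the Steinberg/Frobenius endomorphism) is genuinely preserved under reduction for all but finitely many $v$. This is fine because the split form of $G$ already spreads out over a smooth $\OO_S$-group scheme after a finite étale base change, and for almost every $v$ the residue extension $\F_v \to \overline{\F}_v$ sees the same decomposition/twist data as the original $k$; any ramification or exceptional behaviour occurs only at finitely many $v$, which we may exclude.
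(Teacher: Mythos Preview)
Your proposal is correct and follows essentially the same route as the paper's own sketch: spread out the $\OO_S$-scheme $G$ to obtain good reduction (the paper cites specific results from EGA~IV and Conrad's notes for smoothness, semisimplicity, and preservation of the root datum, where you invoke the generic openness of these properties), use smoothness plus Hensel to get $G(\OO_v)/N_v^1\cong G_{\F_v}(\F_v)$, and then apply Theorem~\ref{pre:th:FGLT_simple}. The paper is slightly more explicit about where the finitely many bad $v$ come from at each step, but the architecture is the same.
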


\begin{proof}[Sketch of proof]
The proof of this result rests on the relative theory of reductive group schemes over rings which lies beyond the general scope of this work. We give here a sketch of the proof and point out to references for the details.

Recall that we gave $G$ the structure of an affine group $\OO_S$-scheme.
Given $v\in V_f^S$, let us write $G_{\mathfrak{p}_v}$ for the reduction of $G$ modulo $\mathfrak{p}_v$, that is, $G_{\mathfrak{p}_v}$ is an affine group $\F_v$-scheme represented by $\OO_S[G]\otimes_{\OO_S}\OO_S/\mathfrak{p_v}=\OO_S[G]\otimes_{\OO_S}\F_v$, where $\OO_S[G]$ represents $G$.
Since $G$ is a connected and irreducible algebraic variety (as an affine algebraic group), it is reduced (as an affine group $\OO_S$-scheme, that is $\OO_S[G]$ has no nilpotent elements) and this implies that the same holds for $G_{\mathfrak{p}_v}$ for almost every $v\in V_f^S$, see \cite[Proposition 9.7.7 and 9.7.8]{EGAIV}.
If $G_{\mathfrak{p}_v}$ is reduced, then it is an algebraic variety and hence has some regular point, but it is also an algebraic $\F_v$ group, so indeed all points are regular and $G_v$ is smooth.
Hence we can apply \cite[18.5.17]{EGAIV} and obtain that the reduction map $G(\OO_v)\to G_{\mathfrak{p}_v}(\F_v)$ is surjective for almost every $v\in V_f^S$.

Now if we see $k$ as the direct limit of the rings $\OO_S[1/r]$ for $r\in\OO_S$ and since $G$ is a semisimple $k$-group, we can apply \cite[Corollary 3.1.11]{Con} to obtain that there exists $s\in\OO_S$ (consisting of those "bad" primes) such that $G_{\OO_S[1/s]}$ (which is the affine group $\OO_S[1/s]$-scheme represented by $\OO_S[G]\otimes_{\OO_S}\OO_S[1/s]$) is a semisimple affine group $\OO_S[1/s]$-scheme, that is, for every $\mathfrak{p}\in\Spec(\OO[1/s])$ the group represented by $\OO_S[1/s][G]\otimes_{\OO_S[1/s]}(\OO_S[1/s]/\mathfrak{p})$ is a semisimple $(\OO_S[1/s]/\mathfrak{p})$-group.
But note that $\OO_S[1/s][G]\otimes_{\OO_S[1/s]}(\OO_S[1/s]/\mathfrak{p})=\OO_S[G]\otimes\F_v$  where $v$ is the valuation corresponding to the prime ideal $\mathfrak{p}\cap\OO_S$.
This means that for almost every $v\in V_f^S$ the group $G_{\mathfrak{p}_v}$ is semisimple.

Now maybe after considering a finite extension $k'$ of $k$ ( or the corresponding finite integral extension $\OO_{S'}$ of $\OO_S$) one can find a split maximal $k'$-torus $T$ in $G_{k'}$ ($G_{\OO_{S'}}$) and applying a similar direct limit argument  one shows that $T_{\mathfrak{p}}$ is a split maximal torus in $G_{\mathfrak{p}}$ for almost every $\mathfrak{p}\in\Spec(\OO_{S'})$ (see section 3 in \cite{Con} and in particular Lemmata 3.2.5 and 3.2.7).

This shows that for almost every $v\in V_f^S$ the root systems associated to $G$ and $G_{\mathfrak{p_v}}$ are the same. Since the root system detects being simply connected and absolutely almost simple, this implies that $G_{\mathfrak{p}_v}$ is simply connected and absolutely almost simple for almost every $v\in V_f^S$.

Therefore, for almost every $v\in V_f^S$ the $\F_v$-group $G_{\mathfrak{p}_v}$ is semisimple, simply connected and absolutely almost simple and we have $G(\OO_v)/N_v^1\cong G_{\mathfrak{p}_v}(\F_v)$.
Given a connected simply connected absolutely almost simple algebraic group $H$ defined over a finite field $\Fq$, $H(\Fq)$ is perfect and $H(\Fq)/Z(H(\Fq))$ is a finite simple group of Lie type (except for a finite number of $q$'s), see Theorem \ref{pre:th:FGLT_simple}.
Hence, noting that for a given prime power $q$ there is a finite number of valuations $v$ such that $q=q_v$ (Lemma \ref{pre:lm:numberofval}), it
follows that the statement is true for almost every $v\in V_f^S$.
\end{proof}

\begin{lemma} \label{lm:Nv1_standar}
Let $G$ be an algebraic group defined over $k$.
Then for almost every $v\in V_f^S$, $G(\mm_v)$ is an $\OO_v$-standard subgroup of $G(\OO_v)$.
\end{lemma}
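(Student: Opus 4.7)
The plan is to realize $G(\mm_v)$, for almost every $v$, via $\OO_v$-valued coordinates around the identity obtained from a multivariable Hensel's lemma, and to extract from this a formal group law with coefficients in $\OO_v$.

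First I would exploit the smoothness of $G$: since $G$ is a group variety over $k$, homogeneity forces every point (in particular the identity $e$) to be simple, so $\dim_k \mathcal{T}_e(G) = d := \dim G$. Fix defining equations $f_1,\dots,f_r \in \OO_S[x_{ij}, \det^{-1}]$ for $G \subseteq \GL_N$ and set $u_{ij} := x_{ij}-\delta_{ij}$, so that $e$ corresponds to the origin in these coordinates. Smoothness of $G$ at $e$ over $k$ forces the Jacobian $(\partial f_i/\partial u_{jk})|_e$ to have rank $N^2-d$ over $k$. After reordering, I may split $\mathbf{u} = (\mathbf{z}, \mathbf{y})$ with $\mathbf{z}\in k^d$ and $\mathbf{y}\in k^{N^2-d}$, and select $N^2-d$ of the $f_i$'s so that the minor
\[
\Delta := \det\bigl(\partial f_i/\partial y_j\bigr)\big|_e
\]
is a nonzero element of $\OO_S$.

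Next, I would restrict to those $v \in V_f^S$ for which $\Delta$ is a unit in $\OO_v$; all but finitely many valuations satisfy this. For such $v$, a formal multivariable Newton iteration on the system $f_i(\mathbf{z},\mathbf{y})=0$, starting at $\mathbf{y}^{(0)}=0$, produces a unique $\phi \in \OO_v[[\mathbf{z}]]^{N^2-d}$ with $\phi(0)=0$ and $f_i(\mathbf{z},\phi(\mathbf{z}))=0$. Because $\OO_v$ is complete, the evaluation $\phi(\mathbf{z}_0)$ converges for every $\mathbf{z}_0\in \mm_v^d$ to an element of $\mm_v^{N^2-d}$, and Hensel's lemma identifies this lift as the unique solution of $f_i(\mathbf{z}_0,\cdot)=0$ close to $0$. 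The map
\[
\Phi:\mm_v^d\longrightarrow G(\mm_v), \quad \mathbf{z}\mapsto (\mathbf{z},\phi(\mathbf{z})),
\]
is then a homeomorphism with inverse the projection onto the $\mathbf{z}$-coordinates.

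Finally, I would transport the ambient group law of $\GL_N$ through $\Phi$. Multiplication in $\GL_N$ is polynomial in $(u_{ij})$ with integer coefficients (inversion brings in $\det^{-1}$, which is a unit on $G(\mm_v)$ since $\det\equiv 1\pmod{\mm_v}$ there). Substituting $\mathbf{y}=\phi(\mathbf{z})$ and $\mathbf{y}'=\phi(\mathbf{w})$ and projecting to the $\mathbf{z}$-coordinates gives the composition law
\[
F(\mathbf{z},\mathbf{w}) \;=\; \pi_{\mathbf{z}}\bigl((\mathbf{z},\phi(\mathbf{z}))\cdot(\mathbf{w},\phi(\mathbf{w}))\bigr) \;\in\; \OO_v[[\mathbf{z},\mathbf{w}]]^{d},
\]
a formal group law with $\OO_v$-coefficients. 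This exhibits $(G(\mm_v),\cdot)\cong(\mm_v^d, F)$ as an $\OO_v$-standard subgroup of $G(\OO_v)$. The main obstacle is the integrality/convergence bookkeeping: one must verify that the implicit series $\phi$ has coefficients in $\OO_v$ (not merely $k_v$) and converges on all of $\mm_v^d$, and that the induced group law $F$ remains defined over $\OO_v$. All of this is controlled by the single condition $v(\Delta)=0$, which holds for almost every $v$.
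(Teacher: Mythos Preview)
Your approach is correct and genuinely different from the paper's. You work directly at the level of coordinates: smoothness of $G$ at $e$ over $k$ gives a Jacobian minor $\Delta\in\OO_S\setminus\{0\}$, and for the cofinitely many $v$ with $v(\Delta)=0$ the formal implicit function theorem over $\OO_v$ produces the chart $\Phi$ and hence the $\OO_v$-valued formal group law $F$. The paper instead works scheme-theoretically: it forms the completion $\mathcal{R}_v$ of $\OO_S[G]$ at the ideal $\langle \mathfrak{p}_v, I_{\OO_S[G]}\rangle$, identifies $G(\mm_v)$ with $\Hom_{\mathrm{cont}}(\mathcal{R}_v,\OO_v)$, checks that $\mathcal{R}_v$ is regular for almost every $v$ (this is where the ``almost every'' enters), and then invokes Cohen's Structure Theorem to conclude $\mathcal{R}_v\cong\OO_v[[y_1,\dots,y_d]]$. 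Your argument is more elementary and constructive---it essentially re-proves the relevant special case of Cohen's theorem by hand---while the paper's approach is coordinate-free and meshes with the affine group scheme language used elsewhere (notably in Lemma~\ref{lm:polval}). One small point you should make explicit: after solving only the selected $N^2-d$ equations, you need that the resulting $(\mathbf{z},\phi(\mathbf{z}))$ also satisfies the remaining $f_j$; this follows because each $f_j(\mathbf{z},\phi(\mathbf{z}))\in\OO_v[[\mathbf{z}]]$ vanishes as an element of $k_v[[\mathbf{z}]]$ (the two smooth germs agree over $k$), hence is identically zero.
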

\begin{proof}
Recall that we  have given $G$ a structure of affine group $\OO_S$-scheme.
Multiplication and inverse operations in $G$ are given by $\OO_S$-algebra maps $m^*: \OO_S[G]\to \OO_S[G]\otimes_{\OO_S}\OO_S[G]$ and $i^*: \OO_S[G]\to \OO_S[G]$.
Recall that the group $G(k_v)$ inherits the structure of a $k_v$-analytic group, where $G(\OO_v)$ and $G(\mm_v)$ are open subgroups. We can interpret $G(\mm_v)$ as 
\[
G(\mm_v)=\{\varphi\in\Hom_{\OO_S}(\OO_S[G],\OO_v)\,:\, \varphi(I_{\OO_S[G]})\subseteq\mm_v\},
\]
where $I_{\OO_S[G]}=\ker\epsilon$ and $\epsilon:\OO_S[G]\to\OO_S$ represents the identity element.
If we regard $G\subseteq\GL_N$, this is nothing but the points of $G$ all of whose coordinates lie in $\mm_v$.
Let us write $I=I_{\OO_S[G]}$ and define
\[
I_v:=\langle \mathfrak{p}_v,I\cap\OO_S[G]\rangle\subset\OO_S[G].
\]
Consider $\OO_S[G]_{I_v}$, the localization of $\OO_S[G]$ at $I_v$.
Let $\mathcal{R}_v$ denote the completion of $\OO_S[G]_{I_v}$ with respect to its unique maximal $I_v\OO_S[G]_{I_v}$ and write $\mathcal{M}_v$ for the maximal ideal of $\mathcal{R}_v$.
We then have 
\[
G(\mm_v)=\{\varphi\in\Hom_{\OO_S}(\OO_S[G],\OO_v)\,:\, \varphi(I_{\OO_S[G]})\subseteq\mm_v\}\cong\Hom(R_v,\OO_v),\]
where the latter are assummed to be continuous homomorphisms with respect to the $\mathcal{M}_v$ and $\mm_v$-adic topologies.
The isomorphism follows from the fact that any $\varphi\in\Hom_{\OO_S}(\OO_S[G],\OO_v)$ such that $\varphi(I_{\OO_S[G]})\subseteq\mm_v$ uniquily extends to an elelement in $\Hom(\mathcal{R}_v,\OO_v)$. 
Note that the isomorphisms above are indeed group isomorphisms, where multiplication and inversion naturally extend from $G(\mm_v)$ -- $m^*$ and $i^*$ extend to $\mathcal{R}_v$ and this represents affine group schemes over $\OO_v$.

If we show that $\mathcal{R}_v\cong \OO_v[[y_1,\ldots,y_d]]$, the ring of power series over $d$ indeterminates with coefficients in $\OO_v$, this would induce an isomorphism of groups $G(\mm_v)\cong\Hom(\OO_v[[y_1,\ldots,y_d]],\OO_v)$.
Note that there is a natural identification $\Hom(\OO_v[[y_1,\ldots,y_d]],\OO_v)=\mm_v^d$ and under this identification the induced multiplication and inversion in the latter are given by analytic functions on $d$ variables with coefficients in $\OO_v$, that is, $G(\mm_v)$ is a standard subgroup of $G(\OO_v)$.

We claim that such isomorphism exists for almost every $v\in V_f^S$. 
First let us note that if $\ch k=0$ then $\ch \F_v\notin\mathcal{M}_v^2$, otherwise $I_{\OO_S[G]}$ would not be a proper ideal.
Hence we may assume that $\ch\F_v\notin\mathcal{M}_v^2$.
Now it suffices to show that $\mathcal{R}_v$ is regular, since in this case $\mathcal{R}_v$ is a complete regular local ring such that $\ch\F_v\notin\mathcal{M}_v^2$ and by Cohen's Structure Theorem of complete regular local rings \cite[Theorem 29.7]{Mat} $\mathcal{R}_v\cong\OO_v[[y_1,\ldots,y_d]]$.
Now $\mathcal{R}_v$ is the completion of $\OO[G]_{\I_v}$, so $\mathcal{R}$ is regular if and only if $\OO[G]_{I_v}$ is (\cite[Proposition 11.24]{AtiMac}).
If $d=\dim G$ (as an affine $k$-group) then $\dim \OO[G]=\dim k[G]+1$ ($\dim A$ stands for the Krull dimension of a ring $A$) and $\dim \OO_S[G]_{I_v}=d+1$ for every $v\in V_f^S$
 since $\OO[G]$ is equicodimensional, see \cite[10.6.1]{EGAIV}.
Now note that $\dim_{\F_v}{I_v/I_v^2}=1+\dim_{\F_v}(I\otimes\F_v/I^2\otimes\F_v)$ and the latter equals  $1+\dim G$ for almost every $v\in V_f^S$ ( $I/I^2$ is isomorphic as a vector space to the tangent space at $1$ of $G$ or equivalently its Lie algebra, which is given by linear equations with coefficients in $\OO_S$, these equations will clearly give a $\F_v$-vector space of dimension $\dim G$ for almost every $v\in V_f^S$).
Hence $\OO[G]_{I_v}$ is regular for almost every $v\in V_f^S$ and the proof is complete.

\end{proof}
Putting together the previous results we obtain the following lemma.

\begin{lemma}\label{lm:goodvaluations}
Let $\mathcal{P}$ be the subset of all $v\in V_f^S$ such that:
\begin{enumerate}[label=\roman*)]
\item\label{lm:good_valuations_G(Fv)_simple} $G(\OO_v)/N_v^1$ is perfect and is a central extension of a finite simple group of Lie type $H(\F_{v})$, where $H$ is of the Lie type of $G$.
\item\label{lm:good_valuations_Nv1_standard}
$N_v^1$ is a standard subgroup of $G(\OO_v)$.
\item\label{lm:good_valuations_[G(O_v),Nv1]=Nv1} for every $n\in\NN$, $[N_v^n,G(\OO_v)]=N_v^n$.
\item $G(\OO_v)$ is a perfect group, that is, $G(\OO_v)=[G(\OO_v),G(\OO_v)]$. \label{lm:good_valuations_G(O_v)_perfect}

\end{enumerate}
Then $V_f\setminus\mathcal{P}$ is finite, i.e. , statements \ref{lm:good_valuations_G(Fv)_simple}, \ref{lm:good_valuations_Nv1_standard}, \ref{lm:good_valuations_[G(O_v),Nv1]=Nv1} and \ref{lm:good_valuations_G(O_v)_perfect} are satisfied for almost every $v\in V_f^S$.
\end{lemma}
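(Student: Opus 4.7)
The plan is to assemble the four conditions from results already established in the excerpt, adding a short commutator-calculus argument to pass from the Lie-algebraic statement of Lemma \ref{lm:L(Ov)_fg_module} to the group-theoretic perfectness statements. Conditions \ref{lm:good_valuations_G(Fv)_simple} and \ref{lm:good_valuations_Nv1_standard} are precisely the content of Lemma \ref{lm:G(Fv)_simple} and Lemma \ref{lm:Nv1_standar} respectively, each holding for all but finitely many $v\in V_f^S$. The second half of Lemma \ref{lm:L(Ov)_fg_module} tells us that, after excluding finitely many further valuations, we may additionally assume
\[
L(\OO_v)=[L(\OO_v),G(\OO_v)].
\]
Fix from now on such a $v$ in the cofinite set $\mathcal P$ where \ref{lm:good_valuations_G(Fv)_simple}, \ref{lm:good_valuations_Nv1_standard} and the above equality all hold; the task is to derive \ref{lm:good_valuations_[G(O_v),Nv1]=Nv1} and \ref{lm:good_valuations_G(O_v)_perfect}.

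The crucial ingredient for \ref{lm:good_valuations_[G(O_v),Nv1]=Nv1} is the $\OO_v$-module isomorphism
\[
N_v^k/N_v^{2k}\;\longrightarrow\; \pi_v^k L(\OO_v)/\pi_v^{2k}L(\OO_v),\qquad g\mapsto \mathbf{g},
\]
obtained as in \eqref{iso:standard} from the standard structure of $N_v^1$ provided by \ref{lm:good_valuations_Nv1_standard}. I would first verify that this isomorphism is in fact $G(\OO_v)$-equivariant, where $G(\OO_v)$ acts on the left by conjugation (which makes sense since $N_v^k\trianglelefteq G(\OO_v)$) and on the right via the restriction to $L(\OO_v)$ of the adjoint action $\Ad_{\OO_v}\colon G(\OO_v)\to\Aut_{\OO_v}(L(\OO_v))$. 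This extension of the $N_v^1$-equivariance of \eqref{iso:standard} to the whole of $G(\OO_v)$ follows from the fact that $\Ad_{\OO_v}$ is a morphism of affine group $\OO_v$-schemes and that, in coordinates on $N_v^1$, one has $gxg^{-1}\equiv\mathbf{x}+(\Ad_{\OO_v}(g)-\mathrm{Id})(\mathbf{x})\pmod{\pi_v^{2k}L(\OO_v)}$ for $x\in N_v^k$ and $g\in G(\OO_v)$; I expect this identification to be the main technical point of the proof. Granted this, and using $\OO_v$-linearity of $\Ad$, the subgroup $[N_v^n,G(\OO_v)]\cdot N_v^{2n}/N_v^{2n}$ corresponds to $\pi_v^n[L(\OO_v),G(\OO_v)]/\pi_v^{2n}L(\OO_v)$, which by the equality above is all of $\pi_v^n L(\OO_v)/\pi_v^{2n}L(\OO_v)$. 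Hence
\[
N_v^n=[N_v^n,G(\OO_v)]\cdot N_v^{2n}
\]
for every $n\geq 1$.

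An easy induction, applying the same identity with $n$ replaced by $2^jn$ and using $[N_v^{2^jn},G(\OO_v)]\subseteq [N_v^n,G(\OO_v)]$, upgrades this to $N_v^n=[N_v^n,G(\OO_v)]\cdot N_v^{2^jn}$ for every $j\geq 0$. Since $[N_v^n,G(\OO_v)]$ is a closed subgroup of the profinite group $G(\OO_v)$ and $\bigcap_j N_v^{2^jn}=1$, a standard limiting argument then yields $N_v^n=[N_v^n,G(\OO_v)]$, giving \ref{lm:good_valuations_[G(O_v),Nv1]=Nv1}. Finally, condition \ref{lm:good_valuations_G(O_v)_perfect} is a direct consequence of the other three: by \ref{lm:good_valuations_G(Fv)_simple} the quotient $G(\OO_v)/N_v^1$ is perfect, so $G(\OO_v)=[G(\OO_v),G(\OO_v)]\cdot N_v^1$, while applying \ref{lm:good_valuations_[G(O_v),Nv1]=Nv1} with $n=1$ gives $N_v^1=[N_v^1,G(\OO_v)]\subseteq[G(\OO_v),G(\OO_v)]$. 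Combining the two identities yields $G(\OO_v)=[G(\OO_v),G(\OO_v)]$, completing the argument.
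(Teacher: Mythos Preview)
Your proposal is correct and follows essentially the same route as the paper: parts \ref{lm:good_valuations_G(Fv)_simple} and \ref{lm:good_valuations_Nv1_standard} are quoted from Lemmas \ref{lm:G(Fv)_simple} and \ref{lm:Nv1_standar}, the equality $L(\OO_v)=[L(\OO_v),G(\OO_v)]$ from Lemma \ref{lm:L(Ov)_fg_module} is transported via the $G(\OO_v)$-equivariant isomorphism $N_v^k/N_v^{2k}\cong L(\OO_v)/\mm_v^kL(\OO_v)$ to obtain $N_v^n=[N_v^n,G(\OO_v)]N_v^{2n}$, an induction plus closedness gives \ref{lm:good_valuations_[G(O_v),Nv1]=Nv1}, and \ref{lm:good_valuations_G(O_v)_perfect} is deduced exactly as you do. Your explicit remark that the $G(\OO_v)$-equivariance (not just $N_v^1$-equivariance) of the isomorphism is the technical point is a helpful clarification that the paper leaves implicit.
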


\begin{proof}
Note that \ref{lm:good_valuations_G(Fv)_simple} and \ref{lm:good_valuations_Nv1_standard} are Lemma \ref{lm:G(Fv)_simple} and \ref{lm:Nv1_standar}  respectively.

We prove \ref{lm:good_valuations_[G(O_v),Nv1]=Nv1}.
Given $v\in V_f^S$, suppose that  $L(\OO_v)=[L(\OO_v),G(\OO_v)]$ and that $N_v^1$ is a standard subgroup of $G(\OO_v)$.
The group $N_v^1=G(\mm_v)$ is a normal subgroup of $G(\OO_v)$, so $G(\OO_v)$ acts on it by conjugation.
Recall that for an $\OO_v$-standard group we have a natural identification $f:N_v^1\to \mm_vL(\OO_v)$ and under this identification we have for every $k\in\NN$ an isomorphism of $G(\OO_v)$-modules
\[
N_v^k/N_v^{2k}\cong L(\OO_v)/\mm_v^kL(\OO_v).
\]
In particular, $L(\OO_v)=[L(\OO_v),G(\OO_v)]$ implies that for every $k\in\NN$, we have
\[
[N_v^k,G(\OO_v)]N_v^{2k}=N_v^k
\]
Now fix $n\in\NN$. Note that we have
\[
[N_v^n,G(\OO_v)]N_v^{2n}=[N_v^n,G(\OO_v)][N_v^{2n},G(\OO_v)]N_v^{4n}.\]
Hence an easy induction argument gives that  for every $k>n$ we have $[N_v^n,G(\OO_v)]N_v^k=N_v^n$ and this implies $[N_v^n,G(\OO_v)]=N_v^n$.
From \ref{lm:good_valuations_Nv1_standard} and Lemma \ref{lm:L(Ov)_fg_module} it follows that our initial assumptions hold for almost every $v\in V_f^S$ and the proof is complete.

For \ref{lm:good_valuations_G(O_v)_perfect} note that \ref{lm:good_valuations_G(Fv)_simple} implies that $[G(\OO_v),G(\OO_v)]N_v^1=G(\OO_v)$. But \ref{lm:good_valuations_[G(O_v),Nv1]=Nv1} implies that $[G(O_v),N_v^1]=N_v^1$, hence $N_v^1\subset[G(\OO_v),G(\OO_v)]$ and $[G(\OO_v),G(\OO_v)]=G(\OO_v)$ for almost every $v\in V_f^S$.

\end{proof}

We are now ready to prove the following proposition, which will be used for the proof of Theorem \ref{th:PRG}.
\begin{proposition}\label{th:deltaP1}
Let $\mathcal{P}$ be as in Lemma \ref{lm:goodvaluations}. There exists $\delta>0$ such that for every $v\in\mathcal{P}$, every nontrivial irreducible representation of $G(\OO_v)$ is of dimension at least $q_v^{\delta}$.
\end{proposition}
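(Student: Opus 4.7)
My plan is to split the analysis of an arbitrary nontrivial $\rho\in\Irr(G(\OO_v))$ according to where its kernel first swallows the congruence filtration. Since $\rho$ has finite image and the $N_v^k$ form a neighbourhood base of the identity, there is a smallest $k\geq 1$ with $N_v^k\subseteq\ker\rho$; I will bound $\dim\rho$ from below depending on whether $k=1$ or $k\geq 2$.

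The case $k=1$ is immediate. Then $\rho$ factors through $H:=G(\OO_v)/N_v^1$, which by property \ref{lm:good_valuations_G(Fv)_simple} of $\mathcal{P}$ is a perfect finite central extension of the simple group of Lie type $H(\F_v)/Z(H(\F_v))$ of the same Lie type as $G$. Because $H$ is perfect, $\rho$ cannot be a nontrivial linear character, so the induced projective representation is still nontrivial; Lemma \ref{lm:projrep} then gives a uniform $\delta>0$ (independent of $v$ and of the Lie type, hence in particular independent of $q_v$) such that $\dim\rho\geq q_v^{\delta}$.

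The case $k\geq 2$ is the main case, handled by Clifford theory. Since $v\in\mathcal{P}$, the subgroup $N_v^1$ is an $\OO_v$-standard group (property \ref{lm:good_valuations_Nv1_standard}), so for every $i\geq 1$ the filtration step $V_i:=N_v^i/N_v^{i+1}$ is an abelian pro-$p$ quotient on which $N_v^1$ acts trivially (because $[N_v^1,N_v^i]\subseteq N_v^{i+1}$), and hence $V_i$ is naturally a module for $H=G(\OO_v)/N_v^1$ that can be identified with $L(\OO_v)\otimes_{\OO_v}\F_v$ equipped with the adjoint action. Applying this with $i=k-1$, the restriction $\rho|_{N_v^{k-1}}$ factors through the abelian group $V_{k-1}$ and must admit a nontrivial character constituent $\lambda$ (otherwise $N_v^{k-1}\subseteq\ker\rho$, contradicting minimality of $k$). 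By Lemma \ref{lm:RG_induction_stabilizer} we then have $\rho\cong\Ind_{\St_{G(\OO_v)}(\lambda)}^{G(\OO_v)}\psi$ for some irreducible $\psi$, so $\dim\rho\geq [G(\OO_v):\St_{G(\OO_v)}(\lambda)]=[H:\St_H(\lambda)]$.

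It remains to check that $\St_H(\lambda)$ is a proper subgroup of $H$; this is the only delicate point and is precisely where property \ref{lm:good_valuations_[G(O_v),Nv1]=Nv1} enters. Indeed, $\St_H(\lambda)=H$ would force $\lambda$ to vanish on $[V_{k-1},H]=[N_v^{k-1},G(\OO_v)]\cdot N_v^k/N_v^k$, and this quotient equals $V_{k-1}$ thanks to $[N_v^{k-1},G(\OO_v)]=N_v^{k-1}$, contradicting the nontriviality of $\lambda$. Once $\St_H(\lambda)$ is known to be proper, Lemma \ref{lm:projrep} applied to the central extension $H$ (of the same Lie type as $G$) gives $[H:\St_H(\lambda)]\geq q_v^{\delta}$ for a constant $\delta>0$ independent of $v\in\mathcal{P}$, and therefore $\dim\rho\geq q_v^{\delta}$, completing the proof. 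The same $\delta$ works in both cases, so we obtain a single uniform exponent as asserted.
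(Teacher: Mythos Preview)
Your proof is correct and follows essentially the same approach as the paper's: split according to whether $N_v^1\subseteq\ker\rho$, and in the nontrivial case use Clifford theory together with property \ref{lm:good_valuations_[G(O_v),Nv1]=Nv1} to force the stabilizer to be a proper subgroup of $G(\OO_v)/N_v^1$, then invoke Lemma \ref{lm:projrep}. The only cosmetic difference is that the paper restricts first to $N_v^1$ and drills down to the deepest nontrivial level inside a contradiction argument, whereas you go directly to the abelian layer $N_v^{k-1}/N_v^k$; your organization is, if anything, slightly more streamlined.
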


\begin{proof}
Pick $\delta$ as in Lemma \ref{lm:projrep}, that is, every projective representation of a perfect central extension of a finite simple group of Lie type, say $H(\Fq)/Z(H(\Fq))$, has dimension at least $q^\delta$.
Suppose $v\in\mathcal{P}$ and take $\rho\in\Irr(G(\OO_v))$.
Let $\rho_{|N_v^1}$ be the restriction of $\rho$ to $N_v^1$.

If $\rho_{|N_v^1}$ is trivial, that is, if $N_v^1\subseteq\ker\rho$, we can see $\rho$ as a representation of $G(\OO_v)/N_v^1$.
By Lemma \ref{lm:goodvaluations}.i) we can apply Lemma \ref{lm:projrep} to get $\dim\rho\geq q_v^{\delta}$.

Suppose now that there is a nontrivial $\mu \in\Irr(N_v^1)$ such that $\mu$ is a constituent of $\rho_{|N_v^1}$.
Let $H\leq G(\OO_v)$ be the stabilizer of $\mu$. We claim that $H$ is a proper subgroup of $G(\OO_v)$.
For suppose not, and let $k\in\NN$ be maximal such that $\mu_{|N_v^k}$ is nontrivial.
By assumption $[N_v^k,N_v^k]\subseteq N_v^{2k}\subseteq\ker\mu$. Hence any nontrivial constituent $\lambda\in\Irr(N_v^k)$ of $\mu_{|N_v^k}$ is linear.
Indeed, there is only one such constituent, since $[N_v^k,N_v^1]\leq N_v^{k+1}\leq \ker\mu$ implies $N_v^k\subseteq \Z(\mu)$, the center of $\mu$, see \cite[2.26 and 2.27]{Isa}.
It follows that $G(\OO_v)$ stabilizes $\lambda$.
But since $\lambda$ is linear this implies $[N_v^k,G(\OO_v)]\subseteq\ker\lambda$.
By lemma \ref{lm:goodvaluations}.\ref{lm:good_valuations_[G(O_v),Nv1]=Nv1}, we have $N_v^k=[N_v^k,G(\OO_v)]$, i.e., $N_v^k\subseteq\ker\lambda$.
But $\lambda$ is an arbitrary constituent of $\mu_{|N_v^k}$, thus $\mu_{|N_v^k}$ must be trivial, which contradicts our choice of $k$.
Therefore $N_v^1\leq H<G(\OO_v)$.
Now by Clifford's Theory (Lemma \ref{lm:RG_induction_stabilizer}) , we know that $\rho$ is induced from an irreducible representation $\tilde{\rho}\in\Irr(H)$, in particular $\dim\rho=\dim\tilde{\rho}|G(\OO_v):H|$.
We can see $H/N_v^1$ as a subgroup of $G(\OO_v)/N_v^1$ and by Lemma \ref{lm:goodvaluations}.\ref{lm:good_valuations_G(Fv)_simple}  we can apply Lemma \ref{lm:projrep} to obtain $|G(\OO_v):H)|\geq q_v^{\delta}$.
It follows that $\dim\rho\geq q_v^{\delta}$ and the proof is complete.
\end{proof}


\begin{lemma}\label{lm:typec}
For every $v\in V_f^S$, there exists $c>0$ such that $G(\OO_v)$ is of representation type $c$, i.e., $|G(\OO_v)/K_n(G(\OO_v))|\leq cn^c$ for every $n\in\NN$.
In particular $G(\OO_v)$ has PRG.
\end{lemma}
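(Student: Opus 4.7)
The plan is to reduce the statement for $G(\OO_v)$ to the statement for an appropriate open standard subgroup and then apply the structural theorems on the representation growth of standard analytic groups proved earlier (Theorems \ref{th:RG_p-adic_perfect_type_c} and \ref{th:RG_Fp[[t]]-standard_type_c}). Since $G(\OO_v)$ is compact, any open subgroup has finite index, so by Lemma \ref{lm:RG_type_c_finite_index} it suffices to exhibit one open subgroup of $G(\OO_v)$ which is of representation type $c'$ for some $c'$.

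First I would choose an open $\OO_v$-standard subgroup $U \leq_o G(\OO_v)$. For $v$ lying in the cofinite set $\mathcal{P}$ of Lemma \ref{lm:goodvaluations} one can simply take $U = N_v^1$; for the remaining finitely many $v$ one invokes Theorem \ref{pre:th:analytic_open_standard} to produce such a $U$. Next I would split into two cases according to $\ch k$. If $\ch k = 0$, then $k_v$ is a finite extension of $\QQ_p$, so via Lemma \ref{pre:lm:Zp_and_Fpt_analytic} we may regard $U$ as a $\ZZ_p$-standard subgroup of the compact $p$-adic analytic group $G(\OO_v)$. Its $\QQ_p$-Lie algebra is $L_G(k) \otimes_k \QQ_p$; since $G$ is semisimple in characteristic zero this Lie algebra is semisimple and hence perfect, so Theorem \ref{th:RG_p-adic_perfect_type_c} applies directly and yields that $U$ (hence $G(\OO_v)$) is of representation type $c$ for some $c$.

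If $\ch k > 0$, then $k_v$ is a finite extension of $\F_p((t))$ and the same lemma lets us view $U$ as an $\F_p[[t]]$-standard subgroup. In this case perfectness of the Lie algebra may fail, so instead I would verify the weaker hypothesis of Theorem \ref{th:RG_Fp[[t]]-standard_type_c}, namely that the associated Lie algebra $\mathcal{L}_U$ is finitely generated as a $U$-module. This is exactly what Lemma \ref{lm:L(Ov)_fg_module} provides for the Lie algebra $L(\OO_v)$ under the conjugation action of $N_v^1$; after shrinking $U$ if necessary one transfers this to $\mathcal{L}_U$ using that $U$ and $N_v^1$ are commensurable and that the graded pieces of the congruence filtration are isomorphic to truncations of $L(\OO_v)$ as $U$-modules. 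Theorem \ref{th:RG_Fp[[t]]-standard_type_c} then gives that $U$ is of representation type $c$ for some $c$.

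In both cases Lemma \ref{lm:RG_type_c_finite_index} lifts the conclusion from $U$ to $G(\OO_v)$, producing the desired constant $c$ (depending on $v$). The PRG statement then follows from Lemma \ref{RG:type_c_implies_PRG}. The only nontrivial step is the characteristic-$p$ case, where one must check finite generation of the Lie algebra as a module over the chosen standard subgroup; this is the genuine technical input and is precisely where Lemma \ref{lm:fgLieJaiPre} (through Lemma \ref{lm:L(Ov)_fg_module}) is indispensable. I note that this lemma gives only a type constant $c_v$ depending on $v$; upgrading to a uniform $c$ independent of $v$ is the global problem addressed in Theorem \ref{th:allvaluations} and is not claimed here.
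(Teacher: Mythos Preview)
Your proposal is correct and follows essentially the same route as the paper: pass to an open $\OO_v$-standard subgroup via Theorem \ref{pre:th:analytic_open_standard}, reduce by Lemma \ref{lm:RG_type_c_finite_index}, then invoke Theorem \ref{th:RG_p-adic_perfect_type_c} in characteristic zero (perfect Lie algebra) and Theorem \ref{th:RG_Fp[[t]]-standard_type_c} in positive characteristic (finite generation supplied by Lemma \ref{lm:L(Ov)_fg_module}). The only cosmetic points are that $L_G(k)\otimes_k\QQ_p$ should read $L_G(k_v)$ regarded over $\QQ_p$, and that the paper does not bother splitting into $v\in\mathcal{P}$ versus $v\notin\mathcal{P}$ when producing the standard subgroup.
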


\begin{proof}
Recall that the group $G(\OO_v)$ has a natural structure of $k_v$-analytic group.
Hence there exists $H\leq_o G(\OO_v)$ such that $H$ is a $\OO_v$-standard group.
Since $H$, being an open subgroup of a profinite group, is a finite index subgroup, it suffices to show the claim for such a group $H$ (Lemma \ref{lm:RG_finite_index_subgroup}).
For $\ch k>0$, $H$ can be given the structure of an $\F_p[[t]]$-analytic group (Lemma \ref{pre:lm:Zp_and_Fpt_analytic}) whose Lie algebra is finitely generated as an $H$-module by Lemma \ref{lm:L(Ov)_fg_module}, so the result follows from Theorem \ref{th:RG_Fp[[t]]-standard_type_c}.
Analogously if $\ch k=0$, then $H$ can be given the structure of a compact $p$-adic analytic group for $p=\ch \F_v$ (Lemma \ref{pre:lm:Zp_and_Fpt_analytic}) with perfect Lie algebra $L_G$ ($G$ is a simple algebraic $k$-group and for $\ch k=0$ this implies its Lie algebra is perfect).
Thus the claim follows from Theorem \ref{th:RG_p-adic_perfect_type_c}.
\end{proof}

\begin{theorem}\label{th:allvaluations}
There exists $c>0$ such that for every $v\in V_f^S$, $G(\OO_v)$ is of representation type $c$.
\end{theorem}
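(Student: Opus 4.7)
The plan is to upgrade the a priori $v$-dependent constants from Lemma~\ref{lm:typec} to a single uniform constant $c$ valid for every $v \in V_f^S$. Since $V_f^S \setminus \mathcal{P}$ is finite by Lemma~\ref{lm:goodvaluations}, I will first apply Lemma~\ref{lm:typec} to each of its elements and take the maximum of the resulting constants. The real task is then to produce a uniform bound over the cofinite set $\mathcal{P}$.

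For $v \in \mathcal{P}$, Proposition~\ref{th:deltaP1} implies that every nontrivial irreducible representation of $G(\OO_v)$ has dimension at least $q_v^{\delta}$; in particular, $K_n(G(\OO_v)) = G(\OO_v)$ whenever $q_v > n^{1/\delta}$, so I may henceforth assume $q_v \leq n^{1/\delta}$. I will then decompose
\[
\bigl|G(\OO_v)/K_n(G(\OO_v))\bigr| = \bigl|G(\OO_v)/N_v^1\bigr| \cdot \bigl|N_v^1/(N_v^1 \cap K_n(G(\OO_v)))\bigr|.
\]
By Lemma~\ref{lm:goodvaluations}.\ref{lm:good_valuations_G(Fv)_simple}, $G(\OO_v)/N_v^1$ is a central extension of a finite simple group of Lie type over $\F_v$ of the same type as $G$, so the first factor is at most $C_1 q_v^{\dim G}\leq C_1 n^{\dim G/\delta}$ for an absolute constant $C_1$ depending only on the Lie type of $G$. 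For the second factor, any $\rho\in\Irr_{\leq n}(G(\OO_v))$ restricts to a representation of $N_v^1$ of dimension at most $n$, so $K_n(G(\OO_v))\cap N_v^1 \supseteq K_n(N_v^1)$, and it suffices to bound $|N_v^1/K_n(N_v^1)|$.

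By Lemma~\ref{lm:goodvaluations}.\ref{lm:good_valuations_Nv1_standard}, $N_v^1$ is $\OO_v$-standard, so I will run the argument of Theorem~\ref{th:RG_Fp[[t]]-standard_type_c} (or Theorem~\ref{th:RG_p-adic_perfect_type_c} in characteristic zero) verbatim, obtaining a bound of the shape $|N_v^1/K_n(N_v^1)| \leq q_v^{\dim G (d_1 + d_2 \log_{q_v} n)}$. Here $d_1, d_2$ are governed by three pieces of local data: the coinvariant exponent $s$ for which $\pi_v^s L(\OO_v) \subseteq [L(\OO_v), N_v^1]$, the nilpotence index $l$ of the adjoint augmentation ideal modulo $\pi_v$, and the scaling constants in $\pi_v^{a+kb}\I_1 \subseteq \I_k$. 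Under the assumption $q_v \leq n^{1/\delta}$, the factor $q_v^{d_1\dim G}$ is at most $n^{d_1\dim G/\delta}$ while $q_v^{d_2\dim G \log_{q_v} n} = n^{d_2\dim G}$, so any uniform choice of $d_1,d_2$ forces a polynomial-in-$n$ bound independent of $v$.

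The main obstacle is precisely this uniformity statement, which I will establish by globalizing the module-theoretic input over $\OO_S$. The $\OO_S$-module $L(\OO_S)/[L(\OO_S), G(\OO_S)]$ is torsion by Lemma~\ref{lm:fgLieJaiPre} applied over $k$ together with Zariski density of $G(\OO_S)$ in $G$ (cf.\ Proposition~\ref{prop:G(OS)_Zariski_dense}), hence is supported on only finitely many primes of $\OO_S$; passing from $G(\OO_S)$ to $N_v^1$ costs only finitely many more exceptional places, giving a uniform bound on $s$. Analogous arguments, applied to the global envelope of the adjoint representation of $G(\OO_S)$ on $L(\OO_S)$ and its powers, will supply uniform versions of $l$ and of the scaling constants. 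Each step will produce a finite list of exceptional valuations, all of which will be absorbed into $V_f^S\setminus\mathcal{P}$. Combining the two polynomial-in-$n$ estimates then yields a single representation-type constant $c$ valid for every $v\in V_f^S$.
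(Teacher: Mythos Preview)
Your reduction to $v\in\mathcal{P}$, the factorisation through $N_v^1$, and the plan to bound $|N_v^1/K_n(N_v^1)|$ by a uniform version of the standard--group argument are all exactly what the paper does. The uniform choice of $l$ is also fine (the paper simply takes $l=(\dim G)^2$). The genuine gap is in your last paragraph, at the ``scaling constants in $\pi_v^{a+kb}\I_1\subseteq\I_k$''. In Theorem~\ref{th:RG_Fp[[t]]-standard_type_c} this inclusion is obtained (Claims~5--8) by picking finitely many \emph{semisimple} elements $g_1,\dots,g_t$ of the standard group, using Zelmanov's solution to the Restricted Burnside Problem to see that they generate an open subgroup, and then extracting a constant $c$ from $A(\Ad(g_i)-\Id)^2$ having finite index in $A(\Ad(g_i)-\Id)$. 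None of this globalises: the elements $g_i$ must lie in $N_v^1$, so no fixed global choice can serve for every~$v$; and the index coming from RBP is not effective, so even place-by-place it gives no uniform bound. Your sentence ``analogous arguments, applied to the global envelope of the adjoint representation \ldots'' does not address this.

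The paper's proof discards this step entirely and replaces it by a different mechanism that exploits the ambient group $G(\OO_v)$ rather than just $N_v^1$. Given $\rho$ with stabiliser $H\le N_v^1$ of index $q_v^r$, one first passes to the normal core $H'=\bigcap_{g\in G(\OO_v)}H^g$; because the adjoint action is $\OO_v$-linear, the $G(\OO_v)$-orbit of the relevant $\lambda\in\Irr(N_v^k/N_v^{2k})$ spans an $\OO_v$-module of rank $\le d=\dim G$, so $|G(\OO_v):H'|\le |G(\OO_v):H|^d$. One then filters $H'N_v^{s(2l-1)j}$ for $j=1,\dots,\lceil dr/\delta\rceil+1$ and uses Lemma~\ref{lm:projrep} (every nontrivial $G(\OO_v)/N_v^1$-module has size $\ge q_v^\delta$) to force a collapse in this chain, which via Nakayama yields $N_v^{s(2l-1)(\lceil dr/\delta\rceil+1)}\le H$. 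This is the missing idea; it gives the uniform $c_1,c_2$ without any appeal to semisimple elements or RBP. There is also a secondary issue: the paper needs the coinvariant bound in the form $\mm_v^{sk}L(\OO_v)\subseteq [\,\mm_v^kL(\OO_v),N_v^k\,]$, uniform in \emph{both} $v$ and $k$, in order to shift commutators from $[N_v^{k+\cdot},N_v^1]$ to $[N_v^k,N_v^{\cdot}]$. Your torsion argument over $\OO_S$ delivers only the case of $N_v^1$; the paper obtains the full statement through the auxiliary Lemma~\ref{lm:polval}, an algebro-geometric argument about $\ord_v$ of the determinant of the Jacobian of the map $(l,g)\mapsto[l,g]$.
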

\setcounter{claim}{0}
\begin{proof}
Note that thanks to Lemma \ref{lm:typec} it suffices to show that there exists $c>0$ such that $G(\OO_v)$ is of type $c$ for almost every $v\in V_f^S$.
Hence we want to give a polynomial bound for $|G(\OO_v)/K_n(G(\OO_v))|$ that holds for almost every $v\in V_f^S$.
In particular, we may assume that $v\in\mathcal{P}$ (see Lemma \ref{lm:goodvaluations} for the definition of $\mathcal{P}$).

Consider $\rho\in\Irr_{\leq n}(G(\OO_v))$ and put $r'= \lceil{\log_{q_v}n} \rceil$. We claim that there exist $c_1,c_2\in\NN$ such that for almost every $v\in V_f^S$ we have $N_v^{c_1r'+c_2}\subseteq \ker \rho$, in particular,
\[ N_v^{c_1r'+c_2}\subseteq K_n(G(\OO_v)).\]
This will finish the proof, for we would have
\begin{align*}
|G(\OO_v)/K_n(G(\OO_v))|& \leq|G(\OO_v)/N_v^{c_1r'+c_2}|=|G(\OO_v)/N_v^1||N_v^1/N_v^{c_1r'+c_2}|.
\end{align*}
From our initial embedding $\iota:G\to\GL_N$ it follows that $G(\OO_v)/N_v^1$ embedds in $\GL_N(\F_v)$ and this implies $|G(\OO_v)/N_v^1|\leq q_v^{N^2}$.
On the other hand the group $N_v^1$ is $\OO_v$-standard, hence $|N_v^k/N_v^{k+1}|=q_v^{\dim G}$ for every $k$ and we have $|N_v^1/N_v^{c_1r'+c_2}|\leq q_v^{(c_1r'+c_2)\dim{G}}$.
Finally note that for $v\in\mathcal{P}$, Proposition \ref{th:deltaP1} gives $q_v\leq n^{1/\delta}$.
All together this implies
\[
|G(\OO_v)/K_n(G(\OO_v))| \leq q_v^{N^2}q_v^{(c_1r'+c_2)\dim G}\leq n^{c_1\dim G+(c_2\dim G+N^2)/\delta}\]
for every $v\in\mathcal{P}$, as we required.

To prove the claim let us first observe that since $N_v^1$ is a normal subgroup of $G(\OO_v)$, it suffices to show
\[
N_v^{c_1r'+c_2}\subseteq K_n(N_v^1).
\]
Indeed, suppose $\rho\in\Irr(G(\OO_v))$ and let $\rho'\in\Irr(N_v^1)$ be an irreducible constituent of $\rho_{|N_v^1}$.
Then we would have $N_v^{c_1r'+c_2}\subseteq\ker\rho'$, and being normal, $N_v^{c_1r'+c_2}\subseteq\ker\Ind_{N_v^1}^{G(\OO_v)}\rho'$, which by Frobenius reciprocity has $\rho$ as an irreducible constituent.
Hence $N_v^{c_1r'+c_2}\subset\ker\rho$ as we required.
To prove the claim we will follow the proof of Theorem \ref{th:RG_Fp[[t]]-standard_type_c}.

Recall that for every $v\in\mathcal{P}$, $N_v^1$ is a standard subgroup of the $\OO_v$-analytic group $G(\OO_v)$ with associated Lie algebra $L(\OO_v)$.
Hence, we have a homeomorphism from $N_v^1$ to $\pi_v L(\OO_v)$ given by
\[
\begin{array}{ccc}
N_v^1	&\to	&\pi_vL(\OO_v)\\
g		&\mapsto&{\bf g},
\end{array}
\]
where $\pi_v$ stands for a uniformizer of the local ring $\OO_v$.
Let us denote the adjoint action of $N_v^1$ by $\Ad_v=\Ad_{\OO_v}:N_v^1\to \Aut_{\OO_v}(L(\OO_v))$.
Recall that for every $k\in\NN$, $N_v^k/N_v^{2k}$ is an abelian normal subgroup of $N_v^1/N_v^{2k}$, where $N_v^1$ acts by conjugation and we have an isomorphism of $N_v^1$-modules $N_v^k/N_v^{2k}\cong L(\OO_v)/\mm_v^k L(\OO_v)$  given by
\begin{align}
\begin{array}{ccc} \label{iso}
N_v^k/N_v^{2k}&\to & L(\OO_v)/\pi_v^{k} L(\OO_v)\\
g&\mapsto & \pi_v^{-k}{\bf g},
\end{array}
\end{align}
see the proof of Theorem \ref{th:RG_p-adic_perfect_type_c}.
For every $v\in V_f^S$, let us write $\A_v$ for the $\OO_v$-subalgebra of $\End_{\OO_v} L(\OO_v)$ generated by $\{\Ad_v(g):g\in N_v^1\}$ and for every subset $H\subseteq G(\OO_v)$ let $\I_v(H)$ denote the left ideal of $A_v$ generated by $\{\Ad_v(h)-\Id\;:\; h\in H\}$.
We will write $\I_{v,k}$ for the ideal $\I_v(N_v^k)$.

We will state and prove a series of claims that are required for the proof.

\begin{claim}
For every $v\in V_f^S$, $[L(\OO_v),N_v^1]$ has finite index in $L(\OO_v)$.
\end{claim}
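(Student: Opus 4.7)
The plan is to replay, for a single fixed $v\in V_f^S$, the short argument used at the end of the proof of Lemma \ref{lm:L(Ov)_fg_module}: first promote Zariski density of the $S$-integral points to Zariski density of $N_v^1$ inside the $k_v$-group $G$; then invoke Lemma \ref{lm:fgLieJaiPre} over the local field; finally descend from $k_v$ back to the integral level $\OO_v$ via a torsion/rank argument.

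\textbf{Step 1 (Zariski density of $N_v^1$).} By Proposition \ref{prop:G(OS)_Zariski_dense}, the $S$-arithmetic subgroup $G(\OO_S)$ is Zariski dense in $G$. The inclusion $G(\OO_S)\subseteq G(\OO_v)$ then forces $G(\OO_v)$ to be Zariski dense in $G$ when the latter is viewed as a $k_v$-group, and hence its finite-index (normal) subgroup $N_v^1=G(\mm_v)$ is Zariski dense in $G$ as well.

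\textbf{Step 2 (Equality over $k_v$).} Apply Lemma \ref{lm:fgLieJaiPre} with $F=k_v$, $Q=G$, and $H=N_v^1$. The hypotheses are met: $G$ is connected, simply connected and (absolutely almost) simple over $k_v$, and $N_v^1$ is Zariski dense by Step~1. The lemma yields
\[
L(k_v)=[L(k_v),N_v^1].
\]

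\textbf{Step 3 (Descent to $\OO_v$).} The adjoint action $\Ad_v\colon N_v^1\to\Aut_{\OO_v}(L(\OO_v))$ is $\OO_v$-linear, so upon tensoring with $k_v$ one has $[L(k_v),N_v^1]=k_v\cdot[L(\OO_v),N_v^1]$. Combined with Step~2, this says that the $\OO_v$-submodule $[L(\OO_v),N_v^1]$ already spans $L(k_v)=L(\OO_v)\otimes_{\OO_v}k_v$ as a $k_v$-vector space. Therefore the quotient $L(\OO_v)/[L(\OO_v),N_v^1]$ vanishes after inverting a uniformizer, i.e.\ it is $\OO_v$-torsion. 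Since $L(\OO_v)$ is a free $\OO_v$-module of rank $\dim G$, this quotient is a finitely generated torsion module over the DVR $\OO_v$ and so is finite, which is exactly the claim.

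The only conceptually substantive step is Step~2, and the real content of that step is already packaged inside Lemma \ref{lm:fgLieJaiPre} (where the classical $\SL_2$-on-$\mathfrak{sl}_2$ computation does the structural work). The descent in Step~3 is a routine tensor-product argument; no serious obstacle is expected.
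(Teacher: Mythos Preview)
Your proof is correct and is essentially the same argument the paper uses: the paper's proof of this claim is simply a citation of Lemma~\ref{lm:L(Ov)_fg_module}, whose second paragraph carries out exactly your Steps~1--3 (Zariski density of $N_v^1$ from that of $G(\OO_S)$, application of Lemma~\ref{lm:fgLieJaiPre} over $k_v$, and the descent to $\OO_v$ giving finite index). You have faithfully unpacked that lemma's proof.
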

\noindent This is Lemma \ref{lm:L(Ov)_fg_module}.

\begin{claim}
There exists $s\in\NN$ such that for almost every $v\in V_f^S$ and every $k>0$, we have
\[
\mm_v^{sk} L(\OO_v)\subseteq [\mm_v^k L(\OO_v),N_v^k]=\mm_v^k L(\OO_v)\I_{v,k}.
\]
\end{claim}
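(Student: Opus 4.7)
The plan is to establish the claim with a fixed integer $s$ (in fact $s=2$ in the generic case) that is uniform over almost every $v\in V_f^S$. The core observation is that the $\OO_v$-standard group structure on $N_v^1$ gives a bijection between $N_v^k$ and $\pi_v^k L(\OO_v)$ under which the adjoint action translates into the Lie bracket on $L(\OO_v)$ modulo higher-order corrections. Since $\I_{v,k}\subseteq \pi_v^k\A_v$, the rescaled object $\pi_v^{-k}\I_{v,k}$ is a well-defined left ideal of $\A_v$, and the desired containment $\mm_v^{sk}L(\OO_v)\subseteq \mm_v^k L(\OO_v)\I_{v,k}$ is equivalent to asking that $L(\OO_v)\cdot\pi_v^{-k}\I_{v,k}$ contain $\mm_v^{(s-2)k}L(\OO_v)$.

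First I would use the explicit formula \eqref{eq:adjoint_locally} for the adjoint action on an $\OO_v$-standard group: for $g\in N_v^k$ with standard coordinate $\pi_v^k h$, where $h\in L(\OO_v)$, one expands
\[
\pi_v^{-k}(\Ad_v(g)-\Id)=\ad(h)+\pi_v^k E_k(h),
\]
with $E_k(h)\in\End_{\OO_v}(L(\OO_v))$ depending polynomially on $h$. Consequently, for every $x,y\in L(\OO_v)$ the Lie bracket $[x,y]$ lies in $L(\OO_v)\cdot\pi_v^{-k}\I_{v,k}+\mm_v^k L(\OO_v)$, so
\[
[L(\OO_v),L(\OO_v)]+\mm_v^k L(\OO_v)\subseteq L(\OO_v)\,\pi_v^{-k}\I_{v,k}+\mm_v^k L(\OO_v).
\]
Under the hypothesis that $L(\OO_v)$ is perfect as a Lie algebra, Nakayama's lemma (applied to the finitely generated $\OO_v$-module $L(\OO_v)$) yields $L(\OO_v)\,\pi_v^{-k}\I_{v,k}=L(\OO_v)$, equivalently $L(\OO_v)\I_{v,k}=\mm_v^k L(\OO_v)$, whence $\mm_v^k L(\OO_v)\I_{v,k}=\mm_v^{2k} L(\OO_v)$; this gives the claim with $s=2$.

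To verify perfectness of $L(\OO_v)$ for almost every $v$, I would invoke the analysis in the proof of Lemma \ref{lm:G(Fv)_simple}: for almost every $v$, the reduction $G_{\mathfrak{p}_v}$ is a connected simply connected absolutely almost simple semisimple algebraic group over $\F_v$. For each fixed Lie type there are only finitely many characteristics in which the corresponding Lie algebra fails to be perfect, and by Lemma \ref{pre:lm:numberofval} these exclude only finitely many valuations $v$; a second application of Nakayama's lemma then upgrades perfectness of $L(\F_v)$ to perfectness of $L(\OO_v)$.

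The main obstacle is the case in which $\ch k$ itself is one of the bad characteristics for the Lie type of $G$ (for instance, $G$ of type $A_1$ over a global field of characteristic $2$), for then $L(\F_v)$ is non-perfect for \emph{every} $v\in V_f^S$ and the ``almost every'' qualifier does not help. To handle this regime, one must not discard the higher-order terms $\pi_v^k E_k(h)$ in the expansion of $\Ad_v(g)-\Id$: a more careful analysis of the formal group law, combined with the fact that $[L(\OO_v),N_v^1]$ has finite index in $L(\OO_v)$ for every $v$ (Claim 1), produces additional generators of $L(\OO_v)\pi_v^{-k}\I_{v,k}$ beyond $[L(\OO_v),L(\OO_v)]$, compensating for the deficit and yielding the containment with a possibly larger but still uniform $s$ depending only on the Lie type of $G$.
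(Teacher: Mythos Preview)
Your reduction to the perfect case is correct and clean, but the non-perfect case is not a residual issue to be handled by a sketch---it is the entire content of the claim. When $\ch k=2$ and $G$ is of type $A_1$ or $C_m$, the Lie algebra $L(\F_v)$ fails to be perfect for \emph{every} $v$, and your proposed remedy via the correction terms $\pi_v^k E_k(h)$ does not close the gap as stated. Modulo $\mm_v^k L(\OO_v)$, the contribution of $\pi_v^{-k}\I_{v,k}$ to $L(\OO_v)$ is exactly $[L(\OO_v),L(\OO_v)]$; the higher-order terms live in $\mm_v^k\End_{\OO_v}(L(\OO_v))$ and hence cannot touch the top $k$ layers of $L(\OO_v)/\mm_v^{(s-2)k}L(\OO_v)$. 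One can certainly extract extra directions from the quadratic part of the adjoint action (for $\SL_2$ in characteristic $2$, for instance, conjugating $f$ by $I+\pi_v^k e$ produces a $\pi_v^{2k}e$ term), but turning this into a bound that is simultaneously uniform in $k$ and in $v$ requires an argument you have not supplied, and the invocation of Claim~1 alone does not provide it.

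The paper avoids the layer-by-layer expansion altogether. It regards $(l,g)\mapsto l\cdot(\Ad(g)-\Id)$ as a morphism of affine varieties $L\times G\to L$ given by polynomials $f_1,\ldots,f_d\in\OO_S[x_1,\ldots,x_d,y_1,\ldots,y_{N^2}]$, forms the $d\times d$ matrix $\M(\mathbf l,\mathbf g)$ whose columns record $d$ such images, and observes that $\mm_v^{r}L(\OO_v)\subseteq V(\mathbf l,\mathbf g)$ whenever $r=\ord_v(\det\M(\mathbf l,\mathbf g))$. The polynomial $\det\M$ is a single regular function on $(L\times G)^d$ defined over $\OO_S$; Claim~1 (that $[L(\OO_v),N_v^1]$ has full $\OO_v$-rank, which uses Lemma~\ref{lm:fgLieJaiPre} and holds irrespective of perfectness) guarantees this polynomial is not identically zero. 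The uniformity in $v$ and $k$ then comes from Lemma~\ref{lm:polval}, which shows that a fixed nontrivial regular function in $I\lhd\OO_S[X]$ achieves valuation exactly $ks$ at suitable points of $X(\mm_v^k)$ for almost every $v$, where $s$ is the order of the function along the identity section. This passage from one global non-vanishing statement to uniform local bounds is the mechanism your proposal lacks.
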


\noindent Since $G$ is an algebraic group, the map $\Ad-\id$ is a morphism of algebraic varieties given by
\[
\begin{array}{rclcc}
L&\times&G&\to&L\\
(l&,&g)&\mapsto&[l,g]\\
(l_1,\ldots,l_d)&,&(g_1,\ldots,g_{N^2})&\mapsto&  (f_1(l_1,\ldots,l_d,g_1,\ldots,g_{N^2}),\\
& & & & \ldots,\\
& & & & f_d((l_1,\ldots,l_d,g_1,\ldots,g_{N^2})),\\
\end{array}
\]
where $d=\dim G$, we have chosen a basis for $L$ such that $(l_1,\ldots,l_d)$ are the coordinates of $l$ and $(g_1,\ldots,g_{N^2})$ stand for the usual coordinates induced from the embedding $G\hookrightarrow \GL_N$.
Moreover,the $f_i$ are polynomials with coefficients in $\OO_S$ (note that $G$ is an affine group $\OO_S$-scheme), which give the coordinates of $[l,g]$ in $L$ with respect to the chosen basis.
Note that $\Ad_v$ is given by the same polynomials for every $v\in V_f^S$.

Put ${\bf x}=(x^1,\ldots,x^d)$, where $x^j=(x_1^j,\ldots,x_d^j)$, ${\bf y}=(y^1,\ldots,y^{d})$, where $y^j=(y_1^j,\ldots,y_{N^2}^j)$ and similarly for $\ {\bf l}\in (L(\OO_v))^d, \ {\bf g}\in (N_v^1)^d.$
By abuse of notation we will identify elements of $L(\OO_v)$ and $G(\OO_v)$ with their corresponding coordinates.
Consider the matrix $\M({\bf x},{\bf y})=(m_{ij}({\bf x},{\bf y}))_{1\leq i,j\leq d}$, where $m_{ij}({\bf x},{\bf y})=f_i(x^j, y^j)$.
Then $\M({\bf l},{\bf g})$ represents the coordinates of $d$ elements of the form $[l,g]$, where $l\in L(\OO_v)$ and $g\in N_v^1$.
Write $V({\bf l},{\bf g})$ for the $\OO_v$-module generated by these $d$ elements.

Note that for ${\bf l}\in(\mm_v^k L(\OO_v))^d$ and ${\bf g}\in (N_v^r)^d$ we have $V({\bf l},{\bf g})\subseteq [\mm_v^k L(\OO_v),N_v^r]$.
Moreover, we claim that there exists a pair $({\bf l},{\bf g})\in (\mm_v^k L(\OO_v))^d \times (N_v^r)^d$ such that equality holds.
Indeed, $[\mm_v^k L(\OO_v),N_v^k]\subseteq \mm_v^k L(\OO_v)$ and $\mm_v^kL(\OO_v)$ is a free $\OO_v$-module of rank $d$ generated by elements of the form $[l,g]$ for $l\in \mm_v^kL(\OO_v)$ and $g\in N_v^r$.
Since $\OO_v$ is a local ring, this implies that $[\mm_v^k L(\OO_v),N_v^r]$ is generated by at most $d$ elements of the form $[l,g]$ where $l\in \mm_v^kL(\OO_v)$, $g\in N_v^r$.

Now from Claim $1$ it follows that $[L(\OO_v),N_v^1]$ has finite index in $L(\OO_v)$.
Therefore, $[L(\OO_v),N_v^1]$ is  a free $\OO_v$-module of rank $d$ as well.
But observe that $V({\bf l},{\bf g})$ is free of rank $d$ if and only if $\det \M({\bf l},{\bf g})\neq 0$.
Hence, there must exist a pair $({\bf l},{\bf g})\in(\mm_vL(\OO_v))^d\times(N_v^1)^d$ such that $\det \M({\bf l},{\bf g})\neq 0$.
In particular $\det \M({\bf x},{\bf y})$ is a nontrivial polynomial with coefficients in $\OO_S$.
We can see this determinant as a regular function of the affine variety $L\times G$.
Moreover, let us note that if we put $r=\ord_v(\det \M({\bf l},{\bf g}))$, then $\mm_v^r L(\OO_v)\subseteq V({\bf l},{\bf g})$.
Therefore it suffices to show that there exists $s\in\NN$ such that for almost every $v\in V_f^S$ and every $k\geq 1$, there exists a pair $({\bf l_v^k},{\bf g_v^k})\in (\mm_v^k L(\OO_v))^d\times(N_v^k)^d$ that satisfies $\ord_v(\det \M({\bf l_v^k},{\bf g_v^k}))=ks$.
This follows from Lemma \ref{lm:polval} below.
\begin{claim}

For every $u>s$ and for almost every $v\in V_f^S$, we have
\[N_v^{u+s}\leq[N_v^u,N_v^1].\]
\end{claim}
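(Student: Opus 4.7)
The plan is to transfer Claim 2 to a statement about the group via the $G(\OO_v)$-module isomorphism \eqref{iso} used with $k=u$,
\[
N_v^u/N_v^{2u}\;\cong\;L(\OO_v)/\mm_v^u L(\OO_v).
\]
Since $u>s$ we have $N_v^{2u}\subseteq N_v^{u+s}$, so that $N_v^{u+s}/N_v^{2u}$ sits inside the left-hand quotient, and corresponds under the isomorphism to $\mm_v^s L(\OO_v)/\mm_v^u L(\OO_v)$. Exactly as in the derivation of $G_{k+s}\leq[G_k,G]G_{2k}$ in the proof of Claim 1 of Theorem \ref{th:RG_Fp[[t]]-standard_type_c}, the image of $[N_v^u,N_v^1]N_v^{2u}/N_v^{2u}$ is precisely the submodule $L(\OO_v)\I_{v,1}/\mm_v^u L(\OO_v)$.

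For every $v\in V_f^S$ outside the finite exceptional set supplied by Claim 2, applying that claim with $k=1$ yields
\[
\mm_v^s L(\OO_v)\;\subseteq\;\mm_v L(\OO_v)\I_{v,1}\;\subseteq\;L(\OO_v)\I_{v,1}.
\]
Translated through the correspondence above, this reads
\[
N_v^{u+s}\;\subseteq\;[N_v^u,N_v^1]\,N_v^{2u}
\]
for every $u>s$.

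To remove the error term $N_v^{2u}$ I would iterate. The inclusion above holds with any integer $n>s$ in place of $u$, and $[N_v^n,N_v^1]\subseteq[N_v^u,N_v^1]$ whenever $n\geq u$. Define $n_0=u$ and $n_{k+1}=2n_k-s$; since $n_k-s=2^k(u-s)$, the hypothesis $u>s$ guarantees $n_k>s$ at every step. A straightforward induction, using the relation $n_{k+1}+s=2n_k$ to replace the error $N_v^{2n_k}$ at the $(k+1)$-th stage, gives
\[
N_v^{u+s}\;\subseteq\;[N_v^u,N_v^1]\,N_v^{2n_k}\qquad\text{for every }k\geq 0.
\]
Because $2n_k=2s+2^{k+1}(u-s)\to\infty$ we have $\bigcap_k N_v^{2n_k}=1$, and since $[N_v^u,N_v^1]$ is a closed subgroup of the compact group $G(\OO_v)$, intersecting over $k$ yields $N_v^{u+s}\subseteq[N_v^u,N_v^1]$.

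The only substantive point is the first paragraph: the identification under \eqref{iso} of $[N_v^u,N_v^1]N_v^{2u}/N_v^{2u}$ with $L(\OO_v)\I_{v,1}/\mm_v^u L(\OO_v)$. Once that module-theoretic translation is in place, the rest is a routine iteration of the sort already carried out in the proof of Theorem \ref{th:RG_Fp[[t]]-standard_type_c}.
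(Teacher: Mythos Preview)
Your proof is correct and follows essentially the same approach as the paper: use Claim~2 with $k=1$ together with the module isomorphism \eqref{iso} to obtain $N_v^{u+s}\subseteq [N_v^u,N_v^1]N_v^{2u}$, then iterate to kill the error term. The only cosmetic difference is that the paper's iteration (borrowed from Claim~1 of Theorem~\ref{th:RG_Fp[[t]]-standard_type_c}) increments $n\mapsto n+1$ rather than your $n_k\mapsto 2n_k-s$, but both achieve $N_v^{u+s}\subseteq [N_v^u,N_v^1]N_v^m$ for arbitrarily large $m$ and then pass to the intersection.
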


\noindent
Recall that we may assume $v\in\mathcal{P}$, in particular, we may assume that $N_v^1$ is a standard subgroup of $G(\OO_v)$.
From the previous claim it follows that $\pi_v^{u+s}L(\OO_v)\leq[\pi_v^{u+1}L(\OO_v),N_v^1]$.
Hence, for $u>s$ the isomorphism (\ref{iso}) gives $N_v^{u+s}\leq[N_v^u,N_v^1]N_v^{2u}$.
Applying induction on $l$ as in Claim 1 in Theorem \ref{th:RG_Fp[[t]]-standard_type_c} we obtain that if $u>s$, then for every $l\geq 2u$, $N_v^{u+s}\leq[N_v^u,N_v^1]N_v^l$.
Therefore $N_v^{u+s}\leq[N_v^u,N_v^1]$.

\begin{claim}
There exists $l\in\NN$ such that for every $v\in V_f^S$, we have 
\[\I_{v,1}^l\leq\pi_v\A_v.\]
\end{claim}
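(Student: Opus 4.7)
The plan is to adapt and uniformize Claim 2 from the proof of Theorem \ref{th:RG_Fp[[t]]-standard_type_c}. First I would observe that, by equation \eqref{eq:adjoint_locally}, every $g \in N_v^1$ satisfies $\Ad_v(g) - \Id \in \pi_v \End_{\OO_v}(L(\OO_v))$, so $\I_{v,1} \subseteq \pi_v \End_{\OO_v}(L(\OO_v))$ and, more generally, $\I_{v,1}^l \subseteq \pi_v^l \End_{\OO_v}(L(\OO_v))$ for every $l \geq 1$. It will therefore suffice to bound uniformly in $v$ the smallest $l$ for which $\pi_v^l \End_{\OO_v}(L(\OO_v)) \cap \A_v \subseteq \pi_v \A_v$.

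Next, for each $v \in V_f^S$ I would introduce the saturation
\[
\A_v^{\mathrm{sat}} := (\A_v \otimes_{\OO_v} k_v) \cap \End_{\OO_v}(L(\OO_v)),
\]
an $\OO_v$-lattice in $\A_v \otimes k_v$ containing $\A_v$. Since $\A_v^{\mathrm{sat}}$ and $\A_v$ span the same $k_v$-vector space, the quotient $\A_v^{\mathrm{sat}}/\A_v$ is torsion and hence finite, so there exists a minimal $N_v \in \NN$ with $\pi_v^{N_v}\A_v^{\mathrm{sat}} \subseteq \A_v$. For $l \geq N_v + 1$ and any $x \in \pi_v^l \End_{\OO_v}(L(\OO_v)) \cap \A_v$, writing $x = \pi_v^l z$ with $z \in \End_{\OO_v}(L(\OO_v))$ gives $z = x/\pi_v^l \in (\A_v \otimes k_v) \cap \End_{\OO_v}(L(\OO_v)) = \A_v^{\mathrm{sat}}$, whence $\pi_v^{N_v} z \in \A_v$ and
\[
x = \pi_v^{l-N_v}\bigl(\pi_v^{N_v} z\bigr) \in \pi_v \A_v.
\]
Consequently $\I_{v,1}^{N_v + 1} \subseteq \pi_v \A_v$ for each $v$ individually.

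The hard part will be to bound $N_v$ uniformly in $v$. To that end, I would use that $G(\OO_v)$ is Zariski-dense in $G$ by Proposition \ref{prop:G(OS)_Zariski_dense}, whence $N_v^1$ is Zariski-dense in $G$ for almost every $v$ (having finite index in $G(\OO_v)$). Letting $\mathcal{B}_k$ denote the $k$-subalgebra of $\End_k(L(k))$ generated by $\Ad(G)$, this density yields $\A_v \otimes_{\OO_v} k_v = \mathcal{B}_k \otimes_k k_v$, so $\A_v^{\mathrm{sat}}$ identifies with the canonical lattice $\mathcal{B}_{\OO_S}^{\mathrm{sat}} \otimes_{\OO_S} \OO_v$, where $\mathcal{B}_{\OO_S}^{\mathrm{sat}} := \mathcal{B}_k \cap \End_{\OO_S}(L(\OO_S))$. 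I would then fix a finite $\OO_S$-generating set of $\mathcal{B}_{\OO_S}^{\mathrm{sat}}$ and use that for any $g \in G(\OO_S)$ and $h \in G(\pp_v) = G(\OO_S) \cap N_v^1$, the commutator $[g, h]$ lies in $G(\pp_v) \subseteq N_v^1$, so $\Ad_v([g,h]) \in \A_v$; Zariski-density of $G(\pp_v)$ in $G$ should then let me realize the chosen generators inside $\A_v$ up to a denominator controlled by integrality over $\OO_S$ rather than by $v$. This gives a bound $N$ on $N_v$ valid for almost every $v \in V_f^S$; the finitely many exceptional valuations are then absorbed by applying the non-uniform form of Jaikin's argument (which requires only the finiteness of $\A_v/\pi_v\A_v$) individually at each such $v$ and taking $l$ to be the maximum of $N+1$ and the resulting finite collection of bounds.
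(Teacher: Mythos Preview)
Your saturation framework is correct for each fixed $v$: the inclusion $\I_{v,1}^l \subseteq \pi_v^l\End_{\OO_v}(L(\OO_v))$ together with $\pi_v^{N_v}\A_v^{\mathrm{sat}}\subseteq\A_v$ does give $\I_{v,1}^{N_v+1}\subseteq\pi_v\A_v$. The difficulty is exactly where you flag it, and your proposed fix does not close the gap. Expressing a fixed $\OO_S$-generator $b_i\in\mathcal{B}_{\OO_S}^{\mathrm{sat}}$ as an element of $\A_v$ requires writing $b_i$ in the $\OO_v$-span of $\{\Ad(h):h\in N_v^1\}$. Zariski density of $G(\pp_v)$ only tells you that $b_i$ lies in the $k_v$-span of $\Ad(G(\pp_v))$; the coefficients in any such expression depend on $v$, so there is no reason their $v$-adic denominators should be bounded independently of $v$. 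The commutator observation $[g,h]\in G(\pp_v)$ does not help either: it produces elements of $\A_v$, but not the specific generators $b_i$ you fixed over $\OO_S$. In fact, since $\Ad(g)\equiv\Id\pmod{\pi_v}$ for all $g\in N_v^1$, the image of $\A_v$ in $\End/\pi_v\End$ is one-dimensional while that of $\A_v^{\mathrm{sat}}$ typically is not, so $N_v\geq 1$ for essentially all $v$ and there is no obvious global object forcing a uniform bound on the invariant factors of $\A_v^{\mathrm{sat}}/\A_v$.

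The paper avoids this entirely by not passing through the saturation index. Its argument stays inside $\A_v$: since $\A_v$ is a submodule of the free $\OO_v$-module $\End_{\OO_v}(L(\OO_v))$ of rank $d^2$, one has $\dim_{\F_v}(\A_v/\pi_v\A_v)\leq d^2$. The images $\bar\I_{v,1}^i$ form a descending chain of subspaces of $\A_v/\pi_v\A_v$; once $\bar\I_{v,1}^l=\bar\I_{v,1}^{l+1}$ the chain is constant for all $j\geq l$, and then for any $x\in\I_{v,1}^l+\pi_v\A_v$ one writes $x=y_j+\pi_v a_j$ with $y_j\in\I_{v,1}^{l+j}\subseteq\pi_v^{l+j}\End$, so $\pi_v a_j\to x$ and closedness of $\pi_v\A_v$ in $\End$ (completeness of $\OO_v$) gives $x\in\pi_v\A_v$. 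Thus the stable value is zero, the chain strictly decreases until it vanishes, and $l=d^2$ works for every $v$. This is both simpler than the saturation route and already uniform; your detour through $N_v$ is proving a strictly stronger (and harder) statement than what is needed.
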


\noindent 
Note that $\A_v$ is an $\OO_v$-submodule of the finitely generated $\OO_v$-module $\End_{\OO_v}(L(\OO_v))$.
Since $\OO_v$ is Noetherian, this implies that $\A_v$ is a finitely generated $\OO_v$-module as well.
In particular, by Nakayama Lemma $\A_v/\pi_v\A_v$ must be finite.
Now note that from the description of the adjoint action (see \eqref{eq:adjoint_locally} in \ref{sec:standard_groups}) it follows that $\Ad(G)-\Id\subseteq \pi_v\End_{\OO_v}(L(\OO_v))$, which implies $\I_{v,1}\subseteq \pi_v\End_{\OO_v}(L(\OO_v))$, i.e., $\I_{v,1}^i\subseteq \pi_v^i\End_{\OO_v}(L(\OO_v))$.
Thus $\I_{v,1}^i$ gives a strictly decreasing sequence of submodules, so there exists $l\in\NN$ such that $\I_{v,1}^l\leq \pi_vA_v$.
But note that since $\End_{\OO_v}(L(\OO_v))$ is generated by $d^2$ ($d=\dim G$) elements it suffices to take $l=d^2$, which is independent of $v$.

\begin{claim}
For every $u>s$ and for almost every $v\in V_f^S$, we have
\[\I_{v,u+s}\subseteq\I_{v,1}\I_{v,u}+\I_{v,u}\I_{v,1}.\]
\end{claim}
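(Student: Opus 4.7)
The plan is to mimic Claim 3 in the proof of Theorem \ref{th:RG_Fp[[t]]-standard_type_c} almost verbatim, now using the $v$-uniform version of the commutator inclusion provided by Claim 3 above (the local analogue of what was Claim 1 in Theorem \ref{th:RG_Fp[[t]]-standard_type_c}). Since Claim 3 holds for \emph{almost every} $v\in V_f^S$ and every $u>s$, we may assume throughout that $v$ lies in $\mathcal{P}$ and satisfies the conclusion of Claim 3, so that $N_v^{u+s}\leq [N_v^u,N_v^1]$.

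First, I would handle a single commutator. Take $g_u\in N_v^u$ and $g_1\in N_v^1$ and compute, using that $\Ad_v$ is a group homomorphism,
\[
\Ad_v([g_u,g_1])-\Id=\Ad_v(g_u^{-1}g_1^{-1})\bigl[\Ad_v(g_u)-\Id,\Ad_v(g_1)-\Id\bigr],
\]
where the bracket on the right is the commutator in $\A_v$, i.e.\ $[X,Y]=XY-YX$. Expanding this and using that $\Ad_v(g_u)-\Id\in\I_{v,u}$ and $\Ad_v(g_1)-\Id\in\I_{v,1}$, we see that
\[
\Ad_v([g_u,g_1])-\Id\in\A_v(\I_{v,u}\I_{v,1}+\I_{v,1}\I_{v,u})\subseteq \I_{v,1}\I_{v,u}+\I_{v,u}\I_{v,1},
\]
since each $\I_{v,\bullet}$ is a left ideal of $\A_v$.

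Next, I would extend this to arbitrary products of such commutators by using the multiplicativity identity recorded in \eqref{eq:RG_prod_of_ads}, namely
\[
\Ad_v(fh)-\Id=(\Ad_v(f)-\Id)(\Ad_v(h)-\Id)+(\Ad_v(f)-\Id)+(\Ad_v(h)-\Id).
\]
An easy induction on the number of factors then shows that for any product $g=[g_u^{(1)},g_1^{(1)}]\cdots[g_u^{(r)},g_1^{(r)}]$ with $g_u^{(i)}\in N_v^u$ and $g_1^{(i)}\in N_v^1$, one has $\Ad_v(g)-\Id\in\I_{v,1}\I_{v,u}+\I_{v,u}\I_{v,1}$. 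Combining this with Claim 3, which guarantees that every $g\in N_v^{u+s}$ is such a product, yields
\[
\I_{v,u+s}=\A_v\cdot\{\Ad_v(g)-\Id:g\in N_v^{u+s}\}\subseteq \I_{v,1}\I_{v,u}+\I_{v,u}\I_{v,1},
\]
as desired.

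I do not expect any serious obstacle here: the argument is essentially formal manipulation in the algebra $\A_v$, and the only input that is not routine is already packaged in Claim 3, which provides the commutator description of $N_v^{u+s}$ uniformly in $v$. The only point to be mildly careful about is to observe that the left-ideal structure of $\I_{v,u}$ and $\I_{v,1}$ absorbs the factors $\Ad_v(g_u^{-1}g_1^{-1})\in\A_v$ appearing in the commutator identity, so that no assumption like $\Ad_v(g_u^{-1}g_1^{-1})\in\I_{v,\bullet}$ is needed.
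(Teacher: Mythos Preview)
Your proposal is correct and follows essentially the same approach as the paper: the paper's proof simply says to invoke Claim~3 (giving $N_v^{u+s}\leq[N_v^u,N_v^1]$ for almost every $v$) and then ``apply the same argument as in Claim~3 of Theorem~\ref{th:RG_Fp[[t]]-standard_type_c}'', which is exactly the commutator identity plus the multiplicative identity \eqref{eq:RG_prod_of_ads} that you spell out. Your remark that the left-ideal property of $\I_{v,u}$ and $\I_{v,1}$ absorbs the factor $\Ad_v(g_u^{-1}g_1^{-1})\in\A_v$ is precisely the point needed to make the single-commutator step go through.
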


If $g\in N_v^{u+s}$, then $g\in [N_v^u,N_v^1]$ for almost every $v\in V_f^S$ by Claim 3.
Now apply the same argument as in Claim 3 of Theorem \ref{th:RG_Fp[[t]]-standard_type_c}.
\begin{claim}
For every $u>s$ and  for almost every $v\in V_f^S$ we have 
\[
\I_{v,u+s(2l-1)}\subseteq \pi_v\I_{v,u}.
\]
\end{claim}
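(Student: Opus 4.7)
The plan is to mimic Claim 4 of Theorem \ref{th:RG_Fp[[t]]-standard_type_c}, the only difference being that we must be careful that the relevant bounds (the integers $s$ and $l$) are uniform in $v$ once we restrict to almost every $v \in V_f^S$. Fortunately, both Claim 4 and Claim 5 above already supply this uniformity: in Claim 4 the integer $l$ can be taken to be $d^2$ regardless of $v$, and Claim 5 holds for almost every $v$ with the same $s$ coming from Claim 2.

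The argument is a straightforward iteration. First I would apply Claim 5 repeatedly $2l-1$ times, starting from $\I_{v,u}$. Each application replaces an ideal of the form $\I_{v,u'}$ occurring at some position in a product with $\I_{v,1}\I_{v,u'} + \I_{v,u'}\I_{v,1}$, shifting the index by $s$. Since the operations preserve the sum structure of products in which exactly one factor is of the form $\I_{v,u'}$ and the others are copies of $\I_{v,1}$, after $2l-1$ iterations we obtain
\[
\I_{v,u+s(2l-1)} \;\subseteq\; \sum_{i+j=2l-1} \I_{v,1}^{\,i}\, \I_{v,u}\, \I_{v,1}^{\,j}.
\]

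The second step is the pigeonhole observation: in each term of the sum, $i+j = 2l-1$, hence $\max(i,j)\geq l$. Applying Claim 4, whichever of the two exponents is at least $l$ forces the corresponding factor $\I_{v,1}^{\max(i,j)}$ to lie in $\pi_v \A_v$. Since $\A_v$ absorbs $\I_{v,u}$ from either side (the $\A_v$-ideal $\I_{v,u}$ is a two-sided object inside $\A_v$ for the purposes of this inclusion, as the products $\I_{v,1}^{i}\I_{v,u}\I_{v,1}^{j}$ sit inside $\A_v \I_{v,u} \A_v \subseteq \I_{v,u}$), each summand is contained in $\pi_v \I_{v,u}$. Summing up yields $\I_{v,u+s(2l-1)} \subseteq \pi_v \I_{v,u}$, as desired.

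There is no real obstacle here; the only mild subtlety is checking that the iteration of Claim 5 really produces the stated sum over $i+j = 2l-1$, which is a routine induction on the number of applications (the base case $2l-1=1$ is literally Claim 5, and the inductive step splits each factor $\I_{v,u}$ in the current sum via Claim 5 while leaving the surrounding $\I_{v,1}$ factors untouched). Since Claim 5 only holds for $u > s$ and we must apply it at every intermediate index $u, u+s, u+2s, \dots, u+(2l-2)s$, the hypothesis $u > s$ automatically ensures all intermediate indices exceed $s$, so Claim 5 is applicable throughout. This yields the claim for almost every $v \in V_f^S$, as required.
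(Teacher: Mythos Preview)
Your proof is correct and follows exactly the same approach as the paper: iterate Claim 5 a total of $2l-1$ times to obtain $\I_{v,u+s(2l-1)}\subseteq \sum_{i+j=2l-1}\I_{v,1}^{i}\I_{v,u}\I_{v,1}^{j}$, then invoke Claim 4 on whichever exponent is at least $l$ to land inside $\pi_v\I_{v,u}$. The paper states this in two sentences; you have simply spelled out the induction and the pigeonhole step, and correctly noted that $\I_{v,u}$ is a two-sided ideal of $\A_v$ (since $N_v^u$ is normal) so that the absorption on either side is legitimate.
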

\noindent Applying the previous claim $(2l-1)$ times we obtain \[\I_{v,u+s(2l-1)}\subseteq \sum_{i+j=2l-1}\I_{v,1}^i\I_{v,u}\I_{v,1}^j.\]
It follows from Claim 4 that $\I_{v,u+s(2l-1)}\leq\pi_v\I_{v,u}$.

So far we have shown that the set of all $v\in V_f^S$ that satisfy all of the above claims is cofinite in $V_f^S$. 
Hence we may assume that all of the above claims hold for $v$.

Consider now $\rho\in\Irr(N_v^1)$ and suppose that $N_v^{2(k-1)} \nsubseteq \ker\rho $ but $N_v^{2k}\subseteq\ker\rho$.
We may assume that $k\geq s$, otherwise $N_v^{2s}\subseteq\ker\rho$ and we may add $2s$ to the constant $c_2$.
Let $\lambda\in\Irr(N_v^k)$ be an irreducible constituent of $\rho_{|N_v^k}$ and let $H\leq N_v^1$ be the stabiliser of $\lambda$.
Pick $\delta$ as in Lemma \ref{lm:projrep} and put $r:=\lceil\log_{q_v}|N_v^1:H|\rceil$. Note that if $n=\dim \rho$ then $|N_v^1:H|\leq n$ by Lemma \ref{lm:RG_induction_stabilizer} and so $r\leq r'$.

\begin{claim}
Let $M\leq H$ and suppose $M\trianglelefteq G(\OO_v)$. Put $m:=\lceil\log_{q_v}|N_v^1:M|\rceil$.
Then for almost every $v\in V_f^S$ we have $N^{s(2l-1)(\lceil\frac{m}{\delta}\rceil+1)}\leq H$.
\end{claim}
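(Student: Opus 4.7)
The plan is to adapt the strategy of Claim 9 from the proof of Theorem \ref{th:RG_Fp[[t]]-standard_type_c}, replacing the pure pro-$p$ counting there by one that exploits the $q_v^{\delta}$ lower bound on indices of proper subgroups of $G(\OO_v)/N_v^1$ given by Lemma \ref{lm:projrep}. Since both $M$ and each $N_v^{s(2l-1)j}$ are normal in $G(\OO_v)$, so are the subgroups $MN_v^{s(2l-1)j}$, and I plan to analyse the chain of $G(\OO_v)$-normal subgroups of $N_v^1$
\[
M \leq MN_v^{s(2l-1)(\lceil m/\delta \rceil + 1)} \leq MN_v^{s(2l-1)\lceil m/\delta \rceil} \leq \cdots \leq MN_v^{s(2l-1)},
\]
whose total index satisfies $|MN_v^{s(2l-1)} : M| \leq |N_v^1 : M| \leq q_v^m$.

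The central claim will be that every strict inclusion $MN_v^{s(2l-1)(i+1)} \lneq MN_v^{s(2l-1)i}$ has index at least $q_v^{\delta}$. Assuming this, if all $\lceil m/\delta\rceil + 1$ successive inclusions were strict, the total index would exceed $q_v^{\delta(\lceil m/\delta\rceil + 1)} > q_v^m$, a contradiction; hence there exists $1 \leq i \leq \lceil m/\delta\rceil$ with $MN_v^{s(2l-1)i} = MN_v^{s(2l-1)(i+1)}$. Using the identity $\I_v(JN) = \I_v(J) + \I_v(N)$ for $N \trianglelefteq G(\OO_v)$ and $J \leq G(\OO_v)$ (as in the proof of Claim 5 of the same theorem), this equality rewrites as $\I_v(M) + \I_{v,s(2l-1)i} = \I_v(M) + \I_{v,s(2l-1)(i+1)}$; combined with Claim 6, it yields $\I_{v,s(2l-1)i} \subseteq \I_v(M) + \pi_v \I_{v,s(2l-1)i}$, and Nakayama's Lemma over the local ring $\OO_v$ forces $\I_{v,s(2l-1)i} \subseteq \I_v(M) \subseteq \I_v(H)$. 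Translating back via (\ref{iso}) and using that $H$ stabilizes $\lambda$ while $N_v^{2k} \leq \ker\lambda$, one deduces $[N_v^k, N_v^{s(2l-1)i}] \leq [N_v^k, H]N_v^{2k} \leq \ker\lambda$, so $N_v^{s(2l-1)i}$ stabilizes $\lambda$ and is contained in $H$. Since $i \leq \lceil m/\delta\rceil$, this gives $N_v^{s(2l-1)(\lceil m/\delta\rceil + 1)} \leq N_v^{s(2l-1)i} \leq H$.

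To justify the crucial index lower bound, I would argue that each quotient $Q_i = MN_v^{s(2l-1)i}/MN_v^{s(2l-1)(i+1)}$ is a finite abelian $p$-group on which $G(\OO_v)$ acts by conjugation; its $N_v^1$-fixed points $F_i := Q_i^{N_v^1}$ form a $G(\OO_v)$-invariant subgroup carrying an action of $G(\OO_v)/N_v^1$, and the class equation for the pro-$p$ group $N_v^1$ acting on the finite $p$-group $Q_i$ guarantees that $F_i$ is nontrivial whenever $Q_i$ is. Since $G(\OO_v)/N_v^1$ is a perfect central extension of a finite simple group of Lie type for $v \in \mathcal{P}$, any nontrivial orbit on $F_i$ has size at least $q_v^\delta$ by Lemma \ref{lm:projrep}, producing $|Q_i| \geq |F_i| \geq q_v^\delta$.

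The hard part will be the subcase in which $G(\OO_v)/N_v^1$ acts trivially on $F_i$: then $F_i$ lies in the centre of $G(\OO_v)/MN_v^{s(2l-1)(i+1)}$ and a priori need not exceed the uniformly bounded order of $Z(H(\F_v))$. I expect to handle this either by excluding finitely many small-$q_v$ valuations (so that the fixed bound on the centre is dominated by $q_v^\delta$), or by passing to a Landazuri--Seitz-type bound on the minimal $\F_v$-dimension of a nontrivial $G(\OO_v)/N_v^1$-module, giving a $q_v^{\delta'}$ estimate directly on $Q_i$ after possibly shrinking $\delta$.
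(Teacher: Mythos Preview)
Your overall strategy---the chain of subgroups $MN_v^{s(2l-1)j}$, the Nakayama argument at an equality, and the pigeonhole forcing an equality via a $q_v^\delta$ lower bound on strict steps---is exactly the paper's. The part that differs is how you justify the $q_v^\delta$ bound, and there your fixed-point approach creates an artificial difficulty.

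The paper avoids your ``hard case'' entirely by refining each strict step $MN_v^{c(i+1)}<MN_v^{ci}$ into the single-level chain
\[
MN_v^{c(i+1)}\leq MN_v^{c(i+1)-1}\leq\cdots\leq MN_v^{ci}.
\]
At any single strict step $MN_v^{j+1}<MN_v^j$ one has $[N_v^1,N_v^j]\subseteq N_v^{j+1}$ and $[N_v^1,M]\subseteq M$, so $N_v^1$ already acts trivially on $MN_v^j/MN_v^{j+1}$---no passage to fixed points is needed. On the other hand $[G(\OO_v),N_v^j]=N_v^j$ by Lemma~\ref{lm:goodvaluations}.\ref{lm:good_valuations_[G(O_v),Nv1]=Nv1}, which forces $N_v^j\subseteq MN_v^{j+1}$ if the action were trivial, contradicting strictness. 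Hence $MN_v^j/MN_v^{j+1}$ is automatically a nontrivial $G(\OO_v)/N_v^1$-module, and a non-fixed element has orbit of size $\geq q_v^\delta$ by Lemma~\ref{lm:projrep}, giving $|MN_v^{ci}:MN_v^{c(i+1)}|\geq q_v^\delta$.

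In fact your ``hard case'' cannot occur for the same reason: if $j_0$ is the largest index with $MN_v^{j_0+1}<MN_v^{j_0}$, then $MN_v^{j_0+1}=MN_v^{c(i+1)}$ and the single-step quotient $MN_v^{j_0}/MN_v^{j_0+1}$ sits inside your $F_i$ as a $G(\OO_v)$-submodule on which the action is nontrivial. So your proposed Landazuri--Seitz detour and exclusion of small $q_v$ are unnecessary; the refinement is the clean fix.
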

\noindent
For convenience we will put $c:=s(2l-1)$.
Consider the chain of subgroups
\[ M\leq MN_v^{c(\lceil\frac{m}{\delta}\rceil+1)}\leq MN_v^{c\lceil\frac{m}{\delta}\rceil}\leq MN_v^{c(\lceil\frac{m}{\delta}\rceil-1)}\leq\ldots\leq  MN_v^{c}.\]
Suppose that there is some equality in the above chain, say $MN_v^{c(i+1)}=MN_v^{ci}$ for some $1\leq i\leq \lceil\frac{m}{\delta}\rceil$.
From the previous equality and Claim $6$ (see Claim $9$ in Theorem \ref{th:RG_p-adic_perfect_type_c} as well) it follows that
\[
\I(M)+\I_v^{ci}=\I(M)+\I_v^{c(i+1)}=\I(M)+\pi_v\I_v^{ci}.
\]
We can apply Nakayama Lemma to the above equality of $\OO_v$-modules to obtain $\I_v^{ci}\leq \I(M)$.
In particular, $L(\OO_v)\I_v^{ci}\leq L(\OO_v)\I(M)$.
Therefore, the isomorphism \eqref{iso} gives
\[ [N_v^k,N_v^{ci}]N_v^{2k}\leq [N_v^k,M]N_v^{2k}.\]
In particular, $N_v^{ci}$ stabilizes $\lambda$ and $N_v^{c(\lceil\frac{m}{
\delta}\rceil+1)}\leq N_v^{ci}\leq H$.
We will show that there must be an equality in the chain.
Suppose that all inequalities in the chain are strict.
We claim that if $MN_v^{c(i+1)}<MN_v^{ci}$ then $|MN_v^{c(i+1)}:MN_v^{ci}|\geq q_v^{\delta}$.
If this holds, then $|N_v^1:M|\geq \left( q_v^{\delta}\right)^{\lceil\frac{m}{\delta}\rceil+1}>q_v^m\geq |N_v^1:M|$, which contradicts the choice of $m$.
Suppose then that $MN_v^{c(i+1)}<MN_v^{ci}$ and consider the following refinement.
\[
MN_v^{c(i+1)}\leq MN_v^{c(i+1)-1}\leq MN_v^{c(i+1)-2}\leq\ldots \leq N_v^{ci}.
\]
Then for some $c(i+1)\leq j\leq ci+1$ we have $MN_v^{j+1}<MN_v^{j}$.
Note that since $M$ is normal in $G(\OO_v)$, $G(\OO_v)$ acts by conjugation on the abelian group $MN_v^j/MN_v^{j+1}$.
The action is non trivial, for $[G(\OO_v),N_v^j]=N_v^j$ by \ref{lm:goodvaluations}.\ref{lm:good_valuations_[G(O_v),Nv1]=Nv1}, but $N_v^1$ does act trivially.
Hence, $MN_v^j/MN_v^{j+1}$ is a nontrivial $G(\OO_v)/N_v^1$ module.
By \ref{lm:goodvaluations}.\ref{lm:good_valuations_G(Fv)_simple} we can apply Lemma \ref{lm:projrep} to obtain $|MN_v^j/MN_v^{j+1}|\geq q_v^\delta$, which finishes the proof.

\begin{claim}
For almost every $v\in V_f^S$ we have $N_v^{s(2l-1)\lceil\frac{dr}{\delta}\rceil+1}\leq H$.
\end{claim}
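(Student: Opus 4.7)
The plan is to reduce Claim 8 to Claim 7 by exhibiting a subgroup $M \trianglelefteq G(\OO_v)$ with $M \leq H$ and $|N_v^1 : M| \leq q_v^{dr}$. Claim 7 applied to such an $M$ then gives an inclusion of the form $N_v^{s(2l-1)(\lceil dr/\delta\rceil + 1)} \leq M \leq H$, from which the statement of Claim 8 follows (the small discrepancy between $s(2l-1)(\lceil dr/\delta\rceil+1)$ and $s(2l-1)\lceil dr/\delta\rceil+1$ is a harmless off-by-one that will be absorbed into constants).

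To construct $M$, we first linearise the problem. Because $N_v^{2k} \subseteq \ker\rho$ and $[N_v^k, N_v^k] \leq N_v^{2k}$, the character $\lambda$ factors through the abelian quotient $N_v^k / N_v^{2k}$, which by the isomorphism (\ref{iso}) is identified, as a $G(\OO_v)$-module, with $L(\OO_v)/\pi_v^k L(\OO_v)$. Hence $\lambda$ is naturally an element of the Pontryagin dual $V^* := \widehat{L(\OO_v)/\pi_v^k L(\OO_v)}$, a free $\OO_v/\pi_v^k$-module of rank $d = \dim G$ on which $G(\OO_v)$ acts $\OO_v$-linearly via the contragredient of the adjoint representation. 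Let $H' \leq G(\OO_v)$ denote the full $G(\OO_v)$-stabiliser of $\lambda$, so that $H = H' \cap N_v^1$. Now let $U \leq V^*$ be the $\OO_v$-submodule generated by the $G(\OO_v)$-orbit of $\lambda$; then $U$ is $G(\OO_v)$-stable and has $\OO_v$-rank at most $d$, so Nakayama's lemma applied to the $\F_v$-vector space $U/\pi_v U$ allows us to choose $t \leq d$ elements $g_1 \cdot \lambda, \ldots, g_t \cdot \lambda$ from the orbit that generate $U$ over $\OO_v$. Set
\[
M := N_v^1 \cap \ker\bigl(G(\OO_v) \to \Aut_{\OO_v}(U)\bigr).
\]
Then $M \trianglelefteq G(\OO_v)$ (an intersection of two normal subgroups) and $M \leq H$ (since every element of $M$ fixes $\lambda \in U$ and lies in $N_v^1$).

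To estimate $|N_v^1 : M|$, note that an element $n \in N_v^1$ is determined modulo $M$ by the tuple $(n \cdot g_1 \lambda, \ldots, n \cdot g_t \lambda)$. Each entry lies in the $N_v^1$-orbit of $g_i \cdot \lambda$, and, using that $N_v^1 \trianglelefteq G(\OO_v)$, the stabiliser of $g_i \cdot \lambda$ in $N_v^1$ equals the conjugate $g_i H g_i^{-1}$, so this orbit has cardinality $|N_v^1 : H| \leq q_v^r$. Therefore $|N_v^1 : M| \leq q_v^{rd}$, and setting $m := \lceil \log_{q_v}|N_v^1 : M|\rceil \leq dr$ in Claim 7 produces the desired inclusion. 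The main obstacle---and the reason this gives the correct linear factor $d$ in the exponent---is the bound $t \leq d$ on the number of generators of $U$; if $U$ had to be generated by the entire $G(\OO_v)$-orbit rather than by $d$ of its elements, the resulting index bound would grow with the orbit size and would be far too weak for Claim 7 to conclude.
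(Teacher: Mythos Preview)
Your argument is correct and is essentially the same as the paper's. The paper sets $H':=\bigcap_{g\in G(\OO_v)}H^g$ and shows, via the same rank-$d$ observation on the $\OO_v$-module $\Irr(L(\OO_v)/\pi_v^kL(\OO_v))$, that $H'=\bigcap_{i=1}^d H^{g_i}$ for suitable $g_1,\ldots,g_d$; your subgroup $M$ coincides with this $H'$ (an element of $N_v^1$ fixes the $\OO_v$-generators $g_i\cdot\lambda$ of $U$ iff it fixes all of $U$ pointwise, iff it fixes every orbit element $g\cdot\lambda$), and both proofs finish by applying Claim~7 with $m\leq dr$.
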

\noindent Put $\displaystyle H':=\bigcap_{g\in G(\OO_v)}H^g$, where $H^g:=g^{-1}Hg$.
Suppose $|G(\OO_v):H'|\leq|G(\OO_v):H|^d$, where $d=\dim G$.
Then, since $H'\trianglelefteq G(\OO_v)$ we can apply the previous claim to obtain $N_v^{s(2l-1)(\lceil\frac{dm}{\delta}\rceil+1)}\leq H$.
We claim that $\displaystyle H'=\bigcap_{g_1,\ldots,g_d}H^g$ for some $g_1,\ldots,g_d\in G(\OO_v)$, which implies $|G(\OO_v):H'|\leq|G(\OO_v):H|^d$.
Let us write $\Irr(N_v^k|N_v^{2k}):=\{\psi\in\Irr(N_v^k)\;|\; N_v^{2k}\subseteq\ker\psi\}$.
Note that $\Irr(N_v^k|N_v^{2k})\cong\Irr(N_v^k/N_v^{2k})$.
Recall that $\lambda\in\Irr(N_v^k|N_v^{2k})$ and since $N_v^k\trianglelefteq G(\OO_v)$ we have an action of $G(\OO_v)$ in $\Irr(N_v^k)$. Indeed, $G(\OO_v)$ acts on $\Irr(N_v^k|N_v^{2k})$ as well.
It is clear that $H^g$ is the stabiliser of $\lambda^g$ under this action.
Now by the isomorphism (of $G(\OO_v)$-modules) \eqref{iso} we obtain an isomorphism of $G(\OO_v)$-modules
\[
\Irr(N_v^k|N_v^{2k})\cong\Irr(N_v^k/N_v^{2k})\cong\Irr(L(\OO_v)/\pi_v^kL(\OO_v)).
\]
We can give $\Irr(L(\OO_v)/\pi_v^kL(\OO_v))$ an $\OO_v$-module structure given by $r\lambda(l):=\lambda(rl)$ for every $r\in\OO_v,\lambda\in\Irr(L(\OO_v)/\pi_v^kL(\OO_v))$ and $l\in L(\OO_v)$.
Since the action of $G(\OO_v)$ on $L(\OO_v)$ is the adjoint action, this action is $\OO_v$-linear and it follows that the $\OO_v$-structure on $\Irr(L(\OO_v)/\pi_v^kL(\OO_v))$ is compatible with the $G(\OO_v)$-action.
It is then clear that for $g\in G(\OO_v)$ and $\lambda\in\Irr(L(\OO_v)/\pi_v^kL(\OO_v))$, $g$ stabilizes $\lambda$ if and only if $g$ stabilizes $\langle\lambda\rangle_{\OO_v}$.
Now $L(\OO_v)/\pi_v^kL(\OO_v)$ is an $\OO_v$-module of rank $d$ and so is $\Irr(L(\OO_v)/\pi_v^kL(\OO_v))$.
It follows that for $\lambda\in\Irr(L(\OO_v)/\pi_v^kL(\OO_v))$, $\langle \lambda^g\;:\; g\in G(\OO_v)\rangle_{\OO_v}$ is an $\OO_v$-submodule generated by $\lambda^{g_1},\ldots,\lambda^{g_d}$ for some $g_1,\ldots,g_d\in G(\OO_v)$, which finishes the proof.

We are finally ready to prove our very first claim.
Recall that $r:=\lceil\log_{q_v}|N_v^1:H|\rceil$ and let us set $c':=\lceil \frac{d}{\delta}\rceil$.
Suppose that $k\geq s^2(2l-1)(c'r+1)+3$.
Then it follows from Claim 2 that
\[ \pi_v^{s^2(2l-1)(c'r+1)}L(\OO_v)\I_{v,1}\leq L(\OO_v)\I_{v,s(2l-1)(c'r+1)}.\]
Now from Claim 3 and isomorphism \eqref{iso}, we obtain
\[ N_v^{k+s^2(2l-1)(c'r+1)+s}\leq [N_v^{k+s^2(2l-1)(c'r+1)},N_v^1]N_v^{2k}\leq [N_v^k, N_v^{s(2l-1)(c'r+1)}]N_v^{2k}.\]
However, since $N_v^{s(2l-1)(c'r+1)}\leq H$ (Claim $8$), we must have \[N_v^{k+s^2(2l-1)(c'r+1)}\leq\ker\lambda.\]
But recall that $N_v^{2(k-1)}\nleq\ker\lambda$, hence $2k-2\leq k+s^2(2l-1)(c'r+1)$, or equivalently, $k\leq s^2(2l-1)(c'r+1)+2$, which contradicts our assumption. 
Therefore we must have
$k\leq s^2(2l-1)(c'r+1)$, so there exist $c_1,c_2$ such that
\[
N_v^{c_1r'+c_2}\leq N_v^{c_1r+c_2}\leq K_n(N_v^1),\]
which finishes the proof.
\end{proof}

We are finally ready to Prove Theorem \ref{th:PRG}.
\begin{proof}[Proof of Theorem \ref{th:PRG}.]
We want to show that $G(\widehat{\OO}_S)\cong\prod_{v\in V_f^S}G(\OO_v)$ has polynomial representation growth.
Let us consider the set of valuations $\mathcal{P}$, see Lemma \ref{lm:goodvaluations}.
We can write
\[ G(\widehat{\OO}_S)\cong \prod_{v\in V_f^S\setminus \mathcal{P}}G(\OO_v)\times \prod_{v\in\mathcal{P}}G(\OO_v).\]
Recall that by Lemma \ref{lm:typec} $G(\OO_v)$ has PRG for every $v\in V_f^S$.
As $\mathcal{P}$ is confinite in $V_f^S$, the first term is a finite product of groups each of which having PRG and hence it has itself PRG (Lemma \ref{lm:RP_direct_prod}).
Therefore it suffices to show that the second term in the above isomorphism has PRG.

Let $\rho$ be an irreducible $n$-dimensional representation of $\prod_{v\in\mathcal{P}}G(\OO_v)$.
Then $\rho\cong\rho_1\otimes\ldots\otimes\rho_t$, where $\rho_i$ is an $n_i$-dimensional irreducible representation of $G(\OO_{v_i})$ for some $v_i\in\mathcal{P}$ and $n_1\ldots n_t=n$.
By \ref{lm:goodvaluations}.\ref{lm:good_valuations_G(O_v)_perfect} $n_i>1$  and by Lemma \ref{lm:ndecom} below there are at most $n^d$ possible configurations of the form $(n_1,\ldots,n_t)$ such that $n_1\ldots n_t=n$.
Given such a configuration $(n_1,\ldots,n_t)$, the number of valuations $v_i$ such that $G(\OO_{v_i})$ has a nontrivial representation of dimension $n_i$ is bounded by $n_i^{b/\delta}$ (Proposition \ref{th:deltaP1} and Lemma \ref{pre:lm:numberofval}).
Hence the number of choices for $(v_1,\ldots,v_t)$ is bounded by $\prod n_i^{b/\delta}=n^{b/\delta}$.
Now given $(n_1,\ldots,n_t)$ and $(v_1,\ldots,v_t)$, we know that $G(\OO_{v_i})$ has at most $cn_i^c$ irreducible representations of dimension $n_i$ (Theorem \ref{th:allvaluations}).
Therefore $\prod_{v\in\mathcal{P}}G(\OO_v)$ has at most $n^dn^{b/\delta}c^{\log n}n^c$ irreducible $n$-dimensional representations.
In particular, it has polynomial representation growth, which finishes the proof.

\end{proof}

\subsubsection{Auxiliary results}

We present here some auxiliary results that we use in the proofs of the results of the previous section.
Lemma \ref{lm:polsol} is needed for Lemma \ref{lm:polval}, which is the final step of Claim 2 in Theorem \ref{th:allvaluations}.

\begin{lemma} \label{lm:polsol}
Let $f=f(x_1,\ldots,x_n)$ be a nontrivial polynomial with coefficients in a finite field $\F$.
Put $d=\max\{\deg_{x_i}f:1\leq i\leq n\}$.
If $|\F|>d$, then there exists ${\bf y_0}\in\F^n$ such that $f({\bf y_0})\neq 0$.
\end{lemma}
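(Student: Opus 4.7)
The plan is to prove the statement by induction on the number of variables $n$.

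For the base case $n=1$, the polynomial $f(x_1)$ is a nonzero univariate polynomial of degree at most $d$, so it has at most $d$ roots in $\F$. Since $|\F| > d$, there must exist some $y_0 \in \F$ with $f(y_0) \neq 0$.

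For the inductive step, assume the result holds for polynomials in $n-1$ variables (satisfying the same bound on the partial degrees). Write
\[
f(x_1, \ldots, x_n) = \sum_{i=0}^{e} g_i(x_1, \ldots, x_{n-1})\, x_n^i,
\]
where $e = \deg_{x_n} f \leq d$. Since $f$ is nontrivial, at least one coefficient $g_i$ is a nontrivial polynomial in $n-1$ variables; moreover $\deg_{x_j} g_i \leq \deg_{x_j} f \leq d$ for every $1 \leq j \leq n-1$. By the inductive hypothesis applied to this $g_i$, there exist $y_1, \ldots, y_{n-1} \in \F$ such that $g_i(y_1, \ldots, y_{n-1}) \neq 0$. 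Substituting these values, the polynomial
\[
h(x_n) := f(y_1, \ldots, y_{n-1}, x_n) = \sum_{i=0}^{e} g_i(y_1, \ldots, y_{n-1})\, x_n^i
\]
is a nonzero univariate polynomial in $x_n$ of degree at most $d$. Applying the base case, there exists $y_n \in \F$ with $h(y_n) \neq 0$, so $\mathbf{y}_0 = (y_1, \ldots, y_n)$ is the desired point.

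This argument is entirely routine; there is no real obstacle. The only subtlety worth emphasising is that the hypothesis $|\F| > d$ is used uniformly at every stage of the induction (since the bound $d$ controls the $x_n$-degree of every specialisation), which is why stating the bound in terms of $d = \max_i \deg_{x_i} f$ rather than the total degree is essential for the inductive step to go through cleanly.
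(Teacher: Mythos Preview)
Your proof is correct and follows essentially the same induction on $n$ as the paper's proof; the only cosmetic difference is that you specialise $x_1,\ldots,x_{n-1}$ first and then $x_n$, whereas the paper views $f$ as a univariate polynomial in $x_n$ over $\F(x_1,\ldots,x_{n-1})$, specialises $x_n$ first, and then invokes the inductive hypothesis. Both arguments rest on the same two ingredients (a nonzero degree-$d$ univariate polynomial has at most $d$ roots, and the partial-degree bound $d$ is inherited by every specialisation), so this is the same proof in slightly different packaging.
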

\begin{proof}
We will apply induction on the number of variables $n$.

For $n=1$ it is clear that the polynomial $f$ can have at most $d$ solutions, which implies the claim.

Now suppose the lemma holds for $n-1$ variables.
We may see $f(x_1,\ldots,x_n)=f(x_n)(x_1,\ldots,x_n)$ as a polynomial in $1$ variable with coefficients in $\F(x_1,\ldots,x_{n-1})$ and of degree at most $d$.
This polynomial can have at most $d$ solutions in $\F$. Hence, there exists $y_n\in\F$ such that $f(y_n)(x_1,\ldots,x_{n-1})$ is a nontrivial polynomial with coefficients in $\F$ and of degree at most $d$.
By induction, there exist $y_1,\ldots,y_{n-1}\in\F$ such that $f(y_n)(y_1,\ldots,y_{n-1})$ is nonzero.
Then, setting ${\bf y_0}=(y_1,\ldots,y_n)$, we have $f({\bf y_0})\neq 0$ and we are done.
\end{proof}
\begin{lemma}\label{lm:polval}




Given $\OO_S[\mathbb{A}^n]:=\OO_S[x_1,\ldots,x_n]$, put ${\tilde{I}:=\langle x_1,\ldots,x_n\rangle}\triangleleft \OO_S[\mathbb{A}^n]$ and consider a radical prime ideal $J\triangleleft \tilde{I}$ such that $\OO_S[X]:=\OO_S[\mathbb{A}^n]/J$ is a finitely generated reduced $\OO_S$-algebra. Suppose $\OO_S[X]\otimes k$ gives an affine $k$-variety and we further assume that this variety is smooth.
For every $\OO_S$-algebra $R$ let us write $X(R)=\Hom_{\OO_S}(\OO_S[X],R)$ for the "$R$-points" of $\OO_S[X]$.

Suppose that $f\in I=\tilde{I}/J\triangleleft\OO_S[X]$ is non trivial. Then there exists $s\in\NN$ such that for almost every $v\in V_f^S$ and  every $k\geq 1$ there exists $x_v^k\in X(\mm_v^k)$ with $\ord_v(f(x_v^k))=ks$.

\end{lemma}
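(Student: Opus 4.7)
The plan is to take $s$ to be the multiplicity of $f$ at the origin of the smooth $k$-variety $X_k=\OO_S[X]\otimes k$, and to produce $x_v^k$ using local coordinates together with Lemma~\ref{lm:polsol}. First I would use smoothness of $X_k$ at the origin (which lies on $X_k$ since $J\subseteq\tilde I$) to choose, after relabelling, $d=\dim X_k$ coordinates $x_1,\ldots,x_d$ whose differentials form a basis of the cotangent space; in the completion $\widehat{\OO_{X_k,0}}\cong k[[x_1,\ldots,x_d]]$, the remaining $x_j$ become power series $h_j(x_1,\ldots,x_d)$ of positive order. By the standard spreading-out argument already used in Lemma~\ref{lm:Nv1_standar}, for almost every $v\in V_f^S$ the scheme $X\otimes_{\OO_S}\OO_v$ is smooth at the origin, $x_1,\ldots,x_d$ remain a regular system of parameters, and the $\tilde I$-adic completion of $\OO_v[X]$ identifies canonically with $\OO_v[[x_1,\ldots,x_d]]$ via Cohen's structure theorem.

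Next I would set $s$ to be the largest integer with $f\in\mathfrak{m}^s$ in $\OO_{X_k,0}$, which is finite by Krull's intersection theorem, and write $f=\bar f_s(x_1,\ldots,x_d)+R$ with $\bar f_s$ a nonzero homogeneous polynomial of degree $s$ and $R$ of order at least $s+1$ in the completion. Excluding the finitely many $v$ for which every coefficient of $\bar f_s$ is a non-unit, I may assume the reduction $\bar f_s\bmod\mm_v\in\F_v[x_1,\ldots,x_d]$ is a nonzero polynomial of degree $s$. Given $k\geq 1$ and any $\tilde{\mathbf{x}}=(\tilde x_1,\ldots,\tilde x_d)\in\OO_v^d$, the substitution $x_i\mapsto\pi_v^k\tilde x_i$ defines a continuous $\OO_v$-algebra homomorphism $\OO_v[[x_1,\ldots,x_d]]\to\OO_v$ which, precomposed with $\OO_S[X]\to\OO_v[[x_1,\ldots,x_d]]$, yields a point $x_v^k(\tilde{\mathbf{x}})\in X(\mm_v^k)$ whose evaluation satisfies
\[
f(x_v^k(\tilde{\mathbf{x}}))=\pi_v^{ks}\,\bar f_s(\tilde x_1,\ldots,\tilde x_d)+O(\pi_v^{k(s+1)}).
\]
Hence $\ord_v f(x_v^k(\tilde{\mathbf{x}}))=ks$ precisely when $\bar f_s(\tilde x_1,\ldots,\tilde x_d)\in\OO_v^\times$, and applying Lemma~\ref{lm:polsol} to $\bar f_s\bmod\mm_v$ (of degree $s$, independent of $v$) furnishes such a $\tilde{\mathbf{x}}$ as soon as $|\F_v|>s$, which holds for almost every $v$.

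The main obstacle is not the combinatorial finish but the geometric setup: one must justify that smoothness of $\OO_S[X]\otimes k$ at the origin spreads out to smoothness over $\OO_v$ for almost every $v$, and that the $\tilde I$-adic completion of $\OO_v[X]$ is then a power series ring in the chosen coordinates, so that formal substitutions genuinely define $\OO_v$-points rather than merely formal points. This step combines generic smoothness with Cohen's structure theorem and mirrors the argument already carried out in the proof of Lemma~\ref{lm:Nv1_standar}.
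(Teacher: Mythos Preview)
Your proposal is correct and follows essentially the same strategy as the paper: define $s$ as the order of vanishing of $f$ at the origin, isolate the degree-$s$ leading form, use Lemma~\ref{lm:polsol} to find a residue-field point where this form does not vanish, and lift via the Cohen structure theorem (invoking the argument of Lemma~\ref{lm:Nv1_standar}). The only notable difference is one of packaging: the paper works \emph{extrinsically}, taking a lift $\tilde f\in\OO_S[x_1,\ldots,x_n]$, extracting its homogeneous degree-$s$ part $\tilde h$, and showing $\tilde h\notin L$ where $L$ is the ideal of linear parts of generators of $J$ (so that $\tilde h$ gives a nonzero function on the tangent space $\mathcal{T}_0(X_v)$), whereas you work \emph{intrinsically} in chosen local coordinates $x_1,\ldots,x_d$ on $X$. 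The paper's extrinsic route avoids having to spread out a choice of regular system of parameters, at the cost of the short argument that $\tilde h\notin L$; your intrinsic route is more direct once the coordinates are in place but front-loads the spreading-out step you correctly flag as the main obstacle. Both arrive at the same application of Lemma~\ref{lm:polsol} and the same lifting via the completion.
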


\begin{proof}

The affine $k$-algebra $k[X]:=\OO_S(X)\otimes_{\OO_S}k$ determines an affine algebraic variety defined over $k$, which, by construction, is embedded in the affine space $\mathbb{A}^n$. By abuse of notation we will denote this algebraic variety by $X$.
Suppose that $J=\langle g_1,\ldots,g_r\rangle$, let $l_i$ be the linear component (in terms of the $x_i$'s) of $g_i$ and write $L=\langle l_1,\ldots,l_r\rangle\triangleleft \OO_S[\mathbb{A}^n]$
for the ideal generated by the $l_i$ in $\OO_S[\mathbb{A}^n]$.
Then $k[\mathcal{T}_0(X)]:=\left(\OO_S[\mathbb{A}^n]/L\right)\otimes_{\OO_S}k$ is an affine $k$-algebra which determines the tangent space of $X$ at $0$.

The ring $\OO_S[X]$ is a Noetherian domain, so we have $\bigcap I^n=0$ (\cite[Corollary 10.18]{AtiMac}).
Hence, there exists $s$ such that $f\in I^s\setminus I^{s+1}$.
Pick $\tilde{f}\in \tilde{I}^s\setminus\tilde{I}^{s+1}$ such that $\pi(\tilde{f})=f$, where $\pi$ is the natural projection $\pi:\OO_S[\mathbb{A}^n]\to \OO_S[X]$, and let $\tilde{h}$ be the homogeneous component of degree $s$ of $\tilde{f}$.
We claim that $\tilde{h}\notin L$.
Indeed, suppose that $\tilde{h}=\sum l_im_i$.
Then $\tilde{h}':=\sum(g_i-l_i)m_i\in\tilde{I}^{s+1}$ and $\tilde{f}':=\tilde{f}-\tilde{h}+\tilde{h}'\in\tilde{I}^{s+1}$.
But note that $\pi(\tilde{h})=\tilde{h}'$, hence $\pi(\tilde{f})=\pi(\tilde{f}')=f$, which implies $f\in I^{s+1}$, a contradiction. 

Now for every prime ideal $p_v\subseteq\OO_S$ we have a reduction map
 $\tilde{r}_v:\OO_S[\mathbb{A}^n]\to\F_v[\mathbb{A}^n]:=\OO_S[\mathbb{A}^n]\otimes_{\OO_S}\OO_S/p_v$ and analogously for $\OO_S[X]\to\F_v[X_v]:=\OO_S[X]\otimes_{\OO_S}\OO_S/p_v$.
We say that $\F_v[X_v]$ is the reduction modulo $p_v$ of $\OO_S[X]$.
Since $\OO_S[X]$ is a reduced domain, it follows that for almost every $v\in V_f^S$, $\F_v[X_v]$ is a reduced $\F_v$-algebra with no zero divisors.
For those $v$, $\F_v[X]$ determines an affine algebraic variety defined over $\F_v$, which will be denoted by $X_v$, with tangent space at $0$ given by the affine algebra 
\[
\F_v[\mathcal{T}_0(X_v)]=:\left(\OO_S[\mathbb{A}^n]/L\right)\otimes_{\OO_S}\OO_S/p_v\cong \F_v[\mathbb{A}^n]/\left(L\otimes_{\OO_S}\OO_S/p_v\right).
\]
If we set $\tilde{h}_v=\tilde{r}_v(\tilde{h})$ and $L_v=\tilde{r}_v(L)$, since $\tilde{h}\notin L$ it follows that $\tilde{h}_v\notin L_v$ for almost every $v\in V_f^S$.
It follows that $l(\tilde{h}_v)$, the image of $\tilde{h}_v$ in $\F_v[\mathcal{T}_0(X_v)]$, is non trivial.

Suppose that $\dim k[X]=d$, then since $X$ is a smooth variety we have $\dim\mathcal{T}_0(X)=d$, hence $\dim\F_v[\mathcal{T}_0(X_v)]=d$ for almost every $v\in V_f^S$.
Thus, for almost every $v\in V_f^S$ there is an $\F_v$-isomorphism $\rho_v:\F_v[\mathcal{T}_0(X_v)]\to\F_v[x_1,\ldots,x_d] $.
By construction $\rho_v(l(\tilde{h}_v))$ is a polynomial of degree $s$.
It follows now from Lemma \ref{lm:polsol}, that for every $v$ such that $|\F_v|>s$, there exists ${\bf y_v'}\in\F_v^s$ such that $\rho_v(l(\tilde{h}_v))(y_v')\neq 0$, equivalently there exists ${\bf y_v}\in \mathcal{T}_0(X_v)(\F_v)$ such that ${\bf y_v}(l(\tilde{h}_v)\neq 0$, i.e., there exists ${\bf x_v^0}\in \F_v^n$ such that $\tilde{h}_v({\bf x_v^0})\neq 0$.

If we now reinterpret $\mathcal{T}_0(X_v)(\F_v)$ via dual numbers (for a ring $R$, $R[\epsilon]\cong R[x]/(x^2)$) and recall that $\F_v[X_v]=\OO_S[X]\otimes \OO_S/p_v$ we have natural bijections
\begin{align*}
\mathcal{T}_0(X_v)(\F_v)&=\Hom(\F_v[\mathcal{T}_0(X_v)],\F_v)\\
& \leftrightarrow \{\varphi\in\Hom_{\OO_S}(\F_v[X_v],\F_v[\epsilon])\; :\; \varphi(\tilde{r}_v(\tilde{I}))\subseteq \epsilon\F_v[\epsilon]\}\\
& \leftrightarrow \Hom_{\OO_S}(\F_v[X_v],\mm_v^k/\mm_v^{k+1})\\ & \leftrightarrow \Hom_{\OO_S}(\OO_S[X],\mm_v^k/\mm_v^{k+1}),
\end{align*}
where the bijections ($\leftrightarrow$) are induced by the evaluation maps at the $x_i$'s.
Recall now (the proof of) Lemma \ref{lm:Nv1_standar} and set  $I_v=\langle p_v,I\rangle$ so that we have
\[
\Hom_{\OO_S}(\OO_S[X],\mm_v^k/\mm_v^{k+1})\leftrightarrow\Hom_{\OO_S}(\widehat{\OO_S[X]}_{I_v},\mm_v^{k}/\mm_v^{k+1}),
\]
where $\widehat{\OO_S[X]}_{I_v}$ denotes the completion of the ring $\OO_S[X]_{I_v}$ with respect to its unique maximal ideal and the latter are assummed to be continous homomorphisms with respect to the $I_v$-adic topology.
It follows from the proof of Lemma \ref{lm:Nv1_standar} that for almost every $v\in V_f^S$ there is an isomorphism
$\widehat{\OO_S[X]}_{I_v}\cong \OO_v[[y_1,\ldots,y_d]]$, which implies that $\Hom_{\OO_S}(\widehat{\OO_S[X]}_{I_v},\mm_v^k)\leftrightarrow(\mm_v^k)^d$ and so for those $v$'s we have a projection
\[
\Hom_{\OO_S}(\widehat{\OO_S[X]}_{I_v},\mm_v^k)\twoheadrightarrow\Hom_{\OO_S}(\widehat{\OO_S[X]}_{I_v},\mm_v^k/\mm_v^{k+1})
\] again induced by the evaluation maps at the $x_i$'s under the corresponding homomorphisms.

Now, following the constructions, this gives for almost every $v\in V_f^S$ a point $x_v^k\in X(\mm_v^k)$ such that $f(x_v^k)\neq 0\mod \mm_v^{k+1}$.
But recall that $\tilde{h}$ has degree $s$, so $\ord_v(f(x_v^k))=ks$ as desired.
\end{proof}

The following lemma is used in the proof of Theorem \ref{th:PRG}.
\begin{lemma}[{\cite[Lemma 4.7]{LuMa}}] \label{lm:ndecom}
Given $n\in\NN$, define $f(n)$ to be the number of tuples $(n_1,\ldots,n_t)$ of integers such that $n_i>0$ and $n=n_1\ldots n_t$.
Then there exists $d\in\NN$ such that $f(n)\leq n^d$ for every $n\in\NN$.
\end{lemma}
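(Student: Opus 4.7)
\bigskip

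\noindent\emph{Proof plan.} The key observation is that interpreting the lemma as stated (with $n_i > 0$) would make $f(n)$ infinite, since one could always insert arbitrarily many $1$'s. In context (the proof of Theorem~\ref{th:PRG}, where each $n_i$ is the dimension of a nontrivial irreducible representation of a perfect group $G(\OO_{v_i})$, hence $n_i\geq 2$), the intended statement concerns ordered factorizations with $n_i\geq 2$. So the plan is to bound
\[
f(n)=\#\bigl\{(n_1,\ldots,n_t)\,:\,t\geq 0,\ n_i\geq 2\ \forall i,\ n_1\cdots n_t=n\bigr\},
\]
with the convention $f(1)=1$ (the empty product).

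\smallskip

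The natural approach is a recursion on $n$ obtained by conditioning on the first factor $n_1=d$. For $n\geq 2$, every ordered factorization is determined by the choice of $d\mid n$ with $d\geq 2$, followed by an ordered factorization of $n/d$ with parts $\geq 2$. This gives the clean identity
\[
f(n)=\sum_{\substack{d\mid n\\ d\geq 2}}f(n/d)\qquad(n\geq 2),
\]
which I would verify on a couple of small cases (e.g.\ $n=4,6$) before proceeding.

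\smallskip

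Next, I would fix $d=2$ and prove $f(n)\leq n^{2}$ by strong induction on $n$. The base case $f(1)=1\leq 1$ is immediate. For the inductive step, using $f(n/d)\leq(n/d)^{2}$ for each divisor $d\geq 2$ of $n$, one gets
\[
f(n)\;\leq\;\sum_{\substack{d\mid n\\ d\geq 2}}\Bigl(\frac{n}{d}\Bigr)^{\!2}\;\leq\;n^{2}\sum_{d\geq 2}\frac{1}{d^{2}}\;=\;n^{2}\Bigl(\frac{\pi^{2}}{6}-1\Bigr)\;<\;n^{2},
\]
since $\pi^{2}/6-1<1$. This closes the induction and gives the required bound with $d=2$.

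\smallskip

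There is no real obstacle here: the whole argument rests on the elementary numerical fact that $\sum_{k\geq 2}k^{-2}<1$, which provides just enough decay to absorb the sum over divisors. The only subtle point is the (implicit) clarification that the factors must be $\geq 2$, without which the statement is vacuous; I would note this explicitly at the start.
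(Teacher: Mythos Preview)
The paper does not supply its own proof of this lemma; it merely cites \cite[Lemma 4.7]{LuMa} and invokes the result. So there is no in-paper argument to compare against.

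Your proof is correct. The clarification that the intended condition is $n_i\geq 2$ (not $n_i>0$) is well-observed and indeed matches how the lemma is used both in the proof of Theorem~\ref{th:PRG} (where $n_i>1$ is explicitly noted before invoking the lemma) and in the proof of Theorem~\ref{th:CharPRG}. The recursion $f(n)=\sum_{d\mid n,\,d\geq 2}f(n/d)$ together with strong induction and the numerical fact $\sum_{k\geq 2}k^{-2}=\pi^2/6-1<1$ cleanly yields $f(n)\leq n^2$, giving the required polynomial bound with $d=2$.
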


\clearpage{\pagestyle{empty}\cleardoublepage}

\chapter{Subgroup Growth of Arithmetic Groups}


\section{Subgroup Growth}
Given a group $\Gamma$ let us write $a_n(\Gamma)$ for the number of subgroups of $\Gamma$ of index $n$ and $s_n(\Gamma)$ for the number of subgroups of $\Gamma$ of index at most $n$, i.e., $s_n(G)=\sum_{i=1}^n a_i(G)$.
If $\Gamma$ is a topological group, we will only consider closed subgroups.
Note that if $\Gamma$ is a profinite group  and $H\leq \Gamma$ is a subgroup of finite index, then $H$ is open if and only if it is closed.
Hence, $s_n(\Gamma)$ equals the number of closed (equivalently open) subgroups of index at most $n$.

We recall some standard properties concerning subgroup growth in the following lemma.

\begin{lemma}\label{lm:SG_evident}
\begin{enumerate}[label=\roman*)]
\item If $N\trianglelefteq \Gamma$ ,then $a_n(\Gamma/N)\leq a_n(\Gamma)$ and $s_n(\Gamma/N)\leq s_n(\Gamma).$\label{e-sg-finite-index}
\item If $H\leq\Gamma$ and  $|\Gamma:H|=m$,  then $a_n(H)\leq a_{mn}(\Gamma)$ and $s_n(H)\leq s_{mn}(\Gamma).$
\item Suppose $a_n(\Gamma)$ is finite, then  $a_n(\Gamma)=a_n(\Gamma/N)$ for some finite index normal subgroup $N\trianglelefteq\Gamma$.
In particular, $a_n(\Gamma)=a_n(\widehat{\Gamma})$, where $\widehat{\Gamma}$ denotes the profinite completion of $\Gamma$.\label{e-SG-equiv-profinite}
\end{enumerate}
\end{lemma}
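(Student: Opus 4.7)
The plan is to handle each of the three parts by invoking the Correspondence Theorem and, in the last part, the observation that finite-index subgroups always contain a normal subgroup of controlled finite index (the core).

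For part (i), I would observe that the projection $\pi:\Gamma\to\Gamma/N$ induces a bijection between subgroups of $\Gamma$ containing $N$ and subgroups of $\Gamma/N$, and that this bijection preserves the index. Hence the set of subgroups of $\Gamma/N$ of index $n$ injects into the set of subgroups of $\Gamma$ of index $n$ (namely those containing $N$), giving $a_n(\Gamma/N)\leq a_n(\Gamma)$. Summing over indices up to $n$ gives the corresponding inequality for $s_n$.

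For part (ii), the key observation is just the multiplicativity of indices in a tower: given $K\leq H$ with $|H:K|=n$, we have $|\Gamma:K|=|\Gamma:H|\cdot|H:K|=mn$. Thus the identity map $K\mapsto K$ defines an injection from the set of subgroups of $H$ of index $n$ into the set of subgroups of $\Gamma$ of index $mn$, which is trivially injective (different subgroups of $H$ remain different as subgroups of $\Gamma$). Again, the statement for $s_n$ follows by summing.

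For part (iii), the crucial input is that every subgroup $H\leq\Gamma$ of index $n$ contains its normal core $\mathrm{Core}_\Gamma(H)=\bigcap_{g\in\Gamma}H^g$, which is a normal subgroup of $\Gamma$ of index at most $n!$ (since $\Gamma/\mathrm{Core}_\Gamma(H)$ embeds into the symmetric group on the coset space $\Gamma/H$). Under the hypothesis $a_n(\Gamma)<\infty$, only finitely many such $H$ exist, so the intersection
\[
N:=\bigcap_{|\Gamma:H|=n}\mathrm{Core}_\Gamma(H)
\]
is a finite intersection of finite-index normal subgroups, hence itself a normal subgroup of finite index in $\Gamma$. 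By construction every subgroup of index $n$ in $\Gamma$ contains $N$, so part (i) (applied to this $N$) gives $a_n(\Gamma/N)=a_n(\Gamma)$.

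For the final assertion $a_n(\Gamma)=a_n(\widehat{\Gamma})$, I would use that the dense embedding $\iota:\Gamma\to\widehat{\Gamma}$ induces a bijection between open (equivalently, finite-index closed) subgroups of $\widehat{\Gamma}$ of index $n$ and finite-index subgroups of $\Gamma$ of index $n$, the correspondence being $U\mapsto\iota^{-1}(U)$ and $K\mapsto\overline{\iota(K)}$, with the index preserved because $\iota(\Gamma)$ meets every coset of an open subgroup by density. I don't expect any serious obstacle in the proof: the only mildly delicate point is the last bijection, which relies on basic properties of profinite completions (density of the image, and the fact that in a profinite group finite-index closed subgroups are exactly the open ones).
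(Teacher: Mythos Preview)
Your argument is correct in all three parts. The paper does not actually give a proof of this lemma: it is stated without proof (the label \texttt{lm:SG\_evident} suggests the author considered it self-evident), and the text moves immediately to the next lemma. Your write-up supplies exactly the standard justifications one would expect---the Correspondence Theorem for (i), multiplicativity of indices for (ii), and the finite intersection of normal cores for (iii)---so there is nothing to compare against and nothing missing.
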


The following lemma gives an upper bound for the subgroup growth of a finite group.
\begin{lemma}\label{lm:SG_finite_group}
Let $\Gamma$ be a finite group. Then $s_n(\Gamma)\leq|\Gamma|^{\log |\Gamma|}$.
\end{lemma}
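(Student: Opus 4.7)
The plan is to bound $s_n(\Gamma)$ by the total number of subgroups of $\Gamma$ and then estimate the latter via a generation argument. The key observation is that every subgroup of a finite group admits a short generating set, so the assignment from tuples of elements to the subgroups they generate is a surjection from a not-too-large set.

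First I would prove the following generation lemma: any subgroup $H \leq \Gamma$ can be generated by at most $d := \lceil \log_2 |\Gamma| \rceil$ elements. Indeed, one builds a chain $1 = H_0 < H_1 < \cdots < H_r = H$ by picking, at each step, an element $g_{i+1} \in H \setminus H_i$ and setting $H_{i+1} = \langle H_i, g_{i+1}\rangle$. By Lagrange's theorem $|H_{i+1}| \geq 2|H_i|$, so the chain terminates in at most $\log_2 |H| \leq \log_2 |\Gamma|$ steps. Hence $H = \langle g_1, \ldots, g_r\rangle$ with $r \leq d$, and by repeating entries if necessary we may represent $H$ by an ordered $d$-tuple in $\Gamma^d$.

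Next, this yields a surjection from $\Gamma^d$ onto the set of all subgroups of $\Gamma$, so the total number of subgroups is at most $|\Gamma|^d \leq |\Gamma|^{\log|\Gamma|}$ (writing $\log$ in base $2$, or absorbing the constant $1/\log 2$ into the exponent if natural log is used). Finally, since $s_n(\Gamma)$ counts only those subgroups of index $\leq n$, it is bounded by the total number of subgroups, giving $s_n(\Gamma) \leq |\Gamma|^{\log |\Gamma|}$.

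There is no real obstacle here; the entire content of the lemma is the elementary doubling argument, and the rest is a counting of tuples. The only minor point to be careful about is the convention for the base of the logarithm, but in either convention the estimate is immediate since one only needs $\log_2 |\Gamma|$ generators.
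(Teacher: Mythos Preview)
Your proof is correct and essentially identical to the paper's: both use the doubling chain argument to show every subgroup is generated by at most $\log_2|\Gamma|$ elements, then bound the number of subgroups by counting generating tuples. The paper packages this via the quantity $\rk(\Gamma)=\max_{H\leq\Gamma} d(H)$; one cosmetic slip in your write-up is that you should take $d=\lfloor\log_2|\Gamma|\rfloor$ rather than the ceiling (or simply use $\rk(\Gamma)$ directly) so that the final inequality $|\Gamma|^d\leq|\Gamma|^{\log|\Gamma|}$ goes the right way.
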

\begin{proof}
If $d(H)$ denotes the minimum number of generators of a group $H$, let us define the rank of $G$ as
\[
\rk(\Gamma):=\max\{d(H)\; :\; H\leq \Gamma\}.
\]
Thus, every subgroup of $\Gamma$ is generated by at most $\rk(\Gamma)$ element, so we have
\[
s_n(\Gamma)\leq |\Gamma|^{\rk(\Gamma)}.
\]
Moreover $\rk(\Gamma)\leq\log|\Gamma|$. Indeed, suppose $d(H)=r$ for some $H\leq\Gamma$, say $H=\langle h_1,\ldots,h_r\rangle$ and put $H_i:=\langle h_1,\ldots,h_i\rangle$. Then
\[
|\Gamma|\geq|H|=|H_r:H_{r-1}|\ldots|H_2:H_1||H_1|\geq 2^r.
\]
Therefore $s_n(\Gamma)\leq |\Gamma|^{\rk(\Gamma)}\leq |\Gamma|^{\log|\Gamma|}$ as desired.
\end{proof}

\begin{lemma}\label{lm:SG_super_of_subgroups}
Let $U$ be a finite index subgroup of $\Gamma$ with $|\Gamma:U|=m$.
Then for each $k$ the number of subgroups $H$ of $\Gamma$ such that $U\leq H$ and $|H:U|\leq k$ is  bounded by $m^{\log k}$.
\end{lemma}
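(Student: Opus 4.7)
The plan is to mimic the proof of Lemma \ref{lm:SG_finite_group}, but now relativized to the subgroup $U$.

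First I would show that any subgroup $H$ with $U\leq H\leq \Gamma$ and $|H:U|\leq k$ is generated by $U$ together with at most $\log_2 k$ elements. To see this, pick a maximal chain
\[
U = H_0 \lneq H_1 \lneq \ldots \lneq H_r = H
\]
with the property that each $H_{i+1}$ is obtained from $H_i$ by adjoining a single element $h_{i+1}\in H$. Since each step is strict and $H_i\leq H_{i+1}$ are groups, one has $|H_{i+1}:H_i|\geq 2$. Multiplying over $i$ gives $|H:U|\geq 2^r$, hence $r\leq \log_2 k$. By construction $H = \langle U, h_1,\ldots,h_r\rangle$.

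Next, observe that each generator $h_i$ is determined, for the purpose of generating $H$ over $U$, only by its coset $h_iU\in \Gamma/U$; replacing $h_i$ by any $uh_i$ with $u\in U$ does not change the group $\langle U,h_1,\ldots,h_r\rangle$. Since $|\Gamma:U|=m$, there are at most $m$ choices per coset representative, and hence at most $m^r\leq m^{\log_2 k}$ tuples $(h_1U,\ldots,h_r U)$ with $r\leq \log_2 k$. Summing (or, more cleanly, padding every tuple to length exactly $\lfloor \log_2 k\rfloor$ by repeating the trivial coset), the total number of choices is bounded by $m^{\log k}$, and each choice determines at most one subgroup $H$. This yields the desired bound.

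There is no serious obstacle here: the argument is essentially the rank-bound idea from Lemma \ref{lm:SG_finite_group}, with $U$ playing the role of the trivial subgroup and $\Gamma/U$ playing the role of the ambient group. The only small point to watch is that $U$ need not be normal in $\Gamma$, so one must be careful to phrase the counting in terms of left cosets $hU$ rather than elements of a quotient group; but since we are only using cosets to parameterize generators (not to form a quotient), this causes no difficulty.
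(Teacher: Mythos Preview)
Your proof is correct and follows essentially the same approach as the paper: bound the chain length between $U$ and $H$ by $\log k$ to get a generating set of size at most $\log k$ over $U$, then observe that each generator only matters modulo its $U$-coset, giving at most $m^{\log k}$ possibilities.
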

\begin{proof}
The maximal length of a chain of subgroups between $U$ and $H$ is
at most $\log k$, so $H$ can be generated by $U$ and at most $\log k$ further elements.
Moreover, note that for $u_1,\ldots,u_{\log k}\in U$ we have
$\langle U,h_1,\ldots,h_{\log k}\rangle=\langle U,u_1h_1,\ldots,u_{\log k}h_{\log k}\rangle$ so the number of distinct subgroups of this form is at
most $|\Gamma:U|^{\log k} = m^{\log k}$.
\end{proof}

\begin{lemma}\label{lm:SG_finite_index_subgroup}

Let $H$ be a finite index subgroup of $\Gamma$ such that $|\Gamma:H|=m$.
Then for each $n\in\NN$ we have
\[
s_n(\Gamma)\leq (mn)^{\log m}s_{n}(H).
\]

\end{lemma}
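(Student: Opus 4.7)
The plan is to bound $s_n(\Gamma)$ by organizing the subgroups $K\leq\Gamma$ of index at most $n$ according to their intersection with $H$, and then counting how many $K$ can share a given intersection.

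First, given $K\leq\Gamma$ with $|\Gamma:K|\leq n$, I would look at $U:=K\cap H\leq H$. By the standard computation $|H:K\cap H|=|KH:K|\leq|\Gamma:K|\leq n$, so $U$ is a subgroup of $H$ of index at most $n$. This defines a map from the set counted by $s_n(\Gamma)$ into the set counted by $s_n(H)$, and the first step is to bound the size of the fibers of this map.

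Next, fix $U\leq H$ with $|H:U|\leq n$, and consider all $K\leq\Gamma$ with $|\Gamma:K|\leq n$ and $K\cap H=U$. Every such $K$ contains $U$, and the index $|K:U|=|K:K\cap H|=|KH:H|\leq|\Gamma:H|=m$. Hence each $K$ in the fiber over $U$ is a subgroup of $\Gamma$ containing $U$ with $|K:U|\leq m$. Since $|\Gamma:U|=|\Gamma:H|\cdot|H:U|\leq mn$, I can now invoke Lemma \ref{lm:SG_super_of_subgroups} (applied to the ambient group $\Gamma$ and the subgroup $U$, with $k=m$) to conclude that the number of such $K$ is at most $(mn)^{\log m}$.

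Combining both steps, each of the at most $s_n(H)$ choices of $U$ accounts for at most $(mn)^{\log m}$ subgroups $K$ of $\Gamma$ with index at most $n$, yielding
\[
s_n(\Gamma)\leq(mn)^{\log m}\,s_n(H).
\]
There is no real obstacle here; the only subtle point is getting the correct index bound $|\Gamma:U|\leq mn$ (as opposed to just $n$ or $m$) so that Lemma \ref{lm:SG_super_of_subgroups} yields the stated factor $(mn)^{\log m}$ rather than something weaker.
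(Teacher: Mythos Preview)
Your proof is correct and follows essentially the same approach as the paper: map each $K\leq\Gamma$ of index at most $n$ to $K\cap H$, observe that the fiber over $U$ consists of subgroups containing $U$ with $|K:U|\leq m$, and bound the fiber size by $(mn)^{\log m}$ via Lemma~\ref{lm:SG_super_of_subgroups}. You have simply made explicit the index computations that the paper leaves implicit.
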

\begin{proof}
To each subgroup $S\leq\Gamma$ we associate the subgroup $S\cap H$.
If $|\Gamma:S|\leq n$, then $|H:(H\cap S)|\leq n$ as well. 
Suppose that $U\cap H=S\cap H$, then $|U:(H\cap S)|\leq m$.
By the previous lemma the number of such possible $U$ is bounded by $(mn)^{\log m}$, so we have
\[
s_n(\Gamma)\leq (mn)^{\log m}s_{n}(H).
\]
\end{proof}

\begin{lemma}\label{lm:SG_commensurable}
Suppose $\Gamma_1$ and $\Gamma_2$ are commensurable subgroups of a group $\Gamma$.
Then $s_n(\Gamma_1)\leq n^{D_1\log n}$ for some $D_1\in\mathbb{R}_{\geq 0}$ if and only if $s_n(\Gamma_2)\leq n^{D_2\log n}$ for some $D_2\in\mathbb{R}_{\geq 0}$.
\end{lemma}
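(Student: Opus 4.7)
The plan is to prove the implication in one direction and observe that the argument is symmetric in $\Gamma_1$ and $\Gamma_2$. So assume $s_n(\Gamma_2)\leq n^{D_2\log n}$ for some constant $D_2\geq 0$, and aim to bound $s_n(\Gamma_1)$ by an expression of the form $n^{D_1\log n}$.

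First I would set $H=\Gamma_1\cap\Gamma_2$ and let $m_i=|\Gamma_i:H|<\infty$ for $i=1,2$, which is allowed since $\Gamma_1$ and $\Gamma_2$ are commensurable. The strategy is to move from $\Gamma_1$ down to $H$ using Lemma \ref{lm:SG_finite_index_subgroup}, and then from $H$ up into $\Gamma_2$ using Lemma \ref{lm:SG_evident}.\ref{e-sg-finite-index}. Concretely, applying Lemma \ref{lm:SG_finite_index_subgroup} to $H\leq\Gamma_1$ gives
\[
s_n(\Gamma_1)\leq (m_1 n)^{\log m_1}\,s_n(H),
\]
and applying Lemma \ref{lm:SG_evident}.\ref{e-sg-finite-index} (the index-$m$ version, as stated in the excerpt) to $H\leq\Gamma_2$ gives
\[
s_n(H)\leq s_{m_2 n}(\Gamma_2).
\]
Combining these with the hypothesis $s_n(\Gamma_2)\leq n^{D_2\log n}$ yields
\[
s_n(\Gamma_1)\leq (m_1 n)^{\log m_1}(m_2 n)^{D_2\log(m_2 n)}.
\]

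The remaining step is the routine asymptotic comparison: the factor $(m_1 n)^{\log m_1}$ is polynomial in $n$, while $(m_2 n)^{D_2\log(m_2 n)}=\exp\!\bigl(D_2(\log m_2+\log n)^2\bigr)$ is bounded, for every $C>D_2$ and all sufficiently large $n$, by $\exp(C(\log n)^2)=n^{C\log n}$. Absorbing both the polynomial factor and the finitely many small values of $n$ into the constant, we obtain $s_n(\Gamma_1)\leq n^{D_1\log n}$ for some $D_1\geq 0$, as required. Since the roles of $\Gamma_1$ and $\Gamma_2$ are interchangeable, the converse follows by the same argument.

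I do not expect any real obstacle: the two cited lemmas do essentially all the work, and the only delicate point is the asymptotic step, which is a short calculation with $\log(m_2 n)=\log m_2+\log n$.
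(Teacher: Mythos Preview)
Your argument is correct and matches the paper's approach: pass to $H=\Gamma_1\cap\Gamma_2$, bound $s_n(\Gamma_1)$ by $s_n(H)$ via Lemma~\ref{lm:SG_finite_index_subgroup}, bound $s_n(H)$ by $s_{m_2 n}(\Gamma_2)$ via the elementary index-$m$ inequality, and absorb the constants into the exponent. One small caveat on the citation: the label \texttt{e-sg-finite-index} in Lemma~\ref{lm:SG_evident} points to part~(i) about quotients, whereas the inequality $s_n(H)\le s_{mn}(\Gamma)$ you actually invoke is part~(ii).
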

\begin{proof}
This follows by repeated applications of the inequalities in \ref{lm:SG_finite_group} and \ref{lm:SG_finite_index_subgroup} to the groups $\Gamma_1$,$\Gamma_2$ and $\Gamma_1\cap\Gamma_2$.
\end{proof}

The following lemmata explain how one can obtain information on the subgroup growth of a group from its representation growth.

\begin{lemma}\label{lm:type_c_log_subg_growth}
Let $\Gamma$ be a group.
If $\Gamma$ is of representation type $c$, i.e., $|\Gamma:K_n(\Gamma)|\leq n^c$, then $s_n(\Gamma)\leq n^{c^2\log n}$.

\end{lemma}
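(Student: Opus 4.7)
The plan is to observe that every finite index subgroup $H\leq\Gamma$ of index at most $n$ must contain $K_n(\Gamma)$. Once this is established, the bound on $s_n(\Gamma)$ reduces to bounding the number of subgroups of index at most $n$ in the finite group $\Gamma/K_n(\Gamma)$, which has order at most $n^c$, and we can then apply Lemma \ref{lm:SG_finite_group}.

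First I would argue the containment $K_n(\Gamma)\subseteq H$ as follows. If $|\Gamma:H|\leq n$, then the left action of $\Gamma$ on the coset space $\Gamma/H$ gives a homomorphism $\pi_H:\Gamma\to S_m$ (with $m=|\Gamma:H|\leq n$) whose kernel is $\bigcap_{g\in\Gamma}H^g\subseteq H$. The associated complex permutation representation of $\Gamma$ has dimension $m\leq n$, so by Maschke's theorem its irreducible constituents are elements of $\Irr_{\leq n}(\Gamma)$. Hence $K_n(\Gamma)$, being contained in the kernel of every such irreducible constituent, lies in the kernel of the whole permutation representation, which is contained in $H$.

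From this containment, the subgroups of $\Gamma$ of index at most $n$ are in bijection with the subgroups of index at most $n$ of the finite quotient $\overline{\Gamma}:=\Gamma/K_n(\Gamma)$. By the representation type $c$ hypothesis, $|\overline{\Gamma}|\leq n^c$. Applying Lemma \ref{lm:SG_finite_group} to $\overline{\Gamma}$ yields
\[
s_n(\Gamma)=s_n(\overline{\Gamma})\leq|\overline{\Gamma}|^{\log|\overline{\Gamma}|}\leq (n^c)^{\log(n^c)}=n^{c^2\log n},
\]
which is the desired estimate.

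The only step requiring care is the first one, but it is essentially an application of the standard device that turns a coset action into a linear representation, combined with the definition of $K_n(\Gamma)$ as the intersection of kernels of irreducible representations of dimension at most $n$. No genuine obstacle arises: the proof is a short and direct reduction to the representation theoretic hypothesis followed by the trivial finite group bound.
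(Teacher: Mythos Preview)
Your proof is correct and follows essentially the same approach as the paper: both use the permutation representation on cosets to show that every subgroup of index at most $n$ contains $K_n(\Gamma)$, then reduce to the finite quotient and apply Lemma~\ref{lm:SG_finite_group}. The only cosmetic difference is that the paper passes through the intermediate subgroup $S_n:=\bigcap_{|\Gamma:H|\leq n}H$ before quotienting, whereas you quotient directly by $K_n(\Gamma)$; since $K_n(\Gamma)\subseteq S_n$ and both quotients have order at most $n^c$, this makes no difference.
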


\begin{proof}
i) Let $H\leq \Gamma$ be a subgroup of index at most $n$.
Then $\Gamma$ permutes the right cosets of $H$ by right multiplication.
The associated permutation representation $\rho:\Gamma\to \GL_{|\Gamma:H|}(\mathbb{C})$ is of degree $|\Gamma:H|\leq n$.
It follows that $K_n(\Gamma)\subseteq\ker\rho\subseteq H$.
This is true for any such $H$, so we obtain
\[
|\Gamma:\bigcap_{|\Gamma:H|\leq n}H|\leq |\Gamma:K_n(\Gamma)|\leq n^c.
\]

If we set $S_n:=\bigcap_{|\Gamma:H|\leq n}H$, it follows that $S_n\subseteq H$ for any subgroup $H\leq \Gamma$ of index at most $n$ and we have $s_n(\Gamma)=s_n(\Gamma/S_n)$.
Now $\Gamma/S_n$ is a finite group of order at most $n^c$ and we can apply Lemma \ref{lm:SG_finite_group} to obtain

\[
s_n(\Gamma)=s_n(\Gamma/S_n)\leq (n^c)^{\log n^c}=n^{c^2\log n}.
\]
\end{proof}

\begin{lemma}\label{lm:Fratt_p_subgroup_of_PRG_group}
Let $\Gamma$ be a profinite group with Polynomial Representation Growth, say, $r_n(\Gamma)\leq n^c$.
Then for every subgroup $P\leq \Gamma$ of index at most $n$ we have $|P:\overline{[P,P]}|\leq n^{c+1}$. 
\end{lemma}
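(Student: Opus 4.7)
The goal is to show $|P:\overline{[P,P]}|$ is finite and polynomially bounded. The approach is to identify $|P^{\mathrm{ab}}|$ with the number of continuous linear characters of $P$, and then use Frobenius reciprocity together with the hypothesis $r_n(\Gamma) \leq n^c$ to bound this count.

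\medskip

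\noindent\textbf{Step 1: Translate to counting linear characters.} Since $P$ is a closed subgroup of a profinite group, $P$ itself is profinite, and (as in Lemma \ref{lm:RG_profinite_finite_image}) every continuous character factors through a finite quotient. The continuous characters $\chi:P\to \mathbb{C}^\times$ are precisely the elements of $\mathrm{Irr}_1(P)$, and they are in bijection with the Pontryagin dual of the abelian profinite group $P/\overline{[P,P]}$; in particular $|P^{\mathrm{ab}}|=|\mathrm{Irr}_1(P)|$ whenever the right-hand side is finite. The estimate below will in fact show $|\mathrm{Irr}_1(P)|<\infty$.

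\medskip

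\noindent\textbf{Step 2: Associate to each linear character of $P$ an irreducible representation of $\Gamma$ of bounded dimension.} Let $m=|\Gamma:P|\leq n$. For each $\chi\in\mathrm{Irr}_1(P)$, consider the induced representation $\mathrm{Ind}_P^\Gamma\chi$ of dimension $m$. Pick any irreducible constituent $\rho(\chi)$ of $\mathrm{Ind}_P^\Gamma\chi$; then $\dim\rho(\chi)\leq m\leq n$, so $\rho(\chi)\in \mathrm{Irr}_{\leq n}(\Gamma)$. By Frobenius reciprocity (Theorem \ref{pre:Frobenius}, extended to the profinite setting via Lemma \ref{lm:RG_induction_stabilizer}),
\[
\langle\chi,\rho(\chi)_{|P}\rangle = \langle\mathrm{Ind}_P^\Gamma\chi,\rho(\chi)\rangle\geq 1,
\]
so $\chi$ is a constituent of $\rho(\chi)_{|P}$.

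\medskip

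\noindent\textbf{Step 3: Bound the fibres of $\chi\mapsto\rho(\chi)$.} Fix $\rho\in\mathrm{Irr}_{\leq n}(\Gamma)$ with $\dim\rho=k$. If $\rho(\chi)=\rho$, then $\chi$ occurs as a constituent of $\rho_{|P}$. But $\rho_{|P}$ is a $k$-dimensional representation of $P$, so the total multiplicity of linear constituents appearing in $\rho_{|P}$ is at most $k$; in particular, the number of \emph{distinct} linear characters $\chi\in\mathrm{Irr}_1(P)$ with $\langle\chi,\rho_{|P}\rangle\neq 0$ is at most $k=\dim\rho$.

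\medskip

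\noindent\textbf{Step 4: Sum using polynomial representation growth.} Combining Steps 2 and 3,
\[
|\mathrm{Irr}_1(P)|\ \leq\ \sum_{\rho\in\mathrm{Irr}_{\leq n}(\Gamma)}\dim\rho\ =\ \sum_{k=1}^{n}k\cdot r_k(\Gamma)\ \leq\ \sum_{k=1}^{n}k\cdot k^{c}\ \leq\ n\cdot n^{c}= n^{c+1},
\]
where the final inequality uses $r_k(\Gamma)\leq k^c\leq n^c$ together with $R_n(\Gamma)\leq n\cdot n^c$. This proves the claim.

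\medskip

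\noindent\textbf{Main obstacle.} The only delicate point is ensuring the induction–restriction argument works cleanly in the profinite setting: every continuous linear character of $P$ factors through a finite quotient, and Frobenius reciprocity applies upon passing to an appropriate common open normal subgroup of $\Gamma$ contained in $P$, as set up in Lemma \ref{lm:RG_induction_stabilizer}. Once this is in place, the counting is a routine matter of summing $\dim\rho$ over $\mathrm{Irr}_{\leq n}(\Gamma)$ and invoking the polynomial bound $r_n(\Gamma)\leq n^c$.
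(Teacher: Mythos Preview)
Your approach is essentially the paper's: the paper simply notes $|P:\overline{[P,P]}|=r_1(P)$ and invokes Lemma~\ref{lm:RG_finite_index_subgroup}(ii), whose proof is exactly the induction/Frobenius-reciprocity argument you spell out in Steps~2--3.

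There is, however, an arithmetic slip in Step~4: the inequality $\sum_{k=1}^{n}k\cdot k^{c}\leq n\cdot n^{c}$ is false (already for $n=2$, $c=0$ one gets $1+2=3\not\leq 2$); the sum $\sum_{k=1}^n k^{c+1}$ is of order $n^{c+2}$, not $n^{c+1}$. The clean fix is to tighten the range and the fibre bound simultaneously: since every $\rho(\chi)$ is a constituent of the $m$-dimensional representation $\Ind_P^\Gamma\chi$ (where $m=|\Gamma:P|\leq n$), one has $\rho(\chi)\in\Irr_{\leq m}(\Gamma)$ and $\dim\rho(\chi)\leq m$, so the fibre over any $\rho$ has size at most $m$. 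This gives $r_1(P)\leq m\,R_m(\Gamma)\leq n\,R_n(\Gamma)$, which is precisely the content of Lemma~\ref{lm:RG_finite_index_subgroup}(ii). Reading the hypothesis as $R_n(\Gamma)\leq n^c$ (equivalent to PRG up to adjusting $c$) then yields the stated $n^{c+1}$; from the literal hypothesis $r_k(\Gamma)\leq k^c$ one obtains $n^{c+2}$ instead, which is equally sufficient for the subgroup-growth application in Theorem~\ref{th:PSG}.
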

\begin{proof}
Note that $|P:\overline{[P,P]}|=r_1(P)$.
On the other hand, since $|\Gamma:P|\leq n$,
by Lemma \ref{lm:RG_finite_index_subgroup} we have $r_1(P)\leq n r_n(\Gamma)\leq n^{c+1}$. 
\end{proof}
\section{CSP and Subgroup Growth of Arithmetic Groups}

In this section we show how Theorem \ref{th:PRG} can be used to obtain results on subgroup growth of arithmetic groups.
Let $k$ be a global field of positive characteristic and fix a nonempty finite set of valuations $S\subset V_k$.
We aim at proving the following theorem.

\begin{theorem}\label{th:log_subgroup_growth}
Let $G$ be a simply connected almost simple $k$-group.
Let $\Gamma\leq G$ be an $S$-arithmetic subgroup and suppose that $G$ has the weak Congruence Subgroup Property with respect to $S$.
Then there exists a constant $D>0$ such that $s_n(\Gamma)\leq n^{D\log n}$.
\end{theorem}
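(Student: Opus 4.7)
The plan is to reduce to bounding the subgroup growth of the congruence completion $G(\widehat{\OO}_S)$ using the weak Congruence Subgroup Property, and then to combine the Polynomial Representation Growth of $\Gamma$ provided by Theorem~\ref{th:CSP_implies_PRG} with the uniform representation-type control on local factors (Theorem~\ref{th:allvaluations} and Proposition~\ref{th:deltaP1}) via the principle recorded in Lemma~\ref{lm:type_c_log_subg_growth}.

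First I would perform the standard reductions. By Lemma~\ref{lm:SG_commensurable} and the definition of $S$-arithmeticity we may replace $\Gamma$ by $G(\OO_S)$, and Lemma~\ref{lm:SG_evident}.\ref{e-SG-equiv-profinite} passes to the profinite completion $\widehat{\Gamma}$. The weak Congruence Subgroup Property says $C(G,S)$ is finite, so the surjection $\widehat{\Gamma}\twoheadrightarrow\overline{\Gamma}$ has finite kernel. Since $G$ is simply connected and almost $k$-simple with $G(\OO_S)$ infinite (otherwise the bound is trivial), Theorems~\ref{th:G(OS)_infinite_iff_G_S_noncompact} and~\ref{th:Strong_Approximation} identify $\overline{\Gamma}$ with $G(\widehat{\OO}_S)=\prod_{v\in V_f^S} G(\OO_v)$. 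A final application of Lemma~\ref{lm:SG_commensurable} reduces the task to proving $s_n(G(\widehat{\OO}_S))\leq n^{D\log n}$.

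Next I would use the representation-theoretic input to cut the product down to size. Given an open $H\leq G(\widehat{\OO}_S)$ of index at most $n$, the permutation representation of $G(\widehat{\OO}_S)$ on cosets of $H$ has dimension at most $n$, and its kernel (the normal core of $H$) lies in $H$. Each irreducible constituent is a tensor product of irreducibles of the factors $G(\OO_v)$, so Proposition~\ref{th:deltaP1} forces these constituents to be trivial on $G(\OO_v)$ whenever $v\in\mathcal{P}$ satisfies $q_v^{\delta}>n$. Thus $H\supseteq\prod_{v\in\mathcal{P},\,q_v^\delta>n}G(\OO_v)$, and by Lemma~\ref{pre:lm:numberofval} it suffices to bound $s_n(W_n)$, where $W_n:=\prod_{v\in T_n}G(\OO_v)$ and $T_n:=(V_f^S\setminus\mathcal{P})\cup\{v\in\mathcal{P}:q_v^\delta\leq n\}$ has size $O(n^{1/\delta})$. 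By Theorem~\ref{th:allvaluations}, each factor of $W_n$ is of representation type~$c$ uniformly; by Theorem~\ref{th:CSP_implies_PRG}, $W_n$ itself has PRG; and hence by Lemma~\ref{lm:Fratt_p_subgroup_of_PRG_group}, the abelianization of every open subgroup of $W_n$ of index at most $n$ is polynomially bounded in $n$.

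Finally, to conclude $s_n(W_n)\leq n^{D\log n}$ I would aim to invoke Lemma~\ref{lm:type_c_log_subg_growth}. The main obstacle is precisely this last step. A naive factorwise estimate yields $|W_n/K_n(W_n)|=\prod_{v\in T_n}|G(\OO_v)/K_n(G(\OO_v))|\leq (cn^c)^{O(n^{1/\delta})}$, which is exponential in $n^{1/\delta}$ and too coarse to feed directly into Lemma~\ref{lm:type_c_log_subg_growth}. The hard part will be to exploit the tensor product structure together with Proposition~\ref{th:deltaP1} more carefully—only a bounded number of factors can contribute simultaneously to an irrep of dimension at most $n$, since their individual dimensions are at least $q_v^\delta\geq 2$ and the product of dimensions is at most $n$—so as to bound $|W_n/S_n|$ polynomially in $n$, where $S_n$ is the intersection of open subgroups of index at most $n$. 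Once this refined bound is established, Lemma~\ref{lm:type_c_log_subg_growth} yields $s_n(W_n)\leq n^{D\log n}$ and hence the desired estimate on $s_n(\Gamma)$.
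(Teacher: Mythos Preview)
Your reductions down to bounding $s_n(G(\widehat{\OO}_S))$ match the paper exactly, and your observation that every open subgroup of index at most $n$ contains $\prod_{v\in\mathcal{P},\,q_v^\delta>n}G(\OO_v)$ is correct. The gap is in the final step, and it is not a matter of refinement: the bound you hope for is simply false. For each $v\in T_n$ the factor $G(\OO_v)$ has a proper open subgroup of index at most $n$ (pull back any maximal subgroup of $G(\OO_v)/N_v^1$), so $S_n\cap G(\OO_v)$ is proper for essentially all $v\in T_n$, and hence $|W_n/S_n|$ is at least exponential in $|T_n|\sim n^{1/\delta}$. The same applies to $|W_n/K_n(W_n)|$. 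Your remark that each individual irreducible of dimension at most $n$ is supported on $O(\log n)$ factors is true but irrelevant, since $K_n$ and $S_n$ are intersections over \emph{all} such irreducibles (resp.\ subgroups), and these collectively touch every factor.

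The paper circumvents this by inserting a structural step you are missing: Lubotzky's theorem (Theorem~\ref{th:Lu_subnormal}) that every subgroup of index $n$ contains a \emph{subnormal} subgroup of index at most $n^{d_1}$. Subnormal subgroups of $\prod_v G(\OO_v)$ are extremely rigid: for $v\in\mathcal{P}$ they either contain $G(\OO_v)$ entirely or are trapped inside $Z_v$ (Propositions~\ref{prop:subnormal_subgr_local} and~\ref{prop:subnormal_subgr_global}), which forces $\sum_{v\in V(H')}\log q_v=O(\log n)$ and hence $|V(H')|=O(\log n)$, not $O(n^{1/\delta})$. Only then does one obtain a finite product $\widetilde{G}$ whose top quotient $\widetilde{G}/\widetilde{G}(1)$ has \emph{polynomial} size in $n$; the paper finishes by combining Lemma~\ref{lm:SG_finite_group} for this quotient with a Frattini-chain argument on the pro-$p$ part $\widetilde{G}(1)$ using Lemma~\ref{lm:Fratt_p_subgroup_of_PRG_group}, and a count of the possible sets $V(H')$. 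Your use of PRG via Lemma~\ref{lm:Fratt_p_subgroup_of_PRG_group} is the right idea for that last piece, but it only becomes effective once the subnormal reduction has cut the number of relevant factors down to $O(\log n)$.
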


If $\Gamma$ is an $S$-arithmetic group of $G$, then it is commensurable with $G(\OO_S)$.
By Lemma \ref{lm:SG_commensurable} we may assume $\Gamma=G(\OO_S)$.
We can apply the same scheme of reductions as in section \ref{sec:CSP_vs_RG} by using Lemma \ref{lm:SG_commensurable} instead of Lemma \ref{lm:RG_finite_index_subgroup} to show that we may assume that $G$ is a connected simply connected absolutely almost simple $k$-group with Strong Approximation.

Recall that since $G$ has wCSP with respect to $S$, the map
\[
\pi:\widehat{G(\OO_S)}\twoheadrightarrow G(\widehat{\OO}_S).
\]
has finite kernel.
Thus, there exists a finite index subgroup $\Gamma'\leq \widehat{G(\OO_S)}$ that embedds in $G(\widehat{\OO}_S)$ as a finite index subgroup.
Therefore repeated applications of Lemma \ref{lm:SG_commensurable} allow us to obtain Theorem \ref{th:log_subgroup_growth} as a consequence of the following theorem.

\begin{theorem}\label{th:PSG}

Let $G$ be a connected simply connected absolutely almost simple $k$-group.
Then  there exists a constant $D$ such that
\[
s_n(G(\widehat{\OO}_S))\leq n^{D\log n}.
\]
\end{theorem}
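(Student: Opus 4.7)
The plan is to exploit the product decomposition $G(\widehat{\OO}_S)=\prod_{v\in V_f^S}G(\OO_v)$ together with the uniform representation-type bound from Theorem \ref{th:allvaluations}. Let $\delta>0$ be the constant from Proposition \ref{th:deltaP1} and let $\mathcal{P}\subseteq V_f^S$ be the cofinite subset provided by Lemma \ref{lm:goodvaluations}, so that for every $v\in\mathcal{P}$ the group $G(\OO_v)$ is perfect and every nontrivial irreducible representation of $G(\OO_v)$ has dimension at least $q_v^{\delta}$.

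First I would reduce to a finite product. For $v\in\mathcal{P}$ with $q_v^{\delta}>n$, any proper subgroup of $G(\OO_v)$ of index at most $n$ would, via the permutation representation on its cosets, yield a nontrivial subrepresentation of dimension strictly less than $n$ (perfectness of $G(\OO_v)$ rules out a contribution from linear characters), contradicting Proposition \ref{th:deltaP1}. Hence $G(\OO_v)$ has no proper subgroup of index $\leq n$ for such $v$, and any $H\leq G(\widehat{\OO}_S)$ with $[G(\widehat{\OO}_S):H]\leq n$ must contain every such factor. Setting $T_0=T_0(n):=(V_f^S\setminus\mathcal{P})\cup\{v\in\mathcal{P}:q_v\leq n^{1/\delta}\}$, which by Lemma \ref{pre:lm:numberofval} satisfies $|T_0|\leq C_1\, n^{1/\delta}$ for a constant $C_1$, one obtains a natural bijection $s_n(G(\widehat{\OO}_S))=s_n(Q)$ where $Q:=\prod_{v\in T_0}G(\OO_v)$.

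Next I would count subgroups of $Q$ of index at most $n$ by exploiting their projections. For $H\leq Q$ with $[Q:H]\leq n$, let $H_v$ be the image of $H$ under the projection $Q\to G(\OO_v)$; then $[G(\OO_v):H_v]\leq n$ and, since $H\subseteq\prod_v H_v$, the inequality $\prod_{v\in T_0}[G(\OO_v):H_v]\leq[Q:H]\leq n$ forces the support $S_H:=\{v\in T_0:H_v\neq G(\OO_v)\}$ to have size at most $\log_2 n$. The number of possible supports is therefore at most $|T_0|^{\log_2 n}\leq n^{O(\log n)}$. By Theorem \ref{th:allvaluations} and Lemma \ref{lm:type_c_log_subg_growth}, each $G(\OO_v)$ contributes at most $n^{c^2\log n}$ choices of $H_v$. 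Finally, the number of subdirect products of $\prod_{v\in S_H}H_v$ with prescribed projections is bounded via Goursat's Lemma, using that each finite quotient $G(\OO_v)/K_n(G(\OO_v))$ has a short composition series whose top factor is a quasi-simple group of Lie type of order $\leq n^c$ (Lemma \ref{lm:goodvaluations}(i)) and whose remaining factors are abelian congruence layers $N_v^i/N_v^{i+1}$ as described in the proof of Theorem \ref{th:allvaluations}.

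The main obstacle is the final Goursat-theoretic count: bounding the number of subdirect products of a prescribed tuple of factor subgroups, over a support that may grow logarithmically with $n$. The naive bound from Lemma \ref{lm:type_c_log_subg_growth} alone yields only $n^{O((\log n)^2)}$, which is too weak. Tightening the bound to $n^{D\log n}$ requires using that the shared quotients allowed by Goursat data are forced to be simple groups of Lie type of order bounded by $n^{O(1)}$, with automorphism groups of comparable size, so that the combinatorial explosion of matching isomorphisms across factors is controlled. Assembling the estimates from the three steps then gives the desired constant $D$.
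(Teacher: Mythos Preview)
Your reduction to the finite product $Q=\prod_{v\in T_0}G(\OO_v)$ is correct, and your bound $|S_H|\le\log_2 n$ on the set of places where the projection $\pi_v(H)$ is \emph{not} onto is valid. The gap is that knowing $\pi_v(H)=G(\OO_v)$ for $v\notin S_H$ does \emph{not} imply $G(\OO_v)\subseteq H$, so the passage in your step~3 to ``subdirect products of $\prod_{v\in S_H}H_v$'' is unjustified. Concretely, if $v_1,v_2\in T_0\cap\mathcal P$ have isomorphic simple top quotients $G(\OO_{v_i})/Z_{v_i}\cong S$ (which happens, since by Lemma~\ref{pre:lm:numberofval} several valuations can share a residue field), then the fibre product $H=\{(g_1,g_2):\bar g_1=\phi(\bar g_2)\}$ over an isomorphism $\phi$ has both projections onto, index $|S|$, yet contains neither factor; here $S_H=\varnothing$ and your count misses $H$ entirely. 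More generally, the set one must control is $W(H)=\{v\in T_0:G(\OO_v)\not\subseteq H\}\supseteq S_H$, and bounding $|W(H)|$ (or counting the Goursat data over \emph{all} of $T_0$, which has polynomial size) is precisely the missing structural input. Your final paragraph also acknowledges that even over $S_H$ the Goursat count is only sketched; the argument that ``shared quotients are forced to be simple of Lie type of bounded order'' would need to be made precise, and the $n^{O((\log n)^2)}$ obstacle you mention is not resolved.

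The paper closes this gap by a different mechanism: rather than projecting $H$ to the factors, it first replaces $H$ by a \emph{subnormal} subgroup $H'\le H$ of index $\le n^{d_1}$ (Theorem~\ref{th:Lu_subnormal}, quoted from Lubotzky). For subnormal subgroups Proposition~\ref{prop:subnormal_subgr_global} shows that $H'$ is genuinely sandwiched between two products, so that the bad set $V(H')$ (playing the role of your $W$) is forced to satisfy $\sum_{v\in V(H')}\log q_v\le O(\log n)$. The count then proceeds not via Goursat but via the extension inequality $s_n(\widetilde G)\le s_n(\widetilde G/\widetilde G(1))\,s_n(\widetilde G(1))\,n^{\rk(\widetilde G/\widetilde G(1))}$, where the pro-$p$ piece $\widetilde G(1)$ is handled by the Frattini bound of Lemma~\ref{lm:Fratt_p_subgroup_of_PRG_group}, which in turn rests on the polynomial representation growth of $G(\widehat\OO_S)$ (Theorem~\ref{th:PRG}). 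Thus the paper's route trades your projection/Goursat analysis for (i) the subnormal reduction, which supplies exactly the control over $W(H)$ that your argument lacks, and (ii) a Frattini-chain count inside a pro-$p$ group, where PRG gives a clean polynomial bound on each layer. If you want to salvage your approach, you would need an independent argument bounding $|W(H)|$ logarithmically in $n$; this is essentially what Theorem~\ref{th:Lu_subnormal} encodes.
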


\noindent Hence, we are reduced to the study of the subgroup growth of the group $G(\widehat{\OO}_S)=\prod_{v\in V_f^S}G(\OO_v)$.

We obtain the following Corollary as direct application of Theorem \ref{th:allvaluations} and Lemma \ref{lm:type_c_log_subg_growth}.
\begin{corollary}\label{cor:all_valuations_subgroup_type_dlog}
There exists $d>0$ such that for every $v\in V_f^S$ we have
\[
s_n(G(\OO_v))\leq n^{d\log n}.
\]
\end{corollary}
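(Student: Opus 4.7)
The plan is to simply combine the two inputs cited right before the statement. By Theorem \ref{th:allvaluations} there exists a single constant $c>0$, independent of $v$, such that every local factor $G(\OO_v)$ is of representation type $c$; that is, $|G(\OO_v):K_n(G(\OO_v))|\leq cn^c$ for every $n\in\NN$ and every $v\in V_f^S$. This is the crucial uniform step, and all of the hard work has already been done in the previous section (the analysis of the adjoint action, the standardness of $N_v^1$ for almost all $v$, and Jaikin's argument in Theorem \ref{th:RG_Fp[[t]]-standard_type_c}).

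Next, I would invoke Lemma \ref{lm:type_c_log_subg_growth}: any group $\Gamma$ of representation type $c$ satisfies $s_n(\Gamma)\leq n^{c^2\log n}$. Recall the idea of that lemma — every subgroup $H\leq\Gamma$ of index at most $n$ contains $\ker\rho$ for $\rho$ the coset permutation representation of $\Gamma$ on $\Gamma/H$, which has dimension at most $n$, so $H$ contains $K_n(\Gamma)$; passing to the finite quotient $\Gamma/K_n(\Gamma)$ of order at most $n^c$ and applying the elementary bound $s_n(F)\leq |F|^{\log |F|}$ for finite groups (Lemma \ref{lm:SG_finite_group}) yields the claim. Applying this to $\Gamma=G(\OO_v)$ with the same constant $c$ produced by Theorem \ref{th:allvaluations} gives
\[
s_n(G(\OO_v))\leq n^{c^2\log n}
\]
for every $v\in V_f^S$ and every $n\in\NN$.

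Setting $d:=c^2$ finishes the proof. The only thing worth underlining is that the bound is genuinely uniform in $v$, since $c$ itself does not depend on $v$. There is no real obstacle beyond the already-established Theorem \ref{th:allvaluations}; the statement is a formal consequence of that theorem together with the general representation-growth-to-subgroup-growth conversion.
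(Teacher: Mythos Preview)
Your argument is correct and is exactly the approach the paper takes: the corollary is stated as an immediate consequence of Theorem \ref{th:allvaluations} together with Lemma \ref{lm:type_c_log_subg_growth}, and you have spelled this out accurately, including the key point that the constant $c$ from Theorem \ref{th:allvaluations} is uniform in $v$.
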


To prove Theorem \ref{th:PSG} we basically follow \cite{NiAlSze} and apply Lemma \ref{lm:Fratt_p_subgroup_of_PRG_group} in the final step.
The following proposition is \cite[Proposition 4.3]{Lu}. Let us note that in \cite{Lu} $G$ is assumed to be split, however the same argument works without this assumption.

\begin{theorem}\label{th:Lu_subnormal}

Let $G$ be as in Theorem \ref{th:PSG}. If $H$ is a subgroup of index $n$ in $G(\widehat{\OO}_S)$, then $H$ contains a sub-normal subgroup of $G(\widehat{\OO}_S)$ of index at most $n^{d_1}$ for some constant $d_1$.
\end{theorem}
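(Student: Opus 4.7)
The strategy is to exhibit, for each index-$n$ subgroup $H \leq G(\widehat{\OO}_S)$, a normal (hence subnormal) subgroup $M \triangleleft G(\widehat{\OO}_S)$ with $M \subseteq H$ and $[G(\widehat{\OO}_S):M] \leq n^{d_1}$. The uniform representation-type bound from Theorem~\ref{th:allvaluations} will supply the necessary local input, which will then be assembled using the product decomposition $G(\widehat{\OO}_S) = \prod_{v \in V_f^S} G(\OO_v)$.

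For each $v \in V_f^S$ set $H_v := \pi_v(H)$, so $[G(\OO_v):H_v] \leq n$. The permutation action of $G(\OO_v)$ on $G(\OO_v)/H_v$ has degree at most $n$, hence every irreducible constituent of this representation has dimension $\leq n$; its kernel, which is the local core $\mathrm{Core}_{G(\OO_v)}(H_v)$, therefore contains the characteristic subgroup $K_n(G(\OO_v))$. By Theorem~\ref{th:allvaluations} this yields the uniform bound
\[
[G(\OO_v) : \mathrm{Core}_{G(\OO_v)}(H_v)] \leq [G(\OO_v):K_n(G(\OO_v))] \leq c\, n^{c}.
\]
Moreover Proposition~\ref{th:deltaP1} forces $K_n(G(\OO_v)) = G(\OO_v)$ as soon as $q_v > n^{1/\delta}$, so $\mathrm{Core}_{G(\OO_v)}(H_v) = G(\OO_v)$ at every such $v$. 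Combining this with the elementary inclusion $H \subseteq \prod_v H_v$, which gives $\prod_v [G(\OO_v):H_v] \leq n$, the set $F := \{v : H_v \neq G(\OO_v)\}$ is contained in $\{v : q_v \leq n^{1/\delta}\}$ and thus has polynomially bounded size by Lemma~\ref{pre:lm:numberofval}.

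Setting $M_v := \mathrm{Core}_{G(\OO_v)}(H_v)$ for $v \in F$ and $M_v := G(\OO_v)$ otherwise, the product $M := \prod_v M_v$ is a normal (hence subnormal) subgroup of $G(\widehat{\OO}_S)$. Applying the sharper form of Theorem~\ref{th:allvaluations} to each factor,
\[
[G(\widehat{\OO}_S) : M] = \prod_{v \in F}[G(\OO_v):\mathrm{Core}_{G(\OO_v)}(H_v)] \leq \prod_{v \in F}c\,[G(\OO_v):H_v]^{c} \leq c^{|F|}\,n^{c} \leq n^{\,d_1},
\]
where $d_1$ depends only on $c$ and on the polynomial bound for $|F|$ from Lemma~\ref{pre:lm:numberofval}.

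The principal obstacle is verifying the containment $M \subseteq H$: although $\pi_v(M) = M_v \subseteq H_v$ for every $v$, in general $H$ is a proper subgroup of $\prod_v H_v$, as ``twisted diagonal'' Goursat-type subgroups demonstrate. Following Lubotzky's proof of \cite[Proposition~4.3]{Lu}, this gap is closed by exploiting the rigidity of the quasisimple quotients $G(\OO_v)/N_v^1$: by Lemma~\ref{lm:goodvaluations} these are perfect central extensions of pairwise non-isomorphic finite simple groups of Lie type (except on a controlled finite set of valuations, sharing residue degree by Lemma~\ref{pre:lm:numberofval}), so any diagonal identification between factors of $H$ is forced to be trivial on all but finitely many valuations. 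The inclusion $M \subseteq H$ is then established by constructing a subnormal chain $G(\widehat{\OO}_S) = M_0 \triangleright M_1 \triangleright \cdots \triangleright M_r = M$, introducing one valuation of $F$ at each step and, where needed, intersecting with $H$ to absorb the Goursat correction into an additional polynomial factor in the final index bound.
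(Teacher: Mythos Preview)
The paper does not prove this statement; it records it as \cite[Proposition~4.3]{Lu} and only remarks that Lubotzky's split hypothesis is unnecessary. So there is no ``paper's proof'' to compare against; your proposal is an attempted reconstruction of Lubotzky's argument.

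Your local analysis is sound and is indeed the starting point: the uniform representation-type bound from Theorem~\ref{th:allvaluations} gives $[G(\OO_v):K_m(G(\OO_v))]\le cm^c$, hence $[G(\OO_v):\mathrm{Core}(H_v)]\le c\,[G(\OO_v):H_v]^c$, and the inclusion $H\subseteq\prod_v H_v$ does yield $\prod_v[G(\OO_v):H_v]\le n$.

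There is, however, a computational slip in your index estimate. You bound $|F|$ only polynomially via Lemma~\ref{pre:lm:numberofval}, and then write $c^{|F|}\le n^{d_1}$. But $c^{|F|}$ with $|F|$ of order $n^{1/\delta}$ is not polynomial in $n$. What you need (and have already earned) is the logarithmic bound $|F|\le\log_2 n$, coming from $\prod_{v\in F}[G(\OO_v):H_v]\le n$ with each factor $\ge 2$; then $c^{|F|}\le n^{\log_2 c}$ and the estimate goes through.

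The substantive gap is exactly the one you flag: you never establish that your candidate is contained in $H$. Your opening plan promises a \emph{normal} $M\subseteq H$, but this is too strong --- for the diagonal $H\cong S$ inside $S\times S$ (two places with isomorphic residue field) one has $H_v=G(\OO_v)$ for both $v$, so your $M$ is the whole group, yet $H$ contains no normal subgroup of index smaller than $|S|^2$. Your final paragraph tacitly retreats from normality (``intersecting with $H$'', producing a subnormal chain), but what remains is a description of what one would like to do, not an argument: you do not specify the chain, do not verify subnormality after intersecting, and do not control the index at each step. The appeal to pairwise non-isomorphism of the $G(\OO_v)/N_v^1$ is also not quite right as stated; several valuations can share a residue field. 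What Lubotzky actually uses is the finer structural fact that each isomorphism type of simple composition factor occurs with multiplicity bounded by a constant depending only on $k$, together with a careful Goursat-type analysis inside the finite quotient $G(\widehat{\OO}_S)/M$; this is where the real work lies, and your sketch does not carry it out.
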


The following proposition describes the subnormal subgroups of the local factors $G(\OO_v)$ for $v\in\mathcal{P}$ (see Lemma \ref{lm:goodvaluations} for the definition of $\mathcal{P}$).

\begin{proposition} \label{prop:subnormal_subgr_local}
Let $\mathcal{P}$ be as in Lemma \ref{lm:goodvaluations}.
Then for every $v\in\mathcal{P}$ the following hold:
\begin{enumerate}[label=\roman*)] 
\item If $H$ is a subnormal subgroup of $G(\OO_v)$ such that $HN_v^1=G(\OO_v)$ then $H=G(\OO_v)$.\label{prop:subnormal_subgr_local_Nv1_sub_Frattini}
\item Let $Z_v$ denote the inverse image in $G(\OO_v)$ of the center of $G(\OO_v/\mm_v)$.\label{prop:subnormal_subgr_local_proper_sub_central}
If $H$ is a proper subnormal subgroup of $G(\OO_v)$ then $H\leq Z_v$.
\end{enumerate}
\end{proposition}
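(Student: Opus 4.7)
The plan is to prove \ref{prop:subnormal_subgr_local_Nv1_sub_Frattini} first and then deduce \ref{prop:subnormal_subgr_local_proper_sub_central} from it combined with the structure of $G(\OO_v)/N_v^1$. Throughout I will rely on Lemma \ref{lm:goodvaluations}: for $v\in\mathcal{P}$ we have $[N_v^k,G(\OO_v)]=N_v^k$ for every $k\geq 1$, and $G(\OO_v)/N_v^1$ is a perfect central extension of a finite simple group of Lie type.

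For \ref{prop:subnormal_subgr_local_Nv1_sub_Frattini}, I would first treat the case in which $H$ is a \emph{normal} subgroup of $G(\OO_v)$ with $HN_v^1=G(\OO_v)$. The idea is to use the commutator identity $[a,bc]=[a,c][a,b]^c$ together with normality of $H$ to get, for every $k\geq 1$,
\[
N_v^k=[N_v^k,G(\OO_v)]=[N_v^k,HN_v^1]\subseteq [N_v^k,N_v^1]\cdot [N_v^k,H]^{N_v^1}\subseteq N_v^{k+1}\cdot H.
\]
Iterating on $k$ gives $N_v^1\subseteq H\cdot N_v^k$ for every $k$, and since $\bigcap_k N_v^k=1$ and $H$ is closed, we conclude $N_v^1\subseteq H$, whence $H=G(\OO_v)$. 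For general subnormal $H$, fix a chain $H=H_0\triangleleft H_1\triangleleft\cdots\triangleleft H_r=G(\OO_v)$ and argue by descending induction on $i$. Since $H_{r-1}\trianglelefteq G(\OO_v)$ and $H\leq H_{r-1}$, we have $H_{r-1}N_v^1=G(\OO_v)$, so the normal case gives $H_{r-1}=G(\OO_v)$. But then $H_{r-2}\trianglelefteq G(\OO_v)$ with $H_{r-2}N_v^1=G(\OO_v)$, and the same argument yields $H_{r-2}=G(\OO_v)$. Continuing, we obtain $H=G(\OO_v)$.

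For \ref{prop:subnormal_subgr_local_proper_sub_central}, let $H$ be a proper subnormal subgroup of $G(\OO_v)$. By part \ref{prop:subnormal_subgr_local_Nv1_sub_Frattini} we must have $HN_v^1\subsetneq G(\OO_v)$, and $HN_v^1/N_v^1$ is a proper subnormal subgroup of $\overline{G}:=G(\OO_v)/N_v^1$. The group $\overline{G}$ is a perfect central extension of a finite non-abelian simple group, so its only proper normal subgroup is contained in $Z(\overline{G})$; walking up a subnormal chain for $HN_v^1/N_v^1$, the last proper term is a proper normal subgroup of $\overline{G}$ (hence central) and, being abelian, contains all lower terms of the chain. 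Thus $HN_v^1/N_v^1\leq Z(\overline{G})$. Since for $v\in\mathcal{P}$ the reduction map induces an isomorphism $G(\OO_v)/N_v^1\cong G(\OO_v/\mm_v)$, this central subgroup coincides with $Z_v/N_v^1$, giving $H\leq HN_v^1\leq Z_v$.

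The only non-routine step is the commutator estimate in \ref{prop:subnormal_subgr_local_Nv1_sub_Frattini}; once the identity $N_v^k=[N_v^k,G(\OO_v)]$ from Lemma \ref{lm:goodvaluations} is in hand the rest is straightforward group-theoretic bookkeeping. The structural input for \ref{prop:subnormal_subgr_local_proper_sub_central} is likewise immediate from the quasisimple nature of $G(\OO_v)/N_v^1$, so I do not anticipate any serious obstacle beyond being careful with the descending induction along the subnormal series.
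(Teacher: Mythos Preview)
Your proposal is correct and follows essentially the same route as the paper. The normal case in \ref{prop:subnormal_subgr_local_Nv1_sub_Frattini} is handled identically via the inclusion $N_v^k=[N_v^k,HN_v^1]\subseteq HN_v^{k+1}$, and your descending induction along the subnormal chain is just an unpacking of the paper's one-line remark that a proper subnormal subgroup lies in a proper normal one. For \ref{prop:subnormal_subgr_local_proper_sub_central} your argument is a mild variant: the paper reduces again to the normal case and uses simplicity of $G(\OO_v)/Z_v$, whereas you invoke \ref{prop:subnormal_subgr_local_Nv1_sub_Frattini} to force $HN_v^1$ proper and then argue in the quasisimple quotient $G(\OO_v)/N_v^1$; your version is slightly cleaner since it avoids the implicit extra step (via perfectness of $G(\OO_v)/N_v^1$) needed to pass from $HZ_v=G(\OO_v)$ back to $HN_v^1=G(\OO_v)$.
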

\begin{proof}
i) A proper sub-normal subgroup is contained in a proper normal subgroup, so we may assume that $H$ is normal in $G(\OO_v)$.
By Lemma \ref{lm:goodvaluations}.\ref{lm:good_valuations_[G(O_v),Nv1]=Nv1}, we know that $[N_v^k,G(\OO_v)]=N_v^k$ for every $k$.
We claim that $N_v^1\subseteq HN_v^k$ for every $k\geq 1$. The claim implies that $N_v^1\subseteq H$ (recall that we assume $H$ to be closed) and hence $H=HN_v^1=G(\OO_v)$ as required.

To prove the claim it suffices to show that $N_v^k\subseteq HN_v^{k+1}$ for every $k\geq 1$.
We know that $[N_v^k,G(\OO_v)]=[N_v^k,HN_v^{1}]=N_v^k$.
But note that $[N_v^{k},HN_v^{1}]\subseteq [N_v^{k},H][N_v^{k},N_v^1]\subseteq HN_v^{k+1}$, which proves the claim.

ii) We may assume as above that $H$ is a normal subgroup.
By Lemma \ref{lm:goodvaluations}.\ref{lm:good_valuations_G(Fv)_simple} the group $G(\F_v)/Z(G(\F_v))\cong G(\OO_v)/Z_v$ is a finite simple group.
Hence, a proper normal subgroup $H$ either satisfies $HZ_v=G(\OO_v)$, whence $H=G(\OO_v)$ by i), or $H\leq Z_v$.
\end{proof}

We can now derive some properties of subnormal subgroups in the global case.

\begin{proposition}\label{prop:subnormal_subgr_global}
Let $H$ be an open subnormal subgroup of $G(\widehat{\OO}_S)$ of index $n$. Let $\mathcal{P}$ as in Lemma \ref{lm:goodvaluations} and define $V(H):=\{v\in\mathcal{P}\ |\ H\cap G(\OO_v)\neq G(\OO_v)\}$.
Then for every $v\in V(H)$ we have
\[
H\cap G(\OO_v)\leq Z_v
\]
and there exists a constant $d_2$ such that
\[
\sum_{v\in V(H)}\log q_v\leq \frac{1}{d_2}\log n.
\]
\end{proposition}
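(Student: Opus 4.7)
The plan is to establish both assertions by exploiting the subnormal structure, with the index estimate relying on a strengthening of the containment $H\cap G(\OO_v)\leq Z_v$ to $\pi_v(H)\leq Z_v$, where $\pi_v$ denotes projection to the $v$-th factor.

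I first observe that $V(H)$ is finite: since $H$ is an open subgroup of the profinite group $G(\widehat{\OO}_S)=\prod_{v\in V_f^S}G(\OO_v)$ endowed with the product topology, it contains $G(\OO_v)$ for all but finitely many $v$. For the first containment, fix a subnormal chain $H=H_0\trianglelefteq H_1\trianglelefteq\cdots\trianglelefteq H_r=G(\widehat{\OO}_S)$. Intersecting each term with the normal subgroup $G(\OO_v)$ yields a subnormal chain for $H\cap G(\OO_v)$ inside $G(\OO_v)$; since $v\in V(H)\subseteq\mathcal{P}$ means $H\cap G(\OO_v)$ is a proper subgroup of $G(\OO_v)$, Proposition \ref{prop:subnormal_subgr_local}.\ref{prop:subnormal_subgr_local_proper_sub_central} gives $H\cap G(\OO_v)\leq Z_v$.

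To prove the index estimate I would first upgrade this to $\pi_v(H)\leq Z_v$ for every $v\in V(H)$. Projecting the subnormal chain shows $\pi_v(H)$ is subnormal in $G(\OO_v)$, so by Proposition \ref{prop:subnormal_subgr_local}.\ref{prop:subnormal_subgr_local_proper_sub_central} either $\pi_v(H)\leq Z_v$ or $\pi_v(H)=G(\OO_v)$. Suppose for contradiction the latter holds, so $\pi_v(H_i)=G(\OO_v)$ for every $i$. Because $H_{r-1}$ and $G(\OO_v)$ are both normal in $G(\widehat{\OO}_S)$, the commutator $[H_{r-1},G(\OO_v)]$ is contained in $H_{r-1}\cap G(\OO_v)$; applying $\pi_v$, which restricts to the identity on the direct factor $G(\OO_v)$, yields
\[
[G(\OO_v),G(\OO_v)]=[\pi_v(H_{r-1}),G(\OO_v)]\leq H_{r-1}\cap G(\OO_v).
\]
By Lemma \ref{lm:goodvaluations}.\ref{lm:good_valuations_G(O_v)_perfect} the group $G(\OO_v)$ is perfect, so $G(\OO_v)\leq H_{r-1}$. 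Since $G(\OO_v)\trianglelefteq G(\widehat{\OO}_S)$ it is normal in each $H_i$, so the same commutator-plus-perfectness argument applied to $H_{r-2}\trianglelefteq H_{r-1}$ gives $G(\OO_v)\leq H_{r-2}$; iterating down the chain produces $G(\OO_v)\leq H$, contradicting $v\in V(H)$.

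With $\pi_v(H)\leq Z_v$ for every $v\in V(H)$, the group $H$ is contained in $\prod_{v\in V(H)}Z_v\times\prod_{v\notin V(H)}G(\OO_v)$, whose index in $G(\widehat{\OO}_S)$ equals $\prod_{v\in V(H)}|G(\OO_v):Z_v|$. By Lemma \ref{lm:goodvaluations}.\ref{lm:good_valuations_G(Fv)_simple} the quotient $G(\OO_v)/Z_v\cong G(\F_v)/Z(G(\F_v))$ is a finite simple group of Lie type whose type depends only on $G$, so by Lemma \ref{lm:projrep} (or the standard order formulas for groups of Lie type) there exists $d_2>0$ such that $|G(\OO_v):Z_v|\geq q_v^{d_2}$ for every $v\in\mathcal{P}$. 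Hence $n\geq\prod_{v\in V(H)}q_v^{d_2}$, and taking logarithms yields $\sum_{v\in V(H)}\log q_v\leq\frac{1}{d_2}\log n$. The main obstacle is the inductive argument promoting $H\cap G(\OO_v)\leq Z_v$ to $\pi_v(H)\leq Z_v$; without this strengthening the natural inclusion $\prod_v(H\cap G(\OO_v))\subseteq H$ runs in the wrong direction to convert local indices into a bound on $n$, yielding only the pointwise estimate $n\geq q_v^{d_2}$ rather than the required product bound.
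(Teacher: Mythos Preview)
Your proof is correct and follows essentially the same approach as the paper's: the local containment via Proposition~\ref{prop:subnormal_subgr_local}, the upgrade to $\pi_v(H)\leq Z_v$ via perfectness of $G(\OO_v)$, and the index bound from the product inclusion are all the same ingredients. Your explicit induction down the subnormal chain is in fact more complete than the paper's terse ``we may assume $H$ is normal,'' and your index computation is more direct---the paper introduces auxiliary objects $\widetilde{G}$, $E$, $\widetilde{G}(1)$ here partly because they are reused in the proof of Theorem~\ref{th:PSG}.
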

\begin{proof}
Let $v\in V_f^S$.
By definition and Proposition \ref{prop:subnormal_subgr_local}.\ref{prop:subnormal_subgr_local_proper_sub_central}, if $v\in\mathcal{P}\setminus V(H)$, then $G(\OO_v)\subseteq H$.
On the other hand if $v\in V(H)$, then by Proposition \ref{prop:subnormal_subgr_local}.\ref{prop:subnormal_subgr_local_proper_sub_central} we have $H\cap G(\OO_v)\leq Z_v$. Moreover we claim that $\pi_v(H)\leq Z_v$, where $\pi_v$ denotes the projection from $G(\widehat{\OO}_S)$ to $G(\OO_v)$. Note that to prove the claim we may assume that $H$ is a normal subgroup. But for $H$ normal, if $\pi_v(H)=G(\OO_v)$ then $H\cap G(\OO_v)=G(\OO_v)$, since $G(\OO_v)$ is a perfect group (Lemma \ref{lm:goodvaluations}.\ref{lm:good_valuations_G(O_v)_perfect}).
We have shown that 
\[
\prod_{v\in\mathcal{P}\setminus V(H)} G(\OO_v)\subseteq H\subseteq\prod_{v\in V_f^S\setminus V(H)}G(\OO_v)\times \prod_{v\in V(H)}Z_v.\]
Therefore, if we write $Z(H):=\prod_{v\in V(H)}Z_v$ and $E:=\prod_{v\in V_f^S\setminus\mathcal{P}}G(\OO_v)$, we may see $H$ as a subgroup of $Z(H)\times E$.

Let us put $\widetilde{G}:=\prod_{v\in V(H)}G(\OO_v)\times E$.
On the one hand $M:=|E:\prod_{v\in V_f^S\setminus\mathcal{P}}G(\mm_v)|$ is independent of $H$.
On the other hand $|Z(H):\prod_{v\in V(H)}G(\mm_v)|$ is negligible compared to $|\prod_{v\in V(H)}G(\OO_v):\prod_{v\in V(H)}G(\mm_v)|$.
Let us write 
\[
\widetilde{G}(1):=\prod_{v\in V(H)\cup (V_f^S\setminus\mathcal{P})}G(\mm_v).
\]
Since $|\widetilde{G}:Z(H)\times E|\leq |\widetilde{G}:H|$ we can find a constant $d'_1$ (independent of $H$) such that $|\widetilde{G}:\widetilde{G}(1)|\leq n^{d'_1}$.

Now note that $|\widetilde{G}:\widetilde{G}(1)|$ is approximately $M\prod_{v\in V(H)}q_v^{d'_2}$, where $d'_2=\dim G$.
Moreover we also have 
\[
\prod_{v\in V(H)}q_v^{d_2}\leq|\widetilde{G}:H|\leq n^{d_1},
\]
where $d_2$ is a constant slightly less than $d_2'$.
In particular, we have
\begin{equation}
 \sum_{v\in V(H)}\log q_v\leq \frac{1}{d_2}\log n
\end{equation}

\end{proof}
We can finally give a proof of Theorem \ref{th:PSG}.
\begin{proof}[Proof of Theorem \ref{th:PSG}]
Let $H$ be an open subgroup of $G(\widehat{\OO}_S)$ of index at most $n$.
By Theorem \ref{th:Lu_subnormal} and (the proof of) Proposition \ref{prop:subnormal_subgr_global} there exist constants $d_1$, $d_2$ (independent of $H$), a subnormal subgroup $H'$ and a finite set of valuations $V(H')$ such that
\[
|G(\widehat{\OO}_S):H'|\leq n^{d_1},
\]
\[
|\widetilde{G}:\widetilde{G}(1)|\leq n^{d_1},
\]
\[
\prod_{v\in\mathcal{P}\setminus V(H')} G(\OO_v)\subseteq H'\subseteq\prod_{v\in V_f^S\setminus V(H')}G(\OO_v)\times \prod_{v\in V(H')}Z_v,\]
\[
\sum_{v\in V(H')}\log q_v\leq \frac{1}{d_2}\log n^{d_1},
\]
where $\displaystyle{\widetilde{G}=\prod_{v\in V(H')}G(\OO_v)\times E}$ and $\displaystyle{\widetilde{G}(1)=\prod_{v\in V(H')\cup (V_f^S\setminus\mathcal{P})} G(\mm_v)}$ are as in Proposition \ref{prop:subnormal_subgr_global}.
Then, for fixed $V(H')$, by Lemma \ref{lm:SG_super_of_subgroups} we can reduce the problem to counting subgroups $H$ of index at most $n$ in the group $\widetilde{G}$.
By \cite[Proposition 1.3.2]{Subgroup_Growth} we know that 
\begin{equation}\label{eq:subgr_extension}
s_n(\widetilde{G})\leq s_n(\widetilde{G}/\widetilde{G}(1))s_n(\widetilde{G}(1))n^{\rk(\widetilde{G}/\widetilde{G}(1))}
\end{equation}
where $\rk(\widetilde{G}/\widetilde{G}(1))$ is the maximal cardinality of a minimal generating set of a subgroup of $\widetilde{G}/\widetilde{G}(1)$.
Clearly 
\[
\rk(\widetilde{G}/\widetilde{G}(1))\leq \log |\widetilde{G}/\widetilde{G}(1)|\leq \log n^{d_1}=d_1\log n.\]
Applying Lemma \ref{lm:SG_finite_group} to the finite group $\widetilde{G}/\widetilde{G}(1)$ we obtain
\[
s_n(\widetilde{G}/\widetilde{G}(1))\leq n^{d_1 \log n^{d_1}}=n^{d_1^2\log n}.
\]
To prove the appropriate bound for $s_n(\widetilde{G}(1))$ we apply Lemma \ref{lm:Fratt_p_subgroup_of_PRG_group}.
By Theorem \ref{th:PRG} $G(\widehat{\OO}_S)$ has PRG, say $r_n(G(\widehat{\OO}_S)\leq n^{C}$.
By construction, it follows that $r_n(\widetilde{G})\leq n^C$ as well.
Now given a subgroup $K\leq \widetilde{G}(1)$ such that $|\widetilde{G}(1):K|\leq n$, we have $|\widetilde{G}:K|\leq n^{d_1+1}$. By Lemma \ref{lm:Fratt_p_subgroup_of_PRG_group} applied to $K$ and $\widetilde{G}$ we get
\begin{equation}\label{eq:fratt}
|K:\Phi(K)|\leq (n^{d
_1+1})^{C+1}\leq n^E
\end{equation}
for some constant $E$, where $\Phi(K)$ denotes the Frattini subgroup of $K$.
Now if $P$ is a subgroup of the pro-$p$ group $\widetilde{G}(1)$ of index at most $n$, find a sequence
\[
\widetilde{G}(1)=K_0\geq K_1\geq \ldots\geq K_{\lfloor \log_p n\rfloor}=P
\]
such that $|K_s:K_{s+1}|=1$ or $p$.
For every $s$, $K_s/\Phi(K_s)$ is an elementary abelian $p$-group which can be interpreted as an $\F_p$-vector space of dimension $d(K_s)$.
By construction $\Phi(K_s)\leq K_{s+1}$, hence $K_{s+1}$ can be interpreted as a subspace of codimension at most $1$ in $K_s/\Phi(K_s)$.
Hence given $K_s$ the number of choices for $K_{s+1}$ is
\[
1+\frac{p^{d(K_s)}-1}{p-1}\leq p^{d(K_s)}=|K_s:\Phi(K_s)|\leq n^E,\]
where the last inequality follows from \eqref{eq:fratt}.
Hence the number of possibilities for $P$ is at most $n^{E\log_p n}\leq n^{E_2\log n}$ for some constant $E_2$.

If we substitute the above bounds in \eqref{eq:subgr_extension} we obtain
\begin{equation} \label{eq:subgr_standar}
s_n(\widetilde{G})\leq n^{d_1^2\log n} n^{E_2\log n}n^{d_1\log n}\leq n^{d_3\log n}
\end{equation}
for some constant $d_3$.

We have found the appropriate bound for a fixed $V(H')$, so it remains to estimate the number of possible $V(H')$.
Recall that $\sum_{v\in V(H')}\log q_v\leq \frac{1}{d_2}\log n^{d_1}$, in particular, $|V(H')|\leq \frac{d_1}{d_2}\log n$.
On the other hand by Lemma \ref{pre:lm:numberofval} the number of valuations $v$ with $q_v\leq n^{d_1/d_2}$ is bounded by $n^{bd_1/d_2}$. Therefore the number of possible subsets $V(H')$ is bounded by $n^{B\log n}$ for some constant $B$.
This together with \eqref{eq:subgr_standar} gives
\[
s_n(G(\widehat{\OO}_S))\leq n^{d_3\log n}n^B\leq n^{D\log n}
\]
for some constant $D$, which finishes the proof.  

\end{proof}
\clearpage{\pagestyle{empty}\cleardoublepage}

\chapter{Algebra groups and the Fake Degree Conjecture}
\section{Introduction}
The theory and results presented in this chapter are motivated by the so-called Fake Degree Conjecture.
The Fake Degree Conjecture appears in the context of algebra groups, a class of groups introduced by Isaacs which can be seen as a generalization of unitriangular groups.
It is natural to try to adapt Kirillov's Orbit Method  for this class of groups and this was first made, under certain hypothesis, by Kazhdan.
Kirillov's method suggests a correspondence between 
the so-called coadjoint orbits and the irreducible characters of the group under study.
The Fake Degree Conjecture claims that in the class of algebra groups one can use this correspondence to read the irreducible character degrees from the square root of the sizes of the coadjoint orbits (the fake degrees).

To introduce the class of algebra groups let us first consider the group $U_n(\Fq)$, this is the group of unitriangular  matrices of size $n\times n$ with coefficients in the finite field $\Fq$.
The unitriangular groups and their representation theory have been object of study for several reasons.
Let us note that for $p=\ch\Fq$ the group $U_n(\Fq)$ is a Sylow $p$-subgroup of $\GL_n(\Fq)$.
This implies that any $p$-group can be embedded in some $U_n(\Fq)$, for every finite group can be represented as a permutation matrix group in some $\GL_n(\ZZ)$.
The understanding of this class of groups may therefore give information about the family of $p$-groups.
For instance, a lower bound for the number of conjugacy classes of $U_n(\Fp)$ produces an estimate for the number of isomorphism classes of $p$-groups of order $p^n$, see \cite{Hig} and \cite{VaA}.
On the other hand, unitriangular groups have a rich representation theory.
Indeed, classifying their irreducible representations is known to be a wild problem.
Recent work on the representation theory of unitriangular groups can be found in \cite{DiaIsaacs}.

The first trivial observation about the representation theory of $U_n(\Fq)$ is that, being a $p$-group, every irreducible representation has $p$-power degree.
However it was soon observed that all known irreducible representations of $U_n(\Fq)$ have  indeed $q$-power degree.
The corresponding conjecture was publicize, among others, by Thompson and read as follows.
\begin{conjecture}\label{conj:th}
All irreducible representations of $U_n(\Fq)$ have $q$-power degree.
\end{conjecture}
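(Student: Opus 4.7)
The plan is to deduce Conjecture~\ref{conj:th} from the more general statement, due to Isaacs, that for every finite-dimensional nilpotent associative $\F_q$-algebra $J$, every $\chi\in\Irr(1+J)$ satisfies $\chi(1)\in q^{\mathbb{N}}$. This implies the conjecture because $U_n(\F_q)=1+\mathfrak{u}_n(\F_q)$ is an algebra group. I would argue by induction on $\dim_{\F_q} J$, with the trivial case $J=0$ as base.

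For the inductive step, use nilpotency of $J$ to pick a one-dimensional two-sided ideal $I=\F_q z\subseteq Z(J)$; the subgroup $Z=1+I$ is central in $G=1+J$ of order $q$. For $\chi\in\Irr(G)$, Schur's lemma gives $\chi|_Z=\chi(1)\lambda$ with $\lambda\in\Irr(Z)$. If $\lambda$ is trivial, then $\chi$ factors through the algebra group $G/Z\cong 1+(J/I)$ of smaller dimension, and the inductive hypothesis applies directly, producing a $q$-power degree.

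The essential case is $\lambda\neq 1$. The approach is to construct a \emph{polarization} of $\lambda$: a normal algebra subgroup $N=1+B$, with $B\trianglelefteq J$ a two-sided ideal strictly between $I$ and $J$, together with an extension $\hat\lambda\in\Irr(N)$ of $\lambda$ whose inertia subgroup $T=G_{\hat\lambda}$ is an algebra subgroup $1+C$ properly contained in $G$. Once this is available, Clifford theory (Theorem~\ref{pre:th:Clifford}) produces $\eta\in\Irr(T)$ with $\chi=\Ind_T^G\eta$, so
\[
\chi(1)=|G:T|\cdot\eta(1)=q^{\dim J-\dim C}\cdot\eta(1),
\]
and $\eta(1)$ is a $q$-power by induction applied to the smaller algebra group $T=1+C$.

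The main obstacle is the construction of such a polarization when $J^p\neq 0$. If $J^p=0$, one may transport the problem to the Lie-algebra side via the truncated exponential $\exp\colon J\to 1+J$ and obtain the polarization by standard symplectic linear algebra on the quotient of $J$ by the radical of the alternating form $\omega_\lambda(x,y)=\lambda(xy-yx)$; this is Kirillov's orbit method. For general $J$, including $\mathfrak{u}_n(\F_q)$ with $n>p$, this Lie-theoretic symplectic argument must be replaced by a purely algebra-theoretic analogue that respects the associative multiplication. The technical heart of the proof is therefore the lemma asserting that the stabilizer in $G$ of any irreducible character of a normal algebra subgroup $1+B$ is itself an algebra subgroup $1+C$ (with $B\subseteq C\subseteq J$). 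This closure of the class of algebra subgroups under taking inertia groups is what keeps the Clifford-theoretic induction inside the class of algebra groups, and is precisely where the associative (rather than Lie) structure of $J$ must be exploited.
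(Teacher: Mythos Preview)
The thesis does not prove this statement itself; the conjecture is recorded for historical context and its resolution is simply quoted as Isaacs's theorem (Theorem~\ref{th:Isaaqdeg}, \cite[Theorem~A]{Isaacs}).

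Your inductive framework via a one-dimensional central ideal $I=\F_q z$ is the natural opening, and the split into the cases $\lambda$ trivial versus $\lambda$ nontrivial is correct. The gap is exactly where you locate it, but it is a genuine gap rather than a deferred routine check. You assert as the key lemma that the inertia group in $1+J$ of \emph{any} irreducible character of a normal algebra subgroup $1+B$ is again an algebra subgroup $1+C$. No argument is offered, and in this generality the statement is not a known result: writing $g=1+a$, the condition $\theta^{g}=\theta$ is nonlinear in $a$ once $a^{2}\neq 0$, and nothing in your outline forces the solution set to be closed under the $\F_q$-scaling $a\mapsto\nu a$ for $\nu\in\F_q\setminus\F_p$ (note that $1+\nu a$ is not a power of $1+a$, so group closure of the inertia group gives no leverage). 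The passage from $\F_p$-closure to $\F_q$-closure is precisely where the content of the conjecture lives, so asserting it at the level of inertia groups relocates the difficulty rather than resolving it. Isaacs's actual proof does not establish such a lemma; it extracts the $\F_q$-structure by a different and more delicate route. The polarization picture you sketch is closer in spirit to Halasi's subsequent refinement (Theorem~\ref{th:Halasi}) than to Isaacs's original argument, but Halasi's proof, too, does not rest on the blanket inertia-group statement you posit.
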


In 1977 D. Kazhdan  gave in \cite{kazhdan} a description of all irreducible characters of the groups $U_n(\Fq)$ in the case where $n\leq p$. Kazhdan adapted Kirillov's Orbit Method thanks to the fact that for $n\leq p$ the logarithm and exponentinal functions are well defined bijections between the unitriangular group $U_n(\Fq)$ and the strictly uppertriangular matrices $\mathfrak{u}_n(\Fq)$.
In this setting we have a natural action of $U_n(\Fq)$ on $\mathfrak{u}_n(\Fq)$ and on its dual $\widehat{\mathfrak{u}_n(\Fq)}:=\Irr((\mathfrak{u}_n(\Fq),+))$ (see section \ref{sec:KOM} for more details).
For $\lambda\in\Irr(\mathfrak{u}_n(\Fq))$ let $\Omega_\lambda$ denote its corresponding $U_n(\Fq)$-orbit.

\begin{theorem}[{\cite[Proposition 1]{kazhdan}}]\label{th:KazhdanSpringer}
Suppose $n\leq p=\ch\Fq$.
For every $\chi\in\Irr(U_n(\Fq))$ there exists an $\Fq$-subalgebra $\mathfrak{h}\subset\mathfrak{u}_n(\Fq)$ and a linear character $\lambda\in\Irr(\exp(\mathfrak{h}))$ such that
\[
\chi=\lambda^{U_n(\Fq)}=\frac{1}{|\Omega_\lambda|^{2}}\sum_{\mu\in\Omega_\lambda}\mu\circ\log.
\]
\end{theorem}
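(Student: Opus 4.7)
The plan is to adapt Kirillov's Orbit Method to the finite setting, where the crucial enabling feature is the restriction $n \leq p = \ch\F_q$. Under this hypothesis the nilpotency class of $\mathfrak{u}_n(\F_q)$ is at most $n-1 < p$, so the truncated series $\exp(X) = \sum_{k=0}^{n-1} X^k/k!$ and $\log(1+Y) = \sum_{k=1}^{n-1}(-1)^{k+1}Y^k/k$ are mutually inverse bijections between $\mathfrak{u}_n(\F_q)$ and $U_n(\F_q)$, and the Baker--Campbell--Hausdorff series terminates to give an $\F_q$-polynomial identity $\exp(X)\exp(Y)=\exp(X*Y)$ with $X*Y = X+Y+\tfrac{1}{2}[X,Y]+\cdots$ involving only iterated Lie brackets, all of whose denominators are units in $\F_q$. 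This first step is a direct verification but is the structural backbone of the whole argument.

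Next, transport the conjugation action of $U_n(\F_q)$ on itself to the adjoint action $\Ad(g)X = gXg^{-1}$ on $\mathfrak{u}_n(\F_q)$, and dualize to a coadjoint action of $U_n(\F_q)$ on $\widehat{\mathfrak{u}_n(\F_q)} = \Irr(\mathfrak{u}_n(\F_q),+)$. Fix $\lambda \in \widehat{\mathfrak{u}_n(\F_q)}$ and construct a \emph{polarization}: an $\F_q$-subalgebra $\mathfrak{h}\subset\mathfrak{u}_n(\F_q)$ of dimension $\dim\mathfrak{u}_n(\F_q) - \tfrac{1}{2}\dim\Omega_\lambda$ that is totally isotropic and maximal for the alternating bilinear form $B_\lambda(X,Y):=\lambda([X,Y])$ on $\mathfrak{u}_n(\F_q)$. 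Existence follows by the standard inductive argument for nilpotent Lie algebras (pick a central element on which $\lambda$ is nontrivial, pass to the quotient by an appropriate codimension-one ideal, and lift). Setting $H:=\exp(\mathfrak{h})$, the map $\tilde\lambda(\exp X):=\lambda(X)$ is a linear character of $H$ because $\lambda$ vanishes on $[\mathfrak{h},\mathfrak{h}]$ and every BCH correction beyond the linear term lands in $[\mathfrak{h},\mathfrak{h}]$.

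The main step, and the technical heart, is to show $\chi := \Ind_H^{U_n(\F_q)}\tilde\lambda$ is irreducible via Mackey's criterion. For each $g\in U_n(\F_q)\setminus H$ one must produce $h\in H\cap g^{-1}Hg$ such that $\tilde\lambda(h)\neq\tilde\lambda^g(h)$. Lifting to the Lie algebra: the non-degeneracy of the form $B_\lambda$ on $\mathfrak{u}_n(\F_q)/\mathrm{rad}(B_\lambda)$ combined with maximality of $\mathfrak{h}$ forces, for every $Y\notin\mathfrak{h}+\mathrm{rad}(B_\lambda)$, the existence of $X\in\mathfrak{h}$ with $\lambda([X,Y])\neq 0$; this translates precisely into the required separation of characters once one checks that $\mathrm{rad}(B_\lambda) = \mathrm{Lie}(\stab\lambda) \subseteq \mathfrak{h}$. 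This is the step I expect to be the main obstacle, since it requires delicate bookkeeping between the three levels (group, Lie algebra, and its dual), all valid thanks to BCH. With irreducibility in hand, a dimension count gives $\chi(1)=|U_n(\F_q):H| = |\Omega_\lambda|^{1/2}$.

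Finally, derive the explicit formula. By definition of induction,
\[
\chi(\exp X) \;=\; \frac{1}{|H|}\sum_{\substack{g\in U_n(\F_q)\\ g\exp(X)g^{-1}\in H}}\tilde\lambda(g\exp(X)g^{-1}) \;=\; \frac{1}{|H|}\sum_{\substack{g\\ \Ad(g)X\in\mathfrak{h}}}\lambda(\Ad(g)X),
\]
and a Frobenius-reciprocity / Fourier-inversion computation on the abelian group $(\mathfrak{u}_n(\F_q),+)$, together with the fact that $\mathfrak{h}^{\perp}$ (with respect to $B_\lambda$) is the orbit $U_n(\F_q)\cdot\lambda$ viewed in the dual, collapses this sum into $|\Omega_\lambda|^{-1/2}\sum_{\mu\in\Omega_\lambda}\mu(X)$. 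Bijectivity with $\Irr(U_n(\F_q))$ then follows by comparing $\sum_\Omega |\Omega| = |\mathfrak{u}_n(\F_q)| = |U_n(\F_q)| = \sum_\chi\chi(1)^2$, so every irreducible character is accounted for, proving the theorem.
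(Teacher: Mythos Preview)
Your proposal is correct and follows the classical Kirillov line. The paper (which proves this as the special case $J=\mathfrak{u}_n(\F_q)$ of the more general Theorem~\ref{th:mainKOM} for algebra groups $1+J$ with $J^p=0$) differs from your approach on two points worth noting.

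\emph{Polarization.} You build $\mathfrak{h}$ by the standard nilpotent-Lie-algebra descent through a central element. The paper instead refines the chain of associative ideals $J\supset J^2\supset\cdots$ to codimension-one steps and takes the first $J_i^\perp$ that is proper; the identity $B_\lambda(uv,w)=B_\lambda(u,vw)B_\lambda(v,wu)$ shows $J_i^\perp$ is closed under the \emph{associative} product, so one gets an associative subalgebra as the statement demands, not merely a Lie subalgebra. Your $\mathfrak{h}$ is a priori only a Lie subalgebra; $\exp(\mathfrak{h})$ is still a group by BCH, so the argument survives, but the conclusion is slightly weaker than what is asserted.

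\emph{Irreducibility.} You use Mackey's criterion. The paper bypasses Mackey entirely: it first shows (Lemma~\ref{lm:coadorbits}) that the class functions $\chi_\Omega(\exp j)=|\Omega|^{-1/2}\sum_{\mu\in\Omega}\mu(j)$ are an orthonormal family of size $\kk(1+J)$, then computes the induced character $\psi_\lambda^{1+J}$ directly and identifies it with $\chi_{\Omega_\lambda}$. Irreducibility is then just $\langle\chi_\Omega,\chi_\Omega\rangle=1$, and the bijection with $\Irr(1+J)$ is automatic by counting. The key step in that computation (Proposition~\ref{prop:transitiveaction}) is that the set $\{\mu:\mu|_{H_\lambda}=\lambda|_{H_\lambda}\}$ coincides with the $(1+H_\lambda)$-orbit of $\lambda$; the full $(1+J)$-orbit only appears after summing over coset representatives. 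Your sketch (``$\mathfrak{h}^\perp$ is the orbit $U_n\cdot\lambda$'') collapses these two stages into one, which is not literally correct --- it is the $H$-orbit, not the full orbit, that equals the affine translate $\lambda+\mathfrak{h}^{\mathrm{ann}}$ in the dual.

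Both routes work; the orthonormality-first argument is cleaner and sidesteps the bookkeeping you flagged as the main obstacle.
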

\noindent In particular, it follows that, for $n\leq p$, every irreducible representation of $U_n(\Fq)$ has degree $|U_n(\Fq)|/|\mathfrak{h}|$ (for some $\Fq$-subalgebra $\mathfrak{h}\subseteq\mathfrak{u}_n(\Fq)$), which is a $q$-power.

However, for arbitrary $n$ the problem was open until 1995, when I.M. Isaacs presented a proof of a more general statement. In \cite{Isaacs}
Isaacs generalized the class of groups under study by introducing the concept of "algebra groups", which contains, in particular, the class of unitriangular groups.

Let $R$ be a commutative ring with unit and let us consider a finite associative $R$-algebra $J$ which is assumed to be nilpotent, i.e., $J^n=0$ for some $n\in\NN$.
Let us consider the set of formal objects
\[
1+J:=\{\ 1+x\ :\ x\in J\ \}.
\]
Then $1+J$ is easily seen to be a group with respect to the natural
multiplication $(1 + x)(1 + y) = 1+x + y + xy$.
In fact, $1+J$ is a subgroup of the
group of units of the algebra $A= R\cdot 1+J$, in which $J$ is the Jacobson radical.
The
group $1+J$ constructed in this way is the $R$-algebra group based on $J$.
It is clear that $U_n(\Fq)\cong 1+\mathfrak{u}_n(\Fq)$, so unitriangular groups are algebra groups.
This more general definition allowed Isaacs to apply inductive arguments to a broader class of groups.
This way he obtained an affirmative answer to Conjecture \ref{conj:th}.

\begin{theorem}[{\cite[Theorem A]{Isaacs}}]\label{th:Isaaqdeg}
For every $\Fq$-algebra group $P$ (and in particular for any untriangular group $U_n(\Fq)$) every irreducible representation of $P$ has $q$-power degree.
\end{theorem}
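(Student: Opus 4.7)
The plan is to prove by induction on $\dim_{\F_q} J$ a slightly strengthened statement, essentially following Isaacs: \emph{every $\chi \in \Irr(1+J)$ is of the form $\lambda^{1+J}$ for some linear character $\lambda \in \Irr(1+H)$, where $H$ is an $\F_q$-subalgebra of $J$.} This immediately yields the theorem, because then $\chi(1) = [1+J : 1+H] = q^{\dim_{\F_q} J - \dim_{\F_q} H}$ is a power of $q$. The base case $J = 0$ is trivial.

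For the inductive step, the first move is to exploit nilpotency of $J$ to pick a nonzero ideal $Z \subseteq J$ with $JZ = ZJ = 0$ (for instance, $Z = J^{n-1}$ where $n$ is the nilpotency index). Then $1+Z$ is contained in the center of $P := 1+J$, and by Schur's lemma $\chi|_{1+Z} = \chi(1)\lambda_0$ for some linear $\lambda_0 \in \Irr(1+Z)$. If $\lambda_0$ is trivial, then $\chi$ is inflated from an irreducible character of $P/(1+Z) \cong 1 + J/Z$, which is an algebra group on a strictly smaller-dimensional nilpotent $\F_q$-algebra; applying the inductive hypothesis there and pulling back the subalgebra from $J/Z$ to $J$ concludes this case.

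The substantive case is $\lambda_0 \neq 1$. Here I would find a proper $\F_q$-subalgebra $H \subsetneq J$ with $Z \subseteq H$ and an irreducible $\mu \in \Irr(1+H)$ lying above $\lambda_0$ with $\mu^P = \chi$. Granted this, the inductive hypothesis applied to the algebra group $1+H$ writes $\mu = \nu^{1+H}$ for some linear $\nu$ on an algebra subgroup of $1+H$; by transitivity of induction $\chi = \nu^P$, which is the desired form. To construct $H$, the idea is to extend $\lambda_0$ to a linear character on a maximal abelian algebra subgroup $1+M$ containing $1+Z$, consider the bilinear form $b : J \times J \to \widehat{(1+Z)}$ given by $b(x,y) = \lambda_0([1+x,1+y])$, and take $H$ to be the radical of the induced form modulo $M$; then Clifford theory applied with respect to the $P$-stabilizer of the chosen extension of $\lambda_0$ produces the required $\mu$.

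The main obstacle is verifying that the subgroup produced by Clifford theory is \emph{genuinely an algebra subgroup}, i.e.\ of the form $1 + H$ for a subspace $H \subseteq J$ closed under multiplication, and not merely some ad hoc subgroup. Ordinary character theory of $p$-groups only forces indices to be $p$-powers, so the whole point is to use the $\F_q$-linear structure: the maps $y \mapsto [a,y]$ and $y \mapsto \lambda_0(1 + \text{(polynomial in } y, a\text{)})$ are $\F_q$-linear, and the orbits of $\F_q$-linear actions on $\F_q$-vector spaces have $q$-power size. Carrying out this bookkeeping carefully forces the stabilizers, and hence the inducing subgroups, to be algebra subgroups, which converts the expected $p$-power bound into the desired $q$-power bound.
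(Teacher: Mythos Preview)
The paper does not prove this theorem; it is quoted from Isaacs with a citation, so there is no in-paper argument to compare against. That said, two points about your proposal deserve comment.

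First, the strengthened statement you set out to prove --- that every $\chi\in\Irr(1+J)$ is induced from a linear character of an \emph{algebra} subgroup --- is not Isaacs's theorem but Halasi's (stated in the paper as Theorem~\ref{th:Halasi}, published nine years after \cite{Isaacs}). Isaacs's own argument does not establish monomiality from algebra subgroups; it proceeds by showing that restriction of $\chi$ to $1+I$, for $I\trianglelefteq J$ an ideal of $\FF_q$-codimension~$1$, is either irreducible or a sum of exactly $q$ conjugate constituents, and then inducts. So ``essentially following Isaacs'' is a misattribution, and you have set yourself the harder target.

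Second, and more seriously, the construction of $H$ in the case $\lambda_0\neq 1$ does not go through as written. The form $b(x,y)=\lambda_0([1+x,1+y])$ is ill-defined: the group commutator $[1+x,1+y]$ does not lie in $1+Z$ for general $x,y\in J$, so $\lambda_0$ cannot be evaluated on it. More fundamentally, your final paragraph asserts that $y\mapsto [a,y]$ (group commutator) is $\FF_q$-linear; it is not. Expanding $(1+a)^{-1}(1+y)^{-1}(1+a)(1+y)$ gives $1$ plus a polynomial in $a,y$ with many higher-order terms, and linearity in $y$ fails as soon as $J^2\neq 0$. This is precisely why the orbit-method/bilinear-form machinery in the paper's Section on Kirillov's method requires the hypothesis $J^p=0$ (so that $\exp$ and $\log$ are available to straighten the commutator), and why both Isaacs and Halasi needed genuinely different ideas for general $J$. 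Your sketch correctly identifies where the difficulty lies but does not resolve it; as it stands the argument only covers the case $J^p=0$, which is Theorem~\ref{th:mainKOM}.
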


Isaacs's result and his new approach motivated further investigation of algebra groups.
Theorem \ref{th:Isaaqdeg} can be interpreted as a partial generalization of Kazhdan's aforementioned result for certain unitriangular groups to the class of algebra groups.
It was natural to study to what extend were Kazhdan's results in \cite{kazhdan} extendible to the class of $\Fq$-algebra groups.
In the algebra group setting there is a natural bijection given by $1+x\mapsto x$ between the group $1+J$ and the algebra $J$, which may be used to apply Kirillov's Orbit Method (see section \ref{sec:KOM} for more details).
In this sense Halasi showed the following partial generalization of Theorem \ref{th:KazhdanSpringer} to algebra groups.

\begin{theorem}[{\cite[Theorem 1.2]{Halasi}}]\label{th:Halasi}
Let $1+J$ be an $\Fq$-algebra group and $\chi\in\Irr(1+J)$.
Then there exist an $\Fq$-algebra $H\leq J$ and a linear character $\lambda\in\Irr(1+H)$ such that $\chi=\lambda^{1+J}$.

\end{theorem}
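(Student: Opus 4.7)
The plan is to proceed by induction on $d = \dim_{\F_q} J$, with the trivial base case $J = 0$. In the inductive step, if $\chi$ is already linear, simply take $H = J$ and $\lambda = \chi$; the real work lies in the non-linear case. My strategy there is to produce a proper $\F_q$-subalgebra $J_0 \subsetneq J$ together with some $\chi_0 \in \Irr(1+J_0)$ satisfying $\chi = (\chi_0)^{1+J}$. Granted this, the inductive hypothesis applied to $J_0$ furnishes an $\F_q$-subalgebra $H \leq J_0$ and a linear $\lambda \in \Irr(1+H)$ with $\chi_0 = \lambda^{1+J_0}$, and transitivity of induction yields $\chi = \lambda^{1+J}$ as required.

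To build $J_0$, I would exploit the two-sided annihilator $A = \{x \in J : xJ = Jx = 0\}$. Because $J$ is finite dimensional and nilpotent, $A$ is a non-zero ideal contained in the associative centre $Z(J)$, so $1+A$ is a central subgroup of $1+J$. Restricting $\chi$ to it yields a single linear character, $\chi|_{1+A} = \chi(1)\,\mu$ with $\mu \in \Irr(1+A)$. If $\mu$ is trivial then $1+A \leq \ker\chi$, so $\chi$ descends to an irreducible character of the strictly smaller algebra group $1+(J/A)$ and the inductive hypothesis applies directly.

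The delicate case is $\mu$ non-trivial. Identifying $1+A$ with $(A,+)$ via $1+a \mapsto a$ turns $\mu$ into a non-zero $\F_q$-linear form $\bar\mu : A \to \F_q$. Fix any $\F_q$-linear extension $\tilde\mu : J \to \F_q$ of $\bar\mu$ and form the bilinear pairing $B(x,y) = \tilde\mu(xy - yx)$ on $J$; its radical contains $A$ (because $A \subseteq Z(J)$), and, modulo $A$, $B$ is independent of the chosen extension. A linear character $\lambda$ of $1+J_0$ (for a subalgebra $A \subseteq J_0$) extending $\mu$ corresponds, after fixing an additive character $\psi : \F_q \to \mathbb{C}^\times$ and writing $\lambda(1+x) = \psi(m(x))$, to an $\F_q$-linear $m : J_0 \to \F_q$ extending $\bar\mu$ and vanishing on $J_0^2$. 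I would therefore take $J_0$ to be the preimage under $J \to J/A$ of a maximal $B$-isotropic subspace of $J/A$; a Clifford--Mackey / stabiliser computation then shows that the stabiliser of $\lambda$ in $1+J$ is exactly $1+J_0$, whence $\lambda^{1+J}$ is irreducible, and a comparison of central characters on $1+A$ identifies it with $\chi$.

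The main obstacle is that the natural polarisation procedure delivers only a totally isotropic $\F_q$-subspace of $J/A$, not necessarily an associative subalgebra of $J$. This is precisely where the algebra-group setting departs from Kazhdan's Lie-theoretic approach, in which a Lie-polarising subspace suffices because $\exp$ and $\log$ identify the group with the Lie algebra under the hypothesis $n \leq p$. Overcoming it demands an inner inductive refinement: roughly, pick an element $z \in A$ with $\bar\mu(z) \neq 0$, peel off a two-dimensional ``hyperbolic plane'' it spans inside $J$ with respect to $B$, and recurse on its $B$-orthogonal complement, leveraging the nilpotence of $J$ at each step to keep the resulting $J_0$ closed under multiplication. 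Carrying this refinement through so that $J_0$ is an honest $\F_q$-subalgebra of codimension at least one in $J$, still carrying a polarising linear character compatible with $\chi$, constitutes the technical heart of the argument.
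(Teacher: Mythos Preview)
The paper does not give its own proof of this theorem; it is quoted from \cite{Halasi} as background, and the only related argument in the text is the special case $J^p=0$, handled via the orbit method in Theorem~\ref{th:mainKOM} together with Proposition~\ref{prop:maxisosubalgebra}. So there is no in-paper proof to compare against, and what follows is an assessment of your sketch on its own terms.

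Your inductive scaffolding (reduce to a proper subalgebra $J_0$ carrying a constituent that induces back to $\chi$, then recurse) is the right shape and is how Halasi proceeds. But two steps are not justified. First, ``comparison of central characters on $1+A$'' cannot identify $\lambda^{1+J}$ with $\chi$: many irreducibles of $1+J$ lie over the same $\mu\in\Irr(1+A)$, so you need a genuine Clifford/inertia argument for a constituent of $\chi|_{1+J_0}$, not merely for $\mu$. Second, and more seriously, your construction of the linear character requires an $\F_q$-linear $m:J_0\to\F_q$ extending $\bar\mu$ with $m(J_0^2)=0$, whereas the isotropy condition you impose on $J_0$ only says $\tilde\mu(xy-yx)=0$ for $x,y\in J_0$; vanishing on commutators does not force vanishing on products, so $\lambda(1+x)=\psi(m(x))$ need not be multiplicative. (Incidentally, the obstacle you highlight---that a maximal isotropic \emph{subspace} need not be a subalgebra---is actually surmountable: Proposition~\ref{prop:maxisosubalgebra} in this paper shows maximal isotropic \emph{subalgebras} always exist for forms of type $B_\lambda$. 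The genuine difficulty in going beyond $J^p=0$ is not that, but the failure of $1+h\mapsto\lambda(h)$ to be a homomorphism, which is exactly the gap above.) As written, the proposal locates the neighbourhood of the argument but does not close it.
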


Nevertheless, obtaining a explicit description of all irreducible characters of a given algebra group in terms of coadjoint orbits as in Theorem \ref{th:KazhdanSpringer} is not possible (see section \ref{sec:KOM}).
However it was suggested that obtaining the irreducible character degrees might be possible in the following way.
Note that in Theorem \ref{th:KazhdanSpringer}, for every $\chi\in\Irr(U_n(\Fq))$, $\chi(1)=|\Omega_{\lambda}|^{1/2}$ for some $\lambda\in\Irr(\mathfrak{u}_n(\Fq))$ and corresponding $U_n(\Fq)$-orbit $\Omega_\lambda$.
In the case of an algebra group $1+J$ we similarly have a $(1+J)$-action on both $J$ and $\widehat{J}:=\Irr((J,+))$.
Consider the list of integers obtained by taking the square roots of the sizes of
the conjugation orbits of $1+J$ on $\Irr(J)$.
The sum of these numbers is $|J|=|1+J|$ and the length of the list is the number of conjugacy classes of $1+J$.
In other words, this list of square roots of orbit sizes resembles the list of degrees of the irreducible characters of $1+J$.
In an unpublished set of notes circulated
by Isaacs in 1997, these numbers were called the “fake character degrees” of the algebra group $1+J$.
It was suggested that perhaps they are always the actual irreducible character degrees.
This has been later known as the Fake Degree Conjecture.

\begin{conjecture}[Fake Degree Conjecture]\label{con:fd}
For every prime power $q$ and every $\Fq$-algebra group $1+J$ the irreducible character degrees of $1+J$ coincide, counting multiplicities, with the square root of the cardinalities of the $(1+J)$-orbits in $\Irr(J)$.
\end{conjecture}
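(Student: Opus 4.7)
The plan is to attempt to generalize Kirillov's Orbit Method (as used in Theorem~\ref{th:KazhdanSpringer} under the hypothesis $J^p=0$) to an arbitrary finite nilpotent $\F_q$-algebra $J$. First I would verify the two obvious numerical necessities: the number of $(1+J)$-orbits in $\Irr(J)$ must equal the number of conjugacy classes of $1+J$, and the sum of squared fake degrees must equal $|J|=|1+J|=\sum_{\chi\in\Irr(1+J)}\chi(1)^2$. The first identity follows from Burnside's lemma applied to the conjugation action of $1+J$ both on itself and on $\Irr(J)$ (via the induced action on $J$), and the second is immediate from orbit-stabilizer. These are sanity checks, not a proof, but they guarantee that the multisets of actual degrees and fake degrees at least have the same total count and the same Plancherel norm.

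The substantive step is to construct, for each orbit $\Omega\subseteq\Irr(J)$, an irreducible character of $1+J$ of degree $|\Omega|^{1/2}$. Following Halasi's Theorem~\ref{th:Halasi}, I would pick $\lambda\in\Omega$ and seek an $\F_q$-subalgebra $H\leq J$ that is a ``polarization'' of $\lambda$ (i.e., the stabilizer of $\lambda$ in $1+J$ is $1+H$, and $\lambda$ extends to a linear character $\widetilde\lambda$ of $1+H$); I would then show that the induced character $\widetilde\lambda^{1+J}$ is irreducible. Orbit-stabilizer combined with $|1+J:1+H|^{2}=|\Omega|$ would give the correct degree. To conclude, the assignment $\Omega\mapsto\widetilde\lambda^{1+J}$ would need to be well-defined (independent of the choices of $\lambda\in\Omega$, of the polarization $H$, and of the linear extension $\widetilde\lambda$) and bijective onto $\Irr(1+J)$.

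The main obstacle I anticipate is precisely the well-definedness and bijectivity of this last assignment. When $J^p=0$ the exponential/logarithm correspondence makes polarizations and linear extensions essentially canonical, which is what allows Kazhdan's argument to go through. For general $J$ no such canonical choice is available: distinct extensions $\widetilde\lambda$ of the same $\lambda$ to $1+H$ can a priori yield non-isomorphic induced representations, and one could imagine two different orbits inducing the same irreducible character. A decisive test is provided by the degree-one part of the conjecture, since the number of fake degrees equal to $1$ is $|J/[J,1+J]|$, while the number of linear characters of $1+J$ is $|(1+J)_{\ab}|$. Hence I would concentrate on computing $(1+J)_{\ab}$ for well-chosen families of algebra groups, most naturally the groups $1+\I_{\F_q}$ coming from the augmentation ideal of the group algebra $\F_q[\pi]$ of a $p$-group $\pi$, where the abelianization can plausibly be expressed via classical invariants of $\pi$. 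If for some $\pi$ the order of $(1+\I_{\F_q})_{\ab}$ turns out to exceed the predicted value $q^{\kk(\pi)-1}$, the Fake Degree Conjecture is refuted; conversely, identifying the nature of the discrepancy (for instance a cohomological obstruction such as the Bogomolov multiplier $B_0(\pi)$) would explain precisely why Kirillov-type methods cease to govern the character theory of $1+J$ once one leaves the range $J^p=0$.
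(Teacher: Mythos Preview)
The statement is a conjecture that the paper \emph{refutes}, and your proposal correctly anticipates this: you isolate the decisive degree-one test (comparing $|(1+\I_{\Fq})_{\ab}|$ with the predicted $q^{\kk(\pi)-1}$) and even foresee the Bogomolov-multiplier obstruction that Theorem~\ref{th:sizeequality} ultimately confirms. Your route, however, differs from the paper's \emph{first} disproof in Section~\ref{sec:disproof}, which is elementary and self-contained: one fixes the free $2$-step nilpotent group $\pi$ of exponent $p$ on four generators, filters $1+\I_{\Fp}$ by the subgroups $A_i=1+(z-1)^i\Fp[\pi]$ with $z=[x_1,x_2][x_3,x_4]$, and---\emph{assuming} the conjecture holds for the quotient $\widetilde\pi=\pi/\langle z\rangle$---derives the bound $|(1+\I_{\Fp})_{\ab}|\le p^{\kk(\pi)-2}$, contradicting the conjecture for $\pi$ itself. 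Your direct-computation strategy instead matches the paper's second, structural disproof via Theorem~\ref{th:sizeequality}, but that route is not soft: it rests on Oliver's $K$-theoretic identification $\SK_1(\R_q[\pi])\cong\B_0(\pi)$ and a careful analysis of the logarithm map on $\Wh'(\R_q[\pi])$, machinery your proposal gestures at but does not supply.

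One technical slip in your polarization sketch: $1+H$ should correspond to a \emph{maximal isotropic} subalgebra for $B_\lambda$ (Proposition~\ref{prop:maxisosubalgebra}), not to the full stabilizer $1+\Rad B_\lambda$; the degree identity $|1+J:1+H|^2=|\Omega|$ then follows from Lemma~\ref{lm:bilinear_form_stabilizer}.\ref{e-radical_index} rather than from orbit--stabilizer alone.
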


Note that an immediate corollary of this conjecture (see Lemma
\ref{lm:abelianization}) is that the orders of $[J,J]_L$ and $[1+J,1+J ]$ have
to be equal (we write $[a ,b]_L=ab-ba$ for Lie brackets when necessary to avoid confusion with group commutators).
Thus in order to understand Conjecture \ref{con:fd} one should first answer the following question.

\begin{question}
\label{q:abel}
Is it true that the size of $(1+J)_{\ab}$, the abelianization of $1+J$, coincides with the index of $[J,J]_L$ in $J$?
\end{question}

In \cite{Ja1} Jaikin gave an explicit counterexample to the Fake Degree Conjecture by constructing an $\FF_2$-algebra group that gives a negative answer to the above question.
However, this approach was not sufficient to disprove the conjecture for odd characteristics.
The particular behaviour of the prime $p=2$ regarding character correspondences and computations for odd $p$ suggested that the Fake Degree Conjecture might hold for algebra groups defined over fields of odd characteristic.

This was our motivation for looking at the following family of
examples. Let $\pi$ be a finite $p$-group. Given a ring $R$ we will set $\I_{R}$
to be the augmentation ideal of the group ring $R[\pi]$. If we take $R=\Fq$,
then $\I_{\Fq}$ is a nilpotent algebra and  $1 + \I_{\FF_q}$ is the group of
normalized units of the modular group ring $\Fq[\pi]$. It is not difficult to
see that the index of $[\I_{\FF_q}, \I_{\FF_q}]_L$ in $\I_{\FF_q}$ is equal to
$q^{\kk(\pi)-1}$, where $\kk(\pi)$ is the number of conjugacy classes of $\pi$
(see Lemma \ref{lm:FD_ab_lie_conjugacy_classes}).
In section \ref{sec:disproof} we give a disproof for the Fake Degree Conjecture by showing that the aforementioned question cannot have a positive answer for every $\Fq$-algebra group.
The disproof points out to a possible relation between $\kk(\pi)$ and $(1+\I_{\Fq})_{ab}$.
Our next result describes this relation by determining the size of $(1+\I_{\FF_q})_{\ab}$, the abelianization of $1 + \I_{\FF_q}$.

\begin{theorem}\label{th:sizeequality}
Let $\pi$ be a finite $p$-group. Then
\[
|(1 + \I_{\FF_q})_{\ab}| = q^{\kk(\pi) - 1} |\!\B_0(\pi)|.
\]
\end{theorem}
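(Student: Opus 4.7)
The plan is to realize $1+\I_{\Fq}$ as the $\Fq$-rational points of an affine algebraic $\Fq$-group $G$ and to split the abelianization, via Galois cohomology, into a ``geometric'' factor and an ``arithmetic'' (Bogomolov) factor.

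First, I would set up the affine algebraic $\Fq$-group $G$ defined by $G(R):=1+\I_{\Fq}\otimes_{\Fq}R$ for every $\Fq$-algebra $R$. Since $\I_{\Fq}$ is a nilpotent associative algebra, $G$ is smooth, connected and unipotent, with Lie algebra $\Lie(G)=(\I_{\Fq},[\cdot,\cdot]_L)$. By Theorem \ref{t:algebraicgroup} the algebraic commutator subgroup $[G,G]$ is defined over $\Fq$, and taking $\Fq$-points of the derived-subgroup short exact sequence yields
\[
1\to [G,G](\Fq)/[G(\Fq),G(\Fq)]\to (1+\I_{\Fq})_{\ab}\to G(\Fq)/[G,G](\Fq)\to 1,
\]
where the same theorem identifies the kernel with $\B_0(\pi)$.

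Second, I would evaluate the right-hand quotient. Since $[G,G]$ is a connected algebraic group defined over the finite field $\Fq$, Lang's theorem gives $H^1(\Gal(\overline{\Fq}/\Fq),[G,G])=0$, hence $G(\Fq)/[G,G](\Fq)\cong (G/[G,G])(\Fq)$; the quotient $G/[G,G]$ is a connected commutative unipotent $\Fq$-group, so its $\Fq$-points have cardinality $q^{\dim G/[G,G]}$. The dimension count then reduces to showing $\Lie([G,G])=[\I_{\Fq},\I_{\Fq}]_L$, which I would verify using the explicit algebra-group commutator formula
\[
[1+a,1+b]-1=[a,b]_L\,(1+b)^{-1}(1+a)^{-1},
\]
combined with the facts that $[\I_{\Fq},\I_{\Fq}]_L$ is an $\Ad(G)$-stable Lie ideal (Jacobi) and is preserved by the restricted $p$-operation $a\mapsto a^p$ on $\Lie(G)$ (Jacobson's formula together with the identity $[x,y(xy)^{p-1}]_L=(xy)^p-(yx)^p$). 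Combining this with the standard computation $\dim \I_{\Fq}/[\I_{\Fq},\I_{\Fq}]_L=\kk(\pi)-1$ (from Lemma \ref{lm:FD_ab_lie_conjugacy_classes}, i.e.\ that $\Fq$-linear trace forms on $\Fq[\pi]$ are exactly the class functions on $\pi$) one obtains $|G(\Fq)/[G,G](\Fq)|=q^{\kk(\pi)-1}$, and multiplying with the Bogomolov factor yields the claimed formula.

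The main obstacle is the identification $\Lie([G,G])=[\I_{\Fq},\I_{\Fq}]_L$ in positive characteristic: in characteristic zero it is automatic via $\exp/\log$, whereas here the restricted Lie structure forces one to exploit the algebra-group structure explicitly. A further delicate point is the proof of Theorem \ref{t:algebraicgroup} itself, which I expect to run in parallel: one identifies $[G,G](\Fq)/[G(\Fq),G(\Fq)]$ with a Galois-cohomology obstruction attached to $[G,G]$ and then matches the latter with $\B_0(\pi)$ through the description of the Bogomolov multiplier as the quotient of the Schur multiplier by commuting pairs.
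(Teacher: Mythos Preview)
Your reduction is clean, but it reverses the logical order of the paper and leaves the real content unproved. In the paper, Theorem~\ref{th:sizeequality} is obtained from the exact sequence of Theorem~\ref{th:exactseq}, whose proof rests on Oliver's $K$-theoretic computation $\SK_1(\R_q[\pi])\cong\B_0(\pi)$; Theorem~\ref{t:algebraicgroup} is then \emph{derived} from that same exact sequence (inequality~\eqref{manyelements} comes from the size of $\M_q$, and the isomorphism $G'(\Fq)/G(\Fq)'\cong\B_0(\pi)$ is read off from the transfer argument on $\SK_1$). Invoking Theorem~\ref{t:algebraicgroup} to prove Theorem~\ref{th:sizeequality} is therefore circular unless you supply an independent proof of both of its parts, and that is precisely where your proposal becomes vague.

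Two specific gaps. First, $\Ad$-stability and $p$-stability of $[\I_{\Fq},\I_{\Fq}]_L$ do not by themselves force $\Lie([G,G])=[\I_{\Fq},\I_{\Fq}]_L$: there exist connected unipotent groups in characteristic $p$ (e.g.\ $(a,b,c)\cdot(a',b',c')=(a+a',b+b',c+c'+a^{p}b')$) with $[\Lie G,\Lie G]=0$ but $[G,G]$ one-dimensional, and nothing in your sketch isolates what is special about algebra groups here. The paper closes this gap the other way round, using $|G(\Fq)/G'(\Fq)|\geq|\M_q|=q^{\kk(\pi)-1}$ from the $K$-theory together with the general bound $\dim G'\geq\dim[L_G,L_G]_L$. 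Second, and more seriously, the identification $G'(\Fq)/G(\Fq)'\cong\B_0(\pi)$ is not a standard Galois-cohomology obstruction: the subgroup $G(\Fq)'$ is not the set of $\Fq$-points of any algebraic subgroup, so there is no long exact sequence to plug into. In the paper this isomorphism is the content of Oliver's theorem, transported through $\K_1(\R_q[\pi])\to\K_1(\Fq[\pi])$. Your ``matching with the Bogomolov multiplier via commuting pairs'' is exactly the hard step, and you have not indicated any mechanism for it that bypasses $\SK_1$.
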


The group $\B_0(\pi)$ that appears in the theorem is the {\em Bogomolov
multiplier} of $\pi$. It is defined as the subgroup of the Schur multiplier
$\HH^2(\pi,\QQ/\ZZ)$ of $\pi$ consisting of the cohomology classes vanishing
after restriction to all abelian subgroups of $\pi$.  The Bogomolov multiplier
plays an important role in birational geometry of quotient spaces $V/\pi$ as it
was shown by Bogomolov in \cite{Bo}. In a dual manner, one may view the group
$\B_0(\pi)$ as an appropriate quotient of the homological Schur multiplier
$\HH_2(\pi,\ZZ)$, see \cite{Mor12}. We were surprised to discover that, in this
form, the Bogomolov multiplier had appeared in the literature much earlier in a
paper of W. D. Neumann \cite{Ne}, as well as in the paper of R. Oliver
\cite{Oli80} that plays an essential role in our proofs. The latter paper
contains various results about Bogomolov multipliers that were only subsequently
proved in the cohomological framework.

There are plenty of finite $p$-groups with nontrivial Bogomolov multipliers
(see for example \cite{Kan14}). Thus we obtain explicit counterexamples to the Fake
Degree Conjecture for all primes.

Our next result provides a conceptual explanation for the equality in Theorem \ref{th:sizeequality}.
Note that multiplication and inversion in $1+\I_{\F_q}$ extend to $1+\I_{\F_q}\otimes R$ for every $\F_q$-algebra $R$.
Hence we can associate to $(1+\I_{\F_q})$ an algebraic group $G$ defined over $\F_q$.
It is clear that $G$ is a unipotent group.
A direct calculation shows that  the $\F_q$-Lie algebra $L_G$ of $G$ is isomorphic to $\I_{\F_q}$.
The derived subgroup $G'$ of $G$ is also a unipotent algebraic group defined over $\F_q$ (\cite[Corollary I.2.3]{Bor}), and so by
\cite[Remark A.3]{KMT74}, $|G^\prime(\F_q)|=q^{\dim G^\prime}$.
Note that in general we have only an inclusion $$(1+\I_{\FF_q})^\prime=
G(\Fq)^\prime \subseteq G^\prime(\Fq),$$ but not the equality.

\begin{theorem} 
\label{t:algebraicgroup} Let $\pi$ be a finite $p$-group and $G$ the associated algebraic $\F_q$-group as above. 
\begin{enumerate}
\item
We have
\[
\dim G^\prime=\dim_{\F_q}[L_G,L_G]_L=|\pi|-\kk(\pi).
\]
In particular,
\[
|G(\FF_q): G^\prime(\FF_q)|=q^{\kk(\pi)-1}.
\]

\item  For every $q=p^{n}$, we have
\[
G'(\Fq)/G(\Fq)'\cong \B_{0}(\pi).
\]
\end{enumerate}

\end{theorem}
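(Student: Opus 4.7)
The plan is to establish the two parts in tandem, with part (2) doing the conceptual heavy lifting and part (1) then following by a cardinality argument.

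\textbf{Part (1).} The first step is to identify $L_G$ with $\I_{\F_q}$ endowed with the associative commutator bracket $[x,y]_L = xy-yx$: this is forced by the multiplication law $(1+x)(1+y) = 1+x+y+xy$, whose bilinear term is the ring product in $\I_{\F_q}$. Next, one computes $\dim_{\F_q}[L_G,L_G]_L$: since $\F_q\cdot 1$ is central in $\F_q[\pi]$ one has $[\I_{\F_q}, \I_{\F_q}]_L = [\F_q[\pi], \F_q[\pi]]_L$, and the latter is the kernel of the class-trace map $\tau\colon \F_q[\pi] \to \F_q^{\kk(\pi)}$ that records the tuple of conjugacy-class coefficient sums, so $\dim [L_G,L_G]_L = |\pi|-\kk(\pi)$. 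The standard inclusion $[L_G,L_G]_L \subseteq L_{G'}$ is detected by second-order infinitesimals, since a direct calculation in $G(\F_q[\epsilon]/\epsilon^3)$ gives $[1+\epsilon x,\,1+\epsilon y] = 1 + \epsilon^{2}[x,y]_L$, and this already yields $\dim G' \geq |\pi| - \kk(\pi)$. For the reverse inequality and the cardinality statement $|G(\F_q):G'(\F_q)| = q^{\kk(\pi)-1}$, I would apply Lang--Steinberg to the connected unipotent $\F_q$-group $G/G'$ to get an exact sequence $1 \to G'(\F_q) \to G(\F_q) \to (G/G')(\F_q) \to 1$ with $|(G/G')(\F_q)| = q^{\dim G - \dim G'}$; combining with Theorem \ref{th:sizeequality} and part (2) then yields
\[
q^{\dim G - \dim G'} = \frac{|G(\F_q)/G(\F_q)'|}{|G'(\F_q)/G(\F_q)'|} = \frac{q^{\kk(\pi)-1}|\B_0(\pi)|}{|\B_0(\pi)|} = q^{\kk(\pi)-1},
\]
forcing $\dim G' = |\pi| - \kk(\pi)$.

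\textbf{Part (2).} Since $G'$ is a connected unipotent $\F_q$-subgroup of $G$, $|G'(\F_q)| = q^{\dim G'}$ by the standard point-count (Remark A.3 of \cite{KMT74}, as cited in the excerpt), whereas $G(\F_q)'$ is the abstract commutator subgroup of the group $1+\I_{\F_q}$ of normalised units of $\F_q[\pi]$. The strategy is to realise $G'(\F_q)/G(\F_q)'$ as the Bogomolov multiplier through Oliver's computation in \cite{Oli80} (which is, in present-day language, a computation of the relevant $SK_1$-type invariant of the modular group ring in terms of $\HH_2(\pi,\mathbb{Z})$ modulo contributions of abelian subgroups). Using a free presentation $1 \to R \to F \to \pi \to 1$ and the Hopf-formula description of $\B_0(\pi)$ as $(R\cap [F,F])$ modulo the subgroup generated by $[F,R]$ together with the commutators $[u,v]$ of preimages of commuting pairs in $\pi$, I would construct a natural homomorphism $\B_0(\pi) \to G'(\F_q)/G(\F_q)'$ by lifting commutators from $F$ to the unit group of $\F_q[\pi]$; the generators of the denominator become trivial in $G(\F_q)'$ by direct computation, so the map is well defined. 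Surjectivity follows because $G'(\F_q)/G(\F_q)'$ is generated by images of such algebraic-group commutators, and injectivity follows from a cardinality comparison with Theorem \ref{th:sizeequality}, which forces $|G(\F_q)/G(\F_q)'| = q^{\kk(\pi)-1}|\B_0(\pi)|$.

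\textbf{Main obstacle.} The principal difficulty is to set up the map $\B_0(\pi) \to G'(\F_q)/G(\F_q)'$ intrinsically and to verify its well-definedness: one must show that algebraic-group commutators arising from pairs of preimages of elements commuting in $\pi$ actually lie in $G(\F_q)'$, thereby matching the defining relations of $\B_0(\pi)$ inside $\HH_2(\pi,\mathbb{Z})$. Translating Oliver's $K$-theoretic framework into the concrete algebraic-group language of the present setting is what makes this step nontrivial; once it is in place, the remaining ingredients (Lang--Steinberg, Theorem \ref{th:sizeequality}, connectedness of $G'$) assemble into the proof essentially by bookkeeping.
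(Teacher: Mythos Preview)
Your handling of part (1) is broadly in line with the paper's: both arguments pin $\dim G'$ between $\dim_{\F_q}[L_G,L_G]_L = |\pi|-\kk(\pi)$ from below (the paper cites \cite[Cor.\ 10.5]{Hum75}, your infinitesimal computation is the same content) and use a point count on the connected unipotent group to bound it from above. The paper gets the upper bound directly from $|G(\F_q)/G'(\F_q)| \geq |\M_q| = q^{\kk(\pi)-1}$, whereas you route it through part (2); either way the bookkeeping closes.

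The real issue is your proposed construction in part (2). Lifting commutators from a free presentation $F \twoheadrightarrow \pi$ through $\pi \hookrightarrow 1+\I_{\F_q} = G(\F_q)$ sends every commutator word in $F$ into $G(\F_q)'$, so the induced map $\B_0(\pi) \to G'(\F_q)/G(\F_q)'$ is identically zero. Your surjectivity claim is then vacuous, and nothing remains. The sentence ``$G'(\F_q)/G(\F_q)'$ is generated by images of such algebraic-group commutators'' is also not right as stated: elements of $G'(\F_q)$ are products of commutators over the algebraic closure, not over $\F_q$, and there is no evident way to attach an element of $\B_0(\pi)$ to such a product. You correctly flag this as the main obstacle, but your sketch does not surmount it.

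The paper's route to part (2) is entirely different and avoids constructing any direct map. One passes to a finite extension $\F_l/\F_q$ of degree $m$; for $m$ large enough one has $G'(\F_q) = G(\F_q)\cap G(\F_l)'$, so $G'(\F_q)/G(\F_q)'$ is exactly the kernel of the base-change map $f\colon G(\F_q)_{\ab}\to G(\F_l)_{\ab}$. This kernel is then analysed by comparing the two instances of the exact sequence of Theorem \ref{th:exactseq} (for $\F_q$ and for $\F_l$). One shows that the induced map $\M_q \to \M_l$ is injective, forcing $\ker f \subseteq \SK_1(\F_q[\pi])$; the reverse inclusion, and hence equality $\ker f = \SK_1(\F_q[\pi]) \cong \B_0(\pi)$, comes from Oliver's result that the transfer map is an isomorphism on $\SK_1$, together with the composite $\trf\circ\incl = (\,\cdot\,)^m$, once $m$ is chosen divisible by $\exp\B_0(\pi)$. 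So the identification with $\B_0(\pi)$ is not obtained by a Hopf-formula map at all, but by trapping $G'(\F_q)/G(\F_q)'$ between two copies of $\SK_1$ via base change and transfer.
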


Our hope is that the second statement of the theorem would help better
understand the structure of the Bogomolov multiplier. As an example of this
reasoning, recall that a classical problem about the Schur multiplier asks what
is the relation between the exponent of a finite group and of its Schur
multiplier (\cite{Sch04}). Standard arguments reduce this question to the case
of $p$-groups.  It is known that the exponent of the Schur multiplier is bounded
by some function that depends only on the exponent of the group (\cite{Mor07}),
but this bound  is obtained from  the bounds that appear in the solution of the
Restricted Burnside Problem and so it is probably very far from being optimal.
Applying the homological description of the Bogomolov multiplier,  it is not
difficult to see that the exponent of the Schur multiplier is at most the
product of the exponent of the group by the exponent of the Bogomolov
multiplier. Thus, we hope that the following theorem would help obtain a
better bound on the exponent of the Schur multiplier.

 \begin{theorem}\label{exponent}
Let $\pi$ be a finite $p$-group and $G$ the associated algebraic $\F_p$-group associated to $1+\I_{\F_p}$ as above.
For every $q=p^{n}$, we have
\[
\exp \B_0(\pi) = \min \{ m \mid G^\prime (\FF_q) \subseteq G(\FF_{ q^{m} })^\prime \}.
\]
\end{theorem}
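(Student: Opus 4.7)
My plan is to reduce Theorem \ref{exponent} to the scalar-extension behaviour of the isomorphism in Theorem \ref{t:algebraicgroup}(2). Applying that theorem both to $\F_q$ and to $\F_{q^m}$ yields isomorphisms
\[
\alpha_q : G'(\F_q)/G(\F_q)' \xrightarrow{\sim} \B_0(\pi), \qquad \alpha_{q^m} : G'(\F_{q^m})/G(\F_{q^m})' \xrightarrow{\sim} \B_0(\pi).
\]
The inclusion $G(\F_q)\hookrightarrow G(\F_{q^m})$ carries commutators to commutators and $G'(\F_q)\subseteq G'(\F_{q^m})$, so there is a natural induced homomorphism
\[
\iota_m : G'(\F_q)/G(\F_q)' \to G'(\F_{q^m})/G(\F_{q^m})',
\]
and evidently $G'(\F_q)\subseteq G(\F_{q^m})'$ holds if and only if $\iota_m=0$. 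Transporting through $\alpha_q$ and $\alpha_{q^m}$ turns $\iota_m$ into an endomorphism $\tilde\iota_m$ of $\B_0(\pi)$.

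The crux of the proof---and the main obstacle---is to show that $\tilde\iota_m$ is multiplication by $m$. Granted this, the theorem follows at once: $\B_0(\pi)$ is a finite abelian $p$-group (as a quotient of the Schur multiplier of the $p$-group $\pi$), so $\exp \B_0(\pi)=p^\ell$ for some $\ell\geq 0$. The condition $\tilde\iota_m=0$, i.e.\ $m\,\B_0(\pi)=0$, is then equivalent to $v_p(m)\geq \ell$, and the minimal positive $m$ satisfying this is precisely $p^\ell=\exp \B_0(\pi)$.

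To establish the multiplication-by-$m$ description, I would revisit the construction of $\alpha_q$ carried out in the proof of Theorem \ref{t:algebraicgroup}(2), which follows Oliver's strategy in \cite{Oli80} of realising both $G'(\F_q)/G(\F_q)'$ and $\B_0(\pi)$ as the cokernel of a common universal commutator map, built from the presentation of $\pi$ and functorial in the coefficient field. Under this identification, base change from $\F_q$ to $\F_{q^m}$ corresponds to summing elements over the $m$ Frobenius conjugates---equivalently, to the field-theoretic trace $\mathrm{Tr}_{\F_{q^m}/\F_q}$---and this trace acts on the Galois-invariant quotient $\B_0(\pi)$ as multiplication by the degree $m$. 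Making this naturality statement rigorous, and checking that $\alpha_q$ and $\alpha_{q^m}$ are compatible with the filtration used in the proof of Theorem \ref{t:algebraicgroup}(2), is where the bulk of the technical work would be concentrated.
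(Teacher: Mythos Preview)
Your overall framework is sound and matches the paper's: the condition $G'(\FF_q)\subseteq G(\FF_{q^m})'$ is equivalent to the vanishing of the map $\iota_m$ you describe, and once one knows that (after identifying source and target with $\B_0(\pi)$) this map vanishes precisely when $m$ annihilates $\B_0(\pi)$, the conclusion $\min\{m\}=\exp\B_0(\pi)$ is immediate.

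The genuine gap is in the step where you claim $\tilde\iota_m$ is multiplication by $m$. Your heuristic is backwards: the inclusion $\FF_q\hookrightarrow\FF_{q^m}$ is \emph{not} ``summing over Frobenius conjugates''; that description fits the trace (or, on the level of $\K_1$, the transfer), which goes in the \emph{opposite} direction. There is no reason for scalar extension alone to act as $[m]$ on a Galois-invariant object, and an appeal to ``functoriality in the coefficient field'' does not produce this.

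What the paper actually does is use the $\K$-theoretic transfer $\trf:\K_1(\FF_{q^m}[\pi])\to\K_1(\FF_q[\pi])$, which satisfies the elementary identity $\trf\circ\incl=[m]$ on $\K_1(\FF_q[\pi])$, together with Oliver's result \cite[Proposition~21]{Oli80} that $\trf$ restricted to $\SK_1$ is an \emph{isomorphism}. These two facts combined give $\incl|_{\SK_1(\FF_q[\pi])}=0$ if and only if $[m]$ kills $\SK_1(\FF_q[\pi])\cong\B_0(\pi)$, i.e.\ if and only if $\exp\B_0(\pi)\mid m$. (One also needs that $\ker f\subseteq\SK_1(\FF_q[\pi])$, which the paper obtains from the injectivity of the induced map $\M_q\to\M_{q^m}$ established in the proof of Theorem~\ref{t:algebraicgroup}.) Your proposal never mentions the transfer map, and your guess that the isomorphism of Theorem~\ref{t:algebraicgroup}(2) comes from a ``universal commutator map built from a presentation of $\pi$'' does not reflect the actual construction: Theorems~\ref{t:algebraicgroup} and~\ref{exponent} are proved simultaneously via the $\K$-theoretic exact sequence of Theorem~\ref{th:exactseq} and Oliver's transfer isomorphism. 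Without that specific input, the ``multiplication by $m$'' identification is unsupported.
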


\section{Algebra groups and Kirillov's Orbit Method}\label{sec:KOM}

Throughout this section we mainly follow \cite{Sa04}. Fix a prime $p$ and let $R$ be a commutative ring with unit and $\ch R=p$.
Let us consider a finite associative $R$-algebra $J$ and assume further that $J$ is nilpotent, i.e. $J^n=0$ for some $n\in\NN$.
We define the $R$-algebra group based on $J$ as \[1+J=\{1+j\ :\ j\in J\},\]
where group multiplication and inverse are given by
\begin{align*}
(1+j)(1+h)&=1+j+h+jh, \\ (1+j)^{-1}&=\sum_{k=0}^{n-1} (-1)^kj^k.
\end{align*}
Associativity of the algebra multiplication guarantees that the group operation is associative and thanks to the nilpotency of $J$ the inverse operation is well defined.
Note that $|J|=|1+J|$ and hence $1+J$ is a finite $p$-group.
Groups constructed in this manner are called $R$-algebra groups.

Let us note that given any finite associative $R$-algebra $A$, $1+\rad(A)$ is an $R$-algebra group.
From now on we will take $R=\Fq$ ($q$ being some $p$-power) and in this case $J$ becomes an $\Fq$-vector space.
Nevertheless, some of the results presented below hold for more general $R$. 
Let us present some examples of algebra groups that will be useful later on.

\begin{example}
Take $J=\mathfrak{u}_n(\Fq)$ be the strictly upper triangular matrices of size $n\times n$ with entries in $\Fq$.
Then $1+J=1+U_n(\Fq)$ is the unitriangular group of dimension $n$ over $\Fq$.
\end{example}

\begin{example}
Let $\pi$ be a finite $p$-group and consider the group algebra $\Fq[\pi]$.
Let us write $\I_{\Fq[\pi]}:=\ker \left(\Fq[\pi]\to \Fq\right)$ for the augmention ideal.
Then $\rad \Fq[\pi]=\I_{\Fq[\pi]}$ and $1+\I_{\Fq[\pi]}$ is an algebra group.
Note that we can describe the units of $\Fq[\pi]$ as $\left(\Fq[\pi]\right)^*\cong \Fq^*\times \left(1+\I_{\Fq[\pi]}\right)$.
For this reason $1+\I_{\Fq[\pi]}$ is called the group of normalized units of $\Fq[\pi]$.
\end{example}

As explained in the introduction Isaacs introduced this class of groups motivated by Conjecture \ref{conj:th} on unitriangular groups. Isaac's Theorem \ref{th:Isaaqdeg} can be seen as a generalization of Kazhdan's results on unitriangular groups (Theorem \ref{th:KazhdanSpringer}).
Kazhdan's work followed the spirit of Kirillov's Orbit Method.
Kirillov presented in \cite{Kirillov} a method to describe the irreducible unitary representations of a connected simply connected nilpotent Lie group $N$.
If we denote by $\mathfrak{n}$ the Lie algebra of the Lie group $N$, the adjoint action on $\mathfrak{n}$ induces an $N$-action on $\hat{\mathfrak{n}}$, the unitary dual of $(\mathfrak{n},+)$.
Kirillov established a correspondence between $N$-orbits in $\hat{\mathfrak{n}}$ and irreducible unitary representations of $N$ (up to isomorphism).
Kirillov's Orbit Method has been applied to several class of groups, among others $p$-groups and pro-$p$ groups (\cite{Jon}, \cite{Howe} and \cite{Ja2}), finitely generated nilpotent groups (\cite{Voll}) and arithmetic groups (\cite{AKOV3}), see \cite{MeritsKOM} for a general discussion.
We explain how to adapt Kirillov's Orbit Method to the class of $\F_q$-algebra groups and show that it works best for groups of nilpotency class strictly less than $p$.

Let $J$ be as above and $1+J$ the associated $\F_q$-algebra group.
There is a natural action of $1+J$ on $J$ given by \[a^{(1+b)}=(1+b)^{-1}a(1+b)\quad \mbox{for every}\ a,b\in J.\]
Note that this action is equivalent to the conjugation action of $1+J$ on itself, the equivalence being given by $a\mapsto 1+a$.
The $(1+J)$-action on $J$, induces a $(1+J)$-action on $\widehat{J}:=\Irr(J,+)$, called the coadjoint action, given by
\[\lambda^{1+b}(a)=\lambda(a^{(1+b)^{-1}})\ \mbox{ for every}\ \lambda\in \Irr(J)\quad \mbox{and every}\ a,b\in J.\]

Let us write $\Fun (1+J)^{1+J}$ for the set of complex functions on $1+J$ that are constant on conjugacy classes.
From the above setting one can immediately construct an assignment 
\[
\begin{array}{ccc}
(1+J)\textrm{-orbits in}\ \Irr(J)&\to &\Fun (1+J)^{1+J}\\
\Omega &\mapsto & \chi_\Omega(1+j):= |\Omega|^{-1/2}\sum_{\lambda\in\Omega} \lambda(j), 
\end{array}
\] 
which associates to each coadjoint orbit $\Omega$ a class function of $1+J$.
The following lemma states some remarkable properties of the functions $\chi_\Omega$.
 
\begin{lemma} \label{lm:coadorbits}
Let $J$ be as above and $1+J$ the associated algebra group.
\begin{enumerate}[label=\roman*)]
\item	\label{itm1:lm:coadorbits} The number of $(1+J)$-orbits in $J$ and in $\Irr(J)$ equals $\kk(1+J)$, the number of conjugacy classes of $1+J$.
\item	\label{itm2:lm:coadorbits} Let $\Omega_1$ and $\Omega_2$ be two $(1+J)$-orbits in $\Irr(J)$.
Then 
\[
\langle\chi_{\Omega_1},\chi_{\Omega_2}\rangle=
	\begin{cases}
									1\quad &\mbox{if}\ \Omega_1=\Omega_2,\\
									0\ &\mbox{otherwise}.
	\end{cases}
\]
				\end{enumerate}

\end{lemma}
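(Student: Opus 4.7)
The plan is to handle the two parts separately, with the second providing a clean computation that also clarifies the normalization factor $|\Omega|^{-1/2}$.

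For part \ref{itm1:lm:coadorbits}, I would first note that the map $a \mapsto 1+a$ from $J$ to $1+J$ is a bijection, and it is $(1+J)$-equivariant: the twisted action $a \mapsto (1+b)^{-1}a(1+b)$ on $J$ corresponds exactly to conjugation on $1+J$. Hence the number of $(1+J)$-orbits on $J$ equals $\kk(1+J)$. To pass from orbits on $J$ to orbits on $\Irr(J)$, I would verify that conjugation on $J$ is additive, i.e.\ $(a_1+a_2)^{1+b}=a_1^{1+b}+a_2^{1+b}$, so the coadjoint action of $1+J$ on the abelian group $(J,+)$ is by group automorphisms. I can then invoke the standard character-theoretic fact (Brauer's permutation lemma) that for a group $G$ acting by automorphisms on a finite abelian group $A$, the number of $G$-orbits on $A$ equals the number of $G$-orbits on its Pontryagin dual. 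This gives the desired equality of the three numbers.

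For part \ref{itm2:lm:coadorbits}, I would compute the inner product directly. Using $|1+J|=|J|$ and the definition of $\chi_\Omega$, one has
\[
\langle \chi_{\Omega_1},\chi_{\Omega_2}\rangle=\frac{1}{|J|\,|\Omega_1|^{1/2}|\Omega_2|^{1/2}}\sum_{j\in J}\sum_{\lambda\in\Omega_1}\sum_{\mu\in\Omega_2}\lambda(j)\overline{\mu(j)}.
\]
Swapping the order of summation and applying the orthogonality relations for irreducible characters of the finite abelian group $(J,+)$, namely $\sum_{j\in J}\lambda(j)\overline{\mu(j)}=|J|\,\delta_{\lambda,\mu}$, the inner product collapses to $|\Omega_1\cap\Omega_2|\,/\,(|\Omega_1|^{1/2}|\Omega_2|^{1/2})$. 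Since distinct $(1+J)$-orbits on $\Irr(J)$ are disjoint, this yields $1$ when $\Omega_1=\Omega_2$ and $0$ otherwise, completing the proof.

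Neither step is really the hard part; the computation in \ref{itm2:lm:coadorbits} is routine once one has set up the characters of $(J,+)$ correctly, and the only substantive ingredient in \ref{itm1:lm:coadorbits} is Brauer's permutation lemma. The one point that merits care is checking that the twisted action of $1+J$ on $J$ is indeed additive (so that $\Irr(J)$ carries a genuine $(1+J)$-set structure compatible with its group structure), which is a one-line verification using distributivity in the algebra $J$.
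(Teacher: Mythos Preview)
Your proof is correct and matches the paper's approach closely. For part \ref{itm2:lm:coadorbits} you give exactly the same orthogonality computation as the paper; for part \ref{itm1:lm:coadorbits} the paper argues directly via Burnside's lemma and a fixed-point count (showing each $g\in 1+J$ fixes the same number of elements in $J$ as in $\Irr(J)$ using the $\F_q$-linear map $j\mapsto j^g-j$), which amounts to proving from scratch the special case of Brauer's permutation lemma that you invoke as a black box.
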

\begin{proof}
\ref{itm1:lm:coadorbits}
It was pointed out above that the $(1+J)$-action on $J$ is equivalent to the conjugation action of $1+J$ on itself.
Hence the number of $(1+J)$-orbits in $J$ equals $\kk(1+J)$.
Now by the well known orbit-counting formula often
known as Burnside's Lemma, the number of orbits of a $(1+J)$-action equals the average number of fixed points of the elements of $1+J$ in this action.
Hence it suffices to show that an element $g\in 1+J$ has the same number of fixed points in $J$ and in $\Irr(J)$.

Given $g\in 1+J$, note that the action of $g$ on $J$ is $\F_q$-linear, hence $f_g: J\to J$, $j\mapsto j^g-j$, is an $\F_q$-linear homomorphism.
Now $g$ fixes $\lambda\in\hat{J}$ if and only if $\lambda(j^g-j)=0$ for every $j\in J$, i.e., if and only if $\im f_g\subseteq\ker\lambda$.
Since 
$\im f_g$ is clearly a subgroup of $(J,+)$, the number of characters $\lambda\in\Irr(J)$ vanishing in $\im f_g$ equals $|J/\im f_g|$.
On the other hand, note that  $j\in J$ is fixed by $g$ if and only if $j\in\ker f_g$. 
Clearly $|\ker f_g|=|J/\im f_g|$ and so the number of fixed points of $g$ in $J$ and $\Irr(J)$ coincide.

\ref{itm2:lm:coadorbits}
We claim that for every $\lambda,\phi\in\Irr(J)$, $\langle\lambda,\phi\rangle=1$ if $\lambda=\phi$ and $\langle\lambda,\phi\rangle=0$ otherwise.
Indeed, note that every $\chi_{\Omega}$ is the sum of the distinct $\lambda\in\Omega$ and the summands are disjoint for distinct orbits $\Omega$, so orthogonality of irreducible characters proves the claim. The result is now clear.
\end{proof}

It follows from Lemma \ref{lm:coadorbits} that $\{\chi_\Omega:\Omega\ \textrm{is a}\ (1+J)\textrm{-orbit}\}$ is a set of $\kk(1+J)$ orthonormal functions in $\Fun(1+J)^{1+J}$.
It is tempting to think that this could be the set of irreducible characters of $1+J$. 
However, note that $\chi_\Omega(1)=|\Omega|^{1/2}$.
Thus for $\chi_\Omega$ to be a character, the number of elements in every orbit must be a square.
Computing the length of an orbit amounts to knowing the order of the stabilizer of any of its elements.
The interplay between the algebra and group structures will allow us to obtain a nice description of the stabilizer of a given character $\lambda\in\Irr(J)$.
At this point let us remark that together with the associative multiplication, the algebra $J$ inherits a Lie algebra structure given by the bracket $[a,b]=ab-ba$.

Let us now introduce for every $\lambda\in\Irr(J)$ the form $B_{\lambda}:J\times J\to \CC^*$ given by $(a,b)\mapsto\lambda([a,b])$.
Put $\Rad {B_{\lambda}}:=\{j\in J\ :\ B_\lambda(j,u)=1\ \forall u\in J\}$.
We will say that $B_\lambda$ is non-degenerate if $\Rad {B_{\lambda}}=0$.
For every $\F_q$-subspace $H\subseteq J$ we will write $H^\perp=\{j\in J\ : \ B_\lambda(j,h)=1\ \forall h\in H\}$ and we will call an $\F_q$-subspace $H\subseteq J$ isotropic if $H\subseteq H^\perp$.

The following lemma states some important properties of the forms $B_\lambda$ and explains how to use them to obtain the stabilizer of a character $\lambda\in\Irr(J)$.

\begin{lemma}\label{lm:bilinear_form_stabilizer}
Given $\lambda\in \Irr(J)$, let $B_{\lambda}:J\times J\to \CC^*$ be as above.
Then:
\begin{enumerate}[label=\roman*)]
\item $B_\lambda(u+v,w)=B_\lambda(u,w)B_\lambda(v,w)$ , $B_\lambda(u,v+w)=B_\lambda(u,v)B_\lambda(u,w)$, $B_\lambda(ru,u)=1$, $B_\lambda(ru,v)=B_\lambda(u,rv)$ and $B_\lambda(uv,w)=B_\lambda(u,vw)B_\lambda(v,wu)$ for every $u,v,w\in J$, $r\in \F_q$. \label{e-bilinear_form_properties}

\item For every $\F_q$-linear subspace $H\subseteq J$, $H^\perp$ is an $\F_q$-linear subspace.
In particular, $\Rad {B_\lambda}=J^\perp$ is an $\F_q$-linear subspace.\label{e-linearsubspace}

\item $\St_{1+J}(\lambda)=1+\Rad {B_{\lambda}}.$ \label{e-stabilizer}

\item \label{e-radical_index} If H is a maximal isotropic $\F_q$-linear subspace, then $|J/H|=|H/\Rad{B_\lambda}|$.
In particular $|J/\Rad{B_\lambda}|=|\Omega_\lambda|$ is a square.

\end{enumerate}

\end{lemma}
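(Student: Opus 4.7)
The plan is to handle the four parts in order, each building on the previous, with (iii) being the only step that requires real care.

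For (i), all identities are formal consequences of two facts: $\lambda$ is a character of the abelian group $(J,+)$, so $\lambda(x+y)=\lambda(x)\lambda(y)$, and the Lie bracket on $J$ is $\F_q$-bilinear. Thus $B_\lambda(u+v,w)=\lambda([u,w]+[v,w])=B_\lambda(u,w)B_\lambda(v,w)$, and similarly in the second slot. The identity $B_\lambda(ru,u)=1$ reflects $[ru,u]=0$, while $B_\lambda(ru,v)=B_\lambda(u,rv)$ uses $[ru,v]=[u,rv]$. The final identity follows from the purely algebraic equality $[uv,w]=[u,vw]+[v,wu]$, which I would verify by expanding $uvw-wuv=(uvw-vwu)+(vwu-wuv)$ and applying $\lambda$.

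For (ii), the additivity in each variable from (i) makes $H^\perp$ closed under addition, hence a subgroup of $(J,+)$; closure under $\F_q$-multiplication follows from $B_\lambda(ru,h)=B_\lambda(u,rh)=1$ whenever $u\in H^\perp$ and $h\in H$, since then $rh\in H$.

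Part (iii) is the heart of the matter. I would argue: $1+b\in \St_{1+J}(\lambda)$ iff $\lambda\bigl((1+b)a(1+b)^{-1}\bigr)=\lambda(a)$ for all $a\in J$. Since $1+b$ is a unit of $\F_q\cdot 1+J$, the map $a\mapsto a(1+b)$ is a bijection of $J$, so this condition is equivalent to $\lambda\bigl((1+b)a\bigr)=\lambda\bigl(a(1+b)\bigr)$ for every $a$, which using additivity of $\lambda$ collapses to $\lambda(ba-ab)=\lambda([b,a])=1$ for all $a$. This is precisely $b\in\Rad B_\lambda$, so $\St_{1+J}(\lambda)=1+\Rad B_\lambda$.

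For (iv) I would invoke the standard symplectic argument. Since $B_\lambda(u,u)=1$, the pairing descends to a non-degenerate alternating form $\bar B_\lambda$ on $\bar J := J/\Rad B_\lambda$. A maximal isotropic $\F_q$-subspace $H$ must contain $\Rad B_\lambda$ (which pairs trivially with everything) and must satisfy $H=H^\perp$ by maximality, for otherwise adjoining any $x\in H^\perp\setminus H$ would enlarge an isotropic subspace. Hence $\bar H:=H/\Rad B_\lambda$ is a Lagrangian, and non-degeneracy gives $\dim_{\F_q}\bar J=2\dim_{\F_q}\bar H$, i.e.\ $|J/H|=|H/\Rad B_\lambda|$. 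Multiplying the two equal factors yields $|J/\Rad B_\lambda|=|J/H|^2$, a perfect square. Combining with (iii) and the orbit-stabilizer theorem then gives $|\Omega_\lambda|=|1+J|/|1+\Rad B_\lambda|=|J/\Rad B_\lambda|$, which is a square.

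The only genuine obstacle is the manipulation in (iii): one has to track carefully that $a\mapsto a(1+b)$ is a bijection of $J$ and that the resulting cancellation actually reduces the fixed-point condition to the bilinear form identity. Everything else is formal or a standard counting argument for alternating forms over $\F_q$.
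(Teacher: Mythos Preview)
Your proof is correct and follows essentially the same route as the paper: the substitution trick $a\mapsto a(1+b)$ in (iii) and the $H=H^\perp$ argument in (iv) are exactly what the paper does. The only cosmetic difference is that for (iv) the paper phrases the index equality via the duality isomorphism $J\to\Irr(J)$, $j\mapsto B_\lambda(j,\cdot)$ (giving $|H^\perp|=|J/H|$ directly), rather than invoking Lagrangian dimension counting, but the content is identical.
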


\begin{proof}
\ref{e-bilinear_form_properties} Since both $\lambda$ and the Lie bracket are bilinear with respect to addition the first two equalities hold. 
For the third, note that the Lie bracket is $\F_q$-linear and hence $[ru,u]=r[u,u]=0$ for every $u\in J,\ r\in\F_q$.
From the previous equalities it follows that $B_\lambda(r(u+v),u+v)=B_\lambda(ru,v)B_\lambda(rv,u)=1$ (in particular $B_\lambda(u,v)^{-1}=B_\lambda(v,u)$) for every $u,v\in J$, $r\in\F_q$.
It follows that $B_\lambda(ru,v)=B_\lambda(rv,u)^{-1}=B_\lambda(u,rv)$.
The last equality follows from the Jacobi identity of Lie algebras.

\ref{e-linearsubspace} For every $u,v\in H^\perp$ and $w\in H$, applying \ref{e-bilinear_form_properties} we have
\[
B_\lambda(ru+v,w)=B_\lambda(ru,w)B_\lambda(v,w)=B_\lambda(u,rw)B_\lambda(v,w)=1.
\]
Hence $H^\perp$ is an $\F_q$-linear subspace.

\ref{e-stabilizer} Let $g=1+u$ be an element of $1+J$.
Then $g$ fixes $\lambda$ if and only if for every $v\in J$, $\lambda(gvg^{-1})=\lambda(v)$ or equivalently $\lambda(gvg^{-1}-v)=0$.
Since multiplication by $g$ acts bijectively on $J$ we may change $v$ for $vg$ in the above equality so this amounts to $\lambda(gv-vg)=\lambda(uv-vu)=\lambda([u,v])=0$ for every $v\in J$, i.e., $u\in\Rad_{B_{\lambda}}$.

\ref{e-radical_index} Let us first assume that $\Rad B_\lambda=0$
.
Then $B_\lambda$ induces an isomorphism:
\[
\begin{array}{rcccccc}
\phi:& (J,+)	&	\to 	& (\Irr(J),\cdot)	\\
     & j		&	\mapsto & B_\lambda^j: 			&J	&\to &\mathbb{C}^*\\
& & & & u&\mapsto & B_\lambda(j,u)\\
\end{array}
\]
Note that for an $\F_q$-subspace $H\subseteq J$, we have an isomorphism $\phi(H^{\perp})\cong \Irr(J/H)$, in particular, $|H^{\perp}|=|J/H|$.
If we assume $H$ isotropic, then $H\subseteq H^{\perp}$.
Moreover, equality holds precisely when $H$ is a maximal isotropic subspace. Indeed, if $h\in H^\perp$ then $H+\F_q h$ is isotropic, hence $h\in H$. It follows that for $H$ maximal isotropic we have $|H|=|J/H^\perp|=|J/H|$.

If $\Rad B_\lambda\neq 0$ then $\tilde{\lambda}:J/\Rad B_\lambda\times J/\Rad B_\lambda\to\CC^*$ given by $(u+\Rad B_\lambda,v+\Rad B_\lambda)\mapsto \lambda([u,v])$ is a nondegenerate form and the claim easily follows from the previous claim.
\end{proof}

The previous lemma gives further evidence that the $\chi_\Omega$ could be the irreducible characters of $1+J$.
Now since $1+J$ is a finite $p$-group, by \cite[Corollary 6.14]{Isaacs} every irreducible character of $1+J$ is induced from a linear character of a subgroup $N\leq 1+J$.
Hence it will be reasonable to try to obtain $\chi_\Omega$ as the induced character of a linear character $\psi$ of a subgroup $1+H_\Omega$, where $H_\Omega\subseteq J$ is a subalgebra of index $|\Omega|^{1/2}$.
Note that for $\lambda\in\Omega$, we have 
\[
\chi_\Omega(1+u)=\sum_{g\in (1+J)/St_{1+J}(\lambda)}\lambda^g(u),
\]
thus it will be natural to pick $\psi_\lambda\in \Irr(H_\Omega)$ such that $\psi_\lambda(1+j):=\lambda(j)$.

However this approach has serious obstructions. One can easily observe that this will not work even for an abelian $R$-algebra $J$.
Indeed, in this case, for any coadjoint orbit $\Omega$, $|\Omega|=1$, which would imply $H_\Omega=J$ for every $\Omega$.
But note that for $\psi_\lambda$ to be a character we must have $\lambda(u+j)=\lambda(u)\lambda(j)=\psi_\lambda(1+u)\psi_\lambda(1+j)=\psi_\lambda(1+u+j+uj)=\lambda(u+j+uj)=\lambda(u+j)\lambda(uj)$ for every $u,j\in J$, that is $J^2\subseteq\ker\lambda$.
If this were to hold for every $\lambda\in\hat{J}$, then $J^2=0$ and this is certainly a very strong restriction which does not hold for every commutative $\F_q$-algebra $J$.

Nevertheless another approach is possible.
Let us observe that the abovementioned strategy will make sense equally well if we set a correspondence of the form
\[
\begin{array}{ccc}
(1+J)-\mbox{orbits in}\ \Irr(J)&\to &\Fun (1+J)^{1+J}\\
\Omega &\mapsto & \chi_\Omega(e(j)):= \frac{1}{|\Omega|^{\frac{1}{2}}}\sum_{\lambda\in\Omega} \lambda(j), 
\end{array}
\] 
where $e:\ J\to 1+J$ is any bijection preservig the $(1+J)$-actions.
Observe that the assigment $j\mapsto 1+j$ clearly satisfies this property.
It is not clear how to find an alternative map different from the obvious one, however if we assume that $J^p=0$ (i.e. if $n\leq p$) we can consider the exponential map given by 
\[
\begin{array}{ccc}
\exp:\ &J\to &1+J\\
\ &j\mapsto &\exp(j)=\sum_{k=0}^{n-1} \frac{j^k}{k!} 
\end{array}
\]
Note that $(J,+)$ is a $p$-group and hence the map $j\mapsto \frac{j}{k}$ is to be considered as the inverse of the map $j\mapsto kj$, which is well defined.
As usual, its inverse is given by the logaritm 
\[
\begin{array}{cccc}
\log:\ &1+J&\to &J\\
&1+j&\mapsto &\log(1+j)=\sum_{k=1}^{n-1}(-1)^{k+1} \frac{j^k}{k} 
\end{array}
\]
It is clear that both maps respect the $(1+J)$-action on both $J$ and $1+J$.
For the rest of the section we will assume that $J^p=0$. We will show that, in this case, the outlined strategy works and we have a bijection
\[
\begin{array}{cccl}
(1+J)\textrm{-orbits in} \Irr(J)	&\to 		& &\Irr(1+J)\\
\Omega 									&\mapsto	&\exp(j)&\mapsto |\Omega|^{-1/2}\sum_{\lambda\in\Omega}\lambda(j)\\
& & 1+j & \mapsto |\Omega|^{-1/2} \sum_{\lambda\in\Omega}\lambda(\log (1+j))
\end{array}
\]

First let us recall that thanks to the universal Baker-Campbell-Hausdorff Formula (see \cite[Lemma 9.15]{Khu}), for every $u,j\in J$ we have $\exp(u)\exp(j)=\exp(u+j+\gamma(u,j))$, where $\gamma(u,j)$ is a sum of Lie words on the elements $u$ and $j$.
It follows that given $H_\lambda\subseteq J$ a subalgebra of index $|\Omega|^{1/2}$, $\exp(H_\lambda)=1+H_\lambda$ is a subgroup of $1+J$ with the same index.
Let us now define $\psi_\lambda(\exp(j)):=\lambda(j)$.
Then for $\psi_\lambda$ to be a linear character of $\exp(H_\lambda)$ we must have
\begin{equation}\label{eq:lambdapsi}
\begin{split}
\lambda(u+j)=\lambda(u)\lambda(j)& =
\psi_\lambda(\exp(j))\psi_\lambda(\exp(u))\\
& =\psi_\lambda(\exp(u+j+\gamma(u,j))=\lambda(u+j+\gamma(u,j)).
\end{split}
\end{equation}
Thus, if $[H_\lambda,H_\lambda]\subseteq\ker\lambda$, then $\psi_\lambda$ is a linear character of $\exp(H_\lambda)$. 
Therefore, to construct $\psi_\lambda$ with the desired properties it suffices to find a maximal isotropic subalgebra with respect to $B_\lambda$.
This is the content of the following proposition.

\begin{proposition} \label{prop:maxisosubalgebra}
Let $\lambda\in\Irr(J)$ and consider the form $B_\lambda$ as above. Then there exists an $\F_q$-subalgebra $H_\lambda$ such that $H_{\lambda}$ is a maximal isotropic subspace of $J$.
\end{proposition}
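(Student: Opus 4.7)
The plan is to adapt Vergne's classical polarization construction for nilpotent Lie algebras to the present associative setting, producing $H_\lambda$ as a sum of radicals of restrictions of $B_\lambda$ to an ascending flag of two-sided ideals of $J$.

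First I would choose a flag $0 = J_0 \subset J_1 \subset \cdots \subset J_n = J$ of two-sided ideals with each quotient $J_i/J_{i-1}$ one-dimensional over $\F_q$. This is possible because $J$ is a finite-dimensional nilpotent associative $\F_q$-algebra: each factor $J^k/J^{k+1}$ of the powers filtration is annihilated by $J$ on both sides, so every $\F_q$-subspace sandwiched between $J^{k+1}$ and $J^k$ is automatically a two-sided ideal of $J$, and one refines the powers filtration by arbitrary codimension-one $\F_q$-subspaces.

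Having fixed the flag, set $V_i := J_i \cap J_i^\perp$ (the radical of $B_\lambda$ restricted to $J_i$) and define $H_\lambda := \sum_{i=1}^n V_i$. Isotropy of $H_\lambda$ is immediate: if $u \in V_i$ and $v \in V_j$ with $i \leq j$, then $u \in J_i \subseteq J_j$ and $v \in J_j^\perp$, so $B_\lambda(u,v) = 1$. The subalgebra property is where the two-sided ideal structure pays off. Using the identity $B_\lambda(uv,w) = B_\lambda(u,vw)\,B_\lambda(v,wu)$ from Lemma \ref{lm:bilinear_form_stabilizer}.\ref{e-bilinear_form_properties}, for $u \in V_i$, $v \in V_j$ with $i \leq j$ the product $uv$ lies in $J_i \cdot J_j \subseteq J_i$ (right ideal), and for any $w \in J_i$ one has $vw \in J_j \cdot J_i \subseteq J_i$ (left ideal) and $wu \in J_i \cdot J_i \subseteq J_i \subseteq J_j$, so both $B_\lambda(u,vw)$ and $B_\lambda(v,wu)$ equal $1$; hence $uv \in J_i \cap J_i^\perp = V_i \subseteq H_\lambda$.

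The hard part will be establishing maximality, i.e., showing that $\dim_{\F_q} H_\lambda = \tfrac{1}{2}(\dim J + \dim \Rad B_\lambda)$, after which Lemma \ref{lm:bilinear_form_stabilizer}.\ref{e-radical_index} together with isotropy finishes the proof. I plan to induct on $i$, tracking the partial sums $H_i := \sum_{k \leq i} V_k$ and the ranks $r_i := \dim V_i$. Two ingredients control the increments. A parity argument: each quotient $J_i/V_i$ carries a non-degenerate alternating form, forcing $r_i \equiv i \pmod{2}$ and hence $r_i - r_{i-1}$ odd. A bound argument: $J_i^\perp$ has codimension at most one in $J_{i-1}^\perp$, and the intersection $V_i \cap J_{i-1} = J_{i-1} \cap J_i^\perp$ embeds as a subspace of $J_i/J_{i-1}$, which together force $r_i - r_{i-1} \in \{-1,+1\}$. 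A short case analysis on these two alternatives then identifies $\dim H_i - \dim H_{i-1} = \tfrac{1}{2}(1 + (r_i - r_{i-1})) \in \{0,1\}$, and telescoping yields $\dim H_\lambda = \tfrac{1}{2}(\dim J + r_n) = \tfrac{1}{2}(\dim J + \dim \Rad B_\lambda)$, as required.
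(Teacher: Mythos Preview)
Your proof is correct and takes a genuinely different route from the paper's. The paper argues by induction on $\dim_{\F_q} J$: it locates the largest ideal $J_i$ in a one-step flag that is still isotropic, observes that $J_i^\perp$ is a proper $\F_q$-subalgebra (via the same associativity identity you use), applies the induction hypothesis inside $J_i^\perp$, and then checks that a maximal isotropic subalgebra of $J_i^\perp$ remains maximal isotropic in $J$. Your approach is the Vergne polarization transported to the associative setting: a single global construction $H_\lambda=\sum_i (J_i\cap J_i^\perp)$ along a full flag of two-sided ideals, followed by a direct dimension count. The paper's argument is shorter and avoids the dimension bookkeeping; your argument is more explicit (it actually names $H_\lambda$ in closed form) and makes the link with classical nilpotent Lie theory transparent, which is pleasant since the paper is adapting Kirillov's method.

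Two small points of phrasing in your maximality sketch deserve tightening. First, what embeds into $J_i/J_{i-1}$ is $V_i/(V_i\cap J_{i-1})$, not $V_i\cap J_{i-1}$ itself; combined with the dual embedding $V_{i-1}/(V_{i-1}\cap J_i^\perp)\hookrightarrow J_{i-1}^\perp/J_i^\perp$ (codimension at most one) this gives $|r_i-r_{i-1}|\le 1$, and only then does the parity force $r_i-r_{i-1}\in\{-1,+1\}$. Second, for the parity step you are implicitly invoking Lemma~\ref{lm:bilinear_form_stabilizer}\ref{e-radical_index} with $J$ replaced by the ideal $J_i$ and $\lambda$ restricted; this is legitimate because $J_i$ is itself a nilpotent $\F_q$-algebra, but it is worth saying so. With those clarifications the telescoping $\dim H_\lambda=\tfrac12(\dim J+\dim\Rad B_\lambda)$ goes through and maximality follows from Lemma~\ref{lm:bilinear_form_stabilizer}\ref{e-radical_index}.
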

\begin{proof}
We will apply induction on $\dim_{\F_q} J$. The result is clear if $B_\lambda$ is trivial.
By refining the chain of ideals $\{0\}=J^p\subseteq J^{p-1}\subseteq\ldots J^2\subseteq J$ we can obtain a chain of ideals
\[
\{0\}=J_{k+1}\subseteq J_k\subseteq\ldots\subseteq J_2\subseteq J_1=J\]
such that $J_i/J_{i+1}$ has dimension $1$ for every $i\leq k$.
Since $B_\lambda$ is nontrivial, $J$ is not isotropic, so there exists $i\leq k+1$ such that $J_i$ is isotropic but $J_{i-1}$ is not.
Note that $J_i\nsubseteq \Rad B_\lambda$, for $J_i$ has codimension $1$ in $J_{i-1}$ and so the latter would be isotropic as well.
Hence $J_i^{\perp}\subsetneq J$.
Note that $J_i^\perp$ is a subalgebra (recall that $B_\lambda(uv,w)=B_\lambda(u,vw)B_\lambda(v,wu)$ by \ref{lm:bilinear_form_stabilizer}.\ref{e-bilinear_form_properties}).
If we consider the restriction of $B_\lambda$ to $J_i^{\perp}$ as a form on $J_i^{\perp}$, by induction there exists a subalgebra $H_\lambda\subseteq J_i^{\perp}$ that is a maximal isotropic subspace of $J_i^{\perp}$.
We claim that $H_\lambda$ is a maximal isotropic subspace of $J$ as well.
Indeed, suppose $H_\lambda\subseteq U$ for some maximal isotropic subspace $U$.
First note that $J_i\subseteq H_\lambda$ for $H_\lambda$ is maximal within $J_i^{\perp}$ and $J_i+H_\lambda\subseteq J_i^{\perp}$ is isotropic.
This implies $J_i\subseteq U$ and since $U$ is isotropic, we have $U\subseteq J_i^{\perp}$.
Since $H_\lambda$ was maximal, this forces $H_\lambda=U$ and the claim follows.
\end{proof} 

It only remains to show that $\psi_\lambda^{1+J}(1+j)=\chi_\Omega(\exp(j))$, which will show that the latter is a character of $1+J$.
We will first need the following proposition.

\begin{proposition} \label{prop:transitiveaction}
Let $\lambda,\phi\in\Irr(J)$ an consider a maximal isotropic subspace $H_\lambda\subseteq J$ for $B_\lambda$ as in Proposition \ref{prop:maxisosubalgebra}.
Then $\phi_{|H_\lambda}=\lambda_{|H_\lambda}$ if and only if $\lambda$ and $\phi$ are $1+H_\lambda$-conjugate.
\end{proposition}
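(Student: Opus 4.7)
The plan is to handle the two implications separately. The forward direction is a direct calculation showing that $1+H_\lambda$ acts trivially on $\lambda|_{H_\lambda}$. The converse reduces to a counting argument: the $(1+H_\lambda)$-orbit of $\lambda$ is automatically contained in the set of characters of $(J,+)$ whose restriction to $H_\lambda$ agrees with $\lambda$, and one checks that both sets have the same cardinality.

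For the forward direction, fix $h\in H_\lambda$ and $a\in H_\lambda$. Then $\lambda^{1+h}(a)=\lambda((1+h)a(1+h)^{-1})$ and a short manipulation yields
\[
(1+h)a(1+h)^{-1}-a=[h,a](1+h)^{-1}=\sum_{k\geq 0}(-1)^k[h,a]h^k.
\]
Because $H_\lambda$ is an associative subalgebra, each $ah^k$ lies in $H_\lambda$, and the Leibniz identity gives $[ah^k,h]=a[h^k,h]+[a,h]h^k=-[h,a]h^k$ (using $[h^k,h]=0$). Thus every term $[h,a]h^k$ sits in $[H_\lambda,H_\lambda]$, which is contained in $\ker\lambda$ by isotropy of $H_\lambda$ for $B_\lambda$. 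Summing gives $\lambda([h,a](1+h)^{-1})=0$, hence $\lambda^{1+h}|_{H_\lambda}=\lambda|_{H_\lambda}$.

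For the converse, by Lemma \ref{lm:bilinear_form_stabilizer}.\ref{e-stabilizer} the stabiliser of $\lambda$ in $1+J$ is $1+\Rad {B_\lambda}$. Maximality of $H_\lambda$ as an isotropic subspace forces $\Rad {B_\lambda}\subseteq H_\lambda$ (any element of $\Rad {B_\lambda}$ together with $H_\lambda$ spans an isotropic subspace), so the stabiliser of $\lambda$ inside $1+H_\lambda$ is again $1+\Rad {B_\lambda}$ and the $(1+H_\lambda)$-orbit of $\lambda$ has length $|H_\lambda|/|\Rad {B_\lambda}|$. By Lemma \ref{lm:bilinear_form_stabilizer}.\ref{e-radical_index} this equals $|J/H_\lambda|$, which is precisely the number of extensions of the fixed character $\lambda|_{H_\lambda}$ to $(J,+)$ (any two such extensions differ by a character of $J/H_\lambda$). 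Combined with the forward direction this places the orbit inside a set of the same cardinality, forcing equality.

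The only delicate point is the forward direction: one has to control the entire associative tail $[h,a]h^k$ for all $k\geq 0$, not merely the leading bracket $[h,a]$. The Leibniz manoeuvre above repackages this tail as a single Lie bracket of elements of $H_\lambda$, at which point isotropy finishes the argument; once this is in place, the matching of the orbit count with the extension count is essentially formal.
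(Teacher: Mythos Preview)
Your argument is correct and follows the same overall strategy as the paper: one implication by a direct check that conjugation by $1+H_\lambda$ does not change the restriction to $H_\lambda$, and the other by matching the orbit size $|H_\lambda|/|\Rad B_\lambda|=|J/H_\lambda|$ against the number of extensions of $\lambda|_{H_\lambda}$. The counting half is identical to the paper's.

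The only difference is in the direct check. You expand $(1+h)^{-1}$ as a geometric series and use the Leibniz identity to rewrite each $[h,a]h^k$ as $-[ah^k,h]\in[H_\lambda,H_\lambda]\subseteq\ker\lambda$. The paper instead reuses the substitution trick from the proof of Lemma~\ref{lm:bilinear_form_stabilizer}\ref{e-stabilizer}: since $H_\lambda$ is a subalgebra, multiplication by $g=1+u\in 1+H_\lambda$ is a bijection of $H_\lambda$, so one may replace $v$ by $vg$ to reduce $\lambda(gvg^{-1})=\lambda(v)$ for all $v\in H_\lambda$ directly to $\lambda([u,v])=0$, which holds by isotropy. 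This bypasses the series expansion entirely. Your route is more hands-on but perfectly valid; the paper's is a one-line shortcut once you recall that lemma's proof. (A minor point: what you label the ``forward'' direction is in fact the implication from conjugacy to equality of restrictions, i.e.\ the converse of the statement as written.)
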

\begin{proof}
Note first that by construction $(B_\lambda)_{|H_\lambda}$ is trivial and so by Lemma \ref{lm:bilinear_form_stabilizer}.\ref{e-stabilizer}  if $\lambda=\phi^g$ for some $g\in 1+H_\lambda$, then for any $h\in H_\lambda$ we have
\[
\lambda(h)=\phi^{g}(h)=\phi(h^{g^{-1}})=\phi(h).
\]
Now let $\Sigma$ be the set of elements $\phi\in\Irr(J)$ such that $\lambda_{|H_\lambda}=\phi_{|H_\lambda}$, then $|\Sigma|=|J/H_{\lambda}|$.
On the other hand, $1+H_\lambda$ acts on $\Sigma$ and the $(1+H_\lambda)$-orbit of $\lambda$ has size $|H_{\lambda}|/|\St_{1+J}(\lambda)|=|H_\lambda|/|\Rad B_\lambda|=|J/H_\lambda|=|\Sigma|$
by Lemma \ref{lm:bilinear_form_stabilizer}.\ref{e-radical_index}.
Therefore $\Sigma$ is the $(1+H_\lambda)$-orbit of $\lambda$ and we are done.
\end{proof}

\begin{theorem}\label{th:mainKOM}
Let $1+J$ be an algebra group with $J^p=0$. Consider $\lambda\in\Irr(J)$ and let $\Omega$ be its corresponding $(1+J)$-orbit.
Then
\[
\chi_{\Omega,\exp}(j):=\chi_{\Omega}(\exp(j))=|\Omega|^{-1/2}\sum_{\lambda\in\Omega}\lambda(j) 
\]
is an irreducible character of $1+J$. Moreover, all irreducible characters of $1+J$ are of this form.
\end{theorem}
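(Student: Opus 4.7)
The plan is to realize $\chi_{\Omega,\exp}$ as a character induced from a carefully chosen linear character of a subgroup, and then exploit the orthonormality already established in Lemma \ref{lm:coadorbits}. Fix $\lambda\in\Omega$ and, via Proposition \ref{prop:maxisosubalgebra}, choose a subalgebra $H_\lambda\subseteq J$ that is maximal isotropic for the form $B_\lambda$. Because $[H_\lambda,H_\lambda]\subseteq\ker\lambda$, the identity \eqref{eq:lambdapsi} shows that the prescription $\psi_\lambda(\exp(h)):=\lambda(h)$ is well defined and yields a linear character of the subgroup $\exp(H_\lambda)=1+H_\lambda$ of $1+J$. By Lemma \ref{lm:bilinear_form_stabilizer}.\ref{e-radical_index} the index $|1+J:1+H_\lambda|=|J:H_\lambda|$ equals $|\Omega|^{1/2}$, so $\theta_\lambda:=\Ind_{1+H_\lambda}^{1+J}\psi_\lambda$ is a genuine character of $1+J$ of degree $|\Omega|^{1/2}=\chi_{\Omega,\exp}(1)$.

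Next I would compute $\langle\theta_\lambda,\chi_{\Omega',\exp}\rangle$ for an arbitrary $(1+J)$-orbit $\Omega'$ in $\Irr(J)$. By Frobenius reciprocity (Theorem \ref{pre:Frobenius}) this equals $\langle\psi_\lambda,\chi_{\Omega',\exp}|_{1+H_\lambda}\rangle$, and unfolding the definition of $\chi_{\Omega',\exp}$ while using the orthogonality of characters of the finite abelian group $(H_\lambda,+)$ yields
\[
\langle\theta_\lambda,\chi_{\Omega',\exp}\rangle=|\Omega'|^{-1/2}\cdot\#\{\phi\in\Omega':\phi|_{H_\lambda}=\lambda|_{H_\lambda}\}.
\]
Proposition \ref{prop:transitiveaction} identifies the set on the right with the $(1+H_\lambda)$-orbit of $\lambda$, which is empty unless $\Omega'=\Omega$. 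When $\Omega'=\Omega$, the stabilizer of $\lambda$ inside $1+H_\lambda$ is $1+\Rad B_\lambda$ (using that $\Rad B_\lambda\subseteq H_\lambda$, a consequence of the maximality of $H_\lambda$), so that orbit has cardinality $|H_\lambda|/|\Rad B_\lambda|=|J:H_\lambda|=|\Omega|^{1/2}$ by a second application of Lemma \ref{lm:bilinear_form_stabilizer}.\ref{e-radical_index}. Hence $\langle\theta_\lambda,\chi_{\Omega',\exp}\rangle=\delta_{\Omega,\Omega'}$.

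To conclude, Lemma \ref{lm:coadorbits} tells us that $\{\chi_{\Omega',\exp}\}$ is an orthonormal system in $\Fun(1+J)^{1+J}$ of cardinality $\kk(1+J)=\dim\Fun(1+J)^{1+J}$, hence an orthonormal basis. Expanding the class function $\theta_\lambda$ in this basis gives $\theta_\lambda=\chi_{\Omega,\exp}$, so $\chi_{\Omega,\exp}$ is a genuine character of $1+J$ with norm $1$, and therefore irreducible. As $\Omega$ ranges over the $\kk(1+J)=|\Irr(1+J)|$ coadjoint orbits we obtain pairwise distinct irreducibles, so the list is complete. The only non-routine ingredient in this plan is the transitivity statement Proposition \ref{prop:transitiveaction}, which has already been established; everything else is Frobenius reciprocity together with the bookkeeping packaged in Lemma \ref{lm:coadorbits}.
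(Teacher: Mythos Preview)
Your argument is correct. Both you and the paper construct the same linear character $\psi_\lambda$ on $1+H_\lambda$ and aim to identify $\Ind_{1+H_\lambda}^{1+J}\psi_\lambda$ with $\chi_{\Omega,\exp}$, but the routes diverge at that point. The paper computes the induced character directly from the defining sum, introduces the auxiliary characters of $\Irr(J/H_\lambda)$ to handle the support condition, and rewrites the result as a sum over the set $\Sigma=\{\nu:\nu|_{H_\lambda}=\lambda|_{H_\lambda}\}$, invoking Proposition~\ref{prop:transitiveaction} only to read off $|\Sigma|$. Your approach instead computes the inner products $\langle\theta_\lambda,\chi_{\Omega',\exp}\rangle$ via Frobenius reciprocity and then expands $\theta_\lambda$ in the orthonormal basis $\{\chi_{\Omega',\exp}\}$. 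This is arguably cleaner: it avoids the sum manipulations and makes the role of Proposition~\ref{prop:transitiveaction} more transparent, since the proposition is exactly what identifies the set $\{\phi\in\Omega':\phi|_{H_\lambda}=\lambda|_{H_\lambda}\}$. The price is that you rely on the full orthonormal-basis statement of Lemma~\ref{lm:coadorbits} (cardinality $\kk(1+J)$ together with orthonormality) to pin down $\theta_\lambda$ uniquely, whereas the paper only needs orthonormality at the very end to deduce irreducibility.
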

\begin{proof}
For a $(1+J)$-orbit $\Omega$ take $\lambda\in\Omega$ and let $H_\lambda$ be as in Proposition \ref{prop:maxisosubalgebra}.
By \eqref{eq:lambdapsi} the function $\psi_\lambda$ given by $\psi_\lambda(\exp(j))=\lambda(j)$ is an irreducible character of $1+H_\lambda$.
We claim that $\psi_\lambda^{1+J}=\chi_\Omega(\exp(j))$.
This would show that $\chi_{\Omega,\exp}$ is a character of $1+J$.
Let us define $\widetilde{\psi}_\lambda:1+J\to\mathbb{C}$ that maps $1+h\to \psi_\lambda(1+h)$ for $1+h\in 1+H_\lambda$ and vanishes elsewhere.
 By definition we have 
\begin{align*}
\psi_\lambda^{1+J}(\exp(j))&=\frac{1}{|H_\lambda|}\sum_{g\in 1+J}\widetilde{\psi}_\lambda(\exp(j)^{g^{-1}})=\frac{1}{|H_\lambda|}\sum_{g\in 1+J}\widetilde{\psi}_\lambda(\exp(j^{g^{-1}}))\\
&=\frac{1}{|H_\lambda|}\sum_{\substack{g\in 1+J\\ j^{g^{-1}}\in H_\lambda}}\lambda(j^{g^{-1}})
\end{align*}
Let us consider $\Irr(J/H_\lambda)$ and regard its elements as members of $\Irr(J)$. By orthogonality relations of irreducible characters we obtain
$\sum_{\nu\in\Irr(J/H_\lambda)}\nu(j)=|J/H_\lambda|$ if $j\in H_\lambda$ and $0$ otherwise.
Hence, we can rewrite
\begin{align*}
\psi_\lambda^{1+J}(\exp(j))&=\frac{1}{|J|}\sum_{\substack{g\in 1+J\\ \nu\in\Irr(J/H_\lambda)}}\lambda(j^{g^{-1}})\nu(j^{g^{-1}})=\frac{1}{|J|}\sum_{\substack{g\in 1+J\\ \nu\in\Irr(J/H_\lambda)}}(\lambda\nu)^g(j)\\
&=\frac{1}{|J|}\sum_{\substack{g\in 1+J\\ \nu\in\Sigma}}\nu^{g}(j)=\frac{|\Sigma|}{|J|}\sum_{g\in 1+J}\lambda^g(j),
\end{align*}
where $\Sigma$ is the set of $\nu\in\Irr(J)$ such that $\nu_{|H_\lambda}=\lambda_{|H_\lambda}$ and we have $|\Sigma|=|H_\lambda|/|\Rad B_\lambda|$, see (the proof of) Proposition \ref{prop:transitiveaction}.
Now recall that $\St_{1+J}(\lambda)=1+\Rad B_\lambda$ by Lemma \ref{lm:bilinear_form_stabilizer}.\ref{e-stabilizer}, so we obtain
\[
\psi_\lambda^{1+J}(\exp(j))=\frac{|\Sigma|}{|J|}\sum_{g\in 1+J}\lambda^g(j)=\frac{|\St_{1+J}(\lambda)|}{|H_\lambda|}\sum_{\mu\in\Omega_\lambda}\mu(j)=|\Omega_\lambda|^{-1/2}\sum_{\mu\in\Omega_\lambda}\mu(j).
\]
Therefore $\chi_{\Omega,\exp}$ is indeed a character. So we obtain $\kk(1+J)$ characters of the form $\chi_{\Omega,\exp}$ which, applying the same idea as in Lemma \ref{lm:coadorbits}.\ref{itm2:lm:coadorbits}, are irreducible, whence the theorem.
\end{proof}
By using the Lazard Correspondence one can extend Theorem \ref{th:mainKOM} to the class of $p$-groups of nilpotency class less than $p$, see \cite{Sa04}.
Nevertheless the approach presented above is more convenient for our results. In general it is not possible to extend Theorem \ref{th:mainKOM} to algebra groups where $J^p$ is not necessarily equal to $0$.
Indeed, suppose that $J$ is an $\Fp$-algebra, then by construction of $\chi_{\Omega}$ all character values of $\chi_{\Omega}$ lie in $\mathbb{Q}(\xi)$ where $\xi$ is a nontrivial $p^{\textrm{th}}$ root of unity. This would force all character values to be real if $p=2$. A well known result from Character Theory says that if $\chi$ takes real values for every $\chi\in\Irr(1+J)$ then every element $g\in 1+J$ is conjugate to its inverse. However, one can find $\F_2$-algebra groups where the last condition does not hold, see \cite{IsaacsKa}.

Note that in Theorem \ref{th:mainKOM}, the character degrees are given by $|\Omega_{\lambda}|^{1/2}$. So even though Kirillov's Orbit Method cannot provide all of the irreducible characters, it might be possible  that one can obtain the irreducible character degrees by looking at the size of the coadjoint orbits $|\Omega_{\lambda}|$. 
This is the content of the Fake Degree Conjecture, which we present in the following section.

\section{The Fake Degree Conjecture: a disproof} \label{sec:disproof}

Kirillov's corespondence between $(1+J)$-orbits in $\Irr(J)$ and $\Irr(1+J)$ suggests a correspondence
\[
\begin{array}{ccc}
(1+J)-\textrm{orbits in}\, \Irr(J)&\to &\{\chi(1)\;:\chi\in\Irr(1+J)\}\\
\Omega &\mapsto &|\Omega|^{\frac{1}{2}}\\
\end{array}
\]
The Fake Degree Conjecture suggests that this correspondence is well defined and gives a bijection counting multiplicities for every algebra group $1+J$.
To show that the Fake Degree Conjecture is false, we will focus on linear  characters.
The correspondence then restricts to a bijection between linear characters of $1+J$ and fixed points of $\Irr(J)$ under the coadjoint action of $1+J$. The idea of the proof is to show that this leads to a contradiction.

\begin{lemma}\label{lm:abelianization} Let $J$ be a finite dimensional nilpotent algebra over a finite field $\FF$. Then the number of fixed points in $\Irr(J)$ under the coadjoint action of $1+J$ equals the index of $[J,J]_L$ in $J$. In particular if the Fake Degree Conjecture holds,
\begin{equation}\label{eq:ab_equality}
|J /[J,J]_{L}|=|(1+J)_{\ab}|.
\end{equation}
\end{lemma}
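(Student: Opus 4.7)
The plan is to compute both sides via the description of stabilizers already established in Lemma \ref{lm:bilinear_form_stabilizer}, and then invoke the Fake Degree Conjecture applied to linear (i.e.\ one-dimensional) characters.

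First I would identify the fixed points of the coadjoint action. A character $\lambda \in \Irr(J)$ is fixed by every $1+b \in 1+J$ if and only if $\St_{1+J}(\lambda) = 1+J$. By Lemma \ref{lm:bilinear_form_stabilizer}.\ref{e-stabilizer}, this happens exactly when $\Rad B_\lambda = J$, i.e.\ when the bilinear form $B_\lambda(a,b) = \lambda([a,b]_L)$ is identically trivial. This is equivalent to the condition $[J,J]_L \subseteq \ker \lambda$, so the set of fixed points is in natural bijection with $\Irr(J/[J,J]_L)$. Since $J/[J,J]_L$ is a finite abelian group, $|\Irr(J/[J,J]_L)| = |J/[J,J]_L|$, which proves the first assertion.

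Next I would relate the linear characters of $1+J$ to $(1+J)_{\ab}$. By definition, $r_1(1+J) = |(1+J)_{\ab}|$, since linear characters factor through the abelianization and an abelian finite group has exactly $|G|$ distinct linear characters.

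Finally, assuming the Fake Degree Conjecture, the set of orbits $\Omega$ with $|\Omega|^{1/2} = 1$ is in bijection (with multiplicity) with the set of one-dimensional irreducible characters of $1+J$. Orbits with $|\Omega| = 1$ are exactly the fixed points of the coadjoint action; combined with the two counts above, this yields
\[
|J/[J,J]_L| = |(1+J)_{\ab}|,
\]
which is \eqref{eq:ab_equality}. The only non-routine step is the identification of the stabilizer with $1+\Rad B_\lambda$, but this is already done in the preceding lemma, so the proof will be short and essentially a bookkeeping argument matching orbits of trivial size to one-dimensional characters.
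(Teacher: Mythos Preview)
Your proposal is correct and follows essentially the same approach as the paper: both use Lemma~\ref{lm:bilinear_form_stabilizer}.\ref{e-stabilizer} to identify the fixed points of the coadjoint action as the characters vanishing on $[J,J]_L$, and then match size-one orbits with linear characters via the Fake Degree Conjecture. Your write-up is slightly more explicit about the intermediate identifications (e.g.\ $\Irr(J/[J,J]_L)$ and $r_1(1+J)=|(1+J)_{\ab}|$), but the argument is the same.
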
  

\begin{proof}
By Lemma \ref{lm:bilinear_form_stabilizer}.\ref{e-stabilizer}, $\lambda\in\Irr(J)$ is fixed under the coadjoint action of $1+J$ if and only if $\rad B_{\lambda}=J$, which amounts to $\lambda([J,J]_{L})=0$.
It is now clear that the number of fixed points in $\Irr(J)$ equals the number of linear forms vanishing on $[J,J]_{L}$ and hence if the Fake Degree Conjecture holds
\[|J /[J,J]_{L}|=|\{\text{fixed points of\ } \Irr(J)\}|=|\Lin(1+J)|=|(1+J)_{\ab}|.\]
\end{proof}

The following family of examples will prove to be key for the study of the equality \eqref{eq:ab_equality}.
Let $\pi$ be a finite $p$-group and  $\mathbb{F}_{q}[\pi]$ the group algebra over the finite field  $\mathbb{F}_{q}$.
The augmentation ideal 
\[
\I_{\F_q}:=\{\sum_{h\in \pi}c_{h}h\ : \sum_{h\in \pi}c_{h}=0\}
\]
coincides with the Jacobson radical $\rad \mathbb{F}_{q}[\pi]$ of $\F_q[\pi]$ and hence it is a nilpotent ideal.
We can then consider the algebra group $1+\I_{\F_q}$, these are the normalized units of $\mathbb{F}_{q}[\pi]$.
If the Fake Degree Conjecture holds, we know $$|[1+\I_{\F_q},1+\I_{\F_q}]|=|[\I_{\F_q},\I_{\F_q}]_{L}|.$$
The following lemma gives a relation between the size of $[\I_{\F_q},\I_{\F_q}]_{L}$ and the number of conjugacy classes of the group $\pi$.
\begin{lemma}
\label{lm:FD_ab_lie_conjugacy_classes}
Let $\pi$ be a finite group and $\f$ a field. Then
\[
\dim_{\f} \I_{\f}/[\I_{\f},\I_{\f}]_L= \kk(\pi)-1.
\]
\end{lemma}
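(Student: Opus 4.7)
The plan is to reduce the statement to the well-known fact that the dimension of the Lie abelianization of a group algebra equals the number of conjugacy classes.

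First I would establish the classical identity $\dim_\f \f[\pi]/[\f[\pi],\f[\pi]]_L = \kk(\pi)$. The Lie commutator subspace $[\f[\pi],\f[\pi]]_L$ is $\f$-linearly spanned by the elements $gh - hg$ with $g,h \in \pi$, which is the same as the subspace spanned by $g - hgh^{-1}$ for $g,h \in \pi$. Hence two basis elements $g, g' \in \pi$ become equal in the quotient if and only if they are conjugate in $\pi$. Choosing a representative from each conjugacy class then exhibits a basis of $\f[\pi]/[\f[\pi],\f[\pi]]_L$, so its dimension is $\kk(\pi)$.

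Next I would observe that $\f[\pi] = \f \cdot 1 \oplus \I_\f$ as an $\f$-vector space and that the scalars $\f \cdot 1$ lie in the center of $\f[\pi]$. Writing any two elements of $\f[\pi]$ as $a = \alpha \cdot 1 + a'$ and $b = \beta \cdot 1 + b'$ with $a', b' \in \I_\f$, we get $[a,b]_L = [a',b']_L$, so
\[
[\f[\pi],\f[\pi]]_L = [\I_\f,\I_\f]_L.
\]
Since $\I_\f$ is a two-sided ideal we also have $[\I_\f,\I_\f]_L \subseteq \I_\f$, so under the direct sum decomposition
\[
\f[\pi]/[\f[\pi],\f[\pi]]_L \;=\; \f \cdot 1 \;\oplus\; \I_\f/[\I_\f,\I_\f]_L.
\]
Taking dimensions and applying the first step yields
\[
\kk(\pi) \;=\; 1 + \dim_{\f} \I_\f/[\I_\f,\I_\f]_L,
\]
which is the desired equality.

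There is no real obstacle here; the only point to be careful about is the identification $[\f[\pi],\f[\pi]]_L = [\I_\f,\I_\f]_L$, which relies on $1$ being central, and the verification that the spanning set $\{gh - hg\}$ for $[\f[\pi],\f[\pi]]_L$ really does force precisely the conjugation identifications on a basis of $\f[\pi]$.
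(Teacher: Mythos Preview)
Your proof is correct and follows essentially the same two-step approach as the paper: first compute $\dim_\f \f[\pi]/[\f[\pi],\f[\pi]]_L = \kk(\pi)$, then identify $[\f[\pi],\f[\pi]]_L = [\I_\f,\I_\f]_L$. The only cosmetic difference is that the paper obtains this last equality from the identity $[g,h]_L = [g-1,h-1]_L$ for $g,h\in\pi$, whereas you use the centrality of $1$ in the decomposition $\f[\pi]=\f\cdot 1\oplus \I_\f$; these are equivalent one-line observations.
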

\begin{proof}
It is clear that the set $\pi$ is an $\f$-basis for $\f[\pi]$. We first claim
that
\[
\dim_{\f} \f[\pi]/ [\f[\pi],\f[\pi]]_L= \kk(\pi).
\]

Let $x_{1},\ldots,x_{\kk(\pi)}$ be representatives of conjugacy classes of
$\pi$. Observe that for any $x,y,g\in\pi$ with $y=g^{-1}xg$, we have
$x-y=[g,g^{-1}x]_{L}$. The elements
$\bar x_{1},\ldots,\bar x_{\kk(\pi)}$ therefore span $\f[\pi]/
[\f[\pi],\f[\pi]]_L$.

Set $\lambda_{i}$ to be the linear functional on $\f[\pi]$ that takes the value
$1$ on the elements corresponding to the conjugacy class of $x_{i}$ and vanishes
elsewhere. Observe that for any $g,h\in\pi$, we have $[g,h]_{L}=g(hg)g^{-1}-hg$
and hence each $\lambda_{i}$ induces a linear functional on $\f[\pi]/
[\f[\pi],\f[\pi]]_L$. Now if $\sum_j\alpha_{j}\bar x_{j}=0$ for some $\alpha_j
\in \f$, then $\alpha_i = \lambda_{i}(\sum_j \alpha_j \bar x_j)=0$ for each $i$.
It follows that $\bar x_{1},\ldots,\bar x_{\kk(\pi)}$ are also linearly
independent and hence a basis. This proves the claim.


Now, it is clear that $\{g-1:g\in\pi\setminus\{1\}\}$ is an $\f$-basis for
$\I_{\f}$. Since for any $g,h\in\pi$, we have $[g,h]_{L}=[g-1,h-1]_{L}$, it
follows that $[\f[\pi],\f[\pi]]_{L}=[\I_{\f},\I_{\f}]_{L}$, whence the lemma.
\end{proof}

As an immediate consequence of the above results we obtain

\begin{proposition}\label{prop:consequence_FD}
Let $\pi$ be a finite $p$-group and $\I_{\F_q}$ the augmentation ideal of $\F_q[\pi]$.
If the Fake Degree Conjecture holds, then for any finite $p$-group $\pi$, $$|1+\I_{\F_q}/[1+\I_{\F_q},1+\I_{\F_q}]|=|\I_{\F_q}/[\I_{\F_q},\I_{\F_q}]_{L}|=q^{\kk(\pi)-1}.$$
\end{proposition}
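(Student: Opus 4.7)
The plan is to combine the two preceding lemmas directly, since the proposition is essentially a concatenation of their conclusions specialized to $J = \I_{\F_q}$. First I would invoke Lemma \ref{lm:abelianization} with this choice of algebra. That lemma says: if the Fake Degree Conjecture holds for the algebra group $1+J$, then the number of fixed points of the coadjoint action of $1+J$ on $\Irr(J)$ coincides with the number of linear characters of $1+J$, and both are equal to $|J/[J,J]_L|$. Applied to $J = \I_{\F_q}$ this yields
\[
|(1+\I_{\F_q})_{\ab}| \;=\; |\I_{\F_q}/[\I_{\F_q},\I_{\F_q}]_L|,
\]
which takes care of the first equality in the statement.

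Next I would compute the right-hand side by applying Lemma \ref{lm:FD_ab_lie_conjugacy_classes} with $\f = \F_q$. That lemma identifies
\[
\dim_{\F_q} \I_{\F_q}/[\I_{\F_q},\I_{\F_q}]_L \;=\; \kk(\pi)-1,
\]
so that $|\I_{\F_q}/[\I_{\F_q},\I_{\F_q}]_L| = q^{\kk(\pi)-1}$. Splicing this into the previous equality completes the proof.

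There is no real obstacle here: the proposition is stated precisely to package the two preceding lemmas into a statement about group algebras over finite $p$-groups, which is the family to be used in the sequel as a source of potential counterexamples. The only thing worth emphasizing in the write-up is that Lemma \ref{lm:abelianization} is used in its conditional form (assuming the Fake Degree Conjecture), whereas Lemma \ref{lm:FD_ab_lie_conjugacy_classes} is unconditional, so the hypothesis on the Fake Degree Conjecture is needed only for the first equality.
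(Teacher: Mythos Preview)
Your proposal is correct and matches the paper's approach exactly: the paper presents this proposition ``as an immediate consequence of the above results,'' meaning precisely the combination of Lemma~\ref{lm:abelianization} (applied conditionally to $J=\I_{\F_q}$) and Lemma~\ref{lm:FD_ab_lie_conjugacy_classes} (unconditionally, with $\f=\F_q$). Your observation that the Fake Degree Conjecture is needed only for the first equality is also accurate and worth keeping.
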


\begin{theorem}
The Fake Degree Conjecture is false for any prime p.
\end{theorem}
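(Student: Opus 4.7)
The plan is to derive a contradiction from the Fake Degree Conjecture by combining the two main results already displayed in the introduction. Concretely, I would reduce the problem to a statement about the Bogomolov multiplier $\B_0(\pi)$ of finite $p$-groups and then invoke the existence of $p$-groups with nontrivial Bogomolov multiplier for every prime $p$.

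First, I would assume toward a contradiction that the Fake Degree Conjecture holds for all $\F_q$-algebra groups. Then Proposition \ref{prop:consequence_FD} applies: for every finite $p$-group $\pi$ and every $p$-power $q$, the abelianization of the normalized unit group of $\F_q[\pi]$ must satisfy
\[
|(1+\I_{\F_q})_{\ab}| = q^{\kk(\pi)-1}.
\]
Second, I would invoke Theorem \ref{th:sizeequality}, which asserts that the true size of the abelianization is
\[
|(1+\I_{\F_q})_{\ab}| = q^{\kk(\pi)-1}\,|\B_0(\pi)|.
\]
Comparing these two equalities, the Fake Degree Conjecture would force $|\B_0(\pi)|=1$ for every finite $p$-group $\pi$.

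To close the argument, I would then exhibit, for each prime $p$, a finite $p$-group $\pi$ with $\B_0(\pi)\neq 1$. As remarked in the introduction (and documented in the references cited there, e.g.\ \cite{Kan14}), such $p$-groups are well known to exist for every prime $p$. Taking any such $\pi$ and any $q$ a power of $p$, the resulting algebra group $1+\I_{\F_q}$ provides an explicit counterexample, since its actual abelianization is strictly larger than the value $q^{\kk(\pi)-1}$ predicted by the Fake Degree Conjecture via Lemma \ref{lm:abelianization} and Lemma \ref{lm:FD_ab_lie_conjugacy_classes}.

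The conceptual content of the argument is thus entirely packed into Theorem \ref{th:sizeequality}; the main obstacle in the whole chapter is proving that theorem, i.e.\ identifying the discrepancy $|(1+\I_{\F_q})_{\ab}|/q^{\kk(\pi)-1}$ with the Bogomolov multiplier $\B_0(\pi)$. Given that theorem, the disproof of the Fake Degree Conjecture itself is a one-line comparison with Proposition \ref{prop:consequence_FD}.
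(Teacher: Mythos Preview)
Your argument is correct, but it is not the proof that the paper gives in Section~\ref{sec:disproof}. The paper's disproof is placed \emph{before} Theorem~\ref{th:sizeequality} and is deliberately self-contained: it fixes the explicit $p$-group $\pi=\langle x_1,\dots,x_4\mid x_i^p=[x_j,x_k,x_l]=1\rangle$, passes to the quotient $\widetilde{\pi}=\pi/\langle z\rangle$ with $z=[x_1,x_2][x_3,x_4]$, and then bounds $|(1+\I_{\F_p})_{\ab}|$ by filtering with the subgroups $A_i=1+(z-1)^i\F_p[\pi]$. Assuming the Fake Degree Conjecture for $\widetilde{\pi}$ to evaluate the bottom layer and using that $z$ is a commutator but not a simple one, this yields $|(1+\I_{\F_p})_{\ab}|\le p^{p\kk(\widetilde{\pi})-2}=p^{\kk(\pi)-2}$, contradicting Proposition~\ref{prop:consequence_FD} applied to $\pi$ itself.

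Your route is exactly the one the paper points out in the paragraph following Theorem~\ref{th:sizeequality} (``Thus we obtain explicit counterexamples to the Fake Degree Conjecture for all primes''). It is logically sound and conceptually cleaner, but it outsources all the work to the $\K$-theoretic proof of Theorem~\ref{th:sizeequality} via Oliver's results. The paper's hands-on argument, by contrast, needs no $\K$-theory and no mention of $\B_0$; it is what motivates the later structural theorem rather than depending on it. Note also a small directional contrast: your argument shows the true abelianization is \emph{larger} than $q^{\kk(\pi)-1}$ (by the factor $|\B_0(\pi)|$), whereas the paper's filtration argument, run under the hypothesis that the conjecture holds for $\widetilde{\pi}$, produces an upper bound that is \emph{smaller}; both yield contradictions, just from opposite sides.
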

\begin{proof}
Let us assume that the Fake Degree Conjecture holds for $\Fp$-algebra groups for some prime $p$.
Take the $p$-group
\[
\pi:=\langle x_{1},x_{2},x_{3},x_{4}:x_{i}^p=[x_{j},x_{k},x_{l}]=1\ \text{for all}\ i,j,k,l\in\{1,2,3,4\}\rangle.
\]
Consider the element $z:=[x_{1},x_{2}][x_{3},x_{4}]\in Z(\pi)$ and write $\widetilde{\pi}:=\pi/\langle z\rangle$.
We denote by $\I_{\F_p}$ the augmentation ideal of $\F_p[\pi]$ and by $\widetilde{\I}_{\F_p}$ the augmentation ideal of $\F_p[\widetilde{\pi}]$.
We will try to estimate $|1+\I_{\F_p}/[1+\I_{\F_p},1+\I_{\F_p}]|$. For this consider the series of subgroups
$$A_{i}:=1+(z-1)^{i}\mathbb{F}_{p}[\pi] \unlhd 1+\I_{\F_p}\ , \ i\in\{1,\ldots,p-1\}.$$
We then  have:
\begin{multline*}
|1+\I_{\F_p}/[1+\I_{\F_p},1+\I_{\F_p}]|=|1+\I_{\F_p}/A_{1}[1+\I_{\F_p},1+\I_{\F_p}]|\cdot \\ \prod_{i=1}^{p-1} |A_{i}[1+\I_{\F_p},1+\I_{\F_p}]/A_{i+1}[1+\I_{\F_p},1+\I_{\F_p}]|.
\end{multline*}
The first term is just $|1+\widetilde{\I}_{\F_p}/[1+\widetilde{\I}_{\F_p},1+\widetilde{\I}_{\F_p}]|=p^{\kk(\widetilde{\pi})-1}$, by the preceding proposition.
For the rest of the terms observe that we have isomorphisms:
\[
A_{i}[1+\I_{\F_p},1+\I_{\F_p}]/A_{i+1}[1+\I_{\F_p},1+\I_{\F_p}]\cong A_{i}/(A_{i+1}[1+\I_{\F_p},1+\I_{\F_p}]\cap A_{i}).
\]
Moreover we have an isomorphism of $1+\I_{\F_p}$-modules between $A_{i}/A_{i+1}$ and $\Fp[\widetilde{\pi}]$,
where $1+\I_{\F_p}$ acts naturally by conjugation and the isomorphism is given by $1+(z-1)x\mapsto \bar{x}$ for every $x\in \pi$.
We observe that $1+\I_{\F_p}$ acts trivially on the quotient  $A_{i}/(A_{i+1}[1+\I_{\F_p},1+\I_{\F_p}]\cap A_{i})$
and hence $|A_{i}/(A_{i+1}[1+\I_{\F_p},1+I_{\F_p}]\cap A_{i})|\leq|\Fp[\widetilde{\pi}]/[\Fp[\widetilde{\pi}],1+\I_{\F_p}]|$.
However
\[
[\Fp[\widetilde{\pi}],1+\I_{\F_p}]=[\Fp[\widetilde{\pi}],1+\widetilde{\I}_{\F_p}]=[\Fp[\widetilde{\pi}],\widetilde{\I}_{\F_p}]_{L}=[\widetilde{\I}_{\F_p},\widetilde{\I}_{\F_p}],
\]
since for every $h\in\widetilde{\pi}$ and $g\in 1+\widetilde{\I}_{\F_p}$ we have $h-h^{g}=[g^{-1},gh]_{L}$.
This in turn implies that for every $i\in\{1,\ldots,p-1\}$ we have
\begin{align*}
|A_{i}/(A_{i+1}[1+\I_{\F_p},1+\I_{\F_p}]\cap A_{i})|& \leq|\Fp[\widetilde{\pi}]/[\Fp[\widetilde{\pi}],1+\I_{\F_p}]|\\
& =|\Fp[\widetilde{\pi}]/[\Fp[\widetilde{\pi}],\Fp[\widetilde{\pi}]]_{L}|=p^{\kk(\widetilde{\pi})}.
\end{align*}
We can in fact sharpen this inequality for $i=1$.
Observe that ${z=1+(z-1)1}$ is a commutator and hence belongs to ${A_{2}[1+\I_{\F_p},1+\I_{\F_p}]\cap A_{1}}$,
however $1\notin[\Fp[\widetilde{\pi}],\Fp[\widetilde{\pi}]]_{L}$.
Hence 
\[
|A_{1}/(A_{2}[1+\I_{\F_p},1+\I_{\F_p}]\cap A_{1})|\leq p^{\kk(\widetilde{\pi})-1}.
\]
Therefore we have shown the following inequality
\[
|1+\I_{\F_p}/[1+\I_{\F_p},1+\I_{\F_p}]|\leq p^{p\kk(\widetilde{\pi})-2}.
\]
The following lemma establishes the relation between $\kk(\pi)$ and $\kk(\widetilde{\pi}):$
\begin{lemma}
Let $\pi$ and $\widetilde{\pi}$ be as above, then $\kk(\pi)=p\kk(\widetilde{\pi})$.
\end{lemma}
\begin{proof}
Let $\{\overline{x^{h_{0}=1}},\overline{x^{h_{1}}},\ldots,\overline{x^{h_{n}}}\}$ be a conjugacy class in $\widetilde{\pi}$, where $\overline{x}$ stands for $x\langle z\rangle\in\widetilde{\pi}$.
Let us show that $\{x,x^{h_{1}},\ldots,x^{h_{n}}\}$ is a conjugacy class in $\pi$.
If there exists $h\in \pi$ such that $x^{h}\notin\{x,x^{h_{1}},\ldots,x^{h_{n}}\}$, then $x^{h}=x^{h_{i}}z^{k}$ for some $i\in\{0,\ldots,n\},\ k\in\{1,\ldots,p-1\}$.
Hence $x=x^{h_{i}h^{-1}}z^{k}$, which implies $z=[h_{i}h^{-1},x^{1/k}]$, but $z$ is not a simple commutator. 
Since $x$ was an arbitrary representative of $\overline{x}$, it follows that  $\overline{x}^{\widetilde{\pi}}$ splits into $p$ distinct conjugacy classes of $\pi$.
\end{proof}
Substituting this in the previous inequality we obtain
$$|1+\I_{\F_p}/[1+\I_{F_p},1+\I_{\F_p}]|\leq p^{\kk({\pi})-2},$$
which contradicts the previous proposition.
\end{proof}

\section{Abelianization of groups of units and the Bogomolov multiplier} \label{sec:Main&Bogomolov}
\sectionmark{Groups of units and the Bogomolov multiplier}
Motivated by the results in the previous section, we want to better understand the groups $(1+\I_{\F_q})_{\ab}$.
To this end we will apply Oliver's results in $\cite{Oli80}$.
This will allow us to prove Theorem $\ref{th:exactseq}$ which gives enough information about the group $(1+\I_{\F_q})$ to prove Theorem \ref{th:sizeequality}.

\subsection{The group $(1+\I_{\mathbb{F}_q})_{\ab}$}
Given a ring $\R$, recall that $\I_{\R}$ denotes the augmention ideal of the group ring $\R[\pi]$, that is,
\[
\I_{\R}:=\ker\left(\R[\pi]\to \R\right)=\{\sum_{g\in\pi}r_g g\; :\; \sum_{g\in\pi}r_g=0\}.
\]
We will approach the group $(1+\I_{\F_q})_{\ab}$ via $\K$-theory.
Given a ring $\R$, the first $\K$-theoretical group of $\R$ is defined as 
\[
\K_1(\R):=\GL(\R)_{\ab}.
\]
When $\R$ is a local ring, we have $\K_1(\R)\cong (\R^*)_{\ab}$ (see \cite[Corollary 2.2.6]{Ros94}), the abelianization of the group of units of the ring $\R$.
Hence we have $\K_1(\F_q[\pi])\cong \F_q^*\times(1+\I_{\F_q})_{\ab}$.
We will apply the results in \cite{Oli80}, to obtain information about the group $\K_1(\F_q[\pi])$.

Let us first introduce the appropriate setting.
Let $\OO$ be the ring of integers of a $p$-adic field $k$.
We define
\begin{align*}
\SK_1(\OO[\pi])&:=\ker\left(\K_1(\OO[\pi])\to\K_1(k[\pi])\right)\\
\Wh(\OO[\pi])&:=\K_1(\OO[\pi])/\left(\OO^*\times\pi_{\ab}\right)\\
\Wh'(\OO[\pi])&:=\Wh(\OO[\pi])/\SK_1(\OO[\pi])\\
\end{align*}
The main theorem in \cite{Oli80} is the following.
\begin{theorem}\label{th:oliver}
Let $\pi$ be a $p$-group and $\OO$ the ring of integers of a $p$-adic local field. Then
\[
\SK_1(\OO[\pi])\cong B_0(\pi).
\]
\end{theorem}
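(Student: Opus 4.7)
The plan is to follow the strategy of Oliver's original argument in \cite{Oli80}, which computes this group under a different name (the name ``Bogomolov multiplier'' for the quotient of the Schur multiplier came later). First I would reduce to the case $\OO=\ZZ_p$: restriction and induction between $\OO[\pi]$ and $\ZZ_p[\pi]$ are well-behaved on the $p$-primary part of $\SK_1$, and since $\SK_1(\OO[\pi])$ is a finite $p$-group (being a subgroup of the torsion of $\K_1$ of a $p$-adic order, and rationally killed by passage to $k[\pi]$), a transfer--restriction argument reduces the computation to $\ZZ_p[\pi]$.

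Second, the central tool is the integral $p$-adic logarithm. Because $\pi$ is a $p$-group, the augmentation ideal $\I_{\OO}$ is $p$-nilpotent and the series
\[
\log(1+x)=\sum_{n\geq 1}(-1)^{n+1}\frac{x^n}{n}
\]
converges to give a homomorphism $\log\colon 1+\I_{\OO}\to \I_{\OO}\otimes_{\ZZ_p}\QQ_p$ after a bounded denominator. Composing with projection to the abelianization $(1+\I_{\OO})_{\ab}\cong \K_1(\OO[\pi])/\OO^*$ and pairing with the trace map to the centre, one controls the image of $\K_1(\OO[\pi])$ in $\K_1(k[\pi])$; the elements killed by this pairing (modulo the induced commutator relations coming from $\GL_n$) are precisely $\SK_1(\OO[\pi])$. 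Via Hopf's formula $\HH_2(\pi,\ZZ)\cong (R\cap[F,F])/[F,R]$ for a free presentation $\pi=F/R$, one then identifies $\SK_1(\OO[\pi])$ with the cokernel of the canonical map
\[
\bigoplus_{A\leq\pi\text{ abelian}} \HH_2(A,\ZZ)\;\longrightarrow\;\HH_2(\pi,\ZZ),
\]
where the sum runs over abelian subgroups and the arrows are induced by inclusions. The intuition is that elements of $\SK_1$ arising from abelian subgroups are forced to be trivial (since for abelian $\pi$ one has $\SK_1(\OO[\pi])=0$), while all other contributions come from genuine noncommutativity captured by $\HH_2(\pi,\ZZ)$.

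Third, I would match this cokernel with $\B_0(\pi)$. By the universal coefficient theorem, $\HH^2(\pi,\QQ/\ZZ)$ is Pontryagin dual to $\HH_2(\pi,\ZZ)$ for $\pi$ finite, and restriction in cohomology is dual to the inclusion-induced map in homology. Consequently
\[
\operatorname{coker}\!\Bigl(\bigoplus_{A\text{ abelian}}\!\HH_2(A,\ZZ)\to \HH_2(\pi,\ZZ)\Bigr)
\]
is dual to
\[
\ker\!\Bigl(\HH^2(\pi,\QQ/\ZZ)\to\prod_{A\text{ abelian}}\HH^2(A,\QQ/\ZZ)\Bigr)=\B_0(\pi),
\]
and since both groups are finite abelian, duality gives an abstract isomorphism.

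The hard part will be the second step, namely the identification of $\SK_1$ with the explicit cokernel involving $\HH_2$ of abelian subgroups. This requires tracking how commutators in $\GL_n(\OO[\pi])$ interact with the logarithm and trace, and in particular pinning down exactly which torsion elements of $\K_1(\OO[\pi])$ lift to commutators in $\GL_\infty$ versus which are genuine obstructions. Oliver's argument handles this through a subtle Magnus-type expansion combined with a careful induction on the derived length of $\pi$, and it is this bookkeeping --- not the formal identification with $\B_0(\pi)$ --- that is the core of the theorem.
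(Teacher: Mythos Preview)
The paper does not prove this theorem; it is quoted as the main result of \cite{Oli80} and used as a black box. So there is no ``paper's proof'' to compare against. What the paper does contain, in the proof of Theorem~\ref{th:exactseq}, is a summary of Oliver's machinery that is consistent with the outline you sketch: the $p$-adic logarithm $\Log\colon 1+\I_{\R_q}\to \I_{\R_q}\otimes\QQ_p$, the integer-valued map $\Gamma=(1-\tfrac{1}{p}\Phi)\circ\log$ on $\Wh'(\R_q[\pi])$, and the short exact sequence $1\to\Wh'(\R_q[\pi])\xrightarrow{\Gamma}\bar\I_{\R_q}\to\pi_{\ab}\to 1$ from \cite[Theorem~2]{Oli80}. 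The transfer isomorphism you invoke in your first step is also cited in the paper as \cite[Proposition~21]{Oli80}. So the logarithm-and-trace strategy you outline is indeed Oliver's, and the paper confirms this by using exactly those ingredients downstream.

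One caution on your sketch: the paper (following Oliver) works directly over general $\OO=\R_q$ with the twisted Frobenius $\Phi$ rather than first reducing to $\ZZ_p$; the transfer is used to compare $\SK_1$ over different rings of integers, not as a preliminary reduction. This is a matter of organization rather than a gap, but if you want your write-up to align with what the paper later uses, it is cleaner to keep $\OO$ general throughout and let the $\varphi$-twisted map $1-\tfrac{1}{p}\Phi$ do the work.
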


Recall that the Bogomolov multiplier of the group $\pi$ is defined as 
\[
B_0(\pi)=H_2(\pi,\mathbb{Z})/H_2^{\ab}(\pi,\mathbb{Z}),
\]
where $H_2^{\ab}(\pi,\mathbb{Z})$ denotes the subgroup of $H_2(\pi,\mathbb{Z})$ generated by $H_2(\tau,\mathbb{Z})$ for every abelian subgroup $\tau\leq\pi$.

Let us introduce the appropriate setting to apply Theorem \ref{th:oliver}.
Let $\zeta_{q-1}$ be a primitive $\small q-1$-root of unity.
If $p$ is a prime and $q $ is a power of $p$, let $\R_q=\ZZ_p[\zeta_{q-1}]$ be a finite extension of the $p$-adic integers $\ZZ_p$.
Note that $\R_q/p\R_q \cong \FF_q$.
Fix a $\ZZ_p$-basis $ B_{q}=\{\lambda_j \mid 1 \leq j \leq n \}$ of $\R_q$ and let $\varphi$ be a generator of $\Aut_{\ZZ_p}(\R_q) \cong \Gal(\FF_q/\FF_p)$ such that $\varphi(\lambda) \equiv\lambda^p \pmod{p}$.
Let us define
\[
\bar\I_{\R_q} = \I_{\R_q}/\langle x - x^g \mid x \in \I_{\R_q}, \, g \in \pi
\rangle.
\] 
Set $\mathcal C$ to be a set of nontrivial conjugacy class representatives of
$\pi$.
Then $\bar\I_{\R_q}$ can be regarded as a free $\ZZ_p$-module with basis $\{ \lambda \overline{(1 - r)} \mid \lambda\in B_{q}, \, r \in \mathcal C \}$.
Finally define the abelian group $\M_q$ to be
\[
\M_q=\bar\I_{\R_q}/\langle p \lambda \overline{(1 - r)} - \varphi(\lambda)\overline{(1 - r^p)} \mid \lambda\in B_{q},\ r \in \mathcal C\rangle.
\]
The proof of Theorem \ref{th:sizeequality} rests on the following structural description of the group $(1 + \I_{\FF_q})_{\ab}$.

\begin{theorem}
\label{th:exactseq}
Let $\pi$ be a finite $p$-group. There is an exact sequence
\begin{equation*}
\label{eq:extheorem}
\xymatrix{ 
1 \ar[r] & \B_0(\pi) \times \pi_{\ab} \ar[r] & (1 + \I_{\FF_q})_{\ab} \ar[r] & \displaystyle \M_q \ar[r] & \pi_{\ab} \ar[r] & 1.
}
\end{equation*}
\end{theorem}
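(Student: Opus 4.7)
The strategy is to compute $(1+\I_{\F_q})_{\ab}$ via $K$-theoretic methods, lifting from characteristic $p$ to the $p$-adic integers $\R_q$ and then applying Oliver's integral logarithm construction together with Theorem \ref{th:oliver}.

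\emph{Step 1: K-theoretic identification.} Since $\F_q[\pi]$ is a local ring (its radical is $\I_{\F_q}$ and residue ring $\F_q$), Whitehead's lemma gives $K_1(\F_q[\pi]) \cong \bigl(\F_q[\pi]^{\times}\bigr)_{\ab} \cong \F_q^{\times} \times (1+\I_{\F_q})_{\ab}$. Under the natural inclusion $\pi \hookrightarrow 1+\I_{\F_q}$, $g \mapsto 1+(g-1) = g$, the group $\pi_{\ab}$ embeds as a subgroup of $(1+\I_{\F_q})_{\ab}$; this inclusion will account for the $\pi_{\ab}$ summand appearing on the left of the sequence.

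\emph{Step 2: Lift to characteristic zero.} The ring surjection $\R_q[\pi] \twoheadrightarrow \F_q[\pi]$ has kernel $p\,\R_q[\pi] \subseteq \rad(\R_q[\pi])$, so Hensel-type lifting yields a surjection $(1+\I_{\R_q})_{\ab} \twoheadrightarrow (1+\I_{\F_q})_{\ab}$ whose kernel is the image of $1+p\,\R_q[\pi]$.

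\emph{Step 3: Apply Oliver's integral logarithm.} In \cite{Oli80}, Oliver constructs a group homomorphism
\[
\Gamma \colon (1+\I_{\R_q})_{\ab} \longrightarrow \bar\I_{\R_q}, \qquad \Gamma(u) = \log(u) - \tfrac{1}{p}\,\psi\bigl(\log(u)\bigr),
\]
where $\psi$ is the endomorphism of $\R_q[\pi]$ extending $\varphi$ on $\R_q$ and $g \mapsto g^p$ on $\pi$; the subtraction cancels the $p$-denominators of $\log$, so $\Gamma$ really lands in $\bar\I_{\R_q}$. Oliver proves that this fits into a four-term exact sequence
\[
1 \longrightarrow SK_1(\R_q[\pi]) \longrightarrow \Wh'(\R_q[\pi]) \xrightarrow{\;\bar\Gamma\;} \bar\I_{\R_q}/(\text{Frobenius relations}) \longrightarrow \pi_{\ab} \longrightarrow 1,
\]
where the right-hand surjection is induced by the augmentation-like map $\lambda\,\overline{(1-r)} \mapsto \bar r$, and the quotient module appearing in the middle is precisely $M_q$ as defined in the statement. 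By Theorem \ref{th:oliver}, the leftmost term is $B_0(\pi)$.

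\emph{Step 4: Descend to $\F_q$.} Combining Step 2 with Oliver's sequence in Step 3 via a diagram chase, one obtains an induced map $\bar L \colon (1+\I_{\F_q})_{\ab} \to M_q$: given $u \in 1+\I_{\F_q}$, lift to $\tilde u \in 1+\I_{\R_q}$, apply $\Gamma$, and verify that the result in $M_q$ is independent of the choice of lift (this is exactly where the defining relations $p\lambda\,\overline{(1-r)} - \varphi(\lambda)\,\overline{(1-r^p)}$ of $M_q$ are needed: different lifts differ by elements of $p\R_q[\pi]$, whose contribution to $\Gamma$ lies in the kernel of the quotient $\bar\I_{\R_q} \twoheadrightarrow M_q$). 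The cokernel of $\bar L$ is then identified with $\pi_{\ab}$ using the augmentation as above, and the kernel computation gives $B_0(\pi) \times \pi_{\ab}$: the $\pi_{\ab}$ factor is the image of $\pi \hookrightarrow 1+\I_{\F_q}$ from Step 1, which maps to $0$ in $M_q$ since $\Gamma(g) = (g-1) - \tfrac{1}{p}(g^p-1)$ vanishes in $M_q$ by construction; the $B_0(\pi)$ factor is $SK_1(\R_q[\pi])$, which by a separate verification injects into $(1+\I_{\F_q})_{\ab}$ (it is not killed by reduction mod $p$, as $SK_1$-elements are $p$-torsion lifts that already have nontrivial image in the mod-$p$ abelianization).

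\emph{Main obstacle.} The delicate point is Step 4: proving that the composite map $(1+\I_{\F_q})_{\ab} \to M_q$ is well-defined and that $SK_1(\R_q[\pi]) \cong B_0(\pi)$ maps \emph{injectively} (rather than trivially) into $(1+\I_{\F_q})_{\ab}$. This requires a careful analysis of the filtration by powers of $p$ on $1+\I_{\R_q}$, reconciling Oliver's $p$-adic logarithmic description with the purely characteristic-$p$ abelianization, and it is here that the specific Frobenius-twisted relations defining $M_q$ enter in an essential way.
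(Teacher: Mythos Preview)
Your approach is the same as the paper's, but two points need correction.

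First, in Step~3 you misstate Oliver's result. His exact sequence (\cite[Theorem~2]{Oli80}) is
\[
1 \longrightarrow \Wh'(\R_q[\pi]) \xrightarrow{\ \Gamma\ } \bar\I_{\R_q} \longrightarrow \pi_{\ab} \longrightarrow 1,
\]
with target $\bar\I_{\R_q}$, \emph{not} $\M_q$; and $\Wh' = \Wh/\SK_1$ by definition, so inserting $\SK_1$ on the left renders the sequence non-exact. The group $\M_q$ only enters after reduction mod~$p$: Oliver's Proposition~2 identifies $\K_1(\R_q[\pi], p\I_{\R_q})$ with $p\bar\I_{\R_q}$ via $\log$, and a commutative diagram then yields
\[
\coker(\Gamma \circ \partial) \;\cong\; \coker\!\Bigl((1 - \tfrac{1}{p}\Phi)\colon p\bar\I_{\R_q} \to \bar\I_{\R_q}\Bigr),
\]
which is precisely $\M_q$. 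So $\M_q$ is not the target of Oliver's map but the cokernel obtained after composing with the relative boundary $\partial$.

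Second, you correctly flag the injectivity of $\B_0(\pi)\times\pi_{\ab}$ into $(1+\I_{\FF_q})_{\ab}$ as the delicate step, but your proposed justification (``$\SK_1$-elements are $p$-torsion lifts that already have nontrivial image'') is not an argument. The paper's resolution is clean: since $\K_1(\R_q[\pi], p\I_{\R_q}) \cong p\bar\I_{\R_q}$ is torsion-free, the kernel of the reduction map $\mu\colon \K_1(\R_q[\pi]) \to \K_1(\FF_q[\pi])$ intersects the torsion subgroup $\SK_1(\R_q[\pi]) \times \pi_{\ab}$ trivially, whence $\mu$ restricts to an injection on that subgroup.
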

\begin{proof}
Our proof relies on inspecting the connection between $\K_1(\FF_q[\pi])$ and $\K_1(\R_q[\pi])$ by utilizing the results of \cite{Oli80}.
By Theorem \ref{th:oliver}, we know that $\SK_1(\R_q[\pi])\cong B_0(\pi)$.

The crux of understanding the structure of the group $\K_1(\R_q[\pi])$ is in the
short exact sequence (see \cite[Theorem 2]{Oli80})
\begin{equation*}
\xymatrix{ 
1 \ar[r] & \Wh'(\R_q[\pi]) \ar[r]^-{\Gamma} & \bar\I_{\R_q} \ar[r] & \pi_{\ab} \ar[r] & 1,
}
\end{equation*}
where the map $\Gamma$ is defined by composing the $p$-adic logarithm with a
linear automorphism of $\bar\I_{\R_q} \otimes \QQ_p$. More precisely, there is a
map $\Log \colon 1 + \I_{\R_q} \to \I_{\R_q} \otimes \QQ_p$, which induces an
injection $\log \colon \Wh'(\R_q[\pi]) \to \bar\I_{\R_q} \otimes \QQ_p$. Setting
$\Phi \colon \I_{\R_q} \to \I_{\R_q}$ to be the map  $\sum_{g \in \pi} \alpha_g
g \mapsto \sum_{g \in \pi} \varphi(\alpha_g) g^p$, we define $\Gamma \colon
\Wh'(\R_q[\pi]) \to \bar\I_{\R_q} \otimes \QQ_p$ as the  composite of $\log$
followed by the linear map $1 - \frac{1}{p} \Phi$. It is shown in
\cite[Proposition 10]{Oli80} that  $\im \Gamma \subseteq \bar\I_{\R_q}$, i.e.,
$\Gamma$ is integer-valued. We thus have a diagram
\begin{equation}
\label{eq:diagWh}
\xymatrix{ 
 & 1 + \I_{R_q} \ar[d] \ar@/^/[dr]^-{(1 - \frac{1}{p}\Phi)\circ \log} \\
1 \ar[r] & \Wh'(\R_q[\pi]) \ar[r]^-{\Gamma} & \bar\I_{R_q}.
}
\end{equation}
The group $\Wh'(\R_q[\pi])$ is torsion-free (cf. \cite {Wa}), so we have an
explicit description
\begin{equation}
\label{eq:k1explicit}
\K_1(\R_q[\pi]) \cong  \R_q^*\times\SK_1(\R_q[\pi]) \times \pi_{\ab} \times \Wh'(\R_q[\pi]).
\end{equation}

To relate the results above to $\K_1(\FF_q[\pi])$, we use the functoriallity of $\K_1$.
Given a ring $\R$ and and ideal $I\subset\R$, the projection map $\R\to\R/I$ induces a map $\mu:\K_1(\R)\to \K_1(\R/I)$ whose kernel is denoted by $\K_1(\R,I)$.
Let $\mathfrak{p}$ be the ideal in $\R_q[\pi]$ generated by $p$.
Since $\F_q[\pi]=\R_q[\pi]/\mathfrak{p}$ we obtain an exact sequence
\begin{equation}
\label{eq:exseqK}
\xymatrix{ 
\K_1(\R_q[\pi], \mathfrak{p}) \ar[r]^-{\partial} & \K_1(\R_q[\pi]) \ar[r]^-{\mu} & \K_1(\FF_q[\pi]) \ar[r] & 1.
}
\end{equation}

Since $\R_q[\pi]$ is a local ring (\cite[Corollary XI.1.4]{Bas68}), we have $\K_1(\R_q[\pi])\cong \R_q^*\times(1+\I_{\R_q})$ and so the map $\mu$ indeed surjects onto $\K_1(\F_q[\pi])=\F_q^*\times(1+\I_{\F_q})$.
Moreover $\K_1(\R_q[\pi], \mathfrak{p}) = (1 + p\R_q) \times \K_1(\R_q[\pi],p\I_{\R_q})$
and $\R_q^*/(1 + p\R_q) \cong \FF_q^* $. 
Hence \eqref{eq:k1explicit} and \eqref{eq:exseqK} give a reduced exact sequence
\begin{equation}
\label{eq:exseqKreduced}
\xymatrix{ 
\K_1(\R_q[\pi], p\I_{\R_q}) \ar[r]^-{\partial} & \Wh'(\R_q[\pi]) \ar[r]^-{\mu} & \displaystyle \frac{(1 + \I_{\FF_q})_{\ab}}{\mu(\SK_1(\R_q[\pi]) \times \pi_{\ab})} \ar[r] & 1.
}
\end{equation}
To determine the structure of the relative group $\K_1(\R_q[\pi], p\I_{\R_q})$
and its connection to the map $\partial$, we make use of \cite[Proposition
2]{Oli80}. The restriction of the logarithm map $\Log$ to $1 + p\I_{\R_q}$
induces an isomorphism $\log \colon \K_1(\R_q[\pi], p\I_{\R_q}) \to
p\bar\I_{\R_q}$ such that the following diagram commutes:
\begin{equation}
\label{eq:diagK1rel}
\xymatrix{ 
 & 1 + p\I_{\R_q} \ar[d] \ar@/^/[dr]^-{\log} \\
1 \ar[r] & \K_1(\R_q[\pi],p\I_{\R_q}) \ar[r]^-{\log} & p\bar\I_{\R_q}.
}
\end{equation}
In particular, the group $\K_1(\R_q[\pi],p\I_{\R_q})$ is torsion-free, and so
$\mu(\SK_1(\R_q[\pi]) \times \pi_{\ab}) \cong \SK_1(\R_q[\pi]) \times
\pi_{\ab}$. Note that by \cite[Theorem V.9.1]{Bas68}, the vertical map $1 +
p\I_{\R_q} \to \K_1(\R_q[\pi],p\I_{\R_q})$ of the above diagram is surjective.

We now collect the stated results to prove the theorem. First combine the
diagrams \eqref{eq:diagWh} and \eqref{eq:diagK1rel} into the following diagram:
\begin{equation}
\label{eq:diagCompare}
\xymatrix{ 
1 + p\I_{\R_q} \ar[d] \ar[rr] \ar@/^/[ddr]^{\log} && 1 + \I_{\R_q} \ar[d] \ar@/^/[ddr]^{(1 - \frac{1}{p}\Phi)\circ \log} \\
\K_1(\R_q[\pi],p\I_{\R_q}) \ar[rr]^-{\partial} \ar[dr]^-{\log} && \Wh'(\R_q[\pi]) \ar[dr]^-{\Gamma} \\
& p\bar\I_{\R_q} \ar[rr]^-{1 - \frac{1}{p}\Phi} && \bar\I_{\R_q}.
}
\end{equation}
Since the back and top rectangles commute and the left-most vertical map is
surjective, it follows that the bottom rectangle also commutes.  Whence $\coker
\Gamma\circ\partial \cong \coker (1 - \frac{1}{p}\Phi)$. Observing that the latter group is
isomorphic to $\M_q$, the exact sequence \eqref{eq:exseqKreduced} gives an exact
sequence
\begin{equation}
\label{eq:exfinal}
\xymatrix{ 
1 \ar[r] & \B_0(\pi) \times \pi_{\ab} \ar[r] & (1 + \I_{\FF_q})_{\ab} \ar[r] & \M_q \ar[r] & \pi_{\ab} \ar[r] & 1.
}
\end{equation}
The proof is complete.
\end{proof}

We now derive Theorem \ref{th:sizeequality} from Theorem \ref{th:exactseq}.

\begin{proof}[Proof of Theorem \ref{th:sizeequality}]
The exact sequence of Theorem \ref{th:exactseq} implies that $|(1 +
\I_{\FF_q})_{\ab}| = |\!\B_0(\pi)| \cdot |\!\M_q\!|$. Hence it suffices to
compute $|\!\M_q\!|$. To this end, we filter $\M_q$ by the series of its
subgroups
\[
\M_q\supseteq p\M_q\supseteq p^2\M_q\supseteq\cdots.
\]
Note that the relations $p \lambda \overline{(1 - r)} -
\varphi(\lambda)\overline{(1 - r^p)}=0$ imply $(\exp\pi) \M_q=0$, so the above
series reaches $0$.

For each $i \geq 0$, put
\[
\pi_i=\{x^{p^{i}}\mid x\in\pi\} \textrm{\ and \ }  \mathcal{C}_{i}=\mathcal{C}\cap(\pi_i\setminus \pi_{i+1}).
\]
Note that $\mathcal{C}_i$ is empty for $p^i \geq \exp \pi$.
Now,
\[
 p^{i}\M_q/p^{i+1}\M_q=\langle\lambda \overline{(1-r)}\mid\lambda\in B_{q},\ r\in\mathcal{C}_i\rangle \cong \bigoplus_{\mathcal{C}_i} C_p^{n}. 
\]
It follows that $|\!\M_q\!|=q^{|\mathcal{C}|}=q^{\kk(\pi)-1}$ and the proof is complete.
\end{proof}

\qnote{
\begin{example}
Let $\pi$ be the group given by the polycyclic generators $\{ g_i \mid 1 \leq i
\leq 7 \}$ subject to the power-commutator relations
\[
\begin{aligned}
g_{1}^{2} = g_{4}, \,
 g_{2}^{2} = g_{5}, \,
 g_{3}^{2} = g_{4}^{2} = g_{5}^{2} = g_{6}^{2} = g_{7}^{2} = 1, \\
[g_{2}, g_{1}]  = g_{3}, \,
 [g_{3}, g_{1}]  = g_{6} ,\,
 [g_{3}, g_{2}]  = g_{7} , \,
 [g_{4}, g_{2}]  = g_{6}, \,
 [g_{5}, g_{1}]  = g_{7},
\end{aligned}
\]
where the trivial commutator relations have been omitted. The group $\pi$ is of
order $128$ with $\pi_{\ab} \cong C_4 \times C_4$. Its Bogomolov multiplier is
generated by the commutator relation $[g_{3}, g_{2}] = [g_{5}, g_{1}]$ of order
$2$, see \cite[Family 39]{JM14}. We have $\kk(\pi) = 26$ and by inspecting the
power structure of conjugacy classes, we see that $\M_q \cong C_2^{13} \times
C_4^{6}$. On the other hand, using the available computational tool
\cite{LAGUNA}, it is readily verified that we have $(1 + \I_{\FF_q})_{\ab} \cong
C_2^{13} \times C_4^{5} \times C_8$. Following the proof of Theorem
\ref{th:exactseq}, the embedding of $\B_0(\pi) \times \pi_{\ab}$ into $(1 +
\I_{\FF_q})_{\ab}$ maps the generating relation $[g_{3}, g_{2}] = [g_{5},
g_{1}]$ of $\B_0(\pi)$ into the element $\exp((1 - g_7)(g_3 - g_5))$, which
belongs to $(1 + \I_{\FF_q})_{\ab}^4$. In particular, the embedding of
$\B_0(\pi) \times \pi_{\ab}$ into $(1 + \I_{\FF_q})_{\ab}$ may not be split.
\end{example}
}

\subsection{$B_0$ and rationality questions in unipotent groups}
As explained in the introduction, for a linear algebraic $\F_q$-group we have $G(\F_q)'=[G(\F_q),G(\F_q)]\subseteq G'(\F_q)$, where $G'$ is the derived group of $G$. 
Suppose now $G$ is the group defined by the algebra group $1+\I_{\F_p}$ where $\I_{\F_p}$ is the agumentation ideal of $\F_p[\pi]$ for some $p$-group $\pi$. Theorem \ref{t:algebraicgroup} and \ref{exponent} give a precise description of the behaviour of the difference between $G'(\Fq)$ and $G(\Fq)'$ for this class of groups.
\begin{proof}[Proofs of Theorem \ref{t:algebraicgroup} and \ref{exponent}]
We will consider an extension $\FF_{l}$ of $\FF_q$ of degree $m$. The inclusion
$G(\Fq)\subseteq G(\Fl)$ induces a map $f \colon G(\Fq)_{\ab}\to G(\Fl)_{\ab}$
with
\[
\ker f= (G(\Fq)\cap G(\Fl)')/ G(\Fq)'.
\]
Note that there exists a large enough $m$  such that $G'(\Fq)= G(\Fq)\cap G(\Fl)'$, and hence
$\ker f = G'(\FF_q)/ G(\FF_q)'$. For this reason we want to understand $\ker f$
for a given $m$. 


The inclusion $\Fq\subseteq \Fl$ induces a map $$\incl\colon\K_{1}(\Fq[\pi])\to
\K_{1}(\Fl[\pi]).$$ Note that $f$ is just the restriction of $\incl$ to
$(1+\I_{\Fq})_{\ab}$. Recalling sequence \eqref{eq:exseqKreduced} from the proof
of Theorem \ref{th:exactseq}, we set
\[
\SK_{1}(\Fl[\pi])=\mu(\SK_{1}(\R_{l}[\pi])\subseteq(1 + \I_{\Fl})_{\ab}=G(\Fl)/G(\Fl)'.
\]
Commutativity of the diagram
\begin{equation}
\label{eq:diagcomrstr}
\xymatrix{ 
 \K_{1}(\R_{q}[\pi])\ar[r]^-{\mu} \ar[d]^-{\incl} & \K_{1}(\Fq[\pi])\ar[d]^-{\incl}\\
 \K_{1}(\R_{l}[\pi])\ar[r]^-{\mu} & \K_{1}(\Fl[\pi])
 }
\end{equation}
shows that $\incl$ restricts to a map
$\incl\colon\SK_{1}(\Fq[\pi])\to\SK_{1}(\Fl[\pi])$. Recall that
$\SK_{1}(\R_l[\pi])\cong\B_{0}(\pi)$,so we obtain from sequence \eqref{eq:exfinal}
the commutative diagram
\begin{equation*} 
\xymatrix{
1 \ar[r] & \SK_{1}(\Fq[\pi])\times \pi_{\ab} \ar[r]\ar[d]^-{\incl\times \id} & G(\Fq)_{\ab} \ar[r]\ar[d]^-{f} & \M_q \ar[r]\ar[d]^-{\iota} & \pi_{\ab} \ar[r] & 1\\
1 \ar[r] & \SK_{1}(\Fl[\pi]) \times \pi_{\ab} \ar[r] &  G(\Fl)_{\ab} \ar[r] & \M_l \ar[r] & \pi_{\ab} \ar[r] & 1,
}
\end{equation*}
where $\iota$ is the map induced by the inclusion $\I_{\R_{q}}\subseteq \I_{\R_{l}}$.

We will now show that $\ker\iota=0$. This will imply $\ker
f\subseteq\SK_{1}(\Fq[\pi])$. Without loss of generality, we may assume that
there is an inclusion of bases $B_{q}\subseteq B_{l}$. As in the proof of Theorem \ref{th:sizeequality}, let us consider the series
\[
\M_l\supseteq p\M_l\supseteq p^2\M_l\supseteq\cdots.
\]
Observe again that for each $i \geq 0$ we have
\[
\begin{aligned}
p^{i}\M_q/p^{i+1}\M_q&=\langle\lambda\overline{(1-r)}\mid\lambda\in B_{q},\ r\in\mathcal{C}_i\rangle,\\
p^{i}\M_l/p^{i+1}\M_l&=\langle\lambda \overline{(1-r)}\mid\lambda\in B_{l},\ r\in\mathcal{C}_i\rangle.
\end{aligned}
\]
If we consider the graded groups associated to the series above, we get an induced map 
\[
\gr(\iota)\colon \bigoplus_{i \geq 0} p^{i}\M_q/p^{i+1}\M_q\to \bigoplus_{i \geq 0} p^{i}\M_l/p^{i+1}\M_l.
\]
By construction $\iota$ is induced by the assignments
$\lambda \overline{(1-r)}\mapsto\lambda\overline{(1-r)}$, for every $\lambda\in B_{q},\
r\in\mathcal{C}$. Hence $\gr(\iota)$ is injective in every component and
therefore injective. This implies $\ker\iota=0$, as desired. In particular, we obtain that 
\begin{equation}\label{manyelements}
|G(\Fq)/ G'(\Fq)|\ge|\M_q|= q^{\kk(\pi)-1}.
\end{equation}

We are now ready to show the first statement of Theorem \ref{t:algebraicgroup}.
Observe that $G$ is a unipotent connected algebraic group defined over $\Fp$ and so is $G'$ (\cite[Corollary I.2.3]{Bor}).
 Hence $ G'\cong_{\Fp}\mathbb{A}^{\dim G'}$ (cf. \cite[Remark A.3]{KMT74}) and so
$|G'(\Fp)|=p^{\dim G'}$. By (\ref{manyelements}), 
we have $|G(\Fp)/ G'(\Fp)|\ge p^{\kk(\pi)-1}$, whence $\dim G'\le  |\pi|-\kk(\pi)$. 
On the other
hand we have  $ [L_G,L_G]_{L}= [\I_{\F},\I_{\F}]_{L},$  which, by Lemma \ref{lm:FD_ab_lie_conjugacy_classes}, has dimension $|\pi|-\kk(\pi) $.
It is known that for an algebraic group, $\dim G'\geq\dim [L_G,L_G]_{L}$ (see \cite[Corollary 10.5]{Hum75}).
Thus $ \dim G'=|\pi|-\kk(\pi)$.

Let us set $e=\exp\B_{0}(\pi)$, the exponent of the group $\B_0(\pi)$.
We now claim that $\ker f=\SK_{1}(\Fq[\pi])$ if and only if $e$ divides $m=|\Fl:\Fq| $.
This will imply the second statement of Theorem \ref{t:algebraicgroup} and also  Theorem \ref{exponent}.

Let us consider
$\Fl[\pi]\cong\bigoplus_{i=1}^{m}\Fq[\pi]$ as a free $\Fq[\pi]$-module.  This
gives a natural inclusion $\GL_{1}(\Fl[\pi])\to \GL_{m}(\Fq[\pi])$, which
induces the transfer map
\[
\trf \colon \K_{1}(\Fl[\pi])\to \K_{1}(\Fq[\pi]).
\]
Note that if $x\in\K_{1}(\Fq[\pi])$, then $(\trf\circ\incl)(x)=x^{m}$.
By commutativity of \eqref{eq:diagcomrstr} the transfer map restricts to a map
\begin{equation*}
\trf \colon \SK_{1}(\Fl[\pi])\to \SK_{1}(\Fq[\pi]).
\end{equation*}
Moreover, by \cite[Proposition 21]{Oli80} the transfer map is an isomorphism.
It thus follows that $\incl(\SK_{1}(\Fq[\pi]))=1$ if and only if $e$ divides $m$.
Hence $\ker f=\SK_{1}(\Fq[\pi])$ if and only if $e$ divides $m $ and we are done.
 \end{proof}
\qnote{
\begin{example}
Let $\pi$ be the group given by the polycyclic generators $\{ g_i \mid 1 \leq i
\leq 7 \}$ subject to the power-commutator relations
\[
\begin{aligned}
g_{1}^{2} = g_{4}, \,
 g_{2}^{2} = g_{5}, \,
 g_{3}^{2} = g_{4}^{2} = g_{5}^{2} = g_{6}^{2} = g_{7}^{2} = 1, \\
[g_{2}, g_{1}]  = g_{3}, \,
 [g_{3}, g_{1}]  = g_{6} ,\,
 [g_{3}, g_{2}]  = g_{7} , \,
 [g_{4}, g_{2}]  = g_{6}, \,
 [g_{5}, g_{1}]  = g_{7},
\end{aligned}
\]
where the trivial commutator relations have been omitted. The group $\pi$ is of
order $128$ with $\pi_{\ab} \cong C_4 \times C_4$. Its Bogomolov multiplier is
generated by the commutator relation $[g_{3}, g_{2}] = [g_{5}, g_{1}]$ of order
$2$, see \cite[Family 39]{JM14}. We have $\kk(\pi) = 26$ and by inspecting the
power structure of conjugacy classes, we see that $\M_q \cong C_2^{13} \times
C_4^{6}$. On the other hand, using the available computational tool
\cite{LAGUNA}, it is readily verified that we have $(1 + \I_{\FF_q})_{\ab} \cong
C_2^{13} \times C_4^{5} \times C_8$. Following the proof of Theorem
\ref{th:exactseq}, the embedding of $\B_0(\pi) \times \pi_{\ab}$ into $(1 +
\I_{\FF_q})_{\ab}$ maps the generating relation $[g_{3}, g_{2}] = [g_{5},
g_{1}]$ of $\B_0(\pi)$ into the element $\exp((1 - g_7)(g_3 - g_5))$, which
belongs to $(1 + \I_{\FF_q})_{\ab}^4$. In particular, the embedding of
$\B_0(\pi) \times \pi_{\ab}$ into $(1 + \I_{\FF_q})_{\ab}$ may not be split.
\end{example}
}
\clearpage{\pagestyle{empty}\cleardoublepage}

\chapter{Finite groups of Lie type}
\section{Introduction}

In this chapter we investigate the representation growth of a certain class of groups arising from finite simple groups. The results presented in this chapter were obtained in collaboration with B. Klopsch.
We refer to Section \ref{sec:RG} for notation and concepts relating to representation growth.
As above, we only consider continuous representations.
Let $H$ be a group and suppose that $r_n(H)<\infty$ for every $n$.
Then one can formally define the representation zeta function of $H$ as the Dirichlet series
\[\zeta_H(s)=\sum_{n\in\NN}r_n(G)n^{-s}.\]
We write $\alpha(H)$ for the abscissa of convergence of $\zeta_H$.
Recall that given a group $\Gamma$ we define $R_n(\Gamma)=\sum_{i=1}^n r_n(\Gamma)$. The invariant $\alpha(H)$ is related to the asymptotic behaviour of the sequence $R_n(H)$ by the equation
\[\alpha(H)=\limsup \frac{\log R_n(H)}{\log n}.\]

Let us consider the class of groups 
\[
\mathcal{C}:=\{\mathbf{H}=\prod_{i\in I}S_i\ :\ S_i \ \mbox{is a nonabelian finite simple group}\}
.\]
We want to study the representation growth of groups in this class.
In particular, we want to understand under which conditions a group of this class has polynomial representation growth. 
Given a cartesian product of finite groups $\mathbf{H}=\prod_{i\in I}S_i$, let us define $L_{\mathbf{H}}(n):=\{i\in I: R_n(S_i)>1\}$ and put $l_{\mathbf{H}}(n):=|L_{\mathbf{H}}(n)|$.
Hence, $l_{\mathbf{H}}(n)$ counts the number of factors in $\mathbf{H}$ with a nontrivial irreducible representation of dimension at most $n$.
We will usually write $l(n)=l_{\mathbf{H}}(n)$ when there is no danger of confusion.
We immediately obtain the following necessary condition.
\begin{lemma}\label{prop:nec}
Let $\mathbf{H}=\prod_{i\in I}S_i$.
If $l_{\mathbf{H}}(n)$ is not polynomially bounded then $\mathbf{H}$ does not have PRG.
\end{lemma}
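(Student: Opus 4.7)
The plan is to prove the contrapositive: if $\mathbf{H}$ has polynomial representation growth, then $l_{\mathbf{H}}(n)$ is polynomially bounded. The strategy is to show the concrete inequality $R_n(\mathbf{H}) \geq l_{\mathbf{H}}(n)$, which immediately yields the statement, since if $l_{\mathbf{H}}(n)$ were not polynomially bounded then neither would $R_n(\mathbf{H})$ be.

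First I would recall the structure of irreducible representations of a cartesian product. By convention (see Section \ref{sec:RG}), $\Irr(\mathbf{H})$ consists of representations with finite image, so each $\rho \in \Irr(\mathbf{H})$ factors through a quotient by an open normal subgroup, and in particular through a finite subproduct $\prod_{i \in J} S_i$ for some finite $J \subseteq I$. By the standard description of irreducibles of a finite direct product (cf. Lemma \ref{lm:RP_direct_prod}), such a $\rho$ has the form $\bigotimes_{i \in J} \rho_i$ with $\rho_i \in \Irr(S_i)$, and we may tacitly pad by trivial representations outside $J$.

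Next I would construct many such representations in dimension $\leq n$. For each $i \in L_{\mathbf{H}}(n)$ the condition $R_n(S_i) > 1$ produces a nontrivial irreducible representation $\rho_i$ of $S_i$ of dimension at most $n$; extending by the trivial representation on every other factor gives an irreducible representation $\widetilde{\rho}_i \in \Irr(\mathbf{H})$ of dimension $\dim \rho_i \leq n$. Different indices $i \in L_{\mathbf{H}}(n)$ yield pairwise non-isomorphic representations, since the unique factor on which $\widetilde{\rho}_i$ acts non-trivially determines $i$. Consequently
\[
R_n(\mathbf{H}) \;\geq\; l_{\mathbf{H}}(n),
\]
and the lemma follows at once.

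There is no real obstacle here; the only minor point to handle carefully is the identification of irreducibles of the infinite cartesian product with tensor products supported on finitely many factors, which rests on the continuity/finite-image convention built into $\Irr(\cdot)$.
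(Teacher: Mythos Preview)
Your proof is correct and follows the same underlying idea as the paper: construct many distinct irreducibles of $\mathbf{H}$ from nontrivial irreducibles of the factors in $L_{\mathbf{H}}(n)$. The paper, however, takes pairs of indices and tensors two nontrivial factor representations together to obtain
\[
R_{n^2}(\mathbf{H}) \geq \frac{l(n)(l(n)-1)}{2},
\]
whereas you use a single factor at a time to get the cleaner inequality $R_n(\mathbf{H}) \geq l_{\mathbf{H}}(n)$. Both suffice for the lemma; your version is more direct and avoids the unnecessary squaring of $n$.
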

We aim at proving that for $\mathbf{H}\in\mathcal{C}$ the converse is also true.

\begin{theorem}\label{th:CharPRG}
Let $\mathbf{H}=\prod_{i\in I} S_i$ be a cartesian product of nonabelian finite simple groups and let $l_{\mathbf{H}}(n):=|\{i\in I : R_n(S_i)>1\}|$.
Then $\mathbf{H}$ has PRG if and only if $l_{\mathbf{H}}(n)$ is polynomially bounded.
\end{theorem}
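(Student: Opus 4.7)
The plan is to establish the non-trivial implication: polynomial boundedness of $l_{\mathbf{H}}(n)$ implies that $\mathbf{H}$ has PRG. Regarding $\mathbf{H}$ as a profinite group with the product topology, Lemma~\ref{lm:RG_profinite_finite_image} guarantees that every continuous irreducible representation factors through a finite sub-product. Applying the decomposition of irreducibles of a finite direct product (Lemma~\ref{lm:RP_direct_prod}) inductively, I would first record that every $\rho \in \Irr_{\leq n}(\mathbf{H})$ is of the form $\sigma_{i_1} \otimes \cdots \otimes \sigma_{i_k}$ with $i_1,\dots,i_k \in I$ distinct and each $\sigma_{i_j}$ a \emph{nontrivial} irreducible representation of $S_{i_j}$ of dimension $n_j \geq 2$ satisfying $n_1\cdots n_k \leq n$. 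In particular $k \leq \log_2 n$ and each $i_j \in L_{\mathbf{H}}(n)$.

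Next, I would invoke the uniform polynomial bound on the representation growth of finite simple groups: there exist absolute constants $C_0,d_0 > 0$ such that $R_n(S) \leq C_0\,n^{d_0}$ for every nonabelian finite simple group $S$. This follows from the Liebeck--Shalev estimates for Witten zeta functions of finite simple groups, which uniformly control the character degrees across all Lie types and ranks (via Deligne--Lusztig theory and the Landazuri--Seitz bounds) as well as for alternating groups of arbitrary degree (via asymptotics of partition-counting). Combined with the hypothesis $l_{\mathbf{H}}(n) \leq An^c$, this yields
\[ a(n) \;:=\; \sum_{i \in I} r_n(S_i) \;=\; \sum_{i \in L_{\mathbf{H}}(n)} r_n(S_i) \;\leq\; l_{\mathbf{H}}(n)\cdot C_0\, n^{d_0} \;\leq\; AC_0\, n^{\,c+d_0}. \]

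Finally, I would assemble these ingredients by a direct count. Grouping irreducibles according to the unordered dimension tuple $(n_1,\dots,n_k)$ and bounding the ordered sum over distinct indices $i_1<\cdots<i_k$ by the product of independent sums divided by $k!$, one obtains
\[ R_n(\mathbf{H}) \;\leq\; 1 + \sum_{k=1}^{\lfloor \log_2 n\rfloor} \frac{1}{k!} \sum_{\substack{n_1,\dots,n_k\geq 2 \\ n_1\cdots n_k\leq n}} \prod_{j=1}^{k} a(n_j). \]
Since $\prod_j a(n_j) \leq (AC_0)^k (n_1\cdots n_k)^{c+d_0} \leq (AC_0)^k\, n^{\,c+d_0}$, and Lemma~\ref{lm:ndecom} bounds the number of factorisations $(n_1,\dots,n_k)$ of integers $\leq n$ by $n^{d}$ for some absolute $d$, summing the geometric series in $k$ against $(AC_0)^k/k!$ yields $R_n(\mathbf{H}) \leq e^{AC_0}\, n^{\,c+d_0+d+1}$. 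Hence $\mathbf{H}$ has PRG, which together with Lemma~\ref{prop:nec} finishes the proof.

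The principal obstacle is the uniform polynomial bound $R_n(S) \leq C_0 n^{d_0}$ for all nonabelian finite simple groups $S$: this is the one genuinely non-formal input, and without it alternating groups of unbounded degree or Lie-type groups of unbounded rank could in principle spoil the estimate for $a(n)$. Once this bound is in hand, the remainder is a combinatorial repackaging, which admits a more conceptual reformulation via the factorisation $\zeta_{\mathbf{H}}(s) = \prod_{i \in I} \zeta_{S_i}(s)$: Abel summation and the uniform bound give $\zeta_{S_i}(s)-1 \ll_s m(S_i)^{d_0-s}$, where $m(S_i)$ is the minimal nontrivial character degree of $S_i$, and the hypothesis $l_{\mathbf{H}}(m) \leq Am^c$ then forces $\sum_{i \in I}(\zeta_{S_i}(s)-1)$ to converge for $s > c+d_0+1$.
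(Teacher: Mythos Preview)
Your proposal is correct and follows essentially the same approach as the paper. Both arguments decompose an irreducible of $\mathbf{H}$ as a tensor product of nontrivial irreducibles from finitely many factors, invoke the uniform bound $r_n(S)\leq C_0 n^{d_0}$ for nonabelian finite simple groups (the paper's Corollary~\ref{cor:growthsimples}, derived from Liebeck--Shalev), and control the number of factorisations via Lemma~\ref{lm:ndecom}. The only difference is organisational: the paper bounds $r_n(\mathbf{H})$ directly by the product of three polynomial factors $n^a\cdot n^b\cdot n^c$ (configurations $\times$ index choices $\times$ representation choices), whereas you aggregate indices and representations into $a(n)=\sum_i r_n(S_i)$ and carry an extra $1/k!$ and a sum over $k$; this yields the same polynomial conclusion with slightly more bookkeeping.
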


Theorem \ref{th:CharPRG} provides us with a way to construct numerous examples of groups within $\mathcal{C}$ having PRG.
We will denote this  subclass of groups by $\mathcal{C}_{_{PRG}}$.
It is natural to ask what types of representation growth can occur among groups in $\mathcal{C}_{_{PRG}}$. 
In \cite{KaNi}, Kassabov and Nikolov studied questions related to the class $\mathcal{C}$, and more in particular about the subclass $\mathcal{A}\subset\mathcal{C}$ where
\[
\mathcal{A}:=\{\mathbf{H}=\prod_{i\in I}S_i\ :\ S_i=\Alt(m) \ \mbox{for some} \  m\geq 5\}
,\]
where $\Alt(m)$ denotes the alternating group on $m$ letters.
In particular, they showed the following theorem.

\begin{theorem}[{cf. \cite[Theorem 1.8]{KaNi}}]\label{th:alt_abscissa}
For any $b>0$, there exists a group $G$ such that $\alpha(G)=b$.
\end{theorem}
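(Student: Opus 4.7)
The plan is to realise $G$ as a countable cartesian product of alternating groups,
\[
G\;=\;\prod_{m\geq 5}\Alt(m)^{f(m)},
\]
for a suitable weight function $f\colon\{m\in\NN:m\geq 5\}\to\NN\cup\{0\}$ depending on $b$. First I would extend Lemma \ref{lm:RP_direct_prod} to countable direct products of non-abelian finite simple groups: every continuous irreducible representation of such a profinite product factors through a finite subproduct, so that, as an identity of Dirichlet series with non-negative coefficients,
\[
\zeta_G(s)\;=\;\prod_{m\geq 5}\zeta_{\Alt(m)}(s)^{f(m)}.
\]
Consequently $\alpha(G)$ is the infimum of those $s>0$ at which $\sum_{m\geq 5}f(m)\log\zeta_{\Alt(m)}(s)$ is finite.

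Next I would compare $\log\zeta_{\Alt(m)}(s)$ with $(m-1)^{-s}$. For $m\geq 5$ the minimal dimension of a non-trivial irreducible representation of $\Alt(m)$ is $m-1$, realised with multiplicity one by the standard representation, while by classical results (Rasala, together with Liebeck--Shalev type bounds for the zeta function of a simple group) every other non-trivial irreducible of $\Alt(m)$ has dimension of order at least $m^{2}$. Combining these inputs should yield absolute constants $s_{0},c_{1},c_{2}>0$ such that for all $s\geq s_{0}$ and all $m\geq 5$
\[
c_{1}(m-1)^{-s}\;\leq\;\log\zeta_{\Alt(m)}(s)\;\leq\;c_{2}(m-1)^{-s}.
\]
Thus $\alpha(G)$ equals the abscissa of convergence of the auxiliary Dirichlet-type series $D_{f}(s):=\sum_{m\geq 5}f(m)(m-1)^{-s}$.

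It then remains to tune $f$ so that $D_{f}$ has abscissa exactly $b$. For $b>1$ take $f(m)=\lceil m^{b-1}\rceil$; then $D_{f}(s)\sim\sum_{m}m^{b-1-s}$ has abscissa $b$. For $0<b\leq 1$ take instead $f$ supported on the sparse sequence $m_{k}=\lceil k^{1/b}\rceil$ with $f(m_{k})=1$ and $f=0$ elsewhere; then $D_{f}(s)\sim\sum_{k}k^{-s/b}$, again with abscissa $b$. In either case the counting function $l_{G}(n)=\sum_{m\leq n+1}f(m)$ is polynomially bounded in $n$, so Theorem \ref{th:CharPRG} certifies that $G\in\mathcal{C}_{_{PRG}}$ and that $\zeta_{G}$ is a bona fide Dirichlet series with $\alpha(G)=b$.

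The main obstacle is the uniform upper bound $\log\zeta_{\Alt(m)}(s)\leq c_{2}(m-1)^{-s}$ for all sufficiently large $s$ and all $m$: it requires controlling the total contribution to $\zeta_{\Alt(m)}(s)$ from the many non-standard non-trivial irreducibles, despite the number of irreducibles $p(m)$ growing super-polynomially in $m$. The corresponding lower bound is immediate from the standard representation alone, and the remaining manipulation of Dirichlet series is elementary.
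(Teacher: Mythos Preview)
Your approach is the right one and is precisely what the paper attributes to Kassabov--Nikolov: a profinite product $\prod_{m\ge 5}\Alt(m)^{f(m)}$ with $f$ tuned so that the Euler product has the prescribed abscissa. It also mirrors the paper's own proof of the Lie-type analogue (Theorem~\ref{th:arb_abscissa_quasisimple}), where the role of your comparison is played by $\zeta_{L(q)}\sim_{C}1+q^{k(1-hs/2)}$ from Theorem~\ref{th:AKOV_finite_Lie}.

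There is one genuine slip in the quantifiers. You state that there exist \emph{absolute} constants $s_{0},c_{1},c_{2}$ with $c_{1}(m-1)^{-s}\le\log\zeta_{\Alt(m)}(s)\le c_{2}(m-1)^{-s}$ for all $s\ge s_{0}$, and then conclude $\alpha(G)=\alpha(D_{f})$ for \emph{every} $b>0$. But if $s_{0}$ is fixed once and for all, the comparison says nothing about $s<s_{0}$, so for targets $b<s_{0}$ you cannot deduce convergence of $\zeta_{G}$ on $(b,s_{0})$. The correct statement is: for every $s_{0}>0$ there exist $c_{1}(s_{0}),c_{2}(s_{0})$ with the two-sided bound for all $s\ge s_{0}$ and all $m\ge 5$; then, given $b$, choose $s_{0}\in(0,b)$. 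Note that an absolute $s_{0}$-free version in the paper's $\sim_{C}$ sense is impossible here: as $s\to 0^{+}$ one has $\zeta_{\Alt(m)}(s)\to|\Irr(\Alt(m))|\sim p(m)$, so no uniform-in-$m$ constant can work near $s=0$. This is exactly the structural difference from the Lie-type case.

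Your identified obstacle is the right one, and it is \emph{not} a consequence of Theorem~\ref{th:repgrwoAn} alone (which only gives $\zeta_{\Alt(m)}(t)\to 1$ without rate). What you need is $(m-1)^{s_{0}}\bigl(\zeta_{\Alt(m)}(s_{0})-1\bigr)\le c_{2}(s_{0})$ uniformly in $m$. This does follow from the degree stratification of $S_{m}$-irreducibles: partitions with $\lambda_{1}=m-k$ (or their conjugates) contribute at most $2p(k)$ characters, each of degree $\asymp m^{k}$, so $(m-1)^{s_{0}}(\zeta_{\Alt(m)}(s_{0})-1)$ is dominated by a series of the shape $\sum_{k\ge 1}p(k)\,m^{-(k-1)s_{0}}$, bounded for each fixed $s_{0}>0$. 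Making this precise (via Rasala/James for the low-degree end and a crude count for the tail) is the one piece of honest work; the rest of your outline is routine.
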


The groups appearing in the proof of Theorem \ref{th:alt_abscissa} are frames (see \cite{KaNi})for groups of the form
\[
\mathbf{H}=\prod_{i\geq 5}\Alt(i)^{f(i)},
\]
for some $f:\NN\to \NN$.
In particular the simple factors of $\mathbf{H}$ have unbounded rank \footnote{For $S$ a nonabelian finite simple groups we define $\rk S=m$ if $S=\Alt(m)$ and $\rk S=\rk L$ if $S$ is a simple group of Lie type $L$.} as alternating groups.
We want to obtain a similar result for the subclass $\mathcal{L}\subset\mathcal{C}$, where
\[
\mathcal{L}:=\{\mathbf{H}=\prod_{i\in I}S_i\ :\ S_i \ \mbox{is a finite simple group of Lie type}\}
.\]
We do this in the following theorem, where it is shown that an analogous result holds for this class. In our case, all of the simple factors may have the same Lie rank.

\begin{theorem}\label{th:Lie_abscissa}

For any $c>0$, there exists a group $\mathbf{H}=\prod_{i\in\NN}S_i\in\mathcal{L}_{_{PRG}}$ such that $\alpha(\mathbf{H})=c$. Moreover $\mathbf{H}$ can be chosen so that only one Lie type occur among its factors.

\end{theorem}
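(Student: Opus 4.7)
The plan is to choose, given $c > 0$, a Lie type $L$ (which will depend on $c$) and a sequence of prime powers $(q_i)_{i\in\NN}$ so that $\mathbf{H} = \prod_i S_L(q_i)$ has $\alpha(\mathbf{H}) = c$, where $S_L(q)$ denotes the finite simple group of Lie type $L$ over $\F_q$ (for $q$ large enough for $S_L(q)$ to be simple). The starting point is that every continuous irreducible representation of a cartesian product of finite groups factors through finitely many coordinates as an external tensor product of irreducibles from those coordinates; formally,
\[
\zeta_{\mathbf{H}}(s) \;=\; \prod_{i\in\NN} \zeta_{S_L(q_i)}(s),
\]
and $\alpha(\mathbf{H})$ is the infimum of those $s_0 > 0$ for which this product (equivalently the series $\sum_i (\zeta_{S_L(q_i)}(s)-1)$) converges on $\{\Real(s) > s_0\}$.

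The key technical input I would use is the asymptotic shape of $\zeta_{S_L(q)}(s)$ as $q\to\infty$, for fixed Lie type $L$. By Deligne--Lusztig theory and Lusztig's classification of complex characters of finite groups of Lie type, $\Irr(S_L(q))$ decomposes into finitely many families (their number depending only on $L$), within each of which the character degrees are given by a polynomial $P_j(q)$ and the multiplicities by a polynomial $N_j(q)$. Setting
\[
\mu_L(s) \;=\; \max_{j} \bigl(\deg N_j - s\deg P_j\bigr),
\]
one obtains a continuous, piecewise linear, strictly decreasing function $\mu_L\colon [0,\infty)\to\RR$ with $\mu_L(0) = \rk L > 0$ and $\mu_L(s)\to -\infty$ as $s\to\infty$. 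Away from finitely many transition points one has
\[
\zeta_{S_L(q)}(s) - 1 \;\asymp\; q^{\mu_L(s)} \qquad \text{as } q\to\infty,
\]
with implicit constants depending only on $L$ and $s$. Let $s^*_L$ be the unique zero of $\mu_L$, so that $\mu_L(s) < 0$ precisely when $s > s^*_L$.

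Given $L$ and any target $c > s^*_L$, fix a prime $p$ and take $q_i = p^{k_i}$ with $k_i = \lceil \log i / (|\mu_L(c)|\log p)\rceil$, so that $q_i \asymp i^{1/|\mu_L(c)|}$. Then
\[
\sum_i \bigl(\zeta_{S_L(q_i)}(s) - 1\bigr) \;\asymp\; \sum_i q_i^{\mu_L(s)} \;\asymp\; \sum_i i^{\mu_L(s)/|\mu_L(c)|},
\]
which converges iff $\mu_L(s)/|\mu_L(c)| < -1$, iff (using strict monotonicity of $\mu_L$) $s > c$. Hence $\alpha(\mathbf{H}) = c$. To cover arbitrary $c > 0$, it is enough to observe that $s^*_L$ can be made arbitrarily small by taking $L$ of sufficiently large rank; for instance, for $L = A_{n-1}$ the principal-series characters of $\PSL_n(q)$ (of which there are $\asymp q^{n-1}$, each of dimension $\asymp q^{n(n-1)/2}$) already force $s^*_{A_{n-1}} \leq 2/n \to 0$ as $n\to\infty$. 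Finally, Lemma \ref{lm:projrep} yields $\dim \rho \geq q_i^\delta$ for every nontrivial $\rho \in \Irr(S_L(q_i))$ and some constant $\delta = \delta(L)>0$, so $l_{\mathbf{H}}(n) = |\{i : q_i \leq n^{1/\delta}\}|$ is polynomially bounded in $n$, and Theorem \ref{th:CharPRG} then gives $\mathbf{H}\in \mathcal{L}_{\mathrm{PRG}}$.

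The main obstacle will be the refined asymptotic $\zeta_{S_L(q)}(s) - 1 \asymp q^{\mu_L(s)}$ that underlies the entire argument. An elementary approach using only the lower bound $d_L(q) \geq q^{\delta}$ on the minimum nontrivial degree from Lemma \ref{lm:projrep} together with the polynomial bound $|\Irr(S_L(q))| \leq C q^{\rk L}$ produces only the sandwich $q^{-\beta s} \leq \zeta_{S_L(q)}(s) - 1 \leq Cq^{\rk L - \delta s}$, which translates to a genuine gap in the resulting abscissa estimate and allows one to control $\alpha(\mathbf{H})$ only up to a multiplicative constant. Pinning down the precise value $c$ really does require the polynomial shape of both character degrees and their multiplicities coming from Deligne--Lusztig/Lusztig theory, together with enough uniformity of these asymptotics in $i$ to justify term-by-term summation.
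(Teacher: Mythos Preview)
Your proposal is correct and follows essentially the same route as the paper: both rely on the asymptotic $\zeta_{L(q)}(s)-1 \sim_C q^{\rk\Phi-|\Phi^+|s}$ (your $\mu_L(s)$ is exactly the paper's $k(1-\tfrac{h}{2}s)$, with zero $s^*_L=2/h$), and both tune a product of groups $L(p^i)$ to hit the target abscissa. The only substantive differences are presentational: you parameterize by choosing a sequence $q_i\asymp i^{1/|\mu_L(c)|}$ (with repeats), whereas the paper parameterizes by taking all $L(p^i)$ with multiplicities $f(i)=p^{k(ha_i-2i)/2}$ for a sequence $a_i/i\to c$; and the paper quotes the needed asymptotic directly from Avni--Klopsch--Onn--Voll rather than deriving it from Lusztig's classification. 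One point the paper handles more explicitly is ensuring the factors are genuinely simple rather than quasisimple: it first proves the result for the quasisimple groups $L(q)$ and then specializes to $L=A_{p-1}$ in characteristic $p$, where $\SL_p(\FF_{p^e})$ has trivial center; your passage from $L(q)$ to $S_L(q)$ is fine too, since the center has order bounded in terms of $L$ alone, but you should say so.
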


It is natural to ask whether the cumbersome groups appearing in the proof of Theorem \ref{th:Lie_abscissa} are finitely generated as profinite groups. We give a positive answer in the following theorem.

\begin{theorem}\label{th:PRG_iff_fg}
Let $\mathbf{H}\in\mathcal{C}$.
If $\mathbf{H}$ has PRG then $\mathbf{H}$ is finitely generated as a profinite group.
\end{theorem}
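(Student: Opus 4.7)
The plan is to bound the topological rank $d(\mathbf{H})$ uniformly and conclude that $\mathbf{H}$ is topologically finitely generated. Regroup factors by isomorphism type: write $\mathbf{H} = \prod_T T^{a_T}$, where $T$ ranges over pairwise non-isomorphic nonabelian finite simple groups and $a_T = |\{i \in I : S_i \cong T\}| \in \mathbb{N} \cup \{\infty\}$. Because each $S_i$ is simple nonabelian, every open normal subgroup of $\mathbf{H}$ is a cofinite subproduct, so the continuous finite quotients of $\mathbf{H}$ are precisely the finite subproducts $\prod_{i \in F} S_i$; consequently $d(\mathbf{H}) = \sup_F d\bigl(\prod_{i \in F} S_i\bigr)$. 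Using Goursat's lemma together with the simplicity and pairwise non-isomorphism of the $T_j$, one checks that $d\bigl(T_1^{b_1} \times \cdots \times T_r^{b_r}\bigr) = \max_j d(T_j^{b_j})$, and hence $d(\mathbf{H}) = \sup_T d(T^{a_T})$. So it suffices to bound $d(T^{a_T})$ uniformly as $T$ varies.

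The key ingredient is the classical counting formula of P.\ Hall: for a nonabelian finite simple group $T$ and $d \geq 2$, the direct power $T^k$ is generated by $d$ elements if and only if $k \leq \phi_d(T)/|\Aut(T)|$, where $\phi_d(T)$ denotes the number of ordered $d$-tuples of elements of $T$ that generate $T$. To bound $a_T$ I invoke the PRG hypothesis: fix constants $C, \alpha$ with $R_n(\mathbf{H}) \leq C n^\alpha$ for all $n$. For every index $i$ with $S_i \cong T$, composing the projection $\mathbf{H} \twoheadrightarrow S_i \cong T$ with a fixed irreducible representation of $T$ of minimal nontrivial degree $m(T)$ produces a continuous irreducible representation of $\mathbf{H}$ of degree $m(T)$; distinct $i$'s yield representations with distinct kernels, hence pairwise non-isomorphic. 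Therefore $a_T \leq r_{m(T)}(\mathbf{H}) \leq R_{m(T)}(\mathbf{H}) \leq C\, m(T)^\alpha$. The identity $\sum_{\chi \in \Irr(T)} \chi(1)^2 = |T|$ yields the uniform estimate $m(T) \leq |T|^{1/2}$, and therefore $a_T \leq C |T|^{\alpha/2}$; in particular each $a_T$ is finite.

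To close the argument I align the two sides. By theorems of Dixon, Kantor--Lubotzky and Liebeck--Shalev, $\phi_d(T)/|T|^d \to 1$ as $|T| \to \infty$ for every fixed $d \geq 2$; moreover the classification of finite simple groups shows $|\mathrm{Out}(T)| \leq c \log |T|$ for an absolute constant $c$. Hence $\phi_d(T)/|\Aut(T)| \geq |T|^{d-1-o(1)}$ for $|T|$ large. Choosing an integer $d$ with $d - 1 > \alpha/2$, the bound $a_T \leq C |T|^{\alpha/2}$ is eventually dominated by $\phi_d(T)/|\Aut(T)|$; only finitely many isomorphism types $T$ lie outside this regime, and each such $T$ has a finite multiplicity $a_T$ which can be absorbed by further enlarging $d$. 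This produces a uniform $d$ with $d(T^{a_T}) \leq d$ for all $T$, whence $d(\mathbf{H}) \leq d$. The main obstacle is not the asymptotics themselves but the uniformity across the whole class of nonabelian finite simple groups; once the multiplicity bound $a_T \leq C|T|^{\alpha/2}$ is in place, Hall's criterion together with the Liebeck--Shalev estimates and the bound on $|\mathrm{Out}(T)|$ finishes the job.
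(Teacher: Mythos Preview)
Your argument is correct, and the overall architecture---group the factors by isomorphism type, bound the multiplicity $a_T$ polynomially in $|T|$ using PRG, then invoke a uniform bound on $d(T^{a_T})$---matches the paper's. The difference lies in the machinery you bring to the last step. The paper bounds the multiplicity via Theorem~\ref{th:CharPRG} (PRG $\Rightarrow$ $l_{\mathbf{H}}(n)\le n^e$), noting that every simple $S$ has a nontrivial irreducible of dimension $<|S|$, so $c(S)\le l_{\mathbf{H}}(|S|)\le |S|^e$; it then simply quotes Wiegold's result $d(S^{|S|^e})=e+2$ and assembles generators $\prod_S g_j^S$ explicitly. Your route instead unpacks the generation question through Hall's formula $h(T,d)=\phi_d(T)/|\Aut(T)|$ and then calls on the Dixon/Kantor--Lubotzky/Liebeck--Shalev asymptotics together with the CFSG bound on $|\mathrm{Out}(T)|$ to control the right-hand side. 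This is valid but considerably heavier: Wiegold's bound is elementary and does not require CFSG or probabilistic-generation theorems, so the paper's proof is shorter and avoids deep dependencies. On the other hand, your version makes the quantitative trade-off between multiplicity growth and generation explicit, which is conceptually informative even if overkill here.
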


The next natural question is whether such a group is the profinite completion of finitely generated residually finite group. If this is true, we say that the group in question is a profinite completion.
This question was answered by Kassabov and Nikolov for cartesian products with factors of unbounded rank (see \cite[Theorem 1.4]{KaNi}).

\begin{theorem}
Let $\mathbf{H}=\prod_{i\in\NN}
S_{i}^{c(i)}$
where the $\{S_i\}$ are an infinite family of finite simple groups such that $\rk S_i\to\infty$.
If $\mathbf{H}$ is topologically finitely generated then it is a profinite completion.
\end{theorem}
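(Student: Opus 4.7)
The plan is to construct an explicit finitely generated residually finite abstract group $\Gamma$ whose profinite completion is isomorphic to $\mathbf{H}$. Fix a $d$-tuple $(x_1,\ldots,x_d)$ of topological generators of $\mathbf{H}$ and consider the abstract subgroup $\Gamma=\langle x_1,\ldots,x_d\rangle\leq\mathbf{H}$. Density of $\Gamma$ in $\mathbf{H}$ together with residual finiteness of $\mathbf{H}$ yield at once that $\Gamma$ is residually finite and that the canonical map $\widehat{\Gamma}\twoheadrightarrow\mathbf{H}$ is a continuous surjection; the theorem reduces to proving its injectivity, equivalently to showing that every finite-index subgroup of the abstract group $\Gamma$ is the intersection of $\Gamma$ with an open subgroup of $\mathbf{H}$. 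This is precisely the condition that rules out the exotic finite quotients that appear in cartesian products of \emph{bounded} rank simple groups, so the hypothesis $\rk S_i\to\infty$ must enter in an essential way.

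The central technical input is the theorem of Guralnick--Kantor--Kassabov--Lubotzky that every nonabelian finite simple group of Lie type whose rank exceeds an absolute constant $r_0$, as well as every alternating group of sufficiently large degree, admits a presentation with a bounded number of generators and a bounded number of relators, where both bounds are independent of the group. Under the hypothesis $\rk S_i\to\infty$ all but finitely many factors fall in this regime. Moreover, the assumption that $\mathbf{H}$ is topologically finitely generated places, via Kantor--Lubotzky's criterion for finite generation of products of simple groups, a polynomial ceiling on the multiplicities $c(i)$ in terms of $|S_i|$; this allows one to amalgamate the individual presentations into a uniformly bounded presentation of each power $S_i^{c(i)}$, and then to collect all these presentations inside a single free group $F_d$ of fixed rank $d$ by a diagonal construction.

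Concretely, lift the chosen topological generators to a continuous surjection $\widehat{F}_d\twoheadrightarrow\mathbf{H}$ and identify its kernel as the \emph{closed} normal closure of a countable family of words $w_1,w_2,\ldots$ drawn from the \emph{abstract} free group $F_d$. One then defines $\Gamma$ to be the quotient of $F_d$ by the abstract normal closure of the $w_j$. The main obstacle is to show that the abstract and topological normal closures of $\{w_j\}$ in $\widehat{F}_d$ induce the same finite quotients: this is what guarantees $\widehat{\Gamma}\cong\mathbf{H}$ rather than merely a surjection. Here one invokes Nikolov--Segal's theorem that every finite-index subgroup of a finitely generated profinite group is open, in combination with the bounded generation features of the high-rank simple factors $S_i^{c(i)}$. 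These ingredients conspire to prevent any exotic finite quotient of $\Gamma$ from escaping the topological normal closure, yielding the desired isomorphism.
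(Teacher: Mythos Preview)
This theorem is not proved in the paper; it is quoted verbatim from Kassabov--Nikolov \cite[Theorem~1.4]{KaNi} as context for the discussion following Theorem~\ref{th:PRG_iff_fg}, and no argument is given here. There is therefore no proof in the present paper against which to compare your attempt.

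Regarding your sketch on its own terms: you identify the right circle of ideas---bounded presentations for high-rank simple groups and the polynomial control on the multiplicities $c(i)$ coming from finite generation---and these are indeed the ingredients behind the Kassabov--Nikolov construction. However, your write-up has two difficulties. First, you give two incompatible definitions of $\Gamma$: initially as the abstract subgroup of $\mathbf{H}$ generated by a topological generating set, and later as $F_d$ modulo the abstract normal closure of a family of words. These are different groups in general, and the argument needs to commit to one (in \cite{KaNi} the group is built explicitly, not taken as an arbitrary dense subgroup). Second, and more seriously, the decisive step---showing that every finite quotient of the abstract group $\Gamma$ already occurs as a continuous quotient of $\mathbf{H}$, i.e.\ as a finite product of the $S_i$---is left as ``these ingredients conspire.'' The invocation of Nikolov--Segal does not close this gap: that theorem controls finite-index subgroups of the \emph{profinite} group $\mathbf{H}$ (or $\widehat{F}_d$), whereas what must be ruled out are extra finite quotients of the \emph{abstract} group $\Gamma$, which is precisely the delicate point. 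The actual argument in \cite{KaNi} carries out an explicit construction and verification rather than appealing to a general principle here.
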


Moreover they also explain why the condition $\rk S_i\to\infty$ is necessary.
In particular, the groups appearing in the proof of Theorem \ref{th:Lie_abscissa} are not profinite completions.
It is not clear however whether Theorem $\ref{th:Lie_abscissa}$ remains true if we require $\mathbf{H}$ to be a profinite completion.

\section{Representation Growth of Finite Groups of Lie Type}

We refer to section \ref{sec:FGLT} for definitons and concepts regarding finite groups of Lie type.
Let $G$ be a connected simply connected absolutely almost simple affine algebraic group with Steinberg endomorphism $F:G\to G$, then (except for a finite number of exceptions) the group $G^F$ is quasisimple and in particular $G^F/Z(G^F)$ is a simple group (see Theorem \ref{pre:th:FGLT_simple}).
Finite simple groups of this form are called finite simple groups of Lie type.
According to the classification of finite simple groups, all but a finite number of the nonabelian finite simple groups are either an alternating group or a finite simple group of Lie type.
Therefore the asymptotic behaviour of the representation growth of groups of the form $G^F$ may be used to derive asymptotic properties for the representation growth of finite simple groups of Lie type.
The representation growth of the quasisimple groups $G^F$ has been studied in \cite{LieSha} by Liebeck and Shalev using Deligne-Lusztig theory.
For a prime power $q$ and Lie type $L$, let $L(q)$ denote (if there exists one) a finite quasisimple group of type $L$ defined over $\Fq$.
\begin{theorem}[{\cite[Theorem 1.1]{LieSha}}] \label{th:repgrowq}
Fix a Lie type $L$ and let $h$ be the corresponding Coxeter number.
Then for any fixed real number $t>2/h$, we have 
\[\zeta_{L(q)}(t)\to 1\ \hbox{as} \ q\to\infty.\]
\end{theorem}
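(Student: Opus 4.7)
The plan is to deduce the theorem from Deligne--Lusztig theory combined with a counting argument over the centralizer types of semisimple elements in the dual group. Let $G$ be a connected simply connected absolutely almost simple algebraic $\F_q$-group with Steinberg endomorphism $F$ realising $L(q)=G^F$, and let $G^\ast$ denote the dual group. By Lusztig's Jordan decomposition of characters, $\Irr(G^F)$ is partitioned into rational series $\mathcal{E}(G^F,[s])$ indexed by the $G^{\ast F}$-conjugacy classes $[s]$ of semisimple elements $s\in G^{\ast F}$, and every $\chi\in\mathcal{E}(G^F,[s])$ has the form $\chi=\chi_{s,\psi}$ with $\chi_{s,\psi}(1)=[G^{\ast F}:C_{G^\ast}(s)^F]_{p'}\,\psi(1)$ for some unipotent character $\psi$ of $C_{G^\ast}(s)^F$. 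Thus
\[
\zeta_{L(q)}(t)-1=\sum_{[s]\neq[1]}\sum_{\psi}\chi_{s,\psi}(1)^{-t}+\sum_{\psi\neq 1}\chi_{1,\psi}(1)^{-t},
\]
and the second summand (unipotent characters) has only boundedly many terms whose degrees are polynomials in $q$ of positive degree, so it tends to $0$ as $q\to\infty$.

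To handle the first sum I would group the semisimple classes $[s]$ according to the $G^{\ast}$-conjugacy type of $M=C_{G^\ast}(s)^\circ$. Since $M$ contains a maximal torus of $G^\ast$ and its root subsystem $\Phi_M$ is a closed subsystem of $\Phi$, there are only finitely many such types, the number being bounded by a constant $N(L)$ that depends only on the Lie type. For each type $M$, an elementary count (parameterising classes with centralizer of that type via $F$-stable cosets in $Z(M)^\circ$ modulo a finite relative Weyl group) shows that the number of contributing classes $[s]$ is at most $c_1(L)\,q^{\dim Z(M)^\circ}$. Moreover $[G^{\ast F}:M^F]_{p'}=q^{|\Phi^+|-|\Phi_M^+|}\,\rho_M(q)$ where $\rho_M$ is a rational function taking values in a bounded range on prime powers, and the number of unipotent characters $\psi$ of $M^F$ is bounded by $|W_M|\le |W|$ with each $\psi(1)\ge 1$. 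Putting this together, the total contribution of one type $M$ is bounded by
\[
c_2(L)\,q^{\dim Z(M)^\circ-t(|\Phi^+|-|\Phi_M^+|)}.
\]

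It therefore suffices to establish the combinatorial inequality
\[
\frac{\dim Z(M)^\circ}{|\Phi^+|-|\Phi_M^+|}\ \le\ \frac{2}{h}
\]
for every proper pseudo-Levi subgroup $M\subsetneq G^\ast$ of maximal rank, with equality attained for $M$ a maximal torus. For $M=T$ one has $\dim Z(T)^\circ=\rk\Phi=r$ and $|\Phi^+|=rh/2$, giving exactly $2/h$; for any proper subsystem $\Phi_M$ with semisimple rank $r_M$, the inequality reduces to $h(r-r_M)\le 2(|\Phi^+|-|\Phi^+_M|)$, which I plan to verify by reducing to irreducible components of $\Phi_M$ and using the standard identities $|\Phi|=rh$ and $|\Phi_i|=r_ih_i$ for each irreducible component together with the fact that $h_i\le h$. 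Granting this, each of the finitely many types $M$ contributes at most $c_2(L)\,q^{(\dim Z(M)^\circ)(1-th/2)}$, which tends to $0$ for every fixed $t>2/h$, and the theorem follows.

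The main obstacle will be the verification of the combinatorial inequality above in complete generality, in particular ensuring it is uniform across twisted types where $F$ acts nontrivially on the root datum; the remaining difficulty is a careful bookkeeping of $p$-parts in the Jordan decomposition degree formula and of the relative Weyl groups governing the parameter count for semisimple classes, but these are technical rather than conceptual issues.
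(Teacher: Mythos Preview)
The paper does not prove this statement; it is quoted as \cite[Theorem 1.1]{LieSha} and used as a black box in Chapter~6, so there is no proof in the paper to compare against.

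That said, your outline is essentially a reconstruction of the Liebeck--Shalev argument itself: they too use Lusztig's Jordan decomposition, stratify semisimple classes by the isomorphism type of $C_{G^\ast}(s)^\circ$, bound the number of classes and of unipotent characters within each stratum by constants depending only on $L$, and reduce the whole estimate to the root-combinatorial inequality you isolate. Your reformulation $|\Phi_M|\le r_M h$ (equivalent to $h(r-r_M)\le |\Phi|-|\Phi_M|$) is exactly the content of their key lemma, and your proposed proof via $|\Phi_i|=r_ih_i$ together with $h_i\le h$ for each irreducible component is the right idea. The one point to flag is that $h_i\le h$ for an arbitrary closed irreducible subsystem of an irreducible $\Phi$ is not a purely formal consequence of the definitions; it requires either a short case analysis through the Borel--de Siebenthal classification of maximal subsystems or an argument via heights of highest roots, so you should not leave it as an assertion. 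The bookkeeping you mention for twisted types and for the $p'$-part of the index is indeed routine once the combinatorics is in place.
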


\begin{theorem}[{\cite[Theorem 1.2]{LieSha}}]\label{th:repgrowL}
Fix a real number $t>0$. Then there is an integer $r(t)$ such that for any $L(q)$ with $\rk L\geq r(t)$, we have
\[\zeta_{L(q)}(t)\to 1\ \mbox{as}\ |L(q)|\to\infty.\]
\end{theorem}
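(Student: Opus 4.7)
The plan is to show that $\zeta_{L(q)}(t)-1=\sum_{\chi\in\Irr(L(q))\setminus\{1_{L(q)}\}}\chi(1)^{-t}$ can be made arbitrarily small once $\rk L\geq r(t)$ and $|L(q)|$ is large. The strategy is to stratify the irreducible characters of $L(q)$ by degree and to bound each stratum using the parametrization of characters coming from Deligne--Lusztig theory, trading a weak ``total count $\times$ minimum degree'' bound in the large-$t$ regime for a refined count in the small-$t$ regime.

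First I would recall the Landazuri--Seitz--Tiep lower bound on the minimal nontrivial character degree of $L(q)$: there is a constant $\alpha(L)>0$, with $\alpha(L)$ growing linearly in $\rk L$, and a constant $c>0$ such that $\chi(1)\geq c\,q^{\alpha(L)}$ for every nontrivial $\chi\in\Irr(L(q))$. Combining this with the elementary bound $|\Irr(L(q))|=O_L(q^{\rk L})$ (obtained from counting semisimple classes in the dual group) yields a crude estimate
\[
\zeta_{L(q)}(t)-1\leq |\Irr(L(q))|\cdot (c\,q^{\alpha(L)})^{-t}\leq C\,q^{\rk L-t\alpha(L)}.
\]
For $t$ large (say $t>1/\alpha_0$ for the universal constant in the Landazuri--Seitz bound) this already tends to $0$ once $\rk L$ is sufficiently large in terms of $t$, uniformly as $|L(q)|\to\infty$. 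The content of the theorem lies in the small-$t$ regime, where this estimate fails because the exponent $\rk L - t\alpha(L)$ remains positive however large $\rk L$ is.

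To treat small $t$, I would stratify $\Irr(L(q))\setminus\{1_{L(q)}\}$ into layers $\mathcal{S}_i=\{\chi:q^{i}\leq\chi(1)<q^{i+1}\}$ for $i\geq 1$. By Deligne--Lusztig theory every irreducible character of $L(q)$ corresponds to a pair $(s,\psi)$, with $s$ a semisimple class in the dual group $L^{*}(q)$ and $\psi$ a unipotent character of $C_{L^{*}(q)}(s)^F$, and has degree $|L(q):C_{L^{*}(q)}(s)|_{p'}\cdot\psi(1)$. A character in $\mathcal{S}_i$ with small $i$ must therefore arise from a semisimple element $s$ whose centralizer has codimension roughly $i$ in $L^{*}$, together with a unipotent character of that centralizer of bounded degree. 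Both ingredients can be counted: the number of semisimple $L^{*}(q)$-classes with centralizer of codimension at most $i$ is polynomially bounded in $q$ of degree controlled by $i$ and essentially independent of $\rk L$, while the number of unipotent characters of a connected reductive group over $\Fq$ is bounded in terms of its Weyl group only. Plugging these bounds into $\sum_i |\mathcal{S}_i|\cdot q^{-ti}$ and summing, the terms with small $i$ are handled by the centralizer-codimension count, and the terms with large $i$ are handled by the crude estimate above; the key point is that the cutoff between the two regimes can be chosen so that the whole sum is bounded by $q^{-\varepsilon\rk L}$ for some $\varepsilon(t)>0$, once $\rk L\geq r(t)$.

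The main obstacle will be the uniform counting of semisimple conjugacy classes in $L^{*}(q)$ with centralizer of prescribed codimension, in a form where the dependence on $\rk L$ can be separated from the dependence on $q$. The standard approach is to parametrize $F$-stable maximal tori by $F$-conjugacy classes in the Weyl group and to enumerate the possible centralizer types, then to use the fact that for any fixed codimension only finitely many isomorphism classes of centralizer types occur across all ranks; this is where rank-uniformity enters. Once this uniform bound is in hand, the remaining combinatorial summation over the strata $\mathcal{S}_i$ is routine, and taking $r(t)$ large enough to absorb the contribution of the bounded-codimension strata at the given $t$ completes the proof.
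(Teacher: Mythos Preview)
The paper does not contain a proof of this statement: Theorem~\ref{th:repgrowL} is simply quoted from \cite[Theorem~1.2]{LieSha} and used as a black box in the proof of Corollary~\ref{cor:growthsimples} and Theorem~\ref{th:CharPRG}. There is therefore no in-paper argument to compare your proposal against.

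That said, your sketch is broadly in the spirit of the original Liebeck--Shalev proof, which does proceed via Lusztig's parametrization of $\Irr(L(q))$ by pairs $(s,\psi)$ and a careful stratification by degree. One point to watch: your crude bound paragraph is slightly misleading. Since the Landazuri--Seitz exponent $\alpha(L)$ grows linearly in $\rk L$, the exponent $\rk L - t\,\alpha(L)$ is of the form $\rk L\,(1 - t c_0)$ for an absolute $c_0>0$; it is negative precisely when $t>1/c_0$, independent of $\rk L$. So the ``large $t$'' regime is governed by a fixed threshold, not by taking $\rk L$ large in terms of $t$. The genuine work, as you say, is for $t\leq 1/c_0$, and there the decisive input in \cite{LieSha} is a uniform \emph{gap} result: apart from a bounded number of characters of degree polynomial in $q$ of bounded exponent, all remaining characters have degree at least $q^{c\,\rk L}$ for an absolute $c>0$. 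This gap is what lets the stratified sum collapse; your outline gestures at it via ``only finitely many centralizer types occur at fixed codimension,'' which is the right idea but would need to be made uniform across the classical families to actually close the argument.
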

The corresponding result for alternating groups reads as follows.
\begin{theorem}[{\cite[Corollary 2.7]{LieSha2}}]\label{th:repgrwoAn}
Fix a real number $t>0$.
Let $A_n$ denote the alternating group on $n$ elements, then
\[\zeta_{A_n}(t)\rightarrow 1\ \mbox{as}\ n\to\infty.\] 
\end{theorem}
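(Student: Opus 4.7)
The plan is to translate $\zeta_{A_n}(t) - 1 = \sum_{\chi \neq \mathbf{1}_{A_n}} \chi(1)^{-t}$ into a combinatorial statement about partitions and then exploit the hook length formula. First I would reduce to $S_n$: since $[S_n : A_n] = 2$, Clifford theory tells us that each $\chi \in \Irr(A_n)$ is a constituent of $\psi_{|A_n}$ for some $\psi \in \Irr(S_n)$, with $\chi(1) \geq \psi(1)/2$, and each $\psi$ contributes at most two such constituents; since the trivial and sign characters of $S_n$ both restrict to $\mathbf{1}_{A_n}$, we obtain
$$0 \leq \zeta_{A_n}(t) - 1 \leq 2^{t+1} \sum_{\psi \in \Irr(S_n) \setminus \{\mathbf{1}, \mathrm{sgn}\}} \psi(1)^{-t}.$$
It therefore suffices to show that this right-hand side tends to $0$ as $n \to \infty$.

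Next I would invoke the classical parametrisation $\Irr(S_n) \leftrightarrow \{\lambda \vdash n\}$ and the hook length formula $\psi_\lambda(1) = f^\lambda = n!/\prod_{v \in \lambda} h_\lambda(v)$, under which the trivial and sign characters correspond to $\lambda = (n)$ and $\lambda = (1^n)$. The problem becomes
$$\sum_{\lambda \vdash n,\ \lambda \notin \{(n), (1^n)\}} (f^\lambda)^{-t} \longrightarrow 0.$$
For any fixed threshold $M$, I would split this sum into a small-degree and a large-degree part. The small-degree part is handled for free: the minimum of $f^\lambda$ over $\lambda \neq (n), (1^n)$ is attained at $(n-1, 1)$ and equals $n-1$, so once $n > M + 1$ no term satisfies $f^\lambda \leq M$ and this part is empty.

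The crux is a uniform large-degree estimate of the form $\sum_{\lambda \vdash n,\ f^\lambda > M} (f^\lambda)^{-t} \to 0$ as $M \to \infty$, uniformly in $n$. I would approach this by a dyadic decomposition on $f^\lambda$, requiring an estimate on the number of partitions with character degree in each dyadic range. The key input is a bound of the form $|\{\lambda \vdash n : f^\lambda \leq m\}| \leq C m^{s}$ with $s$ independent of $n$, combined with the Vershik--Kerov / Logan--Shepp phenomenon that under the Plancherel measure a typical partition satisfies $\log f^\lambda \sim \tfrac{1}{2} \log n!$, together with the subexponential bound $p(n) \leq e^{c\sqrt{n}}$ on the total number of partitions. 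These inputs ensure that the geometric series produced by the dyadic decomposition converges and that its tail can be made arbitrarily small uniformly in $n$. Establishing this refined counting of partitions by character degree is the main obstacle of the argument.
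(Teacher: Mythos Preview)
The paper does not prove this statement; it is quoted from \cite[Corollary 2.7]{LieSha2} and used as a black box, so there is no in-paper proof to compare against. I can only assess your outline on its own terms and against what Liebeck--Shalev actually do.

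Your reduction from $A_n$ to $S_n$ via Clifford theory and the passage to partitions and the hook length formula are fine. The gap is in the large-degree estimate. A bound of the shape $|\{\lambda \vdash n : f^\lambda \leq m\}| \leq C m^s$ with a \emph{fixed} exponent $s>0$ gives, after dyadic summation, only a bound of order $M^{s-t}$ on $\sum_{f^\lambda > M} (f^\lambda)^{-t}$, which tends to $0$ only when $t > s$; for the conclusion to hold for \emph{every} $t>0$ you need the count to be $m^{o(1)}$ uniformly in $n$ (equivalently $C_\epsilon m^\epsilon$ for every $\epsilon>0$), and establishing \emph{that} is precisely the substance of the Liebeck--Shalev argument. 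The extra ingredients you invoke do not close this gap: Vershik--Kerov/Logan--Shepp is a statement about Plancherel-typical partitions and yields no usable upper bound on the \emph{number} of partitions with small $f^\lambda$, and combining the Hardy--Ramanujan bound $p(n)\leq e^{c\sqrt{n}}$ with a fixed-$s$ polynomial count still only treats the range $t>s$. What actually works is to stratify partitions of $n$ by the defect $k=n-\max(\lambda_1,\lambda_1')$ (where $\lambda_1'$ is the first column length), observe that each stratum contains at most $2p(k)$ partitions, and prove a lower bound for $f^\lambda$ in stratum $k$ that grows fast enough in both $k$ and $n$; this yields the $m^{o(1)}$ count and hence the theorem for every $t>0$.
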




Note that for a finite simple group of Lie type $S=L(q)/Z(L(q))$ for some Lie type $L$ and primer power $q$, we have $\zeta_S(t)\leq\zeta_{L(q)}(t)$ for every $t>0$.
Therefore the above theorems provide information about the asymptotic behaviour of the representation zeta functions for all the infinite series of nonabelian finite simple groups.
As a corollary, one obtains a general result for the representation growth of any nonabelian finite simple group.

\begin{corollary}\label{cor:growthsimples}
Let $S$ denote a nonabelian finite simple group.
\begin{enumerate}[label=\roman*)]
\item For $t>1$, we have
\[\zeta_S(t)\to 1\ \mbox{as}\ |S|\to\infty.\]
\item There exists an absolute constant $c$ such that $r_n(S)<cn$ for every nonabelian finite simple group $S$.
\end{enumerate}
\end{corollary}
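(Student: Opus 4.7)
The plan is to derive both statements from the preceding asymptotic theorems (Theorems \ref{th:repgrowq}, \ref{th:repgrowL} and \ref{th:repgrwoAn}) together with the Classification of Finite Simple Groups. Throughout I will use the obvious fact that any irreducible representation of the simple quotient $S=L(q)/Z(L(q))$ lifts to an irreducible representation of $L(q)$ of the same dimension, so that $\zeta_S(t)\leq \zeta_{L(q)}(t)$ for every $t>0$.

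For part (i), fix $t>1$ and $\varepsilon>0$. By the Classification every nonabelian finite simple group is either sporadic, alternating, or of Lie type. There are only finitely many sporadic groups, so they contribute nothing asymptotically. Theorem \ref{th:repgrwoAn} produces an integer $N$ with $\zeta_{A_n}(t)<1+\varepsilon$ for all $n>N$. Next, Theorem \ref{th:repgrowL} yields a rank threshold $r(t)$ and a cardinality threshold $M$ such that $\zeta_{L(q)}(t)<1+\varepsilon$ whenever $\rk L\geq r(t)$ and $|L(q)|>M$. The remaining Lie types, those with $\rk L<r(t)$, form a finite list; and for each such $L$ the Coxeter number satisfies $h\geq 2$, hence $2/h\leq 1<t$, so Theorem \ref{th:repgrowq} provides a threshold $Q_L$ with $\zeta_{L(q)}(t)<1+\varepsilon$ for $q>Q_L$. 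Combining these bounds, only finitely many nonabelian finite simple $S$ can satisfy $\zeta_S(t)\geq 1+\varepsilon$, proving part (i).

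For part (ii), the key reduction is the elementary inequality $r_n(S)/n\leq \zeta_S(1)$ (which is immediate since $r_n(S)/n$ is a single nonnegative term in $\zeta_S(1)=\sum_k r_k(S)/k$). Thus it suffices to produce an absolute constant $c$ with $\zeta_S(1)\leq c$ for every nonabelian finite simple $S$. Mirroring part (i): Theorem \ref{th:repgrwoAn} at $t=1$ bounds $\zeta_{A_n}(1)$ uniformly in $n$; Theorem \ref{th:repgrowL} at $t=1$ bounds $\zeta_{L(q)}(1)$ uniformly over all Lie types of rank at least $r(1)$; and for every fixed Lie type $L\neq A_1$ the Coxeter number satisfies $h\geq 3$, so $2/h<1$ and Theorem \ref{th:repgrowq} at $t=1$ gives $\zeta_{L(q)}(1)\to 1$, bounding it uniformly in $q$.

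The main obstacle, and the one case not covered above, is $L=A_1$: here $h=2$, and Theorem \ref{th:repgrowq} does not apply at $t=1$. I handle it by direct inspection of the well-known character table of $\mathrm{PSL}_2(q)$. The group has $O(q)$ irreducible characters, and every nontrivial irreducible character has degree at least $(q-1)/2$, so
\[
\zeta_{\mathrm{PSL}_2(q)}(1)-1 \;=\; \sum_{\chi\neq \mathbf{1}}\frac{1}{\chi(1)} \;\leq\; O(q)\cdot\frac{2}{q-1} \;=\; O(1)
\]
uniformly in $q$. Combined with the finitely many sporadic groups, this furnishes an absolute bound $\zeta_S(1)\leq c$, and hence $r_n(S)\leq n\,\zeta_S(1)\leq cn$ for every nonabelian finite simple group $S$ and every $n$, completing the proof.
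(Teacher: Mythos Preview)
Your argument is correct, and it is essentially the derivation the paper has in mind: the paper does not supply a proof for this corollary, presenting it simply as an immediate consequence of Theorems \ref{th:repgrowq}, \ref{th:repgrowL} and \ref{th:repgrwoAn} together with the observation $\zeta_S(t)\leq\zeta_{L(q)}(t)$ and the Classification. Your write-up makes the deduction explicit, and in particular you correctly isolate the one genuinely delicate point that the paper glosses over: for part (ii) the threshold $t=1$ lies exactly at $2/h$ for type $A_1$, so Theorem \ref{th:repgrowq} is unavailable there, and a direct inspection of the $\mathrm{PSL}_2(q)$ character degrees is needed to obtain the uniform bound on $\zeta_S(1)$.
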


For the proof of Theorem \ref{th:Lie_abscissa} we will need a more precise study of the representation zeta functions
$\zeta_{L(q)}(s)$.
Let us first introduce some concepts which will be useful for comparing different zeta functions, cf. \cite[Definition 2.4]{AKOV_base_change}.
Let $f(s)=\sum_{n=1}^\infty a_nn^{-s}$ and $g(s)=\sum_{n=1}^\infty b_nn^{-s}$ be Dirichtlet generating series, i.e., Dirichlet series with integer coefficients $a_n, b_n\geq 0$.
Let $C\in\RR$. We write $f\lesssim_C g$ if $f$ and $g$ have the same abscissa of convergence $\alpha$ and $f(s)\leq C^{1+s}g(s)$ for every $s>\alpha$.
We write $f\sim_C g$ if $f\lesssim_C g$ and $g\lesssim_C f$.

In \cite{AKOV_base_change} Avni, Klopsch, Onn and Voll obtained a precise description of the asymptotic behaviour of the representation zeta functions of finite groups of Lie type.

\begin{theorem}[{\cite[Theorem 3.1 (3.1)]{AKOV_base_change}}] \label{th:AKOV_finite_Lie}
For every connected semi-simple algebraic group $G$ defined
over a finite field $\Fq$ with absolute root system $\Phi$
\[
\zeta_{G(\Fq)}(s) \sim_{C}  1 + q^{\rk\Phi-|\Phi^+|s}=1+q^{k(1-\frac{h}{2}s)}.
\]
\end{theorem}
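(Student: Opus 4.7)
The plan is to decompose $\Irr(G^F)$ via Lusztig's Jordan decomposition of characters into series $\mathcal{E}(G^F, (s))$ indexed by semisimple $F$-stable conjugacy classes $(s)$ in the Langlands dual $(G^*)^F$. Lusztig's bijection matches each $\mathcal{E}(G^F,(s))$ with the set of unipotent characters of the centralizer $C_{G^*}(s)^F$, multiplying degrees by the factor $[G^{*F}:C_{G^*}(s)^F]_{p'}$. Since the number of unipotent characters of a connected reductive group is bounded purely in terms of its root system, and each unipotent character degree is a polynomial in $q$ with bounded coefficients, one obtains
\[
\zeta_{G(\F_q)}(s) \sim_C \sum_{(s)} [G^{*F}:C_{G^*}(s)^F]_{p'}^{-s},
\]
reducing the problem to a count of semisimple classes stratified by centralizer type.

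For the lower bound I would restrict to regular semisimple classes: then $C_{G^*}(s) = T^*$ is a maximal torus and $\mathcal{E}(G^F,(s))$ consists of a single character of degree $[G^{*F}:T^{*F}]_{p'}$. The order formula $|G^{*F}| = q^{|\Phi^+|}\prod_i (q^{d_i}-\varepsilon_i)$, together with the fact that $|T^{*F}|$ is a product of cyclotomic factors of total polynomial degree $\rk\Phi$ in $q$, confines this index to $[c_1 q^{|\Phi^+|}, c_2 q^{|\Phi^+|}]$ for constants depending only on $\Phi$. Steinberg's classical computation shows that the total number of semisimple classes in $(G^*)^F$ equals $q^{\rk\Phi}(1 + O(q^{-1}))$, while the non-regular locus, being cut out by the vanishing of at least one root, has strictly smaller dimension and accounts for at most $O(q^{\rk\Phi - 1})$ classes. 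Summing over the remaining regular classes then yields $\zeta_{G(\F_q)}(s) - 1 \gtrsim q^{\rk\Phi - |\Phi^+|s}$.

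For the upper bound I would stratify the semisimple classes by the isomorphism type $\Psi$ of the root system of $C_{G^*}(s)^{\circ}$; only finitely many such $\Psi$ occur (up to $W$-conjugacy, depending on $\Phi$ alone). The stratum of classes of type $\Psi$ inside a maximal torus is cut out by the $\rk\Psi$ equations $\alpha(s)=1$ ($\alpha\in\Psi$), giving at most $O(q^{\rk\Phi - \rk\Psi})$ such classes, and each contributes at most a bounded multiple of $q^{-(|\Phi^+|-|\Psi^+|)s}$. Summing,
\[
\zeta_{G(\F_q)}(s) \lesssim \sum_{\Psi \subseteq \Phi}  q^{(\rk\Phi - \rk\Psi) - (|\Phi^+| - |\Psi^+|)s}.
\]
The $\Psi = \emptyset$ term is the main term $q^{\rk\Phi - |\Phi^+|s}$ and $\Psi = \Phi$ contributes a bounded constant. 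The main technical obstacle is to show that every intermediate subsystem $\emptyset \neq \Psi \subsetneq \Phi$ contributes at most $C^{1+s}(1 + q^{\rk\Phi - |\Phi^+|s})$, uniformly in $q$ and $s > 0$; by linearity of exponents this reduces to the key numerical inequality $|\Psi^+|/\rk\Psi \leq |\Phi^+|/\rk\Phi$ for any proper subsystem $\Psi$ of an irreducible root system $\Phi$, which can be verified from the classification of subsystems.
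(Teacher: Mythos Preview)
The paper does not prove this statement: it is quoted verbatim as \cite[Theorem 3.1]{AKOV_base_change} and used as a black box in the proof of Theorem~\ref{th:arb_abscissa_quasisimple}. There is therefore no ``paper's own proof'' to compare against.

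That said, your sketch is essentially the argument of Avni--Klopsch--Onn--Voll: Lusztig's Jordan decomposition to reduce to a sum over semisimple classes in the dual group, the regular semisimple classes furnishing the main term $q^{\rk\Phi - |\Phi^+|s}$, and a stratification by centralizer root subsystem $\Psi\subseteq\Phi$ for the upper bound. The combinatorial inequality you isolate at the end,
\[
\frac{|\Psi^+|}{\rk\Psi}\ \le\ \frac{|\Phi^+|}{\rk\Phi}
\quad\text{for every closed subsystem }\Psi\subseteq\Phi\text{ with }\Phi\text{ irreducible,}
\]
is exactly the pivot of their argument (it amounts to the fact that the Coxeter number of any closed subsystem is bounded by that of the ambient irreducible system), and it is checked there via the Borel--de~Siebenthal classification. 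One technical point you should flag: for general semisimple $G$ the centralizers $C_{G^*}(s)$ need not be connected, so the clean Lusztig bijection with unipotent characters of $C_{G^*}(s)^F$ requires a bit more care (passing to the simply connected cover or invoking the extension to disconnected centralizers); this does not change the exponents but does affect the implied constants.
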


The following lemma provides a way to study infinite products of representation zeta functions.

\begin{lemma}\label{lm:prod_zeta_fun}
Let $f,g$ be Dirichlet generating series and denote by $\alpha_f$  and $\alpha_g$ their corresponding abscissae of convergence.
Suppose that ${f=
\prod_{m=1}^\infty(1 +f_m)}$ and ${g=\prod_{
m=1}^\infty(1+g_m)}$, where $f_m,g_m$ are Dirichlet
generating series with vanishing constant terms.
Suppose further that there exists $C\in\RR_{>0}$
such that, for all $m$, $f_m\lesssim_{C}g_m$.
Then $\alpha_f\leq\alpha_g$.
\end{lemma}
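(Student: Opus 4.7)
The proof should be a short, direct comparison of positive series, exploiting that all coefficients involved are non-negative integers so convergence at a real point is absolute and equivalent to convergence of the logarithmic sum. Here is how I would lay it out.

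\medskip

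First I would clarify a preliminary fact that is needed to apply the hypothesis $f_m \lesssim_C g_m$: namely, that $\alpha_{g_m} \leq \alpha_g$ (and, symmetrically, $\alpha_{f_m} \leq \alpha_f$). To see this, write $g = (1+g_m)\prod_{m'\neq m}(1+g_{m'})$. Since each $g_{m'}$ has non-negative integer coefficients and vanishing constant term, the second factor is a Dirichlet series with non-negative integer coefficients and constant term $1$. Expanding coefficient-by-coefficient shows that $g - (1+g_m)$ has non-negative coefficients, so $g(s)\geq 1+g_m(s)$ for every real $s$ in the half-plane of convergence of $g$. Hence $\alpha_{g_m}\leq \alpha_g$. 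By definition of $\lesssim_C$, we also have $\alpha_{f_m}=\alpha_{g_m}=:\alpha_m \leq \alpha_g$.

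\medskip

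The core of the argument is then to show that $f$ converges at every real $s_0>\alpha_g$. Fix such an $s_0$. Since $g(s_0)<\infty$ and $g(s_0)=\prod_{m}(1+g_m(s_0))$ (all terms non-negative, so product = formal Dirichlet series value), convergence of the infinite product is equivalent to $\sum_{m}g_m(s_0)<\infty$. Because $s_0>\alpha_g\geq\alpha_m$ for every $m$, the hypothesis $f_m\lesssim_C g_m$ gives the termwise estimate
\[
f_m(s_0)\;\leq\;C^{1+s_0}\,g_m(s_0) \qquad (m\in\NN).
\]
Summing over $m$ yields
\[
\sum_{m=1}^{\infty}f_m(s_0)\;\leq\;C^{1+s_0}\sum_{m=1}^{\infty}g_m(s_0)\;<\;\infty,
\]
and again by non-negativity this is equivalent to convergence of $\prod_{m}(1+f_m(s_0))=f(s_0)$. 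Thus $f(s_0)<\infty$, which forces $\alpha_f\leq s_0$. Letting $s_0\downarrow \alpha_g$ gives $\alpha_f\leq\alpha_g$, as required.

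\medskip

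There is no real obstacle here; the only point one has to be a little careful with is the interchange between \emph{formal} and \emph{analytic} identities for the infinite product $\prod(1+h_m)$. This is harmless because all coefficients are non-negative integers, so Tonelli's theorem (or just monotone rearrangement of non-negative terms) guarantees that the formal Dirichlet-series identity $h=\prod(1+h_m)$ and the numerical identity $h(s)=\prod(1+h_m(s))$ are equivalent in the region of convergence, and that the convergence of the product at $s$ is equivalent to $\sum_m h_m(s)<\infty$.
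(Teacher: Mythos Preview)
The paper states this lemma without proof; it is treated as a routine consequence of the relation $\lesssim_C$ (imported from \cite{AKOV_base_change}). Your argument is correct and is exactly the natural one: dominate $\sum_m f_m(s_0)$ by $C^{1+s_0}\sum_m g_m(s_0)$ for real $s_0>\alpha_g$, and use non-negativity of coefficients to pass between convergence of the infinite product and convergence of the logarithmic sum. The only care point you flagged---that the formal product identity agrees with the analytic one because all coefficients are non-negative---is precisely the justification the paper would implicitly rely on.
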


Finally the following theorem will be used several times to obtain the necessary bounds to apply Theorem \ref{th:CharPRG}, see \cite{Lan}.

\begin{theorem}\label{th:min_deg_proj_rep}
There are absolute constants $d$, $e>1$ such that for any finite simple group of Lie type $S$ and any irreducible projective representation $\chi$ of $S$ we have
\[
\chi(1)>dq^{er}
\]
where $r=\rk L$, and $S$ is of type $L$ and defined over $\Fq$.
\end{theorem}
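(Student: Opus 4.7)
\noindent\textbf{Plan for Theorem \ref{th:min_deg_proj_rep}.} The statement is essentially a reformulation of the classical Landazuri--Seitz theorem \cite{Lan}, which produces explicit lower bounds, type by type, for the minimum dimension of a nontrivial irreducible projective representation of a finite quasisimple group of Lie type. The plan is to extract a uniform bound of the advertised shape $\chi(1) > d q^{er}$ from these type-by-type estimates by taking worst-case constants over the finitely many families of Lie types.

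\noindent\textbf{Step 1: inventory of Landazuri--Seitz bounds.} For each family (untwisted and twisted) I would write down the explicit lower bound provided by Landazuri--Seitz. Schematically, for a simple group $S = L(q)/Z(L(q))$ of rank $r$ these look like
\[
\chi(1) \geq c_{L}\, q^{a_{L} r + b_{L}},
\]
with $c_L>0$ and $a_L>0$ constants depending only on the family $L$ (for example, roughly $q^{r}$ for type $A_r$, roughly $q^{2r-2}$ for types $B_r$ and $C_r$, roughly $q^{2r-3}$ for $D_r$, and explicit polynomials in $q$ for the exceptional and twisted exceptional families). In each case the bound is of polynomial type in $q$ with an exponent that is at worst linear in $r$ and bounded below away from~$0$ uniformly in the family.

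\noindent\textbf{Step 2: uniformisation.} The classical Lie families form a finite list (four infinite series and finitely many exceptional/twisted types). Hence one can set
\[
e := \min_{L}\, a_{L} \qquad\text{and}\qquad d := \min_{L}\, c_{L},
\]
where the minima range over this finite list (absorbing the constant term $q^{b_L}$ into $c_L$ when $b_L\ge 0$, or otherwise bounding it below by something depending only on a uniform lower bound $q \geq q_0$). This yields the desired absolute constants, at least on the complement of a finite list of small groups (small $r$ and/or small $q$).

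\noindent\textbf{Step 3: the remaining small cases.} The finitely many small-rank/small-$q$ quasisimple groups excluded above (including small values of $q$ for the families where Landazuri--Seitz has restrictions, and exceptional isomorphisms with alternating groups) can be dealt with individually by inspecting their character tables, which bounds their minimal nontrivial projective degree explicitly. Adjusting $d$ and $e$ to accommodate this finite list completes the proof.

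\noindent\textbf{Main obstacle.} There is no real conceptual difficulty: once Landazuri--Seitz is invoked, the argument is essentially bookkeeping. The only genuinely delicate point is to verify that the exponent $e$ one extracts from the family-by-family bounds really satisfies the stated inequality, which requires some care for low-rank families (notably $A_1$, where the Landazuri--Seitz bound is only linear in $q$). In practice one either restricts to $r \geq 2$ and sweeps the low-rank exceptions into the finite list of Step~3, or weakens the conclusion slightly; the constants $d$ and $e$ produced this way are then absolute.
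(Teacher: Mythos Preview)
Your approach is exactly right and matches the paper's, which gives no proof at all and simply cites \cite{Lan}; extracting uniform constants from the Landazuri--Seitz tables by taking the minimum over the finitely many families is the standard (and only) way to obtain such a statement.

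One correction to your handling of the obstacle you flag. The difficulty with $A_1$ is not merely delicate: it makes the condition $e>1$ literally unachievable. For $\PSL_2(q)$ the Landazuri--Seitz bound is $(q-1)/\gcd(2,q-1)$, linear in $q$, and since $r=1$ no pair $d>0$, $e>1$ can satisfy $dq^{e}<(q-1)/2$ for all large $q$. Your proposed fix of sweeping the rank-$1$ cases into the finite list of Step~3 does not work, because there are infinitely many groups $\PSL_2(q)$ as $q$ varies. The correct uniform statement has $e>0$ (one can take $e=1$ with a suitably small $d$, but not $e>1$). Checking the two places in the paper where the theorem is invoked confirms that only $e>0$ is ever used; the ``$e>1$'' in the statement is almost certainly a slip for ``$e>0$''.
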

\section{Polynomial Representation Growth in $\mathcal{C}$}

We begin by giving a proof of Lemma \ref{prop:nec}.
\begin{proof}[{\bf Proof of Lemma \ref{prop:nec}}]
Suppose that $l(n)$ is not polynomially bounded.
Observe that for some (distinct) $i_1,\ldots,i_t\in I$ and  $\rho_{i_j},$ an $n_{i_j}$-dimensional irreducible representation of the group  $S_{i_j}$, the tensor product $\rho_{i_1}\otimes\ldots\otimes\rho_{i_t}$ naturally defines an irreducible $n$-dimensional representation of the group $\prod_{j=1}^{t}S_{i_j}$ (and in particular of $\mathbf{H}$), where $n=n_{i_1}\ldots n_{i_t}$.
Indeed, every irreducible representation of $\mathbf{H}$ has this form. 
In particular, it follows that
\[
R_{n^2}(\mathbf{H})\geq\displaystyle \frac {l(n)(l(n)-1)}{2}.
\]
Hence $R_n(\mathbf{H})$ cannot be polynomially bounded, as we assumed that $l(n)$ is not.
This implies that $r_n(\mathbf{H})$ is not polynomially bounded.
\end{proof}

\begin{proof}[{\bf Proof of Theorem \ref{th:CharPRG}}]
The direct implication follows from Lemma \ref{prop:nec}.

For the converse take $a$ as in Lemma \ref{lm:ndecom}.
Since $i(n)$ is polynomially bounded and $l_H(1)=1$ ( the $S_i$ being nonabelian and simple), we can take $b\in\NN$ such that $l_H(n)\leq n^b$.
Similarly, by Corollary \ref{cor:growthsimples} we can take $c\in\NN$ such that $r_n(S)\leq n^c$ for every nonabelian finite simple group $S$. 

Now let $\rho$ be an irreducible $n$-dimensional representation of $\mathbf{H}=\prod_{i\in I}S_i$.
Recall that $\rho\cong\rho_{i_1}\otimes\ldots\otimes\rho_{i_t}$ for some (distinct) $i_1,\ldots,i_t\in I$, where $\rho_{i_j}$ is a nontrivial $n_{i_j}$-dimensional irreducible representation of $S_{i_j}$ and $n_{i_1}\cdots n_{i_t}=n$.
Moreover, $n_{i_j}>1$ since the groups $S_{i_j}$ are simple.
By the previous lemma there are at most $n^a$ possible configurations of the form $(n_1,\ldots,n_t)$ such that $n_1\cdots n_t=n$.
Given such a configuration $(n_{i_1},\ldots,n_{i_t})$, the number of factors $S$ of $\bf{H}$ such that $S$ has a representation of dimension $n_{i_j}$ is at most $l_H(n_{i_j})\leq n_{i_j}^b$.
Hence the number of choices of factors $(i_1,\ldots,i_t)\in I^t$ for such a configuration is bounded by $\prod n_{i_j}^b=n^b$.
Now each group $S_{i_j}$ has at most $n_{i_j}^c$ irreducible representations of dimension $n_{i_j}$.
Therefore $r_n(\mathbf{H})\leq n^an^bn^c=n^{a+b+c}$ and $\mathbf{H}$ has polynomial representation growth.
\end{proof}

Given a finite group $G$ let us denote by $d(G)$ the minimal number of generators of $G$.
It is known that $d(S)=2$ for every nonabelian finite simple group.
The following result in \cite{WieIV} will be used for the proof Theorem \ref{th:PRG_iff_fg}.

\begin{theorem}\label{th:Wiegold_d(S)}
Let $S$ be a nonabelian finite simple group.
For every $e\in\NN$, $d\left(S^{|S|^{e}}\right)=e+2$.
\end{theorem}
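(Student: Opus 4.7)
The plan is to establish the theorem via Hall's classical counting formula for the minimal number of generators of a direct power of a nonabelian finite simple group. The first step is to prove this formula: if $\phi_m(S)$ denotes the number of ordered generating $m$-tuples of $S$ and $h_m(S)=\phi_m(S)/|\Aut(S)|$, then
\[
d(S^k)\leq m \quad\Longleftrightarrow\quad k\leq h_m(S).
\]
This relies on Goursat's lemma combined with the simplicity of $S$: a subdirect subgroup of $S\times S$ either equals $S\times S$ or is the graph of some $\alpha\in\Aut(S)$, and an easy inductive argument then shows that a subdirect subgroup of $S^k$ equals $S^k$ precisely when all its pairwise projections are surjective and not such graphs. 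Consequently, a surjective homomorphism $F_m\twoheadrightarrow S^k$ amounts to a choice of $k$ generating $m$-tuples of $S$ that are pairwise inequivalent under the diagonal action of $\Aut(S)$. Since an automorphism fixing a generating set is trivial, this action is free on generating tuples, and the number of equivalence classes is exactly $h_m(S)$.

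The second step is to bound $h_m(S)$ in both directions. For the upper bound, using $S\cong\mathrm{Inn}(S)\hookrightarrow\Aut(S)$ (so $|\Aut(S)|\geq|S|$) together with the observation that the identity tuple is never generating, one gets
\[
h_m(S)\leq\frac{|S|^m-1}{|\Aut(S)|}\leq\frac{|S|^m-1}{|S|}<|S|^{m-1}.
\]
For the lower bound, appending arbitrary elements to a generating $2$-tuple preserves generation, so $\phi_m(S)\geq\phi_2(S)\cdot|S|^{m-2}$; the freeness of the $\Aut(S)$-action yields $\phi_2(S)\geq|\Aut(S)|$, and together with the classical fact that $d(S)=2$ for every nonabelian finite simple group this gives $h_m(S)\geq|S|^{m-2}$.

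Specializing to $k=|S|^e$ and applying Hall's formula finishes the proof: the lower bound gives $h_{e+2}(S)\geq|S|^e$, hence $d(S^{|S|^e})\leq e+2$, while the upper bound forces $h_{e+1}(S)\leq|S|^e-1$ (as $h_{e+1}(S)$ is a nonnegative integer strictly less than $|S|^e$), hence $d(S^{|S|^e})\geq e+2$. The main obstacle is the first step: while Goursat's lemma itself is elementary, passing from the pairwise statement to the full description of subdirect subgroups of $S^k$ requires the inductive simplicity argument, and the appeal to $d(S)=2$ in the lower bound is a nontrivial classification-based input. Once these structural facts are in place, the counting estimates are routine.
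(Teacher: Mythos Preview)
Your argument is correct and is precisely the classical Hall--Wiegold approach. Note, however, that the paper does not actually prove this theorem: it simply quotes the result from Wiegold's paper \cite{WieIV}, so there is no in-paper proof to compare against. What you have written is essentially a self-contained reconstruction of the argument behind that citation---Hall's enumeration formula $d(S^k)\le m\iff k\le h_m(S)$ together with the two-sided estimate $|S|^{m-2}\le h_m(S)<|S|^{m-1}$---and the deduction for $k=|S|^e$ is exactly as one would expect. Your identification of the two nontrivial inputs (the Goursat-type classification of subdirect subgroups of $S^k$, and the classification-dependent fact that $d(S)=2$) is accurate.
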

\begin{proof}[{\bf Proof of Theorem \ref{th:PRG_iff_fg}}]
Suppose $\mathbf{H}$ has PRG.
For $S$ a nonabelian finite simple group, let $c(S)$ denote the number of copies of $S$ appearing as a composition factor of $\mathbf{H}$.
By Theorem \ref{th:CharPRG} we know that $l(n)\leq n^e$ for some $e\in\NN$.
Since for every finite group $S$, $S$ has a nontrivial irreducible representation of dimension less than $|S|$, it follows that $c(S)\leq l(|S|)\leq |S|^e$ for every $S$.
Note that we can write $\mathbf{H}=\prod_{S}S^{c(S)}$, where $S$ runs along the nonabelian finite simple groups.

Applying Theorem \ref{th:Wiegold_d(S)}, we obtain that, for every $S$, $S^{c(S)}$ is generated by $e$ elements.
Let us pick for every $S$ a tuple $(g_1^S,\ldots,g^S_e)\in S^{c(S)}$ such that $<g^S_1,\ldots,g^S_e>=S^{c(S)}$.
We claim that the $e$-tuple $(\mathbf{g_1}=\prod_S g_1^S,\ldots,\mathbf{g_e}=\prod_S g_e^S)$ generates $\mathbf{H}$ as a profinite group.
Put $\mathbf{G}=<\mathbf{g_1},\ldots,\mathbf{g_e}>$.
It follows by construction that $\mathbf{G}\times \prod_{Q\neq S}Q^{c(Q)} =\mathbf{H}$.
Thefore
\[
\mathbf{H}/\prod_{Q\ncong S}Q^{c(Q)}\cong S^{c(S)}
\]
 is in the top of $\mathbf{G}$ for every $S$.
But this clearly implies $\mathbf{G}=\mathbf{H}$.

\end{proof}

\section{Types of representation growth}
\begin{proof}[{\bf Example}]
Let us first look at an example to ilustrate the idea of the construction.
Let us consider the groups $\SL_2(\Fq)$.
The representation theory of these groups is well understood (see \cite[Chapter 15]{DigMi}) and we know that for $q=p^n$ ($p$ odd prime) their representation zeta functions are given by
\begin{align*}
\zeta_{\SL_2(\Fq)}=&1+q^{
-s}+\frac{q-3}{2}(q+1)^{-s+2}+2\left(\frac{q + 1}{2}\right)^{-s}
+\\
&\frac{q-1}{2}
(q-1)^{-s}+2\left(\frac{q-1}{2}\right)^{-s}.
\end{align*}
For convinience, we will encode the nontrivial part as
\[
f(q,s)=\zeta_{\SL_2(\Fq)}(s)-1
\]
Fix an odd prime $p$ and consider now the group $\mathbf{H}=\prod_{i\in\NN}\SL_2(\F_{p^i})$.
Note that all factors of the cartesian product have rank $1$.
Hence, applying Theorem \ref{th:min_deg_proj_rep} we readily obtain that if $\SL_2(\Fq)$ has a representation of dimension $n$, then $n>dq^e$.
It follows that $l_H(n)<\log_p (n/d)$, in particular, $l_H(n)$ is polynomially bounded and $\mathbf{H}$ has PRG.
Recalling once again that irreducible representations of $\mathbf{H}$ are given by finite tensor products of irreducible representations of the factors $\SL_2(\Fq)$, we obtain that the representation zeta function of $\mathbf{H}$ is given by
\[
\zeta_{\mathbf{H}}(s)=\prod_{i\in\NN}\zeta_{\SL_2(\F_{p^i})}(s)=\prod_{i\in\NN}(1+f(p^i,s)).
\]
It follows that $\zeta_{\mathbf{H}}(s)$ converges if and only if $\sum_{i\in\NN}f(p^i,s)$ does.
An analysis of the summands ocurrying in each $f(p^i,s)$ gives that the latter series converges if and only if the series $\sum_{q=p^i}{q}^{-s+1}=\sum_{i\in\NN}(p^{-s+1})^i$ does.
The latter expression, being a geometric series, converges if and only if $p^{-s+1}<1$, that is, if and only if $s>1$.
Therefore $\alpha(\mathbf{H})=1$.
\end{proof}

\begin{theorem}\label{th:arb_abscissa_quasisimple}

Given $c\in\RR_{>0}$, there exists a Lie type $L$, a prime number $p$ and  $\mathbf{H}=\prod_{i\in\NN} S_i$, where for every $i\in\NN$, $S_i=L(q)$ for some $p$-power $q$, such that $\mathbf{H}$ has PRG and $\alpha(\mathbf{H})=c$.
\end{theorem}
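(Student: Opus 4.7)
Given $c>0$, first choose a Lie type $L$ with absolute root system $\Phi$ of rank $k=\rk\Phi$ and Coxeter number $h=2|\Phi^{+}|/k$ satisfying $h>2/c$; e.g., $L=A_{n}$ with $n+1>2/c$ works and provides $h$ arbitrarily large. Set
\[
\beta_{c}:=k\!\left(\tfrac{hc}{2}-1\right)>0,
\]
fix an arbitrary prime $p$ of the characteristic associated to $L$, and put $f_{i}:=\lceil p^{i\beta_{c}}\rceil$ for $i\ge 1$. Define
\[
\mathbf{H}\;:=\;\prod_{i\ge 1} L(p^{i})^{f_{i}},
\]
so that all simple factors of $\mathbf{H}$ share the common Lie type $L$.

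\textbf{Computing the abscissa.} By Theorem~\ref{th:AKOV_finite_Lie} there is a constant $C=C(L)$ (independent of $q$) with
\[
\zeta_{L(q)}(s)-1\;\sim_{C}\;q^{k(1-hs/2)}\qquad\text{for every admissible }q.
\]
(The same asymptotic persists when one passes from $G(\F_{q})$ to the simple quotient $L(q)$, as the central kernel has bounded order depending only on $L$.) Applying Lemma~\ref{lm:prod_zeta_fun} in both directions to the list of factors obtained by repeating each $\zeta_{L(p^{i})}(s)-1$ with multiplicity $f_{i}$, and using that $\prod(1+x_{i})$ and $\sum x_{i}$ have the same convergence behaviour for nonnegative $x_{i}$, we obtain
\[
\alpha(\mathbf{H})\;=\;\alpha\!\left(\sum_{i\ge 1} f_{i}\, p^{ik(1-hs/2)}\right).
\]
Up to bounded multiplicative factors, the right-hand series is geometric with ratio $p^{\beta_{c}+k-khs/2}$; since $\beta_{c}+k=khc/2$, this ratio is less than $1$ exactly when $s>c$, whence $\alpha(\mathbf{H})=c$.

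\textbf{Verifying PRG.} By Theorem~\ref{th:CharPRG} it suffices to check that $l_{\mathbf{H}}(n)$ grows polynomially. By Theorem~\ref{th:min_deg_proj_rep} every nontrivial irreducible (projective) representation of the simple factor $L(p^{i})$ has dimension strictly greater than $d\,p^{iek}$ for absolute constants $d,e>1$. Hence $R_{n}(L(p^{i}))>1$ forces $p^{i}\le(n/d)^{1/(ek)}$, and summing the multiplicities,
\[
l_{\mathbf{H}}(n)\;\le\;\sum_{i:\,p^{i}\le(n/d)^{1/(ek)}}f_{i}\;\le\;C'\,(n/d)^{\beta_{c}/(ek)},
\]
which is polynomially bounded in $n$, yielding $\mathbf{H}\in\mathcal{L}_{PRG}$.

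\textbf{Main obstacle.} The crucial technical input is the \emph{uniformity in $q$} of the asymptotic in Theorem~\ref{th:AKOV_finite_Lie}: without a constant $C$ that does not depend on the prime power $q$, one cannot reduce $\alpha(\mathbf{H})$ to the abscissa of the comparison product $\prod_{i}(1+p^{ik(1-hs/2)})^{f_{i}}$ via Lemma~\ref{lm:prod_zeta_fun}. A secondary subtlety is that Theorem~\ref{th:repgrowq} forces $\alpha(\mathbf{H})\ge 2/h$ for any infinite product of groups of type $L$, so small values of $c$ cannot be achieved by tuning multiplicities alone; this is precisely what requires the preliminary choice of $L$ with Coxeter number exceeding $2/c$, and uses the freedom to take $L=A_{n}$ for arbitrarily large $n$.
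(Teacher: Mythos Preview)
Your proposal is correct and follows essentially the same approach as the paper: choose a Lie type with Coxeter number large enough that $hc>2$, take multiplicities growing like $p^{i\beta_c}$ with $\beta_c=k(hc/2-1)$, reduce the abscissa computation via Theorem~\ref{th:AKOV_finite_Lie} and Lemma~\ref{lm:prod_zeta_fun} to a geometric series with ratio $p^{kh(c-s)/2}$, and verify PRG through Theorem~\ref{th:min_deg_proj_rep} and Theorem~\ref{th:CharPRG}. The only cosmetic difference is that the paper arranges the multiplicities as exact powers of $p$ by introducing an auxiliary integer sequence $a_i$ with $a_i/i\to c$ and setting $f(i)=p^{k(ha_i-2i)/2}$, whereas you take $f_i=\lceil p^{i\beta_c}\rceil$; since $p^{i\beta_c}\le f_i\le 2p^{i\beta_c}$, both choices yield the same convergence behaviour and the same abscissa. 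Your parenthetical about passing to the simple quotient is unnecessary here, since in the paper $L(q)$ already denotes the quasisimple group $G(\FF_q)$ to which Theorem~\ref{th:AKOV_finite_Lie} applies directly.
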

\begin{proof}

Given $c\in\RR_{>0}$, pick a Lie type $L$ with Coxeter number $h$ and $k=\rk\Phi$ such that $khc>2$.
Let us assume that $h$ is an even integer (this holds for all root systems except for those of type $A_{2k}$, moreover and easy adaptation of the proof also works for $h$ odd).
Construct a sequence of natural numbers $\{a_n\}\subseteq\NN$ such that $c-\frac{a_n}{n}<\frac{1}{n}$ for every $n\in\NN$.
Clearly $a_n\to c$ and there exists $N\in\NN$ such that $hka_n>2$ for every $n>N$.
Pick a prime number $p$.
Consider the function $f:\NN\to\NN$ given by
$i\mapsto {p}^{\frac{k(ha_i-2i)}{2}}$ if $i>N$, $i\mapsto 0$ otherwise.
Recall that $L(q)$ denotes a quasisimple group of type $L$ defined over $\Fq$.
Finally consider the group
\[
\mathbf{H}=\prod_{i\in\NN}L(p^i)^{f(i)}.
\]
We claim that $\alpha(\mathbf{H})=c$.
First observe that, by Theorem \ref{th:min_deg_proj_rep}, if $L(p^i)$ has an irreducible representation of dimension $n$, then $n>dp^{ier}$, that is, $i<\frac{\log (n/d)}{re\log p}\leq C\log n$ for some constant $C$.
On the other hand note that $f(i)=p^{\frac{ik(ha_i-2i)}{2i}}\leq p^{\frac{ikhc-2i}{2}}=p^{Di}$ for some constant $D$.
Thus
\[
l_H(n)<C\log n f(C\log n)\leq C\log n p^{Dlog n}\leq C\log n n^{D\log p},
\]
that is, $l_H(n)$ is polynomially bounded.
By Theorem \ref{th:CharPRG} $\mathbf{H}$ has PRG and we have
\[
\zeta_{\mathbf{H}}=\prod_{i\in\NN}\zeta_{L(p^i)}^{f(i)}.
\]
Now by Theorem \ref{th:AKOV_finite_Lie} we know that
\[
\zeta_{L(p^i)}\lesssim_C 1+p^{i(\rk\Phi-|\Phi^+|s)}=1+p^{ik\left(1-\frac{h}{2}s\right)}.
\]
Therefore an application of Lemma \ref{lm:prod_zeta_fun} gives
\[
\alpha(\mathbf{H})=\alpha\left(\prod_{i\in\NN}\left(1+p^{ik\left(1-\frac{h}{2}s\right)}\right)^{f(i)}\right).
\]
The latter product convergences if and only if the series
\[\sum_{i\in\NN}f(i)p^{ik\left(1-\frac{h}{2}s\right)}=\sum_{i>N}{p}^{\frac{k(ha_i-2i)}{2}}p^{ik\left(1-\frac{h}{2}s\right)}=
\sum_{i>N}p^{\frac{ikh}{2}\left(\frac{a_i}{i}-s\right)}
\]
converges.
We claim that the latter series converges if and only if $s>c$.
Suppose first that $s>c$, then $\frac{a_i}{i}-s<-\epsilon$ for some $\epsilon>0$. Thefore
\[
\sum_{i>N}p^{\frac{ikh}{2}\left(\frac{a_i}{i}-s\right)}\leq \sum_{i>N}p^{\frac{ikh}{2}\left(c-s\right)}. 
\]
Since the latter is a subseries of a geometric series of ratio $p^{\frac{kh}{2}\left(c-s\right)}<1$, it converges.
Suppose now that $s<c$. Then there exists $N<M\in\NN$ and $\epsilon >0$ such that $\frac{a_i}{i}-s>\epsilon$ for every $i>M$.
Hence
\[
\sum_{i>N}p^{\frac{ikh}{2}\left(\frac{a_i}{i}-s\right)}\geq\sum_{i>M}p^{\frac{ikh}{2}\left(\frac{a_i}{i}-s\right)}\geq
\sum_{i>M}p^{\frac{ikh}{2}\left(\epsilon\right)}.
\]
Since the latter series is a tale of a geometric series of ratio $p^{\frac{kh}{2}\left(\epsilon\right)}>1$, it does not converge.
\end{proof}

Theorem \ref{th:Lie_abscissa} is now a corollary of Theorem \ref{th:arb_abscissa_quasisimple}.
\begin{proof}[{\bf Proof of Theorem \ref{th:Lie_abscissa}}]
Let us pick $L=(A_p,\id)$ such that $\rk\Phi=p$ is a prime number and $pc>2$.
Now by (the proof of) Theorem \ref{th:arb_abscissa_quasisimple} there exists $H=\prod_{i\in\NN}S_i$ such that for every $i\in\NN$, $S_i=L(p^e)$ for some $e\in\NN$ and $\alpha(\mathbf{H})=c$. 
Note that in this case we have $L(p^e)=\SL_p(\mathbb{F}_{p^e})$ and $Z(\SL_p(\mathbb{F}_{p^e}))=1$, since there are no nontrivial $p^{\mbox{{\tiny th}}}$-roots of unity in a field of characteristic $p$.
Hence the groups $L(p^e)$ are indeed simple and the theorem follows.
Note that one can similarly find an appropriate Lie type within the other infinite families ($B_i$, $C_i$, $D_i$, $i\in\NN)$ and mimic the above construction for that type. 
\end{proof}

\clearpage{\pagestyle{empty}\cleardoublepage}







\backmatter

\newpage
\pagestyle{empty}
\textcolor{white}{Se acabó}
\vspace*{14cm}
\cleardoublepage






\end{document}